\newtheorem{theorem}{Theorem}[section] 
\newtheorem{lemma}[theorem]{Lemma}     
\newtheorem{corollary}[theorem]{Corollary}
\newtheorem{proposition}[theorem]{Proposition}
\theoremstyle{definition}
\newtheorem{definition}[theorem]{Definition}
\newtheorem{example}[theorem]{Example}
\theoremstyle{remark}
\newtheorem{remark}[theorem]{Remark}
\newcommand{\tnu}{\widetilde{\nu}}
\newcommand{\vngl}{{\mathrm{VN}}_l(G)}
\newcommand{\vng}{{\mathrm{VN}}(G)}
\newcommand{\A}{\mathcal{A}}
\newcommand{\N}{\mathcal{N}}
\newcommand{\D}{\mathcal{D}}
\newcommand{\E}{\mathbb{E}}
\newcommand{\M}{\mathcal{M}}
\newcommand{\cM}{\mathfrak{M}}
\newcommand{\R}{\mathcal{R}}
\newcommand{\n}{\mathfrak{n}}
\newcommand{\re}{\mathrm{Re}}
\newcommand{\im}{\mathrm{Im}}
\newcommand{\bR}{{\mathbb{R}}}
\newcommand{\bC}{{\mathbb{C}}}
\newcommand{\bN}{{\mathbb{N}}}
\newcommand{\I}{{\mathds {1}}}
\newcommand{\clo}[1]{\langle{#1}\rangle}
\begin{document}

\title[Reduction theorem and applications]
{An extension of Haagerup's reduction theorem with applications to subdiagonal subalgebras of general von Neumann algebras}

\author{Louis Labuschagne}
\address{DSI-NRF CoE in Math. and Stat. Sci,\\ Focus Area for Pure and Applied Analytics,\\ Internal Box 209, School of Math.\& Stat. Sci.\\NWU, PVT. BAG X6001, 2520 Potchefstroom\\ South Africa}
\email{Louis.Labuschagne@nwu.ac.za}

\author{Quanhua Xu}
\address{Institute for Advanced Study in Mathematics, Harbin Institute of Technology,  Harbin 150001, China; and
Laboratoire de Math{\'e}matiques, Universit{\'e} Marie \& Louis Pasteur, 25030 Besan\c{c}on Cedex, France}
\email{qxu@univ-fcomte.fr}

\subjclass[2010]{46L51, 46L52 (primary), 46J15, 46K50 (secondary)}
\date{\today}
\keywords{reduction theorem, maximal subdiagonal subalgebra, noncommutative $H^p$-space, Hilbert transform, Gleason-Whitney property, left (right) Toeplitz operators}

\begin{abstract} We revisit Haagerup's enigmatic reduction theorem \cite[Theorems 2.1 \& 3.1]{HJX} showing how that theorem may be extended to general von Neumann algebras $\M$ equipped with an arbitrary faithful normal semifinite weight in a manner which faithfully captures the essence of the original. In contrast to the proposal in \cite[Remark 2.8]{HJX}, we show how in the non-$\sigma$-finite case the enlargement $\R=\M\rtimes\mathbb{Q}_D$ of $\M$ may be approximated by an increasing \emph{sequence} of \emph{expected} semifinite subalgebras. Using this revised version of the reduction theorem we may then all the applications of this theorem to $H^p$-spaces from the $\sigma$-finite case to general von Neumann algebras. Inspired by the theory of topologically ordered groups we then propose the even more general concept of approximately subdiagonal subalgebras which proves to be general enough to contain all group theoretic examples. This then forms the context for much of the study of Fredholm Toeplitz operators in the closing sections.
\end{abstract}

\maketitle

\tableofcontents

\section{Introduction}\label{S1}

In this introduction we focus on elucidating the role played by the reduction theorem and various other approximation techniques in developing the theory of noncommutative $H^p$ spaces, rather 
than trying to give a complete description of the historical development of the theory of noncommutative $H^p$ spaces. A survey of this historical development will be presented at a later stage once the appropriate context has been adequately justified.  
Although only formally published in 2010 \cite{HJX}, Haagerup's reduction theorem first saw the light of day as a handwritten note date marked 15 May 1978, which for many decades was circulated 
among a select few individuals. Haagerup's intention with this theorem was to set up a framework within which a $\sigma$-finite von Neumann algebra $\M$ could be indirectly approximated with a 
monotonically increasing sequence of finite von Neumann algebras equipped with faithful normal tracial states. More specifically this theorem provides a framework within which one can first enlarge 
a given $\sigma$-finite von Neumann algebra $\M$ to a von Neumann superalgebra $\R$ (still $\sigma$-finite) inside of which $\M$ appears as an expected subalgebra. It is then this enlarged 
algebra $\R$ rather than $\M$ itself, that Haagerup showed can be approximated with a sequence $\{\R_n\}$ of finite von Neumann subalgebras equipped with faithful normal tracial states. 

Haagerup's hope was that in the development of the theory of noncommutative $L^p$-spaces this theorem could be used to extend results regarding noncommutative $L^p$-spaces from the easier to handle 
tracial setting, to the more enigmatic setting of $\sigma$-finite von Neumann algebras. It was with the theory of noncommutative $H^p$-spaces that this vision was quite strongly realised. The 
fountainhead of the theory of noncommutative $H^p$-spaces are the maximal subdiagonal subalgebras of Arveson introduced in his remarkable seminal paper ``Analyticity in Operator Algebras'' 
\cite{AIOA}. Ultimately it is these maximal subdiagonal subalgebras which in this theory make up the category of noncommutative $H^\infty$-spaces. The theory initially focused on finite von Neumann 
algebras equipped with faithful normal tracial states developing at a somewhat slow, but carefully measured, pace. The first systematic description of noncommutative $H^p$ spaces of finite maximal 
subdiagonal subalgebras was given by Marsalli and West \cite{MW}. The verification of Szeg\"o's formula for this context in 2005 (see \cite{L-Szego}) then enabled Blecher and Labuschagne to build on 
this earlier pioneering work and develop a very detailed and successful theory of noncommutative $H^p$-spaces in the context of finite von Neumann algebras (see \cite{BLsurvey}). At almost the 
same time that Blecher and Labuschagne started their development, Xu pioneered the application of the reduction theorem to subdiagonal subalgebras of $\sigma$-finite von Neumann algebras in the 2005 
paper \cite{Xu}. This paper contains a ``no-frills'' version of the reduction theorem for $\sigma$-finite algebras for which Xu then shows how in this setting a subdiagonal subalgebra $\A$ of such a von Neumann 
algebra may be enlarged to a subdiagonal subalgebra $\widehat{\A}$ of the enlargement $\R$ of $\M$ in such a way that $\A$ appears as a non-self-adjoint expected subalgebra of $\widehat{A}$, with 
$\widehat{A}$ in turn being monotonically approximable by a sequence $\{\A_n\}$ of subdiagonal subalgebras of the $\R_n$s. This structure then provides a very powerful formalism for extending large 
parts of the achieved theory for finite von Neumann algebras to the setting of subdiagonal subalgebras of $\sigma$-finite von Neumann algebras. Ji demonstrated the power of the technique developed 
by Xu by using it to lift a large number of blocks of theory regarding noncommutative $H^p$-spaces from the finite to the $\sigma$-finite context (see \cite{jig2, jig3}). Subsequently Labuschagne 
also successfully used this technique to make further inroads into the $\sigma$-finite theory \cite{L-HpIII}.

When Haagerup's reduction theorem did finally appear in print \cite{HJX}, the authors did present a mechanism for lifting Haagerup's reduction theorem to general von Neumann algebras 
\cite[Remark 2.8]{HJX}. The essence of this idea runs as follows: One first notes that any von Neumann algebra $\M$ may be written in the form $\M=\oplus_{j\in J}\,\N_j\overline{\otimes}B(K_j)$ 
where each $\N_j$ is a $\sigma$-finite von Neumann algebra. If we are now given a faithful normal semifinite weight on $\M$ of the form $\oplus_{j\in J}(\nu_j\otimes\mathrm{Tr}_j)$ where each 
$\nu_j$ is a faithful normal state on $\N_j$, we may construct a net of approximating algebras by first applying Haagerup's reduction theorem to each $\N_j$ to produce a superalgebra of the form 
$\oplus_{j\in J}\,\R_j\overline{\otimes}B(K_j)$ (where each $\R_j$ is an appropriate enlargement of $\N_j$), and then using the very regular structure of each $B(K_j)$ to build a net of approximating 
subalgebras for $\oplus_{j\in J}\,\R_j\overline{\otimes}B(K_j)$ from the subalgebras approximating each $\R_j$. However there are some problems with this strategy: 
\begin{itemize}
\item It only works for certain weights on $\M$.
\item The approximating subalgebras are neither monotonic nor a sequence.
\item The approximating subalgebras of the enlargement of $\M$ are no longer all expected unital subalgebras. 
\end{itemize}

One of the main objectives of this paper is to prove a version of Haagerup's reduction theorem which is valid for arbitrary von Neumann algebras equipped with some a priori given faithful normal 
semifinite weight, but much closer in spirit to Haagerup's original theorem than the framework posited by the approach mentioned above. In particular we will show how an arbitrary von Neumann algebra 
$\M$ equipped with an arbitrary faithful normal semifinite weight $\nu$, may be enlarged to a superalgebra $\R$ inside of which $\M$ appears as an expected subalgebra, with the enlarged algebra $\R$ 
in this case being approximable by a monotonically increasing sequence of \emph{semifinite} von Neumann subalgebras each equipped with a faithful normal semifinite trace. 

This then raises the question of the efficacy of this extended reduction theorem for lifting results regarding $H^p$-spaces from the setting originally considered by Marsalli and West \cite{MW} to 
general von Neumann algebras. For this task a two-step procedure seems to present itself. In the paper \cite{Bek-sem}, Bekjan established a number technical lemmata (summarised in Proposition 
\ref{Bek-red}), which over the years have proven to be extremely useful in lifting results regarding $H^p$-spaces from the setting of finite von Neumann algebras, to semifinite algebras. The basic 
idea is to in the case where $\M$ is semifinite and the restriction of the faithful normal semifinite trace $\tau$ on $\M$ to the self-adjoint portion $\D$ of $\A$ also semifinite, to select a net of finite projections 
$(e_\alpha)$ in $\D$ which increases to $\I$ and to then show that the compressions $e_\alpha\A e_\alpha$ form a net of (finite) maximal subdiagonal subalgebras which can be used to approximate 
$\A$. As far as lifting results to the setting of general von Neumann algebras is concerned, this procedure forms the first phase of the two-step procedure. In the first step one uses the procedure 
to lift results regarding noncommutative $H^p$ spaces from the setting of finite to semifinite algebras, whilst in the second, one then uses the extended version of the reduction theorem to then 
further lift results from the semifinite to the general setting. As will be demonstrated, this approach proves to be very successful in developing a theory for general von Neumann algebras.

However despite the success noted above, there is one further obstacle that needs to be overcome before a truly complete theory of noncommutative Hardy spaces can be achieved. A challenge which lies 
beyond the reach of the reduction theorem. The ``standard'' approach to lifting results regarding subdiagonal subalgebras requires the reference weight $\nu$ of the von Neumann superalgebra $\M$ to be semifinite on the 
self-adjoint portion $\D$ of the subdiagonal subalgebra $\A$. It is in part this fact that ensures the applicability of the reduction theorem in studying these objects. Whilst this restriction does 
ensure access to the reduction theorem, it also excludes Hardy space of the upper half-plane. In order to achieve a truly complete theory which includes Hardy space of the upper halfplane, we need 
to come up with a more general concept of subdiagonality which canonically contains all former concepts and which makes room for objects like Hardy space of the upper halfplane. This objective is 
achieved by delving deeply into the theory of topologically ordered groups, the structure of which we use to identify a formalism which bears generalisation to general von Neumann algebras. The 
outcome is the notion of approximately subdiagonal subalgebras of von Neumann algebras, which not only canonically contains former concepts of subdiagonality, but which also allows for the 
possibility of the reference weight of the containing von Neumann algebra not being semifinite on the self-adjoint portion of the given approximately subdiagonal algebra. We believe that the Hardy 
spaces obtained for such subalgebras are at present the most general noncommutative Hardy spaces in print.

To be able to handle the proof of the reduction theorem and its applications to $H^p$-spaces with confidence, a deep understanding of the behaviour of faithful normal conditional expectations with 
regard to noncommutative $L^p$-spaces is required. Any reader familiar with this theory will be well aware of the useful and important contributions of Junge and Xu in this regard, who proved a set 
of very useful technical results on this point valid for $\sigma$-finite von Neumann algebras (see for example \cite[Proposition 2.3]{JX-BurkRosen}). However the proofs of Junge and Xu do not 
directly extend to the general case. The authors are moreover not aware of any references where the validity of their results for the general case is explicitly verified. So whilst it may be a kind of 
``folk-theorem'' that these results do extend to the general case, explicit verification of this belief is required. This verification is then the starting point of our analysis.

\section{Preliminaries}\label{S2}

Throughout the symbols $\M$, $\N$ and $\R$ will be used to denote concrete von Neumann algebras.  For each $a\in \M$, the real part of $a$, namely $\frac{1}{2}(a+a^*)$ will be denoted by $\mathrm{Re}(a)$ and imaginary part $\frac{1}{2i}(a-a^*)$ by $\mathrm{Im}(a)$. For a subset $K$ of $\M$ we shall denote the respective collections of real and imaginary parts of the elements of $K$ by 
$\mathrm{Re}(K)$ and $\mathrm{Im}(K)$. Each von Neumann algebra of course admits of a faithful normal semifinite weight with one such weight 
being used as a reference weight. The phrase \emph{faithful normal semifinite} will occasionally be compressed to just fns. Given $\M$, we shall consistently use the symbol $\nu$ to denote the 
reference weight of $\M$. Now let $\N$ be a von Neumann subalgebra of $\M$. We shall follow standard practice by setting $\mathfrak{n}_\nu=\{a\in\M:\nu(a^*a)<\infty\}$, 
$\mathfrak{p}_\nu=\{a\in\M_+:\nu(a)<\infty\}$, and $\mathfrak{m}_\nu=\mathrm{span}(\mathfrak{p}_\nu)$. However given a subalgebra $\A$ of $\M$, we shall write $\mathfrak{n}_\nu(\A)$ for 
$\{a\in\A:\nu(a^*a)<\infty\}$. If $\N$ is a von Neumann subalgebra, we shall additionally write $\mathfrak{p}_\nu(\N)$ and $\mathfrak{m}_\nu(\N)$ for $\{a\in\N_+:\nu(a)<\infty\}$, and 
$\mathrm{span}(\mathfrak{p}_\nu(\N))$ respectively.

To proceed with our analysis we shall require a bit of insight into the GNS-construction for an \emph{fns} weight. Recall that when a von Neumann algebra $\M$ equipped with a faithful normal state $\omega$ is identified with the GNS-representation engendered by $\omega$, the state $\omega$ then becomes a vector state corresponding to a cyclic and separating vector $\Omega$. The vector $\Omega$ is then in fact cyclic and separating for both $\M$ and $\M'$ (See \cite[Propositions 2.5.3 and 2.5.6]{BR}). In this context one may then develop modular theory by defining antilinear operators $S_0$ and $F_0$ on the dense subspaces $\{a\Omega\colon a\in \M\}$ and $\{a'\Omega\colon a'\in \M'\}$ of $H$ by means of the prescriptions $$S_0(a\Omega)=a^*\Omega, \quad F_0(a'\Omega)=a'^*\Omega$$where $a\in \M$ and $a'\in\M'$, and proceeding from there. These operators turn out to be closable with their closures $S$ and $F$ turning out to be adjoints of each other. In the polar decomposition $J\Delta^{1/2}$ of $S$ the positive operator $\Delta$ is referred to as the \emph{modular operator} and the anti-unitary operator $J$ as \emph{modular conjugation}. The prescription 
$\sigma_t^\omega:a\mapsto \Delta_\omega^{it}a\Delta_\omega^{-it}$ can then be shown to induce a strongly-continuous one-parameter automorphism group (the so-called \emph{modular automorphism group}) on $\M$. 

\begin{remark}\label{5:R left Hilbert} In the case where we have a faithful normal semifinite weight $\nu$ rather than a state, the Hilbert space $H_\nu$ in the GNS-construction for the pair $(\M,\nu)$, is constructed by equipping $\mathfrak{n}_\nu$ with an inner product defined by $\langle x,y\rangle =\nu(y^*x)$ $(x,y\in \mathfrak{n}_\nu)$. Equipped with this structure $\n_\nu$ becomes a pre-Hilbert space, with the Hilbert space $H_\nu$ then defined to be its completion.

Despite the absence of a cyclic and separating vector, one may nevertheless still develop a modular theory that closely rivals that of the $\sigma$-finite setting. The primary ingredient one needs to 
develop modular theory in this setting is a dense subspace of $H_\nu$ which admits an involutive structure which can be used to define the operators $S$ and $F$. The subspace $\eta(\n_\nu\cap\n_\nu^*)$ 
turns out to be just such a subspace. We may specifically define the operator $S_0$ on this subspace by means of the prescription $S_0:\eta(a)\mapsto\eta(a^*)$. This operator extends to a closed 
densely-defined anti-linear operator $S$. The modular operator $\Delta$ is then $\Delta=|S|^2$ with the modular conjugation $J$ the anti-linear isometry in the polar decomposition $S=J\Delta^{1/2}$. 
(The above facts can be seen by considering the discussion preceding \cite[Lemma VI.1.4]{Tak2} alongside \cite[Lemma VI.1.5]{Tak2} and \cite[Theorems VII.2.5 \& VII.2.6]{Tak2}.)
\end{remark}

Let $\M$ as above be equipped with an \emph{fns} weight $\nu$. The so-called \emph{centralizer} of the pair $(\M,\nu)$ is defined to be the subalgebra $\M_\nu=\{a\in\M\colon  \sigma_t^\nu(a)=a\mbox{ for all }t\in\mathbb{R}\}$ of fixed points of the modular group. The use of the term centralizer is justified by the fact that the conditions 
\begin{itemize}
\item $a\mathfrak{m}_\nu\subseteq\mathfrak{m}_\nu$ and $\mathfrak{m}_\nu a\subseteq\mathfrak{m}_\nu$,
\item and that $\nu(ax)=\nu(xa)$ for all $x\in\mathfrak{m}_\nu.$
\end{itemize}
are both necessary and sufficient for $a\in \M$ to belong to the centralizer.

An element $a\in \M$ is then said to be $\sigma^\nu_t$-analytic (or just analytic) if there exists a strip $S_\gamma= \{z\in \mathbb{C}\colon  |\Im(z)| <\gamma\}$ in $\mathbb{C}$, and a function $F:S_\gamma\to \M$ such that
\begin{itemize}
\item $F(t)=\sigma_t(a)$ for each $t\in \mathbb{R}$,
\item with $z\mapsto \rho(F(z))$ analytic for every $\rho\in \M_*.$
\end{itemize}
In such a case we write $\sigma_z(a)$ for $F(z).$ If $F$ even extends to an entire-analytic function, we say that $a\in \M$ is \emph{entire-analytic}. We remind the reader that the set of 
entire-analytic elements of $\M$ (denoted by $\M^a_\sigma$) is a $\sigma$-weakly dense *-subalgebra of $\M$ \cite[Theorem VIII.2.3]{Tak2}. In our analysis we will only ever make use of $\M^a_\sigma$. We 
shall in the ensuing analysis therefore consistently use the term \emph{analytic} to refer to entire-analytic functions.

By the term \emph{normal conditional expectation}, we understand a 
unital normal order-preserving contractive map $\mathbb{E}:\M\to\N$ which additionally satisfies the criterion that $\mathbb{E}(afb) = a\mathbb{E}(f)b$ for all $a,b\in\N$ and all $f\in \M$. If for any $f\in\M_+$ we have that $\mathbb{E}(f)\neq 0$ precisely when 
$f\neq 0$, $\mathbb{E}$ is said to be faithful. A category of faithful normal conditional expectations we shall be particularly interested in, is the category of normal conditional expectations $\mathbb{E}:\M\to\N$ satisfying $\nu\circ \mathbb{E}=\nu$ where 
$\nu{\upharpoonright}\N$ is assumed to still be semifinite.

Using the notion of the extended positive part $\widehat{\M}_+$ of a von Neumann algebra $\M$ (see \S IX.4 of \cite{Tak2}) one may introduce a category of expectation like operators called \emph{Operator Valued Weights}. Given a von Neumann subalgebra $\N$ of $\M$, 
an Operator Valued Weight $\mathscr{W}$ from $\M$ to $\N$ is an operator $\mathscr{W}:\M_+\to\widehat{\N}_+$ which preserves suprema, and additionally satisfies
\begin{itemize}
\item $\mathscr{W}(\kappa f)=\kappa\mathscr{W}(f)$,
\item $\mathscr{W}(f+g)=\mathscr{W}(f)+\mathscr{W}(g)$,
\item and $\mathscr{W}(a^*fa)=a^*\mathscr{W}(f)a$
\end{itemize}
for all $f,g \in \M_+$, all $a\in \N$ and all non-negative scalars $\kappa$.

The concept of a crossed product of $\M$ with an LCA (locally compact abelian) group $G$ is particularly important for this paper. (We shall be particularly interested in the discrete group of dyadic rationals.) We pause to summarise the essentials - further details may be found in for 
example \cite{vD}. Suppose that $\M$ acts on the Hilbert space $H$ with $G$ a locally compact abelian group and $\alpha$ an action of the group on $\M$, by which we mean a point to $\sigma$-weakly continuous mapping $\alpha$ from $G$ into the group of 
$*$-automorphisms on $\M$, which respects the group action in the sense that $\alpha_s\circ\alpha_t=\alpha_{st}$. Now let $L^2(G,H)$ be the space of square Bochner-integrable functions from $G$ to $H$. The space $L^2(G,H)$ may of course be written as the tensor product
$H\otimes L^2(G)$. For every $a\in\M$, the prescription 
\begin{equation}\label{def-piembed} \pi_\alpha(a)(x\otimes f)(s)=\sigma_{s^{-1}}(a)(f(s)x) \qquad x\in H, f\in L^2(G)
\end{equation}
is well-defined on the simple tensors, and extends to a bounded map on $L^2(G,H)$. Specifically the map 
$\pi_\alpha:\M\to B(L^2(G,H)):a\mapsto \pi_\alpha(a)$ $(a\in \M)$ yields a faithful normal representation of $\M$ as a subalgebra of $B(L^2(G,H))$ \cite[Part I: Proposition 2.5]{vD}. For every $g\in G$, we may now define shift operators $\lambda_g \in B(L^2(G,H))$ by 
the prescription $\lambda_g(\xi)(s)=\xi(g^{-1}s)$ where $\xi\in L^2(G,H)$. One then defines the crossed product of $\M$ with the group action of $G$, to be the von Neumann algebra on $L^2(G,H)$ generated by $\pi_\alpha(\M)$ and the translation operators $\lambda_g$ 
($g\in G$). We will denote this von Neumann algebra by $\M\rtimes_\alpha G$. It is of interest to note that the unitary group of shift operators is strongly continuous, and realises the group action in the sense that $\lambda_g\pi(a)\lambda_g^*=\pi(\sigma_g(a))$ for any 
$g\in G$ and any $a\in \M$ \cite[Part I: Proposition 2.8 \& Lemma 2.9]{vD}. 

The dual group $\widehat{G}$ of $G$ moreover induces a dual action $\widehat{\alpha}$ on $\M\rtimes_\alpha G$ uniquely characterised by the prescriptions 
\begin{equation}\label{5:eqn dual} 
\widehat{\alpha}_\gamma(\pi(a))=\pi(a) \mbox{ and }\widehat{\alpha}_\gamma(\lambda_g)= \overline{\gamma(g)}\lambda_g \mbox{ for each } a\in\M\mbox{ and }g\in G.
\end{equation}

It is well-known that this dual action characterises the elements of $\pi_\alpha(\M)$ inside $\M\rtimes_\alpha G$ in the sense that as a subspace of $\M \rtimes_\alpha G$, the algebra $\pi_\alpha(\M)$ corresponds to the fixed points of $\widehat{\alpha}$ \cite[Lemma 3.6]{haag-DW2}. 
This fact may now be exploited to define an operator valued weight $\mathscr{W}_G$ from $\M \rtimes_\alpha G$ to $\pi_\alpha(\M)$ by means of the prescription $\mathscr{W}_G(a)=\int_{\widehat{G}}\widehat{\alpha}_\gamma(a)\,d\gamma$ where 
$a\in(\mathcal{M} \rtimes_\alpha G)_+$. Given any fns weight $\nu$ on $\M$, we now define the dual weight $\tnu$ of $\nu$ on  $\M\rtimes_\alpha G$ by means of the formula $\tnu=\widehat{\nu}\circ \widehat{\pi^{-1}} \circ \mathscr{W}_G$, where $\widehat{\nu}$ denotes the extension of $\nu$ to $\widehat{\M}_+$. This dual weight 
turns out to once again be an fns weight (see \cite{haag-DW2} for details).

We pass to giving some brief background to noncommutative $L^p$-spaces. The theory is particularly rich and elegant in the case where the reference weight $\nu$ on $\M$ is tracial (that is $\nu(a^*a)=\nu(aa^*)$ for each $a\in \M$). In this case it is common practice to write $\tau$ for the reference weight rather than $\nu$. In the tracial case the algebra $\M$ enlarges to the so-called algebra of $\tau$-measurable operators $\widetilde{\M}$ which is defined to be the set of all densely defined closed operators $f$ affiliated to $\M$ which satisfy the requirement that for any $\epsilon>0$ there exists a projection $p\in \M$ for which $p(H)\subset \mathrm{dom}(f)$ (thus $fp$ is then actually in $\M$ by the Closed Graph Theorem) and $\tau(\I-p)\leq \epsilon$. The enlargement $\widetilde{\M}$ turns out to be a complete metrisable algebra which is large enough to contain all the $L^p$-spaces. Given any $1\leq p<\infty$ the space $L^p(\M,\tau)$ is then simply defined to be $L^p(\M,\tau)=\{f\in \widetilde{\M}: \tau(|f|^p)<\infty\}$, with the norm on $L^p(\M,\tau)$ given by $\|f\|_p= \tau(|f|^p)^{1/p}$ (see \cite[Chapter I]{terp}). In fact such is the elegance of this setting, that the theory extends to even admit a very rich and detailed theory of rearrangement invariant Banach function spaces, the study of which has attracted a large number of highly talented and brilliant scholars from across the globe. See \cite{dP} for a recent survey of this topic.

The construction of $L^p$-spaces for general (possibly non-tracial) von Neumann algebras is quite a bit more complicated. Over the years several (ultimately equivalent) approaches have been developed, with the most widely used approach being that of Haagerup which is based on the theory of crossed products. Given a von Neumann algebra $\M$ equipped with an fns weight $\nu$, Haagerup's approach was to use the modular automorphism group $\sigma_t^\nu$ ($t\in \mathbb{R}$) engendered by $\nu$ to construct the crossed product $\M\rtimes_\nu\mathbb{R}$. In this particular case the group of automorphisms $(\theta_s)$ on $\M\rtimes_\nu\mathbb{R}$ realising the dual action takes the form
\begin{equation}\label{5:eqn dual-R} 
\theta_s(\pi(a))=\pi(a) \mbox{ and }\theta_s(\lambda_t)= e^{-ist}\lambda_t \mbox{ for each } a\in\M\mbox{ and }t,s\in \mathbb{R}.
\end{equation}\label{eq:dualmodgp}
With $\tnu$ denoting the dual weight on $\M\rtimes_\nu\mathbb{R}$, it is moreover possible to show that (see \cite[Theorem 4.7]{haag-OV1} and \cite[page 342]{haag-OV2}) in this case 
\begin{equation}\sigma_t^{\tnu}(f)=\lambda_tf\lambda_t^*\mbox{ with } \sigma_t^{\tnu}(\pi(a))=\pi(\sigma_t^\nu(a))\mbox{ for all }f\in(\M\rtimes_\nu\mathbb{R})\mbox{ and all }a\in \M.
\end{equation} 
By Stone's theorem there exists a nonsingular densely defined positive operator $h$ affiliated to $\M\rtimes_\nu\mathbb{R}$ for which we have that $\lambda_t=h^{it}$ for all $t\in \mathbb{R}$. It then follows from \cite[Theorem VIII.3.14]{Tak2} and its proof that 
$\M\rtimes_\nu\mathbb{R}$ is in fact semifinite, with the prescription $\tau(\cdot)=\tnu(h^{-1}\cdot)$ yielding an \emph{fns} trace on $\M\rtimes_\nu\mathbb{R}$ for which we have that $\tau\circ\theta_s=e^{-s}\tau$ for all $s\in \mathbb{R}$ \cite[Lemma 5.2]{haag-OV2}. 
Note that by construction $h=\frac{d\tnu}{d\tau}$ \cite{PT}. The above facts not only ensure that $\cM=\M\rtimes_\nu\mathbb{R}$ admits an enlargement to an algebra $\widetilde{\cM}$ of $\tau$-measurable operators, but also that each $\theta_s$ extends continuously to 
this enlarged algebra \cite[Proposition 4.7]{L-Expo}. For each $1\leq p<\infty$ the Haagerup $L^p$-space is then defined to be the space $L^p(\M)=\{a\in \widetilde{\cM}: \theta_s(a)=e^{-s/p}a\mbox{ for all }s\in\mathbb{R}\}$. The space $L^1(\M)$ admits a so-called trace 
functional $tr$, which can be used to realise the norm on $L^p(\M)$ by means of the prescription $\|a\|_p=tr(|a|^p)^{1/p}$ \cite[Definitions II.13 \& II.14]{terp}.
We shall where convenient denote the norm closure of a subset $\mathcal{U}$ of $L^p(\M)$ by $[\mathcal{U}]_p$.

The theory of standard forms of a von Neumann algebra, and in particular the Haagerup-Terp standard form will play an important role in our analysis (see \cite{haag-stdfm}). We therefore pause to review this theory. 

\begin{definition}\label{7:D defstdfrm} Given a von Neumann algebra $\M$ equipped with a faithful normal semifinite weight $\nu$, a quadruple $(\pi_0(\M), H_0, J, \mathscr{P})$ where $\pi_0$ is a faithful representation of $\M$ on the Hilbert space $H_0$, $J:H_0\to H_0$ anti-linear isometric involution, and $\mathscr{P}$ a self-dual cone of $H_0$, is said to be a standard form of $\M$ if the following conditions hold:
\begin{itemize}
\item $J\M J=\M'$ (the commutant of $\M$), 
\item $JzJ=z^*$ for all $z$ in the centre of $\M$,
\item $J\xi=\xi$ for all $\xi\in \mathscr{P}$,
\item $a(JaJ)\mathscr{P}\subset \mathscr{P}$ for all $a\in \M$.
\end{itemize} 
(Recall that when we say that $\mathscr{P}$ is a self-dual cone, we mean that $\xi\in \mathscr{P}$ if and only if $\langle\xi,\zeta\rangle\geq 0$ for all $\zeta\in \mathscr{P}$.)   
\end{definition} 

\begin{definition} We define left $\lambda$ and right $\rho$ actions of $\M$ on $L^2(\M)$, by the prescriptions 
$$\lambda(a)f=af \qquad f\in L^2(\M),$$
$$\rho(a)f=fa \qquad f\in L^2(\M).$$
\end{definition} 

\begin{theorem}[Haagerup-Terp standard form]\label{7:T stdform} \phantom{qwerty}
\begin{enumerate}
\item[(1)] $\lambda$ is faithful normal *-representation, and $\rho$ a faithful normal *-anti-represen\-tation of $\M$ on the Hilbert space $L^2(\M)$.
\item[(2)] For all $a\in \M$ we have that $J\lambda(a)J=\rho(a^*)$ and $J\rho(a)J=\lambda(a^*)$, where $J$ denotes the anti-linear isometric involution $f\mapsto f^*$ on $L^2(\M)$.
\item[(3)] The von Neumann algebras $\lambda(\M)$ and $\rho(\M)$ are commutants of each other, with $\rho(\M)=J\lambda(\M)J$.
\item[(4)] The quadruple $(\lambda(\M), L^2(\M), J, L^2_+(\M))$ is a standard form of $\M$ in the sense of Definition \ref{7:D defstdfrm}.
\end{enumerate}
In the above standard form the embedding $\n_\nu\to H_\nu: a\mapsto\eta(a)$ corresponds to the map $\mathfrak{j}^{(2)}\colon\n_\nu\to L^2(\M): a\mapsto [ah^{1/2}]$ where $h=\frac{d\widetilde{\nu}}{d\tau}$.
\end{theorem}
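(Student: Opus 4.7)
The plan is to work throughout inside the concrete realization $L^2(\M)=\{f\in\widetilde{\cM}:\theta_s(f)=e^{-s/2}f\}\subset\widetilde{\cM}$, so that $\lambda$ and $\rho$ are literally left and right multiplication by $\pi(\M)$. For (1), given $a\in\M$ and $f\in L^2(\M)$, the equivariance $\theta_s(af)=\pi(a)\theta_s(f)=e^{-s/2}af$ (using $\theta_s\circ\pi=\pi$ from \eqref{5:eqn dual-R} together with the multiplicative extension of $\theta_s$ to $\widetilde{\cM}$) places $af$ back in $L^2(\M)$, and the H\"older-type bound $\|af\|_2\le\|a\|\|f\|_2$ gives boundedness. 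Associativity and the identities $(af)^*=f^*a^*$, $(fa)^*=a^*f^*$ then make $\lambda$ a $*$-representation and $\rho$ a $*$-anti-representation. Faithfulness follows by testing against $h^{1/2}\in L^2(\M)$: since $h$ is nonsingular, $ah^{1/2}=0$ forces $a=0$. Normality reduces to $\sigma$-weak continuity of $\pi$. For (2), $Jf=f^*$ is an anti-linear isometric involution on $L^2(\M)$ (it preserves both the $\theta_s$-equivariance and the Haagerup $L^2$-norm), and then
\[
J\lambda(a)Jf=J(af^*)=(af^*)^*=fa^*=\rho(a^*)f,
\]
with the symmetric identity proved the same way.

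The only substantive input is (3). The inclusion $\rho(\M)\subseteq\lambda(\M)'$ is immediate from associativity in $\widetilde{\cM}$, which together with (2) gives $J\lambda(\M)J\subseteq\lambda(\M)'$. For the opposite inclusion I would transport the problem to the left Hilbert algebra of Remark \ref{5:R left Hilbert} via the isometry $\mathfrak{j}^{(2)}\colon a\mapsto ah^{1/2}$ from the last clause of the theorem: under this unitary, $\lambda$ becomes the left regular representation of the left Hilbert algebra $\eta(\n_\nu\cap\n_\nu^*)$, $J$ becomes its modular conjugation, and the commutant identification then reduces to Tomita-Takesaki theory (\cite[Theorems VII.2.5, VII.2.6]{Tak2}). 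Given (3), the four axioms of Definition \ref{7:D defstdfrm} are routine: the first is exactly (3); for $z\in Z(\M)$ one has $zf=fz$ in $\widetilde{\cM}$ by affiliation and spectral theory, so $\lambda(z)=\rho(z)$ and $J\lambda(z)J=\rho(z^*)=\lambda(z^*)$; $J\xi=\xi^*=\xi$ for $\xi\in L^2_+(\M)$; and $\lambda(a)J\lambda(a)J\xi=a\xi a^*\in L^2_+(\M)$ for any $\xi\in L^2_+(\M)$.

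For the final assertion, I would first prove that $\mathfrak{j}^{(2)}$ is isometric using $h=\frac{d\tnu}{d\tau}$ and the standard identities $\tnu\circ\pi=\nu$ and $\tnu(\cdot)=\tr(\cdot\,h)$ (\cite{PT}): for $a\in\n_\nu$,
\[
\|ah^{1/2}\|_2^2=\tr(h^{1/2}a^*ah^{1/2})=\tr(\pi(a^*a)h)=\tnu(\pi(a^*a))=\nu(a^*a)=\|\eta(a)\|_{H_\nu}^2.
\]
Intertwining with left multiplication is immediate from $\lambda(b)\mathfrak{j}^{(2)}(a)=bah^{1/2}=\mathfrak{j}^{(2)}(ba)$, and density of the range in $L^2(\M)$ follows from the standard density of $\n_\nu h^{1/2}$ in $L^2(\M)$. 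The main obstacle is the commutant step in (3): everything else is either a direct calculation inside $\widetilde{\cM}$ or a routine verification of axioms, but the bicommutant identification rests on the full strength of modular theory for $\nu$. Once the intertwining isometry $\mathfrak{j}^{(2)}$ is in hand, however, that machinery can be imported cleanly.
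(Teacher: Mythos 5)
The paper does not actually prove items (1)--(4): it imports the Haagerup--Terp standard form wholesale from \cite{haag-stdfm} and \cite{terp}, and the only thing it argues is the final identification, via the single computation $\nu(b_0^*b_1)=tr((h^{1/2}b_0^*)[b_1h^{1/2}])=tr([b_0h^{1/2}]^*[b_1h^{1/2}])$ from Remark \ref{7:R trwt}, which shows that $\mathfrak{j}^{(2)}$ preserves inner products and hence intertwines $\eta$ with the embedding of $\n_\nu$ into $L^2(\M)$. Your closing computation is the same argument in looser dress: the intermediate steps $tr(h^{1/2}a^*ah^{1/2})=tr(\pi(a^*a)h)=\tnu(\pi(a^*a))$ conflate the Haagerup trace functional $tr$ on $L^1(\M)$ with the canonical trace $\tau$ on the crossed product (with respect to which $h=\frac{d\tnu}{d\tau}$ is defined and for which $\tau(h)=\infty$ in general), so the clean route is exactly the appeal to Remark \ref{7:R trwt}. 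Your reproof of (1)--(4) is therefore extra work the paper deliberately avoids; the reduction of the commutant identity to Tomita--Takesaki theory via the unitary $\mathfrak{j}^{(2)}$ is essentially Terp's original argument and is a legitimate route, provided you also justify the density of $\mathfrak{j}^{(2)}(\n_\nu\cap\n_\nu^*)$ in $L^2(\M)$ (Lemma \ref{GL2-2.4+5} supplies this).

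Two slips in the extra material deserve flagging. First, your faithfulness argument tests $\lambda(a)$ against $h^{1/2}$, but $h^{1/2}\in L^2(\M)$ if and only if $tr(h)=\tnu(\I)=\nu(\I)<\infty$, i.e.\ only when $\nu$ is a finite weight --- precisely the case this paper is trying to move beyond. You should instead test against the vectors $[bh^{1/2}]$ with $b\in\n_\nu$: if $a[bh^{1/2}]=[(ab)h^{1/2}]=0$ for all such $b$, then $ab=0$ by nonsingularity of $h$, and $a=0$ by $\sigma$-weak density of $\n_\nu$ in $\M$. Second, the assertion that $zf=fz$ for central $z$ ``by affiliation and spectral theory'' is not a proof: $f\in L^2(\M)$ is affiliated with $\cM=\M\rtimes_\nu\bR$, not with $\M$, so centrality of $z$ in $\M$ does not by itself give commutation with $f$. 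The correct reason is that $\sigma_t^\nu$ fixes $\mathcal{Z}(\M)$ pointwise, so $\pi(z)$ commutes with both $\pi(\M)$ and the shifts $\lambda_t$, hence lies in $\mathcal{Z}(\cM)$ and therefore commutes with every $\tau$-measurable operator affiliated with $\cM$. Both gaps are repairable, but as written they fail exactly in the non-$\sigma$-finite generality the theorem is meant to serve.
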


The final claim above may be seen by noting that by Remark \ref{7:R trwt} we have that $\nu(b_0^*b_1)=tr((h^{1/2}b_0^*)[b_1h^{1/2}])=tr([b_0h^{1/2}]^*[b_1h^{1/2}])$ for all $b_0,b_1\in\n_\nu$.

\section{A review of conditional expectations on Haagerup $L^p$-spaces}\label{S3}

In our analysis we shall repeatedly have occasion to consider the action of faithful normal conditional expectations on Haagerup $L^p$-spaces. For this we shall need to be familiar with the useful and important technical results of Junge and Xu in this regard (see for example \cite[Proposition 2.3]{JX-BurkRosen}). However as pointed out earlier, most of their proofs are only valid for the case of $\sigma$-finite algebras. Whilst it is surely a known folk theorem that their results extend to the general case, we shall nevertheless pause to explicitly verify this contention. To achieve the same results in the general setting, the proof ideas of Junge and Xu require some non-trivial modifications at various points. In particular to prove these results in the general case, we shall need a deeper understanding of how in this case $\mathfrak{n}_\nu$ and $\mathfrak{m}_\nu$ embed into $L^p(\M)$. We therefore collate some technical results from \cite{GL2}, before surveying the necessary facts about expectations. In the following we shall consistently denote the minimal closure of a closable operator $f$ by $[f]$.

\begin{proposition}\label{GL2-2.2+3}
\begin{itemize}
\item Let $q\in [2,\infty)$. If $a\in \mathfrak{n}_\nu$ then $ah^{1/q}$ is closable with $[ah^{1/q}]$, $h^{1/q}a^{*}\in L^{q}(\M)$. \cite[Proposition 2.2]{GL2}
\item Let $a, b \in \mathfrak{n}_\nu$ and $r_{i},s_{i}\in [2,\infty)$ be given with $r_{1}^{-1}+s_{1}^{-1}=r_{2}^{-1}+s_{2}^{-1}$. 
Then (see \cite[Proposition 2.3]{GL2})
\begin{equation*}
([ah^{1/r_{1}}](h^{1/s_{1}}b^{*}))=([ah^{1/r_{2}}](h^{1/s_{2}}b^{*})).
\end{equation*}
\end{itemize}
\end{proposition}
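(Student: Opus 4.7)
Since the two assertions are taken verbatim from \cite[Propositions 2.2 \& 2.3]{GL2}, my proposal is to sketch the natural route one would follow. The governing principle throughout is that every relevant operator lives inside the algebra $\tR$ of $\tau$-measurable operators attached to $\R=\M\rtimes_\nu\bR$, where $h=\frac{d\tnu}{d\tau}$ is positive, self-adjoint, and $\tau$-measurable. Since $h^{1/q}\in\tR$ and $a\in\M\subset\tR$, the densely defined product $ah^{1/q}$ is automatically closable with $[ah^{1/q}]\in\tR$, and a parallel observation handles $h^{1/q}a^{*}$. The substantive content of the first bullet is therefore the $L^q$-membership.

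For this I would treat $q=2$ directly via Theorem \ref{7:T stdform} and the identification $\tr(h\,\cdot\,)=\tnu$ on $\M_{+}$, obtaining
\begin{equation*}
\|[ah^{1/2}]\|_{2}^{\,2}=\tr(h^{1/2}a^{*}ah^{1/2})=\tr(a^{*}ah)=\tnu(a^{*}a)=\nu(a^{*}a)<\infty,
\end{equation*}
which is finite since $a\in\n_\nu$ forces $a^{*}a\in\mathfrak{m}_\nu$. For $q>2$ the key tool is the Araki--Lieb--Thirring trace inequality applied inside $\tR$ with $r=q/2\ge 1$, $A=a^{*}a$, $B=h^{2/q}$, combined with the operator bound $(a^{*}a)^{q/2-1}\le\|a\|^{q-2}$:
\begin{equation*}
\tr\bigl((h^{1/q}a^{*}ah^{1/q})^{q/2}\bigr)\le\tr\bigl((a^{*}a)^{q/2}h\bigr)\le\|a\|^{q-2}\tr(a^{*}ah)=\|a\|^{q-2}\nu(a^{*}a).
\end{equation*}
This gives $\|[ah^{1/q}]\|_{q}^{\,q}\le\|a\|^{q-2}\nu(a^{*}a)<\infty$, and a mirror computation handles $h^{1/q}a^{*}$.

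For the second bullet, once the first is granted we have $[ah^{1/r_{i}}]\in L^{r_{i}}(\M)$ and $h^{1/s_{i}}b^{*}\in L^{s_{i}}(\M)$ for $i=1,2$, and by Hölder each pairing lies in the fixed Haagerup space $L^{p}(\M)$ with $p^{-1}=r_{i}^{-1}+s_{i}^{-1}=:\alpha$. Working inside $\tR$, where multiplication of $\tau$-measurable operators is jointly continuous and associative, the products $h^{1/r_{i}}\cdot h^{1/s_{i}}$ all equal $h^{\alpha}$, so both pairings $[ah^{1/r_{i}}](h^{1/s_{i}}b^{*})$ reduce to the single $\tau$-measurable object $[ah^{\alpha}b^{*}]\in L^{p}(\M)$. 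This is precisely the claimed equality.

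The main obstacle I anticipate lies in the $L^q$-estimate for $q>2$: verifying that the Araki--Lieb--Thirring inequality is legitimate in the $\tau$-measurable framework when $h$ is unbounded and only $a^{*}a\in\mathfrak{m}_\nu$ is assumed. Once that technical step is granted, the rest reduces to straightforward bookkeeping inside $\tR$.
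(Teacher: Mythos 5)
The paper offers no proof of this proposition at all: both bullets are imported verbatim from \cite[Propositions 2.2 \& 2.3]{GL2}, so your sketch has to stand on its own. It does not, because its foundational premise is false. You assert that $h^{1/q}$ lies in the algebra $\widetilde{\cM}$ of $\tau$-measurable operators attached to $\cM=\M\rtimes_\nu\bR$, and deduce that $ah^{1/q}$ is ``automatically closable'' with measurable closure for \emph{any} $a\in\M$. But $h$ satisfies $\theta_s(h^{1/q})=e^{-s/q}h^{1/q}$, so if $h^{1/q}$ were $\tau$-measurable it would by definition belong to $L^q(\M)$, forcing $tr(h)=\tnu(\I)=\nu(\I)<\infty$; this fails for every genuine weight, which is exactly the case this paper exists to handle. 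Equivalently, your ``automatic'' argument applied to $a=\I$ would prove $h^{1/q}\in L^q(\M)$ for every von Neumann algebra, which is absurd. The hypothesis $a\in\mathfrak{n}_\nu$ is therefore doing real work: establishing that $ah^{1/q}$ is premeasurable at all is the substantive content of the first bullet, not a preliminary triviality. The same defect undermines your second bullet: the re-association $[ah^{1/r_{i}}](h^{1/s_{i}}b^{*})=[ah^{\alpha}b^{*}]$ cannot be justified by ``associativity and joint continuity of multiplication in $\widetilde{\cM}$,'' because the inner factors $h^{1/r_i}$, $h^{1/s_i}$ do not belong to that algebra. One must instead argue through spectral truncations $\chi_{[1/n,\,n]}(h)$, or through the analytic-element identity $[ah^{z}]=h^{z}\sigma_{iz}(a)$ of Lemma \ref{GL2-2.4+5} together with a density argument -- and that passage to the limit is the actual proof, not bookkeeping.

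What does survive is the arithmetic. The $q=2$ computation $\|[ah^{1/2}]\|_2^2=\nu(a^*a)$ is correct (cf.\ Remark \ref{7:R trwt}), and the target bound $\|[ah^{1/q}]\|_q^q\le\|a\|^{q-2}\nu(a^*a)$ is indeed the estimate of \cite[Proposition 2.2]{GL2}; the Araki--Lieb--Thirring-flavoured inequality is the right heuristic for it, and such trace inequalities do extend to positive operators merely affiliated with a semifinite algebra by spectral truncation. So the obstacle you flag at the end (legitimacy of the trace inequality for unbounded $h$) is real but secondary; the gap sits one step earlier, in treating $h^{1/q}$ as a measurable operator in the first place.
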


\begin{definition}\label{7:D defn j-embed} For $q\in [2,\infty)$ define the map 
\begin{equation*}
\mathfrak{j}^{(q)}: \mathfrak{n}_\nu\ni a\mapsto \left[ah^{1/q}\right]\in L^q(\M).
\end{equation*}

\medskip

For $p\in [1,\infty)$, define the map 
\begin{equation*}
\mathfrak{i}^{(p)}: \mathfrak{p}_\nu\ni a\mapsto \mathfrak{j}^{(2p)}(a^{1/2})^{*} \mathfrak{j}^{(2p)}(a^{1/2})\in L^p(\M)
\end{equation*}
\end{definition}

\begin{proposition}\label{7:P j-embed}
\begin{itemize}
\item For $q\in [2,\infty)$, each of the maps $\mathfrak{j}^{(q)}:\mathfrak{n}_\nu\to L^q(\M):a\mapsto [ah^{1/q}]$ is linear and injective. \cite[Proposition 2.7]{GL2}
\item For $p\in[1,\infty)$, we may define a map $\mathfrak{i}^{(p)}:\mathfrak{m}_\nu\to L^p(\M)$ by setting $\mathfrak{i}^{(p)}(b^*a)= \mathfrak{j}^{(2p)}(b)^*\mathfrak{j}^{(2p)}(a)$ and extending by linearity to all of $\mathfrak{m}_\nu$. This map is a well-defined linear, injective and positivity-preserving map. \cite[Proposition 2.8 \& Lemma 2.9]{GL2}
\end{itemize}
\end{proposition}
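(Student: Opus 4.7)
The plan is to establish part (1) directly and then leverage it for part (2) using the algebraic structure of $\widetilde{\cM}$. For (1), linearity of $\mathfrak{j}^{(q)}$ is routine: for $a,b\in\mathfrak{n}_\nu$ and scalars $\alpha,\beta$, the operators $(\alpha a+\beta b)h^{1/q}$ and $\alpha ah^{1/q}+\beta bh^{1/q}$ coincide on the common core $\mathrm{dom}(h^{1/q})$, so their minimal closures agree. For injectivity I would exploit nonsingularity of $h$: if $[ah^{1/q}]=0$ then $ah^{1/q}\xi=0$ on $\mathrm{dom}(h^{1/q})$, and since $h^{1/q}$ has dense range (its spectral support being $(0,\infty)$), boundedness of $a$ forces $a=0$.

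For (2) the decisive step is well-definedness. My plan is to recast
\[
\mathfrak{j}^{(2p)}(b)^{*}\mathfrak{j}^{(2p)}(a) \;=\; \bigl[h^{1/(2p)}\,(b^{*}a)\,h^{1/(2p)}\bigr],
\]
an expression that manifestly depends only on the bounded operator $b^{*}a\in\mathfrak{m}_\nu$ and not on the chosen factorisation. To obtain this, I first use the first bullet of Proposition \ref{GL2-2.2+3} to identify $\mathfrak{j}^{(2p)}(b)^{*}$ with $h^{1/(2p)}b^{*}$, and then compose with $[ah^{1/(2p)}]$ on the natural dense common domain, absorbing the bounded inner factor $b^{*}a$ to produce the indicated closure. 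That the output lies in $L^{p}(\M)$ follows from $\theta_{s}(h^{1/(2p)})=e^{-s/(2p)}h^{1/(2p)}$ combined with $\theta_{s}$-invariance of elements of $\M$; linear extension to $\mathfrak{m}_\nu=\mathrm{span}(\mathfrak{p}_\nu)$ is then automatic.

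Injectivity and positivity fall out cleanly. If $\mathfrak{i}^{(p)}(x)=0$ for some $x\in\mathfrak{m}_\nu$, then $[h^{1/(2p)}xh^{1/(2p)}]=0$ in the topological $*$-algebra $\widetilde{\cM}$, and multiplying both sides by $h^{-1/(2p)}$ (legitimate since $h$ is nonsingular and hence invertible in $\widetilde{\cM}$) recovers $x=0$. For positivity, any $x\in\mathfrak{p}_\nu$ satisfies $\nu((x^{1/2})^{*}x^{1/2})=\nu(x)<\infty$, so $x^{1/2}\in\mathfrak{n}_\nu$ and $\mathfrak{i}^{(p)}(x)=\mathfrak{j}^{(2p)}(x^{1/2})^{*}\mathfrak{j}^{(2p)}(x^{1/2})\geq 0$, which extends by linearity to positivity-preservation on $\mathfrak{m}_\nu$.

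The main obstacle is the well-definedness step: a priori the same $x\in\mathfrak{m}_\nu$ admits many incompatible factorisations $x=b^{*}a$ with $a,b\in\mathfrak{n}_\nu$, and the intermediate product $\mathfrak{j}^{(2p)}(b)^{*}\mathfrak{j}^{(2p)}(a)$ is a composition of two unbounded-closure operators whose coincidence across factorisations is not evident. Collapsing the composition to a single sandwich closure $[h^{1/(2p)}xh^{1/(2p)}]$ sidesteps the ambiguity, but the delicate point is verifying that the composed operator really equals the minimal closure of the sandwich on a dense common domain, rather than agreeing only with a proper restriction of it.
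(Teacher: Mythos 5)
The paper offers no proof of this proposition at all: both bullets are imported verbatim from \cite{GL2} (Propositions 2.7, 2.8 and Lemma 2.9 there), so there is no in-paper argument to measure you against. Judged on its own terms, your sketch of part (1) is fine, and your overall strategy for part (2) — reduce $\mathfrak{j}^{(2p)}(b)^{*}\mathfrak{j}^{(2p)}(a)$ to the single ``sandwich'' $[h^{1/(2p)}b^{*}ah^{1/(2p)}]$ so that only $b^{*}a$ survives — is indeed the standard route. But the proof has a genuine gap exactly where you flag one. The identity $\mathfrak{j}^{(2p)}(b)^{*}\mathfrak{j}^{(2p)}(a)=[h^{1/(2p)}b^{*}ah^{1/(2p)}]$ is not a formal manipulation: the product on the left is the closure of a composition whose natural domain consists of those $\xi$ with $b^{*}ah^{1/(2p)}\xi\in\mathrm{dom}(h^{1/(2p)})$, and there is no a priori reason this contains a core for the sandwich, nor that the sandwich is even densely defined and closable with $\tau$-measurable closure. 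Establishing this (typically via regularized/analytic approximations of $h$ and the fact that a $\tau$-measurable operator admits no proper $\tau$-measurable extension) is the entire content of \cite[Proposition 2.8 \& Lemma 2.9]{GL2}; asserting the identity and then observing that well-definedness follows is circular. The same issue recurs in the linear extension to $\mathfrak{m}_\nu$: one must know that a vanishing linear combination $\sum_i\lambda_i b_i^{*}a_i=0$ forces the corresponding combination of closures to vanish, which again rests on the unproved identification.

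A second, concrete error: your injectivity argument multiplies by $h^{-1/(2p)}$ ``in $\widetilde{\cM}$,'' but when $\nu$ is a genuine weight neither $h$ nor $h^{-1}$ is $\tau$-measurable (already for $\M=\mathbb{C}$ with the trivial state one has $\tau(\chi_{(1,\infty)}(h))=\infty$), so $h$ is not an invertible element of the algebra $\widetilde{\cM}$ and this step is illegitimate as written. The nonsingularity of $h$ can still be exploited — from $h^{1/(2p)}\bigl(xh^{1/(2p)}\xi\bigr)=0$ one gets $xh^{1/(2p)}\xi=0$ and then $x=0$ by density of the range of $h^{1/(2p)}$ — but only after one knows the relevant domain is dense (or, as in \cite{GL2}, by pairing against $L^{q}$ via the trace functional). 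So: right architecture, but the two load-bearing verifications are missing or faulty, and the proposal should be regarded as a plan rather than a proof.
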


\begin{remark}\label{7:R trwt} It is worth noting that for any $x\in \mathfrak{m}_\nu$ we have that $\nu(x)=tr(\mathfrak{i}^{(1)}(x))$. See \cite[Proposition 2.13(a)]{GL2}. More generally we will for any $x, y \in \mathfrak{m}_\nu$ have that $tr(\mathfrak{i}^{(q)}(x)\mathfrak{i}^{(p)}(y))=tr(x\mathfrak{i}^{(1)}(y))$ where $p,q \geq 1$ are such that $1=\frac{1}{p}+\frac{1}{q}$. See \cite[Proposition 2.10]{GL2}.
\end{remark}

\begin{lemma}[{\cite[Theorem 2.4 \& Lemma 2.5]{GL2}}]\label{GL2-2.4+5}
Let $\mathfrak{n}_\nu^\infty$ be the collection of all analytic elements $a$ of $\mathfrak{n}_\nu\cap\mathfrak{n}_\nu^*$ for which $\sigma_w^\nu(a)\in\mathfrak{n}_\nu\cap\mathfrak{n}_\nu^*$ for all $w\in\bC$. Then $\mathfrak{n}_\nu^\infty$ and $\mathfrak{m}_\nu^\infty=\mathrm{span}\{b^*a\colon a,b \in \mathfrak{n}_\nu^\infty\}$ are $\sigma$-strongly dense in $\M$. Also for each $q\geq 2$, $\{[ah^{1/q}]: a\in \mathfrak{n}_\nu^\infty\}$ is dense in $L^q(\M)$. Moreover given $z\in\mathbb{C}$ with $0\leq \re(z) \leq 1/2$, we have that 
$$[a h^{z}] = h^{z}\sigma _{iz}(a) \quad \mbox{ for all }a\in \mathfrak{n}_{\infty}$$where $h=\frac{d\widetilde{\nu}}{d\tau}$.
\end{lemma}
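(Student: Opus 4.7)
The plan is to handle the three assertions in turn, with Gaussian regularisation along $\sigma^\nu$ as the common engine.

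First, for the $\sigma$-strong density of $\mathfrak{n}_\nu^\infty$ in $\M$ I would start from the standard fact that $\mathfrak{n}_\nu\cap\mathfrak{n}_\nu^*$ is $\sigma$-strong$^*$ dense in $\M$ (a consequence of the $\sigma$-weak density of $\mathfrak{m}_\nu$ together with polar decomposition and Kaplansky). Given $a\in\mathfrak{n}_\nu\cap\mathfrak{n}_\nu^*$ I would set
\begin{equation*}
a_n:=\sqrt{n/\pi}\int_\bR e^{-nt^2}\sigma^\nu_t(a)\,dt,
\end{equation*}
interpreted as a $\sigma$-weak integral. A contour-shift then gives $\sigma_w^\nu(a_n)=\sqrt{n/\pi}\int e^{-n(t-w)^2}\sigma_t^\nu(a)\,dt$ for every $w\in\bC$, so $a_n$ is entire-analytic. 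Because $\nu\circ\sigma_t^\nu=\nu$, the automorphisms $\sigma_t^\nu$ preserve $\mathfrak{n}_\nu\cap\mathfrak{n}_\nu^*$, and the Gaussian factor makes $\eta(\sigma_w^\nu(a_n))$ a convergent Bochner integral in $H_\nu$. Hence $\sigma_w^\nu(a_n)\in\mathfrak{n}_\nu\cap\mathfrak{n}_\nu^*$ for every $w$, so $a_n\in\mathfrak{n}_\nu^\infty$, and $a_n\to a$ $\sigma$-strongly by the usual approximate-identity estimate. Density of $\mathfrak{m}_\nu^\infty$ then follows from $(\mathfrak{n}_\nu^\infty)^*\mathfrak{n}_\nu^\infty\subseteq\mathfrak{m}_\nu^\infty$ plus joint $\sigma$-strong continuity of multiplication on norm-bounded sets.

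For density in $L^q(\M)$ with $q\geq 2$, I would first invoke the fact (essentially in Haagerup's original construction) that $\mathfrak{j}^{(q)}(\mathfrak{n}_\nu)$ is norm-dense in $L^q(\M)$, obtained by writing any $f\in L^q(\M)$ via the polar decomposition in $\widetilde{\cM}$ and spectrally truncating $h$. It then suffices to approximate each $\mathfrak{j}^{(q)}(a)$ with $a\in\mathfrak{n}_\nu\cap\mathfrak{n}_\nu^*$ in $L^q$-norm by $\mathfrak{j}^{(q)}(a_n)$. Using the KMS identity of the next paragraph to commute $h^{1/q}$ past $\sigma_t^\nu(a)$ reduces this to Gaussian convergence of $L^q$-valued integrals, which is a direct dominated-convergence argument.

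The main difficulty is the identity $[ah^z]=h^z\sigma_{iz}(a)$ on the strip $0\leq \re(z)\leq 1/2$. On the imaginary axis it is immediate from $\lambda_t=h^{it}$ and the dual-weight relation \eqref{eq:dualmodgp}: the equality $h^{it}\pi(a)h^{-it}=\sigma_t^{\tnu}(\pi(a))=\pi(\sigma_t^\nu(a))$ rearranges to $ah^{-it}=h^{-it}\sigma_t^\nu(a)$, which is the claim for purely imaginary $z$. To extend into the strip I would use the spectral projections $e_k=\chi_{[1/k,k]}(h)$ to replace $h^z$ by the bounded analytic family $h^z e_k$, observe that $z\mapsto\sigma_{iz}(a)$ is an $\M$-valued entire function (since $a\in\mathfrak{n}_\nu^\infty$), and apply Phragm\'en--Lindel\"of to matrix elements of the difference to get equality on the strip. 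Letting $k\to\infty$ then identifies $h^z\sigma_{iz}(a)$ with the minimal closure $[ah^z]$ in $\widetilde{\cM}$. The genuine obstacle lies precisely in this last step: one must verify that $ah^z$ is closable and that its closure is no larger than $h^z\sigma_{iz}(a)$. In the $\sigma$-finite setting this is routine via a cyclic separating vector, but in the general case one has to argue directly inside $\widetilde{\cM}$, identifying a core for $h^z$ (through functional calculus) sitting inside the algebraic domain of $ah^z$ and invoking $\tau$-measurability to conclude equality.
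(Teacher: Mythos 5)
First, a framing point: the paper does not prove this lemma at all --- it is imported verbatim from [GL2, Theorem 2.4 and Lemma 2.5] --- so there is no in-paper argument to measure you against. That said, your parts (1) and (3) are sound in outline and match the way the paper handles the analogous claims it \emph{does} prove: the Gaussian regularisation $a_n=\sqrt{n/\pi}\int e^{-nt^2}\sigma_t^\nu(a)\,dt$ together with the Cauchy--Schwarz/Riemann-sum estimate $\nu(|\sigma_w^\nu(a_n)|^2)\le e^{2n(\im w)^2}\nu(|a|^2)$ is exactly the mechanism of Proposition \ref{analytic-n(A)} and of the outline of Proposition \ref{7:P Terp2}, and your route to the commutation rule (imaginary axis via $\lambda_t=h^{it}$ and the dual-weight relations, analytic continuation with spectral cut-offs, then identification of closures inside $\widetilde{\cM}$ using that both candidates are $\tau$-measurable) is the standard one.

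The genuine gap is in part (2). You reduce the density of $\{[ah^{1/q}]:a\in\mathfrak{n}_\nu^\infty\}$ to the prior density of $\mathfrak{j}^{(q)}(\mathfrak{n}_\nu)$ in $L^q(\M)$, and you dispose of the latter as ``essentially in Haagerup's original construction,'' obtained by polar decomposition and spectral truncation of $h$. For $q=2$ this is indeed immediate (it is the GNS density built into the standard form), but for $q>2$ it is precisely the hard content of the cited result, and the truncation device fails: with $e_k=\chi_{[1/k,k]}(h)$ one has $\theta_s(e_kh^{-1/q})=\chi_{[e^s/k,\,e^sk]}(h)\,e^{s/q}h^{-1/q}$, so $fe_kh^{-1/q}$ is not $\theta$-invariant and therefore does not lie in $\M$, let alone in $\mathfrak{n}_\nu$; no factorisation $f=(fe_kh^{-1/q})h^{1/q}$ with an admissible first factor is produced. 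A correct argument must go through Terp's interpolation description of $L^q$ or a duality argument: for $g\in L^{q'}$ annihilating all $[ah^{1/q}]$, the left-ideal property of $\mathfrak{n}_\nu$ and $tr$-duality give $[ah^{1/q}]g=0$ for every $a\in\mathfrak{n}_\nu$, and one then inserts Terp's net $(f_\lambda)$ together with the commutation rule $[f_\lambda h^{1/q}]=h^{1/q}\sigma_{i/q}(f_\lambda)$ to force $g=0$ --- which makes the $L^q$-density logically \emph{downstream} of your part (3), not an independently available input. As written, part (2) assumes a statement essentially equivalent in difficulty to the one it is meant to establish.
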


The following Proposition now easily follows from the above lemma and the manner in which the embeddings $\mathfrak{i}^{(p)}$ and $\mathfrak{j}^{(q)}$ have been defined. 

\begin{proposition}[Proposition 2.11, \cite{GL2}]\label{P density1}
Let $\mathfrak{n}_\nu^\infty$  and $\mathfrak{m}_\nu^\infty$ be as before. Then the following holds:
\begin{enumerate}
\item[(1)] For any $q\in [2,\infty)$, $\{\mathfrak{j}^{(q)}(a): a\in \mathfrak{n}_\nu^\infty\}$ is dense in $L^{q}(\M)$.
\item[(2)] For any $p\in [1,\infty)$, $\{\mathfrak{i}^{(p)}(a): a\in \mathfrak{m}_\nu^\infty\}$ is dense in $L^{p}(\M)$.
\end{enumerate}
\end{proposition}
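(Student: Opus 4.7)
My plan is to treat the two parts separately, since part (1) is essentially immediate from the preceding material while part (2) requires a short factorisation argument.

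For part (1), observe that by the definition of $\mathfrak{j}^{(q)}$ the set in question is literally $\{[ah^{1/q}]:a\in\mathfrak{n}_\nu^\infty\}$, whose density in $L^q(\M)$ is exactly the content of the final claim of Lemma \ref{GL2-2.4+5}. So there is nothing to prove beyond quoting that lemma.

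For part (2), I would argue by polar decomposition together with Hölder's inequality (valid for Haagerup $L^p$-spaces). Fix $\xi\in L^p(\M)$ and write $\xi=u|\xi|$ with $u$ a partial isometry in $\M$. Then $|\xi|^{1/2}\in L^{2p}(\M)$ and $u|\xi|^{1/2}\in L^{2p}(\M)$, so $\xi=(u|\xi|^{1/2})\cdot |\xi|^{1/2}$ exhibits $\xi$ as a product of two elements of $L^{2p}(\M)$. By part (1) (applied with $q=2p\in[2,\infty)$) there exist sequences $(a_n),(b_n)\subset\mathfrak{n}_\nu^\infty$ with
\begin{equation*}
\mathfrak{j}^{(2p)}(a_n)\to |\xi|^{1/2},\qquad \mathfrak{j}^{(2p)}(b_n)\to (u|\xi|^{1/2})^{*}=|\xi|^{1/2}u^{*}
\end{equation*}
in $L^{2p}(\M)$; for the second approximation one uses that the involution is an isometry on $L^{2p}(\M)$. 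Taking adjoints yields $\mathfrak{j}^{(2p)}(b_n)^{*}\to u|\xi|^{1/2}$ in $L^{2p}(\M)$.

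Now Hölder's inequality gives
\begin{equation*}
\bigl\|\mathfrak{j}^{(2p)}(b_n)^{*}\mathfrak{j}^{(2p)}(a_n)-\xi\bigr\|_{p}\to 0,
\end{equation*}
since product is jointly continuous $L^{2p}\times L^{2p}\to L^p$ on bounded sets. But by the very definition of $\mathfrak{i}^{(p)}$ in Proposition \ref{7:P j-embed} we have
\begin{equation*}
\mathfrak{j}^{(2p)}(b_n)^{*}\mathfrak{j}^{(2p)}(a_n)=\mathfrak{i}^{(p)}(b_n^{*}a_n),\qquad b_n^{*}a_n\in\mathfrak{m}_\nu^\infty,
\end{equation*}
so $\xi$ lies in the $L^p$-closure of $\{\mathfrak{i}^{(p)}(x):x\in\mathfrak{m}_\nu^\infty\}$. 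Since $\xi$ was arbitrary, this proves the density claim.

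The main point where one must be careful is the passage from the adjoint approximation $\mathfrak{j}^{(2p)}(b_n)\to|\xi|^{1/2}u^*$ to the product approximation: one needs the involution to act isometrically on $L^{2p}(\M)$ and one needs Hölder's inequality with the exponents $\frac{1}{2p}+\frac{1}{2p}=\frac{1}{p}$ so that the two factors can be approximated independently. Neither is deep, but both are the only non-formal ingredients beyond invoking part (1) and the definition of $\mathfrak{i}^{(p)}$.
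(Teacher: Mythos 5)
Your proposal is correct and follows essentially the route the paper intends: part (1) is a direct restatement of the density claim in Lemma \ref{GL2-2.4+5}, and part (2) is the standard factorisation $\xi=(u|\xi|^{1/2})\cdot|\xi|^{1/2}$ combined with part (1), H\"older's inequality, and the defining identity $\mathfrak{i}^{(p)}(b^*a)=\mathfrak{j}^{(2p)}(b)^*\mathfrak{j}^{(2p)}(a)$. The paper merely asserts that the result ``easily follows'' from the lemma and the definitions of the embeddings (deferring to \cite{GL2}), and your argument is exactly the fleshing-out of that assertion.
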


We shall also have need of the following fact:

\begin{proposition}\label{analytic-n(A)}
Let $\mathcal{B}$ be a $\sigma$-weakly closed unital subalgebra of $\M$ for which we have that $\sigma_t^\nu(\mathcal{B})=\mathcal{B}$. Writing 
$\mathfrak{n}_\nu(\mathcal{B})$ for $\mathfrak{n}_\nu\cap\mathcal{B}$, we will have that $\mathfrak{n}_\nu^\infty(\mathcal{B})$ is $\sigma$-weakly 
dense in $\mathcal{B}$ whenever $\mathfrak{n}_\nu(\mathcal{B})\cap \mathfrak{n}_\nu(\mathcal{B}^*)^*$ is $\sigma$-weakly dense. 
\end{proposition}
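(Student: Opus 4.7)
The strategy is to replicate the standard Gaussian smoothing construction used to prove Lemma \ref{GL2-2.4+5}, but to carry it out inside the invariant subalgebra $\mathcal{B}$. Given $a \in \mathfrak{n}_\nu(\mathcal{B})\cap\mathfrak{n}_\nu(\mathcal{B}^*)^*$, I would form
\[
a_n \;=\; \sqrt{n/\pi}\int_\bR e^{-nt^2}\sigma_t^\nu(a)\,dt \qquad (n\in\bN),
\]
interpreted as a Bochner integral in $\M$, which converges in norm because $t\mapsto \sigma_t^\nu(a)$ is norm-continuous and bounded with the Gaussian supplying integrable decay. The invariance $\sigma_t^\nu(\mathcal{B})\subseteq \mathcal{B}$ together with the norm-closedness of $\mathcal{B}$ gives $a_n\in\mathcal{B}$ immediately, and the standard computation yields $a_n\to a$ $\sigma$-strongly$^*$, and hence $\sigma$-weakly.

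It therefore remains to show that $a_n \in \mathfrak{n}_\nu^\infty(\mathcal{B})$, i.e., that $a_n$ is entire-analytic and that $\sigma_w^\nu(a_n)$ lies in $\mathfrak{n}_\nu(\mathcal{B})\cap\mathfrak{n}_\nu(\mathcal{B}^*)^*$ for every $w\in\bC$. Entire-analyticity, together with the formula
\[
\sigma_w^\nu(a_n) \;=\; \sqrt{n/\pi}\int_\bR e^{-n(t-w)^2}\sigma_t^\nu(a)\,dt,
\]
is routine, and the same Bochner-integral argument used for $a_n$ itself will show $\sigma_w^\nu(a_n)\in\mathcal{B}$. The real content lies in checking that $\sigma_w^\nu(a_n)\in\mathfrak{n}_\nu\cap\mathfrak{n}_\nu^*$. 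For this I would transfer the construction to the GNS Hilbert space $H_\nu$: since $\|\eta(\sigma_t^\nu(a))\|_{H_\nu}^2 = \nu(\sigma_t^\nu(a^*a)) = \nu(a^*a)$ is independent of $t$, the vector-valued integral
\[
\xi_w \;:=\; \sqrt{n/\pi}\int_\bR e^{-n(t-w)^2}\eta(\sigma_t^\nu(a))\,dt
\]
converges as a Bochner integral in $H_\nu$. Combining this with the closedness properties of the GNS map reviewed in Remark \ref{5:R left Hilbert} (equivalently, the fact that $\eta(a_n)$ lies in the domain of every power of the modular operator $\Delta$, with $\Delta^{iw}\eta(a_n)=\eta(\sigma_w^\nu(a_n))$) will let me identify $\xi_w$ with $\eta(\sigma_w^\nu(a_n))$, yielding $\sigma_w^\nu(a_n)\in\mathfrak{n}_\nu$. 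Running the same argument with $a$ replaced by $a^*$ (which lies in $\mathfrak{n}_\nu$ since $\nu(aa^*)<\infty$) and $w$ by $\bar w$, together with the identity $\sigma_w^\nu(a_n)^* = \sigma_{\bar w}^\nu(a_n^*)$, delivers $\sigma_w^\nu(a_n)\in\mathfrak{n}_\nu^*$ as well.

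Combining the pieces, $\sigma_w^\nu(a_n)\in\mathfrak{n}_\nu(\mathcal{B})\cap\mathfrak{n}_\nu(\mathcal{B}^*)^*$ for every $w\in\bC$, so $a_n\in\mathfrak{n}_\nu^\infty(\mathcal{B})$. Since $a_n\to a$ $\sigma$-weakly and $\mathfrak{n}_\nu(\mathcal{B})\cap\mathfrak{n}_\nu(\mathcal{B}^*)^*$ is $\sigma$-weakly dense in $\mathcal{B}$ by hypothesis, the $\sigma$-weak density of $\mathfrak{n}_\nu^\infty(\mathcal{B})$ in $\mathcal{B}$ follows. I expect the main obstacle to be the identification $\xi_w=\eta(\sigma_w^\nu(a_n))$ in the non-$\sigma$-finite setting: the absence of a cyclic separating vector forces the argument to lean on the closedness/modular-operator machinery recorded in Remark \ref{5:R left Hilbert} rather than on the direct vectorial manipulations available when $\nu$ is a state.
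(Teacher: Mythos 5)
Your proposal is essentially the paper's proof: the same Gaussian regularization $a_n=\sqrt{n/\pi}\int e^{-nt^2}\sigma_t^\nu(a)\,dt$ applied to $a\in\mathfrak{n}_\nu(\mathcal{B})\cap\mathfrak{n}_\nu(\mathcal{B}^*)^*$, with \cite[Proposition 2.5.22]{BR} supplying analyticity and $\sigma$-weak convergence, and the whole weight of the argument resting on $\sigma_w^\nu(a_n)\in\mathfrak{n}_\nu\cap\mathfrak{n}_\nu^*$. Your route to that last point --- Bochner-integrating $e^{-n(t-w)^2}\eta(\sigma_t^\nu(a))=e^{-n(t-w)^2}\Delta^{it}\eta(a)$ in $H_\nu$ and identifying the result with $\eta(\sigma_w^\nu(a_n))$ via the modular machinery --- is exactly the ``quick way'' the paper takes by appealing to left Hilbert algebra technology (\cite[Corollary, p.\,272]{SZ}); the paper additionally sketches a self-contained alternative via Riemann sums of the truncated integral, using the Cauchy--Schwarz inequality for $\nu$ on $\mathfrak{n}_\nu$ and $\sigma$-weak lower semicontinuity of $x\mapsto\nu(x^*x)$ to get the explicit bound $\nu(|\sigma_z^\nu(a_n)|^2)\leq e^{2n(\im z)^2}\nu(|a|^2)$, which avoids having to justify the identification $\xi_w=\eta(\sigma_w^\nu(a_n))$ directly. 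One repair is needed in your first step: $t\mapsto\sigma_t^\nu(a)$ is in general only point-$\sigma$-weakly continuous, not norm-continuous, so the integral defining $a_n$ does not converge as a norm-valued Bochner integral; it must be interpreted $\sigma$-weakly (as in \cite{BR}), and $a_n\in\mathcal{B}$ then follows from the $\sigma$-weak closedness of $\mathcal{B}$ (being a $\sigma$-weak limit of Riemann sums lying in $\mathcal{B}$), not from norm closedness. The Hilbert-space integral for $\xi_w$ is unaffected, since $t\mapsto\Delta^{it}\eta(a)$ genuinely is norm-continuous in $H_\nu$.
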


\begin{proof} For any $a\in \mathfrak{n}_\nu(\mathcal{B})\cap \mathfrak{n}_\nu(\mathcal{B}^*)^*$ we set 
$$a_n=\sqrt{\frac{n}{\pi}}\int_{-\infty}^\infty\sigma_t^\nu(a)e^{-n t^2}\,dt.$$The fact that the modular group preserves both $\mathcal{B}$ and $\nu$, 
ensures that we will have that $a_n\in \mathfrak{n}_\nu(\mathcal{B})\cap \mathfrak{n}_\nu(\mathcal{B}^*)^*$. It now follows from 
\cite[Proposition 2.5.22]{BR} that $(a_n)$ converges $\sigma$-weakly to $a$ and is analytic. It remains to show that 
$(\sigma^\nu_z(a_n))\subset \mathfrak{n}_\nu(\mathcal{B})\cap \mathfrak{n}_\nu(\mathcal{B}^*)^*$ for each $z\in\mathbb{C}$.  

It is clear from the proof of \cite[Proposition 2.5.22]{BR} that the values $\sigma_z^\nu(a_n)$ are given by the formula 
$\sigma_z^\nu(a_n)=\sqrt{\frac{n}{\pi}}\int_{-\infty}^\infty\sigma_t^\nu(a)e^{-n (t-z)^2}\,dt$. This then enables us to conclude 
that $$\|\sigma_z^\nu(a_n)\|\leq\sqrt{\frac{n}{\pi}}\int_{-\infty}^\infty\|\sigma_t^\nu(a)\||e^{-n(t-z)^2}|\,dt$$ 
$$\leq\|a\|\sqrt{\frac{n}{\pi}}\int_{-\infty}^\infty|e^{-n(t-z)^2}|\,dt= \|a\|e^{n(\im(z))^2}.$$

The quick way to see that $\sigma_z^\nu(a_n)\in \mathfrak{n}_\nu\cap \mathfrak{n}_\nu^*$ for each $z\in \mathbb{C}$, is to appeal to the technology of left Hilbert algebras. 
The connection of $\mathfrak{n}_\nu\cap\mathfrak{n}_\nu^*$ to left Hilbert algebras may be found in for example Theorem VII.2.6 of \cite{Tak2}. The fact that 
$\sigma_z^\nu(a_n)\in \mathfrak{n}_\nu\cap \mathfrak{n}_\nu^*$ for each $z\in \mathbb{C}$, then follows from for example \cite[Corollary, p272]{SZ}. The verification of this fact 
is also embedded in the proof of \cite[Theorem VI.2.1(i)]{Tak2} (see p 25 of that reference). For the sake of the reader we provide the skeleton of a direct proof of this fact. 
Firstly let $R>0$ be given and let $$S_N=\sqrt{\frac{n}{\pi}} \sum_{k=1}^N e^{-n (\widetilde{t}_k-z)^2}\sigma_{\widetilde{t}_k}^\nu(a)\Delta t_k$$be a Riemann-sum of 
$\sqrt{\frac{n}{\pi}}\int_{-R}^R\sigma_t^\nu(a)e^{-n(t-z)^2}\,dt$. Next recall that in its action on $\n_\nu$, $\nu$ satisfies a Cauchy-Schwarz inequality. If we combine this 
fact with the fact that $\nu\circ\sigma^\nu_t=\nu$ for all $t\in\bR$, then for any $s,t\in \bR$, we will have that 
$$|\nu(\sigma_s^\nu(a)\sigma_t^\nu(a))|\leq\nu(|\sigma_s^\nu(a)|^2)^{1/2}.\nu(|\sigma_t^\nu(a)|^2)^{1/2}=\nu(|a|^2)<\infty.$$One may then use this fact to see that
$$\nu(S_N^*S_N)\leq \frac{n}{\pi}\left(\sum_{k=1}^N |e^{-n(\widetilde{t}_k-z)^2}|\Delta t_k\right)^2\nu(|a|^2).$$Taking the limit yields 
$$\frac{n}{\pi}\nu\left(\big|\int_{-R}^R\sigma_t^\nu(a)e^{-n(t-z)^2}\,dt\big|^2\right)\leq \frac{n}{\pi}\left(\int_{-R}^R|e^{-n(t-z)^2}|\,dt\right)^2\nu(|a|^2).$$Now let 
$R\to\infty$, to see that $$\nu(|\sigma_z^\nu(a_n)|^2)\leq \frac{n}{\pi}\left(\int_{-\infty}^\infty|e^{-n(t-z)^2}|\,dt\right)^2\nu(|a|^2)=(e^{n(\im(z))^2})^2\nu(|a|^2)<\infty.$$
\end{proof}

We pause to compare the crossed product of an expected algebra with the crossed product of the ambient superalgebra.

\begin{remark}\label{10:R cond-exp}
Let $\M$ be a von Neumann algebra equipped with a faithful normal semifinite weight $\nu$, and $\N$ a von Neumann subalgebra for which 
$\nu{\upharpoonright}\N$ is a semifinite weight on $\N$. In this setting the requirement $\sigma_t^\nu(\N)=\N$ for all $t\in\bR$ is equivalent to the 
existence of a normal conditional expectation $\mathbb{E}$ from $\M$ onto $\N$ for which $\nu\circ\mathbb{E}=\nu$ 
\cite[Theorem IX.4.2]{Tak2}. When $\N$ is equipped with the restriction of the weight $\nu$ to $\N$, the associated modular group will 
therefore be the restriction of the modular group of $\M$ \cite[Lemma IX.4.21]{Tak2}. We pause to highlight the implications of the theory developed thus far for 
normal conditional expectations of this form. All facts pointed out here are in some sense implicit in the theory presented in sections 4 and 5 of \cite{HJX}. However the importance of conditional expectations justifies a focused discussion of this nature. 

We firstly note that since both von Neumann algebras act on the same Hilbert space with the modular group of $\N$ being nothing more 
than the restriction of the modular group of $\M$, when their modular groups are used to represent them as algebras acting on 
$L^2(\bR,H)$, 
\begin{itemize}
\item $\pi_\nu(\N)$ will appear as a subalgebra of $\pi_\nu(\M)$ (see equation (\ref{def-piembed}));
\item and they will by definition share the same left-shift operators.
\end{itemize}
Hence $\mathfrak{N}=\N\rtimes_\nu\bR$ will therefore be a subspace of $\cM=\M\rtimes_\nu\bR$ for which the modular automorphism group 
of the dual weight is a restriction of the modular automorphism group of $\cM$ since both modular groups are implemented by the same set of shift operators - see equation (\ref{eq:dualmodgp}). Stone's theorem guarantees the existence of a positive non-singular operator $h$ such that $\lambda_t=h^{it}$ for each $t\in \mathbb{R}$. We may now compare the proofs of \cite[Theorem VIII.3.14]{Tak2} and \cite[Lemma 5.2]{haag-OV2} to see that the canonical traces on $\mathfrak{N}$ and $\cM$ may respectively be defined by $\tnu_{\N}(h^{-1}\cdot)$ and $\tnu(h^{-1}\cdot)$. So by construction $\frac{d\tnu}{d\tau_{\cM}}=\frac{d\tnu_{\N}}{d\tau_{\mathfrak{\N}}}$. Thus by construction the canonical trace on $\mathfrak{N}$ will then be a restriction of the trace on $\cM$, which in turn ensures that 
$\widetilde{\mathfrak{N}}$ is a subspace of $\widetilde{\mathfrak{M}}$. It is also clear from the description of the dual action given in equation (\ref{5:eqn dual-R}), that the dual action $(\theta_t^{\N})$ corresponding to $\mathfrak{N}$ is just the restriction of the dual action of $\cM$. By equations (6) and (9) in \cite[Chapter II]{terp}, this then further ensures that the dual weight on $\mathfrak{N}$ is just the 
restriction of the dual weight on $\cM$. Given any $a\in\mathfrak{p}(\N)_\nu$ we will then by Remark \ref{7:R trwt} have that 
$tr_{\N}(\mathfrak{i}^{(1)}(a))=\nu(a)= tr_{\M}(\mathfrak{i}^{(1)}(a))$. By Proposition \ref{P density1} equality will then hold on all of $L^1(\N)$. So $tr_{\N}$ is clearly nothing more than a restriction of $tr_{\M}$. Thus 
one may safely just write $tr$. However more is true. We know from \cite[Remark 5.6]{HJX} that for each $1\leq p <\infty$, the prescription given there (namely to define $\E_{p}$ on the dense subspace $\mathfrak{i}^{(p)}(\mathfrak{m}_\nu)$ of $L^p$ by the formula $\E_{p}(\mathfrak{i}^{(p)}(x))=\mathfrak{i}^{(p)}(\E(x))$ and extend by continuity), yields a contractive map $\mathbb{E}_{p}$ on $L^p(\M)$, for which we have that 
$\mathbb{E}_{p}\circ\mathbb{E}_{p}= \mathbb{E}_{p}$ and $\mathbb{E}_{p}(f^*)=\mathbb{E}_{p}(f)^*$. Since 
$\nu\circ \mathbb{E}=\nu$, it is clear that $\mathbb{E}(\mathfrak{p}_\nu)=\mathfrak{p}(\N)_\nu$. So for any $b\in \mathfrak{p}_\nu$, we will also have that $tr(\mathbb{E}_{1}(\mathfrak{i}^{(1)}(b)))= tr(\mathfrak{i}^{(1)}(\mathbb{E}(b)))=\nu(\mathbb{E}(b))= \nu(a)=tr(\mathfrak{i}^{(1)}(b))$. The density of $\mathrm{span}(\mathfrak{i}^{(1)}(\mathfrak{p}_\nu))$ in $L^1(\M)$, then ensures that 
$tr(\mathbb{E}_{1}(b))=tr(b)$ for each $b\in L^1(\M)$.

It is now clear from the above discussion that in this setting each $L^p(\N)$ will by definition (see \cite[Lemma 4.10]{LM}) 
be a subspace of $L^p(\M)$, and also that for any $0<p<\infty$ each $L^p(\N)$ is a subspace of $L^p(\M)$. Moreover when the form of the quasinorm 
on $L^\Psi$ (see \cite[Proposition 3.11]{L-Orliczgen}) is considered alongside \cite[Remark 2.3]{FK}, it is clear that the quasinorm on $L^p(\N)$ is just 
a restriction of the quasinorm on $L^p(\M)$. Now \cite[Remark 5.6]{HJX} ensures that $\mathbb{E}$ will 
for any $1\leq p<\infty$, canonically induce a contractive map from $L^p(\M)$ to $L^p(\N)$. In fact for any Orlicz space $L^p$ with 
upper fundamental index strictly less than 1, \cite[Proposition 4.9 \& Theorem 4.11]{LM}, show that these spaces can be equivalently renormed in such a manner that $\mathbb{E}$ will also here induce a contractive map from $L^p(\M)$ to $L^p(\N)$.
\end{remark}

It still remains to show that \cite[Proposition 2.3]{JX-BurkRosen} extends to the general case. For this we need to work a bit harder. 

\begin{proposition}\label{exp-props} Let $\N$ be a von Neumann subalgebra of $\M$ for which there exists a faithful normal conditional expectation $\mathbb{E}$ from $\M$ onto $\N$ satisfying $\nu\circ\mathbb{E}=\nu$. Given $1\leq p,q,r\leq \infty$ such that $\frac{1}{p}+\frac{1}{q}=\frac{1}{r}$ we will for $a\in L^p(\N)$ and $b\in L^q(\M)$ have that $\mathbb{E}_{r}(ab)=a\mathbb{E}_{q}(b)$ and $\mathbb{E}_{r}(ba)=\mathbb{E}_{q}(b)a$.
\end{proposition}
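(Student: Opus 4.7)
The approach is a bilinear density argument reducing the $L^p$-module identity to the algebraic module property $\mathbb{E}(xy)=x\mathbb{E}(y)$ for $x\in\N$, $y\in\M$. First I would invoke the noncommutative Hölder inequality to see that $(a,b)\mapsto ab$ is a bounded bilinear map from $L^p(\N)\times L^q(\M)$ to $L^r(\M)$ (using Remark \ref{10:R cond-exp} to regard $L^p(\N)$ isometrically inside $L^p(\M)$). Since the same Remark provides that $\mathbb{E}_r,\mathbb{E}_q$ are contractive on $L^r(\M),L^q(\M)$, both $(a,b)\mapsto\mathbb{E}_r(ab)$ and $(a,b)\mapsto a\mathbb{E}_q(b)$ are bounded bilinear into $L^r(\N)$. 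It is therefore enough to verify the identity on a dense subset of $L^p(\N)\times L^q(\M)$.

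Second, I would select the dense subsets supplied by Proposition \ref{P density1}. Since $\sigma_t^\nu(\N)=\N$, Proposition \ref{analytic-n(A)} applied with $\mathcal{B}=\N$ furnishes that $\mathfrak{m}_\nu^\infty(\N)$ is $\sigma$-weakly dense in $\N$, and hence $\mathfrak{i}^{(p)}(\mathfrak{m}_\nu^\infty(\N))$ is dense in $L^p(\N)$; likewise $\mathfrak{i}^{(q)}(\mathfrak{m}_\nu^\infty(\M))$ is dense in $L^q(\M)$. Within this framework I would first handle the sub-case in which $a$ is a bounded analytic element $x\in\mathfrak{n}_\nu^\infty(\N)\cap\N$ and $b=\mathfrak{i}^{(q)}(y)$ for $y=c^*d\in\mathfrak{m}_\nu^\infty(\M)$.

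Third, and this is the technical heart of the argument, I would establish the key intermediate identity
\begin{equation*}
x\cdot\mathfrak{i}^{(q)}(y)=\mathfrak{i}^{(q)}(xy)\qquad\text{in }L^q(\M),
\end{equation*}
for $x$ as above. Writing $\mathfrak{i}^{(q)}(y)=[ch^{1/(2q)}]^{*}[dh^{1/(2q)}]$, the element $x$ does not commute with the density $h$; I would transport $x$ past the factor $h^{1/(2q)}$ on the right using Lemma \ref{GL2-2.4+5} (which yields $[xdh^{1/(2q)}]=h^{1/(2q)}\sigma_{i/(2q)}^\nu(xd)$ and the corresponding variant for the left factor), and then invoke the exponent-shift identity of Proposition \ref{GL2-2.2+3} to reassemble the product in the form demanded by $\mathfrak{i}^{(q)}$. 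Noting that $xy\in\mathfrak{m}_\nu$ since $\mathfrak{m}_\nu$ is an ideal, this gives the claimed intermediate identity, after which the defining property $\mathbb{E}_q(\mathfrak{i}^{(q)}(z))=\mathfrak{i}^{(q)}(\mathbb{E}(z))$ from Remark \ref{10:R cond-exp} combined with $\mathbb{E}(xy)=x\mathbb{E}(y)$ at the algebraic level yields $\mathbb{E}_q(xb)=x\mathbb{E}_q(b)$ for such $x,b$.

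Finally, I would extend from bounded analytic $x\in\mathfrak{n}_\nu^\infty(\N)$ to arbitrary $a\in L^p(\N)$ via the bilinear continuity from Step 1, and obtain the right-multiplication identity $\mathbb{E}_r(ba)=\mathbb{E}_q(b)a$ by taking adjoints, using $\mathbb{E}_q(f^*)=\mathbb{E}_q(f)^*$ (Remark \ref{10:R cond-exp}). The main obstacle is Step 3: because $\mathfrak{i}^{(q)}$ inserts the density $h^{1/(2q)}$ and $h$ intertwines with $\N$ only via the modular group, pushing $x$ through the embedding without creating domain pathologies for the minimal closures $[\,\cdot\,]$ forces a careful interplay between the analytic transformation rule of Lemma \ref{GL2-2.4+5} and the exponent-shift of Proposition \ref{GL2-2.2+3}; once this passage is made safe, everything else is a straightforward density argument.
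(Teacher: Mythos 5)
Your overall strategy (bilinear Hölder continuity plus density of the images of $\mathfrak{m}_\nu^\infty$ under the embeddings, with the transformation rules of Lemma \ref{GL2-2.4+5} and Proposition \ref{GL2-2.2+3} doing the real work) is the same as the paper's, but as written the proposal has two genuine gaps, one of which is fatal to the final step. First, the ``key intermediate identity'' $x\cdot\mathfrak{i}^{(q)}(y)=\mathfrak{i}^{(q)}(xy)$ is false: since $[xh^{1/(2q)}]=h^{1/(2q)}\sigma^\nu_{i/(2q)}(x)$, pushing $x$ past the left factor of $\mathfrak{i}^{(q)}(y)=(h^{1/(2q)}c^*)[dh^{1/(2q)}]$ produces $\mathfrak{i}^{(q)}\bigl(\sigma^\nu_{i/(2q)}(x)\,y\bigr)$, not $\mathfrak{i}^{(q)}(xy)$. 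This is repairable, because $\mathbb{E}$ commutes with $\sigma^\nu_z$ (the paper proves this via the infinitesimal generator, a step you omit but need) and $\sigma^\nu_{i/(2q)}(x)\in\N$, so one can undo the twist after applying $\mathbb{E}$; but the twist must be tracked explicitly, exactly as in the paper's chain of equalities where the terms $\sigma_{i/2q}(a)$, $\sigma_{i/2q}(b)$, $\sigma_{-i/2p}(f^*f)$ appear.

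The fatal gap is in your final step. What Step 3 establishes is $\mathbb{E}_q(xb)=x\,\mathbb{E}_q(b)$ for \emph{bounded} $x\in\N$, i.e.\ the case $p=\infty$. You then propose to ``extend from bounded analytic $x$ to arbitrary $a\in L^p(\N)$ via the bilinear continuity from Step 1''. But for $p<\infty$ the bounded elements of $\N$ are not dense in $L^p(\N)$ --- in the Haagerup construction $\N\cap L^p(\N)=\{0\}$, since elements of $\N$ satisfy $\theta_s(x)=x$ while elements of $L^p$ satisfy $\theta_s(x)=e^{-s/p}x$ --- so the bilinear continuity gives you nothing here. The correct dense subset of $L^p(\N)$ is $\mathfrak{i}^{(p)}(\mathfrak{m}_\nu^\infty(\N))$, which you correctly identify in Step 2 but never use: to handle $a=\mathfrak{i}^{(p)}(z)=(h^{1/2p}u^*)[vh^{1/2p}]$ one must combine the product $\mathfrak{i}^{(p)}(z)\,\mathfrak{i}^{(q)}(y)$ into a single $\mathfrak{i}^{(r)}(\cdot)$, which forces the exponent arithmetic $\tfrac{1}{2p}+\tfrac{1}{2q}=\tfrac{1}{2r}+(\text{shift})$ and several further applications of Proposition \ref{GL2-2.2+3} and Lemma \ref{GL2-2.4+5}; this is precisely the long computation in the paper and it does not reduce to the $p=\infty$ case by any density argument. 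Until that computation (or an equivalent bootstrap) is supplied, the proposition is proved only for $p=\infty$.
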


\begin{proof} Let $f\in\mathfrak{n}_\nu^\infty$ and $a,b\in \mathfrak{n}(\N)_\nu^\infty$ be given. We shall prove the claim for the case where $1\leq p,q<\infty$. The first step in the proof is to show that $\mathbb{E}$ preserves analyticity. We pause to note that as far as automorphism groups are concerned, analyticity may be defined either in terms of the automorphism group \cite[Definition 2.5.20]{BR}, or the infinitesimal generator of this group \cite[Definition 3.1.17]{BR}, with the two definitions ultimately being equivalent. (See the discussion following \cite[Definition 3.1.17]{BR} in that reference.) We shall here prefer the second of these definitions. Let $\delta$ be the infinitesimal generator of the modular group $\sigma_t^\nu$ and let $a\in \mathrm{dom}(\delta)$. By definition that means that the expressions $\frac{1}{t}(\sigma_t^\nu(a)-a)$ converge $\sigma$-weakly to $\delta(a)$ as $t\to 0$. But since $\mathbb{E}$ is normal and also commutes with $\sigma_t^\nu$, we have that $\frac{1}{t}\mathbb{E}(\sigma_t^\nu(a)-a)=\frac{1}{t}(\sigma_t^\nu(\mathbb{E}(a))-\mathbb{E}(a))$ converges 
$\sigma$-weakly to $\mathbb{E}(\delta(a))$. So by definition $\mathbb{E}(a)\in\mathrm{dom}(\delta)$ with $\delta(\mathbb{E}(a))= \mathbb{E}(\delta(a))$. The claim is now obviously a fairly direct consequence of \cite[Definition 3.1.17]{BR}.

With $h$ denoting $\frac{d\widetilde{\nu}}{d\tau}$, repeated applications of Proposition \ref{GL2-2.2+3} and Lemma \ref{GL2-2.4+5}, now show that
\begin{eqnarray*}  
&&(h^{1/2p}a)[bh^{1/2p}]\mathbb{E}_{q}(\mathfrak{i}^{(q)}(f^*f))\\
&=&(h^{1/2p}a)[bh^{1/2p}]\mathfrak{i}^{(q)}(\mathbb{E}(f^*f))\\
&=&(h^{1/2p}a)[bh^{1/2p}](h^{1/2q}\sqrt{\mathbb{E}(f^*f)})[\sqrt{\mathbb{E}(f^*f)}h^{1/2q}]\\
&=&(h^{1/2p}a)[bh^{1/2r}]\sqrt{\mathbb{E}(f^*f)}[\sqrt{\mathbb{E}(f^*f)}h^{1/2q}]\\
&=&(h^{1/2p}a)[bh^{1/2r}][\mathbb{E}(f^*f)h^{1/2q}]\\
&=&(h^{1/2p}a)[bh^{1/2r}](h^{1/2q}\sigma_{i/2q}(\mathbb{E}(f^*f)))\\
&=&(h^{1/2p}a)[bh^{1/2q}][\sigma_{-i/2r}(\sigma_{i/2q}(\mathbb{E}(f^*f)))h^{1/2r}]\\
&=&(h^{1/2p}a)[bh^{1/2q}](h^{1/2r}\sigma_{i/2q}(\mathbb{E}(f^*f)))\\
&=&[\sigma_{-i/2p}(a)h^{1/2p}](h^{1/2q}\sigma_{i/2q}(b))[\sigma_{-i/2p}(\mathbb{E}(f^*f))h^{1/2r}]\\
&=&[\sigma_{-i/2p}(a)h^{1/2r}]\sigma_{i/2q}(b)[\sigma_{-i/2p}(\mathbb{E}(f^*f))h^{1/2r}]\\
&=&(h^{1/2r}\sigma_{i/2r}(\sigma_{-i/2p}(a)))\sigma_{i/2q}(b)[\sigma_{-i/2p}(\mathbb{E}(f^*f))h^{1/2r}]\\
&=&\mathfrak{i}^{(r)}(\sigma_{i/2q}(a)\sigma_{i/2q}(b)\sigma_{-i/2p}(\mathbb{E}(f^*f)))\\
&=&\mathfrak{i}^{(r)}(\mathbb{E}(\sigma_{i/2q}(a)\sigma_{i/2q}(b)\sigma_{-i/2p}(f^*f)))\\
&=&\mathbb{E}_{r}(\mathfrak{i}^{(r)}(\sigma_{i/2q}(a)\sigma_{i/2q}(b)\sigma_{-i/2p}(f^*f)))
\end{eqnarray*} 
A similar argument to the one used above shows that 
$$(h^{1/2p}a)[bh^{1/2p}]\mathfrak{i}^{(q)}(f^*f)=\mathfrak{i}^{(r)}(\sigma_{i/2q}(a)\sigma_{i/2q}(b)\sigma_{-i/2p}(f^*f))$$ and hence that 
\begin{eqnarray*}  
(h^{1/2p}a)[bh^{1/2p}]\mathbb{E}_{q}(\mathfrak{i}^{(q)}(f^*f))&=&\mathfrak{i}^{(r)}(\mathbb{E}(\sigma_{i/2q}(a)\sigma_{i/2q}(b)\sigma_{-i/2p}(f^*f)))\\
&=&\mathbb{E}_{r}(\mathfrak{i}^{(r)}(\sigma_{i/2q}(a)\sigma_{i/2q}(b)\sigma_{-i/2p}(f^*f)))\\
&=&\mathbb{E}_{r}((h^{1/2p}a)[bh^{1/2p}]\mathfrak{i}^{(q)}(f^*f)). 
\end{eqnarray*}
The density assertion in Lemma \ref{GL2-2.4+5} now leads to the conclusion that $\mathbb{E}_{r}(ab)=a\mathbb{E}_{q}(b)$ for all $a\in L^p(\N)$ and $b\in L^q(\M)^+$. By linearity the claim holds for all $b\in L^q(\M)$.
\end{proof}

In Proposition \ref{7:P j-embed} we introduced the embedding $\mathfrak{i}^{(p)}$ of $\mathfrak{m}_\nu$ into $L^p$, formally corresponding to sending $x\in \mathfrak{m}_\nu$ to $h^{1/2p}xh^{1/2p}$. However for $1<p <\infty$ more general embeddings are possible. In the ensuing analysis these more general embeddings will be extensively used, and hence we pause to clarify the underlying ideas. In the case $1\leq p <2$ we may select $q,r\geq 1$ so that $\frac{1}{r}+\frac{1}{2}=\frac{1}{p}$ and $\frac{1}{p}+\frac{1}{q}=1$, and for each $0\leq c\leq 1$ introduce an embedding of $\mathfrak{m}_\nu$ into $L^p$ formally corresponding to $x\mapsto h^{c/q}h^{1/r}xh^{1/r}h^{(1-c)/q}$, and for the case $2\leq p \leq \infty$ embeddings formally corresponding to $x \mapsto h^{c/p}xh^{(1-c)/p}$. In each case the thus constructed embedding will still be linear and injective, and the image of $\mathfrak{m}_\nu$ still a dense subspace. Considering the case $2\leq p<\infty$ by way of example, an embedding $\mathfrak{i}_c^{(p)}$ corresponding to the formal map $x \mapsto h^{c/p}xh^{(1-c)/p}$, is constructed by for any $a,b\in \in\mathfrak{n}_\nu$ and any $0<c<1$ first defining $\mathfrak{i}_c^{(p)}(b^*a)$ to be $\mathfrak{j}^{(p/c)}(b)^*\mathfrak{j}^{(p/(1-c))}(a)$, and from there to then linearly extend this map to all of $\mathfrak{m}_\nu$.

Given an expectation $\E$ of the form discussed in Proposition \ref{exp-props}, we may use that Proposition to see that the action of $\E_{p}$ on $L^p$ also harmonises with the above more general embeddings. For that we shall need the following result:

\begin{proposition}[{\cite[Lemma 9]{Terp2}}]\label{7:P Terp2}
There exists a net $(f_\lambda)$ of positive analytic elements in $\mathfrak{n}_\nu$ converging strongly to $\I$, and for which \begin{enumerate}
\item[(1)] $\sigma_z^\nu(f_\lambda)\in \mathfrak{n}_\nu\cap \mathfrak{n}_\nu^*$ for each $z\in \mathbb{C}$ and each $\lambda$,
\item[(2)] $\|\sigma_z^\nu(f_\lambda)\|\leq e^{\delta(\im(z))^2}$ for each $z\in \mathbb{C}$ and each $\lambda$ with $\delta>0$ constant,
\item[(3)] $(\sigma_z^\nu(f_\lambda))$ is $\sigma$-weakly convergent to $\I$ for each $z\in \mathbb{C}$.
\end{enumerate}
\end{proposition}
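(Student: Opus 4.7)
The plan is to obtain $(f_\lambda)$ via Gaussian regularization of a positive approximate identity in $\mathfrak{n}_\nu\cap\mathfrak{n}_\nu^*$. By semifiniteness of $\nu$, one may first select a net $(e_\alpha)$ of positive contractions in $\mathfrak{n}_\nu\cap\mathfrak{n}_\nu^*$ converging $\sigma$-strongly to $\I$. Fix $\delta>0$ (for instance $\delta=1$) and set
$$f_\alpha=\sqrt{\frac{\delta}{\pi}}\int_{-\infty}^{\infty}e^{-\delta t^2}\,\sigma_t^\nu(e_\alpha)\,dt,$$
where the integral is understood $\sigma$-weakly. Since each $\sigma_t^\nu(e_\alpha)\ge 0$ and the Gaussian kernel is positive, $f_\alpha\ge 0$. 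As in \cite[Proposition 2.5.22]{BR}, $f_\alpha$ is entire analytic with
$$\sigma_z^\nu(f_\alpha)=\sqrt{\frac{\delta}{\pi}}\int_{-\infty}^{\infty}e^{-\delta(t-z)^2}\,\sigma_t^\nu(e_\alpha)\,dt \qquad (z\in\bC).$$

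Using $\|\sigma_t^\nu(e_\alpha)\|\le 1$ together with the elementary computation $\sqrt{\delta/\pi}\int_{\bR}|e^{-\delta(t-z)^2}|\,dt=e^{\delta(\im z)^2}$ (via the substitution $t\mapsto t+\re z$), property (2) falls out immediately. For property (1) I would import verbatim the Cauchy--Schwarz-plus-Riemann-sum argument already carried out in the proof of Proposition \ref{analytic-n(A)}: within the $\nu$-inner product on $\mathfrak{n}_\nu$, the invariance $\nu\circ\sigma_t^\nu=\nu$ gives $|\nu(\sigma_s^\nu(e_\alpha)\sigma_t^\nu(e_\alpha))|\le\nu(e_\alpha^2)$, so approximating $\sigma_z^\nu(f_\alpha)$ by Riemann sums and passing to the limit yields
$$\nu(|\sigma_z^\nu(f_\alpha)|^2)\le e^{2\delta(\im z)^2}\,\nu(e_\alpha^2)<\infty.$$
Self-adjointness of $f_\alpha$ then gives $\sigma_z^\nu(f_\alpha)^*=\sigma_{\bar z}^\nu(f_\alpha)\in\mathfrak{n}_\nu$ as well, so $\sigma_z^\nu(f_\alpha)\in\mathfrak{n}_\nu\cap\mathfrak{n}_\nu^*$ for every $z\in\bC$.

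For property (3) I would exploit the contour-shift identity $\sqrt{\delta/\pi}\int_{\bR}e^{-\delta(t-z)^2}\,dt=1$, so it suffices to show that the integrand in the expression for $\sigma_z^\nu(f_\alpha)$ converges $\sigma$-weakly to $e^{-\delta(t-z)^2}\I$ in a dominated fashion. The $\sigma$-strong convergence $e_\alpha\to\I$ propagates through $\sigma_t^\nu=\mathrm{Ad}(\Delta^{it})$ to give $\sigma_t^\nu(e_\alpha)\to\I$ strongly for each fixed real $t$; the uniform bound $\|\sigma_t^\nu(e_\alpha)\|\le 1$ combined with integrability of the Gaussian weight then legitimises dominated convergence in the $\sigma$-weak topology, and setting $z=0$ simultaneously delivers the asserted strong convergence $f_\alpha\to\I$. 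The one point where real work is needed is property (1): the invariance $\nu\circ\sigma_t^\nu=\nu$ is only meaningful for real $t$, so one cannot simply commute $\nu$ past the integral after a complex shift. The Cauchy--Schwarz--Minkowski device borrowed from Proposition \ref{analytic-n(A)} is precisely what circumvents this, and I expect everything else to reduce to routine Gaussian bookkeeping.
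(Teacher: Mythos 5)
Your proposal is correct and follows essentially the same route as the paper: Gaussian regularization of an approximate identity from $\mathfrak{n}_\nu$, the bound $\sqrt{\delta/\pi}\int_{\bR}|e^{-\delta(t-z)^2}|\,dt=e^{\delta(\im z)^2}$ for (2), the Riemann-sum/Cauchy--Schwarz device of Proposition \ref{analytic-n(A)} for (1), and passage of the limit through the Gaussian integral for (3). The only place you are terser than the paper is the upgrade from weak to \emph{strong} convergence $f_\alpha\to\I$: the paper does this via $\limsup_\alpha\|f_\alpha\xi-\xi\|^2\le 2\|\xi\|^2-2\lim_\alpha\re\langle f_\alpha\xi,\xi\rangle=0$ using $\|f_\alpha\|\le1$, and you should either record that step or instead apply scalar dominated convergence to $\|f_\alpha\xi-\xi\|\le\sqrt{\delta/\pi}\int e^{-\delta t^2}\|\sigma_t^\nu(e_\alpha)\xi-\xi\|\,dt$ --- either is a one-liner given what you already have.
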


The final claim made above is not included in the version formulated in \cite[Lemma 9]{Terp2}. To convince the reader of its veracity, we therefore give details as appropriate, merely sketching some parts of the proof.

\begin{proof}[Outline of proof]  
 
One starts by selecting any right approximate identity $(g_\lambda)$ of $\mathfrak{n}_\nu$. Recall that in this case $g_\lambda$ must increase $\sigma$-strong* to $\I$ as $\lambda$ increases. 
Fixing some $\delta>0$, we then define the net $(f_\lambda)$ by means of the prescription $$f_\lambda=\sqrt{\frac{\delta}{\pi}}\int_{-\infty}^\infty\sigma_t^\nu(g_\lambda)e^{-\delta t^2}\,dt.$$The next step 
is to show that the function $$F:\bC\to \M:z\mapsto \sqrt{\frac{\delta}{\pi}}\int_{-\infty}^\infty\sigma_t^\nu(g_\lambda)e^{-\delta (t-z)^2}\,dt$$fulfils the criteria of \cite[Definition 2.5.20]{BR}. (Details 
of this part may be found in the proof of \cite[Proposition 2.5.22]{BR}.) Having verified this fact, the values $\sigma_z^\nu(f_\lambda)$, are then given by the formula 
 $$\sigma_z^\nu(f_\lambda)=\sqrt{\frac{\delta}{\pi}}\int\sigma_t^\nu(g_\lambda)e^{-\delta (t-z)^2}\,dt.$$ 
A slight modification of the latter part of the proof of Proposition \ref{analytic-n(A)} 
then shows that 
 $$\nu(|\sigma_z^\nu(f_\lambda)|^2)\leq (e^{\delta(\im(z))^2})^2\nu(|g_\lambda|^2)<\infty.$$

We next show that $(f_\lambda)$ converges strongly to $\I$. Let $H$ be the Hilbert space on which $\M$ acts. For any $\xi\in H$, we then have that
\begin{eqnarray*}
\lim_\lambda\langle f_\lambda\xi,\xi\rangle &=& \lim_\lambda\left\langle \left(\sqrt{\frac{\delta}{\pi}}\int_{-\infty}^\infty\sigma_t^\nu(g_\lambda)e^{-\delta t^2}\,dt\right)\xi,\xi\right\rangle\\
&=& \lim_\lambda\sqrt{\frac{\delta}{\pi}}\int_{-\infty}^\infty\langle\sigma_t^\nu(g_\lambda)\xi,\xi\rangle e^{-\delta t^2}\,dt\\
&=& \sqrt{\frac{\delta}{\pi}}\int_{-\infty}^\infty\langle\sigma_t^\nu(\I)\xi,\xi\rangle e^{-\delta t^2}\,dt\\
&=& \sqrt{\frac{\delta}{\pi}}\int_{-\infty}^\infty e^{-\delta t^2}\,dt.\|\xi\|^2\\
&=& \|\xi\|^2.
\end{eqnarray*}
If we combine the above formula with the fact that $\|f_\lambda\|\leq 1$, that then enables us to conclude that
$$\limsup_\lambda\|f_\lambda\xi-\xi\|^2=\limsup_\lambda\left(\|f_\lambda\xi\|^2 - \langle f_\lambda\xi,\xi\rangle -\langle \xi, f_\lambda\xi\rangle+\|\xi\|^2\right)\leq 0,$$which proves the claim regarding the strong convergence of $(f_\lambda)$.

We pass to proving (c). For any $\xi,\zeta\in H$ we have that 
\begin{eqnarray*}
\lim_\lambda\langle \sigma_z^\nu(f_\lambda)\xi,\zeta\rangle &=& \lim_\lambda\left\langle \left(\sqrt{\frac{\delta}{\pi}}\int_{-\infty}^\infty\sigma_t^\nu(g_\lambda)e^{-\delta (t-z)^2}\,dt\right)\xi,\zeta\right\rangle\\
&=& \lim_\lambda\sqrt{\frac{\delta}{\pi}}\int_{-\infty}^\infty\langle\sigma_t^\nu(g_\lambda)\xi,\zeta\rangle e^{-\delta (t-z)^2}\,dt\\
&=& \sqrt{\frac{\delta}{\pi}}\int_{-\infty}^\infty\langle\sigma_t^\nu(\I)\xi,\zeta\rangle e^{-\delta (t-z)^2}\,dt\\
&=& \sqrt{\frac{\delta}{\pi}}\int_{-\infty}^\infty e^{-\delta (t-z)^2}\,dt.\langle\xi,\zeta\rangle \\
&=& \langle\xi,\zeta\rangle.
\end{eqnarray*}
Thus $(\sigma_z^\nu(f_\lambda))$ converges to $\I$ in the Weak Operator Topology. But the $\sigma$-weak topology and the Weak Operator Topology agree on the unit ball of $\M$. Hence by part (b), $(\sigma_z^\nu(f_\lambda))$ is $\sigma$-weakly convergent to $\I$ as claimed.
\end{proof}

We close with presenting the promised alternative ways of constructing $\E_{p}$.

\begin{proposition}\label{genformula-exp} Let $\N$ be a von Neumann subalgebra of $\M$ for which there exists a faithful normal conditional expectation $\mathbb{E}$ from $\M$ onto $\N$ satisfying $\nu\circ\mathbb{E}=\nu$. Given $1< p<\infty$ and $c\in (0,1)$ such that both $c, (1-c) \in [0, \frac{p}{2}]$, we will for any $x\in \mathfrak{m}_\nu$ have that $\E_{p}(\mathfrak{i}_c^{(p)}(x))= \mathfrak{i}_c^{(p)}(\E(x))$.
\end{proposition}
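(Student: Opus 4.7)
The plan is to express $\mathfrak{i}_c^{(p)}$ as $\mathfrak{i}^{(p)}$ precomposed with a purely imaginary modular shift, and then invoke the already-established relation $\E_p\circ\mathfrak{i}^{(p)}=\mathfrak{i}^{(p)}\circ\E$ (see Remark \ref{10:R cond-exp}) together with the fact that $\E$ commutes with the modular group on analytic elements (shown at the start of the proof of Proposition \ref{exp-props}). The hypothesis $c,(1-c)\in[0,p/2]$ is what makes the scheme work: it places each of the two exponents $\alpha,\beta$ that occur in the formal product $h^{\alpha}xh^{\beta}$ defining $\mathfrak{i}_c^{(p)}$ (so that $\alpha+\beta=1/p$) into the interval $[0,1/2]$, which is the range of applicability of Lemma \ref{GL2-2.4+5}. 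For $p\ge 2$ these are $\alpha=c/p$, $\beta=(1-c)/p$; for $1\le p<2$ an analogous pair of exponents (still lying in $[0,1/2]$ by the hypothesis) arises from the auxiliary $q,r$ in the definition.

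First I would verify the key identity on $x=b^{*}a$ with $a,b\in\mathfrak{n}_\nu^\infty$. Two applications of Lemma \ref{GL2-2.4+5} unbracket the factors,
\[
\mathfrak{i}_c^{(p)}(b^{*}a) \;=\; [h^{\alpha}b^{*}][ah^{\beta}] \;=\; \sigma_{-i\alpha}^\nu(b^{*})\,h^{\alpha}h^{\beta}\,\sigma_{i\beta}^\nu(a),
\]
and iterating the same lemma yields the commutation rule $h^{1/p}y=\sigma_{-i/p}^\nu(y)\,h^{1/p}$ for $y\in \M_\sigma^a$. Using $-1/p+\beta=-\alpha$, this collapses the above to $\mathfrak{i}_c^{(p)}(b^{*}a)=\sigma_{-i\alpha}^\nu(b^{*}a)\,h^{1/p}$. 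The identical manipulation applied with $\alpha=\beta=1/(2p)$ gives $\mathfrak{i}^{(p)}(y)=\sigma_{-i/(2p)}^\nu(y)\,h^{1/p}$ for $y\in\mathfrak{m}_\nu^\infty$, so by linearity one obtains on $\mathfrak{m}_\nu^\infty$ the desired shift identity
\[
\mathfrak{i}_c^{(p)}(x) \;=\; \mathfrak{i}^{(p)}\!\bigl(\sigma_{w}^\nu(x)\bigr),\qquad w=i\bigl(\tfrac{1}{2p}-\alpha\bigr). \tag{$\star$}
\]
With ($\star$) in hand, the conclusion for $x\in\mathfrak{m}_\nu^\infty$ is immediate: writing $y=\sigma_w^\nu(x)\in\mathfrak{m}_\nu^\infty$,
\[
\E_p(\mathfrak{i}_c^{(p)}(x))=\E_p(\mathfrak{i}^{(p)}(y))=\mathfrak{i}^{(p)}(\E(y))=\mathfrak{i}^{(p)}(\sigma_w^\nu(\E(x)))=\mathfrak{i}_c^{(p)}(\E(x)),
\]
where the third equality uses $\E\circ\sigma_w^\nu=\sigma_w^\nu\circ\E$ and the last applies ($\star$) to $\E(x)\in\mathfrak{m}_\nu(\N)^\infty$.

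What remains is to extend the identity from $\mathfrak{m}_\nu^\infty$ to all of $\mathfrak{m}_\nu$, and this is where I expect the main obstacle to lie. Writing a general $x\in\mathfrak{m}_\nu$ as a linear combination of elements $b^{*}a$ with $a,b\in\mathfrak{n}_\nu$, one would approximate $a$ and $b$ by analytic elements $a_\lambda,b_\lambda\in\mathfrak{n}_\nu^\infty$ obtained by multiplying with the Terp net of Proposition \ref{7:P Terp2} and then applying the Gaussian modular smoothing used in Proposition \ref{analytic-n(A)}, the goal being $L^{p/c}$- and $L^{p/(1-c)}$-convergence of $\mathfrak{j}^{(p/c)}(b_\lambda)$ and $\mathfrak{j}^{(p/(1-c))}(a_\lambda)$ respectively. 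Continuity of the multiplication $L^{p/c}(\M)\times L^{p/(1-c)}(\M)\to L^p(\M)$ delivers $\mathfrak{i}_c^{(p)}(x_\lambda)\to\mathfrak{i}_c^{(p)}(x)$ in $L^p(\M)$, while continuity of $\E_p$ transports the identity to the limit. The entire modular computation in Step 1 is routine once the right shift parameter is identified; the non-trivial work lies in verifying the simultaneous $L^p$-convergence of the analytic approximants to both $\mathfrak{i}_c^{(p)}(x)$ and $\mathfrak{i}_c^{(p)}(\E(x))$, which will require bookkeeping along the lines of (but slightly heavier than) the arguments in the proofs of Propositions \ref{analytic-n(A)} and \ref{exp-props}.
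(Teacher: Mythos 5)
Your route is genuinely different from the paper's. You reduce $\mathfrak{i}_c^{(p)}$ to the symmetric embedding by a modular shift, $\mathfrak{i}_c^{(p)}(x)=\mathfrak{i}^{(p)}(\sigma_w^\nu(x))$ on analytic $x$, and then quote the symmetric case together with $\E\circ\sigma_w^\nu=\sigma_w^\nu\circ\E$; the shift computation in your Step 1 is correct, and is exactly the kind of manipulation the paper performs inside the proofs of Propositions \ref{exp-props} and \ref{prop2.1}. The paper's own proof never touches $x$ at all: it multiplies on the \emph{left} by the Terp net $(f_\lambda)\subset\N$, uses the exponent identity $\frac{c}{p}=\frac{1-c}{p}+\frac{1}{r}$ to rewrite $f_\lambda\,\mathfrak{i}_c^{(p)}(x)$ as $\mathfrak{j}^{(r)}(f_\lambda)$ times the \emph{symmetric} embedding of the untouched $x$, pulls $\mathfrak{j}^{(r)}(f_\lambda)\in L^r(\N)$ out of $\E_{p}$ via the module property of Proposition \ref{exp-props}, applies the symmetric intertwining, reassembles, and lets $f_\lambda\to\I$ $\sigma$-weakly. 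The only limit in that argument is the weak limit of the already-analytic $f_\lambda$'s, which costs nothing.

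The price of your route is the extension from $\mathfrak{m}_\nu^\infty$ to $\mathfrak{m}_\nu$, which you rightly identify as the crux but leave as a sketch. Two warnings there. First, the norm convergence $\mathfrak{j}^{(q)}(a_n)\to\mathfrak{j}^{(q)}(a)$ in $L^q$ for the Gaussian smoothing does hold, because $[\sigma^\nu_t(a)h^{1/q}]=h^{it}[ah^{1/q}]h^{-it}$ and $t\mapsto h^{it}\xi h^{-it}$ is a strongly continuous isometry group on $L^q$; but neither that continuity nor the interchange of the Bochner integral with the embedding is established anywhere in the paper, so you would have to build this machinery (and also first cut $a$ down by the Terp net to land in $\mathfrak{n}_\nu\cap\mathfrak{n}_\nu^*$ before smoothing, adding a further limit). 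Second, if you retreat to the weak convergence that the paper's lemmas do supply, the bilinear map $(b,a)\mapsto\mathfrak{j}^{(p/c)}(b)^*\mathfrak{j}^{(p/(1-c))}(a)$ is not jointly weakly continuous, so you are forced into an iterated limit whose intermediate terms are no longer of the form $\mathfrak{i}_c^{(p)}(y)$ with $y$ analytic, and Step 1 does not apply to them. So the extension is a genuine piece of work rather than routine bookkeeping; the left-multiplication trick in the paper's proof is precisely what makes that work unnecessary.
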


\begin{proof} Let $(f_\lambda)$ be a net in $\N$ satisfying the criteria of Proposition \ref{7:P Terp2}. For the sake of 
argument suppose that $c>(1-c)$. We then clearly have that $2c-1=c-(1-c)\in(0,\frac{p}{2}]$. Thus $r=\frac{p}{2c-1}\leq 2$. 
For any $\lambda$ and $a,b\in \mathfrak{n}_\nu$ we may then apply Proposition \ref{GL2-2.2+3} and Lemma \ref{GL2-2.4+5} to 
the fact that $\frac{c}{p}=\frac{(1-c)}{p}+\frac{1}{r}$ to see that $f_\lambda\mathfrak{i}_c^{(p)}(x)= \mathfrak{j}^{(r)}(f_\lambda)\mathfrak{i}^{(p/(1-c))}(x)$. By Proposition \ref{exp-props} we then 
have that $$f_\lambda\E_{p}(\mathfrak{i}_c^{(p)}(x)) = \E_{p}(f_\lambda\mathfrak{i}_c^{(p)}(x))= \E_{p}(\mathfrak{j}^{(r)}(f_\lambda)\mathfrak{i}^{(p/(1-c))}(x)) =$$ 
 $$\mathfrak{j}^{(r)}(f_\lambda)\E_{p/(1-c)}(\mathfrak{i}^{(p/(1-c))}(x)) = \mathfrak{j}^{(r)}(f_\lambda)\mathfrak{i}^{(p/(1-c))}(\E(x)).$$
On once again applying Proposition \ref{GL2-2.2+3} and 
Lemma \ref{GL2-2.4+5}, we may conclude from this that 
 $$f_\lambda\E_{p}(\mathfrak{i}_c^{(p)}(x)) = \mathfrak{j}^{(r)}(f_\lambda)\mathfrak{i}^{(p/(1-c))}(\E(x)) 
 =f_\lambda\mathfrak{i}_c^{(p/(1-c))}(\E(x)).$$ 
Since $(f_\lambda)$ is at least $\sigma$-weakly convergent to $\I$, the left and right hand sides of this set of equalities are 
both $L^p$-weakly convergent to respectively $\E_{p}(\mathfrak{i}_c^{(p)}(x))$ and $\mathfrak{i}_c^{(p)}(\E(x))$, clearly showing that $\E_{p}(\mathfrak{i}_c^{(p)}(x))=\mathfrak{i}_c^{(p)}(\E(x))$ as 
was claimed.
\end{proof}

\section{The Haagerup reduction theorem revisited}\label{S4}

In our presentation of the reduction theorem, we will closely follow, but not clone, the presentation of this theorem in \cite{HJX}. The astute reader will pick up some subtle but important differences. The proof of the reduction theorem is long and at times complicated. Because of the aforementioned differences, the proof of the extended version presented here is by quite some margin even longer and more complex. In order to try and make the proof more digestible we avoid the temptation of discussing only the points of difference, but provide full details. In providing details we closely follow the presentation in \cite{HJX}, with the result that large blocks of the proof will therefore be little more than a restatement of the corresponding blocks in \cite{HJX}. However this is done deliberately in order to enable readers familiar with the original proof to clearly see how the proof of the original version needs to be adapted to yield the extended version presented here. To further aid the reader in picking up on this difference, we will where appropriate insert comments pointing to the points of difference.

We write $\mathbb{Q}_D$ for the dyadic rationals. This group plays a crucial role in the construction in that the enlargement of $\M$ we seek is nothing but the crossed product $\M\rtimes_\nu \mathbb{Q}_D$ produced using the restriction of the modular automorphism group to $\mathbb{Q}_D$. For this reason, we pause to gain a deeper understanding of $\mathbb{Q}_D$. In so doing we will take our cue from the helpful discussion on the site \newline \verb+https://en.wikipedia.org/wiki/Dyadic_rational+. When equipped with the discrete topology the dual group of the dyadic rationals is the so-called dyadic solenoid (a compact group). To see this note that the dyadic rationals are the direct limit of infinite cyclic 
subgroups of the rational numbers, $\displaystyle{\varinjlim \left\{2^{-k}\mathbb{Z}\mid k = 0, 1, 2, \dots \right\}}$ with the dual group then turning out to be the inverse limit of dual groups of 
each of these subgroups, namely the unit circle group under the repeated squaring map ${\displaystyle \zeta \mapsto \zeta ^{2}}$. An element of the dyadic solenoid can be represented as an infinite sequence of complex numbers $q_0, q_1, q_2, ...$, with the properties that each $q_k$ lies on the unit circle and that, for all $k > 0$, $q_k^2 = q_{k - 1}$. The group 
operation on these elements multiplies any two sequences component-wise. Each element of the dyadic solenoid corresponds to a character of the dyadic rationals that maps $a/2^b$ to the complex number $q_b^a$. Conversely, every character $\gamma$ of the dyadic rationals corresponds to an element of the dyadic solenoid given by $q_k$ = $\gamma(1/2^k)$.

Let $G$ be an LCA group which admits a group action on the von Neumann algebra in the form of $*$-automorphisms $\alpha_g:\M\to \M$. Write $\widehat{\alpha}_\gamma$ ($\gamma\in \widehat{G}$) for the action induced by the dual group $\widehat{G}$. One may use these structures to introduce the following definition: 

\begin{definition}\label{5:D dualwt}
We formally define the operator-valued weight $\mathscr{W}_G$ from $(\mathcal{M} \rtimes_\alpha G)_+$ onto the extended positive part of $\pi(M)$ by the prescription 
 $$\mathscr{W}_G(a)=\int_{\widehat{G}}\widehat{\alpha}_\gamma(a)\,d\gamma,\quad a\in(\mathcal{M} \rtimes_\alpha G)_+.$$
\end{definition}

In the case where the group $G$ is discrete, the dual group is compact and the integral in the definition therefore $\pi(M)$-valued on $\mathcal{M} \rtimes_\alpha G$. So in this case $\mathscr{W}_G$ is (up to a positive factor) a faithful normal conditional expectation from $\mathcal{M} \rtimes_\alpha G$ onto $\pi(M)$. Since by definition $\tnu=\nu \circ \pi_\alpha^{-1}\circ \mathscr{W}_G$, we clearly have that $\tnu\circ \mathscr{W}_G=\tnu$. In this case the action of $\mathscr{W}_G$ can be very elegantly described on the $\sigma$-weakly dense subspace $\mathrm{span}\{\lambda_t\pi_\alpha(a):t\in G,\, a\in\M\}$ of $\mathcal{M} \rtimes_\alpha G$. We capture this fact in the following well-known result presented here for the sake of the reader.

\begin{corollary}\label{5:C OVexistcor}
If the group $G$ is discrete, the operator-valued weight $\mathscr{W}_G$ defined above is a positive scalar multiple of a faithful normal conditional expectation from $\mathcal{M} \rtimes_\alpha G$ onto $\pi(\M)$. The action of this conditional expectation is uniquely determined by the formula
\begin{equation}\label{Waction} \mathscr{W}_G(\lambda_g\pi(a))=\left\{\begin{array}{ll} \pi(a) &\quad\mbox{ if }g=0\\ 0 &\quad\mbox{ otherwise }\end{array}\right.\qquad g\in G, a\in \M. \end{equation}
\end{corollary}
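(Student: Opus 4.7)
The plan is to leverage the fact that when $G$ is discrete its dual $\widehat G$ is compact, so that the Haar integral defining $\mathscr W_G$ produces genuinely bounded elements of $\M\rtimes_\alpha G$ rather than merely elements of the extended positive part $\widehat{\pi(\M)}_+$. Fix a Haar measure $d\gamma$ on $\widehat G$ with finite total mass $c>0$. For $a\in(\M\rtimes_\alpha G)_+$ the integrand $\gamma\mapsto \widehat\alpha_\gamma(a)$ is norm-bounded by $\|a\|$, so $\mathscr W_G(a)$ is bounded. Translation invariance of Haar measure then yields
$$\widehat\alpha_{\gamma_0}(\mathscr W_G(a))=\int_{\widehat G}\widehat\alpha_{\gamma_0\gamma}(a)\,d\gamma=\mathscr W_G(a)\qquad(\gamma_0\in\widehat G),$$
placing $\mathscr W_G(a)$ in the fixed point algebra of $\widehat\alpha$, which by the discussion preceding the corollary is exactly $\pi(\M)$. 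Linear extension thus provides a map $\mathscr W_G:\M\rtimes_\alpha G\to\pi(\M)$.

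Next I would verify that $\mathbb E:=c^{-1}\mathscr W_G$ is a faithful normal conditional expectation onto $\pi(\M)$. For $x,y\in\pi(\M)$ and $a\in\M\rtimes_\alpha G$, the $\widehat\alpha$-invariance of $x,y$ together with the automorphism property gives $\widehat\alpha_\gamma(xay)=x\widehat\alpha_\gamma(a)y$; integration yields the bimodule identity $\mathscr W_G(xay)=x\mathscr W_G(a)y$, and specialising to $a=\I$ forces $\mathscr W_G(x)=cx$ for $x\in\pi(\M)$, so $\mathbb E$ is a projection with range $\pi(\M)$. Normality is built into the operator-valued weight framework and follows from monotone convergence for the integral against a $\sigma$-weakly continuous positive integrand. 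For faithfulness, suppose $a\geq 0$ and $\mathscr W_G(a)=0$; then for every normal state $\phi$ the nonnegative continuous function $\gamma\mapsto\phi(\widehat\alpha_\gamma(a))$ on $\widehat G$ integrates to zero and hence vanishes identically, so evaluating at the trivial character gives $\phi(a)=0$, which forces $a=0$.

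Finally I would compute $\mathbb E$ on the $\sigma$-weakly dense subalgebra $\mathrm{span}\{\lambda_g\pi(a):g\in G,\,a\in \M\}$. By \eqref{5:eqn dual}, $\widehat\alpha_\gamma(\lambda_g\pi(a))=\overline{\gamma(g)}\lambda_g\pi(a)$, hence
$$\mathscr W_G(\lambda_g\pi(a))=\Bigl(\int_{\widehat G}\overline{\gamma(g)}\,d\gamma\Bigr)\lambda_g\pi(a).$$
The classical orthogonality of characters on the compact abelian group $\widehat G$ (equivalently Pontryagin duality applied to the discrete $G$) declares the scalar integral to equal $c$ when $g=0$ and $0$ otherwise, giving the asserted formula for $\mathbb E$ after division by $c$. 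Uniqueness of $\mathbb E$ is then immediate from its normality and the $\sigma$-weak density of this span. The only mildly technical point is the first step, namely ensuring that compactness of $\widehat G$ upgrades an a priori extended-positive-part-valued integral to a $\pi(\M)$-valued map; once this is settled, everything else reduces to integrating the explicit algebraic rules for $\widehat\alpha$ against Haar measure.
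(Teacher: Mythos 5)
Your proposal is correct and follows essentially the same route as the paper: exploit compactness of $\widehat{G}$ to make the Haar integral $\pi(\M)$-valued, normalise so that $\mathscr{W}_G(\I)=\I$, and evaluate on the $\sigma$-weakly dense span of the $\lambda_g\pi(a)$ using the orthogonality relation $\int_{\widehat{G}}\overline{\gamma(g)}\,d\gamma=\delta_{g,0}$. The extra verifications you supply (the bimodule identity, faithfulness via normal states, and normality) are details the paper absorbs into the general operator-valued weight framework, but they do not change the argument.
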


\begin{proof} We have already noted that the group $G$ is discrete if and only if the dual group $\widehat{G}$ is compact. Being compact, Haar measure on $\widehat{G}$ will be finite. It is clear that in this case $\mathscr{W}_G(a)=\int_{\widehat{G}}\widehat{\alpha}_\gamma(a)\,d\gamma$ will be an element of $\M$ for each $a\in \mathcal{M} \rtimes_\alpha G$. In fact, on rescaling we may assume Haar measure on $\widehat{G}$ to be a probability measure, in which case $\mathscr{W}_G(\I)=\I$. The fact that the action of the conditional expectation on terms of the form $\lambda_g\pi(a)$ (where $g\in G, a\in \M$) uniquely determines the expectation, follows from the $\sigma$-weak density of $\mathrm{span}\{\lambda_t\pi_\alpha(a):t\in G,\, a\in\M\}$ in $\mathcal{M} \rtimes_\alpha G$, and the noted normality of this expectation. Given such an element, we may apply the fact that $\pi(\M)$ corresponds to the fixed points of the dual action (\cite[Lemma 3.6]{haag-DW1}, to see that
 $$\mathscr{W}_G(\lambda_g\pi(a))=\int_{\widehat{G}}\widehat{\alpha}_\gamma(\lambda_g\pi(a))\,d\gamma =\lambda_g\pi(a) \int_{\widehat{G}}\overline{\gamma(g)}\,d\gamma.$$ 
 Assuming $G$ to be additive, the claim now follows from the known fact that $$\int_{\widehat{G}}\overline{\gamma(g)}\,d\gamma=\left\{\begin{array}{ll} 1 &\quad\mbox{ if }g=0\\ 0 &\quad\mbox{ otherwise }\end{array}\right..$$(See Exercise VII.5.6 of \cite{Katznelson}.)
\end{proof}

Let $\M$ be a von Neumann algebra equipped with a faithful normal weight $\nu$. The restriction of the mapping $t\mapsto \sigma_t^\nu$ to $\mathbb{Q}_D$ determines an action of the group $\mathbb{Q}_D$ on $\M$. We will write $\M\rtimes\mathbb{Q}_D$ for the crossed product 
of $\M$ with respect to this action. By the above discussion $\mathscr{W}_{\mathbb{Q}_D}$ is a $\widehat{\alpha}_\gamma$-invariant faithful normal conditional expectation $\mathscr{W}_{\mathbb{Q}_D}$ from $\M\rtimes\mathbb{Q}_D$ onto $\M$. 

For the rest of this section denote $\M\rtimes\mathbb{Q}_D$ by $\R$. We noted above that the subgroups $\left\{2^{-k}\mathbb{Z}\mid i = 0, 1, 2, \dots \right\}$ increase to $\mathbb{Q}_D$. We shall use this structure to construct a matching sequence of von Neumann algebras increasing to $\R$. From here on, this section will be devoted to proving the version of the reduction theorem stated below. We shall prove this theorem by means of a series of not insignificant lemmata. Readers familiar with the proof in \cite{HJX}, will recognise the cousins of these lemmata in \cite{HJX}.

\begin{theorem}\label{red s-finite}
Let $\M$ and $\R$ as above, there exists an increasing sequence $(\R_{n})_{n \geq 1}$ of von Neumann subalgebras of $\R$
satisfying the following properties:
\begin{enumerate}
\item Each $\R_{n}$ is semifinite, and is even finite admitting a faithful normal tracial state if $\nu$ is a state.
\item $\bigcup_{n\geq 1}\R_n$ is $\sigma$-strong* dense in $\R$.
\item For every $n\in\bN$ there exists a faithful normal conditional expectation $\mathscr{W}_{n}$ from $\R$ onto $\R_{n}$ such that
$$\tnu\circ\mathscr{W}_{n}=\tnu \quad \mbox{ and } \quad \sigma_{t}^{\tnu}\circ\mathscr{W}_{n} =\mathscr{W}_{n}\circ \sigma^{\tnu}_{t}, \quad t\in\bR.$$(Notice that the equality $\tnu\circ\mathscr{W}_{n}=\tnu$ ensures that $\mathscr{W}_{n}$ maps $\mathfrak{m}(\R)_{\tnu}^+$ onto $\mathfrak{m}(\R_n)_{\tnu}^+$. So by the normality of $\mathscr{W}_{n}$, $\mathfrak{m}(\R_n)_{\tnu}^+$ must be $\sigma$-weakly dense in $\R_n^+$ and hence $\tnu{\upharpoonright}\R_n$ semifinite.)
\end{enumerate}
\end{theorem}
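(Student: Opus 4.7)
My plan is to follow the architecture of the Haagerup-Junge-Xu proof \cite[Theorem 2.1]{HJX}, adapting its perturbation machinery to cope with the fact that $\tnu$ is now merely an fns weight rather than a state. I take $\R_n$ to be the von Neumann subalgebra of $\R$ generated by $\pi(\M)$ together with the unitaries $\{\lambda_s\colon s\in 2^{-n}\mathbb{Z}\}$, so that abstractly $\R_n\cong \M\rtimes_{\sigma_{2^{-n}}^\nu}\mathbb{Z}$. Item (2) is then immediate: $2^{-n}\mathbb{Z}\subset 2^{-(n+1)}\mathbb{Z}$ yields monotonicity, while $\bigcup_n 2^{-n}\mathbb{Z}=\mathbb{Q}_D$ combined with $\R=(\pi(\M)\cup\{\lambda_s\colon s\in\mathbb{Q}_D\})''$ exhibits a $\sigma$-strong* generating family for $\R$ sitting inside $\bigcup_n\R_n$.

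For item (3), I first check that each $\R_n$ is globally $\sigma_t^{\tnu}$-invariant: $\sigma_t^{\tnu}$ restricts to $\pi\circ\sigma_t^\nu$ on $\pi(\M)$ and fixes every $\lambda_s$ with $s\in\mathbb{Q}_D$. The other ingredient demanded by Takesaki's theorem is semifiniteness of $\tnu\upharpoonright\R_n$, which I would obtain by applying Proposition \ref{analytic-n(A)} to the $\sigma^{\tnu}$-invariant subalgebra $\R_n$, having first checked that products $\pi(a)\lambda_s$ with $a\in\mathfrak{n}_\nu\cap\mathfrak{n}_\nu^*$ and $s\in 2^{-n}\mathbb{Z}$ make $\mathfrak{n}_{\tnu}(\R_n)\cap\mathfrak{n}_{\tnu}(\R_n)^*$ $\sigma$-weakly dense in $\R_n$. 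Then \cite[Theorem IX.4.2]{Tak2} delivers the required faithful normal conditional expectation $\mathscr{W}_n\colon\R\to\R_n$ with $\tnu\circ\mathscr{W}_n=\tnu$, and the commutation with $\sigma_t^{\tnu}$ is automatic from the Takesaki correspondence.

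The critical item (1) I attack via a Connes cocycle perturbation of $\tnu$ in the spirit of Haagerup. Let $a_n$ be the self-adjoint element of the commutative subalgebra $\{\lambda_s\colon s\in 2^{-n}\mathbb{Z}\}''\subset\R_n$ with spectrum in $(-\pi,\pi]$ and $\lambda_{2^{-n}}=e^{ia_n}$, and set $u_t:=e^{-it\cdot 2^n a_n}\in\R_n$. Because $\{\lambda_s\colon s\in\mathbb{Q}_D\}''$ sits in the centralizer of $\tnu$, the family $(u_t)$ is a strongly continuous one-parameter unitary group of $\sigma^{\tnu}$-fixed elements, hence trivially a Connes cocycle. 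Connes' theorem then produces an fns weight $\psi_n$ on $\R$ with $(D\psi_n\colon D\tnu)_t=u_t$ and $\sigma_t^{\psi_n}=\mathrm{Ad}(u_t)\circ\sigma_t^{\tnu}$. The calibration $u_{2^{-n}}=\lambda_{2^{-n}}^*$ cancels exactly the conjugation by $\lambda_{2^{-n}}$ that implements $\sigma_{2^{-n}}^\nu$ on $\pi(\M)$, and a short computation then gives $\sigma_{2^{-n}}^{\psi_n}\upharpoonright\R_n=\mathrm{id}_{\R_n}$, so that $\sigma^{\psi_n}\upharpoonright\R_n$ is periodic with period $2^{-n}$.

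The hardest step, and the one responsible for most of the added length over the state case of \cite{HJX}, is to promote this periodicity to actual semifiniteness of $\R_n$: periodicity alone is a priori consistent with $\R_n$ being of type III${}_\lambda$ with $\lambda=e^{-2\pi\cdot 2^n}$, and in the $\sigma$-finite state case the upgrade is handled for free by the observation that $\psi_n\upharpoonright\R_n$ is already a tracial state — a shortcut unavailable for general weights. My plan here is to decompose $\R_n$ into the discrete spectral subspaces of the $\mathbb{T}$-action $\sigma^{\psi_n}\upharpoonright\R_n$, identify the centralizer piece $\R_n^{(0)}=(\R_n)_{\psi_n}$ on which $\psi_n$ restricts to a tracial weight, and construct an fns trace $\tau_n$ on all of $\R_n$ via the Takesaki-duality decomposition $\R_n\cong\R_n^{(0)}\rtimes\mathbb{Z}$ together with the expectation onto the centralizer, carefully tracking how the cocycle $(u_t)$ scales $\psi_n$ on the spectral pieces so that the induced $\mathbb{Z}$-action on $\R_n^{(0)}$ preserves the trace. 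Verifying that the resulting $\tau_n$ is an honest fns trace (rather than only an fns weight) is where the $L^p$-level technology of Section \ref{S3} earns its keep, notably the compatibility of the embeddings $\mathfrak{j}^{(q)}$ and $\mathfrak{i}^{(p)}$ with conditional expectations (Proposition \ref{exp-props}) together with the density facts of Proposition \ref{P density1}. In the state case this $\tau_n$ additionally turns out to be finite, giving the parenthetical strengthening of item (1).
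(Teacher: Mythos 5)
There is a genuine gap, and it is located exactly where you place the ``hardest step.'' Your choice $\R_n=\big(\pi(\M)\cup\{\lambda_s\colon s\in 2^{-n}\mathbb{Z}\}\big)''\cong\M\rtimes_{\sigma^\nu}2^{-n}\mathbb{Z}$ is not the subalgebra used in \cite{HJX} or in the present paper, and it cannot satisfy item (1): this algebra contains $\pi(\M)$, and a crossed product of a type III algebra by a \emph{discrete} subgroup of the modular group is in general still type III. Your cocycle perturbation correctly produces a weight $\psi_n$ whose modular group is $2^{-n}$-periodic on $\R_n$, but a periodic modular group is precisely the signature of a type III$_\lambda$ algebra ($0<\lambda<1$), not a route out of type III. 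The proposed repair --- writing $\R_n\cong\R_n^{(0)}\rtimes\mathbb{Z}$ and ``tracking the scaling so that the induced $\mathbb{Z}$-action preserves the trace'' --- cannot succeed: the trace-scaling factor of that action is Connes' invariant $\lambda$, it genuinely is $\neq 1$ in the type III$_\lambda$ case, and no fns trace can exist on a type III algebra. Your reading of the state case is also off: in \cite{HJX} the $\R_n$ are not crossed products by $2^{-n}\mathbb{Z}$; $\varphi_n\upharpoonright\R_n$ is tracial there \emph{because} $\R_n$ is defined to be the centralizer of $\varphi_n$, not because of any shortcut special to states.

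The construction that works (and is the one in the paper) takes $b_n\in\mathcal{Z}(\R_{\tnu})$ with $e^{ib_n}=\lambda_{2^{-n}}$ (Lemma \ref{bn}), sets $a_n=2^nb_n$ and $\nu_n=\tnu(e^{-a_n/2}\,\cdot\,e^{-a_n/2})$, and defines $\R_n:=\R_{\nu_n}$, the \emph{centralizer} of $\nu_n$. Semifiniteness is then essentially free: an fns weight restricted to its own centralizer is an fns trace (Lemma \ref{construction of Rn}(6)), so no discrete decomposition is needed. The $2^{-n}$-periodicity of $\sigma^{\nu_n}$ is used instead to build $\mathscr{W}_n$ by averaging $\sigma^{\nu_n}_t$ over one period, and the inclusion $\R_n\subset\R_{n+1}$ requires the spectral-projection argument of Lemma \ref{construction of Rn}(5). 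The price of this choice is that item (2) becomes the hard part: the density of $\bigcup_n\R_n$ is no longer a statement about generating unitaries but requires the commutator estimates $\|[b_n,w]\|_{\tnu}\to 0$ on carefully chosen $\sigma$-weakly dense subspaces of $\mathfrak{n}_{\tnu}$ (Lemma \ref{L2 estimate}) and a duality argument in the predual. Your proposal disposes of density in one line, but only because it applies to the wrong $\R_n$; for the centralizers this step is where most of the work of the weight case actually lives.
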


Before proceeding with the proof, we have one technical issue to sort out. We computed the crossed product $\R = \M\rtimes\mathbb{Q}_D$ 
by using a \emph{restriction} of the modular group $t\mapsto\sigma_t^\nu$ to $\mathbb{Q}_D$. Let's denote this restriction by 
$t\mapsto\alpha_t$. The unitary group $t\mapsto \lambda_t$ ($t\in\mathbb{Q}_D$) induces an automorphism group $t\mapsto \lambda_t(\cdot)\lambda_t^*$ (
$t\in \mathbb{Q}_D$) on $\R$. This then raises the question of how this automorphism group, compares to the modular group $t\mapsto\sigma_t^{\tnu}$. Since $\alpha_t$ is just the restriction of $\sigma_t^\nu$, $\nu$ is clearly $\alpha_t$-invariant. This fact ensures that the above question has a very elegant answer. 

\begin{lemma}\label{10:L modgpred}
For any $t\in \mathbb{Q}_D$ and any $x\in \R$, we have that $\sigma_t^{\tnu}(x)=\lambda_tx\lambda_t^*$.
\end{lemma}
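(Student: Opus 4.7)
The plan is to check the equality $\sigma_t^{\tnu}(x) = \lambda_t x \lambda_t^*$ on a $\sigma$-weakly dense subalgebra of $\R$ and then extend by normality. Both $\sigma_t^{\tnu}$ and the inner automorphism $\mathrm{Ad}(\lambda_t)(\cdot) = \lambda_t(\cdot)\lambda_t^*$ are normal $*$-automorphisms of $\R$, so it suffices to verify the equality on the two families of generators, namely the $\pi(a)$ for $a \in \M$ and the $\lambda_s$ for $s \in \mathbb{Q}_D$. The whole argument leans on the fact that $\alpha_t = \sigma_t^\nu{\upharpoonright}\mathbb{Q}_D$, so that $\nu$ is automatically $\alpha$-invariant (the modular group of a weight always preserves that weight).

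For the first family, the defining intertwining relation for the crossed product immediately yields $\lambda_t \pi(a) \lambda_t^* = \pi(\alpha_t(a)) = \pi(\sigma_t^\nu(a))$. On the other hand, for the crossed product $\M\rtimes_\nu\mathbb{R}$ the paper already recorded the formula $\sigma_t^{\tnu}(\pi(a)) = \pi(\sigma_t^\nu(a))$; the proof of that formula, as in \cite{haag-OV1,haag-OV2}, relies only on the defining expression $\tnu = \nu \circ \pi^{-1} \circ \mathscr{W}_G$ together with $\alpha$-invariance of $\nu$, and hence transfers verbatim from $G = \mathbb{R}$ to $G = \mathbb{Q}_D$. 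The two expressions then match, giving the desired identity on $\pi(\M)$.

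For the second family, abelianness of $\mathbb{Q}_D$ gives $\lambda_t \lambda_s \lambda_t^* = \lambda_s$, so the task reduces to showing $\sigma_t^{\tnu}(\lambda_s) = \lambda_s$, i.e.\ that $\lambda_s$ lies in the centralizer of $\tnu$. Here the key tool is the explicit formula from Corollary \ref{5:C OVexistcor}, namely $\mathscr{W}_{\mathbb{Q}_D}(\lambda_u \pi(b)) = \pi(b)$ if $u = 0$ and $0$ otherwise. Using the commutation $\pi(a)\lambda_s = \lambda_s \pi(\sigma_{-s}^\nu(a))$ to push the shift past the representation, one may evaluate $\tnu(\lambda_s x)$ and $\tnu(x\lambda_s)$ for $x = \lambda_r \pi(a)$ directly via this formula; both reduce to the same indicator of $r = -s$ multiplied by $\nu(a)$ and $\nu(\sigma_{-s}^\nu(a))$ respectively, and these coincide because $\nu$ is $\sigma^\nu$-invariant. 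Extending by linearity and normality, the resulting trace identity persists on $\mathfrak{m}_{\tnu}$, placing $\lambda_s$ into the centralizer of $\tnu$ and hence forcing $\sigma_t^{\tnu}(\lambda_s) = \lambda_s$.

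Putting the two cases together, the equality holds on all generators, hence by linearity on the $\sigma$-weakly dense $*$-subalgebra that they generate, and then by $\sigma$-weak continuity of both automorphisms on all of $\R$. I expect the subtlest step to be the centralizer verification for $\lambda_s$: it hinges on the clean description of $\mathscr{W}_{\mathbb{Q}_D}$ on the span of the $\lambda_r \pi(a)$, which is available here precisely because $\mathbb{Q}_D$ is discrete and $\widehat{\mathbb{Q}_D}$ correspondingly compact; the continuous-group analogue would instead go via Stone's theorem and $\lambda_t = h^{it}$, a route which is not directly open in the present discrete setting.
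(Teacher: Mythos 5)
Your proposal is correct and follows the same overall route as the paper: verify the identity on the two generating families $\pi(\M)$ and $\{\lambda_s\}$ and extend by normality, with the $\pi(a)$ case handled exactly as in the paper via the covariance relation and Haagerup's formula $\sigma_t^{\tnu}(\pi(a))=\pi(\sigma_t^\nu(a))$. The one genuine difference is the second family: the paper disposes of $\sigma_t^{\tnu}(\lambda_s)=\lambda_s$ by citing \cite[Theorem 3.2(2)]{haag-DW1} together with the $\alpha_t$-invariance of $\nu$, whereas you re-prove it by showing $\lambda_s$ lies in the centralizer of $\tnu$ through a direct computation with the explicit formula for $\mathscr{W}_{\mathbb{Q}_D}$ from Corollary \ref{5:C OVexistcor}. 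That substitution is sound and has the merit of being self-contained and of exploiting the discreteness of $\mathbb{Q}_D$ exactly where it matters; the only care it requires, which your sketch glosses over, is that $\tnu(\lambda_r\pi(a))$ is not defined for arbitrary $a\in\M$, so the centralizer identity $\tnu(\lambda_s x)=\tnu(x\lambda_s)$ must be checked for $x$ ranging over a suitable dense subset of $\mathfrak{m}_{\tnu}$ (e.g.\ spans of $\lambda_r\pi(a)$ with $a\in\mathfrak{m}_\nu$, or products $y^*z$ with $y,z\in\mathrm{span}\{\lambda_r\pi(b)\colon b\in\mathfrak{n}_\nu\}$) together with the stability conditions $\lambda_s\mathfrak{m}_{\tnu}\subseteq\mathfrak{m}_{\tnu}$ and $\mathfrak{m}_{\tnu}\lambda_s\subseteq\mathfrak{m}_{\tnu}$; the citation route avoids this bookkeeping entirely.
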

 
\begin{proof} For the sake of clarity we will here distinguish between $\M$, and the copy thereof inside $\R$, writing $\pi(\M)$ for that copy. For any $a\in \M$ and any $t\in \mathbb{Q}_D$, it follows from for example \cite[Lemma 2.9]{vD} and \cite[Theorem 4.7(1)]{haag-OV1} that
$$\lambda_t\pi(a)\lambda_t^*=\pi(\alpha_t(a))=\pi(\sigma_t^\nu(a))=\sigma_t^{\tnu}(\pi(a)).$$Now observe that for $s, t\in \mathbb{Q}_D$ we also have that $\lambda_t\lambda_s\lambda_t^*=\lambda_s=\sigma_t^{\tnu}(\lambda_s)$, where the second equality follows from \cite[Theorem 3.2(2)]{haag-DW1} and the fact that $\nu$ is $\alpha_t$-invariant. Since $\R= \M\rtimes \mathbb{Q}_D$ is generated by $\pi(\M)$ and the shift operators $\lambda_t$ ($t\in \mathbb{Q}_D$), these observations are enough to prove the claim. 
\end{proof} 

With $\mathbb{T}$ denoting the unit circle of the complex plane equipped with normalized Lebesgue measure $dm$, we obtain the following very elegant formula. Readers should compare this formula with the one presented in the hypothesis of \cite[Lemma 2.2]{HJX}. In \cite[Lemma 2.2]{HJX} the focus was on describing the possible values of $\tnu(f(\lambda_t))$. Here by contrast our interest is in the possible operator values of $\mathscr{W}_{\mathbb{Q}_D}(f(\lambda_t)))$. This subtle but important difference is one of the key aspects making the extension of the reduction theorem possible.

\begin{lemma}\label{distribution of lambda}
For each $f \in L_\infty(\mathbb{T})$ and each $t\in \mathbb{Q}_D\setminus\{ 0 \}$, we have that
\begin{equation}\label{distribution of lambda formula}
\mathscr{W}_{\mathbb{Q}_D}(f(\lambda_t))) = \left(\int_{\mathbb{T}}\ f(z)\,dm(z)\right)\I.
\end{equation}
\end{lemma}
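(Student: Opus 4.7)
The formula asserts that under $\mathscr{W}_{\mathbb{Q}_D}$ the unitary $\lambda_t$ is ``distributed'' according to normalised Haar measure on $\mathbb{T}$. I will prove it in four stages, moving from monomials to trigonometric polynomials to $C(\mathbb{T})$ and finally to $L^\infty(\mathbb{T})$.

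The base case is monomials $f(z)=z^n$. Here $f(\lambda_t)=\lambda_t^n=\lambda_{nt}$, and since $\mathbb{Q}_D$ is torsion-free (as a subgroup of $\mathbb{Q}$) and $t\neq 0$, we have $nt=0$ if and only if $n=0$. Corollary \ref{5:C OVexistcor} then yields $\mathscr{W}_{\mathbb{Q}_D}(\lambda_t^n)=\delta_{n,0}\I$, which agrees with $\bigl(\int_{\mathbb{T}}z^n\,dm(z)\bigr)\I=\delta_{n,0}\I$. Linearity extends the formula to all trigonometric polynomials.

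For $f\in C(\mathbb{T})$ the continuous functional calculus $f\mapsto f(\lambda_t)$ is a norm-continuous unital $*$-homomorphism into $W^*(\lambda_t)\subseteq\R$. Both $f\mapsto \mathscr{W}_{\mathbb{Q}_D}(f(\lambda_t))$ and $f\mapsto\bigl(\int f\,dm\bigr)\I$ are then norm-continuous maps from $C(\mathbb{T})$ into $\R$, agreeing on the uniformly dense subspace of trigonometric polynomials (Stone--Weierstrass), hence on all of $C(\mathbb{T})$. For bounded Borel $f$, choose a uniformly bounded sequence $(f_n)\subset C(\mathbb{T})$ with $f_n\to f$ pointwise; then by the spectral theorem $f_n(\lambda_t)\to f(\lambda_t)$ strongly, by normality of $\mathscr{W}_{\mathbb{Q}_D}$ the images converge $\sigma$-weakly, and by dominated convergence $\int f_n\,dm\to\int f\,dm$. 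This gives the formula for all bounded Borel $f$.

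It remains to see that $f\mapsto f(\lambda_t)$ descends to $L^\infty(\mathbb{T})$. If $f=0$ $m$-a.e., pick any bounded Borel representative $g$: then $\int|g|^2\,dm=0$, so by the previous step $\mathscr{W}_{\mathbb{Q}_D}\bigl(g(\lambda_t)^*g(\lambda_t)\bigr)=\mathscr{W}_{\mathbb{Q}_D}\bigl((|g|^2)(\lambda_t)\bigr)=0$, and faithfulness of $\mathscr{W}_{\mathbb{Q}_D}$ forces $g(\lambda_t)=0$. Hence $f(\lambda_t)$ depends only on the $L^\infty$-class of $f$, and the formula (\ref{distribution of lambda formula}) holds for every $f\in L^\infty(\mathbb{T})$. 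The main subtlety is concentrated in the last stage: one must simultaneously (i) use normality of $\mathscr{W}_{\mathbb{Q}_D}$ to extend the formula beyond $C(\mathbb{T})$, and (ii) invoke faithfulness of $\mathscr{W}_{\mathbb{Q}_D}$ to show that Lebesgue-null Borel sets are spectral-null for $\lambda_t$, so the statement is meaningful on $L^\infty$-classes rather than on pointwise representatives.
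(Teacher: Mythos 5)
Your proof follows essentially the same route as the paper's: compute $\mathscr{W}_{\mathbb{Q}_D}(\lambda_t^n)=\delta_{n,0}\I$ from Corollary \ref{5:C OVexistcor} and the torsion-freeness of $\mathbb{Q}_D$, match this against $\int_{\mathbb{T}}z^n\,dm$, and then extend from trigonometric polynomials by density and normality. The paper disposes of the extension in one line (``trigonometric polynomials are $\sigma$-weakly dense in $L_\infty(\mathbb{T})$, hence normality of $\mathscr{W}_{\mathbb{Q}_D}$ finishes the proof''), whereas you unpack it into the passage $C(\mathbb{T})\to$ bounded Borel $\to L^\infty$-classes; in particular your use of faithfulness of $\mathscr{W}_{\mathbb{Q}_D}$ to show that $m$-null sets are spectral-null for $\lambda_t$, so that $f(\lambda_t)$ really only depends on the $L^\infty$-class of $f$, addresses a well-definedness point the paper leaves implicit (and which it in effect exploits later, in Lemma \ref{bn}, to conclude that $\lambda_{2^{-n}}$ has no point spectrum). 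One small imprecision: not every bounded Borel function on $\mathbb{T}$ is a pointwise limit of a single uniformly bounded sequence of continuous functions (that only gives Baire class 1); you should instead observe that the set of bounded Borel $f$ satisfying the formula contains $C(\mathbb{T})$ and is closed under uniformly bounded pointwise sequential limits, hence is all bounded Borel functions. With that standard repair the argument is complete and correct.
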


\begin{proof} Let $t \in \mathbb{Q}_D\setminus\{ 0 \}$ be given. We may then apply Corollary \ref{5:C OVexistcor} to see that for each $t\in \mathbb{Q}_D$ and each $n\in \mathbb{Z}$, we will have that
$$\mathscr{W}_{\mathbb{Q}_D}(\lambda_t^{n}) =
\mathscr{W}_{\mathbb{Q}_D}(\lambda_{nt}) = \left\{\begin{array}{ll}
\I & \quad\mbox{ if } n=0, \\
0  & \quad\mbox{ otherwise.}\end{array}\right.$$We of course also have that
$$\int_{\mathbb{T}}\ z^n\,dm(z) = \int_{|z|=1}\ z^{n-1}\,dz = \left\{\begin{array}{ll}
  1 & \quad\mbox{ if } n=0, \\
  0  & \quad\mbox{ otherwise.}\end{array}\right.$$  
Thus \eqref{distribution of lambda formula} holds whenever $f$ is a
trigonometric polynomial. The trigonometric polynomials are of course $\sigma$-weakly dense in $L_\infty(\mathbb{T})$, and hence the normality of $\mathscr{W}_{\mathbb{Q}_D}$ therefore ensures that \eqref{distribution of lambda formula} holds for all $f \in L_\infty(\mathbb{T})$.
\end{proof}

Let $\mathcal{Z}(\R)$ denote the center of $\R$. The following lemma is just a version of \cite[Lemma 2.3]{HJX} with a somewhat more detailed proof.

\begin{lemma}\label{bn}
\begin{enumerate}
\item $\lambda_t \in \mathcal{Z}(\R_{\tnu})$ for any $t\in \mathbb{Q}_D$.
\item For every $n \in\mathbb{N}$ there exists a unique $b_{n} \in
\mathcal{Z}(\R_{\tnu})$ such that $0 \leq b_{n} \leq 2 \pi$ and $e^{ib_n}
= \lambda_{2^{-n}}$.
\end{enumerate}
\end{lemma}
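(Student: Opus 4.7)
Both parts follow quickly once Lemma~\ref{10:L modgpred} is combined with standard spectral theory; part~(1) carries essentially all of the content.

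For part~(1), the first task is to verify that $\lambda_t \in \R_{\tnu}$. Because the action $\alpha$ of $\mathbb{Q}_D$ used to form $\R = \M \rtimes \mathbb{Q}_D$ is the restriction of the modular automorphism group of $\nu$, the weight $\nu$ is $\alpha$-invariant, and the standard theory of dual weights under weight-invariant group actions (the same input from \cite{haag-DW1} already invoked in the proof of Lemma~\ref{10:L modgpred}) delivers $\sigma_s^{\tnu}(\lambda_t) = \lambda_t$ for every $s \in \mathbb{R}$, placing $\lambda_t$ in $\R_{\tnu}$. Centrality is then immediate from Lemma~\ref{10:L modgpred}: for any $x \in \R_{\tnu}$ and any $s \in \mathbb{Q}_D \subset \mathbb{R}$ one has $x = \sigma_s^{\tnu}(x) = \lambda_s x \lambda_s^{*}$, so $\lambda_s$ commutes with every element of $\R_{\tnu}$, whence $\lambda_s \in \mathcal{Z}(\R_{\tnu})$.

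For part~(2), the unitary $\lambda_{2^{-n}}$ lies by~(1) in the abelian von Neumann algebra $\mathcal{Z}(\R_{\tnu})$. Applying the Borel functional calculus to the principal-branch logarithm $\varphi : \mathbb{T} \to [0, 2\pi)$, $\varphi(e^{i\theta}) = \theta$, we set $b_n := \varphi(\lambda_{2^{-n}})$. Then $b_n$ is self-adjoint with spectrum in $[0, 2\pi]$, lies in the von Neumann algebra generated by $\lambda_{2^{-n}}$ (hence in $\mathcal{Z}(\R_{\tnu})$), and satisfies $e^{ib_n} = \lambda_{2^{-n}}$ by the composition rule. Uniqueness is a routine spectral argument inside the abelian algebra generated by $b_n$ and any competitor $b'$: the constraints $0 \le b_n, b' \le 2\pi$ together with $e^{ib_n} = e^{ib'}$ force $b_n - b'$ to be supported on the spectral projection of $\lambda_{2^{-n}}$ at the point $1$ and valued in $\{-2\pi, 0, 2\pi\}$, and the normalisation $\varphi(1) = 0$ removes the remaining ambiguity. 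The only step in this plan requiring any genuine care is the invariance assertion $\lambda_t \in \R_{\tnu}$, which is not stated explicitly earlier in the excerpt but is precisely the dual-weight fact already underpinning Lemma~\ref{10:L modgpred}; no new technical ingredient is needed.
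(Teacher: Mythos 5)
Your part (1) and the existence half of part (2) are correct and follow essentially the same route as the paper: $\lambda_t\in\R_{\tnu}$ via the invariance of $\nu$ under the $\mathbb{Q}_D$-action and the dual-weight theory, centrality from $x=\sigma_s^{\tnu}(x)=\lambda_s x\lambda_s^*$ for $x\in\R_{\tnu}$, and $b_n$ produced by Borel functional calculus with the branch of $\arg$ taking values in $[0,2\pi)$.

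The uniqueness argument in part (2), however, has a genuine gap. You correctly reduce matters to the spectral projection $p$ of $\lambda_{2^{-n}}$ at the point $1$: any competitor $b'$ can differ from $b_n$ only on $p$, where the difference takes values in $\{-2\pi,0,2\pi\}$. But the final step --- that ``the normalisation $\varphi(1)=0$ removes the remaining ambiguity'' --- does not close the argument. The normalisation only fixes the value of \emph{your} $b_n$ on $p$; it imposes nothing on the competitor, which is merely assumed to be a self-adjoint element of $\mathcal{Z}(\R_{\tnu})$ with $0\le b'\le 2\pi$ and $e^{ib'}=\lambda_{2^{-n}}$, not the image of $\lambda_{2^{-n}}$ under some other branch of the logarithm. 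Indeed, if $p\neq 0$ then $b'=b_n+2\pi p$ is a second solution: it lies in $\mathcal{Z}(\R_{\tnu})$ since $p$ does, satisfies $0\le b'\le 2\pi$ because $b_np=\varphi(1)p=0$, and satisfies $e^{ib'}=\lambda_{2^{-n}}$ because $\lambda_{2^{-n}}p=p$. So uniqueness genuinely requires $p=0$, i.e.\ that $\lambda_{2^{-n}}$ has no point spectrum. This is precisely the ingredient the paper supplies: by Lemma \ref{distribution of lambda} one has $\mathscr{W}_{\mathbb{Q}_D}(f(\lambda_{2^{-n}}))=\left(\int_{\mathbb{T}}f\,dm\right)\I$ for every $f\in L^\infty(\mathbb{T})$, so taking $f=\chi_{\{1\}}$ gives $\mathscr{W}_{\mathbb{Q}_D}(p)=m(\{1\})\I=0$, and the faithfulness of $\tnu$ (equivalently of $\mathscr{W}_{\mathbb{Q}_D}$) forces $p=0$. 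Your proof needs this appeal to Lemma \ref{distribution of lambda} (or an equivalent argument) to be complete.
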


\begin{proof} Recall that by Lemma \ref{10:L modgpred}, we have that 
\begin{equation}\label{modular gp of dual weight bis}
\sigma_{t}^{\tnu} (x) = \lambda_t x\lambda_t^*\mbox{ for all }x \in \R, \ t \in G.
\end{equation}
This on the one hand ensures that each $\lambda_s$ ($s\in \mathbb{Q}_D$) is a fixed point of $\sigma_t^{\tnu}$ and hence an element of $\R_{\tnu}$, and on the other that we must for any $x\in \R_{\tnu}$ have that $x=\sigma_t^{\tnu}(x)=\lambda_t x\lambda_t^*$ for all $t\in\mathbb{Q}_D$.  
This clearly ensures the validity of (1). 

To prove (2) we use the branch of the complex logarithmic function $z\mapsto\log(z)$ given by $\log(z)=\ln|z|+i\arg(z)$ where 
$0\leq \arg(z)<2\pi$. For a unimodular complex number $z$ we of course have that $-i\log(z)=\arg(z)$ where $0\leq\arg(z)\leq 2\pi$. Any unitary $u\in \R$ generates a commutative von Neumann subalgebra $\R_u$ of $\R$ which is $*$-isomorphic to some $L^\infty(X,\Sigma,\mu)$ where $(X,\Sigma,\mu)$ is a localizable measure space. By the Borel functional calculus $\log(u)$ then corresponds to a function for which $0\leq -i\log(u)\leq 2\pi$. Thus $-i\log(u)$ is then a bounded positive map (with $\|-i\log(u)\|\leq 2\pi$) belonging to the commutative subalgebra $\R_u$. In the case of $u=\lambda_{2^{-n}}$, the subalgebra $\R_u$ is in fact a subalgebra of $\mathcal{Z}(\R_{\tnu})$. So setting $b_{n} = -i \log(\lambda_{2^{-n}})$ will do the job once we note that (by construction) $e^{i b_n} = \lambda_{2^{-n}}$. 

The uniqueness of $b_{n}$ follows from the fact that $\lambda_{2^{-n}}$ has
no point spectrum by virtue of Lemma \ref{distribution of lambda} and the faithfulness of $\tnu$.
\end{proof}

Now let $a_{n} = 2^{n} b_{n},$ and define a sequence $(\nu_{n})_{n\geq 1}$ of weights on $\R$  by
\begin{equation}\label{def of fn}
\nu_{n}(x)=\tnu(e^{-a_{n}/2} xe^{-a_{n}/2}),\quad
x \in \R^+,\ n \geq 1.
\end{equation}

Parts (3)-(5) of the next lemma corresponds to \cite[Lemma 2.4]{HJX}. All statements regarding the case where $\nu$ is a weight rather than a state (equivalently the case where the $\R_n$'s are semifinite) are of course new. The additional care that needed to be taken to achieve this generality, unfortunately substantially lengthened the proof and increased its complexity.

\begin{lemma}\label{construction of Rn}
\begin{enumerate}
\item Each $\nu_n$ is a faithful normal semifinite weight on $\R$.
\item We have that 
\begin{equation}\label{formula od sfn}
\sigma_{t}^{\nu_{n}}(x)=e^{-ita_{n}}\sigma_{t}^{\tnu}(x)e^{ita_{n}},
\quad x \in\R,\ t \in\mathbb{R}.
\end{equation}
\item $\sigma_{t}^{\nu_{n}}$ is $2^{-n}$-periodic for all $n\geq 1$.
\item Setting $\R_{n} = \R_{\nu_{n}}, \ n \geq 1$, it follows that there exists 
a unique faithful normal conditional expectation $\mathscr{W}_{n}$ from $\R$ onto $\R_{n}$ such that
 $$\tnu\circ\mathscr{W}_{n} = \tnu
 \quad\mbox{and}\quad
 \sigma_{t}^{\tnu}\circ\mathscr{W}_{n} = \mathscr{W}_{n}\circ
 \sigma_{t}^{\tnu}, \quad t \in\mathbb{R},\ n\geq 1.$$
 \item $\R_{n} \subset \R_{n+1}$.
 \item For each $n\in \mathbb{N}$, the restriction $\tau_n$ of $\nu_n$ to $\R_n$, is a faithful normal semifinite trace on $\R_n$. Moreover if $\nu$ is a state, then each  $\tau_n$ is a finite trace on $\R_n$.
 \end{enumerate}
 \end{lemma}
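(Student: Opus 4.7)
The plan is to treat (1)--(3) as a routine perturbation calculation, build $\mathscr{W}_n$ in (4) by direct averaging over the period found in (3), tackle (5) by exploiting the half-period action of $\sigma^{\nu_n}$, and then derive (6) quickly from (4).

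For (1)--(3), the key point is that $e^{-a_n/2}$ is bounded, invertible, and lies in $\mathcal{Z}(\R_{\tnu})$, with the functional inequality $e^{-\|a_n\|}\tnu \leq \nu_n \leq \tnu$ holding on $\R^+$, so $\nu_n$ inherits faithfulness, normality and semifiniteness from $\tnu$. The formula in (2) is then the standard Connes cocycle perturbation by a positive element of the centralizer, with $[D\nu_n : D\tnu]_t = e^{-ita_n}$. For (3), substituting $t = 2^{-n}$ into the modular formula together with $e^{-ib_n} = \lambda_{2^{-n}}^*$ and Lemma \ref{10:L modgpred} gives $\sigma_{2^{-n}}^{\nu_n} = \mathrm{id}_\R$, and $2^{-n}$-periodicity follows from the one-parameter group property. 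For (4) I construct $\mathscr{W}_n$ by averaging, $\mathscr{W}_n(x) = 2^n\int_0^{2^{-n}}\sigma_t^{\nu_n}(x)\,dt$, interpreted as a $\sigma$-weak Bochner integral. Its image is the fixed-point set of $\sigma_t^{\nu_n}$, namely $\R_n$; normality, faithfulness, and the $\R_n$-bimodule property are routine. Both $\tnu\circ\mathscr{W}_n = \tnu$ and the commutation with $\sigma_t^{\tnu}$ follow from $a_n \in \mathcal{Z}(\R_{\tnu})$: the centralizer property gives $\tnu(e^{-ita_n}ye^{ita_n})=\tnu(y)$, while $\sigma_t^{\tnu}(a_n) = a_n$ gives the commutation. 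Uniqueness is standard.

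The real work is in (5). First, from $\lambda_{2^{-n}} = \lambda_{2^{-(n+1)}}^2$ and the branch constraint $0 \leq b_n \leq 2\pi$ one derives the dichotomy $b_n = 2b_{n+1} - 2\pi Q$, where $Q$ is the spectral projection of $b_{n+1}$ corresponding to $[\pi, 2\pi)$; equivalently, $a_{n+1} - a_n = 2^{n+1}\pi Q$. Substituting into formula (2) yields $\sigma_t^{\nu_{n+1}}(x) = e^{-it\cdot 2^{n+1}\pi Q}\,\sigma_t^{\nu_n}(x)\, e^{it\cdot 2^{n+1}\pi Q}$, so for $x \in \R_n$ one has $\sigma_t^{\nu_{n+1}}(x) = e^{-it\cdot 2^{n+1}\pi Q}\,x\,e^{it\cdot 2^{n+1}\pi Q}$, which equals $x$ precisely when $[x,Q] = 0$. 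The main obstacle is that $Q \notin \mathcal{Z}(\R)$ in general (since $\lambda_{2^{-(n+1)}}$ does not commute with $\pi(\M)$), so this commutation is not automatic. The key observation that unlocks it is to evaluate $\sigma^{\nu_n}$ at the \emph{half}-period: using $e^{ib_n/2} = (\I-2Q)\lambda_{2^{-(n+1)}}$, which is read off from the same dichotomy via $b_n/2 = b_{n+1} - \pi Q$, a short computation yields $\sigma_{2^{-(n+1)}}^{\nu_n}(x) = (\I-2Q)\,x\,(\I-2Q)$. For $x \in \R_n$ this must equal $x$, which forces $Qx(\I-Q) = (\I-Q)xQ = 0$, i.e.\ $[x,Q] = 0$. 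Hence $\R_n \subset \R_{n+1}$.

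Part (6) then follows quickly from (4). The trace property of $\tau_n$ is the defining centralizer identity $\nu_n(xy) = \nu_n(yx)$ for $x\in\R_n$ and $y\in\mathfrak{m}_{\nu_n}$, and $\tau_n$ is faithful and normal as restrictions of such. For semifiniteness of $\tau_n$ on $\R_n$ I apply $\mathscr{W}_n$ to $\mathfrak{p}_{\tnu}$: since $\tnu\circ\mathscr{W}_n = \tnu$ and $\nu_n \leq \tnu$, one obtains $\mathscr{W}_n(\mathfrak{p}_{\tnu})\subset \mathfrak{p}_{\nu_n}(\R_n)$, and this is $\sigma$-weakly dense in $\R_n^+$ by normality of $\mathscr{W}_n$ together with the semifiniteness of $\tnu$. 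When $\nu$ is a state, Corollary \ref{5:C OVexistcor} together with the construction $\tnu = \nu\circ\pi^{-1}\circ\mathscr{W}_{\mathbb{Q}_D}$ forces $\tnu(\I) = \nu(\I) = 1$, whence $\nu_n(\I) \leq \tnu(\I) = 1$ and $\tau_n$ is a finite trace.
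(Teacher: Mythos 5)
Your proposal is correct and follows essentially the same route as the paper: the same perturbation formula for $\sigma^{\nu_n}$, the same averaging construction of $\mathscr{W}_n$, the same spectral dichotomy $a_{n+1}-a_n=2^{n+1}\pi Q$, and the same centralizer argument for the trace property. The only cosmetic difference is in (5), where you evaluate $\sigma^{\nu_n}$ at the half-period $2^{-(n+1)}$ via $e^{ib_n/2}=\lambda_{2^{-(n+1)}}(\I-2Q)$, whereas the paper reaches the identical identity $x=(\I-2Q)x(\I-2Q)$ by invoking the $2^{-n-1}$-periodicity of $\sigma^{\nu_{n+1}}$; these are the same computation read in opposite directions.
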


\begin{proof} (1): The normality of $\nu_n$ follows from the fact that $\tnu$ is normal and the fact that 
 $$\sup_\alpha e^{-a_{n}/2}x_\alpha e^{-a_{n}/2} = e^{-a_{n}/2}\sup_\alpha x_\alpha e^{-a_{n}/2}$$ 
for monotone increasing nets. Faithfulness similarly easily 
follows from the fact that for any $x\in \R^+$, 
 $$0=\nu_{n}(x)=\tnu(e^{-a_{n}/2} xe^{-a_{n}/2})\Rightarrow e^{-a_{n}/2} xe^{-a_{n}/2}=0\Rightarrow x=0.$$ 
 Semifiniteness follows from the fact that the $\sigma$-weak density of 
$\mathrm{span}\{x\in\R^+: \tnu(x)<\infty\}$ in $\R$, ensures the $\sigma$-weak density of $\mathrm{span}\{e^{a_{n}/2} xe^{a_{n}/2}\in\R^+: \tnu(x)<\infty\}$, and the fact that 
 $$\mathrm{span}\{x\in\R^+: \nu_n(x)<\infty\}=\mathrm{span}\{e^{a_{n}/2} xe^{a_{n}/2}\in\R^+: \tnu(x)<\infty\}.$$

\medskip

(2): By part (1) of Lemma \ref{bn}, $a_{n} \in \mathcal{Z}(\R_{\tnu})\subset\R_{\tnu}$. Thus this claim is a direct consequence of \cite[Theorem VIII.2.11]{Tak2}.

\medskip

(3): We know from Lemma \ref{10:L modgpred}, that 
$\sigma_t^{\tnu}(a) =\lambda_ta\lambda_t^*$ for every $t\in\mathbb{Q}_D$ and every $a\in \R$. 
It therefore follows from part (2) of Lemma \ref{bn} that
$$\sigma_{2^{-n}}^{\nu_{n}}(x)
= e^{-i\,b_{n}}\sigma^{\tnu}_{2^{-n}}(x)e^{ib_{n}}
= \lambda_{2^{-n}}^{\ast}\lambda_{2^{-n}}x
\lambda_{2^{-n}}^{\ast}\lambda_{2^{-n}}
= x$$for all $x\in\R$, thereby ensuring the validity of (3).

\medskip

(4):  Define $\mathscr{W}_{n}$ by
$$\mathscr{W}_{n}(x) = 2^{n}\int^{2^{-n}}_{0} \sigma_{t}^{\nu_{n}}(x)\, dt,\quad x \in \R.$$
By  the $2^{-n}$-periodicity of $\sigma^{\nu_{n}}$, we have that
$$\mathscr{W}_{n}(x) = \int^{1}_{0}\sigma_{t}^{\nu_{n}}(x)\,dt, \quad x \in \R.$$
It is then a routine matter to check that $\mathscr{W}_{n}$ is a faithful 
normal conditional expectation from $\R$ onto $\R_{n}.$ 

Next let $x\in \R^+$ be given. First suppose that $\tnu(x)<\infty$. Since $a_n\in \R_{\tnu}$, 
we may then apply \cite[Theorem VIII.2.6]{Tak2} and use the $\sigma^{\tnu}$-invariance of 
$\tnu$ to conclude that
$$\tnu\big(\sigma_{t}^{\nu_{n}} (x)\big)
=\tnu \big(e^{-ita_{n}}\sigma_{t}^{\tnu}(x)e^{ita_{n}} \big)  
=\tnu \big(\sigma_{t}^{\tnu}(x) \big)
=\tnu(x),\quad t \in\mathbb{R}.$$If on the other hand 
$\tnu\big(\sigma_{t}^{\nu_{n}} (x)\big)<\infty$ we may apply what we have just proven 
to conclude that 
$$\tnu\big(\sigma_{t}^{\nu_{n}} (x)\big)
=\tnu\big(\sigma_{-t}^{\nu_{n}}(\sigma_{t}^{\nu_{n}} (x))\big)
=\tnu(x), t \in\mathbb{R}.$$Thus for any $x\in \R^+$ and any $t\in \mathbb{R}$, 
$\tnu(x)<\infty$ if and only if $\tnu\big(\sigma_{t}^{\nu_{n}} (x)\big)<\infty$, 
in which case they are equal. We therefore clearly have that $\tnu\circ\sigma_{t}^{\nu_{n}}=\tnu$ 
for all $t\in \mathbb{R}$ and all $n\geq 1$. But then 
$$\tnu(\mathscr{W}_{n}(x))=\int^{1}_{0}\tnu\big(\sigma_{t}^{\nu_{n}}(x)\big)dt
 =\tnu (x),\quad x \in \R^+;$$
that is $\tnu\circ \mathscr{W}_{n}=\tnu$. The uniqueness of $\mathscr{W}_{n}$ is now ensured by 
\cite[Theorem IX.4.2]{Tak2} with the claimed commutation relation following from equation (\ref{formula od sfn}) 
and the definition of $\mathscr{W}_{n}$. 

\medskip

(5): For every natural number $n$, $a_n$ and $a_{n+1}$ will commute, given that they both belong to 
$\mathcal{Z}(\R_{\tnu})$. It is now an easy exercise to see that $\nu_{n+1}(x) = \nu_{n}(h_{n} x)$ for all $x\in\R$, 
where $h_n=e^{-a_{n+1}}e^{a_n}=e^{-(a_{n+1} - a_{n})}$. If we are able to show that $h_{n} \in \mathcal{Z}(\R_{n})$, it will 
then follow from \cite[Theorem VIII.2.11]{Tak2} that $\sigma_{t}^{\nu_{n+1}}(x)= e^{-ith_{n}}\sigma_{t}^{\nu_n}(x)e^{ith_{n}}= 
\sigma_{t}^{\nu_n}(x)$ for all $x\in\R_n$ and all $t \in\mathbb{R}$. This will clearly ensure that $\R_{n}\subset\R_{n+1}$ as claimed.

By part (2) and the fact that $a_k\in \mathcal{Z}(\R_{\tnu})$, we have that $\R_{\tnu} \subset \R_{\nu_{k}}$ for each $k\in \mathbb{N}$. In particular, we will then have that $h_{n} \in \R_{\nu_{n}}=\R_n$. As in Lemma \ref{bn} we now use the branch of the complex logarithmic function $z\mapsto\log(z)$ given $0\leq \arg(z)<2\pi$. Then 
$$a_{n} = -i2^{n}\log\,\lambda_{2^{-n}} = -i2^{n}\log\big(\lambda_{2^{-n-1}}^{2}\big),$$
whence
 $$a_{n+1} - a_{n} = -i2^{n} \big[ 2\log\,
 \lambda_{2^{-n-1}} -\log\,\big(\lambda_{2^{-n-1}}^{2}\big)\big].$$
However for any $z \in\mathbb{T}$,
 $$2\,\log\, z - \log(z^{2}) = \left\{\begin{array}{ll}
 0 & \textrm{if}\  0 \leq\arg\,z <\pi,\\
 2 \pi i &\mathrm{if}\ \pi \leq \arg\,z <2 \pi.
 \end{array}\right.$$
Hence
 $$a_{n+1} - a_{n} = 2^{n+1}\pi  e_{n},$$
where $e_{n}$ is the spectral projection of $\lambda_{2^{-n-1}}$
corresponding to $\im(z) < 0$. So for all $x\in\R$ and all $t\in\mathbb{R}$,
 $$\sigma_{t}^{\nu_{n+1}}(x) = h_{n}^{it}\sigma_{t}^{\nu_{n}}(x)h_{n}^{-it}
 = e^{-i 2^{n+1} \pi te_{n}}
 \sigma_{t}^{\nu_{n}} (x) e^{i 2^{n+1} \pi te_{n}}\,.$$
Consequently, if $x \in \R_n$, the $2^{-n-1}$-periodicity of $\sigma_{t}^{\nu_{n+1}}$ will ensure that
$$x = e^{-i \pi e_{n}} \sigma_{2^{-n-1}}^{\nu_n}(x) e^{i \pi e_{n}} = e^{-i \pi e_{n}} x e^{i \pi e_{n}}.$$Now observe that the Borel functional calculus ensures that $e^{-i \pi e_{n}}e_n=-e_n$ and that $e^{-i \pi e_{n}}(\I-e_n)=e^{-i \pi 0}(\I-e_n) =(\I-e_n)$. Hence 
$$x = e^{-i \pi e_{n}} x e^{i \pi e_{n}} = (\I-2 e_{n}) x(\I-2e_{n}).$$ 
This clearly shows that $\I-2e_{n} \in \mathcal{Z}(\R_n)$, or equivalently that $e_{n} \in \mathcal{Z}(\R_n)$. Thus
$a_{n+1} - a_{n}\in \mathcal{Z}(\R_n)$, and hence also $h_{n} \in \mathcal{Z}(\R_n)$, which then yields the desired inclusion $\R_{n} \subset \R_{n+1}$.
 
\medskip

(6): The faithfulness of $\nu_n$ on $\R_n$ is a clear consequence of part (1). The fact that $\tnu=\tnu\circ\mathscr{W}_{n}$, ensures that the restriction of $\tnu$ to $\R_n$ is normal and semifinite. We may now use a similar argument to that used in the proof of part (1) to conclude from this that $\nu_n$ is normal and semifinite on $\R_n$. It therefore remains to show that $\nu_n$ satisfies the trace property on $\R_n$. Thus let $x\in \R_n$ be given.

First suppose that $\nu_n(x^*x)<\infty$. Thus $x^*x=|x|^2\in \mathfrak{m}_{\nu_n}$. Let $x=v|x|$ be the polar decomposition of $x$. Then 
we clearly have that $v\in \R_n=\R_{\nu_n}$. But in that case we may use \cite[Theorem VIII.2.6]{Tak2} to conclude that 
$x^*xv^*=|x|^2v^*\in \mathfrak{m}_{\nu_n}$ and that in addition $\nu_n(x^*x)=\nu_n(|x|^2v^*v)=\nu(v|x|^2v^*)=\nu_n(xx^*)$. It therefore 
follows that $\nu_n(x^*x)<\infty$ if and only if $\nu_n(xx^*)<\infty$, in which case the two quantities are equal. But then 
$\nu_n(x^*x)=\nu_n(xx^*)$ for all $x\in \R_n$ as required.

Finally suppose that $\nu$ is a state. Since in this particular case $\mathscr{W}_{\mathbb{Q}_D}$ is a faithful normal conditional expectation, we therefore have that $\tnu(\I)=\nu\circ\mathscr{W}_{\mathbb{Q}_D}(\I)=\nu(\I)=1$. So $\tnu$ is also a state. But then 
$\nu_{n}(\I)=\tnu(e^{-a_{n}})\leq\|e^{-a_{n}}\|_\infty<\infty$ as required.
\end{proof}

It remains to show the $\sigma$-weak density of the union of the $\R_{n}$s in $\R$. The groundwork for this verification will be laid by the following cluster of lemmas. The second is essentially just a special case of the first. We first introduce some necessary notation:
Given a von Neumann algebra $\N$ equipped with some \emph{fns} weight $\psi$, we may for any $w\in\mathbb{C}$ with $\im(w)> 0$, define $D(\sigma_w)$ to be the set of all $a\in \N$ for which the map $t\mapsto \sigma^\psi_t(a)$ ($t\in \mathbb{R}$), may be extended to a $\sigma$-weakly continuous function $f_w$ on the strip $\{z\in\mathbb{C}: 0\leq\im(z)\leq \im(w)\}$ which is analytic on the interior of that strip. In the case where $\im(w)<0$, $D(\sigma_w)$ is defined similarly using the strip $\{z\in\mathbb{C}: 0\geq\im(z)\geq \im(w)\}$.

The following pair of lemmata now take the place \cite[Lemma 2.5]{HJX}.

\begin{lemma}[{\cite[Lemma 3.3]{haag-OV1}}/{\cite[VIII.3.18(1)]{Tak2}}]\label{Aig:L 1}
Let $\nu$ be a faithful normal semifinite weight on a von Neumann algebra $\M$. For any $a\in \M$ and $k\geq 0$ the following are equivalent:
\begin{itemize}
\item $\nu(a\cdot a^*)\leq k^2\nu$;
\item $a\in D(\sigma^\nu_{-i/2})$ and $\|\sigma^\nu_{-i/2}(a)\|\leq k$.
\end{itemize}
\end{lemma}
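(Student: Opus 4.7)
The statement is a classical result of Haagerup (and appears as Takesaki VIII.3.18(1)); since the paper cites these sources, the paper itself likely does not repeat the proof. Nevertheless, a proof proposal follows.

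The plan is to use the GNS Hilbert space $H_\nu$ together with modular theory to identify $\sigma^\nu_{-i/2}(a)$ (when it exists as a bounded operator) with the bounded extension of right multiplication by $a$. Once that identification is in hand, both directions follow from routine Cauchy–Schwarz arguments on $H_\nu$. The key analytic fact from left Hilbert algebra theory (e.g.\ \cite[Theorem VII.2.6]{Tak2}) that I will invoke is that $\eta(\mathfrak{n}_\nu\cap\mathfrak{n}_\nu^*)$ carries an involutive structure whose closure has polar decomposition $S=J\Delta^{1/2}$, and that for any $a\in\M$ the operator $T_a\colon\eta(c)\mapsto\eta(ca)$ defined on $\eta(\mathfrak{n}_\nu)$ is closable with $T_a\subset J\sigma^\nu_{-i/2}(a^*)^{*}J$ when the right-hand side makes sense; moreover $T_a$ is bounded iff $a\in D(\sigma^\nu_{-i/2})$ with $\sigma^\nu_{-i/2}(a)$ bounded, and then $\|T_a\|=\|\sigma^\nu_{-i/2}(a^*)\|$.

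For the implication $(\Leftarrow)$ I would first reduce to verifying $\nu(axa^*)\le k^2\nu(x)$ on elements of the form $x=c^*c$ with $c\in\mathfrak{n}_\nu$; by the semifiniteness of $\nu$ and normality of both sides the inequality then propagates to all of $\M^+$ by taking suprema over upward directed nets in $\mathfrak{m}_\nu^+$. For such $x$,
\[
\nu(axa^*)=\nu\bigl((ca^*)^*(ca^*)\bigr)=\|\eta(ca^*)\|_{H_\nu}^{2}=\|T_{a^*}\eta(c)\|_{H_\nu}^{2}.
\]
By the modular-theoretic identification recalled above, boundedness of $\sigma^\nu_{-i/2}(a)$ with norm $\le k$ means exactly that $T_{a^*}$ extends to a bounded operator on $H_\nu$ of norm at most $k$, so $\|T_{a^*}\eta(c)\|^{2}\le k^{2}\|\eta(c)\|^{2}=k^{2}\nu(c^*c)=k^{2}\nu(x)$.

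For the converse $(\Rightarrow)$ the same computation shows that, under the standing hypothesis, the densely defined map $T_{a^*}\colon\eta(c)\mapsto\eta(ca^*)$ satisfies $\|T_{a^*}\eta(c)\|^{2}=\nu(ac^*ca^*)\le k^{2}\|\eta(c)\|^{2}$. Hence $T_{a^*}$ extends to a bounded operator on $H_\nu$ of norm at most $k$. This extension commutes with every left multiplication operator $\lambda(b)$ with $b\in\M$, and so belongs to $\M'$, which via the left-Hilbert-algebra machinery produces the element $\sigma^\nu_{-i/2}(a)\in\M$ with $\|\sigma^\nu_{-i/2}(a)\|=\|T_{a^*}\|\le k$. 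The expected main obstacle is precisely this last identification in the \emph{non}-$\sigma$-finite setting, where there is no cyclic and separating vector to mediate between the left and right actions; one has to invoke the general theory of left Hilbert algebras as in Remark \ref{5:R left Hilbert} and \cite[Ch.\ VII]{Tak2} (specifically the analogue of the Tomita–Takesaki theorem for fns weights) to replace the simpler calculation available in the state case.
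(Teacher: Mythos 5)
The paper does not prove this lemma at all: it is quoted verbatim from \cite[Lemma 3.3]{haag-OV1} and \cite[Theorem VIII.3.18(1)]{Tak2}, exactly as you anticipated, so there is no in-paper argument to compare yours against and your proposal must stand on its own. Your opening reduction -- recasting both conditions as boundedness of the right-multiplication operator $T_{a^*}\colon\eta(c)\mapsto\eta(ca^*)$ on $\eta(\mathfrak{n}_\nu)\subset H_\nu$ -- is the correct first move, and the implication from the second bullet to the first is essentially sound, provided one first verifies $\eta(ca^*)=J\sigma^\nu_{-i/2}(a)J\,\eta(c)$ on the Tomita algebra (where $ca^*\in\mathfrak{n}_\nu$ can actually be checked) and only then passes to general $c\in\mathfrak{n}_\nu$ and to all of $\M^+$ by lower semicontinuity and normality.

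The genuine gap is in the forward implication. Your argument legitimately gets this far: under the hypothesis $\nu(a\cdot a^*)\le k^2\nu$ one has $ca^*\in\mathfrak{n}_\nu$ for every $c\in\mathfrak{n}_\nu$, the closure $R$ of $T_{a^*}$ is bounded with $\|R\|\le k$, it commutes with $\lambda(\M)$, hence $R\in\M'$ and $R=JbJ$ for a unique $b\in\M$ with $\|b\|\le k$. But the remaining claim -- that $b=\sigma^\nu_{-i/2}(a)$, i.e.\ that $t\mapsto\sigma^\nu_t(a)$ admits a $\sigma$-weakly continuous extension to the closed strip $-1/2\le\Im z\le 0$, analytic in the interior -- is precisely the substance of the cited result, and the ``key analytic fact'' you invoke at the outset ($T_a$ bounded if and only if $a\in D(\sigma^\nu_{-i/2})$ with $\sigma^\nu_{-i/2}(a)$ bounded, with equality of norms) is not an independently quotable piece of left Hilbert algebra theory such as \cite[Theorem VII.2.6]{Tak2}; it is a restatement of the lemma itself, so the argument is circular at its crucial point. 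What is missing is the analytic-continuation step: from the boundary data supplied by $R$ one must show that functions of the form $z\mapsto\langle\Delta^{iz}a\Delta^{-iz}\eta(c),\eta(d)\rangle$, for $c,d$ in the Tomita algebra, extend to the strip with the required bound, which in Haagerup's proof is a Phragm\'en--Lindel\"of/three-lines argument exploiting the KMS property of $\sigma^\nu$. Without that argument (or an explicit citation of a statement that already contains it), the forward direction is asserted rather than proved.
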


\begin{lemma}\label{L2 bounded elements}
Let $\psi$ be a faithful normal semifinite weight on the von Neumann algebra $\mathcal{N}$. If additionally $a \in \mathcal{N}_{\psi}$, we may in 
applying Lemma \ref{Aig:L 1} to the pair $(\mathcal{N},\psi)$, choose $k$ to be $\|a\|$.  
\end{lemma}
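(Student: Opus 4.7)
The statement is essentially an immediate corollary of Lemma \ref{Aig:L 1}, and my plan is to reduce it to that lemma by exploiting the defining property of the centralizer. The key observation is that membership in $\mathcal{N}_\psi$ automatically makes the analyticity question trivial: by definition $\sigma_t^\psi(a)=a$ for every $t\in\mathbb{R}$, so the map $t\mapsto\sigma_t^\psi(a)$ extends to the constant entire function $z\mapsto a$ on all of $\mathbb{C}$. In particular, $a\in D(\sigma^\psi_{-i/2})$ with $\sigma^\psi_{-i/2}(a)=a$, and hence $\|\sigma^\psi_{-i/2}(a)\|=\|a\|$.

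Plugging this into the second bullet of Lemma \ref{Aig:L 1} with $k=\|a\|$ gives the required bound $\|\sigma^\psi_{-i/2}(a)\|\leq k$, and the equivalence then yields the weight inequality $\psi(a\cdot a^*)\leq \|a\|^2\psi$. So one simply applies Lemma \ref{Aig:L 1} in the direction (analytic continuation bound) $\Rightarrow$ (weight domination), with the choice $k=\|a\|$ being forced by the centralizer hypothesis.

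There is essentially no obstacle to this argument; the only thing to double-check is that our definition of $D(\sigma_w)$ (which asks for a $\sigma$-weakly continuous extension on the closed strip that is analytic on the interior) is satisfied by the constant function $z\mapsto a$, which is obvious. Thus the plan is a one-line proof: invoke $\sigma_t^\psi(a)=a$, extend trivially, and quote Lemma \ref{Aig:L 1}.
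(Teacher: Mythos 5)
Your proof is correct and follows exactly the paper's own argument: the centralizer condition makes $t\mapsto\sigma_t^\psi(a)$ constant, the constant function $z\mapsto a$ is the (unique) analytic extension, so $\sigma_{-i/2}^\psi(a)=a$ and Lemma \ref{Aig:L 1} applies with $k=\|a\|$. Nothing to add.
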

 
\begin{proof}
Note that under the conditions of the hypothesis, the map $t\mapsto\sigma^\psi_t(a)$ $(t\in \mathbb{R})$ is the constant map $t\mapsto a$. The 
unique continuous extension of this map to the strip $\{z\in\mathbb{C}:0\geq \im(z)\geq -1/2\}$ which is analytic on the interior of the strip, is of course again the constant map $z\mapsto a$. Thus by the discussion preceding Lemma \ref{Aig:L 1}, we have that $a=\sigma^\nu_{-i/2}(a)$. The claim therefore directly follows from Lemma \ref{Aig:L 1}. 
\end{proof}

In the following lemma $[x,\;y]$ denotes the commutator of two operators $x$ and $y$, i.e., $[x,\; y]=xy-yx$. If $\psi$ is a
normal weight on a von Neumann algebra $\mathcal{N}$,  $\|x\|_\psi$ will for any $x\in \mathcal{N}$ denote the quantity 
$\psi(x^*x)^{1/2}$. In the case where $\psi$ is also faithful and semifinite and $x\in \mathfrak{n}_\psi$, $\|x\|_\psi$ is of course 
nothing but $\|\eta_\psi(x)\|$ where $\eta_\psi(x)$ is as in Remark \ref{5:R left Hilbert}. For simplicity of notation we will in the 
following theorem identify $\M$ with the canonical copy thereof inside $\R$.

The state version of parts (b) and (c) of the following lemma correspond to \cite[Lemma 2.6]{HJX}. Many parts of the proof had to be substantially reworked to achieve this generality, resulting in a proof which is almost three times as long as that of \cite[Lemma 2.6]{HJX}. In particular some very careful approximations using ``designer'' $\sigma$-weakly dense subspaces were needed to achieve (b) and (c) in the general case, which then explains the long proof and the need for (a). 

\begin{lemma}\label{L2 estimate}
Let $b_n$, $\tnu$ and $\R$ be as in Lemma \ref{bn}. 
\begin{enumerate}
\item[(1)] The subspaces $\mathrm{span}\{\lambda_t a: t\in\mathbb{Q}_D, a\in \mathfrak{n}_\nu(\M)\cap\mathfrak{n}_\nu(\M)^*\}$ and $\mathrm{span}\{\lambda_t a: t\in\mathbb{Q}_D, a\in \mathfrak{m}_\nu(\M), a \mbox{ analytic}\}$ are both $\sigma$-weakly dense subspaces of $\R$. Moreover $\mathrm{span}\{\lambda_t a: t\in\mathbb{Q}_D, a\in \mathfrak{n}_\nu(\M)\} \subset \mathfrak{n}_{\tnu}$ and $\mathrm{span}\{\lambda_t a: t\in\mathbb{Q}_D, a\in \mathfrak{n}^*_\nu(\M)\} \subset \mathfrak{n}^*_{\tnu}$. Each element of $\mathrm{span}\{\lambda_t a: t\in\mathbb{Q}_D, a\in \mathfrak{m}_\nu(\M), a \mbox{ analytic}\}$ is moreover again analytic with respect to $\sigma_t^{\tnu}$.
\item[(2)] For any $w$ in either $\mathrm{span}\{\lambda_ta: t\in\mathbb{Q}_D, a\in \mathfrak{n}_\nu(\M)\cap\mathfrak{n}_\nu(\M)^*\}$ or $\mathrm{span}\{\lambda_ta: t\in\mathbb{Q}_D, a\in \mathfrak{m}_\nu(\M), a \mbox{ analytic}\}$, we have that 
\begin{enumerate}
\item[(i)] $\displaystyle\lim_{n \to\infty} \| [b_{n},\; w]\|_{\tnu} = 0;$
\item[(ii)] $\displaystyle\lim_{n \to\infty}\,\sup_{t\in [-1,\; 1]}\ \big\|\big[e^{i t b_{n}},\; w \big]\big\|_{\tnu}=0$
\end{enumerate}
\item[(3)] For any $x\in \R$ and any $f\in \mathrm{span}\{\lambda_ta: t\in\mathbb{Q}_D, a\in \mathfrak{m}_\nu(\M), a \mbox{ analytic}\}$, we have that $\displaystyle\lim_{n\to\infty}\,\sup_{t\in\bR} \, \big\|(\sigma_{t}^{\nu_n}(x) - x)f\big\|_{\tnu}=0.$
\end{enumerate}

\end{lemma}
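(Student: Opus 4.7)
The plan is to prove (1), (2), (3) in turn, leveraging throughout the centrality $b_n, a_n, e^{itb_n}\in \mathcal{Z}(\R_{\tnu})$ (Lemma \ref{bn}) and the fact that $\lambda_s \in \R_{\tnu}$ is unitary for each $s\in \mathbb{Q}_D$ (Lemma \ref{10:L modgpred}), together with the uniform distribution of $\lambda_{2^{-n}}$ on $\mathbb{T}$ (Lemma \ref{distribution of lambda}). For (1), $\sigma$-weak density of the listed subspaces follows by combining the defining $\sigma$-weak density of $\mathrm{span}\{\lambda_s\pi(a): s\in\mathbb{Q}_D, a\in\M\}$ in $\R$ with the $\sigma$-weak density in $\M$ of $\mathfrak{n}_\nu(\M)\cap\mathfrak{n}_\nu(\M)^*$ and of the analytic elements of $\mathfrak{m}_\nu(\M)$, furnished by Lemma \ref{GL2-2.4+5} and Proposition \ref{analytic-n(A)}. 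Containment in $\mathfrak{n}_{\tnu}$ follows from the unitarity of $\lambda_s$ in the centralizer combined with Corollary \ref{5:C OVexistcor}, yielding $\tnu((\lambda_s a)^*(\lambda_s a)) = \tnu(a^*a) = \nu(\mathscr{W}_{\mathbb{Q}_D}(a^*a)) = \nu(a^*a)$. Analyticity of $\lambda_s a$ with respect to $\sigma_t^{\tnu}$ reduces to analyticity of $a$ with respect to $\sigma_t^\nu$ via $\sigma_t^{\tnu}(\lambda_s\pi(a)) = \lambda_s\pi(\sigma_t^\nu(a))$.

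For (2), the centralizer properties reduce both estimates to showing $\|[b_n, a]\|_{\tnu} \to 0$ and $\sup_{t\in[-1,1]}\|[e^{itb_n}, a]\|_{\tnu} \to 0$ for $a$ in the chosen dense subspace of $\M$, using $[b_n, \lambda_s a] = \lambda_s[b_n, a]$ and $\|\lambda_s w\|_{\tnu} = \|w\|_{\tnu}$ for $w \in \mathfrak{n}_{\tnu}$. The key tool is the Borel functional calculus: $b_n = g(\lambda_{2^{-n}})$ with $g(e^{i\theta}) = \theta$ on $[0, 2\pi)$, and $e^{itb_n} = f_t(\lambda_{2^{-n}})$ with $f_t(e^{i\theta}) = e^{it\theta}$. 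Approximating $g$ (resp.\ $f_t$) in $L^2(\mathbb{T}, dm)$ by its truncated Fourier series $p_N$, the polynomial piece $p_N(\lambda_{2^{-n}}) = \sum_{|k|\le N} c_k \lambda_{k 2^{-n}}$ satisfies $[\lambda_{k 2^{-n}}, a] = (\sigma_{k 2^{-n}}^\nu(a) - a)\lambda_{k 2^{-n}}$ and hence $\|[\lambda_{k 2^{-n}}, a]\|_{\tnu} = \|\sigma_{k 2^{-n}}^\nu(a) - a\|_\nu$, which vanishes as $n\to\infty$ for each fixed $k$ by strong continuity of $\sigma_t^\nu$ on $H_\nu$. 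The residual $r := h(\lambda_{2^{-n}})$ with $h := g - p_N$ is controlled by expanding $[r, a]^*[r, a]$ in the Fourier basis $\{\lambda_{k 2^{-n}}\}_k$ and applying $\tnu = \nu\circ\mathscr{W}_{\mathbb{Q}_D}$ together with the projection formula in Corollary \ref{5:C OVexistcor}, yielding a bound of the form $\|[r, a]\|_{\tnu}^2 \leq C \|h\|_{L^2(\mathbb{T})}^2 (\nu(a^*a) + \nu(aa^*))$, uniform in $n$. Sending $N \to \infty$ and then $n \to \infty$ completes (2)(i); (2)(ii) follows by the same argument, with uniformity in $t \in [-1, 1]$ coming from uniform $L^2$-Fourier approximation of $f_t$ on compact parameter sets.

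For (3), the $2^{-n}$-periodicity of $\sigma_t^{\nu_n}$ reduces $\sup_{t\in \mathbb{R}}$ to $\sup_{t\in[0, 2^{-n}]}$, and the algebraic identity $\sigma_t^{\nu_n}(x)f = \sigma_t^{\nu_n}(x\sigma_{-t}^{\nu_n}(f))$ decomposes
\begin{equation*}
(\sigma_t^{\nu_n}(x) - x)f = \sigma_t^{\nu_n}\bigl(x(\sigma_{-t}^{\nu_n}(f) - f)\bigr) + \bigl(\sigma_t^{\nu_n}(xf) - xf\bigr).
\end{equation*}
Setting $s := t \cdot 2^n \in [0, 1]$ so that $ta_n = sb_n$, the $\tnu$-invariance of $\sigma_t^{\nu_n}$ bounds the first term by $\|x\|\cdot\|\sigma_{-t}^{\nu_n}(f) - f\|_{\tnu}$; substituting $\sigma_{-t}^{\nu_n}(f) - f = e^{isb_n}(\sigma_{-t}^{\tnu}(f) - f)e^{-isb_n} + [e^{isb_n}, f]e^{-isb_n}$ and invoking the centralizer-unitary invariance of $\|\cdot\|_{\tnu}$ gives $\|\sigma_{-t}^{\nu_n}(f) - f\|_{\tnu} \leq \|\sigma_{-t}^{\tnu}(f) - f\|_{\tnu} + \|[e^{isb_n}, f]\|_{\tnu}$, where the first piece equals $\|\sigma_{-t}^\nu(a) - a\|_\nu \to 0$ as $t \to 0$ (by the analyticity of $a$, with $f = \lambda_r a$) and the second vanishes uniformly in $s \in [-1, 1]$ by (2)(ii). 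The second term $\sigma_t^{\nu_n}(xf) - xf$ admits the analogous decomposition $e^{-isb_n}(\sigma_t^{\tnu}(xf) - xf)e^{isb_n} + [e^{-isb_n}, xf]e^{isb_n}$, giving $\|\sigma_t^{\nu_n}(xf) - xf\|_{\tnu} \leq \|\sigma_t^{\tnu}(xf) - xf\|_{\tnu} + \|[e^{-isb_n}, xf]\|_{\tnu}$; the first piece vanishes as $t \to 0$ by strong continuity of $\sigma_t^{\tnu}$ on $\mathfrak{n}_{\tnu}$ applied to the fixed element $xf \in \mathfrak{n}_{\tnu}$ (a left ideal), while the second vanishes uniformly in $s \in [-1, 1]$ by (2)(ii) extended to all of $\mathfrak{n}_{\tnu}$ via the Lipschitz bound $\|[e^{isb_n}, \cdot]\|_{\tnu} \leq 2\|\cdot\|_{\tnu}$ and the $\tnu$-norm density of the subspace from (1) in $\mathfrak{n}_{\tnu}$, again applied to the fixed $xf$.

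The main obstacle is the residual Fourier estimate in (2): a careful expansion of $(ra - ar)^*(ra - ar)$ in the Fourier basis $\{\lambda_{k2^{-n}}\}_k$ of the abelian algebra generated by $\lambda_{2^{-n}}$, together with the Plancherel-type identity supplied by $\mathscr{W}_{\mathbb{Q}_D}$ and Corollary \ref{5:C OVexistcor}, is needed to replace the classical $L^2(\mathbb{T})$ estimate by an operator-theoretic one; this is the step where the restriction of $a$ to $\mathfrak{n}_\nu\cap\mathfrak{n}_\nu^*$ (or analytic $a \in \mathfrak{m}_\nu$) becomes essential in order to bound both $\nu(a^*a)$ and $\nu(aa^*)$ in the final estimate. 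A secondary but delicate point is the $\tnu$-norm density in $\mathfrak{n}_{\tnu}$ of the crossed-product-type subspace of (1), which underwrites the density extension of (2)(ii) used in (3); this follows from the standard identification of the GNS space of $\tnu$ with (a twisted form of) $H_\nu \otimes \ell^2(\mathbb{Q}_D)$, under which the image of the subspace is visibly dense.
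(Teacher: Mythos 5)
Your proposal is correct in substance, and for parts (1) and (2)(i) it follows essentially the same route as the paper: reduce to a single symbol $a$ using that $\lambda_s$ and $b_n$ lie in (the centre of) $\R_{\tnu}$, approximate $-i\log$ in $L^2(\mathbb{T})$ by trigonometric polynomials, kill the polynomial part by strong continuity of the modular group, and control the residual by the Plancherel-type identity $\mathscr{W}_{\mathbb{Q}_D}(|h(\lambda_{2^{-n}})|^2)=\|h\|_{L^2(\mathbb{T})}^2\I$ from Lemma \ref{distribution of lambda}. Where you diverge is in (2)(ii) and, more substantially, in (3). For (2)(ii) the paper does not re-run the Fourier argument with the $t$-dependent family $\theta\mapsto e^{it\theta}$; it instead proves the commutator power estimate $\|[b_n^m,w]\|_{\tnu}\le m(2\pi)^{m-1}\|[b_n,w]\|_{\tnu}$ by induction and sums the exponential series, which deduces (ii) directly from (i) with no further measure-theoretic input. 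Your uniform-in-$t$ Fourier approximation also works (the $L^2$ tails of $\widehat{e^{it\theta}}$ are uniformly summable on $t\in[-1,1]$), but it is more work for the same conclusion. For (3) the two arguments really differ: the paper keeps $f$ attached as a relatively bounded perturbation (via $f^*\cdot\tnu\cdot f\le c_f\tnu$, Lemma \ref{Aig:L 1}, which is where analyticity of $f$ is actually used), approximates the arbitrary $x\in\R$ by a bounded net from the spanning subspace converging \emph{strongly in the GNS representation} (available by Kaplansky/convexity from $\sigma$-weak density), and passes the supremum through the identity $\|\sigma_t^{\nu_n}(x-x_\alpha)f\|_{\tnu}=\|(x-x_\alpha)\sigma_{-t}^{\nu_n}(f)\|_{\tnu}$. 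You instead absorb $x$ into the fixed vector $xf\in\mathfrak{n}_{\tnu}$ via the splitting $(\sigma_t^{\nu_n}(x)-x)f=\sigma_t^{\nu_n}(x(\sigma_{-t}^{\nu_n}(f)-f))+(\sigma_t^{\nu_n}(xf)-xf)$, which avoids both the approximating net for $x$ and the relative-boundedness lemma. Each step of your decomposition checks out against Lemmata \ref{bn}, \ref{10:L modgpred} and \ref{construction of Rn}.

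The one point you should not leave at the level of ``visibly dense'' is the $\|\cdot\|_{\tnu}$-density of $\mathrm{span}\{\lambda_t a: t\in\mathbb{Q}_D,\ a\in\mathfrak{n}_\nu(\M)\cap\mathfrak{n}_\nu(\M)^*\}$ in $\mathfrak{n}_{\tnu}$, i.e.\ density of its $\eta_{\tnu}$-image in $H_{\tnu}$. This is where your route genuinely needs more than the paper's: $\sigma$-weak (or even GNS-strong operator) density of a subspace of $\mathfrak{n}_{\tnu}$ does \emph{not} in general give density of its image in the GNS space, so you must invoke the Parseval identity $\tnu(y^*y)=\sum_{t}\nu(y_t^*y_t)$ for the dual weight of a discrete crossed product (equivalently the identification $H_{\tnu}\cong\ell^2(\mathbb{Q}_D)\otimes H_\nu$ with $\eta_{\tnu}(\lambda_t\pi(a))=\delta_t\otimes\eta_\nu(a)$, together with density of $\eta_\nu(\mathfrak{n}_\nu\cap\mathfrak{n}_\nu^*)$ in $H_\nu$). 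This is true and available from Haagerup's dual-weight construction, but it is a nontrivial external input; the paper's formulation of the approximation step is designed precisely to avoid needing it. If you supply that identification explicitly, your proof of (3) is complete and is arguably tidier than the published one.
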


\begin{proof} \noindent\textbf{(1):} Recall that the subspace $\{a\in \mathfrak{m}_\nu(\M), a \mbox{ analytic}\}$ is by Lemma \ref{GL2-2.4+5} $\sigma$-strongly and hence also $\sigma$-weakly dense in $\M$. Thus the $\sigma$-weak closure of $\mathrm{span}\{\lambda_ta: t\in\mathbb{Q}_D, a\in \mathfrak{m}_\nu(\M), a \mbox{ analytic }\}$ must include the $\sigma$-weak closure of 
$\mathrm{span}\{\lambda_ta: t\in\mathbb{Q}_D, a\in \M\}$. But this latter space is known to be $\sigma$-weakly dense in the 
crossed product $\R=\M\rtimes\mathbb{Q}_D$. Hence as required, the subspace $$\mathrm{span}\{\lambda_ta: t\in\mathbb{Q}_D, a\in \mathfrak{m}_\nu(\M), a \mbox{ analytic }\}$$is $\sigma$-weakly dense in $\R$. A similar proof using the $\sigma$-strong density 
of $\mathfrak{n}(\M)_\nu^\infty$ in $\M$ (see Lemma \ref{GL2-2.4+5}) shows that $\mathrm{span}\{\lambda_t a: t\in\mathbb{Q}_D, a\in \mathfrak{n}_\nu(\M)\cap\mathfrak{n}_\nu(\M)^*\}$ is $\sigma$-weakly dense in $\R$.  

We next show that $\mathrm{span}\{\lambda_ta: t\in\mathbb{Q}_D, a\in \mathfrak{n}_\nu(\M)\}$ is contained in 
$\mathfrak{n}_{\tnu}$. For any $a\in \mathfrak{n}_\nu(\M)$ and any $\mathbb{Q}_D$, the fact that $\mathscr{W}_{\mathbb{Q}_D}$ 
is a conditional expectation, ensures that 
$\tnu(|\lambda_t a|^2)=\tnu(|a|^2)=\nu(\mathscr{W}_{\mathbb{Q}_D}(|a|^2)=\nu(|a|^2)<\infty$ as required. If indeed $a\in \mathfrak{n}(M)^*_\nu$ it similarly follows that 
$$\tnu(|(\lambda_ta)^*|^2)=\tnu(|\lambda_ta^*\lambda_t^*|^2)= \tnu(|\sigma_t^{\tnu}(a^*)|^2)= \tnu(\sigma_t^{\tnu}(|a^*|^2))=$$ $$\tnu(|a^*|^2)=\nu(\mathscr{W}_{\mathbb{Q}_D}(|a^*|^2)=\nu(|a^*|^2)<\infty.$$This proves the third claim. The fourth claim now follows by duality once we notice that Lemma \ref{10:L modgpred} and \cite[Proposition 6.40]{GLnotes} ensure that $\lambda_ta = [\lambda_{-t}\sigma_t^{\varphi}(a^*)]^*$ for every $t\in \mathbb{Q}_D$ and every $a\in \mathcal{M}$.

Since for any $a\in \M$ and $t \in \mathbb{Q}_D$ we have that $\sigma_t^\nu(a)=\sigma_t^{\tnu}(a)$  with $\lambda_t\in \R_{\tnu}$ (see Lemma \ref{bn}), it clearly follows that each term of the form $\lambda_ta$ will be analytic with respect to $\sigma_t^{\tnu}$ if $a$ is analytic with respect to $\sigma_t^{\nu}$.

\bigskip

\noindent\textbf{(2)(i):} Let $w$ be in either $\mathrm{span}\{\lambda_ta: t\in\mathbb{Q}_D, a\in \mathfrak{n}_\nu(\M)\cap\mathfrak{n}_\nu(\M)^*\}$ or $\mathrm{span}\{\lambda_ta: t\in\mathbb{Q}_D, a\in \mathfrak{m}_\nu(\M), a \mbox{ analytic}\}$, and let $k\in\mathbb{Z}$ be given. For any $t\in\mathbb{Q}_D$ we have by Lemma \ref{bn} that 
\begin{eqnarray*}
\big\|[\lambda_{2^{-n}}^{k},\; w]\big\|_{\tnu}
&=& \big\|(\lambda_{k2^{-n}}w - w \lambda_{k2^{-n}})\big\|_{\tnu}\\
&=& \big\|(w-\lambda_{k2^{-n}}^*w \lambda_{k2^{-n}})\big\|_{\tnu}\\
&=& \big\|(w-\sigma_{-k2^{-n}}^{\tnu}(z))\big\|_{\tnu}\\
&=& \tnu(w^*w)-\tnu(\sigma_{-k2^{-n}}^{\tnu}(w^*)w) - \tnu(w^*\sigma_{-k2^{-n}}^{\tnu}(w))+ \tnu(|\sigma_{-k2^{-n}}^{\tnu}(w)|^2)\\
&=& 2\tnu(w^*w)-\tnu(\sigma_{-k2^{-n}}^{\tnu}(w^*)w) - \tnu(w^*\sigma_{-k2^{-n}}^{\tnu}(w)).
\end{eqnarray*}
(For the last equality we used the fact that $\tnu\circ\sigma_{t}^{\tnu}=\tnu$.) In the case where $w\in \mathrm{span}\{\lambda_ta: t\in\mathbb{Q}_D, a\in \mathfrak{n}_\nu(\M)\cap\mathfrak{n}_\nu(\M)^*\}$, we may now apply \cite[Theorem VIII.1.2]{Tak2} to see that
\begin{equation}\label{L2 estimate 1}
\lim_{n\to\infty}\big\|[P(\lambda_{2^{-n}}),\; w]\big\|_{\tnu} = 0
\end{equation}
for any monomial, and hence also for any trigonometric polynomial $P$. In the case where $w\in\mathrm{span}\{\lambda_ta: t\in\mathbb{Q}_D, a\in \mathfrak{m}_\nu(\M), a \mbox{ analytic}\}$ this conclusion follows from \cite[Lemma VIII.2.5(ii)]{Tak2}.

We show how to obtain the final conclusion for the case where $w\in\mathrm{span}\{\lambda_ta: t\in\mathbb{Q}_D, a\in \mathfrak{n}_\nu(\M)\cap\mathfrak{n}_\nu(\M)^*\}$. The proof for the other case is entirely analogous.
Let $\log$ be as in Lemma \ref{bn}. On $\mathbb{T}$, $\log$ agrees with the bounded almost everywhere continuous function $\arg$. Thus restricted to $\mathbb{T}$ we have that $\log\in L^\infty(\mathbb{T}) \subset L^2(\mathbb{T})$. It follows that there exists a trigonometric polynomial $P$ such that $\|P + i\ {\log}\|_{L_2(\mathbb{T})} < \epsilon$.

Let $a\in \mathfrak{n}_\nu(\M)\cap\mathfrak{n}_\nu(\M)^*$ and any $s\in \mathbb{Q}_D$ be given. (In the case left as an exercise one would assume that $a$ is an analytic element of $\mathfrak{m}_\nu(\M)$.) Taking into account that $\mathscr{W}_{\mathbb{Q}_D}$ is a conditional expectation and that $\lambda_{2^{-n}}$, $b_n$ and $\lambda_s$ all belong to the same abelian von Neumann subalgebra, it follows from equation (\ref{distribution of lambda formula}) and the definitions of $a_n$ and $b_n$ in Lemma \ref{bn}, that 
\begin{eqnarray*}
\|( b_{n}-P(\lambda_{2^{-n}}))\lambda_{s}a\|_{\tnu}&=&\tnu(a^*\lambda_s^* |b_{n}-P(\lambda_{2^{-n}})|^2\lambda_sa)^{1/2}\\
&=&\tnu(a^*|b_{n}-P(\lambda_{2^{-n}})|^2a)^{1/2}\\
&=&\nu(\mathscr{W}_{\mathbb{Q}_D}(a^* |b_{n}-P(\lambda_{2^{-n}})|^2a))^{1/2}\\
&=&\nu(a^*\mathscr{W}_{\mathbb{Q}_D}(|b_{n}-P(\lambda_{2^{-n}})|^2)a)^{1/2}\\
&=&\|-i\log\, - P \|_{L_2(\mathbb{T})}\nu(|a|^2)^{1/2}\\
&<&\epsilon\nu(|a|^2)^{1/2}.
\end{eqnarray*}

Recall that $\lambda_{2^{-n}}$ is a unitary belonging to $\mathcal{Z}(\R_{\tnu})$. Hence $a_n$ and $b_n$ also belong to $\mathcal{Z}(\R_{\tnu})$. With $a$ and $s$ as before we will of course have that $|a|^2\in \mathfrak{m}_\nu\subset 
\mathfrak{m}_{\tnu}$, and hence we may use \cite[Theorem VIII.2.6]{Tak2} to see that
\begin{eqnarray*}
\|\lambda_sa( b_{n}-P(\lambda_{2^{-n}}))\|_{\tnu}&=&\tnu(|\lambda_sa ( b_{n}-P(\lambda_{2^{-n}}))|^2)^{1/2}\\
&=&\tnu((b_{n}-P(\lambda_{2^{-n}}))^*|a|^2 (b_{n}-P(\lambda_{2^{-n}}))^{1/2}\\
&=&\tnu(|b_{n}-P(\lambda_{2^{-n}})|^2|a|^2)^{1/2}\\
&=&\tnu(\mathscr{W}_{\mathbb{Q}_D}(|b_{n}-P(\lambda_{2^{-n}})|^2|a|^2))^{1/2}
\end{eqnarray*}
On once again applying equation (\ref{distribution of lambda formula}), it therefore follows that
$$\|a( b_{n}-P(\lambda_{2^{-n}}))\lambda_s\|_{\tnu}\leq \|-i\log\, - P \|_{L_2(\mathbb{T})}\nu(|a|^2)^{1/2}<\epsilon\nu(|a|^2)^{1/2}.$$

So if $w$ is of the form $z=\sum_{k=1}^m\lambda_{s_k}a_k$, we will have that 
\begin{eqnarray*}
\|[b_{n},\; w ]\|_{\tnu} &\leq& \|[ P(\lambda_{2^{-n}}),\; w]\|_{\tnu} + \|[b_{n} -P(\lambda_{2^{-n}}),\; w]\|_{\tnu}\\
&\leq& \|[ P(\lambda_{2^{-n}}),\; w]\|_{\tnu} + \|(b_{n}-P(\lambda_{2^{-n}}))w\|_{\tnu}+\|w (b_{n}-P(\lambda_{2^{-n}}))\|_{\tnu}\\
&\leq& \|[ P(\lambda_{2^{-n}}),\; w]\|_{\tnu} + \sum_{k=1}^m \|(b_{n}-P(\lambda_{2^{-n}}))\lambda_{s_k}a_k\|_{\tnu}\\
&&\qquad\qquad+\sum_{k=1}^m \|\lambda_{s_k}a_k(b_{n}-P(\lambda_{2^{-n}}))\|_{\tnu}\\
&\leq& \|[P(\lambda_{2^{-n}}),\; w]\|_{\tnu} + \epsilon[2\sum_{k=1}^m \nu(|a_k|^2)^{1/2}].
\end{eqnarray*} 
Therefore, by the above computations, we then have that 
$$\limsup_{n\to\infty}\|[b_{n},\; w]\|_{\tnu}\leq \epsilon[2\sum_{k=1}^m \nu(|a_k|^2)^{1/2}]$$
whence $\lim_{n \to\infty}\| [b_{n},\; w]\|_{\tnu} = 0$ as required.

\bigskip

\noindent\textbf{(2)(ii):} Let $w$ be an element of either $\mathrm{span}\{\lambda_ta: t\in\mathbb{Q}_D, a\in \mathfrak{n}_\nu(\M)\cap\mathfrak{n}_\nu(\M)^*\}$ or $\mathrm{span}\{\lambda_ta: t\in\mathbb{Q}_D, a\in \mathfrak{n}_\nu(\M), a \mbox{ analytic}\}$. 
By Lemma \ref{L2 bounded elements}, the fact that $b_{n} \in \R_{\tnu}$ with 
$\|b_n\|_\infty\leq 2\pi$, similarly ensures that $b_n^*.\tnu.b_n\leq\|b_n\|_\infty^2\tnu \leq (2\pi)^2 \tnu$. 
We claim that $\|[b_n^m,\,w]\|_{\tnu}\leq m(2\pi)^{m-1}\|[b_n,\,w]\|_{\tnu}$ holds for all $m\in \bN$. The validity of the case $m=1$ clearly follows from what we noted above. Now suppose that the claim holds for $m=k$. Since   
 $$[b_{n}^{k+1},\; w]=b_{n}[b_{n}^{k},\; w]+[ b_{n},\; w]b_{n}^{k},$$we may again use the observations made above to see that 
\begin{eqnarray*}
\| [ b_{n}^{k+1},\; w]\|_{\tnu} &\leq& \|b_{n}[b_{n}^{k},\; w]\|_{\tnu}+\|[ b_{n},\; w]b_{n}^{k}\|_{\tnu}\\
 &\leq& \|b_n\|.\|[b_{n}^{k},\; w]\|_{\tnu}+\|[ b_{n},\; w]b_{n}^{k}\|_{\tnu}\\
 &\leq& k(2\pi)^{k}.\|[b_{n},\; w]\|_{\tnu}+\|[ b_{n},\; w]b_{n}^{k}\|_{\tnu}\\
 &\leq& k(2\pi)^{k}.\|[b_{n},\; w]\|_{\tnu}+\|b_n\|_\infty^k\|[ b_{n},\; w]\|_{\tnu}\\
 &=& (k+1)(2\pi)^{k}\|[b_n,\,w]\|_{\tnu}.
\end{eqnarray*}
The claimed estimate therefore follows by induction.
Hence for any $z \in \bC$,
\begin{eqnarray*} 
\| [e^{z b_{n}},\; w]\|_{\tnu}
&\leq& \sum_{k=1}^\infty\frac{|z|^{k}}{k!}\,\|[ b^{k}_{n},\; w]\|_{\tnu}\\
&\leq& \sum_{k=1}^\infty\frac{|z|^{k}}{(k-1)!}\, (2\pi)^{k-1} \|[b_{n},\; w]\|_{\tnu}\\
&=&|z|\,e^{2\pi|z|}\|[ b_{n},\; w]\|_{\tnu}.
\end{eqnarray*}
Therefore
 $$\sup_{t\in [-1,\; 1]}\|[ e^{it b_{n}},\; w]\|_{\tnu}
 \leq e^{2 \pi} \|[ b_{n}, \; w]\|_{\tnu},$$
which on the strength of (i), implies (ii).

\bigskip

\noindent\textbf{(3):} First let $w$ be an element of either $\mathrm{span}\{\lambda_ta: t\in\mathbb{Q}_D, a\in \mathfrak{n}_\nu(\M)\cap\mathfrak{n}_\nu(\M)^*\}$, or $\mathrm{span}\{\lambda_ta: t\in\mathbb{Q}_D, a\in \mathfrak{m}_\nu(\M), a \mbox{ analytic}\}$, and let $\epsilon > 0$ be given. By (b)(ii) there exists $n_{0} \in
\bN$ such that
\begin{equation}\label{L2 estimate2}
 \|[ e^{i s b_{n}},\; w]\|_{\tnu}\leq\epsilon,
 \quad \mbox{for all }s \in [-1,\; 1],\ \mbox{ and all }\;n\geq n_{0}.
\end{equation}
Next observe that by either \cite[Theorem VIII.1.2]{Tak2} or \cite[Theorem VIII.2.5]{Tak2} (as appropriate), we have that  
\begin{eqnarray*}
\|(\sigma_{s}^{\tnu}(w) - w)\|^2_{\tnu}&=&\tnu(w^*w)-\tnu(\sigma_{s}^{\tnu}(w^*)w)- \tnu(w^*\sigma_{s}^{\tnu}(w))+\tnu(\sigma_{s}^{\tnu}(w^*w))\\
&=&2\tnu(w^*w)-\tnu(\sigma_{s}^{\tnu}(w^*)w)- \tnu(w^*\sigma_{s}^{\tnu}(w))\\
&\to&0
\end{eqnarray*}
Therefore $n_0$ can be chosen so that in addition
\begin{equation}\label{L2 estimate3}
 \|(\sigma_{s}^{\tnu}(w) - w)\|_{\tnu}\leq \epsilon,\quad |s| \leq 2^{-n_{0}}.
\end{equation}
Let $t\in\bR$ and $n\in\bN$ be given with $n\geq n_{0}.$ Write $t = t_{1}
+ t_{2},$ where $t_{1} = k\, 2^{-n}$ for some $k\in\mathbb{Z}$ and $0
\leq t_{2} \leq 2^{-n}.$ Then for any $y \in \R$,
 \begin{eqnarray*}
 \sigma_{t_{1}}^{\tnu} (y)
 &=& \lambda_{k2^{-n}}y\lambda_{k2^{-n}}^{\ast}
 =e^{i k b_{n}}y e^{-ik b_{n}}\\
 &=& e^{i k 2^{-n} a_{n}}ye^{-ik2^{-n}a_{n}}
 = e^{i t_{1}a_{n}}ye^{-i t_{1}a_{n}}.
 \end{eqnarray*}
Since $\|\cdot\|_{\tnu}$ is invariant under $\sigma_{t_{1}}^{\tnu}$
and $a_{n} \in \mathcal{Z}(\R_{\tnu})$, we may deduce that
\begin{eqnarray*}
 \|(\sigma_{t}^{\tnu}(w) - e^{i a_{n}t}we^{-ia_{n}t})\|_{\tnu}
 &=& \|(\sigma_{t_{2}}^{\tnu}(w) - e^{i a_{n} t_{2}}we^{-i a_{n}t_{2}})\|_{\tnu}\\
 &\leq& \|(\sigma_{t_{2}}^{\tnu}(w) - w)\|_{\tnu}
 + \|(w - e^{i a_{n}t_{2}}w e^{-ia_{n}t_{2}})\|_{\tnu}\\
 &=& \|(\sigma_{t_{2}}^{\tnu}(w) - w)\|_{\tnu}
 + \|[e^{-ia_{n}t_{2}},\; w]\|_{\tnu}\,.
 \end{eqnarray*}
Now $a_{n}t_{2}=(2^{n} t_{2}) b_{n}$ and $2^{n}t_{2}\le 1$. Hence
from equations (\ref{L2 estimate2}) and (\ref{L2 estimate3}), it follows that 
$$\|e^{-ia_{n}t}\sigma_{t}^{\tnu}(w)e^{ia_{n}t}-w\|_{\tnu} = \|\sigma_{t}^{\tnu}(w)-e^{ia_{n}t}we^{-ia_{n}t})\|_{\tnu}\leq 2\epsilon.$$
(Here we used the fact that $e^{ia_{n}t}$ is in $\R_{\tnu}$ alongside \cite[Theorem VIII.2.6]{Tak2} to verify the claimed equality.) On the basis of Lemma \ref{construction of Rn}, this then ensures that 
\begin{equation}\label{red-estimate 1:eqn}
\lim_{n\to\infty}\,\sup_{t\in\bR} \, \big\|(\sigma_{t}^{\nu_n}(w) - w_\alpha)f\big\|_{\tnu}=0
\end{equation}

Now let $f\in\mathrm{span}\{\lambda_ta: t\in\mathbb{Q}_D, a\in \mathfrak{m}_\nu(\M), a \mbox{ analytic}\}$, and at first assume that $x\in\mathrm{span}\{\lambda_ta: t\in\mathbb{Q}_D, a\in \mathfrak{n}_\nu(\M)\cap\mathfrak{n}_\nu(\M)^*\}$. 
Since $f$ is analytic, we know from Lemma \ref{Aig:L 1} that there exists 
a positive constant $c_f$ so that $f^*.\tnu.f\leq c_f\tnu$. This in turn ensures that
$$\|(\sigma_t^{\nu_n}(x)-x)f\|_{\tnu} \leq c_f^{1/2}\|\sigma_t^{\nu_n}(x)-x\|_{\tnu}\leq c_f^{1/2}2\epsilon.$$
which on the strength of equation (\ref{red-estimate 1:eqn}) proves the claim for this case.
 
Next let $x$ be an arbitrary element of $\R$. The subspace $\mathrm{span}\{\lambda_ta: t\in\mathbb{Q}_D, a\in \mathfrak{n}_\nu(\M)\cap\mathfrak{n}_\nu(\M)^*\}$ of $\R$ is both convex and $\sigma$-weakly dense. Hence by convexity it is also strongly dense with respect to the GNS-representation engendered by $\tnu$. Thus we may select a net $(x_\alpha) \subset \mathrm{span}\{\lambda_ta: t\in\mathbb{Q}_D, a\in \mathfrak{n}_\nu(\M)\cap\mathfrak{n}_\nu(\M)^*\}$ which converges to $x$ (GNS-)strongly. We know from the first part of the proof that 
\begin{equation} \label{red-estimate 2:eqn}
\lim_{n\to\infty}\,\sup_{t\in\bR} \, \big\|(\sigma_{t}^{\nu_n}(x_\alpha) - x_\alpha)f\big\|_{\tnu}=0
\end{equation} 
for each $\alpha$. Since in the GNS-representation of $\R$ the term $\|(x-x_\alpha)f\|_{\tnu}$ is just $\|\pi_{\tnu}(x-x_\alpha)\eta(f)\|$, the strong convergence noted earlier ensures that
\begin{equation}\label{red-estimate 3:eqn}
\lim_{\alpha}\|(x-x_\alpha)f\|_{\tnu}=0.
\end{equation}
We may next use the fact that each $e^{ia_nt}$ and each $\lambda_t$ belong to $\R_{\tnu}$, to see that 
\begin{eqnarray*}
\|(e^{-ia_{n}t}\sigma_{t}^{\tnu}(x-x_\alpha )e^{ia_{n}t})f\|_{\tnu}^2 &=& \|(e^{-ia_{n}t}\lambda_t(x-x_\alpha)\lambda_t^*e^{ia_{n}t})f\|_{\tnu}^2\\
&=& \|(x-x_\alpha)(\lambda_t^*e^{ia_nt}f)\|_{\tnu}^2\\
&=& \|(x-x_\alpha)(\lambda_t^*e^{ia_{n}t}fe^{-ia_{n}t}\lambda_t)\|_{\tnu}^2\\
&=& \|(x-x_\alpha)(e^{ia_{n}t}\sigma_{-t}^{\tnu}(f)e^{-ia_{n}t})\|_{\tnu}^2.
\end{eqnarray*}
It therefore follows from Lemma \ref{construction of Rn} that 
$$\|\sigma_{t}^{\nu_n}(x-x_\alpha)f\|_{\tnu}^2=\|(x-x_\alpha)\sigma_{-t}^{\nu_n}(f)\|_{\tnu}\mbox{ for all }\alpha.$$
Alongside the Uniform Boundedness Principle the strong convergence of $(\pi(x_\alpha))$, ensures that the net $(\pi(x-x_\alpha))$ - and therefore also $(x-x_\alpha)$ - must be norm bounded. So there must exist some $K>0$ so that $\|x-x_\alpha\|\leq K$ for all $\alpha$. We may now use these two facts to see that 
\begin{eqnarray*}
\|\sigma_{t}^{\nu_n}(x-x_\alpha)f\|_{\tnu} &=& \|(x-x_\alpha)\sigma_{-t}^{\nu_n}(f)\|_{\tnu}\\
&=& \|(x-x_\alpha)(\sigma_{-t}^{\nu_n}(f)-f)\|_{\tnu} + \|(x-x_\alpha)\sigma_{-t}^{\nu_n}(f)\|_{\tnu}\\
&\leq& K\|\sigma_{-t}^{\nu_n}(f)-f\|_{\tnu} + \|(x-x_\alpha)f\|_{\tnu}.
\end{eqnarray*}
A combination of equations (\ref{red-estimate 1:eqn}) and (\ref{red-estimate 3:eqn}), now ensures that 
\begin{equation}\label{red-estimate 4:eqn}
\limsup_{n\to\infty}\,\sup_{t\in\bR} \, \big\|\sigma_{t}^{\nu_n}(x-x_\alpha)f\|_{\tnu} \leq  \|(x-x_\alpha)f\|_{\tnu}.
\end{equation}

Given $\epsilon>0$ we may select $\alpha$ so that $\|(x_\alpha-x)f\|_{\tnu}\leq \epsilon$. So by equations (\ref{red-estimate 2:eqn}) and (\ref{red-estimate 4:eqn}), we will then have that
\begin{eqnarray*}
&&\quad\limsup_{n\to\infty}\,\sup_{t\in\bR} \, \|(\sigma_{t}^{\nu_n}(x)-x)f\|_{\tnu}\\
&&\leq \limsup_{n\to\infty}\,\sup_{t\in\bR} \,[ \|\sigma_{t}^{\nu_n}(x-x_\alpha)f\|_{\tnu} + \|(\sigma_{t}^{\nu_n}(x_\alpha)-x_\alpha)f\|_{\tnu} + \|(x_\alpha-x)f\|_{\tnu}]\\
&&\leq \limsup_{n\to\infty}\,\sup_{t\in\bR} \,[ \|\sigma_{t}^{\nu_n}(x-x_\alpha)f\|_{\tnu} + \|(\sigma_{t}^{\nu_n}(x_\alpha)-x_\alpha)f\|_{\tnu} +\epsilon]\\
&&\leq \limsup_{n\to\infty}\,\sup_{t\in\bR}\|\sigma_{t}^{\nu_n}(x-x_\alpha)f\|_{\tnu} + \limsup_{n\to\infty}\,\sup_{t\in\bR}\|(\sigma_{t}^{\nu_n}(x_\alpha)-x_\alpha)f\|_{\tnu} +\epsilon\\
&&= \epsilon.
\end{eqnarray*}
This clearly suffices to prove the claim.
\end{proof}

We are now finally ready to prove the $\sigma$-weak density of $\cup_{n\in\bN}\R_n$ in $\R$, and hence conclude the 
proof of the reduction theorem. The lemma below corresponds to \cite[Lemma 2.7]{HJX}. The proofs of both lemmata make use of a specially selected dense subspace of the pre-dual. Whilst in \cite{HJX} the proof of \cite[Lemma 2.7]{HJX} could be disposed of in six lines, in the present context a quite different dense subspace of functionals needed to be found in terms of which to work out the details of the proof. With the selection of such a subspace having been made, quite a bit more work was required to obtain the same conclusion.  

\begin{lemma}
For any $x \in \R$, the sequence $(\mathscr{W}_n(x))$ is $\sigma$-weakly convergent to $x$. Consequently, $\cup_{n\in\bN}\R_{n}$ is $\sigma$-strong* dense in $\R$.
\end{lemma}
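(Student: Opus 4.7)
The strategy is to first prove that $\mathscr{W}_n(x)\to x$ $\sigma$-weakly for every $x\in\R$; the $\sigma$-strong* density of $\bigcup_n\R_n$ will then follow immediately, since $\mathscr{W}_n(x)\in\R_n$ and since the $\sigma$-weak and $\sigma$-strong* closures of any convex subspace of a von Neumann algebra coincide (a standard consequence of Hahn--Banach separation).

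Since each $\mathscr{W}_n$ is contractive on $\R$, the sequence $\{\mathscr{W}_n(x)\}$ is norm-bounded, so it suffices to verify $\omega(\mathscr{W}_n(x))\to\omega(x)$ for $\omega$ in a norm-dense subspace of $\R_*$. The plan is to test against functionals of the form
$$\omega_{f,g}(y) := \tnu(g^*yf) = \langle y\eta(f), \eta(g)\rangle, \qquad f,g\in\mathcal{K},$$
where $\mathcal{K} := \mathrm{span}\{\lambda_t a : t\in\mathbb{Q}_D,\ a\in\mathfrak{m}_\nu(\M),\ a\text{ analytic}\}$. By Lemma \ref{L2 estimate}(1), $\mathcal{K}\subseteq\mathfrak{n}_{\tnu}\cap\mathfrak{n}_{\tnu}^*$, so the $\omega_{f,g}$ are well-defined normal functionals.

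Granting density of these test functionals, the convergence is a short calculation. Writing $\mathscr{W}_n(x) = \int_0^1 \sigma_t^{\nu_n}(x)\,dt$ as a $\sigma$-weak integral (see Lemma \ref{construction of Rn}(4)) and using normality of $\omega_{f,g}$,
$$\omega_{f,g}(\mathscr{W}_n(x)-x) = \int_0^1 \langle(\sigma_t^{\nu_n}(x)-x)\eta(f),\eta(g)\rangle\,dt.$$
Cauchy--Schwarz in $H_{\tnu}$ together with the fact that $f$ lies in the analytic subspace featured in Lemma \ref{L2 estimate}(3) yields
$$|\omega_{f,g}(\mathscr{W}_n(x)-x)|\leq \|g\|_{\tnu}\sup_{t\in\bR}\|(\sigma_t^{\nu_n}(x)-x)f\|_{\tnu},$$
and the right-hand side vanishes as $n\to\infty$ by Lemma \ref{L2 estimate}(3).

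The hard part is the norm-density of $\{\omega_{f,g}: f,g\in\mathcal{K}\}$ in $\R_*$. In the Haagerup--Terp standard form every element of $\R_*$ is representable as a vector functional $y\mapsto\langle y\xi,\zeta\rangle$ with $\xi,\zeta\in H_{\tnu}$, and the assignment $(\xi,\zeta)\mapsto\omega_{\xi,\zeta}$ is jointly norm-continuous; the task therefore reduces to showing that $\eta(\mathcal{K})$ is dense in $H_{\tnu}$. This is delicate because $\mathcal{K}$ is confined to the analytic subspace: one way to handle it is to exploit the crossed-product identification $H_{\tnu}\cong H_\nu\otimes\ell^2(\mathbb{Q}_D)$, which reduces matters to density of $\eta_\nu(\mathfrak{m}_\nu^\infty)$ in $H_\nu$; the latter can in turn be obtained by mollifying elements of $\mathfrak{n}_\nu^\infty$ against the analytic approximate identity of Proposition \ref{7:P Terp2} (the products $f_\lambda a$ land in $\mathfrak{m}_\nu^\infty$ and satisfy $\eta_\nu(f_\lambda a)\to\eta_\nu(a)$) and invoking the density result of Proposition \ref{P density1}(1).
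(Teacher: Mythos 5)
Your proof is correct, and its core — testing against functionals of the form $\tnu(y^*\cdot f)$ with $f$ in the analytic span, bounding $|\tnu(y^*(\mathscr{W}_n(x)-x)f)|$ by $\|y\|_{\tnu}\sup_{t\in\bR}\|(\sigma_t^{\nu_n}(x)-x)f\|_{\tnu}$, and invoking part (3) of Lemma \ref{L2 estimate} together with norm-boundedness of $(\mathscr{W}_n(x))$ and convexity of $\cup_n\R_n$ — is exactly the paper's argument. Where you genuinely diverge is in establishing norm-density of the test functionals in $\R_*$, which is the one delicate ingredient here. You restrict \emph{both} entries to the analytic span $\mathcal{K}$ and then work in the standard form: represent every normal functional as $\omega_{\xi,\zeta}$, use joint norm-continuity of $(\xi,\zeta)\mapsto\omega_{\xi,\zeta}$, and reduce to density of $\eta(\mathcal{K})$ in $H_{\tnu}$ via the decomposition $H_{\tnu}\cong H_\nu\otimes\ell^2(\mathbb{Q}_D)$ plus mollification by the Terp net. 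That chain does close up (each $f_\lambda a$ with $a\in\mathfrak{n}_\nu^\infty$ lies in $\mathfrak{m}_\nu^\infty$ and $\eta_\nu(f_\lambda a)\to\eta_\nu(a)$, and Proposition \ref{P density1} supplies the base density), but it imports the crossed-product GNS identification and a nontrivial Hilbert-space density fact. The paper instead lets $y$ range over \emph{all} of $\mathfrak{n}_{\tnu}$, which permits a two-line annihilator argument: if $\tnu(y^*gf)=0$ for all admissible $y,f$, take $y=gf$ to get $\tnu(|gf|^2)=0$, hence $gf=0$ by faithfulness, hence $g=0$ by the $\sigma$-weak density of $\mathcal{K}$ in $\R$ (Lemma \ref{L2 estimate}(1)); Hahn--Banach then gives norm-density. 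So the paper's choice of a slightly larger family of functionals buys a much shorter density proof, at no cost to the convergence estimate; your version is sound but does more work for the same conclusion, and if you keep it you should spell out the GNS identification $\eta_{\tnu}(\lambda_t a)\leftrightarrow\delta_t\otimes\eta_\nu(a)$ (including the absence of cross terms, i.e.\ $\tnu(|\sum_t\lambda_ta_t|^2)=\sum_t\nu(|a_t|^2)$) rather than citing it in passing.
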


\begin{proof} 
We claim that the subspace of $\R_*$ spanned by functionals of the form $\tnu(y^*\cdot f)$ where $y\in\mathfrak{n}_{\tnu}$ and $f\in\mathrm{span}\{\lambda_ta: t\in\mathbb{Q}_D, a\in \mathfrak{m}_\nu(\M), a \mbox{ analytic}\}$ is norm dense in $\R_*$. To prove this all we need to show is that if for some $g\in\R$ we have that $\tnu(y^*gf)=0$ for all $y$ and $f$ as above, then $g=0$. To see that this is the case, note that when given such a $g$, we may for each $f$ as above select $y$ to be $y=gf$. We then have that $\tnu(|gf|^2)=0$ for all $f$, which by the faithfulness of $\tnu$, ensures that $gf=0$ for all $f$. But we know from Lemma \ref{L2 estimate} that $\mathrm{span}\{\lambda_ta: t\in\mathbb{Q}_D, a\in \mathfrak{m}_\nu(\M), a \mbox{ analytic}\}$ is $\sigma$-weakly dense in 
$\R$. It therefore follows that $g=0$ as required. 

We proceed to show that for all $y$ and $f$ as above, we will for any $x\in\R$ have that 
\begin{equation}\label{10:eqn reduction}
\lim_{n\to\infty}\tnu(y^*(\mathscr{W}_n(x)-x)f)=0.
\end{equation}
Since the subspace of $\R_*$ spanned by functionals of the form $\tnu(y^*\cdot f)$ is norm dense in $\R_*$ and the sequence $(\mathscr{W}_n(x)-x)$ norm-bounded, it will then follow from this that $\lim_{n\to\infty}\rho(\mathscr{W}_n(x)-x)=0$ for all $\rho\in\R_*$ as required. This will establish the $\sigma$-weak density of $\cup_{n\in\bN}\R_{n}$ in $\R$. The fact that $\cup_{n\in\bN}\R_{n}$ is convex, will then ensure that it is also $\sigma$-strong* dense in $\R$. 

We proceed to prove the validity of equation (\ref{10:eqn reduction}). With $y$ and $f$ as before, we may invoke the Cauchy-Schwarz inequality to see that $$|\tnu(y^*(\mathscr{W}_n(x)-x)f)|\leq \tnu(y^*y)^{1/2}\tnu(|(\mathscr{W}_n(x)-x)f|^2)^{1/2} = \tnu(y^*y)^{1/2}\|(\mathscr{W}_n(x)-x)f\|_{\tnu}.$$It now follows from the definition of the expectations $\mathscr{W}_n$, that $\|(\mathscr{W}_n(x)-x)f\|_{\tnu} \leq \sup_{t\in\bR} \|(\sigma_{t}^{\nu_{n}}(x)-x)f\|_{\tnu}$ for each $n$. 
If we combine the above facts, we have that 
$$|\tnu(y^*(\mathscr{W}_n(x)-x)f)|\leq\tnu(y^*y)^{1/2}.\big[\sup_{t\in\bR}\|(\sigma_{t}^{\nu_{n}}(x)-x)f\|_{\tnu}\big].$$
The claimed convergence therefore follows from part (c) of Lemma \ref{L2 estimate}.
\end{proof}

We close this section with a statement of the result for $L^p$-spaces. This should be compared to \cite[Theorem 3.1]{HJX}. The verification of the density claim in part (2) of the theorem requires significantly more work and a very different proof to the one used in \cite[Theorem 3.1]{HJX}. This part of the proof is ultimately achieved through a subtle reworking of the proof of part (a) of \cite[Proposition 2.11]{GL2}.

\begin{theorem}\label{10:T reduction Lp} 
Let $\M$ be a von Neumann algebra equipped with a faithful normal semifinite weight $\nu$ and let $0< p < \infty$ be given. 
Then for $\R=\M\rtimes_\nu\mathbb{Q_D}$ we have that $L^p(\R)$ is a Banach superspace of $L^p(\M)$ isometrically containing $L^p(\M)$. Moreover the sequence $(\R_n)_{n\geq 1}$ of semifinite von Neumann algebras, each equipped
with a faithful normal semifinite trace $\tau_n$, admit an accompanying sequence of isometric embeddings $J_n\, :\, L^p(\R_n,\tau_n) \to L^p(\R)$ such that
\begin{enumerate}
\item the sequence $\big(J_n\big(L^p(\R_n, \tau_n) \big)\big)_{n\geq 1}$ is increasing;
\item $\bigcup_{n\geq 1}J_n\big(L^p(\R_n,\tau_n)\big)$ is norm dense in $L^p(\R)$;
\item in the case where $1\leq p<\infty$, the extension $\mathscr{W}_n^{(p)}$ of $\mathscr{W}_n$ to $L^p(\R)$ contractively maps $L^p(\R)$ onto $J_n\big(L^p(\R_n, \tau_n) \big)$, with $\mathscr{W}_n^{(p)}(f)$ converging weakly in $L^p$ to $f$ for each $f\in L^p(\R)$;
\item in the case where $1\leq p<\infty$, $L^p(\M)$ and each $J_n\big(L^p(\R_n, \tau_n)\big)$ are the images of contractive projections on $L^p(\R)$.
\end{enumerate}
Here $L^p(\R_n, \tau_n)$ is the tracial noncommutative $L^p$-space associated with $(\R_n,\tau_n)$.
\end{theorem}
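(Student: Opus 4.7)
The plan is to establish the four claims by systematically leveraging the conditional-expectation structure developed for $\R$ and $\{\R_n\}$, and then transferring this to the $L^p$-level via the framework of Remark \ref{10:R cond-exp}.

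First, I would establish the isometric containment $L^p(\M) \subset L^p(\R)$. Since $\mathscr{W}_{\mathbb{Q}_D}$ is a faithful normal conditional expectation with $\tnu\circ\mathscr{W}_{\mathbb{Q}_D}=\tnu$ and $\tnu{\upharpoonright}\M=\nu$, Remark \ref{10:R cond-exp} immediately produces the canonical isometric embedding, and simultaneously yields a contractive projection $\mathscr{W}_{\mathbb{Q}_D}^{(p)}\colon L^p(\R)\to L^p(\M)$ for $1\le p<\infty$, which takes care of the $L^p(\M)$ portion of claim (4). Applying the same remark to the pair $(\R,\R_n)$ together with $\mathscr{W}_n$ and invoking the fact (Lemma \ref{construction of Rn}(6)) that $\tau_n=\nu_n{\upharpoonright}\R_n$ is a fns trace, one obtains an isometric embedding of the Haagerup $L^p(\R_n)$ into $L^p(\R)$. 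Composing with the standard isometric identification between Haagerup and tracial $L^p$-spaces on semifinite algebras defines the map $J_n$. The nestedness (1) then follows from $\R_n\subset\R_{n+1}$ together with the compatibility relation $\mathscr{W}_n=\mathscr{W}_n\circ\mathscr{W}_{n+1}$, which forces $\tau_n$ to be a restriction of $\tau_{n+1}$ and so makes the inclusion of the tracial $L^p$-spaces automatic.

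The key work lies in the density assertion (2). I would imitate, with care, the proof of \cite[Proposition 2.11]{GL2}. The strategy is to exhibit a convenient dense subspace of $L^p(\R)$ whose members can be caught by the $J_n$. By Proposition \ref{P density1}(2), $\{\mathfrak{i}^{(p)}(z):z\in\mathfrak{m}_{\tnu}^\infty\}$ is norm dense in $L^p(\R)$. I would refine this by working with elements of the form $\mathfrak{i}^{(p)}(b^*a)$ with $a,b$ lying in (the analytic part of) $\mathrm{span}\{\lambda_t c:t\in\mathbb{Q}_D,\ c\in\mathfrak{n}_\nu(\M)\cap\mathfrak{n}_\nu(\M)^*\}$, which is guaranteed to be dense by Lemma \ref{L2 estimate}(1). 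For such elements, Lemma \ref{L2 estimate}(3) delivers $\sup_{t\in\bR}\|(\sigma_t^{\nu_n}(x)-x)f\|_{\tnu}\to 0$, and averaging over the period of $\sigma^{\nu_n}$ gives $\|(\mathscr{W}_n(x)-x)f\|_{\tnu}\to 0$. Transferring this strong convergence to $L^p$ via Hölder-type factorisation ($\mathfrak{i}^{(p)}=\mathfrak{j}^{(2p)*}\mathfrak{j}^{(2p)}$ through Definition \ref{7:D defn j-embed} and Proposition \ref{exp-props}), one obtains $\|\mathscr{W}_n^{(p)}(\xi)-\xi\|_p\to 0$ for every $\xi$ in this refined dense set. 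Since $\mathscr{W}_n^{(p)}(\xi)\in J_n(L^p(\R_n,\tau_n))$, density follows.

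For (3), the extension $\mathscr{W}_n^{(p)}$ is provided directly by Remark \ref{10:R cond-exp}; it is contractive and idempotent with range $J_n(L^p(\R_n,\tau_n))$. The weak convergence $\mathscr{W}_n^{(p)}(f)\rightharpoonup f$ then follows from a standard $3\varepsilon$-argument: given $f\in L^p(\R)$ and $\phi\in L^{p'}(\R)$, approximate $f$ in norm by an element $\xi$ of $\bigcup_n J_n(L^p(\R_n,\tau_n))$ (for which $\mathscr{W}_n^{(p)}(\xi)=\xi$ eventually), and use the uniform contractivity of $\mathscr{W}_n^{(p)}$ to conclude. Claim (4) is then immediate: the two relevant contractive projections are precisely $\mathscr{W}_{\mathbb{Q}_D}^{(p)}$ and $\mathscr{W}_n^{(p)}$.

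The principal obstacle is the density in (2). The difficulty is that in the present non-$\sigma$-finite setting the embedding $\mathfrak{i}^{(p)}$ must be coordinated across two different weights ($\tnu$ on $\R$ and its restrictions to each $\R_n$), and one cannot simply invoke norm density of $\mathfrak{m}_\nu^\infty$ inside $L^p$ as in the $\sigma$-finite case of \cite{HJX}. The way out, as indicated in the excerpt, is a subtle reworking that threads together Proposition \ref{GL2-2.2+3}, Proposition \ref{P density1}, Proposition \ref{exp-props}, Proposition \ref{genformula-exp} and Lemma \ref{L2 estimate}(3), exploiting precisely the fact that $\mathrm{span}\{\lambda_t a\}$ with $a$ drawn from $\mathfrak{n}_\nu\cap\mathfrak{n}_\nu^*$ provides a subspace simultaneously controlling $L^2$-norms (via $\tnu$) and $L^p$-norms (via the factorisation $\mathfrak{i}^{(p)}=\mathfrak{j}^{(2p)*}\mathfrak{j}^{(2p)}$).
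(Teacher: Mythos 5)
Your overall architecture (conditional expectations, Remark \ref{10:R cond-exp}, Terp's identification of tracial and Haagerup $L^p$-spaces, and deriving (3)--(4) once (2) is in hand) matches the paper, but the execution of the central density claim (2) contains genuine gaps. First, a small but real error in (1): the traces $\tau_n=\nu_n{\upharpoonright}\R_n$ are \emph{not} compatible, since $\nu_{n+1}(x)=\nu_n(h_nx)$ with $h_n=e^{-2^{n+1}\pi e_n}$ for a generally nontrivial central projection $e_n$, so $\tau_{n+1}{\upharpoonright}\R_n\neq\tau_n$. The nesting must instead be run through the weight-based spaces $L^p(\R_n)$ built from the \emph{compatible} restrictions $\tnu{\upharpoonright}\R_n$ (as the paper does), with the tracial spaces entering only through the abstract isometries $J_n$. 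Second, and more seriously, your transfer from the estimate $\|(\mathscr{W}_n(x)-x)f\|_{\tnu}\to 0$ of Lemma \ref{L2 estimate}(3) to $\|\mathscr{W}_n^{(p)}(\xi)-\xi\|_p\to 0$ is asserted via ``H\"older-type factorisation'' but does not follow from H\"older alone. After factoring $\mathfrak{i}^{(p)}(b^*a)-\mathfrak{j}^{(2p)}(\mathscr{W}_n(b))^*\mathfrak{j}^{(2p)}(\mathscr{W}_n(a))$ and applying H\"older, you are left needing $\|[(a-\mathscr{W}_n(a))h^{1/2p}]\|_{2p}\to 0$, whereas Lemma \ref{L2 estimate} only controls the $L^2$-type quantity $\|[(a-\mathscr{W}_n(a))h^{1/2}]\|_2$. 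Bridging these for $p\neq 1$ requires an inequality of Araki--Lieb--Thirring/Kosaki-interpolation type, roughly $\|[wh^{1/2p}]\|_{2p}\leq\|w\|_\infty^{1-1/p}\|[wh^{1/2}]\|_2^{1/p}$, which is nontrivial in the Haagerup setting and is neither stated nor proved anywhere in the paper or in your proposal. Relatedly, the norm density in $L^p(\R)$ of $\{\mathfrak{i}^{(p)}(b^*a)\}$ with $a,b$ drawn from the special spans is not delivered by Lemma \ref{L2 estimate}(1), which only gives $\sigma$-weak density in $\R$; upgrading this is itself work you have not done.

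Third, your argument does not cover $0<p<1$ at all, although the theorem asserts (1) and (2) for all $0<p<\infty$: the factorisation $\mathfrak{i}^{(p)}=\mathfrak{j}^{(2p)*}\mathfrak{j}^{(2p)}$ requires $2p\geq 2$, so your whole mechanism is unavailable there. For contrast, the paper avoids all of these difficulties by proving the \emph{weak} convergence $\mathscr{W}_n^{(p)}(f)\rightharpoonup f$ directly by duality (pairing against $\mathfrak{i}^{(q)}(\mathfrak{m}(\R)_{\tnu})$ and reducing, via $tr(\mathfrak{i}^{(q)}(b)\mathscr{W}_n^{(p)}(\mathfrak{i}^{(p)}(a)))=tr(\mathscr{W}_n(b)\mathfrak{i}^{(1)}(a))$, to the $\sigma$-weak convergence $\mathscr{W}_n(b)\to b$ already established in the reduction theorem), then obtaining norm density from weak density by convexity, and finally handling $0<p<1$ by an inductive argument using $L^{p/2}(\R_n)=L^p(\R_n)\cdot L^p(\R_n)$ and H\"older. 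If you want to salvage your norm-convergence route for $1\leq p<\infty$ you must supply the interpolation inequality and the refined density statement; for $p<1$ you need a separate argument such as the paper's product decomposition.
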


\begin{proof} The proof makes use of Theorem \ref{red s-finite} and hence we keep all the notation there. The $L^p$ spaces for $\R$ will be constructed using $\tnu$ 
and for $\M$ using $\nu$. For the subalgebras we shall sometimes use their traces and sometimes the weight $\tnu{\upharpoonright}\R_n$ to construct the associated 
$L^p$-spaces. These two variants will respectively be denoted by $L_p(\R_n,\tau_n)$ and $L^p(\R_n)$.

Since by construction $\mathscr{W}_{\mathbb{Q}_D}$ is a faithful normal conditional expectation from $\R$ onto $\M$ for which we have 
that $\nu\circ\mathscr{W}_{\mathbb{Q}_D}=\tnu$, it follows from the discussion in Remark \ref{10:R cond-exp} that $L^p(\M)$ is a 
subspace of $L^p(\R)$ which in the case $1\leq p < \infty$ is the image of a contractive projection. On considering Remark \ref{10:R cond-exp} alongside part (3) of 
Theorem \ref{red s-finite}, it similarly follows that each $L^p(\R_n)$ is a subspace of $L^p(\R)$ which in the case $1\leq p < \infty$ is the image of a contractive 
projection. Given $n\leq k$ it is clear from Lemma \ref{construction of Rn} that the restriction of $\mathscr{W}_n$ to $\R_k$ yields a faithful normal conditional 
expectation from $\R_k$ onto $\R_n$, for which we have that $\tnu\circ \mathscr{W}_n=\tnu$. Thus by Remark \ref{10:R cond-exp} we will then 
clearly have that $L^p(\R_n)\subset L^p(\R_k)$.

For each $n\in \mathbb{N}$ the space $L_p(\R_n, \tau_n)$ is by \cite[Theorem II.37 \& Corollary II.38]{terp} linearly isometric to 
$L^p(\R_n)$. It therefore remains to show that $\cup_{n=1}^\infty L^p(\R_n)$ is dense in $L^p(\R)$, and that in the case $1\leq p <\infty$, the sequence 
$(\mathscr{W}^{(p)}_n(f))$ is for any $f\in L^p(\R)$ weakly convergent to $f$. We shall first prove these claims for the case 
where $1\leq p <\infty$, and then extract the density claim for the general case from this fact.

\textbf{Case 1 ($1\leq p <\infty$):} The claim regarding weak convergence of $(\mathscr{W}^{(p)}_n(f))$ to $f$ for any $f\in L^p(\R)$, clearly ensures that 
$\cup_{n=1}^\infty L^p(\R_n)$ is weak $L^p$ dense in $L^p(\R)$. The convexity of $\cup_{n=1}^\infty L^p(\R_n)$ then ensures that this subspace is in fact norm dense. 
It therefore remains to prove the claim regarding weak convergence.

The case $p=1$ is fairly easy. Given $x\in \R$ we may in this case use the facts noted in Remark \ref{10:R cond-exp} to see that  
 \begin{eqnarray*}
 tr(x\mathscr{W}^{(1)}_n(f))
 &=&
tr(\mathscr{W}^{(1)}_n(x\mathscr{W}^{(1)}_n(f))) = tr(\mathscr{W}_n(x)\mathscr{W}^{(1)}_n(f))\\
&=&tr(\mathscr{W}^{(1)}_n(\mathscr{W}_n(x)f)) =tr(\mathscr{W}_n(x)f).
 \end{eqnarray*}
But we know that $(\mathscr{W}_n(x))$ is $\sigma$-weakly convergent to $x$. Hence 
 $$\lim_{n\to\infty}tr(x\mathscr{W}^{(1)}_n(f))=\lim_{n\to\infty}tr(\mathscr{W}_n(x)f)=tr(xf)$$ 
as required.

Now suppose that $1<p<\infty$ and let $q>1$ be given such that $\frac{1}{p}+\frac{1}{q}=1$. Let $a, b\in \mathfrak{m}(\R)_{\tnu}$ be given. We claim that then 
$\mathscr{W}_n(a),\mathscr{W}_n(b)\in \mathfrak{m}(\R_n)_{\tnu}$ for each $n$. To see this, note that for any $u\in \mathfrak{n}(\R)_{\tnu}$ the operator Schwarz inequality 
for completely positive maps ensures that 
 $$\tnu(\mathscr{W}_n(a)\mathscr{W}_n(u^*))\leq \tnu(\mathscr{W}_n(u^*u))= \tnu(u^*u)<\infty$$ 
which in turn ensures that 
$\mathscr{W}_n(u)\in \mathfrak{n}(\R_n)_{\tnu}$ as claimed. It is moreover clear from the discussion preceding Proposition \ref{exp-props} that $\mathscr{W}^p_n(\mathfrak{i}^{(p)}(a))=\mathfrak{i}^{(p)}(\mathscr{W}_n(a))$ and $\mathscr{W}^q_n(\mathfrak{i}^{(q)}(b))=\mathfrak{i}^{(q)}(\mathscr{W}_n(b))$. This 
fact when combined with repeated applications of the expectation properties noted in Remark \ref{10:R cond-exp} and Proposition \ref{exp-props}, then leads to the conclusion that 
\begin{eqnarray*}
tr(\mathfrak{i}^{(q)}(b)\mathscr{W}^p_n(\mathfrak{i}^{(p)}(a))) &=& tr\circ\mathscr{W}^1_n(\mathfrak{i}^{(q)}(b)\mathscr{W}^p_n(\mathfrak{i}^{(p)}(a)))\\
&=& tr(\mathfrak{i}^{(q)}(\mathscr{W}_n(b))\mathscr{W}^p_n(\mathfrak{i}^{(p)}(a)))\\
&=& tr\circ\mathscr{W}^1_n(\mathfrak{i}^{(q)}(\mathscr{W}_n(b))\mathfrak{i}^{(p)}(a))\\
&=& tr(\mathfrak{i}^{(q)}(\mathscr{W}_n(b))\mathfrak{i}^{(p)}(a)).
\end{eqnarray*} 
By Remark \ref{7:R trwt} we have that $tr(\mathfrak{i}^{(q)}(\mathscr{W}^q_n(b))\mathfrak{i}^{(p)}(a)) = tr(\mathscr{W}_n(b)\mathfrak{i}^{(1)}(a))$ which 
by what we proved in the case $p=1$, must converge to $tr(b\mathfrak{i}^{(1)}(a))$ as $n\to \infty$. 
Since another application of Remark \ref{7:R trwt} shows that $tr(b\mathfrak{i}^{(1)}(a)) =tr(\mathfrak{i}^{(q)}(b)\mathfrak{i}^{(p)}(a))$, it follows that $tr(\mathfrak{i}^{(q)}(b)\mathscr{W}^p_n(\mathfrak{i}^{(p)}(a)))\to tr(\mathfrak{i}^{(q)}(b)\mathfrak{i}^{(p)}(a))$ as $n\to \infty$. Since for any $f\in L^p(\R)$ the sequence 
$(\mathscr{W}^p_n(f))$ is norm bounded and the subspaces $\mathfrak{i}^{(q)}(\mathfrak{m}(\R)_{\tnu})$ and $\mathfrak{i}^{(p)}(\mathfrak{m}(\R)_{\tnu})$ respectively dense in 
$L^q(\R)$ and $L^p(\R)$, approximation by elements of these subspaces now shows that we will for any $f\in L^p(\R)$ and $g\in L^q(\R)$ have that 
$tr(g\mathscr{W}_n^p(f))\to tr(gf)$ as $n\to \infty$. The claim regarding weak convergence therefore follows.

\textbf{Case 2 ($0< p <\infty$):} Let $0<p<\infty$ be given. We explain how to use the density of $\cup_{n\geq 1}L^p(\R_n)$ in 
$L^p(\R)$, to show that $\cup_{n\geq 1}L^{p/2}(\R_n)$ is dense in $L^{p/2}(\R)$. Inductively applying this to what we have already 
proven will then yield the general statement. We have already seen that for $n\leq k$ we will have that $L^{p}(\R_n)\subseteq L^{p}(\R_k)$. This then proves that 
 $$L^{p/2}(\R_n)=L^{p}(\R_n).L^{p}(\R_n)\subseteq L^{p}(\R_n).L^{p}(\R_k)\subseteq L^{p}(\R_k).L^{p}(\R_k)=L^{p/2}(\R_k),$$
and hence that 
$\cup_{n\geq 1}L^{p/2}(\R_n)=\cup_{n,k\geq 1}L^{p}(\R_n).L^{p}(\R_k)$. H\"older's inequality together with the density of $\cup_{n\geq 1}L^p(\R_n)$ in $L^p(\R)$, 
now ensures that 
 $L^{p}(\R_n)\cdot(\cup_{k\geq 1}.L^{p}(\R_k))$ is dense in $L^{p}(\R_n)\cdot L^p(\R)$, with $\cup_{n\geq 1}L^{p}(\R_n).L^{p}(\R)$ 
similarly dense in $L^{p/2}(\R)$. It is now clear that, as was required, $\cup_{n\geq 1}L^{p/2}(\R_n)$ is dense in $L^{p/2}(\R)$. 
\end{proof}

\begin{remark}
Despite the remarkable fact demonstrated by the preceding result, it should be noted that semifinite and type III algebras cannot produce the same $L^p$ spaces. This follows 
from the important work of David Sherman who showed that if for some $1\leq p<\infty$ ($p\neq 2$) the $L^p(\M_0)$ and $L^p(\M_1)$ are linearly isometric, then the underlying 
algebras themselves are Jordan *-isomorphic.   
\end{remark}

\section{Subdiagonality of unital subalgebras of von Neumann algebras}\label{S5}

In the late 1950's and early 1960's, it became clear that many famous theorems about the classical $H^\infty$ space of 
bounded analytic functions on the disk, could be carried over to the setting of abstract function algebras. Several leading researchers contributed 
to the development of these ideas; most notably Helson and Lowdenslager \cite{HL}, and Hoffman \cite{Ho}. This emerging  `commutative generalized $H^p$-theory' 
was then organized and summarized in the mid 1960's in the paper of Srinivasan and Wang \cite{SW}. The notion that Srinivasan and Wang used 
to unify these results was that of {\em weak*-Dirichlet algebras}. This summary of Srinivasan and Wang basically furnishes one with an array of 
properties that are all in some way equivalent to the Szeg\"o formula in the setting of weak*-Dirichlet algebras.

In a parallel development inspired by questions from prediction theory, operator theorists and operator algebraists made great efforts to find noncommutative analogues of the classical
\emph{inner-outer factorization} of analytic functions. In this noncommutative context one wishes, for example,  to find conditions on a positive operator $T$
which imply that $T = |S|$ for an operator $S$ which is in a \emph{noncommutative Hardy class}, or alternatively \emph{outer} in some sense.
(See for example \cite[p.\ 1495]{PX}.) This is an active and important research field with links to many exciting parts of
mathematics. Central parts of this topic still bear further clarification. Note for example the by now classical result of Devinatz \cite{Dev} concerning a Riesz-Szeg\"o like 
factorization of a class of $B(H)$-valued functions on the unit interval, which has resisted generalization in some important directions.

Inspired by these two developments, Arveson introduced his notion of subdiagonal subalgebras of von Neumann algebras as a possible context for extending the 
 of results in \cite{SW} to the noncommutative context \cite{PTAG,AIOA}. The elegance of Arveson's framework is seen in the fact that in the case where 
the ambient von Neumann algebra $\M$ is commutative, the finite maximal subdiagonal subalgebras defined by Arveson correspond exactly to weak*-Dirichlet algebras. Thus Arveson's 
setting canonically extends the notion of weak*-Dirichlet algebras.

The theory of these subdiagonal algebras progressed at a carefully measured pace, until in 2005, Labuschagne \cite{L-Szego} managed to use some of Arveson's ideas to show that in 
the context of finite von Neumann algebras, these maximal subdiagonal algebras satisfy the Szeg\"o formula conjectured by Arveson. 

A sequence of papers by Blecher and Labuschagne followed (\cite{BL1, BL-FMRiesz, BL-outer, BL-Beurling,BL-outer2}), complemented by important contributions from Ueda \cite{Ueda}, 
and Bekjan and Xu \cite{BX}, which together demonstrated that in the context of finite von Neumann algebras the \emph{entire} cycle of results (somewhat surprisingly) survives 
the passage to noncommutativity. Specifically it was shown that the same cycle of results as proferred in \cite{SW} hold true for what Blecher and Labuschagne call tracial subalgebras 
of a finite von Neumann algebra (see \cite{BLsurvey}). 

With the theory of subdiagonal subalgebras of finite von Neumann algebras thereby reaching some level of maturity, authors then turned their attention to the setting of $\sigma$-finite 
von Neumann algebras on the one hand and semifinite von Neumann algebras on the other. In the case of $\sigma$-finite algebras important structural results were obtained by Bekjan, Blecher, Ji, 
Labuschagne, Raikhan, Ohwada, Saito and Xu (\cite{JOS,jisa,Xu,jig2,jig3,L-HpIII,blueda,BeRa}), and Bekjan, Oshanova, Sager, Ueda and others in the semifinite setting (\cite{Bek-sem,BO,sager, Ueda2}).

However the transition from finite to $\sigma$-finite von Neumann algebras cannot be made without some sacrifice. One very costly price that needs to be paid for the passage 
to the $\sigma$-finite case, is the loss of the theory of the Fuglede-Kadison determinant (\cite{FuKa}, \cite{AIOA}). (As was shown by Sten Kaijser \cite{Kai}, the presence 
of such a determinant forces the existence of a finite trace, and hence the theory of the Fuglede-Kadison determinant is essentially a theory of finite von Neumann algebras.) 
In the case of subdiagonal subalgebras of finite von Neumann algebras, this determinant served the role of a `noncommutative' geometric mean, and hence featured very prominently 
in the development of that theory. But how does one in these more general settings even begin to give expression to something like a geometric mean when there is no obvious way 
to make sense of Szeg\"o's formula? As can be seen from \cite{SW} there are a large number of properties that in the setting of weak*-Dirichlet algebras are equivalent to 
Szeg\"o's formula. Whilst Szeg\"o's formula itself may have no meaning in the type III setting, many of these equivalent conditions do extend to the type III setting. See 
for example \cite{L-HpIII, jig2} where one finds aspects like a very detailed Beurling-type theory of invariant subspaces, a very general Gleason-Whitney theorem, and an 
extension of the so-called \emph{unique normal state extension property} and left partial factorization surviving the transition. In the setting of semifinite algebras Bekjan 
and Oshanova and Sager \cite{BO, sager} similarly showed that the unique normal state extension property and the Beurling invariant subspace theory carries over to the semifinite case.

The most recent step forward in the development of this theory was the work of Blecher and Labuschagne \cite{BL-CAOT} on maximal semi-$\sigma$-finite subdiagonal algebras. These 
are subdiagonal subalgebras which are in a very regular way maximal with respect to a strictly semifinite weight on the ambient 
von Neumann algebra. The very regular structure of these algebras enable one to (without reference to Haagerup's reduction theorem) view them as a `limiting case' of the theory of maximal $\sigma$-finite subdiagonal algebras in 
much the same way that Bekjan \cite{Bek-sem} showed that the theory of maximal {\em semifinite} subdiagonal algebras is a limiting case of the finite maximal subdiagonal subalgebras. 
To date this seems to be the most general setting in which aspects like the F \& M Riesz theorem and the full force of the Gleason-Whitney theorem hold true ((GW1) and (GW2) as 
defined in the discussion preceding Theorem \ref{Co}). The concept of subdiagonality for general von Neumann algebras that emerged from all of these studies is the following:

\begin{definition}
Let $\M$ be a von Neumann algebra equipped with a faithful normal semifinite weight $\nu$ and $\mathcal{D}$ be a unital von Neumann subalgebra of $\M$ such that $\nu{\upharpoonright}\mathcal{D}$ is semifinite. Further suppose that there exists a faithful normal conditional expectation $\E:M\to \mathcal{D}$ such that $\nu\circ\E=\nu$ (equivalently $\sigma_t^\nu(\D)=\D$ for all $t\in \mathbb{R}$). We say that a $\sigma$-weakly closed unital subalgebra  $\A$ of $\M$ is \emph{subdiagonal with respect to $\D$} if 
\begin{itemize}
\item $\A+\A^*$ is $\sigma$-weakly dense in $\M$,
\item $\mathcal{D}=\A\cap \A^*$,
\item and $\E$ is multiplicative on $\A$.
\end{itemize}
We will further write $\A_0$ for the ideal $\A\cap\mathrm{ker}(\mathbb{E})$ of $\A$.

\smallskip

\textbf{Note:} Some authors use the terminology \emph{subdiagonal with respect to $\A$} instead of the above.  
\end{definition}

\begin{remark} It easily follows that for any subalgebra $\A$ which is subdiagonal in the above sense, $\mathfrak{n}_\nu(\A)\cap\mathfrak{n}_\nu(\A^*)^*$ and $\mathfrak{n}_\nu(\A)\cap\mathfrak{n}_\nu(\A^*)^*$ are respectively $\sigma$-weakly dense in $\A$ and $\A_0$. This can be seen by selecting a net $(f_\lambda)\subset \D$ of positive analytic elements, satisfying the criteria of Proposition \ref{7:P Terp2}. If for example we are given some $a\in \A$, the fact that $\mathfrak{n}(\M)$ is a left-ideal ensures that $\{f_\gamma a f_\lambda\colon \lambda, \gamma\} \subset \mathfrak{n}(\A)\cap \mathfrak{n}(\A^*)^*$. In the case where $a\in \A_0$, the multiplicativity of $\mathbb{E}$ on $\A$ ensures that then $\{f_\gamma a f_\lambda\colon \lambda, \gamma\}\in \A_0$. The $\sigma$-weak closure of $\mathfrak{n}(\A)\cap \mathfrak{n}(\A^*)^*$ must for each fixed $\lambda$ contain the limit with respect to $\gamma$, namely $(af_\lambda)$. It must therefore also contain $a=\lim_\lambda af_\lambda$.  
\end{remark}

\subsection{Group von Neumann algebras and subdiagonality}

Successful as the theory may be, two challenges remain. We firstly need to show that this theory of noncommutative $H^p$ spaces even extends to general von Neumann algebras. As we shall see in the 
subsequent sections of this paper, when armed with the generalised version of the reduction theorem, this can be done quite successfully. However there is a second challenge that needs attention, and 
that is to find a way to further generalise the notion of subdiagonality to the point where it encompasses Hardy spaces of the upper half-plane. Despite the success of the theory, this has to date 
not been achieved. The problem we need to overcome here is how to deal with notions of $H^\infty$ spaces (described by some unital $\sigma$-weakly closed subalgebra of some von Neumann algebra) where the 
reference weight $\nu$ is not semifinite on $\D=H^\infty\cap (H^\infty)^*$. To gain some intuition on how this may be done, we turn to group von Neumann algebras. 

\subsubsection{Group von Neumann algebra essentials}

We briefly summarise the essentials of group von Neumann algebras of locally compact groups (or just LCGs) $G$. A fuller treatment may be found in \cite[\S 8.6]{GLbook}. We will write $C_r^*(G)$ for the reduced group $C^*$-algebra, and $\vngl$ for the left group von Neumann algebra. In keeping with convention we will wherever there is no danger of confusion simply refer to group von Neumann algebras and write $\vng$ instead of $\vngl$. We pause to biefly review some of these concepts:

For any element $s\in G$ we define an associated unitary $\lambda_s$ on $L^2(G)$ by means of the formula $$\lambda_s(h)(t)=h(s^{-1}t)\qquad h\in L^2(G).$$This is the so-called left-regular representation. More generally for any $f\in L^1(G)$ we may define the operator $\lambda_f$ by $$\lambda_f(h)(t)=(f*h)(t)=\int_G f(s)h(s^{-1}t)\, ds\qquad h\in L^2(G).$$We will write $\rho_s$ for the right-regular representation. With $\delta_G$ denoting the modular function on $G$ the right regular representation may be defined by $$\rho_s(h)(t)=\delta_G^{1/2}(s)h(st)\qquad h\in L^2(G).$$

The reduced group $C^*$-algebra is defined to be $C_r^*(G)=\overline{\lambda(L^1(G))}$, with $\vngl$ being the double commutant of this algebra. We will write $\mathscr{C}(G)$ for the $C^*$-algebra $\overline{\mathrm{span}\{\lambda_s: s\in G\}}$. 

Each group von Neumann algebra $\vng$ admits a canonical weight - the so-called \emph{Plancherel weight} (alt. left Haar weight) - which in a sense encodes left Haar integration at the algebra level. Some preparation is needed to see this. An element $f\in L^2(G)$ is called \emph{left-bounded} if  the formal prescription $\xi\mapsto f*\xi$ yields a bounded operator on $L^2(G)$. More precisely $f\in L^2(G)$ is said to be left-bounded if there exists a constant $C>0$ such that $\|f*g\|_2\leq C\|g\|_2$ for all $g\in C_c(G)$, where $C_c(G)$ denotes the space of continuous functions of compact support. The unique bounded extension of this densely defined operator to all of $L^2(G)$ will, as for the case where symbols are in $L^1(G)$, be denoted by $\lambda_f$. We then specifically have the following (see for example \cite[Definition VII.3.2]{Tak2}):

\begin{theorem}\label{T8.Plancherel} 
The prescription $$\psi_G(x^*x)=\left\{\begin{array}{ll} \|f\|_2^2 &  \quad\mbox{if }x=\lambda(f)\mbox{ for some left-bounded }f\in L^2(G) \\ \infty & \quad\mbox{otherwise}\end{array}\right.$$
defines a faithful normal semifinite weight on $\vng$.  
\end{theorem}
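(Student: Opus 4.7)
The plan is to realise $\psi_G$ as the canonical weight coming from a left Hilbert algebra structure on $L^2(G)$, and then invoke the general weight-theoretic machinery reviewed in Remark \ref{5:R left Hilbert} and discussed in detail in \cite[\S VII.2]{Tak2}. This is the standard route, which identifies the statement as an instance of \cite[Theorem VII.3.3]{Tak2}; the point of the proof is therefore to assemble the requisite ingredients.

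First I would equip the dense subspace $\mathfrak{A} = C_c(G)$ of $L^2(G)$ with the convolution product $(f \cdot g)(t) = \int_G f(s)g(s^{-1}t)\,ds$ and the involution $f^{\sharp}(t) = \delta_G(t)^{-1}\overline{f(t^{-1})}$, and verify that this yields a left Hilbert algebra in the sense of \cite[\S VI.1]{Tak2}. The required checks are: (i) for $f \in \mathfrak{A}$ the map $g \mapsto f \cdot g$ extends to a bounded operator $\lambda_f$ on $L^2(G)$ (a Young's inequality type estimate together with compactness of $\mathrm{supp}(f)$); (ii) the identity $\langle \lambda_f g, h\rangle = \langle g, \lambda_{f^{\sharp}} h\rangle$ for $f,g,h \in \mathfrak{A}$ (a change-of-variables calculation using the defining property of $\delta_G$); (iii) $\mathfrak{A} \cdot \mathfrak{A}$ is $L^2$-dense in $\mathfrak{A}$, by means of approximate units supported in arbitrarily small neighbourhoods of the identity; and (iv) the involution $\sharp$ is closable as an operator on $L^2(G)$, which is a formal consequence of (ii).

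Second, I would identify the associated left von Neumann algebra $\lambda(\mathfrak{A})''$ with $\vng$. The inclusion $\lambda(\mathfrak{A})'' \subseteq \vng$ is immediate, since each $\lambda_f$ with $f \in C_c(G) \subseteq L^1(G)$ already lies in $\overline{\lambda(L^1(G))} = C_r^*(G) \subseteq \vng$. The reverse inclusion follows by approximating each unitary $\lambda_s$ in the strong operator topology by $\lambda_{f_\alpha}$, where $(f_\alpha) \subseteq \mathfrak{A}$ is an approximate identity translated so as to concentrate near $s$.

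Third, I would apply \cite[Theorems VII.2.5 \& VII.2.6]{Tak2}, which canonically associate to every left Hilbert algebra a faithful normal semifinite weight on its left von Neumann algebra. The theory further identifies the left ideal $\mathfrak{n}_{\psi_G}$ as precisely $\{\lambda_f : f \in L^2(G) \text{ left-bounded}\}$ and gives $\psi_G(\lambda_f^* \lambda_f) = \|f\|_2^2$, which is exactly the prescription in the statement; well-definedness on $\vng^+$ is automatic from this construction. The main obstacle will be verifying the adjoint identity (ii) above, where the modular function $\delta_G$ must appear correctly in the definition of $f^{\sharp}$: this bookkeeping is the technical heart of the proof and is precisely what forces the involution to depart from simple complex conjugation in the non-unimodular case.
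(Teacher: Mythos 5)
The paper offers no proof of this statement: it is quoted as a standard fact with a pointer to \cite[Definition VII.3.2]{Tak2}, and the construction you outline --- the left Hilbert algebra $C_c(G)$ with convolution and the involution $f^{\sharp}(t)=\delta_G(t)^{-1}\overline{f(t^{-1})}$, the identification of its left von Neumann algebra with $\vng$, and the canonical weight of \cite[Theorems VII.2.5 \& VII.2.6]{Tak2} --- is precisely the argument behind that reference. Your outline is correct and takes the same route; the only point I would tighten is step (iv), where closability of $\sharp$ is not a purely formal consequence of the adjoint identity but requires exhibiting the densely defined right involution $f^{\flat}(t)=\overline{f(t^{-1})}$ inside the adjoint of $S_0$.
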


\begin{remark} A neighbourhood $U$ of the group unit $e$ is said to be \emph{symmetric} if $U=U^{-1}$ where $U^{-1}=\{a^{-1}\colon a\in U\}$. A very important fact for our purposes is that for any neighbourhood base $\mathcal{U}$ of the group unit $e$ which is made up of symmetric 
compact sets for which we have that $\{U\cdot U\colon U\in \mathcal{U}\}$ is again a neighbourhood base, the Plancherel weight $\psi_G$ is also given by 
$$\psi_G(x^*x)= \left\{ \begin{array}{cl} \int_G|f|(s)^2\,ds & \mbox{ if } x=\lambda_f \mbox{ for some left-bounded } f\in L^2(G) \\ \infty & \mbox{ otherwise }\end{array}\right.$$ and equivalently by 
$$\psi_G(x^*x)=\left\{\begin{array}{cl} \lim_{U}\omega_U(x^*x) &\mbox{ if }x=\lambda_f\mbox{ for some left-bounded }f\in L^2(G)\\ \infty & \mbox{ otherwise }\end{array}\right.$$where the $\omega_U$'s are vector functionals of the form 
$\omega_U=\frac{1}{|U|^2\gamma_G(U)}\langle(\cdot)\chi_U,\chi_U\rangle$ with $U\in \mathcal{U}$ and $\gamma_G(U)=\sup\{\delta_G(s)\colon s\in U\}$ and where the limit is taken as $U$ decreases to $\{e\}$. See equation (8.5) in the upcoming book of Goldstein and Labuschagne \cite{GLbook} for these facts.
\end{remark}

\subsubsection{Topologically ordered groups and subdiagonality}

We refer the reader to \cite{Folland} for details on locally compact groups. (See also \cite[\S 8.6]{GLbook} for a thumbnail introduction.) We shall here 
be interested in a very specific subclass of locally compact groups. It is well known that when given an ordered discrete group $G$, the $\sigma$-weakly closed 
subalgebra $\A$ generated by $\{\lambda_g\colon g\geq e\}$ is a subdiagonal subalgebra of the group von Neumann algebra $\vng$. For discrete groups the 
Plancherel weight $\psi_G$ is a tracial state on $\vng$. For the group $\mathbb{Z}$ the space $H^2(\A)$ is actually 
up to Fourier transform just $H^2(\mathbb{D})$. The same can be said about Hardy space of the upper half-plane: For $H^2(\mathbb{H})$ one will by the 
Paley-Wiener theorem similarly have that $\mathcal{F}(H^2(\mathbb{H}))\equiv L^2[0,\infty)$ (see for example \cite{LLK}). So by analogy with the previous 
example we here too may use an ordered group (in this case $\mathbb{R}$) to compute $H^2(\mathbb{H})$. These facts then strongly suggest that for a general 
 LCG $G$ it is right and proper to regard the $\sigma$-weak closure $\A$ of $\mathrm{span}\{\lambda_t\colon t\geq e\}$ in $\vng$ as $H^\infty$ of $G$. To get a workable 
theory we do however need to be very specific about what we mean by an ordered group. Not just any group admitting a left-ordering will do. We in particular need 
groups where the ordering harmonises with the topology, namely \emph{topologically ordered groups}. Readers are referred to the papers of Nyikos and Reichel 
\cite{Nyikos} and Venkataraman, Rajagopalan and Soundararajan \cite{Venkat} for background regarding these groups. As pointed out in \cite{Nyikos}, for such 
groups sets of the form $V_a=\{s\in G\colon s<a\}$ and $W_b=\{s\in G\colon s>b\}$ form a subbase for the topology of $G$. To simplify notation we shall when such 
groups are in view with mild abuse of notation simply write $(-\infty, a)$ and $(b,\infty)$ for $V_a$ and $W_b$ respectively, and $(-\infty, a]$, $[b,\infty)$ and 
$[b,a]$ for the order intervals $\{s\in G\colon s\leq a\}$, $\{s\in G\colon s\geq b\}$ and $\{s\in G\colon b\leq s\leq a\}$. It is shown in 
\cite[2.5 \& 5.6]{Venkat} that 
\begin{itemize}
\item if such a group is totally disconnected it is either discrete or contains an open subgroup which is homeomorphic to the Cantor set,
\item and that if it is infinite and not totally disconnected it must as a topological space be homeomorphic to a semi-direct product 
$\mathbb{R}\rtimes_\alpha \Gamma$ where $\mathbb{R}$ is the additive reals and $\Gamma$ a discrete group. 
\end{itemize} 

\subsubsection{The emergent structure} 

Let $G$ be a topologically ordered group. Recall that by \cite{Nyikos} sets of the form $(-\infty,a)$ and $(b,\infty)$ form a subbase for the topology of 
$G$ \cite{Nyikos}. Since the groups we are interested are all homeomorphic to either $\mathbb{R}$, $\Gamma$ or $\mathbb{R}\rtimes_\alpha \Gamma$ where 
$\Gamma$ is a discrete group, we can select the neighbourhood base $\mathcal{U}$ required by the theorem mentioned in the previous subsubsection 
to be such that each $U\in \mathcal{U}$ is of the form $U=[a_U^{-1},a_U]$ for some $a_U>e$. For each fixed $U=[a_U^{-1},a_U]$ we may then let $\A_U$ be 
the $\sigma$-weak closure of $\mathrm{span}(\{\lambda_e\}\cup\{\lambda_t\colon t\geq a_U\})$. Then $\{\A_U\}$ is an increasing net of $\sigma$-weakly closed subalgebras of 
$\A$ for which the following holds: 
\begin{itemize}
\item $\cup_U\A_U$ is dense in $\A$. 
\item $\A+\A^*$ is $\sigma$-weakly dense in the group algebra.
\item For any $U$ there exists a neighbourhood $V_U\subset U$ of $e$ such that the state $\frac{1}{\omega_V(\I)}\omega_V$ will for any $V\subseteq V_U$ be 
multiplicative on $\A_{U}$. If we select $V_U$ small enough so that $V_U\cap tV_U=\emptyset$ for any $t\geq a_U$, we will for any 
$x=\sum_{i=1}^n\alpha_i\lambda_{t_i}$ with $t_1=e$ and $t_i\geq a_U$ have that $\frac{1}{|V_U|}\langle x\chi_{V_U}, \chi_{V_U}\rangle = \alpha_1$. From this it follows that $\frac{1}{\omega_{V_U}(\I)}\omega_{V_U}$ is multiplicative on 
$\mathrm{span}(\{\lambda_e\}\cup\{\lambda_t\colon t\geq a_U\})$ and by continuity therefore on $\A_U$. The same also goes for any $V$ 
smaller than $V_U$. Thus the map $x\to \frac{1}{\omega_{V_U}(\I)}\omega_{V_U}(x)\I$ is a normal conditional expectation from ${\mathrm{VN}}_l(G)$ to 
$\mathbb{C}\I$ which is multiplicative on $\A_U$.
\item Now suppose that $a_{\widetilde{U}}<a_U$. As above we need to select $V_{\widetilde{U}}$ small enough so that $\frac{1}{\omega_V(\I)}\omega_V$ will for any $V\subseteq V_{\widetilde{U}}$ be 
multiplicative on $\A_{\widetilde{U}}$. Since $a_{\widetilde{U}}<a_U$, we will have that $\A_U\subset\A_{\widetilde{U}}$. As above the map $x\to \frac{1}{\omega_{V_{\widetilde{U}}}(\I)}\omega_{V_{\widetilde{U}}}(x)\I$ is a normal conditional expectation which is multiplicative on $\A_{\widetilde{U}}$ and which will send terms of the form 
$x=\sum_{i=1}^n\alpha_i\lambda_{t_i}$ with $t_1=e$ and $t_i\geq a_{\widetilde{U}}$ to $\alpha_1\I$. That means that on $\mathrm{span}(\{\lambda_t\colon t\geq a_U\}\cup\{\lambda_e\})$ the action of this 
expectation agrees with that of the corresponding expectation in the previous bullet. Since the space $\mathrm{span}(\{\lambda_t\colon t\geq a_U\}\cup\{\lambda_e\})$ is $\sigma$-weakly dense in $\A_U$, this 
expectation must therefore on $\A_U$ agree with the expectation constructed in the previous bullet. 
\item For any $U\in \mathcal{U}$ we have that $\A_U^*\cap\A_U=\mathbb{C}\I=\A^*\cap\A$.
\item The set $\mathfrak{n}(\A)+\mathfrak{n}(\A^*)$ embeds norm-densely into the GNS Hilbert space $H_{\varphi_G}$. This follows from the fact that the GNS Hilbert space is a copy of $L^2(G)$ with $\psi_G(\lambda(f)^*\lambda(f))=\|f\|^2_2$ for each left bounded element of $L^2(\M)$, combined with the fact that the continuous functions of compact support are dense in $L^2(G)$ and that each such function $f$ is almost everywhere equivalent to the sum $f\chi_{(e,\infty)} + f\chi_{(-\infty,e)}$. 
\end{itemize}

\begin{proposition}\label{A+A_0} For any $a > e$ and $U=[a,\infty)$, the sets $\{ \lambda_t \colon t \geq a\}$ and $\{\lambda(f)\colon f\in L^1(G), \mathrm{supp}(f)\subset [a,\infty)\}$ generate the same $\sigma$-weakly closed subalgebra (which we shall denote by $\A_{U,0}$. Similarly the sets $\{\lambda_t\colon t > e\}$ and $\{\lambda(f)\colon f\in L^1(G), \mathrm{supp}(f)\subset (e,\infty)\}$ generate the same $\sigma$-weakly closed subalgebra which we shall denote by $\A_0$. \end{proposition}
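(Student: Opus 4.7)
My plan is to reduce the claim to equality of two $\sigma$-weakly closed \emph{linear} spans, then establish that equality via a double inclusion: an easy bipolar step and a harder approximate-identity step. First I observe that each generating set is already closed under products, so the algebra it generates is just its linear span. Left-ordering together with $a \geq e$ shows that $[a,\infty)$ is closed under multiplication in $G$: from $e \leq a$ and $a \leq t$, left-translation by $s$ gives $s \leq sa \leq st$, and combining with $s \geq a$ yields $st \geq a$. Since $\lambda(f)\lambda(g) = \lambda(f \ast g)$ with $\mathrm{supp}(f \ast g) \subseteq \mathrm{supp}(f) \cdot \mathrm{supp}(g)$, the $L^1$-symbol set is likewise closed under products. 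Thus the $\sigma$-weakly closed subalgebras in the statement are just the $\sigma$-weak closures of $\mathrm{span}\{\lambda_t : t \geq a\}$ and of $\{\lambda(f) : \mathrm{supp}(f) \subset [a,\infty)\}$, and analogously for the second claim.

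For the inclusion of the closure of the $\lambda(f)$'s in the closure of the $\lambda_t$'s I run a predual bipolar argument. If $\omega \in \vng_*$ vanishes on $\{\lambda_t : t \geq a\}$, then since $s \mapsto \lambda_s$ is SOT-continuous, the function $s \mapsto \omega(\lambda_s)$ is continuous on $G$ and vanishes on all of $[a,\infty)$. For $\xi,\eta \in L^2(G)$, Fubini applied to $\langle \lambda(f)\xi,\eta\rangle = \int f(s)\langle\lambda_s\xi,\eta\rangle\,ds$ produces the identity $\omega(\lambda(f)) = \int_G f(s)\omega(\lambda_s)\,ds$ whenever $\omega(\cdot) = \langle(\cdot)\xi,\eta\rangle$; this extends to every normal $\omega$ by norm-density of such functionals in $\vng_*$ and the bound $\|\lambda(f)\| \leq \|f\|_1$. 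Hence $\omega$ also kills $\{\lambda(f) : \mathrm{supp}(f) \subset [a,\infty)\}$, and the bipolar theorem delivers the desired inclusion.

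For the reverse inclusion I approximate each $\lambda_t$ ($t \geq a$) $\sigma$-weakly by $\lambda(f_n)$ with $\mathrm{supp}(f_n) \subset [a,\infty)$. Choose $f_n \geq 0$, $\|f_n\|_1 = 1$, and supports $V_n$ that are Borel subsets of $[a,\infty)$ shrinking to $\{t\}$. For $t > a$, open neighbourhoods of $t$ inside $(a,\infty)$ suffice. For the boundary case $t = a$ I take one-sided Borel sets $V_n = [a, a b_n]$ with $b_n \to e$ from above in $G$; each contains the nonempty open piece $(a, a b_n)$ and so has positive Haar measure. For $\xi \in C_c(G)$, uniform continuity of $\xi$ together with the shrinking of $V_n$ to $\{t\}$ gives $(f_n \ast \xi)(r) \to \xi(t^{-1}r)$ uniformly on compacta, hence $\lambda(f_n)\xi \to \lambda_t \xi$ in $L^2(G)$; together with $\|\lambda(f_n)\| \leq 1$ and density of $C_c(G)$ this upgrades to $\lambda(f_n) \to \lambda_t$ in SOT, and so $\sigma$-weakly.

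The second claim, for the pair $\{\lambda_t : t > e\}$ and $\{\lambda(f) : \mathrm{supp}(f) \subset (e,\infty)\}$, follows by the same template. The bipolar half is actually easier, because continuity of $s \mapsto \omega(\lambda_s)$ extends vanishing on $(e,\infty)$ to vanishing on $[e,\infty)$, which is more than enough. The approximation half has no boundary issue, since every $t > e$ already sits in an honest open subset of $(e,\infty)$. The step I expect to require real care is precisely the boundary case $t = a$ in the first claim: $[a,\infty)$ is not open near $a$, so one cannot appeal to topological neighbourhoods and must fall back on measurable one-sided sets. The remedy, as sketched, is that left-ordering and the structure of topologically ordered groups supply Borel one-sided sets of positive Haar measure shrinking to $\{a\}$, which is all the standard approximate-identity convergence needs.
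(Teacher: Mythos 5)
Your proposal is correct and follows essentially the same route as the paper, which disposes of the statement by adapting Folland's approximate-identity theorem (translating an approximate identity supported near $e$ to one supported in $[a,ab_n]$, via $\lambda_x\chi_U=\chi_{xU}$) and treating the reverse containment through the weak integral $\omega(\lambda(f))=\int f(s)\,\omega(\lambda_s)\,ds$. The one assertion you leave unjustified --- that $(a,ab_n)$ is nonempty --- does hold: in a non-discrete topologically ordered group no element has an immediate successor (otherwise singletons would be open), while in the discrete case $[a,ab_n]$ contains $a$ and so has positive Haar measure anyway, so your one-sided sets work in both regimes.
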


\begin{proof} On noting that for any neighbourhood of $e$ we have that $\lambda_x\chi_U=\chi_{xU}$, the proof of \cite[Theorem 3.12(a)]{Folland} easily adapts.
\end{proof}

\begin{proposition} For any $a > e$ and $U=[a,\infty)$, the modular automorphism group $\sigma_t^{\psi_G}$ preserves $\A_U$. \end{proposition}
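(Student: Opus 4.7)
The plan is to invoke the classical formula for the modular automorphism group of the Plancherel weight, which acts diagonally (by a scalar) on each generator $\lambda_s$ of $\vng$. Specifically, I would cite (see, e.g., \cite[Theorem VII.3.3]{Tak2} or \S 8.6 of the upcoming book \cite{GLbook}) that for each $s\in G$ and each $t\in\bR$,
$$\sigma_t^{\psi_G}(\lambda_s) = \delta_G(s)^{it}\lambda_s,$$
where $\delta_G$ denotes the modular function of $G$. In particular $\sigma_t^{\psi_G}$ sends each $\lambda_s$ to a nonzero complex scalar multiple of itself, with $\sigma_t^{\psi_G}(\lambda_e)=\lambda_e$ since $\delta_G(e)=1$.

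From there the argument is essentially automatic. Recalling the definition of $\A_U$ as the $\sigma$-weakly closed span of $\{\lambda_e\}\cup\{\lambda_s\colon s\geq a\}$, the above formula ensures that $\sigma_t^{\psi_G}$ carries each of these generators back into $\A_U$ (in fact back into the one-dimensional subspace $\bC\lambda_s$ in each case). By linearity it then maps the whole generating span into $\A_U$, and because $\sigma_t^{\psi_G}$ is a $\sigma$-weakly continuous $*$-automorphism it must also preserve the $\sigma$-weak closure. Hence $\sigma_t^{\psi_G}(\A_U)\subseteq \A_U$, and applying the same reasoning to $\sigma_{-t}^{\psi_G}$ yields the reverse inclusion, and hence equality.

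The only mild obstacle is pinning down the displayed formula in a convention consistent with the preliminaries of this paper (the choice of left versus right Plancherel weight affects the sign of the exponent but not the scalar-multiple conclusion, which is all that is needed). Should quoting the formula from an external source feel unsatisfactory, one can verify it directly using the description of $\psi_G$ on left-bounded elements of $L^2(G)$ given in Theorem \ref{T8.Plancherel}, together with the standard form of the modular conjugation $(Jf)(s)=\overline{f(s^{-1})}\delta_G(s)^{-1/2}$ on $L^2(G)$; a short computation then yields $\Delta^{it}\lambda_s\Delta^{-it}=\delta_G(s)^{it}\lambda_s$, as desired.
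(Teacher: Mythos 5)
Your argument is correct and rests on the same key identity the paper uses, namely $\sigma_t^{\psi_G}(\lambda_s)=\delta_G(s)^{it}\lambda_s$. The only difference is cosmetic: the paper transfers this scaling to convolution operators $\lambda(f)$ with $f\in L^1(G)$ supported in $[a,\infty)$ and then invokes the preceding proposition identifying the two generating sets, whereas you apply the $\sigma$-weak continuity of the automorphism directly to the span of the generators $\{\lambda_e\}\cup\{\lambda_s\colon s\geq a\}$ --- which is exactly how $\A_U$ is defined, so your shortcut is legitimate.
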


\begin{proof} Recall that for any $y\in G$ we have $\sigma_t(\lambda_y)=\delta_G(y)^{it}\lambda_y$. Given $f\in L^1(G)$ we have 
 $$\lambda(f)(g)=\int f(y)g(y^{-1}\cdot)\,dt = \int f(y)\lambda_y(g)\,dt.$$ 
So in effect $\lambda(f)=\int f(y)\lambda_y\,dt$. But then 
 $$\sigma_t(\lambda(f))= \int f(y)\sigma_t(\lambda_y)\,dt = \int \delta_G^{it}(y)f(y)\lambda_y\,dt=\lambda(\delta_G^{it}f).$$ 
We clearly have that $\delta_G^{it}f$ is supported on $[a,\infty)$ iff $f$ is supported on $[a,\infty)$. The claim therefore follows from the previous proposition.
\end{proof}

\begin{proposition}\label{cts-topord} If $G$ is a topologically ordered continuous group then $\A=\A_0$. \end{proposition}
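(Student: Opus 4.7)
The key reduction is immediate: $\mathrm{span}\{\lambda_t : t\ge e\} = \mathbb{C}\I + \mathrm{span}\{\lambda_t : t>e\}$, so proving $\A=\A_0$ amounts to showing that $\I = \lambda_e$ lies in $\A_0$. (The reverse inclusion $\A_0\subseteq\A$ is immediate, since $(e,\infty)\subset [e,\infty)$.) The plan is to exhibit a net $(t_\alpha)\subset G$ with $t_\alpha>e$ and $t_\alpha\to e$, and then transport it through the left-regular representation.

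For the existence of such a net, the hypothesis that $G$ is a continuous topologically ordered group ensures that $G$ is not discrete, so $e$ is not isolated. By the structure theorem quoted in the preceding subsubsection, a non-totally-disconnected topologically ordered group is homeomorphic to $\mathbb{R}\rtimes_\alpha\Gamma$, so its identity component is a copy of $\mathbb{R}$ and supplies the required sequence $t_n>e$ with $t_n\to e$. More elementarily, if $(e,\infty)$ did not accumulate at $e$, then the continuous, order-reversing inversion map $x\mapsto x^{-1}$ would also force $(-\infty,e)$ not to accumulate at $e$; together with $\{e\}$ being closed this would make $\{e\}$ open, contradicting non-discreteness.

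With $(t_\alpha)$ in hand, one invokes the standard fact that the left-regular representation $s\mapsto\lambda_s$ of any locally compact group on $L^2(G)$ is strong-operator continuous. Hence $\lambda_{t_\alpha}\to\lambda_e = \I$ in the strong operator topology, and a fortiori in the $\sigma$-weak topology. Each $\lambda_{t_\alpha}$ lies in $\A_0$ by definition, and $\A_0$ is $\sigma$-weakly closed, so $\I\in\A_0$. Consequently $\{\lambda_t : t\ge e\}\subset\A_0$; taking $\sigma$-weak closures yields $\A\subseteq\A_0$, completing the proof.

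The only nontrivial input is the structural claim that guarantees the approximating net, and this is precisely what fails in the discrete case (where $\lambda_e$ is a non-trivial summand in the natural decomposition of the group algebra). Everything else is a one-line consequence of the strong continuity of the regular representation together with the $\sigma$-weak closedness of $\A_0$.
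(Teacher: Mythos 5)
Your proof is correct and follows essentially the same route as the paper's: produce a net in $(e,\infty)$ converging to $e$, use the strong-operator continuity of the left-regular representation to get $\lambda_{t_\alpha}\to\I$, and conclude from the $\sigma$-weak closedness of $\A_0$. The extra detail you supply on why $(e,\infty)$ accumulates at $e$ is a reasonable elaboration of what the paper leaves implicit.
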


\begin{proof} If $G$ is a continuous group, there exists a net $(t_\gamma)\subset (e,\infty)$ converging to $e$. By \cite[Prop 2.6]{Folland} we will for any $C_c(G)$ have that 
$\lambda_{t_\gamma}(f)\to f$ uniformly, and hence also in $L^2$-norm. It is now an exercise to use these facts to show that $(\lambda_{t_\gamma})$ is strongly convergent to $\lambda_e$. 
The result follows.
\end{proof}

\subsection{Defining approximate subdiagonality}

In the ``standard'' definition of subdiagonality the fact that the canonical expectation $\mathbb{E}$ onto $\D$ satisfies $\nu\circ\mathbb{E}$ will when combined with the fact that $\mathbb{E}(a^*)\mathbb{E}(a)\leq \mathbb{E}(a^*a)$ for all $a\in \M$ ensure that in this case $\mathfrak{n}(\D)\subset\mathfrak{n}(\A)$, or equvalently $\mathfrak{n}(\A) =\mathfrak{n}(\D)\oplus\mathfrak{n}(\A_0)$. As was shown by Arveson, this encompasses group von Neumann algeras of discrete ordered groups. By Proposition \ref{cts-topord} essentially the same claim holds in the case of a continuous topologically ordered group, but now with $\mathfrak{n}(\D)=\{0\}$. If we formalise the preceding structure into an abstract definition, we obtain the following which is then a context for subdiagonality for general vNAs which can accommodate anti-symmetry for non-$\sigma$-finite contexts.

\begin{definition}\label{def-appsubd} Let $\M$ be a von Neumann algebra equipped with an \emph{fns} weight $\nu$. A $\sigma$-weakly closed unital subalgebra $\A$ of $\M$ is said to be \emph{approximately subdiagonal} with respect to $\\A\cap\A^*$ if there exists a net $(\mathcal{A}_\gamma, c_\gamma, \nu_\gamma, P_\gamma)$ where the $\mathcal{A}_\gamma$'s are $\sigma$-weakly closed subalgebras of $\mathcal{A}$, the $c_\gamma$'s are increasing positive constants, the $\nu_\gamma$'s normal states and the $P_\gamma$'s normal conditional expectations from $\mathcal{M}$ onto $\mathcal{D}_\gamma=\mathcal{A}_\gamma^*\cap\mathcal{A}_\gamma$ such that the following holds:
\begin{enumerate}
\item $\A+\A^*$ is $\sigma$-weakly dense in $\M$.
\item $\mathfrak{n}(\mathcal{A})+\mathfrak{n}(\mathcal{A}^*)$ densely embeds into the GNS Hilbert space $H_\nu$.
\item The triples $(\mathcal{A}_\gamma,\nu_\gamma, P_\gamma)$ are ``subdiagonal-like'' subalgebras with $\A$ appearing as the inductive limit of these triples in the sense described below:
\begin{itemize}
\item $\{\mathcal{A}_\gamma\}$ is an increasing net of subalgebras with $\overline{\cup_\gamma\mathcal{A}_\gamma}^{w*}=\mathcal{A}$. Similarly the subalgebras $\mathcal{A}_\gamma^*\cap\mathcal{A}_\gamma=\mathcal{D}_\gamma$ increase to $\mathcal{D}=\mathcal{A}^*\cap\mathcal{A}$.
\item $\nu(x^*x)= \sup_\gamma c_\gamma\nu_\gamma(x^*x)$ for all $x\in \mathcal{M}$ with $\nu(x^*x)= \lim_\gamma c_\gamma\nu_\gamma(x^*x)$ holding if $x\in \mathfrak{n}_\nu(\mathcal{M})$. 
\item Each $\nu_\gamma$ restricts to a faithful normal state on $\mathcal{D}_\gamma$.
\item $\nu_\gamma= \nu_\gamma\circ P_\gamma$ for all $\gamma$, each $P_\gamma$ is multiplicative on $\mathcal{A}_\gamma$ and if $\gamma\geq \alpha$ then also $P_\gamma{\upharpoonright}\mathcal{A}_\alpha = P_\alpha$.
\end{itemize}
\item The collection $\mathcal{A}_{\gamma,0}=\mathcal{A}_\gamma\cap\mathrm{ker}(P_\gamma)$ of ideals in $\mathcal{A}_\gamma$ increases to the $\sigma$-weakly closed ideal $\mathcal{A}_0=\overline{\cup_\gamma \mathcal{A}_{\gamma,0}}^{w*}$ of $\mathcal{A}$.
\item We also have that $\mathfrak{n}_\nu(\A_{\gamma})\cap\mathfrak{n}_\nu(\A_{\gamma}^*)^*$ and $\mathfrak{n}_\nu(\A_{\gamma,0})\cap\mathfrak{n}_\nu(\A_{\gamma,0}^*)^*$ are each respectively $\sigma$-weakly dense in $\A_\gamma$ and $\A_{\gamma,0}$.
\item Each of $\mathcal{A}_\gamma$, $\mathcal{A}_{\gamma,0}$ and $\mathcal{D}_\gamma$ (and therefore also $\mathcal{A}$, $\mathcal{A}_0$ and $\mathcal{D}$) is preserved by the modular automorphism group $\sigma_t^\nu$.
\end{enumerate}
\end{definition}

\textbf{NOTE:} In the above definition one can more generally assume that the $\nu_\gamma$'s are not states but normal semifinite weights for which $s(\nu_\gamma)\A_\gamma s(\nu_\gamma)\cap s(\nu_\gamma)\A_\gamma^* s(\nu_\gamma)$ is an expected subalgebra of $s(\nu_\gamma)\M s(\nu_\gamma)$. However this change does not seem to have any advantages.

\begin{remark}
\begin{enumerate}
\item In the above definition the criteria in requirement (3) ensure that each $P_\gamma$ maps $\mathfrak{n}_\nu(\A_\gamma)$ back into itself. It therefore follows that $\mathfrak{n}_\nu(\A_\gamma) = \mathfrak{n}_\nu(\D_\gamma) \oplus \mathfrak{n}_\nu(\A_{\gamma,0})$. To see this let $a\in\mathfrak{n}_\nu(\A_\gamma)$ be given. For any $\alpha\geq \gamma$ we therefore have that 
$$\nu_\alpha(P_\gamma(a)^*P_\gamma(a)) = \nu_\alpha(P_\alpha(a)^*P_\alpha(a)) \leq \nu_\alpha(P_\alpha(a^*a)) = \nu_\alpha(a^*a).$$ This clearly ensures that 
$$\nu(P_\gamma(a)^*P_\gamma(a)) =\sup_\alpha c_\alpha\nu_\alpha(P_\gamma(a)^*P_\gamma(a))\leq \sup_\alpha c_\alpha\nu_\alpha(a^*a)=\nu(a^*a)<\infty$$as required.
\item It follows from the above criteria that $\A_0$ is an $\A$-ideal. Given $a\in \A$ and $b\in\A_0$, there exist nets $(a_\alpha)\subset \cup_\gamma\A_\gamma$ and $(b_\beta)\subset \cup_\gamma\A_{\gamma,0}$ respectively converging $\sigma$-weakly to $a$ and $b$. The multiplicativity of the $P_\gamma$'s ensure that each of the products $a_\alpha b_\beta$ belongs to $\cup_\gamma\A_{\gamma,0}$. On respectively taking the limit with respect to $\alpha$ and then $\beta$, it follows that $ab\in \A_0$. A similar argument shows that also $ba\in \A_0$. 
\end{enumerate}
\end{remark}

Criterion (2) of the definition allows for the following refinement:

\begin{proposition}\label{refine-2} Let $\A$ be approximately subdiagonal. Given any $2\leq q <\infty$ we then have that $\mathfrak{j}^{(q)}(\cup_\gamma (\mathfrak{n}_\nu(\A_{\gamma})\cap\mathfrak{n}_\nu(\A_{\gamma}^*)^*))$ and $\mathfrak{j}^{(q)}(\cup_\gamma (\mathfrak{n}_\nu(\A_{\gamma,0})\cap\mathfrak{n}_\nu(\A_{\gamma,0}^*)^*))$ are respectively norm dense in $\mathfrak{j}^{(q)}(\mathfrak{n}_\nu(\A))$ and $\mathfrak{j}^{(q)}(\mathfrak{n}_\nu(\A_0))$.
\end{proposition}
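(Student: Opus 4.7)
The plan is to follow the density argument of \cite[Proposition 2.11(a)]{GL2}, adapted to the setting where we must approximate across the increasing family $\{\A_\gamma\}$ of Definition \ref{def-appsubd}. The argument has two main components: a reduction from $L^q$ to $L^2$ via noncommutative interpolation, and an $L^2$-density argument via weak convergence combined with Mazur's theorem.

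First I would reduce to the case $q=2$. For any bounded analytic $b\in\mathfrak{n}_\nu^\infty$, Lemma \ref{GL2-2.4+5} yields the representation $\mathfrak{j}^{(q)}(b)=h^{1/q}\sigma_{i/q}(b)$, and a three-lines argument applied to the analytic $L^p$-valued function $z\mapsto [bh^z]$ on the strip $\{0\leq\re(z)\leq 1/2\}$ (with values in $L^\infty$ at $\re(z)=0$ and in $L^2$ at $\re(z)=1/2$) yields
\[
\|\mathfrak{j}^{(q)}(b)\|_q\leq\|b\|_\infty^{1-2/q}\,\nu(b^*b)^{1/q}\qquad (q\geq 2).
\]
Hence an approximating net which is uniformly bounded in $\M$ and converges in $L^2$ will automatically converge in $L^q$, reducing the proof to $q=2$.

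For the $L^2$-density, I exploit the fact that $\cup_\gamma(\mathfrak{n}_\nu(\A_\gamma)\cap\mathfrak{n}_\nu(\A_\gamma^*)^*)$ is $\sigma$-weakly dense in $\A$, which is a consequence of the bullets in criterion (3) of Definition \ref{def-appsubd} (density of $\cup_\gamma\A_\gamma$ in $\A$) together with criterion (5) (density of $\mathfrak{n}_\nu(\A_\gamma)\cap\mathfrak{n}_\nu(\A_\gamma^*)^*$ in $\A_\gamma$). Given $a\in\mathfrak{n}_\nu(\A)$, Kaplansky's density theorem furnishes a net $(b_\delta)$ in this union with $\|b_\delta\|_\infty\leq\|a\|_\infty$ converging to $a$ in the $\sigma$-strong$^*$ topology. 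For any $c\in\mathfrak{n}_\nu\cap\mathfrak{n}_\nu^*$, Remark \ref{7:R trwt} gives
\[
\langle\mathfrak{j}^{(2)}(b_\delta),\mathfrak{j}^{(2)}(c)\rangle_{L^2}=\tr(\mathfrak{i}^{(1)}(c^*b_\delta))=\nu(c^*b_\delta),
\]
and by Proposition \ref{GL2-2.2+3} the functional $b\mapsto \nu(c^*b)$ extends to a normal functional on $\M$ (namely pairing against the $L^1$-element built from $[h^{1/2}c^*]\in L^2$ and $h^{1/2}$). Hence $\sigma$-weak convergence produces $\nu(c^*b_\delta)\to\nu(c^*a)=\langle\mathfrak{j}^{(2)}(a),\mathfrak{j}^{(2)}(c)\rangle_{L^2}$. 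Since $\mathfrak{j}^{(2)}(\mathfrak{n}_\nu^\infty)$ is $L^2$-dense by Proposition \ref{P density1} and $(\mathfrak{j}^{(2)}(b_\delta))$ is $L^2$-bounded by the next step, we obtain weak $L^2$-convergence $\mathfrak{j}^{(2)}(b_\delta)\rightharpoonup\mathfrak{j}^{(2)}(a)$; as the union is convex, Mazur's theorem then delivers norm-convergent convex combinations lying in $\mathfrak{j}^{(2)}(\cup_\gamma(\mathfrak{n}_\nu(\A_\gamma)\cap\mathfrak{n}_\nu(\A_\gamma^*)^*))$.

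The main obstacle, which makes this a \emph{subtle} rather than routine adaptation, is ensuring the uniform $L^2$-boundedness of $(\mathfrak{j}^{(2)}(b_\delta))$: Kaplansky's theorem controls the $\M$-norm but not the $\nu$-seminorm. I would resolve this using the identity $\nu(x^*x)=\lim_\gamma c_\gamma\nu_\gamma(x^*x)$ from the third bullet of criterion (3), selecting $b_\delta$ inside each $\A_\gamma$ so that $\nu_\gamma(b_\delta^*b_\delta)\leq\nu_\gamma(a^*a)$ (for instance via the operator Schwarz inequality for $P_\gamma$ combined with a Kaplansky-type truncation) and then passing to the supremum in $\gamma$ to get the required $\nu$-bound. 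The corresponding statement for $\A_0$ follows by the same argument applied to the $\A_{\gamma,0}$'s, now invoking criteria (4) and (5) of Definition \ref{def-appsubd} together with the preservation of each $\A_{\gamma,0}$ under the modular automorphism group.
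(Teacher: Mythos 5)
There is a genuine gap at the weak-$L^2$-convergence step. You deduce $\nu(c^*b_\delta)\to\nu(c^*a)$ from $\sigma$-weak convergence of $(b_\delta)$ by asserting that $b\mapsto\nu(c^*b)$ extends to a normal functional on $\M$ for $c\in\mathfrak{n}_\nu\cap\mathfrak{n}_\nu^*$. That is false for a genuine weight: take $\M=B(\ell^2)$, $\nu=\mathrm{Tr}$ and $c$ Hilbert--Schmidt but not trace-class; then $b\mapsto\mathrm{Tr}(c^*b)$ is not even defined at $b=\I$, let alone $\sigma$-weakly continuous. (Your proposed $L^1$-witness $h^{1/2}(h^{1/2}c^*)$ is essentially $\mathfrak{i}^{(1)}(c^*)$, which requires $c^*\in\mathfrak{m}_\nu$, not merely $c\in\mathfrak{n}_\nu\cap\mathfrak{n}_\nu^*$.) This is exactly the state-versus-weight obstruction the whole paper is negotiating, so it cannot be waved through. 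Your second acknowledged obstacle --- the uniform bound on $\nu(b_\delta^*b_\delta)$ --- is also not actually resolved: the operator Schwarz inequality for $P_\gamma$ controls $P_\gamma(a)$, which lies in $\D_\gamma$, not in $\A_\gamma$; there is no conditional expectation onto $\A_\gamma$ to truncate with, and a bound on $\nu_\gamma$ for the single index $\gamma$ carrying $b_\delta$ does not yield a bound on $\sup_\alpha c_\alpha\nu_\alpha$.

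The paper's proof sidesteps both problems by approximating from the other side: fixing the target $b\in\mathfrak{n}_\nu(\A_0)$, it multiplies on the \emph{right} by analytic elements $a_{\alpha,n}\in\cup_\gamma\mathfrak{n}_\nu^\infty(\A_\gamma)$ converging $\sigma$-weakly to $\I$, and uses the commutation identity $[b\,\sigma^\nu_{-i/q}(a_{\alpha,n})h^{1/q}]=[bh^{1/q}]\,a_{\alpha,n}$ from Lemma \ref{GL2-2.4+5} together with the ideal property to keep the approximants inside $\mathfrak{j}^{(q)}(\cup_\gamma(\mathfrak{n}_\nu(\A_{\gamma,0})\cap\mathfrak{n}_\nu(\A_{\gamma,0}^*)^*))$. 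Weak $L^q$-convergence of $[bh^{1/q}]a_{\alpha,n}$ to $[bh^{1/q}]$ is then immediate because $g[bh^{1/q}]\in L^1$ for $g\in L^{q'}$ and the $L^1$--$\M$ pairing \emph{is} normal; convexity converts weak closure to norm closure. No interpolation reduction to $q=2$ and no uniform $\nu$-bound are needed. If you want to salvage your scheme, you would have to route the $\sigma$-weak convergence through fixed $L^1$ elements in this way rather than through the embedding $\mathfrak{j}^{(2)}$ itself.
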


\begin{proof} We shall only prove the second claim. Let $b\in\n(\A_0)$ be given. We may clearly select nets $(a_\gamma)\subset \cup_\gamma (\mathfrak{n}_\nu(\A_{\gamma})\cap\mathfrak{n}_\nu(\A_{\gamma}^*)^*)$ and $(b_\beta)\subset \cup_\gamma (\mathfrak{n}_\nu(\A_{\gamma,0})\cap\mathfrak{n}_\nu(\A_{\gamma,0}^*)^*)$ 
respectively converging $\sigma$-weakly to $\I$ and $b$. By the proof of Proposition \ref{analytic-n(A)} we may for each $\alpha$ and $\beta$ select sequences $(a_{\alpha, n}) \subset \cup_\gamma \mathfrak{n}^\infty_\nu(\A_{\gamma})$ and $(b_{\beta,n}) \subset \cup_\gamma \mathfrak{n}^\infty_\nu(\A_{\gamma,0})$ converging $\sigma$-weakly to $a_\alpha$ and $b_\beta$ respectively. We will 
clearly have that each $b_{\beta,m}\sigma^\nu_{-i/q}(a_{\alpha,n})$ belongs to $\cup_\gamma (\mathfrak{n}_\nu(\A_{\gamma,0})\cap\mathfrak{n}_\nu(\A_{\gamma,0}^*)^*)$. Noting that 
$$\mathfrak{j}^{(q)}(b_{\beta,m}\sigma^\nu_{-i/q}(a_{\alpha,n})) = [(b_{\beta,m}\sigma^\nu_{-i/q}(a_{\alpha,n}))h^{1/q}]=b_{\beta,m}[\sigma^\nu_{-i/q}(a_{\alpha,n})h^{1/q}]$$it is clear 
that as $m\to \infty$ the sequence $(\mathfrak{j}^{(q)}(b_{\beta,m}\sigma^\nu_{-i/q}(a_{\alpha,n})))_{m\in\mathbb{N}}$ will for each fixed $\alpha, \beta$ and $n$ converge weakly to 
$b_\beta[\sigma^\nu_{-i/q}(a_{\alpha,n})h^{1/q}]$. The net $(b_\beta[\sigma^\nu_{-i/q}(a_{\alpha,n})h^{1/q}])_\beta$ will in turn converge weakly to 
$b[\sigma^\nu_{-i/q}(a_{\alpha,n})h^{1/q}]= [b\sigma^\nu_{-i/q}(a_{\alpha,n})h^{1/q}]$. For each fixed $\alpha$ and $n$, we may apply Lemma \ref{GL2-2.4+5} to see that 
$$[b\sigma^\nu_{-i/q}(a_{\alpha,n})h^{1/q}] = b[\sigma^\nu_{-i/q}(a_{\alpha,n})h^{1/q}] = b(h^{1/q}a_{\alpha,n})= [bh^{1/q}]a_{\alpha,n}.$$Now firstly note that for each fixed $\alpha$ the sequence $([bh^{1/q}]a_{\alpha,n})_n$ will converge weakly to $[bh^{1/q}]a_{\alpha}$ and that the net $([bh^{1/q}]a_{\alpha})$ will converge weakly to $[bh^{1/q}]$. We therefore have that $[bh^{1/q}]$ belongs to the weak closure of $\mathfrak{j}^{(q)}(\mathfrak{n}_\nu(\A_0))$ in $L^q(\M)$. Since convexity ensures that the norm and weak closures agree, we are done.  
\end{proof}

We close this section with an analysis of the relation between maximal subdiagonality and approximate subdiagonality.

\begin{proposition}\label{subd-vs-appsubd}
Any maximal subdiagonal subalgebra $\A$ of a von Neumann algebra is approximately subdiagonal.
\end{proposition}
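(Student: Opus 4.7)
The plan is to build the required net $(\A_\gamma,c_\gamma,\nu_\gamma,P_\gamma)$ by a corner-plus-complement compression driven by the semifiniteness of $\nu|_{\D}$. First I would pick an upward directed net $(e_\gamma)$ of projections in the centralizer $\D\cap\M_\nu$ with $\nu(e_\gamma)<\infty$ and $e_\gamma\uparrow\I$; and, for each $\gamma$ with $\I-e_\gamma\neq 0$, a nonzero projection $p_\gamma\in(\I-e_\gamma)\M(\I-e_\gamma)\cap\M_\nu$ with $\nu(p_\gamma)<\infty$ (such exists since $\nu$ remains semifinite on the corner). Setting $\omega_\gamma(\,\cdot\,):=\nu(p_\gamma\,\cdot\,p_\gamma)/\nu(p_\gamma)$ yields a normal state on $\M$ with $\omega_\gamma(e_\gamma)=0$ and $\omega_\gamma(\I-e_\gamma)=1$. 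I would then define
\begin{equation*}
\A_\gamma:=e_\gamma\A e_\gamma+\C(\I-e_\gamma),\qquad \D_\gamma:=\A_\gamma\cap\A_\gamma^*=e_\gamma\D e_\gamma+\C(\I-e_\gamma),
\end{equation*}
$c_\gamma:=\nu(e_\gamma)+\nu(p_\gamma)$, $\alpha_\gamma:=\nu(e_\gamma)/c_\gamma$, and
\begin{equation*}
\nu_\gamma(x):=\alpha_\gamma\,\frac{\nu(e_\gamma x e_\gamma)}{\nu(e_\gamma)}+(1-\alpha_\gamma)\,\omega_\gamma(x),\qquad P_\gamma(x):=e_\gamma\E(x)e_\gamma+\omega_\gamma(x)(\I-e_\gamma),
\end{equation*}
so that $\A_\gamma$ is a unital $\sigma$-weakly closed subalgebra of $\A$, and $\A_{\gamma,0}=\A_\gamma\cap\ker P_\gamma=e_\gamma\A_0 e_\gamma$.

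The verification of Definition~\ref{def-appsubd} rests on three identities. A Pythagorean decomposition in $H_\nu$, $\nu(x^*x)=\nu(e_\gamma x^*x e_\gamma)+\nu((\I-e_\gamma)x^*x(\I-e_\gamma))$, obtained by splitting $\eta(x)$ via the orthogonal right projections $\rho(e_\gamma)$ and $\rho(\I-e_\gamma)$, combined with the orthogonality $e_\gamma p_\gamma=0$ (both projections lying in $\M_\nu$), gives
\begin{equation*}
c_\gamma\nu_\gamma(x^*x)=\nu(e_\gamma x^*x e_\gamma)+\nu(p_\gamma x^*x p_\gamma)=\|\rho(e_\gamma+p_\gamma)\eta(x)\|_{H_\nu}^{\,2}\leq\nu(x^*x),
\end{equation*}
with supremum $\nu(x^*x)$ for every $x\in\M$ (via lower semicontinuity of $\nu$ and $e_\gamma\uparrow\I$) and with the stronger limit $\nu(x^*x)$ for $x\in\mathfrak{n}_\nu$. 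Next, $\nu\circ\E=\nu$ combined with the centralizer property of $e_\gamma$ yields $\nu(e_\gamma\E(x)e_\gamma)=\nu(e_\gamma x e_\gamma)$, which together with $\omega_\gamma$ vanishing on $e_\gamma\M\cup\M e_\gamma$ gives $\nu_\gamma\circ P_\gamma=\nu_\gamma$. Faithfulness of $\nu_\gamma|_{\D_\gamma}$ is then immediate since both $e_\gamma\D e_\gamma$ and $\C(\I-e_\gamma)$ carry strictly positive mass. The conditional-expectation properties of $P_\gamma$ (unitality, idempotence, bimodule) and multiplicativity on $\A_\gamma$ follow from multiplicativity of $\E$ on $\A$ together with $e_\gamma(\I-e_\gamma)=0$; the compatibility $P_\gamma|_{\A_\alpha}=P_\alpha$ for $\gamma\geq\alpha$ uses $p_\gamma\leq\I-e_\gamma\leq\I-e_\alpha$ to give $\omega_\gamma(\I-e_\alpha)=1$; and preservation by $\sigma_t^\nu$ follows from $e_\gamma,p_\gamma\in\M_\nu$ together with the standard theorem that $\sigma_t^\nu(\A)=\A$ for maximal subdiagonal $\A$.

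The hard part will be conditions~(2) and~(5) of Definition~\ref{def-appsubd}. For~(5), the defect is that scalar multiples of $\I-e_\gamma$ fail to lie in $\mathfrak{n}_\nu$ when $\nu(\I-e_\gamma)=\infty$, so naively the $\mathfrak{n}_\nu$-elements of $\A_\gamma$ $\sigma$-weakly close up only to $e_\gamma\A e_\gamma\subsetneq\A_\gamma$; overcoming this will require refining the complement $\C(\I-e_\gamma)$ by a staircase $\C(f_\gamma-e_\gamma)+\C(\I-f_\gamma)$ indexed by a second net of finite-weight projections $e_\gamma\leq f_\gamma\uparrow\I$ in $\D\cap\M_\nu$, or, alternatively, working with the sub-unital variant $\A_\gamma=e_\gamma\A e_\gamma$ where $P_\gamma=e_\gamma\E(\cdot)e_\gamma$ is interpreted as a conditional expectation in the operator-valued-weight sense with $P_\gamma(\I_\M)=e_\gamma=\I_{\D_\gamma}$ (in which case~(5) is trivial). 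Condition~(2) reduces to smoothing an arbitrary $\eta(y)\in H_\nu$ ($y\in\mathfrak{n}_\nu$) to $\eta(e_\gamma y e_\gamma)$ via the Pythagorean identity above, then applying the $\sigma$-weak density of $\mathfrak{n}_\nu(\A)\cap\mathfrak{n}_\nu(\A^*)^*$ in $\A$ (Remark following Definition~\ref{def-appsubd}) inside the bounded corner $e_\gamma\M e_\gamma$ via Kaplansky.
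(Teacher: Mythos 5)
Your construction goes a long way around, and the detour runs into a wall that your own proposal flags but does not actually clear. The fatal point is condition~(5) of Definition~\ref{def-appsubd} for the unital truncation $\A_\gamma=e_\gamma\A e_\gamma+\bC(\I-e_\gamma)$: since $\nu(\I-e_\gamma)=\infty$ whenever $\nu$ is not finite, any element of $\A_\gamma$ with a nonzero $(\I-e_\gamma)$-component lies outside $\mathfrak{n}_\nu$, so $\mathfrak{n}_\nu(\A_\gamma)=e_\gamma\A e_\gamma$ and its $\sigma$-weak closure is the \emph{proper} subalgebra $e_\gamma\A e_\gamma\subsetneq\A_\gamma$. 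Neither of your proposed repairs works: a staircase $\bC(f_\gamma-e_\gamma)+\bC(\I-f_\gamma)$ merely relocates the problem to the top step $\I-f_\gamma$, which still has infinite weight; and the sub-unital variant $\A_\gamma=e_\gamma\A e_\gamma$ with $P_\gamma(\I)=e_\gamma$ abandons the requirement that the $P_\gamma$ be normal conditional expectations from $\M$ onto $\D_\gamma$ in the sense this paper uses (Section~\ref{S2} defines a conditional expectation to be \emph{unital}), so you would be verifying a different definition, and the clause ``each $\nu_\gamma$ restricts to a faithful normal state on $\D_\gamma$'' also becomes ambiguous for a corner not containing $\I$. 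Your treatment of condition~(2) is likewise only a sketch, whereas the paper already supplies it as Proposition~\ref{GNSPropn} and one should simply cite that.

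The real gap, though, is a missing idea: the definition does not ask you to shrink the algebra at all. The paper's proof takes the \emph{constant} net $\A_\gamma=\A$, $\D_\gamma=\D$, $P_\gamma=\mathbb{E}$ (so the compatibility, multiplicativity, ideal, density and modular-invariance clauses are either trivial or are exactly Proposition~\ref{GNSPropn} together with $\sigma_t^\nu(\A)=\A$), and varies only the states: by Haagerup's theorem on normal weights one writes $\nu{\upharpoonright}\D$ as the supremum of an increasing net $c_\gamma\omega_\gamma$ with $\omega_\gamma$ normal states on $\D$, and sets $\nu_\gamma=\omega_\gamma\circ\mathbb{E}$, so that $c_\gamma\nu_\gamma$ increases to $\nu=\nu\circ\mathbb{E}$. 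In other words, the approximation is carried entirely by the weight, not by compressions of the algebra; once you see this, the whole $e_\gamma$/$p_\gamma$ apparatus, the Pythagorean bookkeeping, and the condition~(5) obstruction all disappear.
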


\begin{proof} 
Firstly let $(\M,\A,\mathbb{E}, \nu)$ be a ``standard'' maximal subdiagonal quadruple. That is let $\M$ be a von Neumann algebra equipped with a faithful normal semifinite weight $\nu$, and 
$\mathcal{D}$ be a unital von Neumann subalgebra of $\M$ such that $\nu{\upharpoonright}\mathcal{D}$ is semifinite. Further suppose that there exists a faithful normal conditional expectation 
$\mathbb{E}:M\to \mathcal{D}$ such that $\nu\circ\mathbb{E}=\nu$ (equivalently $\sigma_t^\nu(\D)=\D$ for all $t\in \mathbb{R}$), and that $\A$ is a $\sigma$-weakly closed unital subalgebra $\A$ of $\M$ for 
which  
\begin{itemize}
\item $\A+\A^*$ is $\sigma$-weakly dense in $\M$,
\item $\mathcal{D}=\A\cap \A^*$,
\item $\sigma_t^\nu(\A)=\A$ for all $t$,
\item and $\mathbb{E}$ is multiplicative on $\A$.
\end{itemize}
We may now use Haagerup's description of normal weights to select a net $(c_\gamma, \omega_\gamma)$ of positive scalars and normal states on $\D$ such that $c_\gamma\omega_\gamma$ increases to $\nu{\upharpoonright}\mathcal{D}$. If we now define $\nu_\gamma$ to be $\omega_\gamma\circ \mathbb{E}$, it is then an exercise to see that $c_\gamma\nu_\gamma$ increases to $\nu$. If we further set $\A_\gamma=\A$ and $P_\gamma=\mathbb{E}$, and take note of the claims of Proposition \ref{GNSPropn}, it is clear that this structure satisfies the criteria of the previous definition.
\end{proof}

Given an approximately subdiagonal subalgebra, we will for any $x\in \A_\gamma$ have that $\lim_\alpha P_\alpha(x) = P_\gamma(x)$. Also for any net 
$(x_i)\subset \cup_\gamma(\A_\gamma+\A_\gamma^*)$ converging to say $x\in \A_\gamma$ we will have that $\lim_iP_\alpha(x_i)=P_\alpha(x)$. If for a specific 
approximately subdiagonal subalgebra we in fact have that $\lim_i\lim_\alpha P_\alpha(x_i)=P_\gamma(x)$ and $\D\neq\{0\}$, then that algebra is maximal 
subdiagonal.

\begin{proposition}\label{P.subd-vs-appsubd}
Let $\A$ be an approximately subdiagonal subalgebra of $\M$. If for some $\gamma$ the expectation $P_\gamma$ is non-zero on $\A_\gamma$, the algebra $\D$ is an expected subalgebra of $\M$. If additionally for any net $(x_i)\subset \cup_\gamma(\A_\gamma+\A_\gamma^*)$ converging to say $x\in \cup_\gamma(\A_\gamma+\A_\gamma^*)$ we have that $\lim_i\lim_\alpha P_\alpha(x_i)=\lim_\alpha P_\alpha(x)$, $\A$ is a maximal subdiagonal subalgebra of $\M$. 
\end{proposition}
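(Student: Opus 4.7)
The plan is to construct a normal conditional expectation $\E\colon \M \to \D$ from the compatible net $(P_\gamma)$ and verify the subdiagonality axioms. The key preliminary observation is that the compatibility $P_\gamma|_{\A_\alpha}=P_\alpha$ for $\gamma \geq \alpha$ (Def.~\ref{def-appsubd}(3)), together with $*$-preservation of conditional expectations, ensures that for any $x$ in the $\sigma$-weakly dense subspace $\mathcal{D}_0 := \bigcup_\alpha (\A_\alpha+\A_\alpha^*)$ the net $(P_\gamma(x))_\gamma$ is eventually constant. This defines a linear, unital, contractive, $*$-preserving map $\E_0\colon \mathcal{D}_0 \to \bigcup_\alpha \D_\alpha$ via $\E_0(x) := P_\alpha(x)$ for $x \in \A_\alpha+\A_\alpha^*$.

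For Part~1, I would use weak$^*$-compactness of the unit ball of $B(\M)$ in the point--weak$^*$ topology to extract from $(P_\gamma)$ a subnet $(P_{\gamma_i})$ converging to a unital, contractive, positive linear $\E\colon \M \to \M$. By construction $\E$ extends $\E_0$, takes values in the $\sigma$-weakly closed $\D$, and is the identity on $\bigcup_\alpha\D_\alpha$ (hence on all of $\D$). The hypothesis that $P_\gamma$ be non-zero on some $\A_\gamma$ ensures $\D \neq \{0\}$ so that the construction is non-degenerate; normality of $\E$ follows from the fact that $\E|_{\A_\alpha}=P_\alpha$ is normal combined with a density argument, and Tomiyama's theorem then identifies $\E$ as the desired normal conditional expectation onto $\D$.

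For Part~2, the additional hypothesis is precisely the statement that $\E_0$ is $\sigma$-weakly continuous on $\mathcal{D}_0$. I would then uniquely extend $\E_0$ to a $\sigma$-weakly continuous map $\E\colon \M \to \D$ by setting $\E(x)$ to be the $\sigma$-weak limit of $\E_0(x_i)$ for any bounded net $(x_i) \subset \mathcal{D}_0$ with $x_i \to x$ $\sigma$-weakly; well-definedness is secured by applying the hypothesis to differences $x_i - y_i \to 0 \in \mathcal{D}_0$, and existence of the limit follows from weak$^*$-compactness of bounded subsets of $\D$. Tomiyama then gives $\E$ as a normal conditional expectation, and multiplicativity of $\E$ on $\A$ inherits from multiplicativity of each $P_\alpha$ on $\A_\alpha$ via $\sigma$-weak continuity and density of $\bigcup_\alpha\A_\alpha$ in $\A$. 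To obtain the crucial invariance $\nu \circ \E = \nu$, I first note that for $y \in \A_\alpha^+$ and $\gamma \geq \alpha$, compatibility yields $\nu_\gamma(P_\alpha(y)) = \nu_\gamma(P_\gamma(y)) = \nu_\gamma(y)$, and since the increasing net $(c_\gamma \nu_\gamma)$ has $\{\gamma : \gamma \geq \alpha\}$ cofinal,
\[
\nu(\E(y)) = \sup_{\gamma \geq \alpha} c_\gamma \nu_\gamma(P_\alpha(y)) = \sup_{\gamma \geq \alpha} c_\gamma \nu_\gamma(y) = \nu(y).
\]
This equality then extends to all $y \in \M^+$ via the polarization identity applied to the $\sigma$-weakly dense subspaces $\mathfrak{n}_\nu(\A_\alpha) \cap \mathfrak{n}_\nu(\A_\alpha^*)^*$ provided by Def.~\ref{def-appsubd}(5), combined with normality of $\E$ and $\nu$. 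Semifiniteness of $\nu|_\D$ then follows automatically, and Takesaki's theorem \cite[Theorem IX.4.2]{Tak2} together with the modular invariance $\sigma_t^\nu(\D) = \D$ (Def.~\ref{def-appsubd}(6)) confirms that $\A$ is a maximal subdiagonal subalgebra.

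The main obstacle is the normality-preserving extension of $\E_0$ from $\mathcal{D}_0$ to all of $\M$: the hypothesis only directly controls convergence of $\E_0$ when the target limit lies inside $\mathcal{D}_0$, and bridging to arbitrary $\sigma$-weak limits in $\M$ requires the indirect difference-argument together with weak$^*$-compactness. A secondary technical point is extending $\nu \circ \E = \nu$ from $\mathcal{D}_0^+$ to all of $\M^+$, where the interaction between polarization, positive parts and the $\mathfrak{n}_\nu$-density structure of Def.~\ref{def-appsubd}(5) requires careful bookkeeping.
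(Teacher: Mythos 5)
Your Part 1 has a genuine gap. A point--weak$^*$ cluster point of the net $(P_\gamma)$ in the unit ball of $B(\M)$ is a unital positive contraction, but there is no reason for it to be \emph{normal}: normality is not inherited from agreement with normal maps on a $\sigma$-weakly dense subspace, and point--weak$^*$ limits of normal maps are typically singular (compare Banach limits). More importantly, you have misread the role of the hypothesis that some $P_\gamma$ be non-zero on $\A_\gamma$: it is not mere non-degeneracy of $\D$. In this paper ``expected subalgebra'' presupposes that $\nu{\upharpoonright}\D$ is semifinite, and the whole point of approximate subdiagonality is to admit examples (Hardy space of the upper half-plane) where this fails; so Part 1 is precisely the assertion that the hypothesis forces semifiniteness. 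The paper's argument is: since $P_\gamma$ maps $\mathfrak{n}_\nu(\A_\gamma)$ into $\mathfrak{n}_\nu(\D_\gamma)$ and $\mathfrak{n}_\nu(\A_\gamma)$ is $\sigma$-weakly dense in $\A_\gamma$, non-vanishing of $P_\gamma$ gives $\sigma$-weak density of $\mathfrak{n}_\nu(\D_\gamma)$ in $\D_\gamma$, hence of $\mathfrak{n}_\nu(\D)$ in $\D$; semifiniteness of $\nu{\upharpoonright}\D$ together with $\sigma_t^\nu(\D)=\D$ (criterion (6) of the definition) then yields the normal $\nu$-preserving expectation directly from \cite[Theorem IX.4.2]{Tak2} --- no limit of the $P_\gamma$'s is ever taken.

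Your Part 2 is much closer to the paper's: the paper likewise reads the limiting hypothesis as $\sigma$-weak continuity of the inductively defined $P$ on $\cup_\gamma(\A_\gamma+\A_\gamma^*)$, extends it to $\M$ (via Jarchow's completion theorems rather than your explicit subnet/difference argument, but to the same effect), and uses norm-boundedness of $\sigma$-weakly convergent nets plus $\sigma$-weak compactness of $\mathrm{Ball}(\D)$ to see that the extension lands in $\D$. Where you diverge is in establishing $\nu\circ\E=\nu$: your attempt to push the identity from $\cup_\gamma\A_\gamma^+$ to all of $\M^+$ ``by polarization and normality'' does not go through as stated, because a normal weight is not $\sigma$-weakly continuous and the supremum formula $\nu=\sup_\gamma c_\gamma\nu_\gamma$ need not commute with the $\sigma$-weak limits defining $\E$ on general elements. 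The paper avoids this by checking $\nu\circ P=\nu$ only on the dense subspace $\cup_\gamma(\mathfrak{n}(\A_\gamma)^*\mathfrak{n}(\A_\gamma^*)+\mathfrak{n}(\A_\gamma^*)^*\mathfrak{n}(\A_\gamma))$ and then invoking uniqueness: a $\sigma$-weakly continuous extension of $P$ can only be the canonical expectation $\mathbb{E}$ already produced in Part 1, which preserves $\nu$ by construction and is then multiplicative on $\A$. Repair Part 1 first; Part 2 then closes by this identification rather than by a direct weight computation.
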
 

\begin{proof}
Suppose that we are given a unital $\sigma$-weakly closed subalgebra $\A$ of $\M$ satisfying the criteria of Definition \ref{def-appsubd}. If for some $\gamma$ the expectation $P_\gamma$ 
is non-zero on $\A_\gamma$, then by the $\sigma$-weak continuity of $P_\gamma$, $P_\gamma(\n_\nu(\A_\gamma)) = \n_\nu(\D_\gamma)$ will be $\sigma$-weakly 
dense in $\D_\gamma$. Since for any $\alpha\geq \gamma$ we moreover have that $P_\alpha{\upharpoonright}\A_\gamma=P_\gamma$, each such $P_\alpha$ will similarly be non-zero 
on $\A_\alpha$ and hence $\mathfrak{n}(\D_\alpha)_\nu$ $\sigma$-weakly dense in $\A_\alpha$. Thus $\cup_\gamma \n_\nu(\D_\gamma)$ (and therefore also $\n_\nu(\D)$) is then in turn $\sigma$-weakly dense in $\D$ (the $\sigma$-weak closure of 
$\cup_\gamma D_\gamma$). But then $\D$ is an expected algebra since $\nu{\upharpoonright}\D$ is semifinite and $\sigma_t^\nu(\D)=\D$ for all $t$. Having verified this fact we now let $\mathbb{E}$ be the canonical expectation of $\M$ onto $\D$ with respect to $\nu$.  

The second claim follows by comparing this expectation to the $P_\gamma$'s. Firstly note that criterion (7) of Definition \ref{def-appsubd} ensures for any 
$\alpha\geq \gamma$ we have that $P_\alpha$ restricted to $\A_\gamma +\A_\gamma^*$ equals the action of $P_\gamma$ on the same space. The pairs 
$(P_\gamma, \A_\gamma +\A_\gamma^*)$ form a type of inductive limit which we can use to uniquely define a contractive operator $P$ on the $\sigma$-weakly dense subspace 
$\cup_\gamma(\A_\gamma +\A_\gamma^*)$ of $\M$ which is multiplicative on the $\sigma$-weakly dense subalgebra $\cup_\gamma \A_\gamma$ of $\A$. This map is moreover an 
idempotent mapping $\cup_\gamma(\A_\gamma +\A_\gamma^*)$ onto the $\sigma$-weakly dense subspace $\cup_\gamma\D_\gamma$ of $\D$.  

We leave it as an exercise to show that each $\n(\A_\gamma)^*\n(\A_\gamma^*)+\n(\A_\gamma^*)^*\n(\A_\gamma)$ is $\sigma$-weakly dense in $\A_\gamma^*+\A_\gamma$. 
Criteria (3) of Definition \ref{def-appsubd} ensures that we will for any $\alpha\geq \gamma$ have that $P_\alpha$ will in its action on 
$\n(\A_\gamma)^*\n(\A_\gamma^*)+\n(\A_\gamma^*)^*\n(\A_\gamma)$ satisfy $\nu_\alpha\circ P_\alpha=\nu_\alpha\circ P_\gamma$. It therefore also follows from 
criterion (3) that we will for any $\gamma$ have that $\nu\circ P_\gamma=\nu$ on $\n(\A_\gamma)^*\n(\A_\gamma^*)+\n(\A_\gamma^*)^*\n(\A_\gamma)$. We therefore 
have that $\nu\circ P=\nu$ on the $\sigma$-weakly dense subspace $\cup_\gamma \n(\A_\gamma)^*\n(\A_\gamma^*)+\n(\A_\gamma^*)^*\n(\A_\gamma)$ of $\M$. So if indeed $P$ 
turned out to be $\sigma$-weakly continuous and was extendible to $\M$, its extension could only be $\mathbb{E}$ which would then in turn have to be multiplicative on $\A$. 
The content of the rest of the proof consists of showing that this is indeed the case.

The limiting condition stated in the hypothesis is precisely what is needed to ensure that $P$ is indeed continuous with respect to the the relative $\sigma$-weakly topology 
on $\cup_\gamma(\A_\gamma+\A_\gamma^*)$. By compactness the unit ball of $\M$ is complete under the $\sigma$-weak topology. 

By Theorem 3.3.3 of \cite{Jarchow} $\M$ and $\cup_\gamma(\A_\gamma+\A_\gamma^*)$ will when equipped with the $\sigma$-weak topology have the same completion. Theorem 3.4.2 
of \cite{Jarchow} then ensures that $P$ extends to a continuous map $\widetilde{P}$ from the completion of $\M$ to the completion of $\D$. We proceed to show that 
$\widetilde{P}$ maps $\M$ into $\D$. Given any $x\in \M$ we may select a net $(x_i)\subset \cup_\gamma(\A_\gamma+\A_\gamma^*)$ converging $\sigma$-weakly to $x$. By the 
Banach-Steinhaus theorem any $\sigma$-weakly convergent net $(x_i)$ is norm bounded. We may therefore without loss of generality assume that $(x_i)\subset 
Ball(\cup_\gamma(\A_\gamma+\A_\gamma^*))$. Since each $P_\gamma$ (and therefore also $P$) is contractive, $P$ will map the net $(x_i)$ onto a net 
$(P(x_i))\subset Ball(\cup_\gamma(\D_\gamma))\subset Ball(\D)$. The continuity of $\widetilde{P}$ ensures that in the completion of $\D$ this net converges to 
$\widetilde{P}(x)$. However by the $\sigma$-weak compactness of $Ball(\D)$, the net $(P(x_i))$ must have a subnet converging to some element of $Ball(\D)$. By uniqueness 
of limits, this limit must agree with the $\widetilde{P}(x)$; in other words we must have that $\widetilde{P}(x)\in \D$. Thus $\widetilde{P}$ restricts to a $\sigma$-weakly 
continuous map from $\M$ to $\D$. As noted earlier, this extension can only be $\mathbb{E}$ which must then be multiplicative on $\A$.
This then suffices to prove that $\A$ is a maximal subdiagonal subalgebra of $\M$. 
\end{proof}

On the basis of the preceding proposition we now introduce the following concept.

\begin{definition}\label{def-regappsubd}
We say that an approximately subdiagonal subalgebra is \emph{regular} if either it is actually maximal subdiagonal or otherwise $\A=\A_0$.
\end{definition}

\section{Maximality of subdiagonal subalgebras of general von Neumann algebras}\label{S6}

In this section we shall use the reduction theorem developed previously to characterise maximality of subdiagonal sublagebras of a general von Neumann algebra. Exel showed that in the case where $\nu$ is a tracial state, all $\sigma$-weakly closed subdiagonal algebras are automatically maximal \cite{Exel}. Ji then later showed that this claim still holds in the case where $\nu$ is a faithful normal semifinite trace (\cite{jig1},cf. \cite[Corollary 4.2]{Xu}). For the non tracial $\sigma$-finite case, we have the following result.

\begin{theorem}[\cite{Xu}, \cite{JOS}] Let $\A$ be a $\sigma$-weakly closed unital subalgebra of $\M$ with $\mathcal{D}$ and $\E$ as before, and assume that additionally $\A+\A^*$ is $\sigma$-weakly dense in $\M$. Then $\A$ is maximal as a subdiagonal subalgebra with respect to $\mathcal{D}$ if and only if  $\sigma_t^\nu(\A)=\A$ for all $t\in \mathbb{R}$.
\end{theorem}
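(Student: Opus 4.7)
The plan is to reduce the hard direction to the finite tracial case, where Exel's theorem guarantees automatic maximality, by applying Haagerup's reduction theorem in its original $\sigma$-finite form.

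For the forward direction (maximality implies modular invariance), I would argue via an envelope argument. Since $\nu\circ\E=\nu$, \cite[Theorem IX.4.2]{Tak2} yields $\sigma_t^\nu(\D)=\D$ and $\sigma_t^\nu\circ\E=\E\circ\sigma_t^\nu$, which implies that $\sigma_t^\nu(\A)$ is subdiagonal with respect to $\D$ (same diagonal, dense sum with its adjoint, and $\E$ still multiplicative on it). To conclude $\sigma_t^\nu(\A)\subseteq\A$ from maximality of $\A$, I would show that the $\sigma$-weakly closed algebra $\mathcal{B}_t$ generated by $\A\cup\sigma_t^\nu(\A)$ still admits $\E$ as a multiplicative expectation onto $\D$; the key point is that a product alternating elements of $\A$ with elements of $\sigma_t^\nu(\A)$ can be reduced via the multiplicativity of $\E$ on each piece. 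Maximality of $\A$ then forces $\mathcal{B}_t=\A$, and applying the same reasoning to $-t$ gives equality.

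For the reverse direction, assume $\sigma_t^\nu(\A)=\A$ for all $t\in\bR$. The restriction of the modular action to $\mathbb{Q}_D\subset\bR$ then preserves $\A$, so $\widehat{\A}=\A\rtimes_\nu\mathbb{Q}_D$ is a well-defined $\sigma$-weakly closed unital subalgebra of $\R=\M\rtimes_\nu\mathbb{Q}_D$. I would first verify that $\widehat{\A}$ is subdiagonal in $\R$ with diagonal $\widehat{\D}=\D\rtimes_\nu\mathbb{Q}_D$ and expectation $\widehat{\E}$ obtained as the canonical lift of $\E$ commuting with the shifts $\lambda_t$ ($t\in\mathbb{Q}_D$). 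Using the increasing sequence of finite subalgebras $\R_n$ together with the associated normal conditional expectations $\mathscr{W}_n$ from Theorem \ref{red s-finite} (in the $\sigma$-finite/state version, where each $\R_n$ carries a faithful normal tracial state), I would set $\widehat{\A}_n=\mathscr{W}_n(\widehat{\A})$ and check, using the commutation of $\mathscr{W}_n$ with $\widehat{\E}$, that each $\widehat{\A}_n$ is $\sigma$-weakly closed and subdiagonal in the finite $\R_n$ with diagonal $\widehat{\D}_n=\widehat{\D}\cap\R_n$, and that $\bigcup_n\widehat{\A}_n$ is $\sigma$-strong* dense in $\widehat{\A}$.

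By Exel's theorem applied inside each finite $\R_n$, every $\widehat{\A}_n$ is automatically maximal subdiagonal. To conclude maximality of $\A$ itself, suppose $\mathcal{B}$ is subdiagonal with $\A\subseteq\mathcal{B}\subseteq\M$ and the same diagonal $\D$. Adapting the forward direction shows that $\mathcal{B}$ must also be preserved by $\sigma_t^\nu$ (once one observes that the envelope argument depends only on the common diagonal and the shared expectation $\E$, and that $\E$ is multiplicative on the envelope of $\mathcal{B}\cup\sigma_t^\nu(\mathcal{B})$). Lifting to $\widehat{\mathcal{B}}=\mathcal{B}\rtimes_\nu\mathbb{Q}_D\supseteq\widehat{\A}$, Exel-maximality then gives $\widehat{\mathcal{B}}\cap\R_n=\widehat{\A}_n$ for every $n$, and the $\sigma$-strong* density of $\bigcup_n\R_n$ in $\R$ forces $\widehat{\mathcal{B}}=\widehat{\A}$; intersecting with $\M$ via $\mathscr{W}_{\mathbb{Q}_D}$ then recovers $\mathcal{B}=\A$. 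The main obstacle I anticipate is the combined task of (i) verifying that each $\widehat{\A}_n$ inherits a genuine subdiagonal structure inside $\R_n$ — in particular that the restricted expectation remains multiplicative on $\widehat{\A}_n$ and that the restriction of the tracial state to $\widehat{\D}_n$ behaves as required by Exel's hypotheses — and (ii) extracting modular invariance for an arbitrary subdiagonal extension $\mathcal{B}$ of $\A$, since without this the crossed-product lift of $\mathcal{B}$ is not available and the reduction-theoretic machinery cannot be applied to it directly.
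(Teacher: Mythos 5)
Your reverse direction has the right overall shape (restrict the modular action to $\mathbb{Q}_D$, lift $\A$ to $\widehat{\A}\subset\R=\M\rtimes\mathbb{Q}_D$, cut down to the finite expected subalgebras $\R_n$, and invoke Exel there), and this is indeed Xu's route. But both halves of your argument ultimately rest on the envelope claim, and that claim is false. Multiplicativity of $\E$ on $\A$ and on $\sigma_t^\nu(\A)$ separately does \emph{not} give multiplicativity on the $\sigma$-weakly closed algebra $\mathcal{B}_t$ they generate: for $a\in\A$ and $b\in\sigma_t^\nu(\A)$ there is no way to ``reduce'' $\E(ab)$ using multiplicativity on each piece, and in general $\mathcal{B}_t\cap\mathcal{B}_t^*$ is strictly larger than $\D$. (Already in $M_2(\bC)$ with $\D$ the diagonal, the upper and lower triangular algebras are two maximal subdiagonal algebras with the same diagonal whose join is all of $M_2(\bC)$, on which $\E$ is certainly not multiplicative --- so ``two subdiagonal algebras with common diagonal generate a subdiagonal algebra'' is structurally unsound.) Consequently your forward direction does not go through. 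The actual proof of necessity (due to Ji--Ohwada--Saito, reproduced and generalised in Lemmas \ref{JOS 2.1-part1}--\ref{JOS 2.3} and Theorem \ref{necessary}) is genuinely modular-theoretic: one decomposes the standard-form Hilbert space as $H_1\oplus H_2\oplus H_3$ using $\n(\A_0)$, $\n(\D)$, $\n(\A_0^*)$, shows that the involution $S$ has an anti-diagonal $3\times 3$ matrix form, deduces that $\Delta=S^*S$ is block-diagonal, and concludes that $\Delta^{it}$ preserves $H_1\oplus H_2$ and $H_2\oplus H_3$, whence $\sigma_t^\nu$ preserves $\A_m=\A$. Nothing of this sort is replaced by your alternating-product reduction.

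The same flaw re-enters at the end of your reverse direction, where you propose to show that an arbitrary subdiagonal extension $\mathcal{B}\supseteq\A$ is $\sigma_t^\nu$-invariant ``by adapting the forward direction'' so that it can be lifted to the crossed product. The missing ingredient --- which the paper explicitly identifies as the second pillar of the proof alongside the reduction theorem --- is Arveson's maximality criterion (Theorem \ref{arvmax}): the space $\A_m=\{f\in\M:\E(afb)=\E(bfa)=0\mbox{ for all }a\in\A,\ b\in\A_0\}$ is a subdiagonal superalgebra of $\A$ containing \emph{every} subdiagonal extension of $\A$ with diagonal $\D$, and it is manifestly $\sigma_t^\nu$-invariant whenever $\A$ is, since it is defined purely in terms of $\E$, $\A$ and $\A_0$. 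One therefore never needs to prove invariance of an arbitrary $\mathcal{B}$: it suffices to show $\A_m=\A$, and it is to $\widehat{\A}\subseteq\widehat{\A_m}$ that the reduction-theoretic machinery and Exel's theorem are applied. If you repair your argument by replacing the envelope $\mathcal{B}_t$ (and the arbitrary $\mathcal{B}$) with $\A_m$, and replace the forward direction by the standard-form analysis above, you recover the proof of [Xu] and [JOS].
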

 
The non-trivial proof of this fact makes use of both the reduction theorem and a very elegant maximality criterion of Arveson. Our first objective is to show that the above result holds in the general possibly non-$\sigma$-finite case as well. There are two obstacles to overcome in order to achieve this objective. 
\begin{itemize}
\item The version of the reduction theorem used by Xu in his contribution to the above theorem required the weight $\nu$ to be strictly semifinite on 
$\mathcal{D}$. With the alternative version of the reduction theorem now at our disposal, we are able to remove this restriction. 
\item A further obstacle may be found in Arveson's seminal paper \cite{AIOA}. In that paper Arveson posited a very elegant maximality criterion for subdiagonal algebras \cite[Corollary 2.2.4]{AIOA}, which was exploited by Ji, Ohwada and Saito in their contribution. However Arveson proved his maximality criterion under the assumption that the underlying Hilbert space is separable (and the reference von Neumann algebra therefore $\sigma$-finite). Thus we also need to go back to Arveson's original result and see if it is equally valid in the general case. 
\end{itemize} 

\subsection{Arveson's maximality criterion}

As mentioned earlier Arveson proved his maximality criterion for subdiagonal subalgebras under the assumption that the underlying Hilbert space is separable. Those techniques do not carry over directly to the general case. Here we show that an invocation of the Haagerup-Terp standard form ensures that Arveson's maximality criterion also holds in the general case. If $\A$ is subdiagonal, it is clear that the $\sigma$-weak closure will also be subdiagonal. So in attempting to characterise maximality, we may clearly suppose that $\A$ is $\sigma$-weakly closed.  We shall therefore henceforth restrict attention to $\sigma$-weakly closed subdiagonal subalgebras. In this section we establish the validity of Arveson's maximality criterion for general von Neumann algebras. We note the following crucial fact regarding $\mathfrak{n}(\A_0)$.

\begin{proposition}\label{GNSPropn} Let $\A$ be a $\sigma$-weakly closed subalgebra of $\M$ which is subdiagonal with respect to the expected subalgebra $\mathcal{D}$. Then $\mathfrak{n}(\A_0)$ is $\sigma$-weakly dense in $\A_0$, and $\mathfrak{n}(\A)+\mathfrak{n}(\A^*)$ therefore $\sigma$-weakly dense in $\M$. Moreover $\eta(\mathfrak{n}(\A)+\mathfrak{n}(\A^*))$ is a norm-dense subspace of $H_\nu$ where $H_\nu$ is the Hilbert space produced from the GNS-construction for $\nu$. (Recall that we may equip $\mathfrak{n}_\nu$ with an inner product defined by $\langle a, b \rangle =\nu(b^*a)$, and that $H_\nu$ is the completion of $\mathfrak{n}_\nu$ under the associated norm. The map $\eta$  is then the natural embedding of $\mathfrak{n}_\nu$ into $H_\nu$.)  
\end{proposition}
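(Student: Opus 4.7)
The plan is to prove the two $\sigma$-weak density claims (Parts~1 and~2) by a Terp averaging argument based on Proposition~\ref{7:P Terp2}, and then to deduce the Hilbert space density (Part~3) from Part~2 via a duality argument in the Haagerup--Terp standard form. The setup is as follows: since $\nu{\upharpoonright}\mathcal{D}$ is semifinite, Proposition~\ref{7:P Terp2} furnishes a net $(f_\lambda)\subset\mathfrak{n}_\nu(\mathcal{D})$ of positive analytic elements converging strongly to $\I$, each lying in $\mathfrak{n}_\nu\cap\mathfrak{n}_\nu^*$.

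For Part~1, fix $a\in\A_0$. For any indices $\gamma,\lambda$, the operator $f_\gamma a f_\lambda$ belongs to $\mathfrak{n}_\nu(\A)\cap\mathfrak{n}_\nu(\A^*)^*$ since $\mathfrak{n}_\nu$ is a left ideal and $\mathfrak{n}_\nu^*$ a right ideal of $\M$; and it belongs to $\A_0$ because $\mathcal{D}\subseteq\A$ combined with the multiplicativity of $\mathbb{E}$ on $\A$ yields $\mathbb{E}(f_\gamma a f_\lambda)=f_\gamma\mathbb{E}(a)f_\lambda=0$. Thus $f_\gamma a f_\lambda\in\mathfrak{n}_\nu(\A_0)\cap\mathfrak{n}_\nu(\A_0^*)^*$, and taking the $\sigma$-weak limit first in $\gamma$ and then in $\lambda$ recovers $a$. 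For Part~2, the identical argument applied to arbitrary $a\in\A$ (the multiplicativity step being unnecessary) shows $\mathfrak{n}_\nu(\A)$ is $\sigma$-weakly dense in $\A$; $\sigma$-weak continuity of the adjoint then gives $\mathfrak{n}_\nu(\A^*)$ $\sigma$-weakly dense in $\A^*$, and combining with the hypothesis that $\A+\A^*$ is $\sigma$-weakly dense in $\M$ completes the argument.

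For Part~3 I would pass to the Haagerup--Terp standard form (Theorem~\ref{7:T stdform}), identifying $H_\nu$ with $L^2(\M)$ so that $\eta(a)$ corresponds to $\mathfrak{j}^{(2)}(a)=[ah^{1/2}]$, where $h=d\tnu/d\tau$. Suppose $\xi\in L^2(\M)$ is orthogonal to $\mathfrak{j}^{(2)}(\mathfrak{n}_\nu(\A)+\mathfrak{n}_\nu(\A^*))$ in the inner product $\langle X,Y\rangle=\tr(Y^*X)$. Using the H\"older inclusion $L^2\cdot L^2\subseteq L^1$ and cyclicity of the trace,
\[
\tr\bigl(\xi^*[ah^{1/2}]\bigr)=\tr\bigl([ah^{1/2}]\xi^*\bigr)=\tr\bigl(a\,[h^{1/2}\xi^*]\bigr),\quad a\in\mathfrak{n}_\nu,
\]
with $g:=[h^{1/2}\xi^*]\in L^1(\M)$. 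Under the identification $L^1(\M)\cong\M_*$, the formula $\phi_\xi(a)=\tr(ag)$ defines a normal linear functional on $\M$ that vanishes on $\mathfrak{n}_\nu(\A)+\mathfrak{n}_\nu(\A^*)$ by hypothesis. By Part~2 and normality, $\phi_\xi\equiv 0$ on $\M$, hence $g=0$; the non-singularity of $h^{1/2}$ then forces $\xi=0$. Thus $\mathfrak{j}^{(2)}(\mathfrak{n}_\nu(\A)+\mathfrak{n}_\nu(\A^*))$ has trivial orthogonal complement in $L^2(\M)$, giving the required norm density in $H_\nu$.

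The principal obstacle is Part~3: a direct $H_\nu$-norm approximation of $\eta(a)$ (for $a\in\mathfrak{n}_\nu$) by elements of $\eta(\mathfrak{n}_\nu(\A)+\mathfrak{n}_\nu(\A^*))$ is awkward, since $\sigma$-weak or even $\sigma$-strong$^*$ density in $\M$ does not translate directly into norm convergence of the GNS images. Passage through the standard form recasts the question as the annihilation of a normal functional on $\M$, which is precisely what Part~2 supplies.
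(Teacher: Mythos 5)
Your Parts 1 and 2 are essentially the paper's own argument: the paper multiplies only on the right by $f_\lambda\in\mathfrak{n}_\nu(\D)$ and uses the ideal properties of $\mathfrak{n}_\nu$ and $\A_0$, whereas you sandwich with $f_\gamma a f_\lambda$; both versions are correct and the difference is cosmetic.

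Part 3, however, has a genuine gap, and it sits exactly at the point you yourself flag as ``the principal obstacle.'' Your duality argument requires that $g=[h^{1/2}\xi^*]$ be an element of $L^1(\M)$, so that $\phi_\xi(a)=tr(ag)$ is a \emph{normal} (in particular operator-norm bounded) functional on $\M$. For a general faithful normal semifinite weight this fails: $h^{1/2}$ is not $\tau$-measurable and does not lie in $L^2(\M)$ (that happens precisely when $\nu$ is finite), so the product $h^{1/2}\xi^*$ of this unbounded non-measurable operator with an arbitrary $\xi^*\in L^2(\M)$ need not be preclosed with closure in $L^1(\M)$. Equivalently, the functional $a\mapsto tr(\xi^*[ah^{1/2}])=\langle\eta(a),\xi\rangle$ is continuous on $\mathfrak{n}_\nu$ for the GNS norm but is in general not bounded in operator norm (already for $\M=B(H)$ with the canonical trace and $\xi$ Hilbert--Schmidt but not trace class), hence is not $\sigma$-weakly continuous. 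Consequently the $\sigma$-weak density of $\mathfrak{n}_\nu(\A)+\mathfrak{n}_\nu(\A^*)$ in $\M$ from Part 2 cannot be fed into $\phi_\xi$ to conclude that $\phi_\xi$ vanishes on all of $\mathfrak{n}_\nu$. Your argument is the standard one for the $\sigma$-finite (state) case, where $h^{1/2}\in L^2(\M)$ and everything you wrote is correct; it is precisely the step that does not survive the passage to weights.

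The paper bridges this gap differently, in two moves. First, for a \emph{fixed} $f_\lambda\in\mathfrak{n}_\nu(\D)$ the map $x\mapsto\eta(xf_\lambda)=x\,\eta(f_\lambda)$ is continuous from $(\M,\sigma\text{-strong}^*)$ to $(H_\nu,\text{weak})$, being left multiplication on the fixed vector $\eta(f_\lambda)$; approximating $b\in\mathfrak{n}_\nu$ $\sigma$-strong$^*$ by elements of $\mathfrak{n}(\A)+\mathfrak{n}(\A^*)$ therefore places each $\eta(bf_\lambda)$ in the weak closure of $\eta(\mathfrak{n}(\A)+\mathfrak{n}(\A^*))$. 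Second, one shows $\eta(bf_\lambda)\to\eta(b)$ weakly by passing to the standard form and using Lemma \ref{GL2-2.4+5} to write $[bf_\lambda h^{1/2}]=[bh^{1/2}]\,\sigma^\nu_{i/2}(f_\lambda)$, together with the $\sigma$-weak convergence $\sigma^\nu_{i/2}(f_\lambda)\to\I$ from Proposition \ref{7:P Terp2}(3). Convexity then upgrades weak density to norm density. If you want to keep your annihilator formulation, you would still need this second computation: it is what shows that a vector $\xi$ orthogonal to $\eta(\mathfrak{n}(\A)+\mathfrak{n}(\A^*))$ is orthogonal to each $\eta(bf_\lambda)$ and hence to $\eta(b)$, without ever invoking a normal functional on $\M$.
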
 

\begin{proof}
Since by assumption $\nu$ is semifinite on $\mathcal{D}$, we may select the net $(f_\lambda)$ described in Proposition \ref{7:P Terp2} to be in $\mathcal{D}$. Given any $a\in \A_0$ it is now clear that $(af_\lambda)\subset \mathfrak{n}(\A_0)$, with $(af_\lambda)$ converging strongly to $a$. But the strong and $\sigma$-strong topologies agree on norm bounded sets, and hence the convergence is $\sigma$-strong. Thus as claimed $\mathfrak{n}(\A_0)$ is $\sigma$-weakly dense in $\A_0$. A similar claim clearly holds for $\A_0^*$. The fact that $\nu$ is semifinite on $\mathcal{D}$ ensures that $\mathfrak{n}(\mathcal{D})$ is $\sigma$-weakly dense in $\mathcal{D}$, and hence that $\mathfrak{n}(\A)+\mathfrak{n}(\A^*)=\mathfrak{n}(\A_0)+\mathfrak{n}(\mathcal{D})+\mathfrak{n}(\A_0^*)$ is $\sigma$-weakly dense in $\A+\A^*$ and therefore also in $\M$.

We pass to proving the claim about the norm density of $\eta(\mathfrak{n}(\A)+\mathfrak{n}(\A^*))$. The space $\A+\A^*$ is clearly convex. Hence this subspace is even $\sigma$-strong* dense. Given $b\in \mathfrak{n}_\nu$, we may therefore select a net $(a_\gamma)\subset \mathfrak{n}(\A)+\mathfrak{n}(\A^*)$ converging $\sigma$-strong* to $b$. This ensures that for any fixed $\lambda$ and any 
$x\in H_\nu$, we have that $$\langle\eta(a_\gamma f_\lambda),x\rangle=\langle a_\gamma\eta(f_\lambda),x\rangle\to\langle b\eta(f_\lambda),x\rangle= \langle\eta(bf_\lambda),x\rangle.$$Since the net $(a_\gamma f_\lambda)_\gamma$ belongs to $\mathfrak{n}(\A)+\mathfrak{n}(\A^*)$, this ensures that for any $b\in \mathfrak{n}_\nu$, the weak closure of $\eta(\mathfrak{n}(\A)+\mathfrak{n}(\A^*))$ includes the net $(\eta(bf_\lambda))$. If we are able to show that such a net converges weakly $\eta(b)$, it will follow that $\eta(\mathfrak{n}(\A)+\mathfrak{n}(\A^*))$ is weakly dense in $\eta(\mathfrak{n}_\nu)$. But by convexity, the weak and norm closure of $\eta(\mathfrak{n}(\A)+\mathfrak{n}(\A^*))$ must agree. Thus this will then show that $\eta(\mathfrak{n}(\A)+\mathfrak{n}(\A^*))$ is norm dense in $H_\nu$ as claimed.

To prove that for any $b\in \mathfrak{n}_\nu$ the net $(\eta(bf_\lambda))$ converges weakly to $\eta(b)$, we pass to the Haagerup-Terp standard form for $\M$ with respect to $\nu$ (see Theorem \ref{7:T stdform}). Let $h$ be the density of the dual weight $\widetilde{\nu}$ on the crossed product $\M\rtimes_\nu\mathbb{R}$ with respect to the canonical trace on this crossed product. Recall that $bf_\lambda h^{1/2}$ is then pre-measurable with the closure belonging to $L^2(M)$ (see Proposition \ref{GL2-2.2+3}). We use the fact that in this standard form, the closure $[bh^{1/2}]$ where $b\in\n_\nu$, corresponds to $\eta(b)$.  In the context of this standard form we will then for any $x\in L^2(M)$ have that $\langle\eta(b f_\lambda),x\rangle= tr(x^*[bf_\lambda h^{1/2}])$. We may now further apply Proposition \ref{7:P Terp2} and Lemma \ref{GL2-2.4+5} to this equality to see that 
\begin{eqnarray*}
\langle\eta(bf_\lambda),x\rangle &=& tr(x^*[bf_\lambda h^{1/2}])\\
&=& tr(x^*b[f_\lambda h^{1/2}])\\
&=&tr(x^*b(h^{1/2}\sigma_{i/2}(f_\lambda)))\\
&=&tr(x^*[bh^{1/2}]\sigma_{i/2}(f_\lambda))\\
&\to& tr(x^*[bh^{1/2}])\\
&=&\langle \eta(b),x\rangle.
\end{eqnarray*} 
Thus as required, $\eta(bf_\lambda)$ converges weakly to $\eta(b)$ 
\end{proof}

\begin{definition}\label{Arv-max} Let $\A$ be a $\sigma$-weakly closed subalgebra of $\M$ which is subdiagonal with respect to the expected subalgebra $\mathcal{D}$. We then define $\A_m$ to be the subspace $\A_m=\{f\in\M: \E(afb)=\E(bfa)=0 \mbox{ for all }a\in \A, \,b\in\A_0\}$ 
\end{definition}

\begin{lemma}\label{JOS 2.1-part1}
Let $\M$ be in standard form. With $H_1$, $H_2$ and $H_3$ respectively denoting $[\eta(\n(\A_0))]_2$, $[\eta(\n(\D))]_2$ and $[\eta(\n(\A_0^*))]_2$, we have that 
\begin{enumerate}
\item $H=H_1\oplus H_2\oplus H_3$;
\item each of $H_1$, $H_2$ and $H_3$ is an invariant subspace with respect to the action of $\D$.
\end{enumerate}
\end{lemma}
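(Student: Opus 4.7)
The plan is to deduce both claims directly from Proposition \ref{GNSPropn} together with the module, positivity, and $\nu$-invariance properties of $\E$. I would first refine the density statement of that proposition. The key step is to verify that $\n_\nu(\A) = \n_\nu(\D) + \n_\nu(\A_0)$: for any $a \in \n_\nu(\A)$ the Kadison-Schwarz inequality $\E(a)^*\E(a) \leq \E(a^*a)$, combined with $\nu \circ \E = \nu$, yields $\nu(\E(a)^*\E(a)) \leq \nu(a^*a) < \infty$, so that $\E(a) \in \n_\nu(\D)$ and hence $a - \E(a) \in \n_\nu(\A_0)$. An analogous argument applied to $\A^*$ (using $\D^* = \D$ and the $*$-preservation $\E(x^*) = \E(x)^*$) gives $\n_\nu(\A^*) = \n_\nu(\D) + \n_\nu(\A_0^*)$. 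Combining both decompositions with Proposition \ref{GNSPropn} then shows that $\eta(\n_\nu(\A_0)) + \eta(\n_\nu(\D)) + \eta(\n_\nu(\A_0^*))$ is norm dense in $H$, so that $H_1 + H_2 + H_3$ is dense in $H$.

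Next I would verify pairwise orthogonality of the three summands and so conclude (1). Given $a \in \n_\nu(\A_0)$, $d \in \n_\nu(\D)$ and $c \in \n_\nu(\A_0^*)$, each of the products $d^*a$, $c^*a$ and $c^*d$ belongs to $\n_\nu^* \n_\nu \subseteq \mathfrak{m}_\nu$, so I may freely invoke $\nu = \nu \circ \E$ on each. This produces $\langle \eta(a), \eta(d)\rangle = \nu(d^*a) = \nu(d^*\E(a)) = 0$ (by the $\D$-module property and $\E(a) = 0$); $\langle \eta(a), \eta(c)\rangle = \nu(\E(c^*a)) = 0$ (since $c^*, a \in \A_0$ and $\A_0$ is closed under multiplication, forcing $c^*a \in \A_0 \subseteq \ker \E$); and $\langle \eta(d), \eta(c)\rangle = \nu(\E(c^*d)) = 0$ (since $c^* \in \A_0$, $d \in \A$, and $\A_0$ is an ideal of $\A$). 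Together with the density step this gives the orthogonal direct sum $H = H_1 \oplus H_2 \oplus H_3$.

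For part (2), the observation is that each of $\n_\nu(\A_0)$, $\n_\nu(\D)$, and $\n_\nu(\A_0^*)$ is stable under left multiplication by $\D$. Indeed, for any $d \in \D$, ideal and subalgebra properties give $d\A_0 \subseteq \A_0$, $d\D \subseteq \D$, and $(d\A_0^*)^* = \A_0 d^* \subseteq \A_0$; meanwhile $\n_\nu$ is a left ideal of $\M$. Translating to the GNS picture via $\lambda(d)\eta(x) = \eta(dx)$ and using boundedness of $\lambda(d)$, each $H_i$ is invariant under $\D$. The main subtlety throughout will be keeping careful track of domain conditions, ensuring every operator product on which $\nu$ is evaluated lies in $\mathfrak{m}_\nu$ so that the module, $*$-preservation, and $\nu$-invariance properties of $\E$ can be legitimately invoked; once this bookkeeping is in hand the argument becomes entirely routine.
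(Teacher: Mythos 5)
Your proof is correct and follows essentially the same route as the paper's: density of $\eta(\n(\A_0))+\eta(\n(\D))+\eta(\n(\A_0^*))$ via Proposition \ref{GNSPropn}, pairwise orthogonality from $\nu=\nu\circ\E$ together with the $\D$-module property and multiplicativity of $\E$ on $\A$, and invariance from the left-ideal property of $\n_\nu$ combined with $\D\A_0\subseteq\A_0$. Your explicit Kadison--Schwarz verification that $\n_\nu(\A)=\n_\nu(\D)+\n_\nu(\A_0)$ is a welcome piece of bookkeeping that the paper only asserts implicitly (in Proposition \ref{GNSPropn} and the remark following the definition of subdiagonality), but it does not change the substance of the argument.
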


\begin{proof}
Property (1) follows from the facts that for $a_0,b_0 \in \n(\A_0)$ and $d\in \n(\D)$ we have that $\nu(d^*a_0)=\nu(\E(d^*a_0)) =0$ and similarly that $\nu(a_0b_0)=0$ and $\nu(d^*a_0^*)=0$. Property (2) is a consequence of the fact that $\D\n(\A_0)\subset\n(\A_0)$, $\D\n(\D)\subset\n(\D)$ and $\D\n(\A_0^*)\subset\n(\A_0^*)$. 
\end{proof}

\begin{theorem}\label{arvmax} Let $\A$ be a $\sigma$-weakly closed subalgebra of $\M$ which is subdiagonal with respect to the expected subalgebra $\mathcal{D}$. Then $\A_m$ is a $\sigma$-weakly closed superalgebra of $\A$ which is a maximal subdiagonal subalgebra with respect to the expected subalgebra $\mathcal{D}$.
\end{theorem}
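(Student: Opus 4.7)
The closedness and containment statements are routine formalities: since $\E$ is normal, each of the linear maps $f \mapsto \E(afb)$ and $f \mapsto \E(bfa)$ is $\sigma$-weakly continuous, so $\A_m$, being the intersection of the kernels of such maps as $a$ ranges over $\A$ and $b$ over $\A_0$, is $\sigma$-weakly closed. The inclusion $\A \subset \A_m$ follows at once from the multiplicativity of $\E$ on $\A$ together with $\E(\A_0) = \{0\}$: for $x \in \A$, $a \in \A$ and $b \in \A_0$, multiplicativity yields $\E(axb) = \E(a)\E(x)\E(b) = 0$ and symmetrically $\E(bxa) = 0$. In particular $\D \subset \A_m$.

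The substance of the proof is to generalize Arveson's classical invariant-subspace characterization, which originally leaned crucially on a cyclic separating vector $\Omega$ for $\M$, to the present setting in which no such vector need exist. The plan is to pass to the Haagerup-Terp standard form (Theorem \ref{7:T stdform}) and work intrinsically inside $L^2(\M)$ with the closed subspaces
\[
K_\A := [\eta(\n_\nu(\A))]_2 = H_1 \oplus H_2, \qquad K_{\A^*} := [\eta(\n_\nu(\A^*))]_2 = H_2 \oplus H_3,
\]
where $H_1,H_2,H_3$ are as in Lemma \ref{JOS 2.1-part1}. The central assertion is
\[
f \in \A_m \quad\Longleftrightarrow\quad \lambda(f) K_\A \subset K_\A \ \text{ and } \ \lambda(f)^* K_{\A^*} \subset K_{\A^*}. \qquad (\star)
\]
To prove $(\star)$ I would, for $a \in \n_\nu(\A)$ and $c \in \n_\nu(\A_0^*)$ (so that $\eta(c)$ sweeps out $K_\A^\perp = H_3$), expand the inner product $\langle \lambda(f)\eta(a), \eta(c)\rangle$ using the identification $\eta(x) = \mathfrak{j}^{(2)}(x) = [xh^{1/2}]$, the exponent-shuffling of Proposition \ref{GL2-2.2+3}, the commutation $[ah^z] = h^z \sigma_{iz}(a)$ of Lemma \ref{GL2-2.4+5}, the trace-weight identity of Remark \ref{7:R trwt}, and the tracial property of $tr$; this reduces the inner product to an expression of the form $\nu(c^* f a)$. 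A conditioning argument against $\n_\nu(\D)$ together with the faithfulness of $\nu$ on $\D$ then translates the simultaneous vanishing of all such inner products into the condition $\E(bfa) = 0$ for all $a \in \A$ and $b \in \A_0$; the dual computation on the $K_{\A^*}$ side encodes $\E(afb) = 0$. Once $(\star)$ is in hand, closure of $\A_m$ under products is automatic by composition of invariances, making $\A_m$ a $\sigma$-weakly closed subalgebra of $\M$ containing $\A$.

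The remaining subdiagonality data fall out of $(\star)$. For $\A_m \cap \A_m^* = \D$, any $f$ in the intersection has $\lambda(f)$ preserving both $K_\A$ and $K_{\A^*}$, hence also their intersection $H_2 = [\eta(\n_\nu(\D))]_2$; standard-form arguments leveraging that $\D$ is itself expected in $\M$ with $\nu{\upharpoonright}\D$ semifinite then force $f \in \D$. Multiplicativity of $\E$ on $\A_m$ follows by decomposing $f_i = \E(f_i) + (f_i - \E(f_i))$ for $f_1, f_2 \in \A_m$, observing that the zero parts $f_i - \E(f_i)$ lie in $\A_m \cap \ker\E$ and (by $(\star)$) send $H_2$ into $H_1$, so that reading off the $H_2$-component of $\lambda(f_1 f_2)\eta(d)$ for $d \in \n_\nu(\D)$ yields $\E(f_1 f_2) = \E(f_1)\E(f_2)$. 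The density of $\A_m + \A_m^* \supset \A + \A^*$ in $\M$ is free, so $\A_m$ is subdiagonal with respect to $\D$. Maximality is then tautological: any subdiagonal $\B \supset \A_m$ (with the same $\D$) has $\E$ multiplicative on $\B$ and $\E{\upharpoonright}\B_0 = 0$, forcing $\E(afb) = \E(a)\E(f)\E(b) = 0$ for all $f \in \B$, $a \in \A \subset \B$ and $b \in \A_0 \subset \B_0$, whence $\B \subset \A_m$. The main obstacle throughout is the proof of $(\star)$: the absence of a cyclic separating vector rules out Arveson's original vector-manipulation arguments, so all computations must be carried out intrinsically inside $L^2(\M)$ via the embeddings $\mathfrak{j}^{(q)}$ and $\mathfrak{i}^{(p)}$, with delicate management of the domains arising when unbounded density operators are composed with elements lying only in $\n_\nu$ rather than $\n_\nu \cap \n_\nu^*$; it is precisely here that Propositions \ref{7:P Terp2}, \ref{analytic-n(A)} and \ref{refine-2} carry the bulk of the technical weight.
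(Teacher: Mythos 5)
Your proposal is correct and follows essentially the same route as the paper: both pass to the Haagerup--Terp standard form, use the decomposition $H=H_1\oplus H_2\oplus H_3$ of Lemma \ref{JOS 2.1-part1}, and identify $\A_m$ with the invariance algebra $\{x\in\M: x(H_1\oplus H_2)\subset H_1\oplus H_2,\ x^*(H_2\oplus H_3)\subset H_2\oplus H_3\}$, deriving multiplicativity of $\E$ from the module identity $\E_2(gx)=\E(g)\E_2(x)$ together with the approximate identity $(f_\lambda)\subset\D$. The only cosmetic difference is that you package the two inclusions as a single biconditional $(\star)$ up front, whereas the paper proves $\A_m\subset\A_M$ directly and recovers the reverse inclusion from the multiplicativity of $\E$ on $\A_M$; the underlying computations are the same.
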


\begin{proof}
The proof follows almost exactly the same lines as Arveson's original proof. However the transition from the $\sigma$-finite to the general setting requires some subtle tweaking of that argument at crucial points. We therefore outline the proof, giving details as necessary. We may firstly note, as Arveson does, that the $\sigma$-weak continuity of $\E$ ensures that $\A_m$ is a $\sigma$-weakly closed subspace of $\M$, and that the multiplicativity of $\E$ on any $\D$-subdiagonal subalgebra $\A_1$ containing $\A$, ensures that $\A_1\subset\A_m$. Hence it remains to show that $\A_m$ is an algebra and that $\E$ is multiplicative on $\A_m$. 

We now assume that $\M$ is represented in the Haagerup-Terp standard form (Theorem \ref{7:T stdform}), and once again follow Arveson by 
introducing the subspace 
 $$\A_M=\{x\in\M: x(H_1\oplus H_2)\subset(H_1\oplus H_2),\quad  x^*(H_2\oplus H_3)\subset(H_2\oplus H_3)\}.$$ 
We remind the reader that - as was shown in the proof of Proposition 
\ref{GNSPropn} - for any $b\in \n_\nu$, $\eta(b)$ corresponds to $[ah^{1/2}]$ in this standard form with $\langle[ah^{1/2}],[bh^{1/2}]\rangle =\nu(b^*a)$. It is clear that $\A_M$ is a subalgebra of $\M$. If we are able to show that 
$\A_M$ is $\D$-subdiagonal and that $\A_m\subset\A_M$ we would have that $\A_m=\A_M$, thereby proving the theorem. The fact that by definition $\nu(a_0fa)=0$ for every $a_0^*\in\n(\A_0^*)$, $a\in\n(\A)$, and $f\in \A_m$, ensures that 
$\langle f\eta(a),\eta(a_0^*)\rangle=\nu(a_0fa)=\nu(\E(a_0fa))=0$. Thus $\A_m(H_1\oplus H_2)\perp [\eta(\n(\A_0^*))]_2=H_3$, which ensures that $\A_m(H_1\oplus H_2)\subset (H_1\oplus H_2)$. Similarly $\A^*(H_2\oplus H_3)\subset(H_2\oplus H_3)$. So $\A_m\subset \A_M$ as required. We claim that
\begin{equation}\label{eqn 1}
\E_2(a^*x)=\E(a^*)\E_2(x)\mbox{ for all }a\in \A\mbox{ with }x\in H_2\oplus H_3.
\end{equation}
To see this, it is enough to note that for any $a,b\in\A$ with $b^*\in\n(\A^*)$, we have that
\begin{eqnarray*}
\E_2(a^*[b^*h^{1/2}])&=& \E_2([a^*b^*h^{1/2}])\\
&=&[\E(a^*b^*)h^{1/2}]\\
&=&[(\E(a^*)\E(b^*))h^{1/2}]\\
&=&\E(a^*)[\E(b^*)h^{1/2}]\\
&=&\E(a^*)\E_2[b^*h^{1/2}].
\end{eqnarray*}
Next let $(f_\lambda)$ be as in the proof of Proposition \ref{GNSPropn}. Since $(f_\lambda)\subset\D$, it is easy to check that $(f_\lambda)\subset\A_M\cap \A_M^*$ with in addition $([f_\lambda h^{1/2}])\subset H_2=\E_2(H_\nu)$. So for any $g\in \A_M$, we have that $(gf_\lambda)\subset\n(\A_M)$ and $(g^*f_\lambda)\subset\n(\A_M^*)$. Given $a\in\A$ and $g\in \A_M$ we therefore have by equation (\ref{eqn 1}) that 
$$[\E(a^*g^*f_\lambda)h^{1/2}]=\E_2[(a^*g^*f_\lambda)h^{1/2}]=\E(a^*)\E_2[(b^*f_\lambda)h^{1/2}]$$and similarly that
$$[\E(a^*)\E(g^*f_\lambda)h^{1/2}]=\E_2[\E(a^*)(g^*f_\lambda)h^{1/2}]=\E(a^*)\E_2[(g^*f_\lambda)h^{1/2}].$$ Clearly $[\E(a^*g^*f_\lambda)h^{1/2}]= [\E(a^*)\E(g^*f_\lambda)h^{1/2}]$. The injectivity of the embedding $\n_\nu\to L^2(\M): g\to [gh^{1/2}]$ (see Proposition \ref{7:P j-embed}) now ensures that $\E(a^*g^*f_\lambda)= \E(a^*)\E(g^*f_\lambda)$. By 
Proposition \ref{7:P Terp2}, the nets $(a^*g^*f_\lambda)$ and $(g^*f_\lambda)$ converge strongly to $a^*g^*$ and $g^*$ respectively. However since the strong and $\sigma$-strong* topologies agree on bounded sets, the convergence is even $\sigma$-strong*. The normality of $\E$ therefore ensures that 
$$\E(a^*g^*) = \lim_\lambda\E(a^*g^*f_\lambda) = \E(a^*)\,\lim_\lambda\E(g^*f_\lambda)= \E(a^*)\E(g^*),$$and hence that $\E(ga)=\E(g)\E(a)$. We claim that we also have 
\begin{equation}\label{eqn 2}
\E_2(gx)=\E(g)\E_2(x)\mbox{ for all }g\in \A_M\mbox{ with }x\in H_1\oplus H_2.
\end{equation}  
(To see this note that a minor modification of the proof of equation (\ref{eqn 1}) shows that $\E_2(g[bh^{1/2}])=\E(g)\E_2[bh^{1/2}]$ for each $b\in\n(\A)$. The claim then follows by continuity.) 

Let $g_0,g_1\in \A_M$ be given. Since $([f_\lambda h^{1/2}])\subset H_2$, we have by the definition of $\A_M$ that $(g_1[f_\lambda h^{1/2}])=([(g_1f_\lambda)h^{1/2}])\subset H_1\oplus H_2$. We may therefore use equation (\ref{eqn 2}) to see that 
\begin{eqnarray*}
[\E(g_0g_1f_\lambda)h^{1/2}]&=&\E_2[(g_0g_1f_\lambda)h^{1/2}]\\
&=&\E_2(g_0[(g_1f_\lambda)h^{1/2}])\\
&=&\E(g_0)\E_2[(g_1f_\lambda)h^{1/2}]\\
&=& \E(g_0)[\E(g_1f_\lambda)h^{1/2}]\\
&=&[\E(g_0)\E(g_1f_\lambda)h^{1/2}].
\end{eqnarray*}
Once again the injectivity of the embedding $\n_\nu\to L^2(\M): g\mapsto [gh^{1/2}]$ ensures that $\E(g_0g_1f_\lambda)=\E(g_0)\E(g_1f_\lambda)$ with the normality of $\E$ further ensuring that $\E(g_0g_1)=\lim_\lambda\E(g_0g_1f_\lambda)=\E(g_0)\lim_\lambda\E(g_1f_\lambda)=\E(g_0)\E(g_1)$as required. This then concludes the proof.
\end{proof}

\subsection{Necessity of $\sigma_t^\nu$-invariance}

Our next objective in achieving our ultimate goal of characterising maximality in the general case, is to show that if $\A$ is maximal subdiagonal subalgebra with respect to $\D$, we necessarily have that $\sigma_t^\nu(\A)=\A$ for all $t\in \bR$. This was shown by Ji, Ohwada and Saito in the $\sigma$-finite case \cite{JOS}. With minor modifications at appropriate points, their proof will also work for the general case. We will outline their proof giving details as appropriate where their proof needs to be modified.
 
We first note that the technology developed in the preceding theorem and its proof now enables us to posit the following companion to Lemma \ref{JOS 2.1-part1}. We leave the proof as an exercise.

\begin{lemma}\label{JOS 2.1-part2}
Let $\M$ be in standard form and let $(\A_m)_0$ denote $\A_m\cap\mathrm{ker}(\E)$. With $H_1$, $H_2$ and $H_3$ as before we have that 
\begin{enumerate}
\item $H_1=[\eta(\n((\A_m)_0))]_2$ and $H_3=[\eta(\n((\A_m)_0^*))]_2$;
\item $(\A_m)_0(H_1+H_2)\subset H_1$ and $(\A_m)_0^*(H_2+H_3)\subset H_3$.
\end{enumerate}
\end{lemma}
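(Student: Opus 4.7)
The plan is to deduce both parts of the lemma by combining Lemma \ref{JOS 2.1-part1} (applied twice: once to $\A$ and once to $\A_m$) with the structural information about $\A_m$ developed in Theorem \ref{arvmax} and its proof. Since by Theorem \ref{arvmax} $\A_m$ is itself a (maximal) subdiagonal subalgebra with respect to the same expected $\D$, and $\A\subset \A_m$, these two subalgebras share the same middle piece $H_2=[\eta(\n(\D))]_2$ in their respective orthogonal decompositions of $H$.

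For part (1), I would apply Lemma \ref{JOS 2.1-part1} to $\A_m$ to obtain an alternative orthogonal decomposition $H=K_1\oplus H_2\oplus K_3$ with $K_1=[\eta(\n((\A_m)_0))]_2$ and $K_3=[\eta(\n((\A_m)_0^*))]_2$. Since $\A_0\subset(\A_m)_0$ we automatically get $H_1\subset K_1$ and $H_3\subset K_3$. To upgrade these inclusions to equalities, pick $x\in K_1$. As $x\perp H_2$ (from the second decomposition) the first decomposition forces $x=x_1+x_3$ with $x_1\in H_1\subset K_1$, whence $x_3=x-x_1\in K_1$; but $x_3\in H_3\subset K_3$ and $K_1\perp K_3$, so $x_3=0$. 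Hence $K_1=H_1$, and symmetrically $K_3=H_3$.

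For part (2), I would recall that in the proof of Theorem \ref{arvmax} the algebra $\A_M$ was defined by the two invariance conditions $x(H_1\oplus H_2)\subset H_1\oplus H_2$ and $x^*(H_2\oplus H_3)\subset H_2\oplus H_3$, and that $\A_m\subset\A_M$. So for $f\in(\A_m)_0$ the first invariance condition gives $f(H_1+H_2)\subset H_1\oplus H_2$. To pin the image into $H_1$ I would invoke equation (2) from the proof of Theorem \ref{arvmax}, namely $\E_2(gx)=\E(g)\E_2(x)$ for $g\in\A_M$ and $x\in H_1\oplus H_2$: specialising to $g=f$ and using $\E(f)=0$ yields $\E_2(fx)=0$, so $fx$ is orthogonal to $H_2$ and therefore lies in $H_1$. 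The second inclusion $(\A_m)_0^*(H_2+H_3)\subset H_3$ follows by the symmetric dual of equation (2), i.e.\ $\E_2(g^*y)=\E(g^*)\E_2(y)$ for $g\in\A_M$ and $y\in H_2\oplus H_3$ (which is proved by an entirely analogous computation to equation (2), starting from equation (1) for $\A$ and pushing through the approximate-identity argument involving the net $(f_\lambda)$ from Proposition \ref{7:P Terp2} and the injectivity of $\mathfrak{j}^{(2)}$).

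The main obstacle I foresee is purely bookkeeping: making sure that the dual version of equation (2) is properly stated and transcribed from the argument in the proof of Theorem \ref{arvmax}; beyond that, everything is an immediate corollary of machinery already in place. In particular, no further analytic or approximation work is required — Proposition \ref{GNSPropn} (to know $\n((\A_m)_0)$ is $\sigma$-weakly dense in $(\A_m)_0$, so that $K_1,K_3$ really capture the full subspaces), Lemma \ref{JOS 2.1-part1}, and equation (2) from Theorem \ref{arvmax} do all the heavy lifting.
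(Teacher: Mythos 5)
Your proof is correct. The paper leaves this lemma as an exercise, indicating only that it follows from ``the technology developed in the preceding theorem and its proof,'' and your argument is precisely that intended route: part (1) by comparing the two orthogonal decompositions of $H$ coming from Lemma \ref{JOS 2.1-part1} applied to $\A$ and to $\A_m$ (which share the middle summand $H_2$ since $\A\cap\A^*=\D=\A_m\cap\A_m^*$), and part (2) from the identity $\A_m=\A_M$ together with equation (2) of the proof of Theorem \ref{arvmax} and its left/right mirror, noting that $\E_2$, being a contractive idempotent onto $H_2$, is the orthogonal projection onto $H_2$.
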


The Hilbert space decomposition in Lemmata \ref{JOS 2.1-part1} and \ref{JOS 2.1-part2} now yields the following result. This result is direct generalisation of \cite[Lemma 2.2]{JOS}. The proof closely follows that of \cite{JOS}. The middle part of the proof does however require some delicate adjustments for it to go through.

\begin{lemma}\label{JOS 2.2}
In terms of the Hilbert space decomposition in the preceding theorem we have that
$$\D=\left\{d\in\M: d= \begin{bmatrix} d_{11} & 0 & 0\\ 0 & d_{22} & 0\\ 0 & 0 & d_{33}\end{bmatrix}\right\}$$and
$$(\A_m)_0=\left\{f\in\M: f= \begin{bmatrix} f_{11} & f_{13} & f_{13}\\ 0 & 0 & f_{23}\\ 0 & 0 & f_{33}\end{bmatrix}\right\}.$$
\end{lemma}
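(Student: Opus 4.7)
The lemma is essentially a direct translation of the subspace inclusions furnished by Lemmata \ref{JOS 2.1-part1} and \ref{JOS 2.1-part2} into block matrix form under the orthogonal decomposition $H = H_1 \oplus H_2 \oplus H_3$. The plan is therefore to fix a general element, read off which block entries must vanish, and observe that the stated matrix forms follow with no further work.

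For the first identity I would take $d \in \D$ and invoke Lemma \ref{JOS 2.1-part1}(2), which asserts that each of $H_1$, $H_2$, $H_3$ is invariant under the (left) action of $\D$. Relative to $H = H_1 \oplus H_2 \oplus H_3$ this forces every off-diagonal block of $d$ to vanish, so $d = \mathrm{diag}(d_{11}, d_{22}, d_{33})$, as claimed.

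For the second identity I would take $f \in (\A_m)_0$ and read off the consequences of Lemma \ref{JOS 2.1-part2}(2). The inclusion $(\A_m)_0(H_1 + H_2) \subset H_1$ translates column by column: the image of $H_1$ lies in $H_1$, giving $f_{21} = f_{31} = 0$; the image of $H_2$ lies in $H_1$, giving $f_{22} = f_{32} = 0$. The complementary inclusion $(\A_m)_0^*(H_2 + H_3) \subset H_3$ is the Hilbert adjoint of the first (via the standard equivalence $f(V) \subset W \Leftrightarrow f^*(W^\perp) \subset V^\perp$ applied to $V = H_1 \oplus H_2$, $W = H_1$), so it merely repeats the same four zero-block conditions in adjoint form and provides no additional constraints. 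The remaining entries $f_{11}, f_{12}, f_{13}, f_{23}, f_{33}$ are left unconstrained by these considerations, producing exactly the block pattern in the statement.

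There is no genuine obstacle to overcome: once the two structural lemmata are in hand, the argument is pure bookkeeping. The only point worth flagging is that the two inclusions in Lemma \ref{JOS 2.1-part2} are equivalent by adjunction, so either one alone suffices; listing both merely makes the symmetric shape of the resulting block pattern (an ``upper-right-supported'' profile apart from the $(1,1)$ corner) more transparent.
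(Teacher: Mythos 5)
Your argument establishes only one half of each identity. The lemma asserts set \emph{equalities}: not just that every $d\in\D$ is block-diagonal and every $f\in(\A_m)_0$ has the stated pattern (these forward inclusions are indeed the routine bookkeeping you describe, read off from Lemmata \ref{JOS 2.1-part1} and \ref{JOS 2.1-part2}), but also the converses — that an \emph{arbitrary} element of $\M$ whose matrix is block-diagonal must lie in $\D$, and an arbitrary element of $\M$ with the stated triangular pattern must lie in $(\A_m)_0$. Your proposal never addresses these, and they are the substantive content: the subsequent argument (the matrix decomposition of $S$ and of $\Delta$, and hence the proof that a maximal subdiagonal algebra is $\sigma_t^\nu$-invariant) needs to recognise membership of $\D$ and of $(\A_m)_0$ from the block pattern alone, so the reverse inclusions cannot be dispensed with.

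The converse for $\D$ is where the paper has to work, and it is not formal. Given $b\in\M$ block-diagonal, one writes $b=\E(b)+(b-\E(b))$ with $b-\E(b)\in\mathrm{ker}(\E)$, and uses the module identity $\E_2(f\,\E_2(x))=\E(f)\E_2(x)$ (proved on the dense set $[dh^{1/2}]$, $d\in\n_\nu(\D)$) to get $\E_2\circ M_{b-\E(b)}\circ\E_2=0$; since $b-\E(b)$ is itself block-diagonal it leaves $H_2=\E_2(H_\nu)$ invariant, so in fact $M_{b-\E(b)}\circ\E_2=0$. Feeding in the vectors $[f_\lambda h^{1/2}]$ coming from the approximate identity of Proposition \ref{7:P Terp2} and invoking the injectivity of $\n_\nu\to H_\nu$ gives $(b-\E(b))f_\lambda=0$ for all $\lambda$, whence $b=\E(b)\in\D$ in the limit. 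A parallel argument (following \cite{JOS}) handles $\mathcal{C}\subset(\A_m)_0$. Note also that in this general (non-$\sigma$-finite) setting one cannot simply apply the operator to a cyclic vector, which is precisely why the net $(f_\lambda)$ and the embedding machinery are needed; so the claim that "there is no genuine obstacle to overcome" misjudges where the difficulty of the lemma lies.
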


\begin{proof} We may once again assume that $\M$ is the Haagerup-Terp standard form. As in \cite{JOS} one starts by considering the algebras 
$$\mathcal{B}=\left\{b\in\M: b= \begin{bmatrix} b_{11} & 0 & 0\\ 0 & b_{22} & 0\\ 0 & 0 & b_{33}\end{bmatrix}\right\} \mbox{ and }
\mathcal{C}=\left\{f\in\M: f= \begin{bmatrix} f_{11} & f_{13} & f_{13}\\ 0 & 0 & f_{23}\\ 0 & 0 & f_{33}\end{bmatrix}\right\}.$$
As noted by \cite{JOS}, it is now easy to see that $\D\subset \mathcal{B}$ and that $(\A_m)_0\subset\mathcal{C}$.

To prove the converse inclusion in this generality, we use the fact that
\begin{equation}\label{eqn 3}
\E_2(f\E_2(x))=\E(f)\E_2(x)\mbox{ for all }f\in \M\mbox{ and }x\in H_\nu.
\end{equation}
This equality can be proven by first noticing that any $f\in\M$ and $d\in\n(\D)$, we have that
\begin{eqnarray*}
\E_2(f[dh^{1/2}])&=& \E_2([fdh^{1/2}])\\
&=&[\E(fd)h^{1/2}]\\
&=&[(\E(f)dh^{1/2}]\\
&=&\E(f)[dh^{1/2}].
\end{eqnarray*} 
This shows that $\E_2(fx)=\E(f)x$ for every $x\in H_2=[\eta(\n(\D))]_2$, which suffices to prove the validity of equation (\ref{eqn 3}). In particular we will for any $g\in \mathrm{ker}(\E)$ have that $\E_2\circ M_g \circ \E_2 =0$ where $M_g$ is the left multiplication operator with symbol $g$. 

Now select $b\in \mathcal{B}$. Since $\E(b) \in\D$, we clearly have that $$\E(b)=\begin{bmatrix} v_{11} & 0 & 0\\ 0 & v_{22} & 0\\ 0 & 0 & v_{33}\end{bmatrix}.$$Since $b-\E(b)\in\mathrm{ker}(\E)$, we therefore have that $0=\E_2\circ(b-\E(b))\circ\E_2$. The matrix forms of both $b$ and $\E(b)$ ensure that $H_2$ is an invariant subspace of $M_{(b-\E(b))}$. So we in fact have that $M_{(b-\E(b))}\circ\E_2=0$. Now let $(f_\lambda)$ be as in the proof of Proposition \ref{GNSPropn}. Since $(f_\lambda)\subset\n(\D)$ and hence $([f_\lambda h^{1/2}])\subset H_2$, we therefore have that $0=(b-\E(b))[f_\lambda h^{1/2}]=[((b-\E(b))f_\lambda) h^{1/2}]=\eta((b-\E(b))f_\lambda)$ for each 
$\lambda$. The injectivity of the embedding $\n_\nu\to H_\nu: f\to \eta(f)$ therefore ensures that $0=(b-\E(b))f_\lambda$ for each $\lambda$. Taking the limit now shows that $0=b-\E(b)$ as was required. The rest of the proof proceeds exactly as in \cite{JOS}.
\end{proof}

It is clear from Remark \ref{7:D defstdfrm} and Theorem \ref{7:T stdform} that in the Haagerup-Terp standard form the densely-defined anti-linear operator $S$ which is the starting point of modular theory in this context, is defined as the closure of the operator $S_0$ with domain $\{[ah^{1/2}]:a\in \n_\nu\cap \n_\nu^*\}$ and action $S_0([ah^{1/2}])=[a^*h^{1/2}]$. We use this fact to prove the following lemma.

\begin{lemma}\label{JOS 2.3}
The closed anti-linear operator $S$ described above has the matrix decomposition 
$$S= \begin{bmatrix} 0 & 0 & S_3\\ 0 & S_2 & 0\\ S_1 & 0 & 0\end{bmatrix}$$with respect to the decomposition in Lemma \ref{JOS 2.1-part1}, where for each $i=1,2,3$ the operator $S_i$ is a densely defined operator on $H_i$ with domain $\mathfrak{F}_i$ such that $S_1(\mathfrak{F}_1)=\mathfrak{F}_3$, $S_2(\mathfrak{F}_2)=\mathfrak{F}_2$ and $S_3(\mathfrak{F}_3)=\mathfrak{F}_1$.
\end{lemma}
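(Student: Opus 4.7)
The plan is to exhibit a core for $S$ that visibly respects the block decomposition $H=H_1\oplus H_2\oplus H_3$. By Remark~\ref{5:R left Hilbert} and Theorem~\ref{7:T stdform}, $S$ is the closure of $S_0\colon [ah^{1/2}]\mapsto [a^*h^{1/2}]$ on the dense subspace $\eta(\mathfrak{n}_\nu\cap\mathfrak{n}_\nu^*)$. I will define
\begin{align*}
\mathfrak{F}_1^0 &= \{[ah^{1/2}] : a \in \mathfrak{n}_\nu(\A_0)\cap\mathfrak{n}_\nu(\A_0^*)^*\},\\
\mathfrak{F}_2^0 &= \{[dh^{1/2}] : d \in \mathfrak{n}_\nu(\D)\cap\mathfrak{n}_\nu(\D)^*\},\\
\mathfrak{F}_3^0 &= \{[ah^{1/2}] : a \in \mathfrak{n}_\nu(\A_0^*)\cap\mathfrak{n}_\nu(\A_0)^*\}.
\end{align*}
Each $\mathfrak{F}_i^0$ is contained in $H_i\cap\mathrm{dom}(S_0)$, and because the involution $a\mapsto a^*$ interchanges the defining conditions of $\mathfrak{F}_1^0$ and $\mathfrak{F}_3^0$ while preserving that of $\mathfrak{F}_2^0$, $S_0$ restricted to $\mathfrak{F}_1^0+\mathfrak{F}_2^0+\mathfrak{F}_3^0$ already exhibits precisely the block anti-diagonal/diagonal form asserted in the statement.

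Next I will establish the density of each $\mathfrak{F}_i^0$ in the respective $H_i$ by the double-truncation argument at the end of the proof of Proposition~\ref{GNSPropn}. The semifiniteness of $\nu\upharpoonright\D$ permits the selection of a positive analytic net $(f_\lambda)\subset\D$ satisfying Proposition~\ref{7:P Terp2}. For $a\in\mathfrak{n}_\nu(\A_0)$ the bimodule property of $\E$ forces $f_\gamma af_\lambda\in\A_0$, and the boundedness of $a$ together with $f_\gamma,f_\lambda\in\mathfrak{n}_\nu\cap\mathfrak{n}_\nu^*$ ensures that both $f_\gamma af_\lambda$ and its adjoint $f_\lambda a^*f_\gamma$ lie in $\mathfrak{n}_\nu$, placing $f_\gamma af_\lambda$ inside $\mathfrak{n}_\nu(\A_0)\cap\mathfrak{n}_\nu(\A_0^*)^*$. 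The same weak-convergence calculation using the standard form as appears in the proof of Proposition~\ref{GNSPropn} yields $\eta(f_\gamma af_\lambda)\to\eta(a)$ weakly, whence the density of $\mathfrak{F}_1^0$ in $H_1$ follows by convexity. Identical reasoning handles $\mathfrak{F}_2^0$ and $\mathfrak{F}_3^0$.

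The hard part will be showing that $\mathfrak{F}_1^0+\mathfrak{F}_2^0+\mathfrak{F}_3^0$ is a \emph{core} for $S$, rather than merely dense in $H_\nu$. Equivalently, the convex subset of $H_\nu\oplus H_\nu$ consisting of pairs $(\eta(b),\eta(b^*))$ with $b$ lying in the pre-image of $\mathfrak{F}_1^0+\mathfrak{F}_2^0+\mathfrak{F}_3^0$ needs to be dense in the graph of $S_0$; by convexity it suffices to verify weak density. Given $a\in\mathfrak{n}_\nu\cap\mathfrak{n}_\nu^*$, I will use the $\sigma$-weak density of $\A+\A^*$ in $\M$ to pick $a_n'=b_n'+c_n'^*\in\A+\A^*$ with $a_n'\to a$ $\sigma$-strong$*$. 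Double-truncation then yields
\begin{equation*}
f_\gamma a_n' f_\lambda = f_\gamma b_n' f_\lambda + (f_\lambda c_n' f_\gamma)^*
\end{equation*}
with both summands in $\A\cap\mathfrak{n}_\nu\cap\mathfrak{n}_\nu^*$, and the same weak-convergence reasoning from the density step produces jointly $\eta(f_\gamma a_n' f_\lambda)\to\eta(a)$ and $\eta((f_\gamma a_n' f_\lambda)^*)\to\eta(a^*)$ in $H_\nu$. Finally, each summand $b\in\A\cap\mathfrak{n}_\nu\cap\mathfrak{n}_\nu^*$ decomposes as $b=(b-\E(b))+\E(b)$; the Kadison--Schwarz inequality $\E(b)^*\E(b)\leq\E(b^*b)$ combined with $\nu\circ\E=\nu$ places $\E(b)\in\mathfrak{n}_\nu(\D)\cap\mathfrak{n}_\nu(\D)^*$ and hence $b-\E(b)\in\mathfrak{n}_\nu(\A_0)\cap\mathfrak{n}_\nu(\A_0^*)^*$. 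Symmetric treatment of the second summand exhibits $f_\gamma a_n' f_\lambda$ inside the pre-image of $\mathfrak{F}_1^0+\mathfrak{F}_2^0+\mathfrak{F}_3^0$, completing the weak density argument. Defining $S_i$ as the closure of $S_0\upharpoonright\mathfrak{F}_i^0$ viewed as an operator into the partner subspace then yields the block decomposition with $\mathfrak{F}_i=\mathrm{dom}(S_i)$; the identities $S_1(\mathfrak{F}_1)=\mathfrak{F}_3$, $S_2(\mathfrak{F}_2)=\mathfrak{F}_2$ and $S_3(\mathfrak{F}_3)=\mathfrak{F}_1$ follow from $S_0^2=\mathrm{id}$ on its original domain, passed through the respective graph closures.
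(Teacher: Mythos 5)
Your proposal is correct and follows essentially the same route as the paper: the same three dense subspaces $\eta(\mathfrak{n}_\nu(\A_0)\cap\mathfrak{n}_\nu(\A_0^*)^*)$, $\eta(\mathfrak{n}_\nu(\D)\cap\mathfrak{n}_\nu(\D)^*)$ and $\eta(\mathfrak{n}_\nu(\A_0^*)\cap\mathfrak{n}_\nu(\A_0)^*)$, the same truncation-by-analytic-net density argument, and the same core argument via iterated weak limits in the graph followed by an appeal to convexity. The only cosmetic differences are that the paper draws its approximants directly from the $\sigma$-weakly dense subspace $\A_0+\D+\A_0^*$ rather than splitting elements of $\A+\A^*$ using $\E$ and the Kadison--Schwarz inequality, and that it extracts the block structure by projecting graph-norm-convergent approximating sequences onto the $H_i$ rather than by your (equivalent) observation that the graph of the restriction is an orthogonal direct sum.
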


\begin{proof} We once again assume that $\M$ is in the Haagerup-Terp standard form. Since $\D$ is an expected algebra with $\nu\circ\E=\nu$, the known theory ensures that $\eta(\n(\D)^*\cap\n(\D))$ is dense in $H_2=L^2(\D)$. We claim that $\eta(\n(\A_0)\cap\n(\A_0^*)^*)$, $\eta(\n(\D)\cap\n(\D)^*)$ and $\eta(\n(\A_0^*)\cap\n(\A_0)^*)$ are respectively dense in $H_1$, $H_2$ and $H_3$. To see this let say $a\in \n(\A_0)$ be given and let $(f_\lambda)\subset \n_\nu\cap\n_\nu^*\cap\D$ be as in Proposition \ref{GNSPropn}. Notice that the proofs all run along similar lines, and hence we only prove the first claim. By the ideal property of $\A_0$ we will still have that $(f_\lambda a)\subset \n(\A_0)$, with the left-ideal property of $\n_\nu$ ensuring that $(a^*f_\lambda)\subset \n_\nu$ and hence that we also have that $(f_\lambda a)\subset \n(\A_0^*)^*$. Since the strong and $\sigma$-strong topology agree on bounded sets, $(f_\lambda)$ converges $\sigma$-strongly to $\I$. This in turn ensures that $(\eta(f_\lambda a))=(f_\lambda[ah^{1/2}])$ converges weakly to $[ah^{1/2}]$ in $L^2(\M)$. Thus the weak closure of $\eta(\n(\A_0)\cap\n(\A_0^*)^*)$ must include the weak closure of $\eta(\n(\A_0))$. Since for convex sets weak and norm closures agree, this suffices to show that $[\eta(\n(\A_0)\cap\n(\A_0^*)^*)]_2=[\eta(\n(\A_0))]_2=H_1$.

The prescription 
$$V_0([ah^{1/2}] + [dh^{1/2}] + [b^*h^{1/2}])= [a^*h^{1/2}] + [d^*h^{1/2}] [bh^{1/2}]$$
$$\mbox{where } a,b\in \n(\A_0)\cap\n(\A_0^*)^*,\quad d\in\n(\D)\cap\n(\D)^*$$therefore yields a densely defined anti-linear operator which is a restriction of the operator $S$. The operator $V_0$ is therefore closable. We show that the closure is $S$ itself. The graph $G(S)$ of $S$ is just the norm closure of $\{[gh^{1/2}]\oplus[g^*h^{1/2}]: g\in \n_\nu\cap\n_\nu^*\}$. We therefore need to show that the closure of $G(V_0)$ includes this subspace (and therefore equals it). Let $g\in \n_\nu\cap\n_\nu^*$ be given and select $(a_\alpha)\subset \A_0+\D+\A_0^*$ such that $(a_\alpha)$ converges $\sigma$-weakly to $g$. The net $(a_\alpha^*)$ will then of course converge $\sigma$-weakly to $g^*$. It is easy to check that then $(f_\gamma a_\alpha f_\lambda)\subset \n(\A_0)\cap\n(\A_0^*)^* + \n(\D)\cap\n(\D)^*+\n(\A_0^*)\cap\n(\A_0)^*$. For each fixed $\lambda$ and $\gamma$ it is then easy to see that the net 
$$[f_\gamma a_\alpha f_\lambda h^{1/2}]\oplus [f_\lambda a_\alpha^* f_\gamma h^{1/2}]= (f_\gamma a_\alpha)[f_\lambda h^{1/2}]\oplus (f_\lambda a_\alpha^*)[f_\gamma h^{1/2}]$$will converge weakly to $([f_\gamma g f_\lambda h^{1/2}]\oplus [f_\lambda g^* f_\gamma h^{1/2}]$ as $\alpha$ increases. Next with $\gamma$ fixed we know from the first part of the proof and the proof of Proposition \ref{GNSPropn}, that $([f_\gamma g f_\lambda h^{1/2}]\oplus [f_\lambda g^* f_\gamma h^{1/2}]$ converges weakly to $([f_\gamma gh^{1/2}]\oplus [g^* f_\gamma h^{1/2}]$ as $\lambda$ increases. These same properties then also ensure that $([f_\gamma gh^{1/2}]\oplus [g^* f_\gamma h^{1/2}]$ converges weakly to $([gh^{1/2}]\oplus [g^*h^{1/2}]$ as $\gamma$ increases. It follows that the weak closure of $G(V_0)$ contains $G(S)$. But since $G(V_0)$ is convex, the weak and norm closures must agree. Thus as required the norm closure of $G(V_0)$ includes $G(S)$. 

Given $x\oplus S(x) \in G(S)$, it follows from what we proved above that we may select sequences $(a_n),(b_n)\subset \n(\A_0)\cap\n(\A_0^*)^*$ and $(d_n)\subset \n(\D)\cap\n(\D)^*$ such that $$\lim_{n\to \infty}(\|[(a_n+d_n+b_n^*)h^{1/2}]-x\|^2_2+\|[(a_n^*+d_n^*+b_n)h^{1/2}]-S(x)\|^2_2=0.$$Once this has been noted, the rest of the proof now follows verbatim as in \cite{JOS}. Specifically with $p_i$ denoting the orthogonal projection from $H_\nu$ onto $H_i$ 
($i=1,2,3$), it now follows from the above that
\begin{eqnarray*}
0&=&\lim_{n\to \infty}(\|[a_nh^{1/2}]-p_1(x)\|^2+\|[d_nh^{1/2}]-p_2(x)\|^2+\|[b_n^*h^{1/2}]-p_3(x)\|^2_2\\
&& +\|[a_n^*h^{1/2}]-p_1(S(x))\|^2+\|[d_n^*h^{1/2}]-p_2(S(x))\|^2+\|[b_nh^{1/2}]-p_3(S(x))\|^2_2.
\end{eqnarray*}
These limit formulas clearly show that $p_i(\mathrm{dom}(S))\subset \mathrm{dom}(S)$ with $S(p_1(\mathrm{dom}(S)))\subset p_3(\mathrm{dom}(S))$, $S(p_2(\mathrm{dom}(S)))\subset p_2(\mathrm{dom}(S))$ and $S(p_3(\mathrm{dom}(S)))\subset p_2(\mathrm{dom}(S))$.
The claim therefore follows on setting $\mathfrak{F}_i=p_i(\mathrm{dom}(S))$ for each $i=1,2,3$.
\end{proof}

With Lemmata \ref{JOS 2.2} and \ref{JOS 2.3} replacing \cite[Lemmata 2.2 \& 2.3]{JOS} the following theorem now follows exactly as in \cite{JOS}. The heart of the proof centres around the fact that 
 $$\Delta=\left[\begin{smallmatrix} S_1^*S_1&0&0\\ 0&S_2^*S_2&0\\0&0&S_3^*S_3\end{smallmatrix}\right].$$

\begin{theorem}\label{necessary}
Let $\M$ be a von Neumann algebra equipped with a faithful normal semifinite weight $\nu$. If $\A$ is a maximal $\D$-subdiagonal subalgebra of $\M$, then $\sigma_t^\nu(\A)=\A$ for every $t\in \mathbb{R}$. 
\end{theorem}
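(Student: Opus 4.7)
The plan is to work in the Haagerup-Terp standard form and exploit the block-matrix structure developed in Lemmata~\ref{JOS 2.1-part1}, \ref{JOS 2.2} and \ref{JOS 2.3}. First, since $\A$ is maximal $\D$-subdiagonal, Theorem~\ref{arvmax} gives $\A=\A_m$, and the proof of that theorem identifies $\A_m$ with the ``upper-triangular'' algebra
\[
\A_M=\{x\in\M:\ x(H_1\oplus H_2)\subset H_1\oplus H_2,\ x^{*}(H_2\oplus H_3)\subset H_2\oplus H_3\}
\]
relative to the decomposition $H_\nu=H_1\oplus H_2\oplus H_3$. Combined with Lemma~\ref{JOS 2.2}, this tells us that membership in $\A$ is characterised purely by the vanishing of the blocks $a_{ij}$ with $i>j$ in this decomposition.

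Next I would compute $\Delta=S^{*}S$ directly from the matrix form of $S$ given in Lemma~\ref{JOS 2.3}. Since $S$ is anti-linear with its only non-zero blocks $S_1,S_2,S_3$ sitting on the anti-diagonal (permuting $H_1\leftrightarrow H_3$ and mapping $H_2\to H_2$), the anti-linear adjoint $S^{*}$ has the conjugate-anti-diagonal form, and a direct block multiplication gives
\[
\Delta=\begin{bmatrix}S_1^{*}S_1&0&0\\0&S_2^{*}S_2&0\\0&0&S_3^{*}S_3\end{bmatrix}.
\]
Thus $\Delta$ is block-diagonal with respect to $H_\nu=H_1\oplus H_2\oplus H_3$, and by the Borel functional calculus so is $\Delta^{it}$, taking the form $\mathrm{diag}(U_1^{(t)},U_2^{(t)},U_3^{(t)})$ with each $U_i^{(t)}=(S_i^{*}S_i)^{it}$ unitary on $H_i$.

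The final step is the easy observation that conjugation by a block-diagonal unitary preserves upper-triangularity: if $a=(a_{ij})$ has $a_{ij}=0$ for $i>j$ and $U=\mathrm{diag}(U_1,U_2,U_3)$, then $(UaU^{*})_{ij}=U_i a_{ij}U_j^{*}$ still vanishes for $i>j$. Applying this with $U=\Delta^{it}$ to any $a\in\A=\A_M$ gives $\sigma_t^{\nu}(a)=\Delta^{it}a\Delta^{-it}\in\A_M=\A$, and running the same argument with $-t$ yields $\sigma_t^{\nu}(\A)=\A$ for every $t\in\mathbb{R}$.

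The main obstacle is the clean block computation of $\Delta$. Because $S$ is anti-linear with distinct domains $\mathfrak{F}_1,\mathfrak{F}_2,\mathfrak{F}_3$ on the three summands, some care is needed to confirm that $S^{*}S$ really does split as claimed without any mixing between blocks, that each $\mathfrak{F}_i$ is a core for $(S_i^{*}S_i)^{1/2}$, and that the positive square root (and hence the polar part $\Delta^{1/2}$) respects the decomposition. Once this technical bookkeeping is handled, the remaining matrix-conjugation argument is purely formal.
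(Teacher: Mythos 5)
Your proposal is correct and takes essentially the same route as the paper: the paper defers to the JOS argument, explicitly stating that the heart of the proof is the block-diagonal form $\Delta=\mathrm{diag}(S_1^*S_1,S_2^*S_2,S_3^*S_3)$ coming from Lemma \ref{JOS 2.3}, after which conjugation by the block-diagonal unitaries $\Delta^{it}$ preserves the upper-triangular algebra $\A_M=\A_m=\A$ exactly as you describe. The technical bookkeeping you flag (that $S^*S$ splits without mixing the summands and that the domains behave) is precisely what Lemmata \ref{JOS 2.2} and \ref{JOS 2.3} are designed to supply.
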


\subsection{Sufficiency of $\sigma_t^\nu$-invariance}

Here we show that on the back of the theory developed above, Xu's proof that in the $\sigma$-finite case [$\sigma$-weak closedness] + 
[$\sigma_t^\nu$-invariance] of a subdiagonal subalgebra guarantees maximality, will with very minor modifications carry over to the general setting. We start by recalling Ji's extension of Exel's maximality theorem.

\begin{theorem}[{\cite{jig1}, cf. \cite[Theorem 4.1]{Xu}}] 
Let $\M$ be a von Neumann algebra equipped with a faithful normal semifinite trace $\tau$, and $\mathcal{D}$ be a unital von Neumann subalgebra of $\M$ such that $\tau{\upharpoonright}\mathcal{D}$ is semifinite. Further let $\E$ be the faithful normal conditional expectation onto $\D$ such that $\tau\circ\E=\tau$. If $\A$ is a $\sigma$-weakly closed $\D$-subdiagonal subalgebra of $\M$, it is a maximal $\D$-subdiagonal subalgebra.
\end{theorem}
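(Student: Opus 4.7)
The plan is to reduce to Exel's theorem for finite von Neumann algebras (where the corresponding result is already known) via the truncation technique of Bekjan recalled in the discussion preceding this theorem. Since $\tau{\upharpoonright}\D$ is semifinite, I would select an increasing net of $\tau$-finite projections $(e_\alpha)$ in $\D$ with $e_\alpha \nearrow \I$ strongly. For each $\alpha$ the corner $\M_\alpha := e_\alpha \M e_\alpha$ is then a finite von Neumann algebra equipped with the faithful normal tracial state $\tau_\alpha := \tau(e_\alpha)^{-1}\tau{\upharpoonright}\M_\alpha$, while $\D_\alpha := e_\alpha \D e_\alpha$ and $\A_\alpha := e_\alpha \A e_\alpha$ are the corresponding corners of $\D$ and $\A$.

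The first main step would be to verify that each $\A_\alpha$ is a $\sigma$-weakly closed $\D_\alpha$-subdiagonal subalgebra of $\M_\alpha$. The fact that $\A_\alpha$ is closed under multiplication uses $e_\alpha \in \D \subseteq \A$, and $\sigma$-weak closedness follows by normality of the compression; that $\A_\alpha + \A_\alpha^*$ is $\sigma$-weakly dense in $\M_\alpha$ is obtained by cutting a net in $\A + \A^*$ between $e_\alpha$ and on both sides; the diagonal identity $\A_\alpha \cap \A_\alpha^* = \D_\alpha$ follows by applying $\E_\alpha := \E{\upharpoonright}\M_\alpha$ to an element $x = e_\alpha a e_\alpha = e_\alpha b^* e_\alpha$ in the intersection and invoking that $a - \E(a) \in \A_0 \subseteq \ker\E$; and multiplicativity of $\E_\alpha$ on $\A_\alpha$ transfers directly from $\E$ on $\A$. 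Exel's theorem then yields that each $\A_\alpha$ is already maximal as a $\D_\alpha$-subdiagonal subalgebra of $\M_\alpha$.

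The second step is the lifting. Let $\mathcal{B}$ be any $\D$-subdiagonal subalgebra of $\M$ with $\A \subseteq \mathcal{B}$. The same verification shows that $\mathcal{B}_\alpha := e_\alpha \mathcal{B} e_\alpha$ is a $\D_\alpha$-subdiagonal subalgebra of $\M_\alpha$ containing $\A_\alpha$, so maximality forces $\mathcal{B}_\alpha = \A_\alpha$ for every $\alpha$. For any $b \in \mathcal{B}$ the net $(e_\alpha b e_\alpha)$ therefore lies in $\A$ and converges $\sigma$-strong$^*$ to $b$; since $\A$ is convex and $\sigma$-weakly closed, it is also $\sigma$-strong$^*$ closed, and so $b \in \A$, giving $\mathcal{B} = \A$.

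The hard part is the first step, specifically the diagonal identity $\A_\alpha \cap \A_\alpha^* = \D_\alpha$. Unlike $\sigma$-weak density of $\A_\alpha + \A_\alpha^*$ or multiplicativity of $\E_\alpha$, this is not a formal consequence of the structure of $\A$ inside $\M$; one must genuinely exploit the splitting $a = \E(a) + (a - \E(a))$ with $a - \E(a) \in \A_0 \subseteq \ker\E$, and observe that an element simultaneously of the form $e_\alpha a e_\alpha$ and $e_\alpha b^* e_\alpha$ forces the $\A_0$- and $\A_0^*$-components to be orthogonal via $\E$, cutting them away. A secondary subtlety is that the passage back from $(\mathcal{B}_\alpha)$ to $\mathcal{B}$ in the final step genuinely relies on the monotone convergence $e_\alpha \nearrow \I$ (and therefore on semifiniteness of $\tau{\upharpoonright}\D$) rather than just on $\sigma$-weak density of the corners in $\M$.
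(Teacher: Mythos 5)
Your proposal is correct, and it is essentially the intended argument: the paper itself supplies no proof of this statement (it is quoted from Ji's paper \cite{jig1}), but the route you take --- compress by a net of $\tau$-finite projections $e_\alpha\in\D$ increasing to $\I$, apply Exel's maximality theorem to the finite corners $e_\alpha\M e_\alpha$, and lift back --- is exactly the Bekjan-style reduction the paper records in Proposition \ref{Bek-red} and uses throughout. Two small remarks. First, the step you single out as the hard one is in fact the easiest: since $e_\alpha\in\D\subseteq\A\cap\A^*$, one has $e_\alpha\A e_\alpha\subseteq\A$ and $e_\alpha\A^*e_\alpha\subseteq\A^*$, so $\A_\alpha\cap\A_\alpha^*\subseteq\A\cap\A^*\cap e_\alpha\M e_\alpha=\D_\alpha$ directly, with no need to split off the $\A_0$-component via $\E$ (your argument via $a-\E(a)\in\ker\E$ also works, but it is a detour). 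Second, ``$\sigma$-weak closedness follows by normality of the compression'' is not quite a proof --- normal maps need not have closed range --- but the same observation $e_\alpha\in\A$ gives $\A_\alpha=\A\cap e_\alpha\M e_\alpha$, an intersection of two $\sigma$-weakly closed sets, which is what you need both here and for $\mathcal{B}_\alpha$. With those cosmetic repairs the proof is complete; in particular your closing point is well taken that the final lifting genuinely uses $e_\alpha b e_\alpha\to b$ $\sigma$-strong$^*$ together with the convexity and $\sigma$-weak closedness of $\A$, which is where the semifiniteness of $\tau{\upharpoonright}\D$ enters irreplaceably.
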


Careful perusal of section 3 of \cite{Xu} reveals that if in the non-$\sigma$-finite case
\begin{itemize}
\item the variant of the reduction theorem used in \cite{Xu} is replaced with the one described in the present paper, 
\item and the usage of Exel's maximality theorem is replaced by Ji's extension thereof,
\end{itemize}
the entire proof of \cite[Theorem 1.1]{Xu} and all its underlying lemmata will go through verbatim in the general setting. We therefore obtain the following:

\begin{theorem}\label{sufficient}
Let $\M$ be a von Neumann algebra equipped with a faithful normal semifinite weight $\nu$. If $\A$ is a $\D$-subdiagonal subalgebra of $\M$, then $\A$ will be maximal subdiagonal if $\sigma_t^\nu(\A)=\A$ for every $t\in \mathbb{R}$. 
\end{theorem}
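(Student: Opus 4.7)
The plan is to follow Xu's strategy from \cite{Xu}, adapted to the general setting using the extended reduction theorem of Section~\ref{S4} and Ji's extension of Exel's maximality theorem. By Theorem~\ref{arvmax} we already know $\A\subseteq\A_m$ where $\A_m$ is the unique maximal $\D$-subdiagonal extension of $\A$. Our goal is therefore to prove $\A=\A_m$. Note that by Theorem~\ref{necessary}, the maximality of $\A_m$ forces $\sigma_t^\nu(\A_m)=\A_m$ for all $t\in\bR$, so both algebras share the $\sigma_t^\nu$-invariance property.

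First I would lift the whole picture up to the crossed product $\R=\M\rtimes\mathbb{Q}_D$ provided by Theorem~\ref{red s-finite}. Since $\sigma_t^\nu(\A)=\A$ and $\sigma_t^\nu(\A_m)=\A_m$ for every $t\in\mathbb{Q}_D$, the $\sigma$-weak closures $\widehat{\A}$ and $\widehat{\A_m}$ of $\mathrm{span}\{\lambda_t\pi(a):t\in\mathbb{Q}_D,\,a\in\A\}$ and the corresponding span for $\A_m$ respectively are well-defined $\sigma$-weakly closed subalgebras of $\R$. A routine verification (using Lemma~\ref{10:L modgpred} together with the fact that $\mathscr{W}_{\mathbb{Q}_D}$ is a $\tnu$-preserving faithful normal conditional expectation onto $\pi(\M)$) shows that $\widehat{\A}$ is subdiagonal in $\R$ with respect to $\widehat{\D}=\D\rtimes\mathbb{Q}_D$, with canonical expectation $\widehat{\E}$ satisfying $\mathscr{W}_{\mathbb{Q}_D}\circ\widehat{\E}=\E\circ\mathscr{W}_{\mathbb{Q}_D}$. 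Moreover $\widehat{\A}\subseteq\widehat{\A_m}$, and the relation $\A=\widehat{\A}\cap\pi(\M)$ (and similarly for $\A_m$) can be extracted from the fact that $\pi(\M)$ is the fixed-point algebra of the dual action and $\widehat{\A}$ is invariant under that action. Hence it suffices to prove $\widehat{\A}=\widehat{\A_m}$.

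Next I would descend along the approximating sequence. For each $n$ set $\A_n=\widehat{\A}\cap\R_n$ and $\A_{m,n}=\widehat{\A_m}\cap\R_n$, where $\R_n$ is the sequence of semifinite subalgebras from Theorem~\ref{red s-finite}. The key technical step is to show that $\A_n$ is subdiagonal in $\R_n$ with respect to $\D_n=\widehat{\D}\cap\R_n$ and the restricted expectation $\widehat{\E}_n=\widehat{\E}\!\upharpoonright_{\R_n}$, with $\tau_n$-semifinite diagonal. Concretely, one uses $\mathscr{W}_n$ (which commutes with $\sigma_t^{\tnu}$ and preserves $\widehat{\A}$ because $\sigma_t^{\tnu}(\widehat{\A})=\widehat{\A}$) to deduce that $\mathscr{W}_n(\widehat{\A})\subseteq\A_n$ and hence that $\bigcup_n\A_n$ is $\sigma$-weakly dense in $\widehat{\A}$, and similarly $\bigcup_n\A_{m,n}$ is $\sigma$-weakly dense in $\widehat{\A_m}$. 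Density of $\A_n+\A_n^*$ in $\R_n$, the equality $\D_n=\A_n\cap\A_n^*$, and the multiplicativity of $\widehat{\E}_n$ on $\A_n$ all follow by applying $\mathscr{W}_n$ to the corresponding properties on $\R$.

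With $\R_n$ semifinite, Ji's extension of Exel's theorem applies: any $\sigma$-weakly closed $\D_n$-subdiagonal subalgebra of $\R_n$ is automatically maximal. Since $\A_n\subseteq\A_{m,n}$ are both such subalgebras in $\R_n$, we conclude $\A_n=\A_{m,n}$ for every $n$. Taking $\sigma$-weak closures of the increasing unions yields $\widehat{\A}=\widehat{\A_m}$, and intersecting with $\pi(\M)$ (equivalently, applying $\mathscr{W}_{\mathbb{Q}_D}$ to elements of $\widehat{\A_m}$ and using that they lie in $\widehat{\A}=$fixed points characterisation) gives $\A=\A_m$, which is the desired maximality. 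The main obstacle will be the first part of the descent step: verifying that $\A_n$ genuinely satisfies all four axioms of a subdiagonal subalgebra inside $\R_n$ (especially the semifiniteness of $\tau_n\!\upharpoonright_{\D_n}$ and the $\sigma$-weak density of $\A_n+\A_n^*$ in $\R_n$), since the semifiniteness of the weight on $\D$ alone is not obviously preserved when one cuts by the fixed-point algebra of $\sigma^{\nu_n}$. This is precisely the technical upgrade that the new reduction theorem (Theorem~\ref{red s-finite}) was designed to handle, and it is the content of the auxiliary lemmata of \cite{Xu} that must be re-examined in the general case, as the hypothesis preceding the statement of this theorem acknowledges.
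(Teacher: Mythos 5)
Your proposal is correct and follows essentially the same route as the paper, which simply observes that Xu's argument in \cite{Xu} goes through verbatim once his reduction theorem is replaced by Theorem \ref{red s-finite} and Exel's maximality theorem by Ji's semifinite extension; your outline is an explicit unpacking of exactly that argument (lift to $\R$, cut down to the semifinite $\R_n$, apply Ji, take closures and descend via $\mathscr{W}_{\mathbb{Q}_D}$). The technical caveats you flag at the end are precisely the points the paper delegates to the lemmata of \cite{Xu} as re-examined in Proposition \ref{anred}.
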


\subsection{The main theorem}

The main theorem of this section, namely the following, now follows from a combination of Theorems \ref{necessary} \& \ref{sufficient}:

\begin{theorem}
Let $\M$ be a von Neumann algebra equipped with a faithful normal semifinite weight $\nu$, and $\mathcal{D}$ be a unital von Neumann subalgebra of $\M$ such that $\nu{\upharpoonright}\mathcal{D}$ is semifinite. Further suppose that there exists a faithful normal conditional expectation $\E:M\to \mathcal{D}$ such that $\nu\circ\E=\nu$. If $\A$ is a $\sigma$-weakly closed $\D$-subdiagonal subalgebra of $\M$, then $\A$ is maximal subdiagonal if and only if $\sigma_t^\nu(\A)=\A$ for every $t\in \mathbb{R}$. 
\end{theorem}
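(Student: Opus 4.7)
The plan is to observe that this theorem is an immediate corollary of the two directional results proven earlier in the section, Theorem \ref{necessary} and Theorem \ref{sufficient}, so the proof collapses to a single citation in each direction. No additional hypotheses need checking: the standing assumptions that $\nu\upharpoonright\D$ is semifinite and that $\nu\circ\E=\nu$ place us exactly in the context in which both feeder theorems were stated, and $\A$ is already assumed to be $\sigma$-weakly closed and $\D$-subdiagonal.

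For the forward implication (maximality $\Rightarrow$ modular invariance) I would simply appeal to Theorem \ref{necessary}. Conceptually the machinery behind that theorem is the Haagerup--Terp standard form decomposition $H_\nu=H_1\oplus H_2\oplus H_3$ of Lemma \ref{JOS 2.1-part1}, together with the block-diagonal / block-triangular descriptions of $\D$ and $(\A_m)_0$ in Lemma \ref{JOS 2.2} and the block anti-diagonal form of the Tomita operator $S=J\Delta^{1/2}$ in Lemma \ref{JOS 2.3}. Those block decompositions force $\Delta$ to be block-diagonal with respect to $H_1\oplus H_2\oplus H_3$, so $\sigma_t^\nu(\cdot)=\Delta^{it}(\cdot)\Delta^{-it}$ preserves each summand and hence preserves $\A_m$. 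Combined with Theorem \ref{arvmax}, the maximality assumption gives $\A=\A_m$, and modular invariance of $\A$ then follows.

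For the reverse implication (modular invariance $\Rightarrow$ maximality) I would invoke Theorem \ref{sufficient}. This is the direction in which Xu's original $\sigma$-finite argument from \cite{Xu} transplants essentially verbatim, once supplied with the two key upgrades developed earlier in the paper: the extended Haagerup reduction theorem of Section \ref{S4}, which produces the increasing sequence of semifinite von Neumann subalgebras $\R_n\subseteq\R=\M\rtimes_\nu\mathbb{Q}_D$ together with the faithful normal conditional expectations $\mathscr{W}_n$ satisfying $\tnu\circ\mathscr{W}_n=\tnu$, and Ji's extension of Exel's maximality theorem to the semifinite tracial setting.

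The honest assessment is that there is no remaining obstacle inside the corollary itself; the main work is entirely upstream. What required genuine effort, and is worth signalling, is the two separate generalisations that feed into it: on the necessity side, re-deriving Arveson's maximality criterion (Theorem \ref{arvmax}) without the separability / $\sigma$-finiteness assumption, which is handled by moving to the Haagerup--Terp standard form and using the $L^2$-density supplied by Proposition \ref{GNSPropn}; on the sufficiency side, extending the reduction theorem to arbitrary von Neumann algebras equipped with an arbitrary fns weight (Theorems \ref{red s-finite} and \ref{10:T reduction Lp}). Once those are in hand, the final biconditional is essentially a two-line assembly.
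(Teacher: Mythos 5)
Your proposal is correct and matches the paper's own treatment exactly: the theorem is stated as an immediate combination of Theorems \ref{necessary} and \ref{sufficient}, with all of the substantive work done upstream in those two results. Your additional commentary on the machinery behind each direction is accurate but not part of the paper's (one-line) proof of the final statement.
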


\section{$H^p$-spaces for general von Neumann algebras}\label{Hpsection}

Having dealt with the issue of maximality, we pass to the definition of $H^p(\A)$ spaces. Here the arguments need to be a lot more delicate than in the $\sigma$-finite case given that here it is only $\mathfrak{m}_\nu$ and not all of $\M$, that embeds into $L^p(\M)$ when $1\leq p<\infty$.

\begin{definition}\label{defHp} Let $\M$ be a von Neumann algebra equipped with a faithful normal semifinite weight $\nu$, and let $\A$ be an approximately subdiagonal subalgebra of $\M$. For any $1\leq p<\infty$ we define $H^p(\A)$ to be the norm closure of the subspace $\mathrm{span}(\mathfrak{j}^{(2p)}(\n(\A^*)^*).\mathfrak{j}^{(2p)}(\n(\A))$ of $L^p(\M)$ and $H^p_0(\A)$ the norm closure of the subspace $\mathrm{span}(\mathfrak{j}^{(2p)}(\n(\A^*)^*).\mathfrak{j}^{(2p)}(\n(\A_0))$. For $p=\infty$ we simply define $H^\infty(\A)$ to be $\A$ and $H^\infty(\A_0)$ to be $\A_0$.
\end{definition}

\begin{remark} 
It easily follows from the above definition that if $\A$ is actually maximal subdiagonal, the kernel of the extension $\E_{p}$ of the expectation $\E$ to $L^p(\M)$ will for any $1\leq p<\infty$ contain $H^p_0(\A)$. This can be seen by noting that by definition (see Remark \ref{10:R cond-exp}) this extension will map a term of the form $\mathfrak{j}^{(2p)}(a^*)^*.\mathfrak{j}^{(2p)}(b)=\mathfrak{i}^{(p)}(ab)$ where $a\in \n(\A^*)^*$ and 
$b\in\n(\A_0)$, onto $\mathfrak{i}^{(p)}(\E(ab))=0$.
\end{remark}

Having defined $H^p$ spaces we proceed to develop some technology which will pave the way for a more refined theory of these spaces. 

\begin{theorem}\label{tech-thm} 
Let $\M$ be equipped with a faithful normal semifinite weight $\nu$ and let $\A$ be an approximately subdiagonal subalgebra of $\M$. For any $2\leq q<\infty$ we will then have that 
$$[\mathfrak{i}^{(q)}(\mathrm{span}(\n(\A^*)^*\cdot\n(\A)))]_q=[\mathfrak{j}^{(q)}(\mathrm{span}(\n(\A^*)^*\cdot\n(\A)))]_q$$ 
$$= [\mathfrak{j}^{(q)}(\n(\A)\cap \n(\A^*)^*)]_q =[\mathfrak{j}^{(q)}(\n(\A))]_q=[\mathfrak{j}^{(q)}(\n(\A^*))^*]_q =[\mathfrak{j}^{(q)}(\n(\A^*))]_q^*$$and similarly that 
$$[\mathfrak{i}^{(q)}(\mathrm{span}(\n(\A^*)^*\cdot\n(\A_0)))]_q=[\mathfrak{j}^{(q)}(\mathrm{span}(\n(\A^*)^*\cdot\n(\A_0)))]_q$$ 
$$= [\mathfrak{j}^{(q)}(\n(\A_0)\cap \n(\A_0^*)^*)]_q =[\mathfrak{j}^{(q)}(\n(\A_0))]_q=[\mathfrak{j}^{(q)}(\n(\A_0^*))^*]_q =[\mathfrak{j}^{(q)}(\n(\A_0^*))]_q^*.$$
\end{theorem}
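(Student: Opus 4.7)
The plan is to chain the six claimed equalities by reducing everything to $\sigma_t^\nu$-analytic elements in the approximating subalgebras $\A_\gamma$, and then to exploit the shift identity $[ah^{z}] = h^z\sigma_{iz}^\nu(a)$ from Lemma~\ref{GL2-2.4+5}. Write $\mathscr{A}_\gamma := \mathfrak{n}_\nu^\infty(\A_\gamma)$. By conditions (5)--(6) of Definition~\ref{def-appsubd}, Proposition~\ref{analytic-n(A)} applies to each $\A_\gamma$, making $\mathscr{A}_\gamma$ $\sigma$-weakly dense in $\A_\gamma$. A close inspection of the proof of Proposition~\ref{refine-2} moreover reveals that the dense approximants constructed there are themselves in $\cup_\gamma\mathscr{A}_\gamma$ (resp.\ $\cup_\gamma\mathscr{A}_{\gamma,0}$), being products of analytic elements which stay in $\mathscr{A}_\gamma$ since $\A_{\gamma,0}$ is an ideal in $\A_\gamma$. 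Hence $\mathfrak{j}^{(q)}(\cup_\gamma\mathscr{A}_\gamma)$ is $L^q$-norm dense in $[\mathfrak{j}^{(q)}(\mathfrak{n}(\A))]_q$, which yields $[\mathfrak{j}^{(q)}(\mathfrak{n}(\A))]_q = [\mathfrak{j}^{(q)}(\mathfrak{n}(\A)\cap\mathfrak{n}(\A^*)^*)]_q$ at once, with the analogous statement for $\A_0$.

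For $a \in \mathscr{A}_\gamma$, Lemma~\ref{GL2-2.4+5} rewrites
\[
\mathfrak{j}^{(q)}(a) \;=\; h^{1/q}\sigma_{i/q}^\nu(a) \;=\; \bigl(\sigma_{-i/q}^\nu(a^*)\,h^{1/q}\bigr)^* \;=\; \mathfrak{j}^{(q)}\bigl(\sigma_{-i/q}^\nu(a^*)\bigr)^*,
\]
where the $\sigma_t^\nu$-invariance of $\A_\gamma$ places $\sigma_{-i/q}^\nu(a^*)$ in $\mathscr{A}_\gamma^* \subset \mathfrak{n}(\A^*)$. Passing to $L^q$-closures yields $[\mathfrak{j}^{(q)}(\mathfrak{n}(\A))]_q \subseteq [\mathfrak{j}^{(q)}(\mathfrak{n}(\A^*))^*]_q$, and the reverse inclusion follows from the symmetric computation with $\A$ and $\A^*$ interchanged. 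The equality $[\mathfrak{j}^{(q)}(\mathfrak{n}(\A^*))^*]_q = [\mathfrak{j}^{(q)}(\mathfrak{n}(\A^*))]_q^*$ is immediate from the isometry of the involution on $L^q(\M)$. To compare $\mathfrak{i}^{(q)}$ with $\mathfrak{j}^{(q)}$, the analogous combination of Proposition~\ref{GL2-2.2+3} with Lemma~\ref{GL2-2.4+5} gives, for analytic $a \in \mathscr{A}_\gamma$ and $b \in \mathscr{A}_\gamma^*$, the identity $\mathfrak{i}^{(q)}(b^*a) = \mathfrak{j}^{(2q)}(b)^*\mathfrak{j}^{(2q)}(a) = \mathfrak{j}^{(q)}(\sigma_{-i/(2q)}^\nu(b^*a))$, where $\sigma_{-i/(2q)}^\nu(b^*a) \in \mathscr{A}_\gamma$. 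Consequently both $[\mathfrak{i}^{(q)}(\mathrm{span}(\mathfrak{n}(\A^*)^*\cdot\mathfrak{n}(\A)))]_q$ and $[\mathfrak{j}^{(q)}(\mathrm{span}(\mathfrak{n}(\A^*)^*\cdot\mathfrak{n}(\A)))]_q$ sit inside $[\mathfrak{j}^{(q)}(\mathfrak{n}(\A))]_q$ (the inclusion $\subseteq$ for $\mathfrak{j}^{(q)}$ also being direct since $\mathfrak{n}(\A^*)^*\cdot\mathfrak{n}(\A) \subseteq \mathfrak{n}(\A)$).

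The main technical obstacle is the reverse inclusion $[\mathfrak{j}^{(q)}(\mathfrak{n}(\A))]_q \subseteq [\mathfrak{j}^{(q)}(\mathrm{span}(\mathfrak{n}(\A^*)^*\cdot\mathfrak{n}(\A)))]_q$: given $c \in \mathscr{A}_\gamma$ (which lies in both $\mathfrak{n}(\A)$ and $\mathfrak{n}(\A^*)^*$ by analyticity together with $c^* \in \mathfrak{n}_\nu$), the strategy is to approximate $\mathfrak{j}^{(q)}(c)$ in $L^q$-norm by the products $\mathfrak{j}^{(q)}(cb_\mu)$, where $(b_\mu)\subset \mathscr{A}_\gamma \subset \mathfrak{n}(\A)$ and $b_\mu \to \I$ in a sufficiently strong sense. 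For $q=2$ this can be engineered via Mazur's theorem: $\sigma$-weak density of the convex space $\mathscr{A}_\gamma$ in the unital algebra $\A_\gamma$ supplies a net in $\mathscr{A}_\gamma$ whose right-action on $\eta(c) \in H_\nu$ converges in norm to $\eta(c)$, giving $\mathfrak{j}^{(2)}(cb_\mu)\to\mathfrak{j}^{(2)}(c)$ in $L^2$. The upgrade to $q > 2$ is planned via the estimate $|\mathfrak{j}^{(q)}(cb_\mu)|^2 \leq \|b_\mu\|^2\,|\mathfrak{j}^{(q)}(c)|^2$ followed by the noncommutative dominated-convergence theorem (convergence in measure being delivered by strong convergence of $b_\mu\to\I$); the delicate point is ensuring that the approximants $b_\mu$ can be chosen uniformly norm-bounded inside the convex but non-selfadjoint subspace $\mathscr{A}_\gamma$, which will require a Kaplansky-style density reduction.
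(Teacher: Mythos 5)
Your overall skeleton — reduce to analytic elements via Proposition \ref{refine-2}, use the modular shift identity $[ah^{z}]=h^{z}\sigma_{iz}(a)$ to relate $\mathfrak{j}^{(q)}$ to $\mathfrak{j}^{(q)}(\cdot^*)^*$ and $\mathfrak{i}^{(q)}$ to $\mathfrak{j}^{(q)}$, and then close the chain with an approximate-identity argument — matches the paper's strategy. But the convergence mechanism you propose for the crucial inclusion $[\mathfrak{j}^{(q)}(\n(\A))]_q \subseteq [\mathfrak{j}^{(q)}(\mathrm{span}(\n(\A^*)^*\cdot\n(\A)))]_q$ does not work as written, for two reasons. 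First, the domination inequality $|\mathfrak{j}^{(q)}(cb_\mu)|^2\leq \|b_\mu\|^2\,|\mathfrak{j}^{(q)}(c)|^2$ is false: unwinding it, it amounts to $h^{1/q}b_\mu^*|c|^2b_\mu h^{1/q}\leq \|b_\mu\|^2\,h^{1/q}|c|^2h^{1/q}$, i.e.\ to $b^*xb\leq\|b\|^2x$ for noncommuting positive $x$, which already fails for a unitary $b$ and a projection $x$ that do not commute. So the dominated-convergence upgrade from $q=2$ to $q>2$ collapses. Second, the ``Kaplansky-style density reduction'' you defer to is not available: Kaplansky density is a theorem about $*$-subalgebras, and $\mathscr{A}_\gamma$ is deliberately non-selfadjoint, so there is no general device producing a norm-bounded approximating net inside it. These are not presentational gaps; the step they are meant to carry is the heart of the theorem.

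The fix — and the route the paper takes — is to abandon norm/measure convergence of the approximants entirely and work with \emph{weak} $L^q$-convergence followed by convexity. Choose the multipliers $(a_n)$ to be analytic elements of $\n(\A_\gamma)\cap\n(\A_\gamma^*)^*$ converging $\sigma$-weakly to $\I$ (Proposition \ref{analytic-n(A)} supplies these, with all $\sigma_z$-images under control), and put them on the side where they factor out of the closure: e.g.\ $[a_nah^{1/q}]=a_n[ah^{1/q}]$, so that for every $b\in L^p(\M)$ with $\frac1p+\frac1q=1$ one has
\begin{equation*}
tr\bigl(b\,[a_nah^{1/q}]\bigr)=tr\bigl(a_n([ah^{1/q}]b)\bigr)\longrightarrow tr\bigl([ah^{1/q}]b\bigr),
\end{equation*}
because $[ah^{1/q}]b\in L^1(\M)=\M_*$ and $\sigma$-weak convergence is precisely convergence against $L^1$. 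No norm bound on $(a_n)$ and no operator domination is needed. Since the subspaces in question are convex, their weak and norm closures in $L^q$ coincide, and the inclusion follows. (For right multipliers the same trick works after writing $[cb_\mu h^{1/q}]=[ch^{1/q}]\sigma_{i/q}(b_\mu)$ and using that $\sigma_{i/q}(b_\mu)\to\I$ $\sigma$-weakly.) With this replacement your argument for the remaining equalities, which is essentially the paper's, goes through.
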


\begin{proof} 
In view of the similarity of the proofs of the two cases, we only prove the second set of equalities. By convexity the subspaces $[\mathfrak{j}^{(q)}(\n(\A_0)\cap \n(\A_0^*)^*]_q$, $[\mathfrak{j}^{(q)}(\n(\A_0)]_q$ and $[\mathfrak{j}^{(q)}(\n(\A_0^*))^*]_q$ are all weakly closed in $L^q$. 

Let $a\in \n(\A_0)$ be given. By Proposition \ref{analytic-n(A)}. The $\sigma$-weak density of $\mathfrak{n}(\A^*)\cap\mathfrak{n}(\A)$ in $\A$ ensures that we may select a sequence $(a_n)\subset \mathfrak{n}^\infty(\A)$ which is $\sigma$-weakly convergent to $\I$. By the ideal property of $\A_0$ and the fact that $\mathfrak{n}_\nu$ is a left ideal, we clearly have that $(a_n a)\subset \n(\A_0)\cap\n(\A_0^*)^*$. With $p$ denoting the conjugate index to $q$ it now for any $b\in L^p(\M)$ follows that $$tr(b[a_nah^{1/q}]) = tr(a_n([ah^{1/q}]b)) \to tr(b[ah^{1/q}]).$$In other words $([a_nah^{1/q}])$ converges $L^q$-weakly to $[ah^{1/q}]$. This proves that $$[\mathfrak{j}^{(q)}(\n(\A_0))]_q\subseteq[\mathfrak{j}^{(q)}(\n(\A_0)\cap \n(\A_0^*)^*)]_q.$$The converse inclusion is clear and hence equality follows. This then proves the third equality. 

Starting with $a\in \n(\A_0^*)^*$, a similar proof to the one above shows that $[\mathfrak{j}^{(q)}(\n(\A_0)\cap \n(\A_0^*)^*)]_q =[\mathfrak{j}^{(q)}(\n(\A_0^*))^*]_q$. The fourth equality therefore also holds. 

It is clear that $[\mathfrak{j}^{(q)}(\mathrm{span}(\n(\A^*)^*\cdot\n(\A_0)))]_q\subset [\mathfrak{j}^{(q)}(\n(\A_0))]_q$. If as before we select some $(a_n)\subset \mathfrak{n}^\infty(\A)$ which is $\sigma$-weakly convergent to $\I$, we will for any $a\in \n(\A_0)$ as before have that $([a_nah^{1/q}])$ converges $L^q$-weakly to $[ah^{1/q}]$, which then establishes the converse inclusion. The second equality therefore follows.

It remains to prove the first and final equalities. Firstly note that the proof of Proposition \ref{refine-2} effectively shows that restricting the embedding $\mathfrak{j}^{(2q)}$ of $\n(\A)\cap \n(\A^*)^*$ into $L^{2q}$ to analytic elements, will produce the same closure as $\mathfrak{j}^{(2q)}(\n(\A)\cap \n(\A^*)^*)$ (and hence by what we've already verified, also of $\mathfrak{j}^{(2q)}(\n(\A)$). Careful 
consideration shows that $$[\mathfrak{i}^{(q)}(\mathrm{span}(\n(\A^*)^*\cdot\n(\A)))]_q = [\mathrm{span}(\mathfrak{j}^{(2q)}(\n(\A^*))^*\cdot\mathfrak{j}^{(2q)}(\n(\A)))]_q.$$So when analysing $[\mathfrak{i}^{(q)}(\mathrm{span}(\n(\A^*)^*\cdot\n(\A)))]_q$, it 
is enough to consider terms of the form $(h^{1/2q}b)[ah^{1/2}]$ where $a$ and $b$ are analytic elements of $\n(\A)\cap \n(\A^*)^*$.

With $a\in\n(\A_0)$ and $b\in \n(\A^*)^*$ analytic, the fact that $$[\mathfrak{i}^{(q)}(\mathrm{span}(\n(\A^*)^*\cdot\n(\A_0)))]_q \subseteq [\mathfrak{j}^{(q)}(\mathrm{span}(\n(\A^*)^*\cdot\n(\A_0)))]_q$$is then a simple consequence of the fact that $$(h^{1/2q}b)[ah^{1/2q}]= (\sigma^\nu_{-i/2q}(b)h^{1/2q})[h^{1/2q}\sigma^\nu_{i/2q}(a)]=$$ $$\sigma^\nu_{-i/2q}(b)[h^{1/q}\sigma^\nu_{i/2q}(a)]= \sigma^\nu_{-i/2q}(b)[\sigma^\nu_{-i/2q}(a)h^{1/q}]=[(\sigma^\nu_{-i/2q}(b)\sigma^\nu_{-i/2q}(a))h^{1/q}].$$(Here we repeatedly used Lemma \ref{GL2-2.4+5}.) The converse inclusion follows by a similar argument. 

Once the proof of the final equality has as above been reduced to a manipulation of analytic elements, it will follow by a similar argument.
\end{proof}

We next remind the reader of the concept of an \emph{analytically conditioned algebra} introduced in \cite{L-HpIII}. This concept is closely related to Ji's concept of an \emph{expectation algebra} (see \cite{jig2}), the difference being that expectation algebras are not required to be $\sigma_t^\nu$-invariant. Both these concepts are type III versions of \emph{tracial algebras}, which were introduced by Blecher and Labuschagne in the setting of finite von Neumann algebras \cite{BL-logmod} who then went on to show that for these algebras a large number of conditions (including the validity of a noncommutative Szeg\"o formula) are all equivalent to maximal subdiagonality (see \cite{BL1, BLsurvey}). In the general case the only difference between a subdiagonal subalgebra and an analytically conditioned subalgebra is that for the latter we do not require $\A+\A^*$ to be $\sigma$-weakly dense in $\M$.

\begin{definition}
A $\sigma$-weakly closed unital subalgebra $\A$ of $\M$ is said to be an \emph{analytically conditioned} subalgebra   
\begin{itemize}
\item if $\sigma_t^\nu(\A)=\A$ for all $t\in \mathbb{R}$,
\item and if the faithful normal conditional expectation $\mathbb{E}:\M\to \mathcal{D}=\A\cap \A^*$ satisfying $\nu\circ\mathbb{E}=\nu$ (ensured by the above condition \cite[Theorem IX.4.2]{Tak2}), is multiplicative on $\A$.
\end{itemize}
\end{definition}

\begin{remark} It is not difficult to modify the proof of Proposition \ref{refine-2} to show that we will for \emph{analytically conditioned} subalgebras also have that $\n(\A)\cap \n(\A^*)^*$ is $\sigma$-weakly dense in $\A$ and that in addition the injection $\mathfrak{j}^{(q)}$ will for any $q\geq 2$ map the analytic elements of $\n(\A)\cap \n(\A^*)^*$ onto a norm dense subspace of 
$[\mathfrak{j}^{(q)}(\n(\A)\cap \n(\A^*)^*)]_q$.
\end{remark} 

Given $1\leq p<\infty$ and an analytically conditioned subalgebra of $\M$, we will in view of the above now adopt the convention of respectively writing $\mathcal{H}^p(\A)$ and $\mathcal{H}^p(\A_0)$ for the closures in $L^p(\M)$ of 
$\mathrm{span}(\mathfrak{j}^{(2p)}(\n(\A^*)^*)\cdot\mathfrak{j}^{(2p)}(\n(\A))$ and \newline $\mathrm{span}(\mathfrak{j}^{(2p)}(\n(\A^*)^*)\cdot\mathfrak{j}^{(2p)}(\n(\A_0))$.

\begin{corollary}\label{gen-perp}
Let $\A$ be an analytically conditioned subalgebra of $\M$. Then $\mathcal{H}^2(\A)$ and $\mathcal{H}^2(\A_0)^*$ are orthogonal subspaces of $L^2(\M)$ 
\end{corollary}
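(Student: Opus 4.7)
The plan is to reduce the assertion to checking orthogonality on convenient dense subsets, evaluate the $L^2$-inner product there by means of the pairing formula attached to the Haagerup-Terp standard form, and then let the multiplicativity of $\E$ on $\A$ collapse the answer to zero.

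First, Theorem \ref{tech-thm}, applied once to $\A$ and once to $\A_0$, identifies
$$\mathcal{H}^2(\A) = [\mathfrak{j}^{(2)}(\n_\nu(\A))]_2, \qquad \mathcal{H}^2(\A_0) = [\mathfrak{j}^{(2)}(\n_\nu(\A_0))]_2.$$
Combining the equality $[\mathfrak{j}^{(2)}(\n_\nu(\A_0^*))^*]_2 = [\mathfrak{j}^{(2)}(\n_\nu(\A_0))]_2$ from that same theorem with the fact that the involution is isometric on $L^2(\M)$, we obtain
$$\mathcal{H}^2(\A_0)^* = [\mathfrak{j}^{(2)}(\n_\nu(\A_0^*))]_2.$$
It therefore suffices to show that $\langle [ah^{1/2}], [ch^{1/2}]\rangle_{L^2(\M)} = 0$ for every $a \in \n_\nu(\A)$ and every $c \in \n_\nu(\A_0^*)$, as such elements form a dense subset of each of the two Hilbert subspaces in question.

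Second, the final sentence of Theorem \ref{7:T stdform} (see also Remark \ref{7:R trwt}) gives $tr([ch^{1/2}]^*[ah^{1/2}]) = \nu(c^*a)$, so the above inner product equals $\nu(c^*a)$. Because $c \in \A_0^*$ we have $c^* \in \A_0 \subseteq \ker(\E)$, and the assumption that $\E$ is multiplicative on $\A$ combined with $a \in \A$ yields $\E(c^*a) = \E(c^*)\E(a) = 0$; equivalently, $c^*a \in \A_0$. Cauchy-Schwarz for $\nu$ on $\n_\nu$ places $c^*a$ inside $\mathfrak{m}_\nu$, and the linear extension of $\nu\circ\E = \nu$ from $\mathfrak{p}_\nu$ to $\mathfrak{m}_\nu$ (as recorded in Remark \ref{10:R cond-exp}) then delivers $\nu(c^*a) = \nu(\E(c^*a)) = 0$. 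Continuity of the $L^2$-inner product transports the orthogonality from the dense subsets to their closures, finishing the argument.

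The only real subtlety lies in the bookkeeping around adjoints: one must invoke the specific equality $[\mathfrak{j}^{(2)}(\n_\nu(\A_0^*))]_2 = [\mathfrak{j}^{(2)}(\n_\nu(\A_0))]_2^*$ from Theorem \ref{tech-thm} in order to pair $[ah^{1/2}]$ with $[ch^{1/2}]$, both in the canonical form $\mathfrak{j}^{(2)}(\cdot)$, and thereby make the standard-form trace formula directly applicable, rather than juggling unprocessed adjoints of the shape $h^{1/2}b^*$.
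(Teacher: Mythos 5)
Your proposal is correct and follows essentially the same route as the paper: both arguments reduce to the dense subsets $\mathfrak{j}^{(2)}(\n_\nu(\A))$ and $\mathfrak{j}^{(2)}(\n_\nu(\A_0^*))$, identify the $L^2$-pairing with $\nu$ of a product via the trace formula of Remark \ref{7:R trwt}, and kill it using the multiplicativity of $\E$ on $\A$ together with $\nu\circ\E=\nu$. The paper's version is just a one-line computation with $b\in\mathfrak{n}(\A_0^*)^*$ playing the role of your $c^*$; your extra bookkeeping about adjoints and the appeal to Theorem \ref{tech-thm} is harmless and consistent with how the paper itself treats $\mathcal{H}^2$ for analytically conditioned algebras.
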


\begin{proof} Given $a\in \mathfrak{n}(\A)$ and $b\in \mathfrak{n}(\A_0^*)^*$, we have that 
$$\langle \mathfrak{j}^{(2)}(a), \mathfrak{j}^{(2)}(b^*) \rangle =tr(\mathfrak{j}^{(2)}(b^*)^*\mathfrak{j}^{(2)}(a))= tr(\mathfrak{i}^{(1)}(ba))=\nu(ba)=\nu(\mathbb{E}(ba))=0.$$
\end{proof}

\begin{proposition}\label{prop2.1} Let $\A$ be an analytically conditioned subalgebra. Given $r\geq 1$ with $\frac{1}{r}=\frac{1}{p}+\frac{1}{q}$, we have that $\mathrm{span}(\mathcal{H}^p(\A)\cdot\mathcal{H}^q(\A))$ is a dense subset of $\mathcal{H}^r(\A)$ with $\E_r(ab)=\E_p(a)\E_q(b)$ for each $a\in \mathcal{H}^p(\A)$ and $b\in\mathcal{H}^q(\A)$.
\end{proposition}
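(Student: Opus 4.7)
The plan is to prove both statements together through a single algebraic identity on a dense subspace of analytic elements, extending by continuity. Propositions \ref{analytic-n(A)} and \ref{refine-2}, combined with Theorem \ref{tech-thm}, supply for each $s\in[2,\infty)$ a dense subspace of $\mathcal{H}^s(\A)$ consisting of $\mathfrak{j}^{(s)}(a)$ with $a\in\mathfrak{n}_\nu^\infty\cap\A\cap(\mathfrak{n}_\nu^\infty)^*$. On such analytic symbols the core identity
\begin{equation*}
\mathfrak{j}^{(p)}(a)\cdot\mathfrak{j}^{(q)}(b) \;=\; \mathfrak{j}^{(r)}\!\bigl(a\,\sigma^\nu_{-i/p}(b)\bigr)
\end{equation*}
is obtained from the formal computation $[ah^{1/p}][bh^{1/q}] = ah^{1/p}bh^{1/q} = a\sigma^\nu_{-i/p}(b)h^{1/r}$, using the analytic-continuation identity $h^{1/p}b = \sigma^\nu_{-i/p}(b)h^{1/p}$ (the $z=-i/p$ specialisation of $h^{it}x = \sigma_t^\nu(x)h^{it}$, valid because $b$ is entire-analytic) together with $h^{1/p}h^{1/q}=h^{1/r}$. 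The element $a\sigma^\nu_{-i/p}(b)$ belongs to $\mathfrak{n}(\A)$: to $\A$ by $\sigma_t^\nu$-invariance of $\A$, and to $\mathfrak{n}_\nu$ because $\sigma^\nu_{-i/p}(b)$ is bounded while $a\in\mathfrak{n}_\nu$.

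From this identity, the inclusion $\mathcal{H}^p(\A)\cdot\mathcal{H}^q(\A)\subset\mathcal{H}^r(\A)$ follows by the continuity of multiplication $L^p\times L^q\to L^r$ (H\"older). For the density, take the net $(f_\lambda)\subset\D$ of Proposition \ref{7:P Terp2}: for each analytic generator $a$ of $\mathcal{H}^r(\A)$ the approximating products
\begin{equation*}
\mathfrak{j}^{(p)}(a)\cdot\mathfrak{j}^{(q)}(f_\lambda)=\mathfrak{j}^{(r)}\!\bigl(a\,\sigma^\nu_{-i/p}(f_\lambda)\bigr)
\end{equation*}
lie in $\mathrm{span}(\mathcal{H}^p(\A)\cdot\mathcal{H}^q(\A))$, and since parts (2)--(3) of Proposition \ref{7:P Terp2} give $\sigma^\nu_{-i/p}(f_\lambda)\to\I$ $\sigma$-weakly with uniformly bounded norms, cyclicity of $tr$ converts this into weak-$L^r$ convergence of the displayed products to $\mathfrak{j}^{(r)}(a)$. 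Mazur's theorem then promotes the resulting weak density of the convex set $\mathrm{span}(\mathcal{H}^p(\A)\cdot\mathcal{H}^q(\A))$ to norm density. The multiplicativity assertion is obtained by applying $\E_r$ to the identity: combining Proposition \ref{genformula-exp} (via the factorisation $\mathfrak{j}^{(r)}(c)=\mathfrak{j}^{(2r)}(c)\cdot h^{1/2r}$ together with Proposition \ref{exp-props}), multiplicativity of $\E$ on $\A$, and the commutation $\E\circ\sigma^\nu_z=\sigma^\nu_z\circ\E$ (Remark \ref{10:R cond-exp}) yields on analytic generators
\begin{equation*}
\E_r\bigl(\mathfrak{j}^{(r)}(a\sigma^\nu_{-i/p}(b))\bigr)=\mathfrak{j}^{(r)}\!\bigl(\E(a)\sigma^\nu_{-i/p}(\E(b))\bigr)=\mathfrak{j}^{(p)}(\E(a))\cdot\mathfrak{j}^{(q)}(\E(b));
\end{equation*}
H\"older together with the $L^p$-contractivity of $\E_p,\E_q,\E_r$ (Remark \ref{10:R cond-exp}) extends $\E_r(ab)=\E_p(a)\E_q(b)$ to all of $\mathcal{H}^p(\A)\times\mathcal{H}^q(\A)$.

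The hardest part is handling configurations in which at least one of $p,q$ falls strictly below $2$: then $\mathfrak{j}^{(p)}$ and $\mathfrak{j}^{(q)}$ are not directly given by Proposition \ref{GL2-2.2+3}, and the identification $\mathcal{H}^r(\A)=[\mathfrak{j}^{(r)}(\n(\A))]_r$ from Theorem \ref{tech-thm} is no longer available for $r<2$. In such cases the algebra must be performed at the $\mathfrak{j}^{(2s)}$-level and then projected to $L^r$ through the definition $\mathfrak{i}^{(r)}(ca)=\mathfrak{j}^{(2r)}(c)\mathfrak{j}^{(2r)}(a)$, using the modular-shift bijection $a\leftrightarrow\sigma^\nu_{i(1/2r-1/p)}(a)$ to convert products of the form $\mathfrak{j}^{(p)}(c)\cdot\mathfrak{j}^{(q)}(a)$ into generators of $\mathcal{H}^r(\A)$. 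The uniform bound $\|\sigma^\nu_z(f_\lambda)\|\leq e^{\delta(\im(z))^2}$ from Proposition \ref{7:P Terp2}(2) is then what keeps the weak-convergence argument uniform in $\lambda$ across these reformulations.
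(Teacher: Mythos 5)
Your computation for the regime $p,q,r\ge 2$ is sound, and it is essentially the paper's argument transposed from the symmetric generators $(h^{1/2p}a)[bh^{1/2p}]$ to the one-sided embeddings $[ah^{1/p}]$: the core identity $[ah^{1/p}][bh^{1/q}]=[a\sigma^\nu_{-i/p}(b)h^{1/r}]$, the reduction to analytic elements, the use of the net $(f_\lambda)$, and the application of $\E_r$ to the resulting identity all match the paper's strategy. The genuine problem is that the proposition allows $r\ge 1$, and the case $1\le r<2$ (which includes $r=1$, the case actually used later, e.g.\ in $H^1_0(\widehat{\A})=[H^2(\widehat{\A})\cdot H^2_0(\widehat{\A})]_1$) is precisely where your executed argument stops working: $\mathfrak{j}^{(r)}$ is not defined by Proposition \ref{GL2-2.2+3} for $r<2$, Theorem \ref{tech-thm} does not identify $\mathcal{H}^r(\A)$ with $[\mathfrak{j}^{(r)}(\mathfrak{n}(\A))]_r$ there, and so the weak limits $\mathfrak{j}^{(r)}(a)$ produced by right-multiplying by $\mathfrak{j}^{(q)}(f_\lambda)$ are not (known to be) a dense family of targets in $\mathcal{H}^r(\A)$. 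The same difficulty arises whenever $p$ or $q$ drops below $2$.

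Your closing paragraph correctly diagnoses that the algebra must then be done at the $\mathfrak{j}^{(2s)}$-level, but it stops at a sketch, and the missing device is the one the paper's proof is actually built around: for the density direction one takes the two-sided generator $(h^{1/2r}a)[bh^{1/2r}]$ of $\mathcal{H}^r(\A)$, inserts $f_\lambda,f_\gamma$ \emph{in the middle}, and splits $h^{1/2r}$ as $h^{1/2p}h^{1/2q}$ on each side, so that $(h^{1/2r}af_\lambda)[f_\gamma bh^{1/2r}]$ is exhibited as $\mathfrak{i}^{(p)}(\sigma^\nu_{-i/2q}(af_\lambda))\cdot\mathfrak{i}^{(q)}(\sigma^\nu_{i/2q}(f_\gamma b))$, after which one passes to weak limits in $\lambda$ and $\gamma$. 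Right-multiplication by $f_\lambda$ alone cannot manufacture these symmetric generators. Relatedly, your appeal to Proposition \ref{genformula-exp} for $\E_r([xh^{1/r}])=[\E(x)h^{1/r}]$ sits at the excluded endpoint $c=0$ of that proposition; for analytic $x$ this is rescued by writing $[xh^{1/r}]=\mathfrak{i}^{(r)}(\sigma^\nu_{i/2r}(x))$ and using that $\E$ preserves analyticity and commutes with the analytically continued modular group, but that step needs to be said explicitly. With the middle-insertion argument supplied, the remainder of your proof goes through.
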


\begin{proof} By definition $\mathcal{H}^p(\A)=[\mathrm{span}(\mathfrak{j}^{(2p)}(\n(\A^*)^*)\cdot\mathfrak{j}^{(2p)}(\n(\A))]_p$ and \newline $\mathcal{H}^q(\A)=[\mathrm{span}(\mathfrak{j}^{(2q)}(\n(\A^*)^*)\cdot\mathfrak{j}^{(2q)}(\n(\A))]_q$. It therefore suffices to prove the 
claims for elements of the form $(h^{1/2p}a)[bh^{1/2p}]$ and $(h^{1/2q}f)[gh^{1/2p}]$, where $a,f\in\n(\A^*)^*$ and $b,g\in\n(\A)$. The same type of argument as was used in the final part of the proof of Theorem \ref{tech-thm}, then suffices to show that we may assume 
that each of $a,\,b,\,f$ and $g$ is an analytic element of $\n(\A^*)^*\cap\n(\A)$ for which the image under each $\sigma_z^\nu$ is still an analytic element of $\n(\A^*)^*\cap\n(\A)$. Having made this assumption, a similar argument to that used in the final part of the proof of Proposition \ref{exp-props}, shows that 
\begin{eqnarray*} 
&&\quad(h^{1/2p}a)[bh^{1/2p}].(h^{1/2q}f)[gh^{1/2q}]\\
&&= (h^{1/2p}a)[bh^{1/2q}].(h^{1/2p}f)[gh^{1/2q}]\\
&&=[\sigma^\nu_{-i/2p}(a)h^{1/2p}](h^{1/2q}\sigma^\nu_{i/2q}(b)).[\sigma^\nu_{-i/2p}(f)h^{1/2p}](h^{1/2q}\sigma^\nu_{i/2q}(g))\\
&&=[\sigma^\nu_{-i/2p}(a)h^{1/2r}]\sigma^\nu_{i/2q}(b)\sigma^\nu_{-i/2p}(f)(h^{1/2r}\sigma^\nu_{i/2q}(g))\\
&&= (h^{1/2r}\sigma^\nu_{i/2r}(\sigma^\nu_{-i/2p}(a)))\sigma^\nu_{i/2q}(b)\sigma^\nu_{-i/2p}(f)[\sigma^\nu_{-i/2r}(\sigma^\nu_{i/2q}(g))h^{1/2r}]\\
&&= (h^{1/2r}\sigma^\nu_{i/2q}(a))\sigma^\nu_{i/2q}(b)\sigma^\nu_{-i/2p}(f)[\sigma^\nu_{-i/2p}(g)h^{1/2r}]\\
\end{eqnarray*} 
This clearly shows that $(h^{1/2p}a)[bh^{1/2p}].(h^{1/2q}f)[gh^{1/2q}]\in \mathcal{H}^r(\A)$ and hence that $\mathcal{H}^p(\A)\cdot\mathcal{H}^q(\A)\subset\mathcal{H}^r(\A)$ as claimed. 

We pass to verifying the reverse inclusion. A similar argument to the one used above shows that it suffices to show that $\mathfrak{j}^{(2r)}(a^*)^*\cdot\mathfrak{j}^{(2r)}(b)\in \mathcal{H}^p(\A)\cdot\mathcal{H}^q(\A)$ for all analytic elements $a$ and $b$ of $\n(\A^*)^*\cap\n(\A)$. Now recall that we saw in the proof of Theorem \ref{tech-thm} that with $(f_\lambda)$ as in the proof of Proposition \ref{GNSPropn}, we will have that $\mathfrak{j}^{(2r)}(f_\lambda a^*)$ and $\mathfrak{j}^{(2r)}(f_\lambda b)$ converge weakly to respectively $\mathfrak{j}^{(2r)}(a^*)$ and $\mathfrak{j}^{(2r)}(b)$in $\mathcal{H}^{2r}(\A)$. Hence the fact that $\mathfrak{j}^{(2r)}(a^*)^*\cdot\mathfrak{j}^{(2r)}(b)\in \mathcal{H}^p\A)\cdot\mathcal{H}^q(\A)$ will follow if we can show that for any $\lambda$ and $\gamma$ we have that $\mathfrak{j}^{(2r)}(f_\lambda a^*)^*\cdot\mathfrak{j}^{(2r)}(f_\gamma b)\in \mathcal{H}^p(\A)\cdot\mathcal{H}^q(\A)$. Using a similar argument as before we deduce that 
\begin{eqnarray*} 
&&\quad (h^{1/2r}af_\lambda)[f_\gamma bh^{1/2r}]\\
&&= [\sigma^\nu_{-i/2r}(a)h^{1/2r}]f_\lambda f_\gamma (h^{1/2r}\sigma^\nu_{i/2r}(b))\\
&&= [\sigma^\nu_{-i/2r}(a)h^{1/2p}](h^{1/2q}f_\lambda)[f_\gamma h^{1/2p}](h^{1/2q}\sigma^\nu_{i/2r}(b))\\
&&= (h^{1/2p}\sigma^\nu_{i/2p}[\sigma^\nu_{-i/2r}(a))](\sigma^\nu_{-i/2q}(f_\lambda)h^{1/2q})(h^{1/2p}\sigma^\nu_{i/2p}(f_\gamma))[\sigma^\nu_{-i/2p}(\sigma^\nu_{i/2r}(b))h^{1/2q}]\\
&&= (h^{1/2p}\sigma^\nu_{-i/2q}(a))[\sigma^\nu_{-i/2q}(f_\lambda)h^{1/2p}](h^{1/2q}\sigma^\nu_{i/2q}(f_\gamma))[\sigma^\nu_{i/2q}(b)h^{1/2q}]\\
&&= (h^{1/2p}\sigma^\nu_{-i/2q}(a))(\sigma^\nu_{-i/2q}(f_\lambda)h^{1/2p})(h^{1/2q}\sigma^\nu_{i/2q}(f_\gamma))[\sigma^\nu_{i/2q}(b)h^{1/2q}]\\
&&=\mathfrak{i}^{(p)}(\sigma^\nu_{-i/2q}(af_\lambda))\cdot\mathfrak{i}^{(q)}(\sigma^\nu_{i/2q}(f_\gamma b))
\end{eqnarray*} 
which clearly shows that $\mathfrak{j}^{(2r)}(f_\lambda a^*)^*\cdot\mathfrak{j}^{(2r)}(f_\gamma b)\in \mathcal{H}^p(\A)\cdot\mathcal{H}^q(\A)$ as was required. 

We may also use the first displayed equaltion above to see that
\begin{eqnarray*}
&&\quad\E_r((h^{1/2p}a)[bh^{1/2p}].(h^{1/2q}f)[gh^{1/2q}])\\
&&=\E_r(h^{1/2r}\sigma^\nu_{i/2q}(a))\sigma^\nu_{i/2q}(b)\sigma^\nu_{-i/2p}(f)[\sigma^\nu_{-i/2p}(g)h^{1/2r}]\\
&&= \E_r(\mathfrak{i}^{(r)}(\sigma^\nu_{i/2q}(a)\sigma^\nu_{i/2q}(b)\sigma^\nu_{-i/2p}(f)\sigma^\nu_{-i/2p}(g))\\
&&= \mathfrak{i}^{(r)}(\E(\sigma^\nu_{i/2q}(a)\sigma^\nu_{i/2q}(b)\sigma^\nu_{-i/2p}(f)\sigma^\nu_{-i/2p}(g)))\\
&&= \mathfrak{i}^{(r)}(\sigma^\nu_{i/2q}(\E(a))\sigma^\nu_{i/2q}(\E(b))\sigma^\nu_{-i/2p}(\E(f))\sigma^\nu_{-i/2p}(\E(g)))\\
&&= (h^{1/2r}\sigma^\nu_{i/2q}(\E(a)))\sigma^\nu_{i/2q}(\E(b))\sigma^\nu_{-i/2p}(\E(f))[\sigma^\nu_{-i/2p}(\E(g))h^{1/2r}].
\end{eqnarray*} 
(Here we silently used the fact noted in the proof of Proposition \ref{exp-props}, that $\E$ preserves analyticity.) We may now use the above formula to see that
\begin{eqnarray*} 
&&\quad\E_p((h^{1/2p}a)[bh^{1/2p}])\cdot\E_q((h^{1/2q}f)[gh^{1/2q}])\\
&&= \mathfrak{i}^{(p)}(\E(ab))\cdot\mathfrak{i}^{(q)}(\E(fg))\\
&&= \mathfrak{i}^{(p)}(\E(a)\E(b))\cdot\mathfrak{i}^{(q)}(\E(f)\E(g))\\
&&= (h^{1/2p}\E(a))[\E(b)h^{1/2q}].(h^{1/2p}\E(f))[\E(g)h^{1/2q}]\\
&&= (h^{1/2r}\sigma^\nu_{i/2q}(\E(a)))\sigma^\nu_{i/2q}(\E(b))\sigma^\nu_{-i/2p}(\E(f))[\sigma^\nu_{-i/2p}(\E(g))h^{1/2r}]\
\end{eqnarray*} 
Hence $$\E_p((h^{1/2p}a)[bh^{1/2p}])\cdot\E_q((h^{1/2q}f)[gh^{1/2q}])=\E_r((h^{1/2p}a)[bh^{1/2p}].(h^{1/2q}f)[gh^{1/2q}])$$as required.\end{proof}

On the basis of the above corollary we now define $H^p(\A)$ for $0<p<\infty$ as below

\begin{definition} For $\frac{1}{2}<p<1$ we define $H^p(\A)$ to be the closure of the linear span of $H^{2p}(\A).H^{2p}(\A)$ in $L^p(\M)$. Carrying on inductively we similarly for any $\frac{1}{2^{n+1}}\leq p<\frac{1}{2^n}$ (where $n=0, 1, 2,\dots$) define $H^p(\A)$ to be the closure of $\mathrm{span}(H^{2p}(\A).H^{2p}(\A))$ in $L^p(\M)$.
\end{definition}

We close this section by showing that $H^p$ spaces for $0<p<1$ exhibit similar behaviour to that noted earlier.

\begin{proposition}\label{prop2.2} Let $\A$ be an analytically conditioned algebra. Given $0<r<1$ with $\frac{1}{r}=\frac{1}{p}+\frac{1}{q}$ for some $0<p,q<\infty$, we have that $\mathrm{span}(\mathcal{H}^p(\A)\cdot\mathcal{H}^q(\A))$ is a dense subset of $\mathcal{H}^r(\A)$.
\end{proposition}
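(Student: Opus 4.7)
I proceed by strong induction on $n \geq 0$ where $r \in [2^{-n-1}, 2^{-n})$, the case $r \geq 1$ being Proposition \ref{prop2.1}. The inductive hypothesis is that the claim already holds at every level $r' > 2^{-n}$; fixing $r \in [2^{-n-1}, 2^{-n})$ and $p, q > 0$ with $1/p + 1/q = 1/r$, we have $2r > 2^{-n}$ so the hypothesis applies at level $2r$. The structural facts I rely on are that $\mathcal{H}^r(\A) = \overline{\mathrm{span}(\mathcal{H}^{2r}(\A) \cdot \mathcal{H}^{2r}(\A))}_r$ by definition, and that each of $\mathcal{H}^p(\A)$ and $\mathcal{H}^q(\A)$ admits norm-approximations by two-factor products with factors in $\mathcal{H}^{2p}(\A)$ (respectively $\mathcal{H}^{2q}(\A)$) --- via the doubling definition when $p < 1$ (respectively $q < 1$), or via Proposition \ref{prop2.1} otherwise.

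For the inclusion $\mathrm{span}(\mathcal{H}^p(\A) \cdot \mathcal{H}^q(\A)) \subset \mathcal{H}^r(\A)$, I would write $f \in \mathcal{H}^p(\A)$ as $f \approx f_1 f_2$ with $f_j \in \mathcal{H}^{2p}(\A)$ and $g \in \mathcal{H}^q(\A)$ as $g \approx g_1 g_2$ with $g_j \in \mathcal{H}^{2q}(\A)$. The central product $f_2 g_1$ has exponent $1/(2p) + 1/(2q) = 1/(2r)$, hence lies in $\mathcal{H}^{2r}(\A)$ by the inductive hypothesis at level $2r$. A second application of the hypothesis at level $2r$, now with the decomposition $1/a + 1/b = 1/(2r)$ where $1/a = 1/(2r) - 1/(2p)$ and $1/b = 1/(2r) - 1/(2q)$ (both positive because $p, q > r$), approximates $f_2 g_1 \approx uv$ with $u \in \mathcal{H}^a(\A)$ and $v \in \mathcal{H}^b(\A)$. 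A third application of the same hypothesis yields $f_1 u \in \mathcal{H}^{2r}(\A)$ and $v g_2 \in \mathcal{H}^{2r}(\A)$, whence
$$f_1(uv)g_2 = (f_1 u)(v g_2) \in \mathcal{H}^{2r}(\A) \cdot \mathcal{H}^{2r}(\A) \subset \mathcal{H}^r(\A),$$
and $f_1(uv)g_2 \approx fg$ in $L^r$-quasinorm by a standard H\"older-type estimate.

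For the reverse density I would symmetrically start with a product $uv \in \mathcal{H}^{2r}(\A) \cdot \mathcal{H}^{2r}(\A)$, approximate $u \approx u_1 u_2$ and $v \approx v_1 v_2$ with $u_1, v_1 \in \mathcal{H}^{2p}(\A)$ and $u_2, v_2 \in \mathcal{H}^{2q}(\A)$ via the inductive hypothesis at level $2r$ (using $1/(2p) + 1/(2q) = 1/(2r)$), rewrite the central factor $u_2 v_1 \approx wz$ with $w \in \mathcal{H}^{2p}(\A)$ and $z \in \mathcal{H}^{2q}(\A)$ by another use of the hypothesis at level $2r$, and then observe
$$u_1(wz)v_2 = (u_1 w)(z v_2) \in \mathcal{H}^p(\A) \cdot \mathcal{H}^q(\A),$$
since $\mathcal{H}^{2p}(\A) \cdot \mathcal{H}^{2p}(\A) \subset \mathcal{H}^p(\A)$ and $\mathcal{H}^{2q}(\A) \cdot \mathcal{H}^{2q}(\A) \subset \mathcal{H}^q(\A)$ by either the doubling definition (when exponents are $< 1$) or by Proposition \ref{prop2.1}.

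The main obstacle is the triple nested invocation of the inductive hypothesis at level $2r$ together with the bookkeeping of the auxiliary exponents $a, b$; their positivity is however automatic from the inequalities $p, q > r$, which follow from $1/p + 1/q = 1/r$ with $1/p, 1/q > 0$. The non-commutativity of $\M$ poses no additional obstacle, because every rearrangement used is purely associative and never commutative.
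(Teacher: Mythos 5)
Your argument is correct, and it rests on the same two pillars as the paper's proof: the dyadic doubling definition of $\mathcal{H}^s(\A)$ for $s<1$, and the rearrangement phenomenon that $\mathrm{span}(\mathcal{H}^{p'}(\A)\cdot\mathcal{H}^{q'}(\A))$ and $\mathrm{span}(\mathcal{H}^{q'}(\A)\cdot\mathcal{H}^{p'}(\A))$ have the same closure, namely $\mathcal{H}^{r'}(\A)$. What differs is the organization of the induction. The paper unfolds $\mathcal{H}^p$, $\mathcal{H}^q$ and $\mathcal{H}^r$ all the way into $2^k$-fold products at a level where every exponent is at least $1$, carries out all the interleaving there via Proposition \ref{prop2.1}, and then collapses back down; your version stays local, climbing only one dyadic level and invoking the full strength of the inductive hypothesis at level $2r$ three times (inclusion of the middle product, re-expression of it with the factor order reversed, inclusion again). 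This buys you a cleaner recursion with no $2^k$-fold bookkeeping, at the price of needing the inductive hypothesis in its full two-sided form (all decompositions of $1/(2r)$, both orders) rather than just Proposition \ref{prop2.1}. Note that your auxiliary exponents satisfy $1/a=1/(2r)-1/(2p)=1/(2q)$ and $1/b=1/(2p)$, so $a=2q$ and $b=2p$ exactly: your ``second application'' is precisely the statement that $\mathrm{span}(\mathcal{H}^{2q}(\A)\cdot\mathcal{H}^{2p}(\A))$ is dense in $\mathcal{H}^{2r}(\A)$, which makes the rearrangement character of that step transparent. Two cosmetic repairs: the strong-induction hypothesis should read $r'\geq 2^{-n}$ rather than $r'>2^{-n}$, so that it covers $2r$ when $r=2^{-n-1}$; and elements of $\mathcal{H}^p(\A)$ for $p<1$ are limits of finite \emph{sums} of two-factor products rather than of single products, so each ``$\approx f_1f_2$'' should be a finite sum --- this changes nothing since your H\"older estimates are applied termwise to finitely many summands and the $L^r$-quasinorm is subadditive up to a constant.
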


\begin{proof} The result is proved by inductively considering the cases $\frac{1}{2^{n+1}}\leq r<\frac{1}{2^n}$. We shall prove the case $\frac{1}{2}\leq r<1$, leaving the remaining cases as an exercise. Given $0<p,q<\infty$ with $\frac{1}{r}=\frac{1}{p}+\frac{1}{q}$, we may clearly assume that $p\neq q$. That is we may assume that $q>2r>p$. Then of course $q\geq 1$. Suppose that $p\geq \frac{1}{2^{k-1}}$. For any $s>0$ the space $\mathcal{H}^{2^mp}(\A)$ is by definition the closure of $\mathrm{span}(\mathcal{H}^{2^{m+1}s}(\A)\cdot\mathcal{H}^{2^{m+1}s}(\A))$. If we inductively apply this fact to the cases $m=0, 1, \dots k-1$, we obtain that $\mathcal{H}^{s}(\A)$ is the closure of $\mathrm{span}(\mathcal{H}^{2^ks}(\A)\cdot\mathcal{H}^{2^ks}(\A)\dots\mathcal{H}^{2^ks}(\A)$ where on the right we have the span of an $2^k$-fold product of $\mathcal{H}^{2^ks}(\A)$. For the case $p=s$ this means that $\mathrm{span}(\mathcal{H}^p(\A)\cdot\mathcal{H}^q(\A))$ and $\mathrm{span}((\mathcal{H}^{2^kp}(\A)\dots\mathcal{H}^{2^kp}(\A))(\mathcal{H}^{2^kq}(\A)\dots\mathcal{H}^{2^kq}(\A)))$ have the same closures where in each case we have a $2^k$-fold product of spaces. Since now $2^kq, 2^kp\geq 2$, it therefore follows from Proposition \ref{prop2.1} that $\mathrm{span}(\mathcal{H}^{2^kp}(\A)\mathcal{H}^{2^kq}(\A))$ and $\mathrm{span}(\mathcal{H}^{2^kq}(\A)\mathcal{H}^{2^kp}(\A))$ have the same closures, namely $\mathcal{H}^{2^kr}(\A)$. On inductively applying that to what we noted above it is clear that each of \newline $\mathrm{span}(\mathcal{H}^{2^kr}(\A)\dots\mathcal{H}^{2^kr}(\A))$, $\mathrm{span}((\mathcal{H}^{2^kp}(\A)\dots\mathcal{H}^{2^kp}(\A))(\mathcal{H}^{2^kq}(\A)\dots\mathcal{H}^{2^kq}(\A)))$ and \newline $\mathrm{span}((\mathcal{H}^{2^kp}(\A)\cdot\mathcal{H}^{2^kq}(\A))\dots(\mathcal{H}^{2^kp}(\A)\cdot\mathcal{H}^{2^kq}(\A)))$ have the same closures where in each case we have a $2^k$-fold product. But from our earlier analysis the closure of $\mathrm{span}(\mathcal{H}^{2^kr}(\A)\dots\mathcal{H}^{2^kr}(\A))$ is just $\mathcal{H}^r(\A)$, which then proves the result. 
\end{proof}

\section{The Hilbert transform}\label{S9}

Having introduced the concept of $H^p$-spaces for general von Neumann algebras, it now behoves us to develop a theory of Hilbert transforms suited to this context. In the context of finite von Neumann algebras, this was independently done by Narcisse 
Randrianantoanina \cite{Ran}, and Marsalli \& West \cite{MW}. Ji \cite{jig3} then used the reduction theorem presented in \cite{HJX}, to lift this theory to the context of $\sigma$-finite algebras, with Bekjan \cite[\S 4]{Bek-sem} using the techniques described in 
Proposition \ref{Bek-red} to lift the theory to the semifinite setting. We will show how Marsalli and West's approach can be modified so as work for even approximately subdiagonal subalgebras.

The following proposition generalises a crucial part of \cite[Theorem 9]{mar}. 

\begin{proposition} 
\begin{enumerate}
\item[(1)] Let $\A$ be an approximately subdiagonal subalgebra. For any $u\in \mathrm{Re}(\A_\gamma)$ there then exists a unique $v\in\A_\gamma$ with $P_\gamma(\mathrm{Im}(v))=0$ such that 
$u=\mathrm{Re}(v)$. If $u\in \mathrm{Re}(\A_{\gamma,0})$, then $v\in \A_{\gamma, 0}$. Moreover if in fact $u\in \mathrm{Re}[\mathfrak{n}_\nu(\A_{\gamma})\cap\mathfrak{n}_\nu(\A_{\gamma}^*)^*]$, 
then also $v\in[\mathfrak{n}_\nu(\A_{\gamma})\cap\mathfrak{n}_\nu(\A_{\gamma}^*)^*]$. 
\item[(2)]  If $\A$ is even maximal subdiagonal, then similarly there will for any $u\in \mathrm{Re}(\A)$ exist a unique $v\in\A$ with $\mathbb{E}(\mathrm{Im}(v))=0$ such that 
$u=\mathrm{Re}(v)$. If $u\in \mathrm{Re}(\A_0)$, then $v\in \A_0$. Moreover if in fact $u\in \mathrm{Re}[\mathfrak{n}_\nu(\A)\cap\mathfrak{n}_\nu(\A^*)^*]$, then also 
$v\in[\mathfrak{n}_\nu(\A)\cap\mathfrak{n}_\nu(\A^*)^*]$.
\end{enumerate}
\end{proposition}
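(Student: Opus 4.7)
The strategy is to prove part (1) by the explicit correction $v := w - i P_\gamma(\mathrm{Im}(w))$ starting from any preimage $w$ of $u$, and to obtain part (2) by the parallel argument with $P_\gamma$ replaced by $\mathbb{E}$, where the additional hypothesis $\nu\circ\mathbb{E}=\nu$ substantially simplifies the delicate step. For existence in part (1), I would pick any $w\in\A_\gamma$ with $\mathrm{Re}(w)=u$ and set $d:=P_\gamma(\mathrm{Im}(w))$. Since $P_\gamma$ is a $*$-preserving conditional expectation with range $\D_\gamma$, the element $d$ is a self-adjoint member of $\D_\gamma$, so $id\in\D_\gamma\subset\A_\gamma$ is skew-adjoint. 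Then $v:=w-id$ lies in $\A_\gamma$, satisfies $\mathrm{Re}(v)=\mathrm{Re}(w)=u$, and has $P_\gamma(\mathrm{Im}(v))=d-P_\gamma(d)=0$. For uniqueness, I would exploit the identification $\D_\gamma=\A_\gamma\cap\A_\gamma^*$: a difference $v_1-v_2$ of two candidates is skew-adjoint and lies in $\A_\gamma$, hence in $\D_\gamma$; writing it as $ik$ with self-adjoint $k\in\D_\gamma$, the condition $P_\gamma(k)=0$ combined with $P_\gamma|_{\D_\gamma}=\mathrm{id}$ forces $k=0$.

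The refinement to the ideal $\A_{\gamma,0}$ then comes for free: if $u=\mathrm{Re}(w_0)$ with $w_0\in\A_{\gamma,0}$, then $P_\gamma(w_0)=0$ so $d=0$, and the (unique) $v$ coincides with $w_0\in\A_{\gamma,0}$. The genuinely subtle refinement is the $\mathfrak{n}_\nu$-claim, and I expect this to be the main obstacle. Since $v=w-id$ with $d^*=d\in\D_\gamma$, it reduces to showing $d\in\mathfrak{n}_\nu$; via $d=\tfrac{1}{2i}(P_\gamma(w)-P_\gamma(w)^*)$ this in turn reduces to verifying that both $P_\gamma(w)$ and $P_\gamma(w)^*=P_\gamma(w^*)$ lie in $\mathfrak{n}_\nu$. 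The first inclusion is exactly the content of the remark following Definition \ref{def-appsubd}. For the companion claim about $P_\gamma(w)^*$, the plan is to apply Kadison--Schwarz to the unital completely positive map $P_\alpha$ for each $\alpha\geq\gamma$ (yielding $P_\alpha(w^*)^*P_\alpha(w^*)\leq P_\alpha(ww^*)$), then use the compatibility $P_\alpha|_{\A_\gamma}=P_\gamma$ together with $*$-preservation of the expectations to substitute $P_\alpha(w^*)=P_\gamma(w^*)$, and finally invoke $\nu_\alpha\circ P_\alpha=\nu_\alpha$ to obtain $\nu_\alpha(P_\gamma(w)P_\gamma(w)^*)\leq\nu_\alpha(ww^*)$. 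Multiplying by $c_\alpha$ and taking the supremum over the cofinal tail $\{\alpha\geq\gamma\}$---which equals the full supremum because $c_\alpha\nu_\alpha$ is monotone in $\alpha$---will then yield $\nu(P_\gamma(w)P_\gamma(w)^*)\leq\nu(ww^*)<\infty$, as required.

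For part (2), the construction $v=w-i\mathbb{E}(\mathrm{Im}(w))$ and the existence and uniqueness steps carry over verbatim with $\D_\gamma$ replaced by $\D$ and $P_\gamma$ by $\mathbb{E}$. The $\mathfrak{n}_\nu$-refinement is here painless: $\nu\circ\mathbb{E}=\nu$ combined with Kadison--Schwarz delivers $\nu(\mathbb{E}(w)^*\mathbb{E}(w))\leq\nu(w^*w)$ and $\nu(\mathbb{E}(w)\mathbb{E}(w)^*)\leq\nu(ww^*)$ in one line, bypassing the net-of-weights juggling of part (1). Thus the entire proposition reduces to the single delicate point of pushing Kadison--Schwarz through the approximating family $(c_\alpha\nu_\alpha)$ in part (1).
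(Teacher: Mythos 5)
Your proof is correct, and its existence step coincides with the paper's: the corrected element $v = w - iP_\gamma(\mathrm{Im}(w)) = w - \tfrac{1}{2}P_\gamma(w-w^*)$ is exactly the $a = g - \tfrac{1}{2}P_\gamma(g-g^*)$ used there, and your handling of the $\A_{\gamma,0}$- and $\mathfrak{n}_\nu$-refinements (applying the Remark following Definition \ref{def-appsubd} to both $w$ and $w^*$ via Kadison--Schwarz, $\nu_\alpha\circ P_\alpha=\nu_\alpha$, and the supremum over the cofinal tail $\alpha\geq\gamma$) is the same argument the paper compresses into the remark that $P_\gamma$ maps $\mathfrak{n}_\nu(\A_\gamma)$, and similarly $\mathfrak{n}_\nu(\A_\gamma^*)$, into $\mathfrak{n}_\nu(\D_\gamma)$. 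Where you genuinely diverge is uniqueness. You note that the difference of two candidates is skew-adjoint and lies in $\A_\gamma$, hence in $\A_\gamma\cap\A_\gamma^*=\D_\gamma$, where $P_\gamma$ acts as the identity, so the vanishing of $P_\gamma(\mathrm{Im}(v_1-v_2))$ kills it outright. The paper instead first deduces $P_\gamma(v_1)=P_\gamma(v_2)$ from the real/imaginary-part relations, then uses multiplicativity of $P_\alpha$ on $\A_\alpha$ for $\alpha\geq\gamma$ to get $P_\alpha\bigl((v_1-v_2)^*(v_1-v_2)\bigr)=0$, and finally invokes $\nu=\sup_\alpha c_\alpha\nu_\alpha$ together with the faithfulness of $\nu$. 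Your route is shorter and purely algebraic, resting only on the fact that the expectation restricts to the identity on its range $\D_\gamma=\A_\gamma\cap\A_\gamma^*$; the paper's route instead threads the conclusion through the approximating family of weights, which is more in keeping with the spirit of the surrounding machinery but is not logically necessary here. Both arguments are valid, and I see no gaps in your proposal.
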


\begin{proof} We first consider assertion (1). The proofs being similar, we consider only the case where $u\in \mathrm{Re}[\mathfrak{n}_\nu(\A_{\gamma})\cap\mathfrak{n}_\nu(\A_{\gamma}^*)^*]$. Assuming this to hold, there then exists 
$g\in [\mathfrak{n}_\nu(\A_{\gamma})\cap\mathfrak{n}_\nu(\A_{\gamma}^*)^*]$ such that $u=\mathrm{Re}(g)$. Let $a=g-\frac{1}{2}P_\gamma(g-g^*)$. It is an exercise to see that then $a\in \A_\gamma$ with
$u=\mathrm{Re}(a)$ and $P_\gamma(\mathrm{Im}(a))=\frac{1}{2i}P_\gamma(a-a^*)=0$. Since $P_\gamma$ moreover maps $\mathfrak{n}(\A_\gamma)$ into $\mathfrak{n}(\D_\gamma)$ (and similarly for 
$\mathfrak{n}(\A_\gamma^*)$) this shows existence. Now suppose we have another $a_0 \in[\mathfrak{n}_\nu(\A_{\gamma,0})\cap\mathfrak{n}_\nu(\A_{\gamma,0}^*)^*]$ with $u=\mathrm{Re}(a_0)$ and 
$P_\gamma(\mathrm{Im}(a_0))=0$. Then $a+a^*=2u=a_0^*+a_0$ which implies that $$a-a_0=a_0^*-a^*\mbox{ and therefore also }(a-a_0)^*= a_0-a.$$The fact that 
$P_\gamma(\mathrm{Im}(a))=0=P_\gamma(\mathrm{Im}(a_0))$ additionally implies that $$P_\gamma(a)=P_\gamma(a)^* \mbox { and } P_\gamma(a)=P_\gamma(a)^*$$from which we may conclude that 
$$P_\gamma(a)=\frac{1}{2}P_\gamma(a+a^*)=\frac{1}{2}P_\gamma(a_0+a^*_0) =P_\gamma(a_0).$$But for each $\alpha\geq \gamma$, $P_\alpha$ agrees with $P_\gamma$ on $\A_\gamma$.  We will for any 
$\alpha\geq \gamma$ therefore have that $$P_\alpha((a-a_0)^*(a-a_0))= P_\alpha((a_0-a)(a-a_0)) = P_\alpha(a_0-a)P_\alpha(a-a_0) =0.$$But then 
\begin{eqnarray*} 
\nu((a-a_0)^*(a-a_0)) &=& \sup_\alpha c_\alpha\nu_\alpha((a-a_0)^*(a-a_0))\\ 
&=& \sup_\alpha c_\alpha\nu_\alpha(P_\alpha((a-a_0)^*(a-a_0)))\\
&=&0.
\end{eqnarray*}
The faithfulness of $\nu$ therefore ensures that $a=a_0$.

\medskip

On replacing $\A_\gamma$ and $P_\gamma$ with $\A$ and $\mathbb{E}$ respectively, the proof of assertion (2) runs along similar lines as before.
\end{proof} 

We know from the previous proposition that for any $u\in\mathrm{Re}(\A_\gamma)$ there exists a unique $\widetilde{u}\in\mathrm{Re}(\A_\gamma)$ with $P_\gamma(\widetilde{u})=0$ (namely 
$\mathrm{Im}(v)$ in the previous proposition) such that $u+i\widetilde{u}\in \A_\gamma$. If additionally $u\in \mathrm{Re}(\mathfrak{n}_\nu(\A_{\gamma})\cap\mathfrak{n}_\nu(\A_{\gamma}^*)^*)$, then also $u+i\widetilde{u}\in \mathfrak{n}_\nu(\A_{\gamma})\cap\mathfrak{n}_\nu(\A_{\gamma}^*)^*$. In the maximal subdiagonal case the same definitions hold with $\A_\gamma$ and $P_\gamma$ replaced by $\A$ and $\mathbb{E}$. The map $u\to \widetilde{u}$ is nothing more than a non-commutative analogue of harmonic conjugation, the complexification of which (denoted by $u\to \overline{\widetilde{u}}$) is in the present context often referred to as the Hilbert transform. On the other hand the map $\mathfrak{h}:u\to u+i\widetilde{u}$ is often referred to as the Herglotz-Riesz or just Riesz transform. We proceed to describe some technical properties of this transform after which we use those properties to analyse its behaviour with respect to $L^p$ spaces. 

\begin{lemma}[Compare Lemma 5.1; \cite{MW}] Let $\A$ be a maximal subdiagonal subalgebra of a von Neumann algebra $\M$, and let $u\in \mathrm{Re}(\A)$ be given. With $\mathbb{E}$ denoting the canonical normal conditional expectation from $\M$ onto $\D$ with respect to $\nu$, we have that $u-\mathbb{E}(u)\in \mathrm{Re}(\A)$ and $\widetilde{u}=\widetilde{u-\mathbb{E}(u)}$. Moreover $\overset{\approx}{u} =\mathbb{E}(u)-u$. 
\end{lemma}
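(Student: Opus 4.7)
The plan is to reduce every assertion to an invocation of the uniqueness part of the preceding proposition, which characterises $v \in \A$ with $u=\mathrm{Re}(v)$ and $\mathbb{E}(\mathrm{Im}(v))=0$ as unique. First I would verify the opening claim $u-\mathbb{E}(u)\in\mathrm{Re}(\A)$: this is immediate because $u\in\mathrm{Re}(\A)$ is self-adjoint, $\mathbb{E}$ preserves self-adjointness (being an order-preserving $*$-map), and $\mathbb{E}(u)\in\mathcal{D}\subseteq\A$, so $u-\mathbb{E}(u)$ is a self-adjoint element of $\A$.

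Next I would prove $\widetilde{u}=\widetilde{u-\mathbb{E}(u)}$. Let $v=u+i\widetilde{u}\in\A$ be the unique element furnished by the previous proposition, satisfying $\mathbb{E}(\widetilde{u})=0$. Since $\mathbb{E}(u)\in\mathcal{D}\subseteq\A$ and $\mathbb{E}(u)$ is self-adjoint, the element
\begin{equation*}
v-\mathbb{E}(u)=(u-\mathbb{E}(u))+i\widetilde{u}
\end{equation*}
lies in $\A$, has real part $u-\mathbb{E}(u)$, and its imaginary part $\widetilde{u}$ still satisfies $\mathbb{E}(\widetilde{u})=0$. By the uniqueness assertion of the previous proposition applied to $u-\mathbb{E}(u)\in\mathrm{Re}(\A)$, this identifies $\widetilde{u-\mathbb{E}(u)}$ as $\widetilde{u}$.

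For the final identity $\overset{\approx}{u}=\mathbb{E}(u)-u$, I would apply the Riesz transform machinery to $\widetilde{u}$. Starting from $w=(u-\mathbb{E}(u))+i\widetilde{u}\in\A$ (the element from the second step), observe that $-iw\in\A$ and
\begin{equation*}
-iw=\widetilde{u}+i\bigl(\mathbb{E}(u)-u\bigr).
\end{equation*}
This displays $-iw$ as an element of $\A$ whose real part is $\widetilde{u}$ and whose imaginary part $\mathbb{E}(u)-u$ satisfies $\mathbb{E}\bigl(\mathbb{E}(u)-u\bigr)=\mathbb{E}(u)-\mathbb{E}(u)=0$ by idempotence of $\mathbb{E}$. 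Since $\widetilde{u}\in\mathrm{Re}(\A)$ (as the imaginary part of $v\in\A$, equivalently the real part of $-iv\in\A$), the uniqueness portion of the previous proposition forces $\widetilde{\widetilde{u}}=\mathbb{E}(u)-u$, which is exactly $\overset{\approx}{u}=\mathbb{E}(u)-u$.

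No serious obstacle is anticipated: the entire lemma is a bookkeeping exercise once the previous uniqueness proposition is in hand, and the only mild subtlety is remembering that $\mathbb{E}(u)$ automatically belongs to $\A$ so that additive adjustments stay inside $\A$, and that multiplication by $-i$ swaps real and imaginary parts in precisely the way needed to re-apply uniqueness to $\widetilde{u}$ rather than $u$.
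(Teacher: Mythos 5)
Your proof is correct and takes essentially the approach the paper intends: the paper's own proof is the single remark that Marsalli--West's Lemma 5.1(ii)(iii) ``readily adapts'', and that adaptation is precisely your bookkeeping with the uniqueness clause of the preceding proposition (translate $v$ by $\mathbb{E}(u)\in\D\subseteq\A$ for the second claim, then multiply by $-i$ and re-invoke uniqueness for the third). One cosmetic slip: since $u\in\mathrm{Re}(\A)$ need not itself lie in $\A$, the phrase ``$u-\mathbb{E}(u)$ is a self-adjoint element of $\A$'' is not literally right, but your second step already exhibits $u-\mathbb{E}(u)$ as $\mathrm{Re}(v-\mathbb{E}(u))$ with $v-\mathbb{E}(u)\in\A$, which is all that is needed.
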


\begin{proof} The proof of \cite[Lemma 5.1(ii)(iii)]{MW} readily adapts.
\end{proof}

Armed with the above we can now extend the Generalised Riesz Theorem of Marsalli and West \cite[Theorem 5.2]{MW}.

\begin{theorem}\label{mw-5.2} Let $\A$ be an approximately subdiagonal subalgebra of a von Neumann algebra $\M$ with respect to an \emph{fns} weight $\nu$. For any positive even integer $p$ and any $u\in \mathrm{Re}\cup_\gamma(\mathfrak{n}_\nu(\A_{\gamma})\cap\mathfrak{n}_\nu(\A_{\gamma}^*)^*)$  we have that 
 $$\|\mathfrak{i}^{(p)}(\widetilde{u})\|_p\leq\frac{2p}{\log 2}\|\mathfrak{i}^{(p)}(u)\|_p.$$ 
In the maximal subdiagonal case the same statement holds with $\A_\gamma$ replaced by $\A$.
\end{theorem}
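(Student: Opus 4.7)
The plan is to follow the Marsalli--West strategy in the maximal subdiagonal setting, and then push the resulting $L^p$-inequality down to the approximately subdiagonal case using the net $(\A_\gamma,P_\gamma,\nu_\gamma)$ of Definition~\ref{def-appsubd}. For the maximal case, I would set $v=u+i\widetilde u$. By the preceding proposition $v\in\mathfrak{n}_\nu(\A)\cap\mathfrak{n}_\nu(\A^*)^*$, and since $\mathbb{E}(\widetilde u)=0$ we have $\mathbb{E}(v)=\mathbb{E}(u)\in\D$ self-adjoint; the multiplicativity of $\mathbb{E}$ on $\A$ then propagates to give $\mathbb{E}(v^k)=\mathbb{E}(u)^k$ for every $k\geq 1$. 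Iterated use of Proposition~\ref{prop2.1} together with Theorem~\ref{tech-thm} shows that the $k$-th power $v^k$ admits a canonical interpretation as an element of $L^{p/k}(\M)$ via the appropriate $\mathfrak{i}^{(p/k)}$-embedding, and in particular $\mathfrak{i}^{(1)}(v^p)$ is a well-defined $L^1$-element. Combining the trace-preservation property $tr\circ\mathbb{E}_{1}=tr$ from Remark~\ref{10:R cond-exp} with the identity $\mathbb{E}(v^p)=\mathbb{E}(u)^p$ then yields
\[
tr\big(\mathfrak{i}^{(1)}(v^p)\big)\;=\;tr\big(\mathfrak{i}^{(1)}(\mathbb{E}(u)^p)\big),
\]
a non-negative real number bounded by $\|\mathfrak{i}^{(p)}(u)\|_p^p$ via the Jensen inequality for $\mathbb{E}_p$ applied to the convex map $x\mapsto x^p$ on self-adjoints.

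The analytic core of Marsalli--West is the elementary numerical inequality
\[
|\zeta|^p \;\leq\; \tfrac{p}{\log 2}\,\mathrm{Re}(\zeta^p)+c_p\,(\mathrm{Re}\,\zeta)^p\qquad (\zeta\in\bC,\ p\text{ a positive even integer}),
\]
whose operator-theoretic lift proceeds by expanding both $(v+v^*)^p=2^p u^p$ and $v^p\pm(v^*)^p$ via the non-commutative binomial theorem and then applying $tr$, using its cyclicity together with the multiplicativity of $\mathbb{E}$ on $\A$ to collapse all the cross-terms into traces of $\mathbb{E}(u)^k$. This converts the inequality into $\|\mathfrak{i}^{(p)}(v)\|_p\leq\tfrac{p}{\log 2}\|\mathfrak{i}^{(p)}(u)\|_p$, after which the triangle inequality applied to $\widetilde u=\mathrm{Im}(v)$ delivers the advertised constant $\tfrac{2p}{\log 2}$.

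For the approximately subdiagonal case, exactly the same argument applies verbatim to each triple $(\A_\gamma,P_\gamma,\nu_\gamma)$, because only the multiplicativity of $P_\gamma$ on $\A_\gamma$, the invariance $\nu_\gamma\circ P_\gamma=\nu_\gamma$, and the normality/faithfulness of the objects in sight were used. This yields $\nu_\gamma(\widetilde u^p)\leq(\tfrac{2p}{\log 2})^p\nu_\gamma(u^p)$ for every $\gamma$ with $u\in\mathrm{Re}(\mathfrak{n}_\nu(\A_\gamma)\cap\mathfrak{n}_\nu(\A_\gamma^*)^*)$. Multiplying by $c_\gamma$ and taking the supremum over $\gamma$ --- and noting that for self-adjoint $w$ and even $p$ one has $\nu(w^p)=\nu((w^{p/2})^*w^{p/2})=\sup_\gamma c_\gamma\nu_\gamma(w^p)$ by Definition~\ref{def-appsubd}(3) --- promotes the estimate to $\nu(\widetilde u^p)\leq(\tfrac{2p}{\log 2})^p\nu(u^p)$, which is the desired $L^p$ statement.

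The main obstacle I anticipate is the operator-theoretic lift of the numerical inequality in the core step: expanding the binomials $(v+v^*)^p$ and $v^p\pm(v^*)^p$ into words in $v,v^*$ and reorganising them via cyclicity of $tr$ and multiplicativity of $\mathbb{E}$ --- all while ensuring that every intermediate product remains in the domains on which the $\mathfrak{i}^{(p/k)}$- and $\mathbb{E}_{p/k}$-machinery of Proposition~\ref{prop2.1} and Theorem~\ref{tech-thm} is defined --- is where the factor $\log 2$ enters the picture and where the bookkeeping is most delicate.
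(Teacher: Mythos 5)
There are two genuine gaps here. The first is in your ``operator-theoretic lift'': you cannot collapse the cross-terms of the noncommutative binomial expansion into traces of $\mathbb{E}(u)^k$ by combining cyclicity of $tr$ with multiplicativity of $\mathbb{E}$ on $\A$. Already for $p=4$ a word such as $vv^*vv^*$ involves factors from both $\A$ and $\A^*$; $\mathbb{E}$ is multiplicative only on $\A$, and cyclic permutation cannot separate the $v$'s from the $v^*$'s. The argument that actually works (and which the paper imports from Marsalli--West) never evaluates these mixed terms: one first shows $tr\bigl((x+iy)^p\bigr)=0$ for $x=\mathfrak{i}^{(p)}(u)$ and $y=\mathfrak{i}^{(p)}(\widetilde u)$ --- because $x+iy$ lies in $H^p_0(\A)$ and $tr$ annihilates $(H^p_0(\A))^p\subset H^1_0(\A)$, which is exactly where the approximate subdiagonality enters, via $\nu(ba)=\lim_\alpha c_\alpha\nu_\alpha(P_\alpha(ba))=0$ --- then expands binomially and \emph{bounds} each of the remaining $\binom{p}{k}$ words by $\|x\|_p^{p-k}\|y\|_p^{k}$ using the generalised H\"older inequality. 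The constant $p/\log 2$ comes from $2\|y\|_p^p\le(\|x\|_p+\|y\|_p)^p$, and the extra factor $2$ in the maximal case comes from the reduction $u\mapsto u-\mathbb{E}(u)$ supplied by the preceding lemma, not from taking imaginary parts of $v$ (that step would only reproduce $p/\log 2$).

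The second gap is your passage to the approximately subdiagonal case. The functionals $\nu_\gamma$ are general normal states, not traces, so the cyclic rearrangements underpinning the Marsalli--West computation are not available ``verbatim'' for each triple $(\A_\gamma,P_\gamma,\nu_\gamma)$; and, more fundamentally, in the non-tracial setting $\nu(w^p)$ is \emph{not} equal to $\|\mathfrak{i}^{(p)}(w)\|_p^p$ (formally $|h^{1/2p}wh^{1/2p}|^p\ne h^{1/2}w^ph^{1/2}$), so the estimate $\nu(\widetilde u^{\,p})\le C^p\nu(u^p)$ you arrive at is not the inequality asserted in the theorem. The paper sidesteps both problems by working from the outset inside the Haagerup $L^p$-spaces with the trace functional $tr$ on $L^1(\M)$ --- which \emph{is} tracial on products of $L^p$-elements, so H\"older and cyclicity apply --- and by invoking the $\gamma$-structure only once, to establish $tr(H^1_0(\A))=0$.
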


\begin{proof} With a few minor modifications the proof of \cite[Theorem 5.2]{MW} goes through. We shall merely indicate those modifications. In the case where $\A$ is not subdiagonal but only approximately subdiagonal we have that $\A=\A_0$. So in this case only the first part of the proof of \cite[Theorem 5.2]{MW} is necessary to prove the claim. In the subdiagonal case we will also need the last four lines of this proof. It is clear from Theorem \ref{tech-thm} that $H^p_0$ may be defined as the closure in the $L^p$-norm of $\mathfrak{j}^{(2p)}(\mathfrak{n}(\A_0^*)^*).\mathfrak{j}^{(2p)}(\mathfrak{n}(\A))$. A repeated application of the equalities in Theorem \ref{tech-thm} now shows that 
 $$(H_0^p)^p=[\mathfrak{j}^{(2p)}(\mathfrak{n}(\A_0^*)^*).\mathfrak{j}^{(2p)}(\mathfrak{n}(\A))]_p^p\subseteq 
 [\mathfrak{j}^{(2)}(\mathfrak{n}(\A_0^*)^*).\mathfrak{j}^{(2)}(\mathfrak{n}(\A))]_1.$$ 
Now recall that by Proposition \ref{refine-2}, 
 $$(\mathfrak{j}^{(2)}(\cup_\gamma (\mathfrak{n}(\A_{\gamma,0})\cap\mathfrak{n}(\A_{\gamma,0}^*)^*))).(\mathfrak{j}^{(2)}(\cup_\gamma (\mathfrak{n}(\A_{\gamma})\cap\mathfrak{n}(\A_{\gamma}^*)^*)))$$ 
 is dense in $[\mathfrak{j}^{(2)}(\mathfrak{n}(\A_0^*)^*).\mathfrak{j}^{(2)}(\mathfrak{n}(\A))]_1$. With $h$ denoting the density of the dual weight, we will for any 
$a \in \mathfrak{n}(\A_{\gamma})\cap\mathfrak{n}(\A_{\gamma}^*)^*$ and $b\in \mathfrak{n}(\A_{\gamma,0})\cap\mathfrak{n}(\A_{\gamma,0}^*)^*)$ (by \cite[Remark 7.41]{GLnotes}) have that 
$$tr(\mathfrak{j}^{(2)}(b^*)^*\mathfrak{j}^{(2)}(a)) =tr([b^*h^{1/2}]^*.[ah^{1/2})=\nu(ba) =$$ $$\lim_\alpha c_\alpha\nu_\alpha(ba)= \lim_\alpha c_\alpha\nu_\alpha(P_\alpha(ba))=0.$$Extending by continuity, we will therefore even in the approximately subdiagonal case have that $tr(H^1_0(\A))=0$ and hence that $tr((H_0^p(\A))^p)=0$. If now the roles of $\tau$, $x$ and $y$ in Marsalli and West's proof are here played by $tr$, $x=\mathfrak{i}^{(p)}(u)$ and $y=\mathfrak{i}^{(p)}(\widetilde{u})$, we then similarly have that $tr((x+iy)^p)=0$. In the expression near the middle of page 350 of \cite{MW} where they look at the behaviour of $|\tau(a)|$ for typical members $a$ of $Q(k,p)$, we look at $|tr(a)|$ where for a typical member of $Q(k,p)$ we have that the factors $x^{\alpha_i}$and  $y^{\beta_i}$ respectively belong to $L^{p/\alpha_i}$ and $L^{p/\beta_i}$. So one can apply the generalised version of Holder's inequality to see that 
\begin{eqnarray*}|tr(u)|\leq tr(|u|)&=& tr(|x^{\alpha_1}y^{\beta_1}\dots x^{\alpha_n}y^{\beta_n}|)\\
&\leq& \|x^{\alpha_1}\|_{p/\alpha_1}\|y^{\beta_1}\|_{p/\beta_1}\dots \|x^{\alpha_n}\|_{p/\alpha_n}\|y^{\beta_n}\|_{p/\beta_n}\\
&\leq& \|x\|_p^{\alpha_1}\|y\|_{p}^{\beta_1}\dots \|x\|_{p}^{\alpha_n}\|y\|_{p}^{\beta_n}= \|x\|_p^k\|_p^{p-k}.
\end{eqnarray*}
The rest of the proof now works as in their paper. The final four lines of their proof are only relevant for the case where $\A\neq \A_0$ which in the present context is the maximal subdiagonal case. Hence for that part we do have access to the preceding lemma, as is required by the proof.
\end{proof}

\begin{proposition} For any approximately subdiagonal subalgebra $\A$ of a von Neumann algebra $\M$ we have that $\mathrm{Re}(H^2(\A))= L^2(\M)_{sa}$. If $\A$ is even maximal subdiagonal this equality holds for all $1\leq p<\infty$
\end{proposition}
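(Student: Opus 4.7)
My plan is to construct a bounded right inverse to $\mathrm{Re}:H^p(\A)\to L^p(\M)_{sa}$ built from the algebraic Riesz transform $u\mapsto u+i\widetilde{u}$ and the $L^p$-bound of Theorem \ref{mw-5.2}. The inclusion $\mathrm{Re}(H^p(\A))\subseteq L^p(\M)_{sa}$ is immediate; all the work is in the reverse inclusion.

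\textbf{Stage 1: $p=2$, approximately subdiagonal.} I would first combine property (2) of Definition \ref{def-appsubd} with the Haagerup--Terp identification $H_\nu\cong L^2(\M)$ via $\mathfrak{j}^{(2)}$ and with Theorem \ref{tech-thm} at $q=2$ to conclude that $H^2(\A)+H^2(\A)^*$ is norm-dense in $L^2(\M)$; Proposition \ref{refine-2} refines the dense subspace to $\mathfrak{j}^{(2)}(\mathcal{S})+\mathfrak{j}^{(2)}(\mathcal{S})^*$ where $\mathcal{S}=\bigcup_\gamma(\mathfrak{n}_\nu(\A_\gamma)\cap\mathfrak{n}_\nu(\A_\gamma^*)^*)$. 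For each self-adjoint $u\in\mathrm{Re}(\mathcal{S})$, the algebraic Riesz transform supplies a unique $\widetilde{u}$ with $u+i\widetilde{u}\in\mathcal{S}$, and Theorem \ref{mw-5.2} at $p=2$ yields $\|\mathfrak{i}^{(2)}(\widetilde{u})\|_2\leq\tfrac{4}{\log 2}\|\mathfrak{i}^{(2)}(u)\|_2$. Since $\mathfrak{i}^{(2)}$ is positivity- and hence self-adjointness-preserving on $\mathfrak{m}_\nu$, the prescription $R_0(\mathfrak{i}^{(2)}(u))=\mathfrak{i}^{(2)}(u)+i\mathfrak{i}^{(2)}(\widetilde{u})$ defines a densely-defined bounded operator with values in $H^2(\A)$, whose continuous extension $R:L^2(\M)_{sa}\to H^2(\A)$ satisfies $\mathrm{Re}(R(x))=x$ for every $x\in L^2(\M)_{sa}$, giving the equality $L^2(\M)_{sa}=\mathrm{Re}(H^2(\A))$.

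\textbf{Stage 2: maximal subdiagonal, $1\leq p<\infty$.} With the expectation $\mathbb{E}$ and its $L^p$-extensions in hand, for $1<p<\infty$ I would first upgrade Theorem \ref{mw-5.2} (stated only at even positive integer $p$) to boundedness on every $L^p$ by standard tools: duality relative to the trace pairing passes the bound to conjugate indices, and Riesz--Thorin interpolation between consecutive even integers on the Haagerup $L^p$ scale fills in the rest. The Stage~1 architecture then replays at each such $p$, producing a bounded right inverse $R_p:L^p(\M)_{sa}\to H^p(\A)$. The endpoint $p=1$ requires a separate treatment since the Hilbert transform is unbounded on $L^1$; there I would instead obtain $L^1(\M)=H^1(\A)+H^1(\A)^*$ via an F.\&M.\ Riesz-type decomposition available in the maximal subdiagonal case and conclude via the identity $\mathrm{Re}(f+g^*)=\mathrm{Re}(f+g)$ that $\mathrm{Re}(H^1(\A))=L^1(\M)_{sa}$.

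\textbf{Main obstacle.} The delicate technical bridge is reconciling the two embeddings $\mathfrak{j}^{(q)}$ and $\mathfrak{i}^{(p)}$ on a self-adjointness-preserving dense subspace: Proposition \ref{refine-2} naturally lives on the $\mathfrak{j}^{(q)}$ side, whereas Theorem \ref{mw-5.2} is phrased via $\mathfrak{i}^{(p)}$ on $\mathfrak{m}_\nu$. Verifying that $\mathrm{Re}(\mathcal{S})$ (possibly after passing to a further dense subalgebra) sits inside $\mathfrak{m}_\nu$, is stable under the algebraic Riesz transform, and has norm-dense image under $\mathfrak{i}^{(2)}$ in $L^2(\M)_{sa}$ is the bookkeeping that makes the right-inverse construction go through. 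The $p=1$ endpoint in Stage~2 is the other real point of substance, forcing a structural F.\&M.\ Riesz-type argument rather than an operator-theoretic one.
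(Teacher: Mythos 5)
Your route is genuinely different from the paper's. The paper proves this proposition by a soft density argument that never touches the Hilbert transform: since $\mathrm{Re}(a+b^*)=\mathrm{Re}(a+b)$, the real parts of $H^p(\A)$ sweep out the real parts of $H^p(\A)+H^p(\A)^*$, and the latter space is dense in $L^p(\M)$ --- for $p=2$ in the approximately subdiagonal case this is exactly axiom (2) of Definition \ref{def-appsubd} read through the standard form and Theorem \ref{tech-thm}, while for maximal subdiagonal $\A$ and general $p$ it follows from the $\sigma$-weak density of $\mathfrak{n}(\A^*)^*\mathfrak{n}(\A)+\mathfrak{n}(\A)^*\mathfrak{n}(\A^*)$ in $\M$ together with the sandwiching net $(f_\lambda)$ of Proposition \ref{7:P Terp2}, Lemma \ref{GL2-2.4+5}, and a weak-limit/convexity argument. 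That argument works uniformly for all $1\le p<\infty$, including $p=1$. Your construction of a bounded right inverse $R$ out of $u\mapsto u+i\widetilde u$ and Theorem \ref{mw-5.2} would, where it works, deliver the literal set equality rather than the density of $\mathrm{Re}(H^p(\A))$, which is more than the paper's displayed chain of equalities actually justifies; but it is also more than the subsequent theorem on the Hilbert transform needs.

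Two concrete problems. First, circularity at $1<p<2$ (and at non-even $p\ge 2$): to pass the bound of Theorem \ref{mw-5.2} to these exponents you invoke duality in the trace pairing, but the identity $tr(\widetilde{a}b)=-tr(a\widetilde{b})$ only identifies $\sim$ on $L^p$ with the adjoint of $\sim$ on $L^q$ after one knows that $\mathrm{Re}(\mathfrak{i}^{(p)}(\mathfrak{n}(\A^*)^*\mathfrak{n}(\A)))$ is dense in $L^p(\M)_{sa}$ --- which is precisely the content of the proposition you are proving, and is exactly how the paper's next theorem consumes it. In the paper's architecture this proposition supplies the density input that makes the $L^p$ extension of $\sim$ and $\mathfrak{h}$ possible; your proof consumes the extended transform to produce the density, so you must establish the density by the soft argument first anyway, after which the right-inverse construction is redundant for the stated claim. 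Second, the $p=1$ endpoint: the decomposition $L^1(\M)=H^1(\A)+H^1(\A)^*$ you propose is already false for $\M=L^\infty(\mathbb{T})$, $\A=H^\infty(\mathbb{T})$ --- the sum $H^1+\overline{H^1}$ is a proper dense subspace of $L^1(\mathbb{T})$, precisely because the Riesz projection is unbounded on $L^1$ (equivalently, there exist real $u\in L^1$ with $\widetilde u\notin L^1$, so $u\notin\mathrm{Re}(H^1)$). What is true, and what the paper's argument proves, is that $\mathrm{Re}(H^1(\A))$ is dense in $L^1(\M)_{sa}$; any reading of the statement as a literal set equality at $p=1$ meets the same classical obstruction, so the density formulation is the one you should target there.
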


\begin{proof} The inclusion $\mathrm{Re}(H^2(\A))\subseteq L^2(\M)_{sa}$ is clear. For the converse we remind the reader that $\mathfrak{i}^{(2)}(\mathfrak{m}(\M))$ is dense in $L^2(\M)$ and $\mathfrak{i}^{(2)}(\mathfrak{n}(\A^*)^*\mathfrak{n}(\A))$ by definition dense in $H^2(\A)$. For the case where $\A$ is approximately subdiagonal, recall that by definition $\mathfrak{n}(\A) +\mathfrak{n}(\A^*)$ embeds densely into the GNS Hilbert space. By the Haagerup-Terp standard form this is equivalent to the claim that $[\mathfrak{j}^{(2)}(\mathfrak{n}(\A) +\mathfrak{n}(\A^*))]_2=L^2(\M)$. It now follows from Theorem \ref{tech-thm} that $[\mathfrak{i}^{(2)}(\mathfrak{n}(\A) +\mathfrak{n}(\A^*))]_2=L^2(\M)$, and hence that $\mathrm{Re}(H^2(\A)) = \mathrm{Re}[\mathfrak{j}^{(2)}(\mathfrak{n}(\A) +\mathfrak{n}(\A^*))]_2=  L^2(\M)_{sa}$ for any will for any $b\in \mathfrak{m}(\M)$.

Now suppose that $\A$ is maximal subdiagonal. We may then select a net $(f_\lambda)\subset \mathcal{D}$ satisfying the criteria of Proposition \ref{7:P Terp2}. The $\sigma$-weak density of $\mathfrak{n}(\A^*)^*\mathfrak{n}(\A) + \mathfrak{n}(\A)^*\mathfrak{n}(\A^*)$ in $\M$ ensures that for any $c\in \mathfrak{m}(\M)$ we may select nets $(a_\alpha), (b_\alpha)\subset \mathfrak{n}(\A^*)^*\mathfrak{n}(\A)$ such that $(a_\alpha+b_\alpha^*)$ converges $\sigma$-weakly to $c$. We clearly have that  $(f_\gamma(a_\alpha+b_\alpha^*)f_\lambda)\subset \mathfrak{n}(\A^*)^*\mathfrak{n}(\A) + \mathfrak{n}(\A)^*\mathfrak{n}(\A^*)$. For fixed $\gamma$ and $\lambda$ we have that $$\mathfrak{i}^{(p)}(f_\gamma(a_\alpha+b_\alpha^*)f_\lambda)=(h^{1/2p}f_\gamma)(a_\alpha+b_\alpha^*)[f_\lambda h^{1/2p}]$$converges $L^p$-weakly to $$(h^{1/2p}f_\gamma)c[f_\lambda h^{1/2p}]= \mathfrak{i}^{(p)}(f_\gamma cf_\lambda)$$as $\alpha$ increases. By Lemma \ref{GL2-2.4+5} $$(h^{1/2p}f_\gamma)c[f_\lambda h^{1/2p}]= \sigma_{-i/2p}(f_\gamma)\mathfrak{i}^{(p)}(c)\sigma_{i/2p}(f_\lambda).$$For each fixed $\gamma$ the net $(\sigma_{-i/2p}(f_\gamma)\mathfrak{i}^{(p)}(c)\sigma_{i/2p}(f_\lambda))_{\lambda\in \Lambda}$ converges $L^p$-weakly to $\sigma_{-i/2p}(f_\gamma)\mathfrak{i}^{(p)}(c)$ as $\lambda$ increases, with $(\sigma_{-i/2p}(f_\gamma)\mathfrak{i}^{(p)}(c))$ converging $L^p$-weakly to $\mathfrak{i}^{(p)}(c)$. Thus $\mathfrak{i}^{(p)}(c)$ belongs to the weak closure of $\mathfrak{i}^{(p)}(\mathfrak{n}(\A^*)^*\mathfrak{n}(\A) + \mathfrak{n}(\A)^*\mathfrak{n}(\A^*))$ which by convexity agrees with the norm closure. Since 
 $$\mathrm{Re}(\mathfrak{i}^{(p)}(\mathfrak{n}(\A^*)^*\mathfrak{n}(\A) + \mathfrak{n}(\A)^*\mathfrak{n}(\A^*)))= \mathrm{Re}(\mathfrak{i}^{(p)}(\mathfrak{n}(\A^*)^*\mathfrak{n}(\A))),$$ 
 we have that $$\mathrm{Re}(\mathfrak{i}^{(p)}(c))\in \mathrm{Re}[\mathfrak{i}^{(p)}(\mathfrak{n}(\A^*)^*\mathfrak{n}(\A)]_p=\mathrm{Re}(H^p(\A)).$$Since $\mathfrak{i}^{(p)}(\mathfrak{m}(\M))$ is dense in $L^p(\M)$, we are done.
\end{proof}

Armed with the above result we may now use Theorem \ref{mw-5.2} to prove the following result. 

\begin{theorem}[Compare \cite{MW}, Theorem 5.4] If $\A$ is an approximately subdiagonal subalgebra of a von Neumann algebra $\M$, the maps 
$\sim:\mathrm{Re}(\A)\to \mathrm{Re}(\A)$ and $\mathfrak{h}:\mathrm{Re}(\A)\to \A$ induce bounded real linear maps $\sim: L^2(\M)_{sa}\to L^2(\M)_{sa}$ and 
$\mathfrak{h}:L^2(\M)_{sa}\to L^2(\M)$. If $\A$ is even maximal subdiagonal we will for any $1<p<\infty$ obtain bounded real linear maps 
$\sim: L^p(\M)_{sa}\to L^p(\M)_{sa}$ and $\mathfrak{h}:L^p(\M)_{sa}\to L^p(\M)$. The complexification of the map $\sim$ given by 
$\overline{\sim}:L^p(\M)\to L^p(\M): a\to \widetilde{\mathrm{Re}(a)} +i \widetilde{\mathrm{Im}(a)}$ is a bounded linear map with norm of order $p$ if $p\geq 2$ 
and norm of order $\frac{1}{p-1}$ if $1<p< 2$.
\end{theorem}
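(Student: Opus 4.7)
My plan is to leverage Theorem \ref{mw-5.2} and the density results from Section \ref{Hpsection} to first handle the $L^2$ case in the approximately subdiagonal setting, and then in the maximal subdiagonal case obtain the $L^p$ bound by passing through even integers via Theorem \ref{mw-5.2}, complex interpolation for $p\geq 2$, and a duality argument for $1<p<2$. The Hilbert transform $\mathfrak{h}$ is then bounded because $\mathfrak{h}(u)=u+i\widetilde{u}$ differs from $\sim$ only by an isometry.

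For the $L^2$ case I would first note that for any $\gamma$ and any $u\in \mathrm{Re}(\mathfrak{n}_\nu(\A_\gamma)\cap\mathfrak{n}_\nu(\A_\gamma^*)^*)$, Theorem \ref{mw-5.2} with $p=2$ (an even positive integer) gives $\|\mathfrak{i}^{(2)}(\widetilde{u})\|_2\leq \tfrac{4}{\log 2}\|\mathfrak{i}^{(2)}(u)\|_2$. By Theorem \ref{tech-thm} together with Proposition \ref{refine-2}, the images $\mathfrak{i}^{(2)}(\cup_\gamma\mathrm{Re}(\mathfrak{n}_\nu(\A_\gamma)\cap\mathfrak{n}_\nu(\A_\gamma^*)^*))$ are norm-dense in $\mathrm{Re}(H^2(\A))$, and by the preceding proposition $\mathrm{Re}(H^2(\A))=L^2(\M)_{sa}$. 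Hence $\sim$ extends uniquely by continuity to a bounded real-linear endomorphism of $L^2(\M)_{sa}$, and $\mathfrak{h}(u)=u+i\widetilde{u}$ extends to a bounded real-linear map into $L^2(\M)$ whose norm is controlled by $\sqrt{1+(4/\log 2)^2}$.

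For the maximal subdiagonal case and $p\geq 2$, I would apply Theorem \ref{mw-5.2} to each even integer $2k$ to obtain the bound $\tfrac{4k}{\log 2}$ on the dense subspace of $L^{2k}(\M)_{sa}$ formed by $\mathfrak{i}^{(2k)}(\mathrm{Re}(\mathfrak{n}_\nu(\A)\cap\mathfrak{n}_\nu(\A^*)^*))$; the preceding proposition ensures the appropriate density in $L^{2k}(\M)_{sa}$. I then complexify to obtain $\overline{\sim}(a)=\widetilde{\mathrm{Re}(a)}+i\widetilde{\mathrm{Im}(a)}$, which is complex linear by construction and has the same order of norm growth on $L^{2k}(\M)$. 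Complex interpolation (Riesz--Thorin) applied to the compatible family of complex-linear operators $\overline{\sim}$ on $(L^{2k}(\M),L^{2(k+1)}(\M))$, as formulated for Haagerup $L^p$-spaces in the literature cited in Section 2, yields $\overline{\sim}$ as a bounded operator on $L^p(\M)$ for all $p\geq 2$ with norm of order $p$; restricting to self-adjoint elements gives $\sim$ on $L^p(\M)_{sa}$, and adding the identity gives $\mathfrak{h}$.

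For the range $1<p<2$, I would argue by duality on the complexified transform. The key identity to verify is that $\mathrm{tr}(\overline{\sim}(f)\,g)=-\mathrm{tr}(f\,\overline{\sim}(g))$ for $f\in\mathfrak{i}^{(p)}(\mathfrak{n}_\nu(\A)\cap\mathfrak{n}_\nu(\A^*)^*)$ and $g\in\mathfrak{i}^{(q)}(\mathfrak{n}_\nu(\A)\cap\mathfrak{n}_\nu(\A^*)^*)$ (with $\tfrac1p+\tfrac1q=1$); this reduces, via the relation $\overline{\sim}(u)=-i\mathfrak{h}(u)+i\mathbb{E}(u)$ for self-adjoint $u$ and bilinear expansion, to a statement that the trace of products of elements of $\mathcal{H}^p(\A_0)$ and $\mathcal{H}^q(\A)$ (and conjugate pairings) vanishes, which is precisely the content of the orthogonality computations underlying Corollary \ref{gen-perp} and the computation $\mathrm{tr}(H^1_0(\A))=0$ used in the proof of Theorem \ref{mw-5.2}. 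Given this identity, the norm of $\overline{\sim}$ on $L^p(\M)$ equals its norm on $L^q(\M)$, which by the previous step is of order $q=\tfrac{p}{p-1}$, giving the claimed bound of order $\tfrac{1}{p-1}$. The main obstacle I expect is precisely verifying this duality identity in the non-tracial Haagerup $L^p$ setting — ensuring that the trace pairings are unambiguous and that the required absolute convergence holds to justify the manipulations; this is where Proposition \ref{exp-props} and Proposition \ref{genformula-exp} together with the density statements of Section \ref{Hpsection} will be indispensable.
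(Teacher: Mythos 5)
Your proposal follows essentially the same route as the paper: Theorem \ref{mw-5.2} together with the density of $\mathrm{Re}(H^p(\A))$ in $L^p(\M)_{sa}$ for even integers $p$, Riesz--Thorin interpolation on the Haagerup $L^p$ scale for $p\geq 2$, and for $1<p<2$ the duality identity $tr(\widetilde{a}b)=-tr(a\widetilde{b})$, which the paper obtains by evaluating $tr(\E_p(\mathfrak{h}(u))\E_q(\mathfrak{h}(v)))$ in two ways via Proposition \ref{prop2.1} and taking imaginary parts --- the same orthogonality content ($tr$ vanishing on $\mathcal{H}^1_0$) that you invoke. One small correction: the relation you use should read $\widetilde{u}=-i\mathfrak{h}(u)+iu$ (equivalently, $\mathfrak{h}(u)-\E(u)\in H^p_0(\A)$ since $\E(\widetilde{u})=0$), not $\widetilde{u}=-i\mathfrak{h}(u)+i\E(u)$; with that fixed your bilinear expansion goes through exactly as in the paper.
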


\begin{proof} We leave claim regarding approximately subdiagonal subalgebras as an exercise. For the maximal subdiagonal case we know from Theorem \ref{mw-5.2} 
and the preceding proposition that $u\to\widetilde{u}$ will for any positive even integer $p$ induce a bounded real linear map on $L^p(\M)_{sa}$ with norm majorised by $\frac{2p}{\log 2}$. Given any $a\in L^p(\M)$ we then have that $$\|\overline{\widetilde{a}}\|_p\leq \|\widetilde{\mathrm{Re}(a)}\|_p +\|\widetilde{\mathrm{Im}(a)}\|_p\leq \frac{2p}{\log 2}(\|\mathrm{Re}(a)\|_p +\|\mathrm{Im}(a)\|_p)\leq \frac{4p}{\log 2}\|a\|_p.$$Thus 
$a\to \overline{\widetilde{a}}$ is a bounded operator with norm of order $p$. Haagerup $L^p$ spaces are known to be a complex interpolation scale and hence the 
fact that the same claim also holds for all $p\geq 2$ is a consequence of Riesz-Thorin interpolation. The fact that for $1<p<2$ the action 
$a\to \overline{\widetilde{a}}$ is a bounded operator with norm of order $\frac{1}{p-1}$ follows once we show that up to a sign chage this operator is just the 
adjoint of the operator $\overline{\sim}:L^q(\M)\to L^q(\M)$ where $1=\frac{1}{p}+\frac{1}{q}$. To see this let conjugate indices $p$ and $q$ be given with 
$1<p<2$. For any $x, y\in \mathrm{Re}(\mathfrak{n}(\A^*)^*\mathfrak{n}(\A))$ we then have that 
\begin{eqnarray*}
tr(\mathbb{E}(\mathfrak{i}^{(p)}(u))\mathbb{E}(\mathfrak{i}^{(q)}(v))) &=& tr(\mathbb{E}(\mathfrak{i}^{(p)}(u+i\widetilde{u}))\mathbb{E}(\mathfrak{i}^{(q)}(v+i\widetilde{v})))\\
&=& tr(\mathbb{E}(\mathfrak{i}^{(p)}(u+i\widetilde{u})\mathfrak{i}^{(q)}(v+i\widetilde{v})))\\ 
&=& tr(\mathfrak{i}^{(p)}(u+i\widetilde{u})\mathfrak{i}^{(q)}(v+i\widetilde{v}))\\ 
&=& tr(\mathfrak{i}^{(p)}(u)\mathfrak{i}^{(q)}(v)) - tr(\mathfrak{i}^{(p)}(\widetilde{u})\mathfrak{i}^{(q)}(\widetilde{v}))\\
&&\qquad + i(tr(\mathfrak{i}^{(p)}(\widetilde{u})\mathfrak{i}^{(q)}(v)) + tr(\mathfrak{i}^{(p)}(u)\mathfrak{i}^{(q)}(\widetilde{v})))
\end{eqnarray*}
Since each of $\mathfrak{i}^{(p)}(u)$, $\mathfrak{i}^{(p)}(\widetilde{u})$, $\mathfrak{i}^{(q)}(v)$ and $\mathfrak{i}^{(q)}(\widetilde{v})$ are self-adjoint, the expressions $tr(\mathbb{E}(\mathfrak{i}^{(p)}(u))\mathbb{E}(\mathfrak{i}^{(q)}(v)))$ and $tr(\mathfrak{i}^{(p)}(u)\mathfrak{i}^{(q)}(v)) - tr(\mathfrak{i}^{(p)}(\widetilde{u})\mathfrak{i}^{(q)}(\widetilde{v}))$ are real-valued. Hence we must have that $tr(\mathfrak{i}^{(p)}(\widetilde{u})\mathfrak{i}^{(q)}(v)) = -tr(\mathfrak{i}^{(p)}(u)\mathfrak{i}^{(q)}(\widetilde{v}))$. The density of \newline $\mathrm{Re}(\mathfrak{i}^{(p)}(\mathfrak{n}(\A^*)^*\mathfrak{n}(\A)))$ and $\mathrm{Re}(\mathfrak{i}^{(q)}(\mathfrak{n}(\A^*)^*\mathfrak{n}(\A)))$ in $L^p(\M)_{sa}$ and $L^q(\M)_{sa}$ respectively, now ensures that $tr(\widetilde{a}b) = -tr(a\widetilde{b})$ for each $a \in L^p(\M)_{sa}$ and $b\in L^p(\M)_{sa}$. Thus the adjoint of the real linear $\sim$ on $L^q(\M)$ is the real linear map ${-}{\sim}$ on $L^p(\M)$ which must have norm of order $\frac{1}{p-1}$ (since the pre-adjoint has norm of order $q$). The complexification $\overline{\sim}:L^p(\M)\to L^p(\M)$ therefore also has norm of order $\frac{1}{p-1}$.
\end{proof}

\begin{remark} A slight modification of the argument at the start of the proof of Theorem \ref{mw-5.2} shows that even for approximately subdiagonal subalgebras we have that $H_0^2(\A^*)\perp H^2(\A)$ with respect to the inner product $\langle a, b\rangle =tr (b^*a)$ defined on $L^2(\M)$. When this is combined with the $L^2$-density noted in the previous theorem, it is clear that $L^2(\M)= H^2(\A)\oplus H_0^2(\A^*)$. As we shall shortly see we can do much better in the case of maximal subdiagonal subalgebras.
\end{remark}

We proceed to generalise \cite[Theorem 6.2 \& Corollary 6.3]{MW}.

\begin{lemma} Let $\A$ be a maximal subdiagonal subalgebra of $\M$. For any $x\in H^p(\A)$ we have that 
$$\mathrm{Re}(x)= \mathbb{E}(\mathrm{Re}(x))- \widetilde{\mathrm{Re}(x)} \mbox{ and } \mathrm{Re}(x)= \mathbb{E}(\mathrm{Im}(x))+ \widetilde{\mathrm{Im}(x)}.$$
Moreover $x= \mathbb{E}(x)+i\overline{\widetilde{x}}$.
\end{lemma}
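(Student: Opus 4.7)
The plan is first to verify the identities at the algebraic level for elements of $\A$ lying in a dense subspace of $H^p(\A)$, and then to extend by continuity to all of $H^p(\A)$. The third identity will follow at once by adding the first two.

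For the algebraic step, fix $y \in \mathrm{span}(\mathfrak{n}(\A^*)^* \cdot \mathfrak{n}(\A)) \subseteq \A$ and set $y_0 = y - \mathbb{E}(y) \in \A_0$. The real and imaginary parts of $y_0$ are $u := \mathrm{Re}(y) - \mathbb{E}(\mathrm{Re}(y))$ and $v := \mathrm{Im}(y) - \mathbb{E}(\mathrm{Im}(y))$, both annihilated by $\mathbb{E}$. Since $u + iv = y_0 \in \A$ with $\mathbb{E}(v) = 0$, the uniqueness clause in the Riesz-Herglotz transform developed at the start of this section forces $\widetilde{u} = v$. Invoking the preceding lemma, $\widetilde{\mathrm{Re}(y)} = \widetilde{u} = \mathrm{Im}(y) - \mathbb{E}(\mathrm{Im}(y))$, whence $\mathrm{Im}(y) = \mathbb{E}(\mathrm{Im}(y)) + \widetilde{\mathrm{Re}(y)}$. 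Applying the same reasoning to $iy \in \A$ (using $\mathrm{Re}(iy) = -\mathrm{Im}(y)$ and $\mathrm{Im}(iy) = \mathrm{Re}(y)$) yields $\mathrm{Re}(y) = \mathbb{E}(\mathrm{Re}(y)) - \widetilde{\mathrm{Im}(y)}$.

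To pass to $H^p(\A)$, push both identities through $\mathfrak{i}^{(p)}$. Linearity and $\ast$-preservation of $\mathfrak{i}^{(p)}$ give $\mathfrak{i}^{(p)}(\mathrm{Re}(y)) = \mathrm{Re}(\mathfrak{i}^{(p)}(y))$ and similarly for $\mathrm{Im}$. Remark \ref{10:R cond-exp} and the discussion preceding Proposition \ref{exp-props} supply $\mathbb{E}_{p} \circ \mathfrak{i}^{(p)} = \mathfrak{i}^{(p)} \circ \mathbb{E}$, and the construction of the $L^p$-Hilbert transform in the preceding theorem (underwritten by Theorem \ref{mw-5.2}) ensures that $\mathfrak{i}^{(p)}(\widetilde{\mathrm{Re}(y)})$ coincides with $\widetilde{\mathrm{Re}(\mathfrak{i}^{(p)}(y))}$ computed at the $L^p$-level. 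By Theorem \ref{tech-thm} the subspace $\mathfrak{i}^{(p)}(\mathrm{span}(\mathfrak{n}(\A^*)^* \cdot \mathfrak{n}(\A)))$ is norm dense in $H^p(\A)$; the boundedness of $\mathrm{Re}$, $\mathrm{Im}$, $\mathbb{E}_{p}$ and $\sim$ on $L^p(\M)$ then lets us pass to the limit and conclude that both identities hold for every $x \in H^p(\A)$. Unpacking $i\overline{\widetilde{x}} = i\widetilde{\mathrm{Re}(x)} - \widetilde{\mathrm{Im}(x)}$ and $\mathbb{E}(x) = \mathbb{E}(\mathrm{Re}(x)) + i\mathbb{E}(\mathrm{Im}(x))$ and adding, the first two identities force $\mathbb{E}(x) + i\overline{\widetilde{x}} = \mathrm{Re}(x) + i\mathrm{Im}(x) = x$.

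The main obstacle will be justifying the intertwining relation $\mathfrak{i}^{(p)} \circ {\sim} = {\sim} \circ \mathfrak{i}^{(p)}$ on real elements of $\A$. This demands a careful reading of how the $L^p$-Hilbert transform was constructed in the preceding theorem; namely, that it is the unique bounded extension of the map $\mathfrak{i}^{(p)}(u) \mapsto \mathfrak{i}^{(p)}(\widetilde{u})$ defined on the dense subspace $\mathfrak{i}^{(p)}(\mathrm{Re}(\n_\nu(\A) \cap \n_\nu(\A^*)^*))$ of $L^p(\M)_{sa}$, so that the intertwining is built into the definition once one verifies that our chosen dense family lies in this domain.
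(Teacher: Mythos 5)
Your argument is correct and is precisely the route the paper intends: the paper omits the proof with a pointer to \cite[Lemma 5.1(ii)(iii)]{MW}, whose argument is exactly your combination of the uniqueness clause of the Riesz--Herglotz transform with the preceding lemma (applied to $y_0=y-\mathbb{E}(y)$ and then to $iy$), followed by transfer to $L^p$ through $\mathfrak{i}^{(p)}$, the relation $\mathbb{E}_p\circ\mathfrak{i}^{(p)}=\mathfrak{i}^{(p)}\circ\mathbb{E}$, and the intertwining $\mathfrak{i}^{(p)}(\widetilde{u})=\widetilde{\mathfrak{i}^{(p)}(u)}$, which is indeed built into the construction of the induced $L^p$ Hilbert transform on the dense subspace you use. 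Note only that the identities as printed in the statement contain typographical slips (both left-hand sides read $\mathrm{Re}(x)$ and the tildes sit on the wrong parts); the versions you actually prove, $\mathrm{Im}(x)=\mathbb{E}(\mathrm{Im}(x))+\widetilde{\mathrm{Re}(x)}$ and $\mathrm{Re}(x)=\mathbb{E}(\mathrm{Re}(x))-\widetilde{\mathrm{Im}(x)}$, are the ones consistent with the closing formula $x=\mathbb{E}(x)+i\overline{\widetilde{x}}$ and with \cite{MW}.
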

 
The proof follows similar lines as \cite[Lemma 5.1(ii)(iii)]{MW} and is omitted. 

\begin{theorem}\label{complementHp} Let $\A$ be a maximal subdiagonal subalgebra of $\M$ and suppose that $1<p<\infty$. Then $L^p(\M)=H^p_0(\A)\oplus L^p(\D)\oplus H^p_0(\A)^*$ with the corresponding projections 
respectively given by $x\mapsto \frac{1}{2}(x+i\overline{\widetilde{x}}-\mathbb{E}x)$, $x\mapsto \mathbb{E}x$ and $x\mapsto \frac{1}{2}(x-i\overline{\widetilde{x}}-\mathbb{E}x)$. The first and last projection both 
have norm of the same order as $\sim$.
\end{theorem}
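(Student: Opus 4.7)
The plan is to verify directly that the three prescribed maps sum to the identity, land in the claimed subspaces, are idempotent, and annihilate each other, which will simultaneously establish the direct sum decomposition and identify the projections. Denote the three maps by $P_1, P_2, P_3$. The identity $P_1+P_2+P_3=\mathrm{Id}$ is visible from the formulae, and $P_2=\mathbb{E}_p$ is a projection onto $L^p(\D)$ by the results of Section \ref{S3} (specifically Remark \ref{10:R cond-exp}).

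The first substantive step is to upgrade the Hilbert transform to a bounded map $\overline{\sim}:L^p(\M)\to L^p(\M)$ whose action on the real/imaginary parts produces elements of $H^p(\A)$ after applying the Riesz transform. On the dense subspace $\mathrm{Re}[\mathfrak{i}^{(p)}(\mathfrak{n}(\A^*)^*\mathfrak{n}(\A))]$ of $L^p(\M)_{sa}$ one has $u+i\widetilde{u}=\mathfrak{h}(u)\in H^p(\A)$ by Proposition \ref{analytic-n(A)} and Theorem \ref{tech-thm}. Since $\mathfrak{h}$ is continuous on $L^p(\M)_{sa}$ by the preceding theorem and $H^p(\A)$ is closed, we obtain $\mathfrak{h}(L^p(\M)_{sa})\subset H^p(\A)$. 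Writing a general $x\in L^p(\M)$ as $x=u+iv$ with $u,v$ self-adjoint, the identity $x+i\overline{\widetilde{x}}=\mathfrak{h}(u)+i\mathfrak{h}(v)$ places $x+i\overline{\widetilde{x}}$ in $H^p(\A)$, and then $P_1(x)=\tfrac12[(x+i\overline{\widetilde{x}})-\mathbb{E}(x+i\overline{\widetilde{x}})]$ (using that $\mathbb{E}\overline{\widetilde{x}}=0$, which holds on the dense subspace where $\mathbb{E}(\widetilde{u})=0$ by construction of $\widetilde{u}$, and extends by continuity). So $P_1$ lands in $H^p_0(\A)$. Taking adjoints and using $(\overline{\widetilde{x^*}})^*=\overline{\widetilde{x}}$, the same argument places $P_3$ into $H^p_0(\A)^*$.

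Next I verify the mutual annihilation relations. For $x\in L^p(\D)$, restrict the preceding ``dense subspace of real parts'' argument to $\mathrm{Re}(\mathfrak{n}(\D)\cap\mathfrak{n}(\D)^*)$: there $\mathfrak{h}(u)=u$ (uniqueness in the Hilbert transform construction forces $\widetilde{u}=0$ when $u\in\D_{sa}$), whence by continuity $\overline{\widetilde{x}}=0$ for every $x\in L^p(\D)$. Consequently $P_1=P_3=0$ on $L^p(\D)$. For $x\in H^p_0(\A)$, the lemma preceding the theorem gives $x=\mathbb{E}(x)+i\overline{\widetilde{x}}=i\overline{\widetilde{x}}$, so $P_1(x)=\tfrac12(x+x-0)=x$ and $P_3(x)=0$. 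Adjoining shows $P_3=\mathrm{Id}$ and $P_1=0$ on $H^p_0(\A)^*$. Since $\mathbb{E}$ annihilates both $H^p_0(\A)$ and $H^p_0(\A)^*$, combining these facts yields $P_iP_j=\delta_{ij}P_i$; in particular the three projections are idempotent with the claimed ranges, and their sum being the identity then gives the direct decomposition $L^p(\M)=H^p_0(\A)\oplus L^p(\D)\oplus H^p_0(\A)^*$. The norm bound $\|P_1\|,\|P_3\|\le\tfrac12(2+\|\overline{\sim}\|)$ is immediate from the formulae, and the contractivity of $\mathbb{E}_p$.

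The main obstacle I anticipate is the rigorous verification that the formal identities (\,$\mathbb{E}\overline{\widetilde{x}}=0$ for general $x$, $\overline{\widetilde{x}}=0$ for $x\in L^p(\D)$, and $x=i\overline{\widetilde{x}}$ for $x\in H^p_0(\A)$\,) pass from a convenient dense subspace to the whole $L^p$-space: this is where one needs jointly the $L^p$-continuity of $\overline{\sim}$ and $\mathbb{E}_p$ from the preceding theorem and Proposition \ref{exp-props}, the density statement of Theorem \ref{tech-thm} identifying the various concrete representatives of $H^p_0(\A)$, and the closure of these subspaces. Once these density/continuity transitions are in place, the rest of the argument is bookkeeping on the three-term formulae.
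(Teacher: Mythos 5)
Your proposal is correct and follows essentially the same route as the paper, which at this point simply states that the proof of \cite[Theorem 6.2]{MW} goes through virtually unaltered: that Marsalli--West argument is precisely the verification you carry out, namely that $x+i\overline{\widetilde{x}}=\mathfrak{h}(\mathrm{Re}(x))+i\mathfrak{h}(\mathrm{Im}(x))\in H^p(\A)$, that the three maps sum to the identity and mutually annihilate via the preceding lemma ($x=\mathbb{E}(x)+i\overline{\widetilde{x}}$ on $H^p(\A)$) and the vanishing of $\sim$ on $L^p(\D)$, and that the norm bounds follow from boundedness of $\overline{\sim}$ and contractivity of $\mathbb{E}_p$. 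The density/continuity transitions you flag are exactly the adaptations the paper absorbs into its appeal to the earlier results of Sections \ref{S3} and \ref{Hpsection}, so nothing essential is missing.
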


\begin{proof} The proof of \cite[Theorem 6.2]{MW} goes through virtually unaltered.
\end{proof}

\begin{corollary} Let $\A$ be a maximal subdiagonal subalgebra of $\M$. For any $1<p,q<\infty$ such that $1=\frac{1}{p}+\frac{1}{q}$ the dual space of $H^p(\A)$ is conjugate isomorphic to $H^q(\A)$.
\end{corollary}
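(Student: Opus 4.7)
The plan is to realise $H^p(\A)$ as the range of a bounded projection on $L^p(\M)$ and then identify the $tr$-adjoint of that projection on $L^q(\M)$. By Theorem \ref{complementHp}, $H^p(\A) = H^p_0(\A) \oplus L^p(\D)$ is precisely the range of the bounded projection $E_p(x) = \frac{1}{2}(x + i\,\overline{\widetilde{x}} + \E_{p}(x))$, whose kernel is $H^p_0(\A)^*$. Under Haagerup's bilinear duality $L^p(\M)^* \cong L^q(\M)$ via the pairing $(a,b) \mapsto tr(ab)$, the range of the $tr$-adjoint $E_p^*\colon L^q(\M) \to L^q(\M)$ is therefore a Banach space linearly isomorphic to $H^p(\A)^*$.

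I would then compute $E_p^*$ term by term. For the conditional expectation, the identity $tr(\E_{p}(a)b) = tr(a\,\E_{q}(b))$ follows from Proposition \ref{exp-props} together with the trace-preservation $tr\circ\E_{1}=tr$ noted in Remark \ref{10:R cond-exp}, so that $\E_{p}^* = \E_{q}$. For the Hilbert transform, the intermediate identity $tr(\widetilde{u}v) = -tr(u\widetilde{v})$ for self-adjoint $u,v$, established inside the proof of the preceding theorem, extends by splitting into real and imaginary parts to the full identity $tr(\overline{\widetilde{a}}\,b) = -tr(a\,\overline{\widetilde{b}})$ for all $a \in L^p(\M)$ and $b \in L^q(\M)$, whence $(i\,\overline{\sim})^* = -i\,\overline{\sim}$. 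Combining these adjoint formulas gives $E_p^* = \frac{1}{2}(\mathrm{id} - i\,\overline{\sim} + \E_{q})$, which in the notation of Theorem \ref{complementHp} is exactly $P_3 + P_2$: the bounded projection onto $H^q_0(\A)^* \oplus L^q(\D) = H^q(\A^*)$, with kernel $H^q_0(\A)$.

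By the standard duality between complemented subspaces and their annihilators, $H^p(\A)^\perp = H^q_0(\A)$ in $L^q(\M)$, and $H^p(\A)^*$ is linearly isomorphic to $\mathrm{range}(E_p^*) = H^q(\A^*)$ via the $tr$-pairing. The proof concludes by composing this linear isomorphism with the conjugate-linear isometric involution $x \mapsto x^*$ on $L^q(\M)$, which carries $H^q(\A^*) = H^q_0(\A)^* \oplus L^q(\D)$ isometrically onto $H^q(\A) = H^q_0(\A) \oplus L^q(\D)$; the resulting composition is the required conjugate-linear isomorphism $H^p(\A)^* \cong H^q(\A)$.

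The hardest step will be upgrading $tr(\widetilde{u}v) = -tr(u\widetilde{v})$ from the dense subspace where the Hilbert transform was originally defined to the full pairing on $L^p(\M) \times L^q(\M)$. This requires combining the density of $\mathrm{Re}(\mathfrak{i}^{(p)}(\mathfrak{n}(\A^*)^*\mathfrak{n}(\A)))$ in $L^p(\M)_{sa}$ (noted in the preceding section) with the boundedness of $\overline{\sim}$ as an operator on each $L^r(\M)$ for $1 < r < \infty$, so as to legitimise the formal adjoint identity $(i\,\overline{\sim})^* = -i\,\overline{\sim}$ as a genuine equality of bounded operators; the identification of range and kernel of $E_p^*$ with $H^q(\A^*)$ and $H^q_0(\A)$ then rests on matching the explicit projection formulae from Theorem \ref{complementHp}.
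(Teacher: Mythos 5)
Your argument is correct and rests on the same mechanism as the paper's (which simply invokes the Marsalli--West proof): the three-fold decomposition of Theorem \ref{complementHp}, the $L^p$--$L^q$ trace duality, and the adjoint identity $tr(\widetilde{a}b)=-tr(a\widetilde{b})$ already established in the proof of the preceding theorem. Making the projection $E_p=\frac{1}{2}(\mathrm{id}+i\,\overline{\sim}+\E_p)$ and its $tr$-adjoint explicit is a cosmetic repackaging of the annihilator computation $H^p(\A)^\perp=H^q_0(\A)$, and the final composition with $x\mapsto x^*$ to produce the conjugate-linear isomorphism onto $H^q(\A)$ is exactly as in the source.
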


\begin{proof} The proof of \cite[Corollary 6.2]{MW} goes through unaltered.
\end{proof}

With the above technology at our disposal, we are now able to refine Arveson's maximality criterion. The reader should compare this result to Theorem 2.2 of \cite{jisa} where the $\sigma$-finite case was established.

\begin{theorem}\label{newArvmax} Let $\A$ be a maximal subdiagonal subalgebra of $\M$. Then $\A=\{x\in \M\colon \mathbb{E}(xa)=0, a\in \A_0\}=\{x\in \M\colon \mathbb{E}(ax)=0, a\in \A_0\}$.
\end{theorem}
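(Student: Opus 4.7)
The forward inclusions are immediate: for $x\in\A$ and $a\in\A_0$, the multiplicativity of $\E$ on $\A$ together with $\E|_{\A_0}=0$ gives $\E(xa)=\E(x)\E(a)=0$, and analogously $\E(ax)=0$. Thus $\A$ is contained in both candidate sets, and the substance of the theorem is the reverse inclusion.

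For the reverse inclusion in the first equality, my plan is to invoke Arveson's maximality criterion in the Hilbert-space form recorded in the proof of Theorem \ref{arvmax}: since $\A$ is maximal subdiagonal, $\A=\A_m$, and the latter admits the intrinsic description
\[
\A=\{f\in\M : f(H_1\oplus H_2)\subseteq H_1\oplus H_2,\ f^*(H_2\oplus H_3)\subseteq H_2\oplus H_3\}
\]
in the Haagerup-Terp standard form, with $H_1,H_2,H_3$ as in Lemma \ref{JOS 2.1-part1}. I therefore fix $x\in\M$ satisfying $\E(xa)=0$ for every $a\in\A_0$ and verify both invariance conditions.

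For the first, $H_1\oplus H_2=[\eta(\mathfrak{n}(\A))]_2$ is the orthogonal complement of $H_3=[\eta(\mathfrak{n}(\A_0^*))]_2$, so it suffices to show $\langle\eta(xa),\eta(c)\rangle=\nu(c^*xa)=0$ for every $a\in\mathfrak{n}(\A)$ and $c\in\mathfrak{n}(\A_0^*)$. By a density argument combining the approximate identity of Proposition \ref{7:P Terp2} with the gaussian averaging behind Proposition \ref{analytic-n(A)}, the problem reduces to $c=b^*$ where $b$ is analytic in $\A_0$ and $b,\sigma_w^\nu(b)\in\mathfrak{n}_\nu\cap\mathfrak{n}_\nu^*$ for every $w\in\mathbb{C}$. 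The KMS identity (\cite[Theorem VIII.2.6]{Tak2}) then yields
\[
\nu(bxa)=\nu\bigl(xa\cdot\sigma_{-i}^\nu(b)\bigr).
\]
The $\sigma_t^\nu$-invariance of $\A_0$ (Theorem \ref{necessary}), extended analytically, places $\sigma_{-i}^\nu(b)$ inside $\A_0$; since $\A_0$ is a two-sided ideal in $\A$, the element $a':=a\sigma_{-i}^\nu(b)$ again sits in $\A_0$. The memberships $xa\in\mathfrak{n}_\nu$ and $\sigma_{-i}^\nu(b)\in\mathfrak{n}_\nu^*$ give $xa'\in\mathfrak{n}_\nu\cdot\mathfrak{n}_\nu^*\subseteq\mathfrak{m}_\nu$, whence $\nu(xa')=\nu(\E(xa'))=0$ by the hypothesis. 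The second invariance is the mirror calculation: for $c\in\mathfrak{n}(\A^*)$ and $d\in\mathfrak{n}(\A_0)$, write $c=b^*$ with $b\in\A$ analytic as before, so that $\langle\eta(x^*c),\eta(d)\rangle=\overline{\nu(xd\cdot\sigma_{-i}^\nu(b))}$ vanishes because $\sigma_{-i}^\nu(b)\in\A$ forces $d\sigma_{-i}^\nu(b)\in\A_0\cdot\A\subseteq\A_0$, and the hypothesis once more applies. Arveson's criterion then delivers $x\in\A_m=\A$.

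The second equality $\A=\{x:\E(ax)=0,\,a\in\A_0\}$ follows by the mirror scheme, now using KMS to move a right factor $a$ across to the left so as to realize the hypothesis $\E(ax)=0$. The principal technical obstacle is not the algebraic skeleton but the integrability bookkeeping: one must keep $\sigma_{-i}^\nu(b)$ inside $\mathfrak{n}_\nu\cap\mathfrak{n}_\nu^*$ so that the products entering the $\nu$-computations lie in $\mathfrak{m}_\nu$ (where the identity $\nu=\nu\circ\E$ is available), and one must verify that the analytic test vectors supplied by Proposition \ref{analytic-n(A)}---after smoothing by the Proposition \ref{7:P Terp2} approximate identity and taking adjoints---are $L^2$-norm dense in $\eta(\mathfrak{n}(\A_0^*))\subset L^2(\M)$, so that the pointwise orthogonality propagates to the entirety of $H_3$.
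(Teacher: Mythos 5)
Your architecture is the same as the paper's: both proofs reduce the reverse inclusion to membership in the auxiliary algebra $\A_M$ from the proof of Theorem \ref{arvmax} by establishing the two orthogonality relations $xH^2\perp (H^2_0)^*$ and $x^*(H^2)^*\perp H^2_0$ (your $H_1\oplus H_2$ and $H_2\oplus H_3$ are exactly $H^2$ and $(H^2)^*$). The forward inclusion and the reduction to analytic test elements are fine. The problem is the central integrability step. You write that $xa\in\mathfrak{n}_\nu$ and $\sigma_{-i}^\nu(b)\in\mathfrak{n}_\nu^*$ give $xa\,\sigma_{-i}^\nu(b)\in\mathfrak{n}_\nu\cdot\mathfrak{n}_\nu^*\subseteq\mathfrak{m}_\nu$. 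That inclusion is false for a non-tracial weight: by definition $\mathfrak{m}_\nu=\mathrm{span}(\mathfrak{n}_\nu^*\mathfrak{n}_\nu)$, and the opposite-order product $\mathfrak{n}_\nu\mathfrak{n}_\nu^*$ escapes $\mathfrak{m}_\nu$ in general (already $\nu(ww^*)=\infty$ is possible for $w\in\mathfrak{n}_\nu\setminus\mathfrak{n}_\nu^*$). Without $xa'\in\mathfrak{m}_\nu$ the identity $\nu(xa')=\nu(\E(xa'))$ is unavailable --- indeed $\nu(xa')$ need not even be defined --- so the hypothesis $\E(xa')=0$ cannot be brought to bear. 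The same defect infects the KMS step itself: the identity $\nu(b\cdot y)=\nu(y\,\sigma_{-i}^\nu(b))$ in \cite[Theorem VIII.2.6]{Tak2} is stated for $y\in\mathfrak{m}_\nu$, whereas your $y=xa$ is only in $\mathfrak{n}_\nu$. Since $x$ is an arbitrary element of $\M$ with no a priori relation to $\mathfrak{n}_\nu^*$, there is no cheap repair of the order of factors.

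The paper's proof shows how to do the bookkeeping without ever shifting $b$ by a full unit of analytic continuation. One inserts the approximate identity $(f_\lambda)\subset\mathfrak{n}_\nu(\D)$ of Proposition \ref{7:P Terp2} to the left of $x$ and works in Haagerup $L^1$: the quantity $tr\bigl(b(h^{1/2}f_\lambda)x[ah^{1/2}]\bigr)$ is a trace of a product of two $L^2$ elements sandwiching bounded operators, so the tracial property of $tr$ may be used freely to cycle factors, and Theorem \ref{tech-thm} supplies an $L^2$-approximation of $(h^{1/2}b)$ by elements $[c_nh^{1/2}]$ with $c_n\in\mathfrak{n}_\nu(\A_0)$. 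The hypothesis is then applied to $\E(f_\lambda xac_n)=f_\lambda\E(xac_n)=0$, where $xac_n$ genuinely lies in $\mathfrak{m}_\nu$ because the factors occur in the correct order, and one passes to the limit in $n$ and then in $\lambda$. If you want to keep your KMS-based presentation you must either prove separately that $xa\,\sigma_{-i}^\nu(b)\in\mathfrak{m}_\nu$ (which is not true at this level of generality) or adopt the sandwiching device; as written the argument does not close.
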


\begin{proof} The proofs of the two claims are similar, and hence we will only show that $\A =\{x\in \M\colon \mathbb{E}(xa)=0, a\in \A_0\}$. We will show that $\A_m=\{x\in \M\colon \mathbb{E}(xa)=0, a\in \A_0\}$ where $\A_m$ is as in Theorem \ref{arvmax}. Write $\mathcal{B}$ for $\{x\in \M\colon \mathbb{E}(xa)=0, a\in \A_0\}$. From the
definition of $\A_m$, we have $\A_m\subseteq \mathcal{B}$. 

Conversely, take any $x\in \mathcal{B}$, and let $(f_\lambda)$ be the net in $\mathfrak{n}(\D)_\nu$ converging strongly to 
$\I$ as guaranteed by Proposition \ref{7:P Terp2}. For now we fix $\lambda$. For any $a\in \mathfrak{n}(\A)$ and $b\in 
\mathfrak{n}(\A_0)$ we then have that $$tr(b(h^{1/2}f_\lambda)x[ah^{1/2}]) = tr((h^{1/2}f_\lambda)xa(h^{1/2}b)).$$Since 
$(h^{1/2}b)\in H^2_0$, we may use Theorem \ref{tech-thm} to select a sequence $(c_n)\subset \mathfrak{n}(\A_0)_\nu$ such that 
$([c_nh^{1/2}])$ converges to $h^{1/2}b$ in $L^2$-norm. So we will then have that 
\begin{eqnarray*}
tr((h^{1/2}f_\lambda)xa(h^{1/2}b))&=&\lim_n tr((h^{1/2}f_\lambda)xa[c_n h^{1/2}])\\
&=&\lim_n tr(\mathfrak{i}^{(1)}f_\lambda xa c_n))\\
&=&\lim_n\nu(f_\lambda xac_n)\\
&=& \lim_n\nu(\mathbb{E}(f_\lambda xac_n))\\
&=& \lim_n\nu(f_\lambda\mathbb{E}(xac_n))\\
&=& 0.
\end{eqnarray*}
If we combine the above two centred equations, we obtain that 
 $$tr([bh^{1/2}]f_\lambda x(ah^{1/2})) = tr(b(h^{1/2}f_\lambda)x(ah^{1/2})) = 0.$$ 
The $\sigma$-strong* convergence of $(f_\lambda)$ to $\I$ ensures that $[bh^{1/2}]f_\lambda$ converges $L^2$-weakly to $[bh^{1/2}]$. Hence $tr([bh^{1/2}]x[ah^{1/2}]) = 0$. Another application of Theorem \ref{tech-thm} shows that $\{[bh^{1/2}]\colon b\in \mathfrak{n}(\A_0)_\nu\}$ is dense in $H^2_0$, and $\{[ah^{1/2}]\colon a\in \mathfrak{n}(\A)_\nu\}$ dense in $H^2$. It therefore follows that $xH^2\perp (H_0^2)^*$ and hence that $xH^2\subset H^2$. 

On swopping the roles of $a$ and $b$ above, a similar argument will show that $tr([ah^{1/2}]x(bh^{1/2}))=0$ where as before $a\in \mathfrak{n}(\A)$ and $b\in \mathfrak{n}(\A_0)$. On applying Theorem \ref{tech-thm}, it now follows that $tr(v^*xw^*)=\overline{tr(wxv)}=0$ for $v\in H^2_0$ and $w\in H^2$. So $x^*(H^2)^*\perp H^2_0$, whence $x^*(H^2)^*\subset (H^2)^*$. The astute reader will now notice that The spaces $H^2$ and $(H^2)^*$ respectively correspond to the spaces $H_1\oplus H_2$ and $H_2\oplus H_3$ described in Lemma \ref{JOS 2.1-part1}. Hence the fact that $xH^2\subset H^2$ and $x^*(H^2)^*\subset (H^2)^*$, ensures that $x$ belongs to the algebra $\A_M$ described in the proof of Theorem \ref{arvmax}. But in that same proof we also show that $\A_M=\A_m$. Hence we are done.
\end{proof}

\section{An analytic reduction technique}\label{S10}

Having already posited a basic theory of $H^p$-spaces for general von Neumann algebras we now briefly describe two reduction techniques that have proven their worth in developing the theory of noncommutative $H^p$-spaces. The first is a technique developed by Bekjan (see \cite{Bek-sem}) for lifting results valid for finite von Neumann algebras equipped with a faithful normal tracial state to the context of semifinite algebras. The second was pioneered by Xu \cite{Xu} and demonstrates the remarkable efficacy of the Haagerup reduction theorem for studying maximal subdiagonal subalgebras.

Let $\M$ be a semifinite von Neumann algebra equipped with a faithful normal semifinite trace $\tau$. Given a maximal subdiagonal subalgebra $\A$ of $\M$ for which the restriction of $\tau$ to $\D=\A^*\cap\A$ is still semifinite, one may then select a net $(e_\alpha)$ of projections in $\D$ all with finite trace increasing to $\I$. Bekjan's technique of lifting the theory of subdiagonal subalgebras of finite von Neumann algebras to semifinite algebras, consists of the following collection of facts all gleaned from \cite{Bek-sem}.

\begin{proposition}\label{Bek-red} Let $\M$, $\A$ and $(e_\alpha)$ be as above.
\begin{itemize}
\item Let $e\in \D$ be a projection with finite trace. Then $\A_e=e\A e$ is a subdiagonal subalgebra of $e\M e$ with respect to $\E_e=\E{\upharpoonright}e\D e$ and with diagonal $\D_e=e\D e$. In addition $(\A_e)_0=e\A_0e$. Moreover for any $0<p<\infty$ we have that $L^p(\M_e,\tau_e)=eL^p(\M,\tau)e$, $H^p(\A_e)=eH^p(\A)e$ and $H^p_0(\A_e)=eH^p_0(\A)e$. (Here $\tau_e$ denotes the restriction of $\tau$ to $e\M e$.)
\item For any $0<p<\infty$, $H^p(\A)$ and $H^p_0(\A)$ are respectively the closures of $\cup_\alpha \A_{e_\alpha}$ and $\cup_\alpha (\A_{e_\alpha})_0$ in $L^p(\M,\tau)$.
\end{itemize}
\end{proposition}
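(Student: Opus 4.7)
The plan is to verify the three defining properties of a maximal subdiagonal subalgebra for $\A_e = e\A e$ sitting inside $e\M e$, exploiting throughout that $e\in \D\subseteq \A\cap \A^*$, so that two-sided compression by $e$ preserves $\A$, $\A^*$ and $\D$. The $\sigma$-weak density of $\A_e+\A_e^* = e(\A+\A^*)e$ in $e\M e$ is immediate from the density of $\A+\A^*$ in $\M$ together with the $\sigma$-weak continuity of compression. The identity $\A_e\cap\A_e^*=e\D e$ will be extracted from the observation that any element of $e\A e\cap e\A^*e$ is automatically in $\A\cap\A^* = \D$ (because $e\A e\subseteq \A$ and similarly for $\A^*$) and is already of the form $exe$. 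Multiplicativity of $\E_e := \E{\upharpoonright}e\M e$ on $\A_e$ follows by a short calculation using that $\E$ is a $\D$-bimodule map and multiplicative on $\A$: $\E(eaebe) = e\E(a\cdot eb)e = e\E(a)\E(eb)e = (e\E(a)e)(e\E(b)e)$. The identity $(\A_e)_0 = e\A_0 e$ then drops out by decomposing $eae = e(a-\E(a))e + e\E(a)e$ and applying $\E_e$.

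\textbf{First bullet, $L^p$-identifications.} The identity $L^p(\M_e,\tau_e) = eL^p(\M,\tau)e$ is a classical tracial $L^p$-fact deducible from the spectral characterisation of the $L^p$-quasinorm together with the fact that $\tau_e$ is the restriction of $\tau$ to $e\M e$. The containment $H^p(\A_e)\subseteq eH^p(\A)e$ then follows from $\A_e\subseteq \A$ and the $L^p$-contractivity of two-sided compression by $e$. For the reverse inclusion, I would observe that each generator of $H^p(\A)$ compresses by $e$ to an element of $\A_e\cap L^p(\M_e,\tau_e) \subseteq H^p(\A_e)$, so the image of $H^p(\A)$ under compression is contained in $H^p(\A_e)$. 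The same argument, now invoking $(\A_e)_0 = e\A_0 e$ in place of $\A_e = e\A e$, delivers $H^p_0(\A_e) = eH^p_0(\A)e$.

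\textbf{Second bullet.} Here the plan is to prove that for every $x\in H^p(\A)$ one has $e_\alpha x e_\alpha \to x$ in $L^p(\M,\tau)$. This forces the $L^p$-closure of $\bigcup_\alpha \A_{e_\alpha}$ to contain $H^p(\A)$, since by the first bullet each $e_\alpha x e_\alpha$ lies in $e_\alpha H^p(\A) e_\alpha = H^p(\A_{e_\alpha})$, which in turn sits in the $L^p$-closure of $\A_{e_\alpha}$ itself. The converse containment is immediate, as $\A_{e_\alpha}\subseteq H^p(\A)$ by the first bullet. The convergence itself will first be verified on a dense subspace of $H^p(\A)$ on which $\sigma$-strong convergence $e_\alpha \uparrow \I$ transfers directly to $L^p$-convergence, and then extended to arbitrary $x$ by a standard $\varepsilon/3$-argument using the uniform $L^p$-contractivity of the compressions. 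The argument for $H^p_0(\A)$ is completely parallel, appealing to $(\A_{e_\alpha})_0 = e_\alpha \A_0 e_\alpha$.

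\textbf{The hard part.} The only genuinely non-routine point is the $L^p$-convergence $e_\alpha x e_\alpha\to x$ for arbitrary $x\in L^p(\M,\tau)$, especially for small $p$ (and a fortiori for $0<p<1$, where the quasinorm is less forgiving). I would handle this via a noncommutative dominated-convergence argument: polar-decompose $x = u|x|$ and first establish $e_\alpha|x|^p e_\alpha \to |x|^p$ in $L^1$ by monotone/dominated convergence applied to the $\tau$-finite positive element $|x|^p$, and then deduce the full $L^p$-statement from continuity of the functional calculus together with elementary two-sided bounds. Once this foundational convergence is in place, everything else reduces to bookkeeping built on the algebra-level subdiagonality checks carried out in the first bullet.
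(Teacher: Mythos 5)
The paper offers no proof of Proposition \ref{Bek-red} at all: it is presented explicitly as ``a collection of facts all gleaned from \cite{Bek-sem}'', so there is no in-paper argument to measure your proposal against. Judged on its own terms, your outline is the natural one and most of it is sound: the algebraic verifications in the first bullet (density of $\A_e+\A_e^*$, the identification $\A_e\cap\A_e^*=e\D e$ via $e\A e=\A\cap e\M e$, multiplicativity of $\E_e$ through the $\D$-bimodule property, and $(\A_e)_0=e\A_0 e$) are correct as written, and the overall reduction of the second bullet to the single convergence statement $e_\alpha xe_\alpha\to x$ in $L^p$, combined with uniform $L^p$-contractivity of compressions (valid for all $0<p<\infty$ via singular-number estimates) and the $p$-triangle inequality, is the right strategy.

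The one step that fails as literally stated is your ``noncommutative dominated-convergence'' argument. Compression by a projection is not monotone in the operator order -- $eae\leq a$ is false for general positive $a$ -- so the net $e_\alpha|x|^pe_\alpha$ is neither increasing nor dominated by $|x|^p$, and monotone/dominated convergence does not apply; moreover $\tau\bigl(|x|^p-e_\alpha|x|^pe_\alpha\bigr)\to 0$ does not by itself give $L^1$-convergence because the differences are not positive. The standard repair bypasses $|x|^p$ entirely: it suffices to show $\|(\I-e_\alpha)x\|_p\to 0$ and $\|x(\I-e_\alpha)\|_p\to 0$, which for $x=a^*b$ with $a,b\in\mathfrak{n}_\tau\cap\M$ follows from H\"older, $\|(\I-e_\alpha)a^*\|_{2}^{2}=\tau\bigl(a(\I-e_\alpha)a^*\bigr)=\tau\bigl((\I-e_\alpha)|a^*|^2\bigr)\to 0$ by normality of $\tau$, and density of such $x$; the appeal to ``continuity of the functional calculus'' to pass from $|x|^p$ back to $x$ is not the right tool and should be dropped. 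One further point worth making explicit: for $0<p<1$ the paper defines $H^p$ via closures of products of $H^{2p}$-spaces, and $e(xy)e\neq(exe)(eye)$, so the compression identities in the first bullet do not follow by formally compressing generators; you first need the tracial identification $H^p(\A)=[\A\cap L^p(\M,\tau)]_p$ for all $0<p<\infty$ (which is where the actual content of Bekjan's lemmata lies), after which your argument goes through uniformly in $p$.
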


We pass to the second reduction principle, which is a description of the efficacy of Haagerup's reduction theorem for subdiagonal subalgebras. This in some sense constitutes an ``analytic reduction theorem''. When Xu originally developed this technique, his focus was its use in lifting results from the context of finite von Neumann algebras equipped with a faithful normal tracial state, to $\sigma$-finite von Neumann algebras. However with a sharpened version of the reduction theorem now in place, this technique serves as a means for lifting results from the context of semifinite von Neumann algebras to general von Neumann algebras. This technique is contained in \cite[Lemmata 3.1-3.3]{Xu}. The proofs of these lemmata carry over almost verbatim to the present context. The only change that needs to be made, is that references to Exel's maximality result for finite von Neumann algebras \cite{Exel}, should be replaced by references to Ji's maximality result for semifinite algebras \cite{jig1}, and the version of the reduction theorem in the present paper should be used instead of the one in \cite{HJX}. In terms of the notation in the reduction theorem, we specifically obtain the following:

\begin{proposition}\label{anred}
 Let $\A$ be a maximal subdiagonal subalgebra with respect to $\D$ and let $\widehat{\A}$ and $\widehat{\D}$ respectively be the $\sigma$-weak closures of $\{\pi_\nu(a)\lambda_t:a\in \A,\, t\in \mathbb{Q}_D\}$ and $\{\pi_\nu(d)\lambda_t:d\in \D,\, t\in \mathbb{Q}_D\}$ in $\R$. Then the following holds:
\begin{enumerate}
\item[(1)]
\begin{itemize}
\item The expectation $\mathbb{E}:\M\to\D$ extends to a faithful normal conditional expectation $\widehat{\E}:\R\to\widehat{D}$ which 
may be realised by the prescription $\widehat{\E}(\lambda_t a)=\lambda_t\E(a)$ where $a\in \M$ and $t\in \mathbb{Q}_D$. This expectation 
similarly satisfies $\tnu\circ\widehat{\E}=\tnu$ where $\tnu$ is dual weight on $\R$. Moreover 
$\E\circ\mathscr{W}_{\mathbb{Q}_D}=\mathscr{W}_{\mathbb{Q}_D}\circ\widehat{\E}$.

\item $\widehat{\A}$ is a maximal subdiagonal subalgebra with respect to $\widehat{\D}$. Moreover $\mathscr{W}_{\mathbb{Q}_D}$ maps $\widehat{\A}$ onto $\A$. 
\end{itemize}

\item[(2)]
\begin{itemize}
\item For each $n$ we have that $\widehat{\E}\circ\mathscr{W}_n=\mathscr{W}_n\circ\widehat{\E}$. The map $\E_n=\mathscr{W}_n\circ\widehat{\E}$ is therefore a faithful normal conditional expectation from $\R$ onto $\D_n$ where $\D_n=\widehat{\D}\cap \R_n$. For the canonical trace $\tau_n=(\nu_n){\upharpoonright}\R_n$ on $\R_n$ (see Lemma \ref{construction of Rn}) we moreover have that $\tau_n\circ\E_n=\tau_n$. 

\item For each $n\in \mathbb{N}$, $\A_n=\widehat{\A}\cap \R_n$ is a maximal subdiagonal subalgebra with respect to both of the pairings $(\D_n, \tnu{\upharpoonright}\R_n)$ and $(\D_n, \tau_n)$. Moreover $\mathscr{W}_n$ maps $\widehat{\A}$ onto $\A_n$, and $\cup_{n\in \mathbb{N}} \A_n$ is $\sigma$-weakly dense in $\widehat{\A}$. 
\end{itemize}

\item[(3)] Let $1\leq p < \infty$ be given. We shall write $H^p(\A_n)$ for the $H^p$ space computed using $\tnu{\upharpoonright}\R_n$, and $H^p(\A_n,\tau_n)$ for the $H^p$-space computed using the trace $\tau_n$. 
Then the extension $\mathscr{W}^{(p)}_{\mathbb{Q}_D}$ of $\mathscr{W}_{\mathbb{Q}_D}$ to $L^p(\R)$ maps $H^p(\widehat{\A})$ onto $H^p(\A)$. Similarly the extension $\mathscr{W}_n^{(p)} $of 
$\mathscr{W}_n$ to $L^p(\R)$ maps $H^p(\widehat{\A})$ onto $H^p(\A_n)$. Moreover $(H^p(\A_n))$ is an increasing sequence of $H^p$-spaces for which $\cup_{n\geq 1}H^p(\A_n)$ is dense in 
$H^p(\widehat{\A})$. For each $1<p<\infty$ and each $f\in H^p(\A)$, the sequence $(\mathscr{W}_n^{(p)}(f))$ is weakly convergent to $f$. The sequence of isometric embeddings $J_n$ described in the proof 
of Theorem \ref{10:T reduction Lp} will moreover map each $H^p(\A_n,\tau_n)$ onto the corresponding $H^p(\A_n)$. Similar claims hold for $\A_0$. 
\end{enumerate}
\end{proposition}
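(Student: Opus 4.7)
The plan is to follow the blueprint of \cite[Lemmata 3.1--3.3]{Xu} while making two essential substitutions: replace every appeal to Exel's maximality theorem for finite von Neumann algebras by Ji's extension \cite{jig1} (valid for semifinite algebras equipped with a faithful normal semifinite trace), and replace the $\sigma$-finite version of Haagerup's reduction theorem by our Theorem \ref{red s-finite} / Theorem \ref{10:T reduction Lp}. The skeleton of Xu's original presentation survives intact, but several routine manipulations with positive functionals must be reworked so that they remain meaningful when $\nu$ is only a weight; the $H^p$-density portion of assertion (3) will be where this reworking is most delicate.

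For assertion (1), I would first establish existence of $\widehat{\E}$ by verifying that $\widehat{\D}$ is globally invariant under the modular group $\sigma_t^{\tnu}$, and then invoking \cite[Theorem IX.4.2]{Tak2}. The invariance follows since by equation (\ref{5:eqn dual-R}) and Lemma \ref{10:L modgpred} we have $\sigma_t^{\tnu}(\pi_\nu(d))=\pi_\nu(\sigma_t^\nu(d))\in \pi_\nu(\D)$ (using $\sigma_t^\nu(\D)=\D$) and $\sigma_t^{\tnu}(\lambda_s)=\lambda_s$ for $s\in\mathbb{Q}_D$. The formula $\widehat{\E}(\lambda_t a)=\lambda_t\E(a)$ is then forced by normality together with $\sigma$-weak density of the $\lambda_t\pi_\nu(a)$, and the identity $\E\circ\mathscr{W}_{\mathbb{Q}_D}=\mathscr{W}_{\mathbb{Q}_D}\circ\widehat{\E}$ can be checked on this same generating set. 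That $\widehat{\A}$ is subdiagonal follows at once: multiplicativity of $\widehat{\E}$ on $\widehat{\A}$ descends from multiplicativity of $\E$ on $\A$ together with the crossing relation $\lambda_s\pi_\nu(a)=\pi_\nu(\sigma_s^\nu(a))\lambda_s$, while $\widehat{\A}\cap\widehat{\A}^*=\widehat{\D}$ and the $\sigma$-weak density of $\widehat{\A}+\widehat{\A}^*$ in $\R$ are inherited directly from the corresponding properties of $\A$. Maximality is then an immediate application of the equivalence established in Theorems \ref{necessary} and \ref{sufficient}, since $\widehat{\A}$ is manifestly $\sigma_t^{\tnu}$-invariant.

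For assertion (2), the commutation $\widehat{\E}\circ\mathscr{W}_n=\mathscr{W}_n\circ\widehat{\E}$ reduces, via the integral definition of $\mathscr{W}_n$, to commutation of $\widehat{\E}$ with $\sigma_t^{\nu_n}$. Part (2) of Lemma \ref{construction of Rn} writes $\sigma_t^{\nu_n}$ as a perturbation of $\sigma_t^{\tnu}$ by conjugation by $e^{ita_n}$, and since the $a_n$ lie in $\mathcal{Z}(\R_{\tnu})\subset \widehat{\D}$ they are fixed by $\widehat{\E}$, giving the commutation. Setting $\A_n=\widehat{\A}\cap\R_n$ and $\D_n=\widehat{\D}\cap\R_n$, the subdiagonality of $\A_n$ in $\R_n$ is a transcription of Xu's argument: one uses that $\mathscr{W}_n$ carries $\widehat{\A}$ onto $\A_n$ and $\widehat{\A}_0$ onto $\A_{n,0}$ to transfer $\sigma$-weak density of $\widehat{\A}+\widehat{\A}^*$ in $\R$ to $\sigma$-weak density of $\A_n+\A_n^*$ in $\R_n$. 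Since each $\R_n$ is semifinite, Ji's theorem \cite{jig1} applied to the trace $\tau_n$ yields maximal subdiagonality with respect to $(\D_n,\tau_n)$; the equivalent statement for $(\D_n,\tnu{\upharpoonright}\R_n)$ is then a consequence of our general maximality criterion, since the two weights generate the same modular automorphism group up to inner perturbation by $h_n\in\mathcal{Z}(\R_n)$. Finally, $\sigma$-weak density of $\cup_n\A_n$ in $\widehat{\A}$ follows by combining Theorem \ref{red s-finite}(2) with $\mathscr{W}_n(\widehat{\A})\subseteq \A_n$ and $\mathscr{W}_n(x)\to x$ $\sigma$-weakly.

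For assertion (3), the $L^p$ statements start from Remark \ref{10:R cond-exp} (which ensures $\mathscr{W}_{\mathbb{Q}_D}$ and $\mathscr{W}_n$ extend contractively to the appropriate $L^p$-spaces) combined with Proposition \ref{exp-props}: on the generating elements $\mathfrak{j}^{(2p)}(a^*)^*\mathfrak{j}^{(2p)}(b)$ of $H^p(\widehat{\A})$, the extensions $\mathscr{W}_{\mathbb{Q}_D}^{(p)}$ and $\mathscr{W}_n^{(p)}$ act by applying the relevant expectation term-wise to $a$ and $b$, landing in $H^p(\A)$ and $H^p(\A_n)$ respectively. The hard part will be the density statement $\overline{\cup_n H^p(\A_n)}=H^p(\widehat{\A})$ together with the weak convergence $\mathscr{W}_n^{(p)}(f)\to f$ for each $f\in H^p(\widehat{\A})$: these do not follow mechanically from the $\sigma$-finite argument, because throughout the original proof the formal product $h^{1/p}\cdot$ must now be replaced by the controlled embeddings $\mathfrak{i}^{(p)}$ and $\mathfrak{j}^{(q)}$ from Section \ref{S3}. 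Here my plan is to first use Proposition \ref{refine-2} together with Theorem \ref{tech-thm} to reduce everything to analytic elements of $\mathfrak{n}_{\tnu}(\widehat{\A})\cap \mathfrak{n}_{\tnu}(\widehat{\A}^*)^*$, then invoke the fact (from Proposition \ref{exp-props}) that $\mathscr{W}_n^{(p)}$ maps $H^p(\widehat{\A})$ back into itself so that the weak convergence criterion already proved in Theorem \ref{10:T reduction Lp}(3) can be applied. The claim that $J_n$ identifies $H^p(\A_n,\tau_n)$ with $H^p(\A_n)$ reduces to unpacking the definition of $J_n$ from Theorem \ref{10:T reduction Lp} and checking compatibility on the analytic dense subspaces furnished by Proposition \ref{refine-2}. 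The parallel statements for $\A_0$ follow verbatim by the same reductions.
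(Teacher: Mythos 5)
Your proposal is correct and, for assertions (1) and (2), follows essentially the route the paper takes: Xu's blueprint from \cite[Lemmata 3.1--3.3]{Xu} with Exel's maximality theorem replaced by Ji's semifinite version and the $\sigma$-finite reduction theorem replaced by Theorem \ref{red s-finite}. The one structural difference there is that you obtain the maximality of $\widehat{\A}$ by citing the invariance criterion (Theorems \ref{necessary} and \ref{sufficient}) applied to $(\R,\tnu)$, whereas the paper re-runs Xu's Lemma 3.3 argument, deducing maximality of $\widehat{\A}$ from the maximality of the $\A_n$ and the $\sigma$-weak density of their union. Both are legitimate, since Theorem \ref{sufficient} is already on record by this point, but note that your route is largely a repackaging: the paper's proof of Theorem \ref{sufficient} consists precisely of Xu's Lemmata 3.1--3.3 transplanted to the general setting, so you are citing a theorem whose proof contains the statement you want.

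Where you genuinely diverge is the density claim in (3). The paper splits into cases: for $1<p<\infty$ it invokes the decomposition $L^p(\R)=H^p_0(\widehat{\A})\oplus L^p(\widehat{\D})\oplus H^p_0(\widehat{\A})^*$ of Theorem \ref{complementHp} together with the density of $\cup_n L^p(\R_n)$ in $L^p(\R)$ from Theorem \ref{10:T reduction Lp}; for $p=1$, where the Riesz projections are unbounded, it instead factorizes $H^1_0(\widehat{\A})=[H^2(\widehat{\A})\cdot H^2_0(\widehat{\A})]_1$ and approximates each factor separately. Your argument --- the containment $\mathscr{W}_n^{(p)}(H^p(\widehat{\A}))\subseteq H^p(\A_n)$ plus the weak convergence $\mathscr{W}_n^{(p)}(f)\to f$ from Theorem \ref{10:T reduction Lp}(3) plus convexity of the increasing union --- treats all $1\leq p<\infty$ uniformly and avoids both the boundedness of the Riesz projection and the factorization theorem; what it costs is that the range containment must be established first (the statement requires it anyway), and this is where your phrasing is too optimistic: the expectations do not act ``term-wise'' on the factors of $\mathfrak{j}^{(2p)}(a^*)^*\mathfrak{j}^{(2p)}(b)$, since $\mathscr{W}_n$ is not multiplicative. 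Rather $\mathscr{W}_n^{(p)}(\mathfrak{i}^{(p)}(ab))=\mathfrak{i}^{(p)}(\mathscr{W}_n(ab))$ with $\mathscr{W}_n(ab)\in\mathfrak{m}_{\tnu}\cap\A_n$, and one then needs the $(f_\lambda)$-approximation argument (with $f_\lambda\in\D_n$, moved past $h^{1/2p}$ via Lemma \ref{GL2-2.4+5}) to see that $\mathfrak{i}^{(p)}(\mathfrak{m}_{\tnu}\cap\A_n)\subseteq H^p(\A_n)$. Two further small slips: the element intertwining $\sigma_t^{\tnu}$ and $\sigma_t^{\nu_n}$ is $e^{ita_n}$ with $a_n\in\D_n$ (not $h_n$, which relates $\nu_n$ to $\nu_{n+1}$), and ``$\mathscr{W}_n^{(p)}$ maps $H^p(\widehat{\A})$ back into itself'' should read ``into $H^p(\A_n)$''. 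None of these affects the viability of your plan.
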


It should be noted that in the case of subdiagonal subalgebras, the above result can be used to give an alternative proof of the boundedness of the Hilbert transform.

\begin{proof} \textbf{Part (1):} The first part of bullet 1 is an easy consequence of \cite[Theorem 4.1]{HJX}. For the second part observe that both $\E\circ\mathscr{W}_{\mathbb{Q}_D}$ and $\mathscr{W}_{\mathbb{Q}_D}\circ\widehat{\E}$ are normal. Direct checking using formula (\ref{Waction}) now shows that both these operators map terms of the form $\lambda_t a$ where $a\in \M$ and $t\in \mathbb{Q}_D$, onto 0 if $t\neq 0$, and onto $\E(a)$ otherwise. Since $\mathrm{span}\{\lambda_ta: a\in \M,\, t\in \mathbb{Q}_D\}$ is $\sigma$-weakly dense in $\R$, we are done. As noted earlier the second bullet may be verified using the same proof that Xu used in \cite[Lemmata 3.1 \& 3.3]{Xu}, but with references to Exel's maximality result for finite von Neumann algebras \cite{Exel} replaced by references to Ji's maximality result for semifinite algebras \cite{jig1}. The final claim of the second bullet is an easy consequence of formula (\ref{Waction}).

\medskip
 
\textbf{Part (2):} Notice that all the $\lambda_t$'s where $t\in \mathbb{Q}_D$, belong to $\widehat{\D}$. Therefore each $e^{ita_{n}}$ 
where $a_n$ is as in formula (\ref{formula od sfn}), also belongs to $\widehat{\D}$. Since by part (a) $\widehat{\E}$ commutes with 
$\sigma_t^{\tnu}$, it follows from formula (\ref{formula od sfn}) that 
 \begin{eqnarray*}
 \widehat{\E}(\sigma_t^{\nu_n}(x))
 &=& \widehat{\E}(e^{-ita_{n}}\sigma_{t}^{\tnu}(x)e^{ita_{n}})= e^{-ita_{n}}\widehat{\E}(\sigma_{t}^{\tnu}(x))e^{ita_{n}}\\
 &=& e^{-ita_{n}}\sigma_{t}^{\tnu}(\widehat{\E}(x))e^{ita_{n}}=\sigma_t^{\nu_n}(\widehat{\E}(x))
  \end{eqnarray*}
for all $x\in \R$. The fact that 
$\widehat{\E}\circ\mathscr{W}_n=\mathscr{W}_n\circ\widehat{\E}$, now follows from the definition of $\mathscr{W}_n$. Since as noted above 
$e^{-a_{n}/2}$ belongs to $\widehat{\D}$, we may similarly use formula (\ref{def of fn}) to see that 
 \begin{eqnarray*}
 \nu_n(\widehat{\E}(x))
 &=&\tnu(e^{-a_{n}/2}\widehat{\E}(x)e^{-a_{n}/2}) =\tnu(\widehat{\E}(e^{-a_{n}/2}xe^{-a_{n}/2}))\\
 &=&\tnu(e^{-a_{n}/2}xe^{-a_{n}/2})=\nu_n(x)
  \end{eqnarray*}
for all $x\in \R$. This clearly suffices to 
prove the last claim of the first bullet.

The validity of the first part of the second bullet may be verified using essentially the same proof as was used by Xu in \cite[Lemma 3.2]{Xu} (again with references to Exel's maximality result replaced by references to Ji's maximality result). Although not explicitly stated, the claim regarding the range of $\mathscr{W}_n$, was noted by Xu at the end of the proof of \cite[Lemma 3.2]{Xu}. The final claim in this bullet is now an easy consequence of the fact that for any $a\in \widehat{\A}$, $\mathscr{W}_n(a)$ will by the reduction theorem, converge $\sigma$-weakly to $a$.  

\medskip
 
\textbf{Part (3):} All the claims, except for the claims regarding density, are fairly easy consequences of Theorem \ref{10:T reduction Lp} and the techniques developed in its proof. We shall therefore not insult the reader's sensibilities by repeating obvious facts. We pass to commenting on the density claim regarding $\cup_{n\geq 1}H^p_0(\A_n)$. We know that $\cup_{n\geq 1}H^p_0(\A_n)\subset H^p_0(\widehat{\A})$ and that $\cup_{n\geq 1}L^p(\D_n)$ and $\cup_{n\geq 1}L^p(\R_n)$ are respectively dense in $L^p(\D)$ and $L^p(\R)$. For the case of $1<p<\infty$ it now clearly follows from Theorem \ref{complementHp}, that 
 $$\cup_{n\geq 1}L^p(\R_n)=(\cup_{n\geq 1} H^p_0(\A_n))\oplus (\cup_{n\geq 1}L^p(\D))\oplus (\cup_{n\geq 1} H^p_0(\A_n)^*)$$ 
will not be dense in $L^p(\R)=H^p_0(\widehat{\A})\oplus L^p(\widehat{\D}) \oplus H^p_0(\widehat{\A})^*$ if $\cup_{n\geq 1}H^p((\A_n)_0)$ is not dense in $H^p(\widehat{\A}_0)$. Thus by Theorem 
\ref{10:T reduction Lp} the claim holds for the case $1<p<\infty$. 

For the case $p=1$ it follows from Definition \ref{defHp} and Theorem \ref{tech-thm} that $H^1_0(\widehat{\A})=[H^2(\widehat{\A}).H^2_0(\widehat{\A})]_1$. Any $x\in H^1_0(\widehat{\A})$ may therefore be written in the form $x=ab$ where $a\in H^2(\widehat{\A})$ and $b\in H^2_0(\widehat{\A})$. From what we have already shown there exist sequences $(a_n)$ and $(b_n)$ $a_n \in H^2(\A_n)$ and $b_n \in H^2_0(\A_n)$ for each $n$, which respectively converge in $L^2$-norm to $a$ and $b$. But then each $a_nb_n\in H^1_0(\A_n)$ for each $n$, with $(a_nb_n)$ converging in $L^1$-norm to $ab=x$.

The density claim regarding $\cup_{n\geq 1}H^p(\A_n)$ may be verified in a similar fashion.
\end{proof}

\begin{remark}\label{doublered} When these techniques are placed alongside each other, a double reduction method emerges for lifting results from the context finite von Neumann algebras equipped with a faithful normal tracial state to general von Neumann algebras. As a first step results regarding classical $H^p(\mathbb{T})$ spaces are lifted to the setting of finite von Neumann algebras equipped with finite faithful normal tracial states. The next step is to use Bekjan's method as described in Proposition \ref{Bek-red} to lift the results for finite von Neumann algebras equipped with a faithful normal tracial state, to semifinite von Neumann algebras, with the final step being an application of the analytic reduction theorem (Proposition \ref{anred}) to pass from semifinite von Neumann algebras to general von Neumann algebras. We shall have one occasion to demonstrate this double reduction technique - the comparison of left and right Toeplitz operators.   
\end{remark}

\section{A Beurling theory of invariant subspaces}\label{S11}

For a regular approximately subdiagonal subalgebra $\A$ of $\M$ we define a {\em (right) $\A$-invariant subspace} 
of $L^p(\M)$, to be a closed subspace $K$ of $L^p(M)$ such that $K \A \subset K$.  Invariant subspaces may be 
classified in accordance with their structure. In this regard we say that an invariant subspace $K$ is 
{\em simply invariant} if in addition the closure of $K \A_0$ is properly contained in $K$. 
Given a right $\A$-invariant subspace $K$ of $L^2(\M)$, we define the {\em right wandering subspace} 
of $K$ to be the space $W = K \ominus [K \A_0]_2$, and then say that $K$ is {\em type 1} if
$W$ generates $K$ as an $\A$-module (that is, $K = [W \A]_2$), and {\em type 2} if $W = \{0\}$. If $\A$ is a regular 
approximately subdiagonal subalgebra for which $\A=\A_0$, then all right $\A$-invariant subspaces are by default 
of type 2 which then forces a great simplification of the theory. \emph{We will therefore restrict attention to maximal 
subdiagonal subalgebras in the rest of this section (the case where $\A\neq \A_0$).} For consistency, we will not consider 
left invariant subspaces at all, leaving the reader to verify that entirely symmetric 
results pertain in the left invariant case.

The theory of invariant subspaces for $H^2(\mathbb{D})$ was of course pioneered by Beurling, and 
forms a very important part of the classical theory. In \cite{BL-Beurling}, Blecher and Labuschagne 
extended the classical Beurling theory to the setting of finite maximal subdiagonal subalgebras, in 
the process also showing that the noncommutative theory allows for a much more intricate structure 
than the classical theory. This theory was then first extended to maximal subdiagonal subalgebras of 
semi-finite von Neumann algebras by Sager in \cite{sager}, and then almost simultaneously to $\sigma$-finite von Neumann 
algebras by Labuschagne in \cite{L-HpIII} for the case of $p=2$. Subsequently Bekjan and Raikhan showed that the $\sigma$-finite theory also goes through for the case where $p\neq 2$. With the appropriate technology now at our disposal, 
we show that the theory carries over to the context of general von Neumann algebras. From a conceptual point of view, we shall closely follow the outlines of \cite{BL-Beurling} and \cite{L-HpIII}. It should however be noted that the bulk of the development of the theory for the case $p\neq 2$ required completely new proof strategies.

The development of especially the $L^p$-version of the theory of closed right $\A$-invariant subspaces, makes deep use of the concept of a `column $L^p$-sum' as introduced in \cite{js}. Given $1 \leq p < \infty$ and a collection $\{ X_i : i \in I \}$ of closed subspaces of $L^p(M)$, the {\em external} column $L^p$-sum $\oplus^{col}_i \, X_i$ is defined to be the
closure of the restricted algebraic sum in the norm
 $$\Vert (x_i) \Vert_p \overset{def}{=} tr((\sum_i \, x_i^* x_i)^{\frac{p}{2}})^{\frac{1}{p}}.$$
That this is a norm for $1 \leq p < \infty$ is verified in \cite{js}. If $X$ is a subspace of $L^p(M)$, and if 
$\{ X_i : i \in I \}$ is a collection of  subspaces of $X$, which together densely span $X$, with the added property that 
$X_i^* X_j = \{ 0 \}$ if $i \neq j$, then we say that $X$ is the {\em internal} column $L^p$-sum $\oplus^{col}_i \, X_i$. 
We shall not need the concept of an external column sum. So wherever column sum is mentioned below, it shall refer to an 
internal column sum. Note that  if $J$ is a finite subset of $I$, and if $x_i \in X_i\subset L^p$ for all $i \in J$, then we have that $$tr(|\sum_{i \in J} \, x_i|^p)^{1/p} = tr((|\sum_{i \in J} \, x_i|^2)^{\frac{p}{2}})^{1/p}
= tr((\sum_{i \in J} \, x_i^* x_i)^{\frac{p}{2}})^{1/p}.$$This shows that $X$ is then isometrically isomorphic to the external column $L^p$-sum $\oplus^{col}_i \, X_i$. Since the projections onto the summands are clearly
contractive, it follows by routine arguments (or by \cite[Lemma 2.4]{js}) that if $(x_i)\in \oplus^{col}_i \, X_i$, then the net $(\sum_{j \in J} \, x_j)$, indexed by the finite subsets $J$ of $I$, converges in norm to $(x_i)$. 

\subsection{Invariant subspaces of $L^2(\M)$}

The first cycle of results we present are extensions of corresponding results in \S 2 of \cite{L-HpIII}. 
The first result in this regard is basically a restatement of \cite[Theorem 2.4]{L-HpIII}. The exact same 
proof offered in \cite{L-HpIII} goes through in the general setting and hence we forgo the proof. 

\begin{theorem} \label{inv1}
Let $\A$ be an analytically conditioned algebra.    \begin{itemize}
\item [(1)]   Suppose that $X$ is a subspace of $L^2(\M)$
of the form $X = Z \oplus^{col} [Y\A]_2$ where $Z, Y$ are closed
subspaces of $X$, with $Z$
a type 2 invariant subspace,
and $\{y^*x : y, x \in Y \} = Y^*Y \subset L^1({\mathcal D})$.  Then
$X$ is simply right $\A$-invariant if and only if $Y \neq
\{0\}$.
\item [(2)]  If $X$ is as in {\rm (1)},
then $[Y {\mathcal D}]_2 = X \ominus [X\A_0]_2$ (and
$X = [X\A_0]_2 \oplus [Y {\mathcal D}]_2$).
\item [(3)]   If $X$ is as described in {\rm (1)},
then that description also holds if $Y$ is replaced by $[Y {\mathcal D}]_2$.  Thus
(after making this replacement)
we may assume that $Y$ is a ${\mathcal D}$-submodule of $X$.

\item [(4)]   The subspaces $[Y {\mathcal D}]_2$ and $Z$ in the decomposition
in  {\rm (1)} are uniquely determined by $X$.  So is $Y$ if we
take it to be a ${\mathcal D}$-submodule (see {\rm (3)}).
\item [(5)]  If $\A$ is maximal subdiagonal, then any right $\A$-invariant subspace
$X$ of
$L^2(\M)$ is of the form described in {\rm (1)},
with $Y$ the right wandering subspace of $X$.
\end{itemize}
\end{theorem}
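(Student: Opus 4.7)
The plan is to treat parts (1)--(4) as direct consequences of the assumed decomposition together with an orthogonality computation, and to reserve the real work for part (5), where for a maximal subdiagonal $\A$ one must produce the decomposition from scratch using a wandering-subspace construction. Throughout, the workhorse identities are the multiplicativity of $\E$ on $\A$ (so $\E(\A_0) = \{0\}$), the $\D$-bimodule character of $L^1(\D)$, the $L^1$--$L^\infty$ pairing $\mathrm{tr}(\E_1(\cdot))$ of Remark \ref{10:R cond-exp}, Proposition \ref{exp-props}, and the refined Arveson maximality criterion of Theorem \ref{newArvmax}.

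For parts (1)--(4), the basic calculation is that for $y_1, y_2 \in Y$, $d \in \D$, $a_0 \in \A_0$ we have
\[
\mathrm{tr}((y_1 d)^*(y_2 a_0)) = \mathrm{tr}(d^*(y_1^* y_2) a_0) = \mathrm{tr}(\E_1(d^*(y_1^* y_2) a_0)) = \mathrm{tr}(d^*(y_1^* y_2)\E(a_0)) = 0,
\]
invoking $y_1^* y_2 \in L^1(\D)$ and Proposition \ref{exp-props}. Hence $[Y\D]_2 \perp [Y\A_0]_2$. Combining with the hypothesis that $Z$ is type 2 (so $[Z\A_0]_2 = Z$) yields $[X\A_0]_2 = Z \oplus^{col} [Y\A_0]_2$, which gives (2) on subtracting from $X$; part (1) then follows by reading off the wandering subspace. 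For (3), note that $[Y\D]_2$ still satisfies $[Y\D]_2^* [Y\D]_2 \subset L^1(\D)$ because $L^1(\D)$ is a closed $\D$-bimodule, and $[[Y\D]\A]_2 = [Y\A]_2$. For (4), part (2) determines the wandering subspace $[Y\D]_2$ intrinsically, and then $Z = X \ominus [Y\A]_2$ is forced; uniqueness of $Y$ as a $\D$-module follows from $[Y\D]_2 = Y$ under the convention of (3).

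For (5), assume $\A$ is maximal subdiagonal and let $X$ be a simply right $\A$-invariant subspace of $L^2(\M)$. Form $W = X \ominus [X\A_0]_2$. One immediately sees that $W\D \subset W$, since $[X\A_0]_2$ is a right $\D$-module (as $\D \A_0 \subset \A_0$). The crux is:
\begin{equation*}
W^* W \subset L^1(\D).
\end{equation*}
For $w, w' \in W$ and any $a_0 \in \A_0$, the defining orthogonality $W \perp X\A_0$ applied with $x = w' \in X$ gives $\mathrm{tr}(a_0^* (w^* w')) = 0$, and symmetrically $\mathrm{tr}(a_0 (w^* w')) = 0$ after taking adjoints with $w, w'$ swapped. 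Thus $w^* w' \in L^1(\M)$ annihilates both $\A_0$ and $\A_0^*$ in the $L^1$--$L^\infty$ pairing. By Theorem \ref{newArvmax} and the resulting $L^1$-analogue of the decomposition in Theorem \ref{complementHp} (namely $L^1(\M) = H^1_0(\A) \oplus L^1(\D) \oplus H^1_0(\A)^*$), these two annihilation conditions force $w^* w' \in L^1(\D)$. With the claim in hand, set $Y = W$ and $Z = X \ominus [W \A]_2$. The orthogonality of parts (1)--(4) above applied to $[W\A]_2$ shows $[Z\A_0]_2 \subset Z$, and the opposite inclusion follows because $Z \subset [X\A_0]_2$ (since $W \perp Z$ and $W$ is the complement of $[X\A_0]_2$ in $X$); hence $Z$ is type 2, and the column decomposition $X = Z \oplus^{col} [W\A]_2$ is the required form.

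The main obstacle will be the identification $W^* W \subset L^1(\D)$, because in the non-tracial, non-$\sigma$-finite setting the product of two $L^2$ elements is only pre-measurable in the crossed product sense, and one cannot manipulate $w^* w'$ as an ordinary operator product with the density $h$ inserted. My plan is to approximate $w, w'$ in $L^2$-norm by vectors of the form $\mathfrak{j}^{(2)}(a)$, $\mathfrak{j}^{(2)}(b)$ with $a, b \in \mathfrak{n}_\nu^\infty$ (Proposition \ref{P density1}), compute $\mathfrak{j}^{(2)}(a)^* \mathfrak{j}^{(2)}(b) = \mathfrak{i}^{(1)}(\sigma_{-i/2}(a)^* b)$ via Lemma \ref{GL2-2.4+5} and Proposition \ref{exp-props}, and transfer the two annihilation conditions to statements about $\sigma$-weakly densely many elements of $\M$ before passing to the limit in $L^1$-norm using the continuity of $\E_1$. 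A secondary technical point, needed to set up the internal column $L^2$-sum $[W\A]_2$, is that $W^*W \subset L^1(\D)$ combined with Proposition \ref{exp-props} gives the required $\D$-valued inner-product structure on $W$ and hence the column-sum identity $\|\sum_i w_i a_i\|_2^2 = \|\sum_i a_i^* w_i^* w_i a_i\|_1$, which is what allows the Beurling-type argument to close.
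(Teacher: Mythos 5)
The paper itself gives no proof of this theorem: it states that the proof of \cite[Theorem 2.4]{L-HpIII} goes through verbatim, so your attempt must be judged against that standard argument. Your treatment of (1)--(4) is essentially correct and matches the usual route: the pairing computation $\mathrm{tr}(d^*(y_1^*y_2)a_0)=\mathrm{tr}(d^*(y_1^*y_2)\E(a_0))=0$ is legitimate (via $\mathrm{tr}\circ\E_1=\mathrm{tr}$ and the module property of Proposition \ref{exp-props} applied with $d^*(y_1^*y_2)\in L^1(\D)$), and together with $\A=\D\oplus\A_0$ it yields $[X\A_0]_2=Z\oplus[Y\A_0]_2$ and hence (2), from which (1), (3), (4) follow as you say. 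In (5) there are, however, two genuine problems. First, your justification of $W^*W\subset L^1(\D)$ appeals to an ``$L^1$-analogue'' of Theorem \ref{complementHp}, i.e.\ $L^1(\M)=H^1_0(\A)\oplus L^1(\D)\oplus H^1_0(\A)^*$. No such decomposition exists: Theorem \ref{complementHp} is stated only for $1<p<\infty$ precisely because the Riesz projections are unbounded on $L^1$ (already for $\M=L^\infty(\mathbb{T})$ the algebraic sum is dense but proper). The conclusion you want is still true, but must be argued differently: for $g=w^*w'\in L^1(\M)$ annihilating $\A_0$ and $\A_0^*$, show $\mathrm{tr}\bigl((g-\E_1(g))c\bigr)=0$ for every $c\in\A+\A^*$ using $\mathrm{tr}(\E_1(g)c)=\mathrm{tr}(g\E(c))$ (Proposition \ref{exp-props} and $\mathrm{tr}\circ\E_1=\mathrm{tr}$), and then invoke the normality of $c\mapsto \mathrm{tr}(hc)$ together with the $\sigma$-weak density of $\A+\A^*$ to conclude $g=\E_1(g)\in L^1(\D)$.

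Second, and more seriously, you never prove $W^*\bigl(X\ominus[W\A]_2\bigr)=\{0\}$. This is not a cosmetic omission: the internal column sum $Z\oplus^{col}[W\A]_2$ is \emph{defined} by the algebraic condition $Z^*[W\A]_2=\{0\}$, not by Hilbert-space orthogonality, and the same identity is what makes $Z$ right $\A$-invariant (so that $[Z\A_0]_2\subseteq Z$ makes sense) and what drives the reverse inclusion $Z\subseteq[Z\A_0]_2$ needed for $Z$ to be type 2. Indeed the paper records $W^*(X\ominus[W\A]_2)=\{0\}$ as one of the two substantive conclusions of this theorem in condition (2) of Theorem \ref{Co}. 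The fix follows the same pattern you already use: for $w\in W$ and $z\in Z=X\ominus[W\A]_2$, the relation $z\perp W\A$ gives $\mathrm{tr}(w^*zb)=0$ for all $b\in\A^*$, while $w\perp[X\A_0]_2\supseteq z\A_0$ gives $\mathrm{tr}(w^*za_0)=0$ for all $a_0\in\A_0$; since $\A^*+\A_0=\A+\A^*$ is $\sigma$-weakly dense and $w^*z\in L^1(\M)$ induces a normal functional, $w^*z=0$. With these two repairs your outline closes and coincides with the proof the paper is deferring to.
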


Building on Theorem \ref{inv1}, we are now able to present the following rather elegant decomposition of the 
right wandering subspace. This extends \cite[Proposition 2.5]{L-HpIII}. The proof of the general case is quite 
a bit more tricky than that of the $\sigma$-finite case, and hence full details need to be provided. 

\begin{theorem} \label{newpr}  Suppose that $X$ is
as in Theorem {\rm
\ref{inv1}}, and that $W$ is the right wandering subspace of $X$.
Then $W$ may be decomposed as
an orthogonal direct sum $\oplus^{col}_i \, u_i L^2({\mathcal D})$,
where $u_i$ are partial isometries in $\M$ for which 
$u_i(\frac{d\widetilde{\varphi}}{d\tau_L})^{1/2}a\in W$ for each $a\in \mathfrak{n}(\A)_\nu$, with
$u_i^* u_i \in {\mathcal D}$, and $u_j^* u_i = 0$ if
$i \neq j$.   If $W$ has a cyclic vector for the ${\mathcal D}$-action,
then we need only one partial isometry in the above.
\end{theorem}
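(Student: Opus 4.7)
The plan is to use Zorn's lemma on a suitably chosen poset of orthogonal families of partial isometries, extracted via polar decompositions of elements of $Y$. The starting observation is that for any $y\in Y$, the hypothesis $Y^*Y\subset L^1(\D)$ gives $y^*y\in L^1(\D)^+$, so $|y|:=(y^*y)^{1/2}\in L^2(\D)^+$. Viewing $y$ as a $\tau$-measurable operator affiliated to the crossed product $\cM=\M\rtimes_\nu\bR$, its polar decomposition $y=u|y|$ yields a partial isometry $u\in\M$ (the scaling $\theta_s$ fixes $u$ because $\theta_s(y)=e^{-s/2}y$ and $\theta_s(|y|)=e^{-s/2}|y|$) with $u^*u$ equal to the support projection of $|y|$, which lies in $\D$ because $y^*y$ does.

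The first verification is that $uL^2(\D)\subseteq W$ and that $u$ satisfies the claimed membership property. Set $v:=u^*u\in\D$. Since $|y|\in vL^2(\D)^+$ has full support on $v$, a standard functional-calculus argument (truncating by $\chi_{[1/n,\infty)}(|y|)$) shows $[|y|\n(\D)]_2=vL^2(\D)$. Combined with $yd=u(|y|d)\in Y\D\subset W$ for $d\in\D$, this yields $uL^2(\D)\subseteq W$. To verify $uh^{1/2}a\in W$ for $a\in\n_\nu(\A)$, where $h$ denotes the density of the dual weight $\tnu$ with respect to the canonical trace, one decomposes $a=\E(a)+a_0$: the diagonal part gives $u(h^{1/2}\E(a))\in uL^2(\D)\subseteq W$ directly, while for $a_0\in\A_0$ one appeals to Lemma \ref{GL2-2.4+5} and Theorem \ref{tech-thm} to realise $u(h^{1/2}a_0)$ as the $L^2$-limit of products of the form $u|y|d=yd$ with $d\in\n(\D)$ chosen so that $|y|d$ approximates the relevant $L^2(\D)$-component.

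Zorn's lemma is then applied to the poset of families $\{u_i\}_{i\in I}$ of partial isometries in $\M$ satisfying (i) $u_i^*u_i\in\D$, (ii) $u_iL^2(\D)\subseteq W$, and (iii) $u_j^*u_i=0$ for $i\neq j$. Condition (iii) forces orthogonality of the subspaces $u_iL^2(\D)$ in the column $L^2$-sense, since for $d_1,d_2\in\n_\nu(\D)$ one has $\langle u_id_1,u_jd_2\rangle=\tr(d_2^*u_j^*u_id_1)=0$. Pick a maximal such family and set $X_0:=\bigoplus^{col}_i u_iL^2(\D)\subseteq W$.

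The main obstacle is to show $X_0=W$. Assume $X_0\subsetneq W$; then $W\ominus X_0$ is a nonzero closed right $\D$-submodule of $W$. Using the description $W=[Y\D]_2$, project a suitable nonzero $y_0\in Y$ onto $W\ominus X_0$ and, leveraging the fact that $(W\ominus X_0)^*(W\ominus X_0)\subset L^1(\D)$ is inherited from the analogous property of $W$ via the $\D$-bimodule structure, extract a nonzero element $y'$ in $W\ominus X_0$ whose inner product structure still satisfies $(y')^*y'\in L^1(\D)$. The polar decomposition construction from the first paragraph then yields a new partial isometry $u'$ satisfying conditions (i)--(ii) with $u'L^2(\D)\subseteq W\ominus X_0$, which automatically forces $u'^*u_i=0$ for every $i\in I$ and contradicts the maximality of $\{u_i\}$. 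Hence $W=X_0$. The cyclic case is immediate: if $w\in W$ generates $W$ as a $\D$-module (and necessarily satisfies $w^*w\in L^1(\D)$), the single polar decomposition $w=u|w|$ yields $W=[w\D]_2=u[|w|\D]_2=uL^2(\D)$.
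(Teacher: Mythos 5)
Your route is genuinely different from the paper's. The paper invokes the Junge--Sherman classification of normal Hilbert $\D$-modules to split $W$ into cyclic pieces spatially isomorphic to $e_iL^2(\D)$, and then realises each spatial isomorphism as left multiplication by a partial isometry through a weak* cluster-point argument applied to the net $(u_\lambda f_\lambda)$ built from Terp's approximate identity. You instead work entirely inside $L^2(\M)$: polar decomposition of an element $y$ with $y^*y\in L^1(\D)$ in the algebra of $\tau$-measurable operators affiliated to $\M\rtimes_\nu\bR$, invariance under the dual action to force $u\in\M$ and $u^*u=s(|y|)\in\D$, and Zorn's lemma to exhaust $W$. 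This is more elementary (no module classification, no cluster-point extraction), and the single-element construction is correct; note only that the truncations $\chi_{[1/n,\infty)}(|y|)$ are projections of $\D\rtimes_\nu\bR$ rather than of $\D$, so the identity $[|y|\D]_2=vL^2(\D)$ is better justified by observing that $v=s(|y|)$ is the smallest projection of $\D$ fixing $|y|$ and appealing to the standard form of $\D$.

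There are, however, two genuine gaps. First, in the maximality step you assert that $u'L^2(\D)\subseteq W\ominus X_0$ \emph{automatically} forces $u_i^*u'=0$. Hilbert-space orthogonality alone yields only $\tr(\xi^*u_i^*u'\eta)=0$ for $\xi,\eta\in L^2(\D)$, which (taking $\xi,\eta$ of the form $[dh^{1/2}]$) gives $\E(u_i^*u')=0$, not $u_i^*u'=0$. To close this you must use that $(u_i\xi)^*(u'\eta)\in W^*W\subseteq L^1(\D)$ together with the right $\D$-invariance of both subspaces: then $\tr(d^*\xi^*u_i^*u'\eta)=0$ for all $d\in\D$, so $\xi^*u_i^*u'\eta=0$ by the duality $L^1(\D)^*\cong\D$, and letting $\xi,\eta$ run through $[f_\lambda h^{1/2}]$ gives $u_i^*u'=0$. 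The ingredients are all present in your write-up, but the step is not automatic, and it is precisely where the column-sum (as opposed to merely orthogonal) structure is earned. Second, your verification of the clause ``$u_ih^{1/2}a\in W$ for $a\in\n_\nu(\A)$'' fails for the component $a_0\in\A_0$: the element $u_i(h^{1/2}a_0)$ lies in $[u_iL^2(\D)\A_0]_2\subseteq[X\A_0]_2$, which is orthogonal to $W$, so it cannot be realised as a limit of products $yd$ with $d\in\n(\D)$ and belongs to $W$ only if it vanishes. In fairness, this clause of the statement appears to be a misprint (the notation $\widetilde{\varphi}$, $\tau_L$ is undefined in this paper, and the paper's own proof establishes only $u_iL^2(\D)\subseteq W$, i.e.\ the case $a\in\n_\nu(\D)$); you should prove that version and flag the discrepancy rather than manufacture an argument for the literal claim.
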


\begin{proof}   By the theory of representations of a
von Neumann algebra (see e.g.\ the discussion at the
start of Section 3 in
\cite{js}), any normal Hilbert
${\mathcal D}$-module is an $L^2$ direct sum of cyclic
Hilbert ${\mathcal D}$-modules,
and if $K$ is a normal
cyclic Hilbert ${\mathcal D}$-module, then
$K$ is spatially isomorphic to $eL^2({\mathcal D})$, for
an orthogonal projection $e \in {\mathcal D}$.

Suppose that the latter isomorphism is implemented by a unitary ${\mathcal D}$-module map $\psi$. Let $(f_\lambda)$ be the 
net in $\mathfrak{n}(\D)_\nu^*\cap \mathfrak{n}(\D)_\nu$ converging strongly to $\I$ guaranteed by Proposition \ref{7:P Terp2}. If in addition 
$K \subset W$, we will then have that $g_\lambda = \psi(e[f_\lambda h^{1/2}]) \in W$ for each $\lambda$, where 
$h=\frac{d\widetilde{\varphi}}{d\tau_L}$. Then 
$$tr(d^* g_\lambda^* g_\lambda d) = \Vert \psi(e[f_\lambda h^{1/2}] d) \Vert_2^2 = tr(d^*(h^{1/2}f_\lambda^*) e[f_\lambda h^{1/2}] d),$$for each $d \in {\mathcal D}$, and so 
$g_\lambda^*g_\lambda=(h^{1/2}f_\lambda^*) e[f_\lambda h^{1/2}]=|e[f_\lambda h^{1/2}]|^2$. Hence there exists a partial 
isometry $u_\lambda$ majorised by $e$ such that $g_\lambda=u_\lambda e[f_\lambda h^{1/2}] =u_\lambda [f_\lambda h^{1/2}]$. 
By the modular action of $\psi$ we will then have that $\psi(e[f_\lambda h^{1/2}] d)= g_\lambda d=u_\lambda [f_\lambda h^{1/2}] d$ for any $d\in \mathcal{D}$. Since $L^2(\mathcal{D})$ is the closure of 
$\{(h^{1/2}d): d\in \mathfrak{n}(\mathcal{D})_\nu\}$ (see \cite[Proposition 7.40 \& Theorem 7.45]{GLnotes}), and since 
$\psi(e[f_\lambda h^{1/2}] d)= u_\lambda [f_\lambda h^{1/2}] d =(u_\lambda f_\lambda)(h^{1/2} d)$ for each 
$d\in \mathfrak{n}(\mathcal{D})_\nu^*$, it follows $\psi(ef_\lambda b)=u_\lambda f_\lambda b$ for all $b\in L^2(\mathcal{D})$.

When working with $\mathcal{D}$, we may of course assume that $\mathcal{D}$ is in standard form, in which case the 
Haagerup-Terp standard form (Theorem \ref{7:T stdform}) enables us to further identify $L^2(\M)$ with the underlying Hilbert space of $\M$. 
But then the $\sigma$-strong* convergence of $(f_\lambda)$ to $\I$ ensures that $ef_\lambda b$ will for any 
$b\in L^2(\mathcal{D})$ converge in $L^2$-norm to $eb$. Since 
 $$\|eb-ef_\lambda b\|_2 =\|\psi(eb-ef_\lambda b)\|_2=\|\psi(eb)-u_\lambda f_\lambda b\|_2,$$ 
this in turn ensures that $(u_\lambda f_\lambda b)$ converges to 
$\psi(eb)$ in $L^2$-norm. Given that the net $(u_\lambda f_\lambda)$ is in the unit ball of $\M$, it must admit 
a subnet $(u_\gamma f_\gamma)$ which converges to some element $u_e$ of the unit ball of $\M$. For any 
$b\in L^2(\D)$ the net $(u_\gamma f_\gamma b)$ will then converge to $u_eb$ in the $L^2$-weak topology. But 
$(u_\gamma f_\gamma b)$ is also a subnet of $(u_\lambda f_\lambda b)$ which converges to $\psi(eb)$, and will 
therefore itself still be $L^2$-norm convergent to $\psi(eb)$. It is therefore clear that 
$\psi(eb)=u_eb$ for each $b\in L^2(\D)$ and hence that $(u_\lambda f_\lambda b)$ is for each $b\in L^2(\D)$, 
$L^2$-norm convergent to $u_eb$. For any $b\in L^2(\D)$, we now also have that
\begin{eqnarray*}
tr(d^*b^*ebd) &=& \|ebd\|_2^2\\
&=& \lim_\lambda\|ef_\lambda bd\|_2^2\\
&=& \lim_\lambda\|\psi(ef_\lambda bd)\|_2^2\\
&=& \lim_\lambda\|\psi(ef_\lambda b)d\|_2^2\\
&=& \lim_\lambda\|u_\lambda f_\lambda bd\|_2^2\\
&=& \|u_ebd\|_2^2\\
&=& tr(d^*b^*u_e^*u_ebd).
\end{eqnarray*}
This equality firstly ensures that $b^*eb=b^*u_e^*u_eb$ for all $b\in L^2(\D)$, which then in turn ensures that 
$u_e^*u_e=e$. It follows that $u_e$ is a partial isometry with initial projection $e$, and that 
$\psi(eL^2(\mathcal{D}))=u_eL^2(\mathcal{D})$.

Given $u_i$ and $u_j$ with $i\neq j$, we have that $u_iL^2(\mathcal{D}), u_jL^2(\mathcal{D})\subset W$. 
Hence $L^2(\mathcal{D})u_j^* u_iL^2(\mathcal{D}) \subset L^1({\mathcal D})$. Since for 
any $d_0, d_1\in \mathfrak{n}(\mathcal{D})_\nu$ we have that  
\begin{eqnarray*}
tr([d_1^*h^{1/2}]u_j^* u_i(h^{1/2}d_0)) &=&  tr(\psi(e_j(h^{1/2}d_1))^*\psi(e_i(h^{1/2}d_0)))\\
&=& tr([d_1^*h^{1/2}]e_je_i(h^{1/2}d_0))\\
&=&0,
\end{eqnarray*}
the density of $\{(h^{1/2}d): d\in \mathfrak{n}(\mathcal{D})_\nu\}$ in $L^2(\D)$ now ensures that $u_j^* u_i=0$. In the case 
where $i=j$ we of course have that $u_i^*u_i=e_i\in \mathcal{D}$. Putting these facts together,
we see that $W$ is of the desired form.
\end{proof}

The first corollary of the above theorem corresponds to \cite[Corollary 2.5]{L-HpIII}. Here too the proof of 
the general case requires more delicacy than that of the $\sigma$-finite case, and hence we state the proof in full.

\begin{corollary}  Suppose that $X$ is
as in Theorem {\rm \ref{inv1}}, and that $W$ is the right wandering subspace of $X$.
If indeed $X\subset \mathcal{H}^2(\A)$, then $Z\perp L^2(\mathcal{D})$. If additionally $\A$ is maximal subdiagonal, 
then the partial isometries $u_i$ described in the preceding Proposition, all belong to $\A$.
\end{corollary}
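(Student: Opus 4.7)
The two assertions are largely independent but both hinge on the orthogonality $\mathcal{H}^2(\A)\perp\mathcal{H}^2(\A_0)^{*}$ recorded in the remark following Theorem \ref{mw-5.2}, together with the refined density statements for approximately subdiagonal subalgebras (Proposition \ref{refine-2} and Proposition \ref{analytic-n(A)}) and the Haagerup--Terp identifications of Theorem \ref{tech-thm}.

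For the first claim, the plan is to exploit the type~2 hypothesis $Z=[Z\A_0]_2$. Any $z\in Z$ can be approximated by finite sums $\sum_k z_{n,k}a_{n,k}$ with $z_{n,k}\in Z\subseteq\mathcal{H}^2(\A)$ and, after an extra approximation, $a_{n,k}\in\mathfrak{n}_\nu(\A_0)\cap\mathfrak{n}_\nu(\A_0^{*})^{*}$. For any $d\in L^2(\D)$ the duality identity $\langle z_{n,k}a_{n,k},d\rangle=\langle z_{n,k},da_{n,k}^{*}\rangle$ reduces the claim to showing $da_{n,k}^{*}\in\mathcal{H}^2(\A_0)^{*}$. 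Density of $\{[d_0 h^{1/2}]:d_0\in\mathfrak{n}_\nu(\D)\}$ in $L^2(\D)$ (available because $\nu\!\restriction\!\D$ is semifinite) lets me treat only $d=[d_0 h^{1/2}]$, in which case $a_{n,k}d^{*}=[a_{n,k}d_0 h^{1/2}]=\mathfrak{j}^{(2)}(a_{n,k}d_0)\in\mathcal{H}^2(\A_0)$ since $a_{n,k}d_0\in\A_0\cap\mathfrak{n}_\nu$. The orthogonality then forces $\langle z,d\rangle=0$.

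For the second claim, my plan is to apply the refined Arveson criterion (Theorem \ref{newArvmax}), which reduces $u_i\in\A$ to verifying $\mathbb{E}(au_i)=0$ for every $a\in\A_0$. From the preceding theorem $u_iL^2(\D)\subseteq W\subseteq X\subseteq\mathcal{H}^2(\A)$, so $\mathfrak{j}^{(2)}(u_id_0)=[u_id_0 h^{1/2}]\in\mathcal{H}^2(\A)$ for every $d_0\in\mathfrak{n}_\nu(\D)$. Pairing against $\mathfrak{j}^{(2)}(a)^{*}\in\mathcal{H}^2(\A_0)^{*}$ for $a$ in the $\sigma$-weakly dense analytic class $\mathfrak{n}_\nu^{\infty}(\A_0)$, and rewriting $[ah^{1/2}]=h^{1/2}\sigma_{i/2}^{\nu}(a)$ via Lemma \ref{GL2-2.4+5}, the trace computation collapses to
\[
0=\mathrm{tr}\bigl([ah^{1/2}][u_id_0 h^{1/2}]\bigr)=\nu\bigl(\sigma_{i/2}^{\nu}(a)u_id_0\bigr).
\]
Since $\A_0$ is $\sigma_t^{\nu}$-invariant, $a':=\sigma_{i/2}^{\nu}(a)$ still ranges over $\mathfrak{n}_\nu^{\infty}(\A_0)$, and combining $\nu=\nu\circ\mathbb{E}$ with the $\D$-bimodule property $\mathbb{E}(a'u_id_0)=\mathbb{E}(a'u_i)d_0$ gives $\nu(\mathbb{E}(a'u_i)d_0)=0$ for every $d_0\in\mathfrak{n}_\nu(\D)$.

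The hard part, which is where the non-$\sigma$-finite setting bites, will be extracting $\mathbb{E}(a'u_i)=0$ from this vanishing, since one has only a weight on $\D$ rather than a state. Setting $x:=\mathbb{E}(a'u_i)\in\D$, my plan is to apply Proposition \ref{7:P Terp2} to $(\D,\nu\!\restriction\!\D)$ to obtain a net $(f_\lambda)\subset\D$ of positive analytic elements in $\mathfrak{n}_\nu(\D)\cap\mathfrak{n}_\nu(\D)^{*}$ converging $\sigma$-strongly to $\I$, and to test with $d_0:=x^{*}f_\lambda^{2}\in\D$. A routine operator-norm bound shows $d_0\in\mathfrak{n}_\nu(\D)$ (since $f_\lambda^{2}\in\mathfrak{p}_\nu$ and $\mathfrak{n}_\nu$ is a left ideal), and because $f_\lambda\in\D\subseteq\M_\nu$ lies in the centralizer, the cycling identity
\[
\nu(xd_0)=\nu(xx^{*}f_\lambda^{2})=\nu(f_\lambda xx^{*}f_\lambda)=\|x^{*}f_\lambda\|_\nu^{2}
\]
is legitimate. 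Vanishing of the left side forces $x^{*}f_\lambda=0$ in $H_\nu$, hence in $\M$ by injectivity of $\eta$ on $\mathfrak{n}_\nu$, and letting $f_\lambda\to\I$ $\sigma$-strongly yields $x^{*}=0$. Thus $\mathbb{E}(a'u_i)=0$ for all $a'\in\mathfrak{n}_\nu^{\infty}(\A_0)$, which by $\sigma$-weak density and normality of $\mathbb{E}$ extends to $\mathbb{E}(au_i)=0$ for every $a\in\A_0$, and Theorem \ref{newArvmax} concludes $u_i\in\A$.
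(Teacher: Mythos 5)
Your first claim is fine, though you take the long way round: the paper simply observes that $Z=[Z\A_0]_2\subset[X\A_0]_2\subset\mathcal{H}^2_0(\A)$ and that $\mathcal{H}^2_0(\A)\perp L^2(\D)$ by Corollary \ref{gen-perp}; your generator-level computation is the same orthogonality unwound (and note the small slip that for $d=[d_0h^{1/2}]$ one has $a_{n,k}d=[a_{n,k}d_0h^{1/2}]$, not $a_{n,k}d^*$ — harmless, since $L^2(\D)$ is $*$-invariant). For the second claim your skeleton coincides with the paper's (reduce to $\mathbb{E}(au_i)=0$ for $a\in\A_0$ and invoke Theorem \ref{newArvmax}), and the trace computation down to $\nu(\mathbb{E}(a'u_i)d_0)=0$ for all $d_0\in\mathfrak{n}_\nu(\D)$ is sound.

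The genuine gap is in the step you yourself flag as the hard part. Your cycling identity $\nu(xx^*f_\lambda^2)=\nu(f_\lambda xx^*f_\lambda)$ is justified by the assertion $\D\subseteq\M_\nu$, and that assertion is false: the hypothesis on $\D$ is only that $\sigma_t^\nu(\D)=\D$ (global invariance), which makes the modular group of $(\D,\nu{\upharpoonright}\D)$ the restriction of $\sigma_t^\nu$ — nontrivial unless $\nu{\upharpoonright}\D$ is a trace. Elements of $\D$ are therefore not fixed points of $\sigma_t^\nu$ in general, and the KMS-type cycling produces $\nu(\sigma_i^\nu(f_\lambda)xx^*f_\lambda)$ rather than $\|x^*f_\lambda\|_\nu^2$, so positivity is lost. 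The gap is repairable without the net: since $a'\in\mathfrak{n}_\nu^*$ and $\mathfrak{n}_\nu$ is a left ideal preserved by $\mathbb{E}$, one has $x^*=\mathbb{E}(a'u_i)^*=\mathbb{E}(u_i^*(a')^*)\in\mathfrak{n}_\nu(\D)$, so testing with $d_0=x^*$ gives $\nu(xx^*)=0$ directly and faithfulness finishes. The paper avoids the weight-level extraction altogether by working in $L^2$: it notes $a_0u_ib\in\A_0\mathcal{H}^2(\A)\subset\mathcal{H}^2_0(\A)$ for $b\in L^2(\D)$, applies $\mathbb{E}_2$ (which annihilates $\mathcal{H}^2_0(\A)$) and the module property to get $\mathbb{E}(a_0u_i)b=0$ for all $b\in L^2(\D)$, whence $\mathbb{E}(a_0u_i)=0$ by injectivity of $\mathfrak{j}^{(2)}$ and strong convergence of $(f_\lambda)$ to $\I$.
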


\begin{proof} If indeed $X\subset\mathcal{H}^2(\A)$, it is a fairly trivial observation to make that 
 $$Z=[Z\A_0]_2 \subset [X\A_0]_2\subset [\mathcal{H}^2(\A)\A_0]_2=\mathcal{H}^2_0(\A).$$ 
Since 
$L^2(\mathcal{D})\subset \mathcal{H}^2_0(\A)^*\cap\mathcal{H}^2_0(\A)$, it clearly follows from Corollary 
\ref{gen-perp} that $\mathcal{H}^2(\A)=\mathcal{H}^2_0(\A)\oplus L^2(\mathcal{D})$, and hence the first claim follows. 

Now suppose that $\A$ is maximal subdiagonal. To see the second claim recall that in the proof of Theorem \ref{newpr}, 
we showed that $u_iL^2(\mathcal{D})\subset W$ for each $i$.

Hence given any $a_0\in \A_0$, we will for any $b\in L^2(\D)$ therefore have that 
$a_0u_ib\in a_0W \subset \A_0X \subset \A_0H^2(\A) \subset H^2_0(\A)$. But $\mathbb{E}_2$ annihilates 
$H^2_0(\A)$, and hence we must have that $0=\mathbb{E}_2(a_0u_ib)=\mathbb{E}(a_0u_i)b$ for all $b\in L^2(\D)$. This 
can of course only be if $\mathbb{E}(a_0u_i)=0$. Since $a_0\in \A_0$ was arbitrary, we may now apply the sharpened 
Arveson maximality criterion (Theorem \ref{newArvmax}) to see that $u_i\in \A$ as claimed. 
\end{proof}

The next three results correspond to \cite[Corollary 2.7, Proposition 2.8 \& Theorem 2.9]{L-HpIII}. The proofs in 
\cite{L-HpIII} carry over to the general case, and hence we content ourselves with merely stating these results

\begin{corollary} \label{adcor}  If $X$ is an
invariant subspace of the form described in
Theorem {\rm \ref{inv1}}, then $X$ is type 1 if and only if
 $X = \oplus^{col}_i \, u_i \mathcal{H}^2(\A)$, for $u_i$ as
in Theorem {\rm \ref{newpr}}.
\end{corollary}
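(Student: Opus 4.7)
My plan is to argue both implications by exploiting the column decomposition $W = \oplus^{col}_i u_i L^2(\D)$ of the right wandering subspace given by Theorem \ref{newpr}, together with the key identity
\begin{equation*}
[L^2(\D)\cdot\A]_2 = \mathcal{H}^2(\A) = \mathcal{H}^2_0(\A) \oplus^{col} L^2(\D),
\end{equation*}
valid for any maximal subdiagonal $\A$. The orthogonality in the latter sum follows from Theorem \ref{complementHp} (or equivalently from the remark following the Hilbert transform theorem), and the density identity $[L^2(\D)\A]_2=\mathcal{H}^2(\A)$ follows by approximating any $\mathfrak{j}^{(2)}(b)$ with $b\in\n(\A)$ through the net $([f_\lambda h^{1/2}]\cdot b) = \mathfrak{j}^{(2)}(f_\lambda b) \in L^2(\D)\A$, where $(f_\lambda)\subset\D$ is the net of Proposition \ref{7:P Terp2}; this is essentially the same kind of manipulation used throughout Section 3.

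For the forward direction, assume $X$ is type 1, so $X=[W\A]_2$. Plugging in Theorem \ref{newpr} yields
\begin{equation*}
X = \bigl[\sum_i u_i L^2(\D)\cdot\A\bigr]_2 = \bigl[\sum_i u_i \mathcal{H}^2(\A)\bigr]_2,
\end{equation*}
and the orthogonality relations $u_j^* u_i = 0$ for $i\neq j$ (together with $u_i^*u_i\in\D$) ensure that
$(u_j \mathcal{H}^2(\A))^* (u_i \mathcal{H}^2(\A)) = \mathcal{H}^2(\A)^* u_j^* u_i \mathcal{H}^2(\A) = \{0\}$,
which shows that this is precisely the internal column $L^2$-sum $\oplus^{col}_i u_i \mathcal{H}^2(\A)$. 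For the reverse direction, suppose $X=\oplus^{col}_i u_i \mathcal{H}^2(\A)$ with the $u_i$ as described. Using $\mathcal{H}^2(\A)\cdot \A_0 \subset \mathcal{H}^2_0(\A)$ (an easy consequence of the definitions and the left-ideal property of $\n_\nu$, analogous to Proposition \ref{prop2.1}), together with the reverse density $[\mathcal{H}^2(\A)\A_0]_2 = \mathcal{H}^2_0(\A)$ (obtained by the same sort of $(f_\lambda)$-approximation), I get $[X\A_0]_2 = \oplus^{col}_i u_i \mathcal{H}^2_0(\A)$. Subtracting this from $X$ and using the orthogonal decomposition $\mathcal{H}^2(\A)=\mathcal{H}^2_0(\A)\oplus L^2(\D)$, the wandering subspace of $X$ equals $W=\oplus^{col}_i u_i L^2(\D)$; so $[W\A]_2 = \oplus^{col}_i u_i [L^2(\D)\A]_2 = \oplus^{col}_i u_i \mathcal{H}^2(\A) = X$, proving that $X$ is type 1.

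The main obstacle is the bookkeeping of the column $L^2$-sum structure: verifying that the orthogonality $(u_j\mathcal{H}^2_0(\A))^*(u_i\mathcal{H}^2_0(\A))=\{0\}$ for $i\neq j$ passes cleanly to the $L^2$-closures, and that $[\oplus^{col}_i u_i Y_i]_2$ really is the internal column sum when $Y_i$ is successively replaced by $\mathcal{H}^2_0(\A)$, $L^2(\D)$, and $\mathcal{H}^2(\A)$. All three cases are handled in the same way once one observes that the projection onto each summand $u_i Y$ is given by left multiplication by $u_i u_i^*$ composed with the ${\mathcal D}$-module structure, because $u_j^*u_i=0$ for $i\neq j$ and $u_i^*u_i\in\D$. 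The remaining subtlety is the identification $\mathcal{H}^2(\A)\ominus\mathcal{H}^2_0(\A) = L^2(\D)$, which needs maximal subdiagonality (this is why the corollary restricts to that case, via the preceding results of the section) and is supplied by Theorem \ref{complementHp}.
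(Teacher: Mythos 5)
Your argument is essentially the paper's own proof of this corollary, with the steps the paper dismisses as ``easy to argue'' (namely $[L^2(\D)\A]_2=\mathcal{H}^2(\A)$, $[\mathcal{H}^2(\A)\A_0]_2=\mathcal{H}^2_0(\A)$, the orthogonal splitting $\mathcal{H}^2(\A)=\mathcal{H}^2_0(\A)\oplus L^2(\D)$, and the compatibility of all of this with the column-sum structure coming from $u_j^*u_i=0$ and $u_i^*u_i\in\D$) written out in full. One small correction to your density argument: $[f_\lambda h^{1/2}]\cdot b$ is \emph{not} equal to $\mathfrak{j}^{(2)}(f_\lambda b)$; rather, for $b\in\mathfrak{n}_\nu(\A)$ analytic one has $\mathfrak{j}^{(2)}(f_\lambda b)=[f_\lambda b\,h^{1/2}]=[f_\lambda h^{1/2}]\,\sigma^\nu_{i/2}(b)$ by Lemma \ref{GL2-2.4+5}, which still lies in $L^2(\D)\A$ because $\sigma^\nu_{i/2}(b)\in\A$, so the claim survives once the modular twist is inserted.
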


\begin{proof}  If $X$ is type 1, then $X = [W \A]_2$
where $W$ is the right wandering space, and so the one assertion
follows from Theorem {\rm \ref{newpr}}.
If $X = \oplus^{col}_i \, u_i \mathcal{H}^2(\A)$, for $u_i$ as above, then
$[X \A_0]_2 = \oplus^{col}_i \, u_i \mathcal{H}^2(\A_0)$, and from this
it is easy to argue that  $W =
\oplus^{col}_i \, u_i L^2({\mathcal D})$.
Thus $X = [W \A]_2 = \oplus^{col}_i \, u_i \mathcal{H}^2(\A)$.
  \end{proof}
  
\begin{proposition} \label{typestuff}   Let $X$ be a
closed $\A$-invariant subspace of $L^2(\M)$, where $\A$ is an
analytically conditioned subalgebra of $\M$.
\begin{itemize}  \item [(1)]
If $X = Z \oplus [Y \A]_2$ as in Theorem
 {\rm \ref{inv1}}, then $Z$ is type 2, and $[Y \A]_2$ is type 1.
 \item [(2)]  If $\A$ is a maximal subdiagonal algebra,
and if $X = K_2 \oplus^{col} K_{1}$ where
$K_1$ and $K_{2}$ are types 1 and 2 respectively,
then $K_1$ and $K_2$ are respectively the unique
spaces $Z$ and $[Y \A]_2$ in  Theorem  {\rm \ref{inv1}}.
 \item [(3)]  If $\A$ and $X$ are as in {\rm (2)},
and if $X$ is type 1 (resp.\ type 2),
then the space  $Z$ of Theorem  {\rm \ref{inv1}}
for $X$ is $(0)$ (resp.\ $Z = X$).
 \item [(4)]   If  $X = K_2 \oplus^{col} K_{1}$ where
$K_1$ and $K_{2}$ are types 1 and 2 respectively,
then the right wandering subspace for $X$
equals the right wandering subspace for $K_1$.
  \end{itemize} \end{proposition}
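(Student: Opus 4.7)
My plan is to treat the four assertions in order, using Theorems \ref{inv1} and \ref{newpr} and the uniqueness assertion Theorem \ref{inv1}(4) as the main engines.

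For (1), the assertion that $Z$ is type $2$ is just the standing hypothesis of Theorem \ref{inv1}(1). The content is therefore that $X' = [Y\A]_2$ is type $1$. I would apply Theorem \ref{inv1}(2) to $X'$ itself, viewing $X'$ as its own decomposition with zero type-$2$ summand: this yields that the right wandering subspace of $X'$ is $W' = [Y\D]_2$. Since $\D \subseteq \A$ and $\A$ is unital, $[W'\A]_2 \supseteq [Y\D\A]_2 = [Y\A]_2 = X'$, with the reverse inclusion being trivial. Hence $X' = [W'\A]_2$, which is the definition of type $1$.

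For (2), assume that $\A$ is maximal subdiagonal so that Theorem \ref{inv1}(5) applies and $X$ has the canonical decomposition $X = Z \oplus^{col} [Y\A]_2$ with $Y$ its right wandering subspace. Given $X = K_2 \oplus^{col} K_1$, the strategy is to recognise this as another decomposition of the form required by Theorem \ref{inv1}(1) and then invoke uniqueness. Since $K_1$ is type $1$, $K_1 = [W_1 \A]_2$ where $W_1 = K_1 \ominus [K_1 \A_0]_2$ is the right wandering subspace of $K_1$. Theorem \ref{newpr} decomposes $W_1 = \oplus^{col}_i \, u_i L^2(\D)$ where the $u_i$ satisfy $u_i^* u_i \in \D$ and $u_j^* u_i = 0$ for $i\neq j$; a direct computation then gives $W_1^* W_1 \subset L^1(\D)$. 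Combined with $K_2$ being type $2$ by hypothesis, the decomposition $X = K_2 \oplus^{col} [W_1 \A]_2$ matches exactly the hypotheses of Theorem \ref{inv1}(1). Uniqueness (Theorem \ref{inv1}(4)) then forces $K_2 = Z$ and $K_1 = [Y\A]_2$. Assertion (3) is an immediate specialisation: if $X$ is type $1$ then $X = 0 \oplus^{col} X$ has type-$2$ part $0$, whence $Z = 0$; dually for type $2$.

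For (4), the key observation is that the column-orthogonality condition $K_2^* K_1 = \{0\}$ implies ordinary $L^2$-orthogonality, since $\langle k_1,k_2\rangle = \tr(k_2^* k_1) = 0$ for all $k_1\in K_1$, $k_2\in K_2$. In particular $K_2 \perp K_1 \A_0$. Since $K_2$ is type $2$, $K_2 = [K_2 \A_0]_2$, so
\begin{equation*}
[X\A_0]_2 = [K_2\A_0]_2 + [K_1\A_0]_2 = K_2 \oplus [K_1\A_0]_2,
\end{equation*}
the last sum being an orthogonal direct sum by the preceding remark. Taking orthogonal complements inside $X = K_2 \oplus K_1$ gives the right wandering subspace of $X$ as $K_1 \ominus [K_1 \A_0]_2$, which is precisely the right wandering subspace of $K_1$.

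The main obstacle I anticipate is in (2), specifically the verification that $W_1^*W_1 \subset L^1(\D)$ so that the decomposition $X = K_2 \oplus^{col} [W_1\A]_2$ legitimately meets the hypotheses of Theorem \ref{inv1}(1); this is precisely where the structural information in Theorem \ref{newpr} (existence of the partial isometries $u_i$ with $u_i^*u_i\in\D$ and mutually orthogonal ranges) is indispensable, and it is the point at which our ability to reduce to a case governed by uniqueness hinges. Everything else reduces to routine bookkeeping with the orthogonality relations coming from the column sum.
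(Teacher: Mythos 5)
Your argument is correct and is essentially the standard one: the paper itself omits the proof of this proposition, deferring to \cite[Proposition 2.8]{L-HpIII}, and the route you take — reading off type 1 for $[Y\A]_2$ from Theorem \ref{inv1}(2) applied to the decomposition $0\oplus^{col}[Y\A]_2$, feeding $X=K_2\oplus^{col}[W_1\A]_2$ (with $W_1^*W_1\subset L^1(\D)$ via Theorem \ref{newpr}, or more directly via Theorem \ref{inv1}(5) applied to $K_1$) into the uniqueness assertion \ref{inv1}(4), and computing $[X\A_0]_2=K_2\oplus[K_1\A_0]_2$ for part (4) — is precisely the argument that carries over from the cited source. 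The only point worth making explicit is that applying Theorem \ref{newpr} to $K_1$ in part (2) is licensed by Theorem \ref{inv1}(5), since $K_1$ is itself a closed right $\A$-invariant subspace and $\A$ is there assumed maximal subdiagonal.
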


On collating the information contained in the preceding four results, we obtain the following structure theorem for invariant subspaces of $L^2$.

\begin{theorem} \label{main}  If $\A$ is a maximal subdiagonal
subalgebra of $\M$, and if $K$ is a closed right  $\A$-invariant subspace 
of $L^2(\M)$,
then: \begin{itemize}
\item [(1)]   $K$ may be written uniquely as
an (internal) $L^2$-column sum $K_2 \oplus^{col} K_{1}$
of a type 1 and a type 2 invariant subspace of $L^2(\M)$, respectively.
  \item [(2)]  If
$K  \neq (0)$ then $K$ is type 1 if and only if
$K = \oplus_i^{col} \, u_i \, H^2$, for $u_i$  partial isometries
with mutually orthogonal ranges and $|u_i| \in {\mathcal D}$.
\item [(3)]
The right wandering subspace $W$ of $K$
is an $L^2({\mathcal D})$-module in the sense of Junge and Sherman, and in
particular $W^* W \subset L^{1}({\mathcal D})$.
\end{itemize}
\end{theorem}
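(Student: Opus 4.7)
The strategy is to collate the structural results already established for invariant subspaces of $L^2(\M)$, namely Theorem \ref{inv1}, Theorem \ref{newpr}, Corollary \ref{adcor} and Proposition \ref{typestuff}, and to read off each of (1)--(3) from these. Since $\A$ is maximal subdiagonal (and in particular analytically conditioned), every closed right $\A$-invariant subspace $K$ of $L^2(\M)$ falls within the scope of Theorem \ref{inv1}(5). The plan is to first invoke that result to obtain a concrete decomposition, then identify its summands as type 1 and type 2 pieces, and finally show uniqueness.

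For (1), first I would apply Theorem \ref{inv1}(5) to write $K = Z \oplus^{col} [Y\A]_2$, where $Y$ is the right wandering subspace of $K$ (or, equivalently by Theorem \ref{inv1}(3), its $\D$-closure, so that $Y$ is a $\D$-submodule with $Y^*Y \subset L^1(\D)$). Proposition \ref{typestuff}(1) immediately tells us that $Z$ is type 2 and $[Y\A]_2$ is type 1, so setting $K_2 = Z$ and $K_1 = [Y\A]_2$ gives the required decomposition. Uniqueness is then exactly the content of Proposition \ref{typestuff}(2): any decomposition of $K$ as an internal column sum of a type 1 and a type 2 invariant subspace must coincide with the $Z$, $[Y\A]_2$ pair from Theorem \ref{inv1}(1), together with the uniqueness clause of Theorem \ref{inv1}(4) for the $\D$-module $Y$.

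For (2), this is essentially just a restatement of Corollary \ref{adcor} in the present setting. Namely, if $K$ is type 1 then by definition $K = [W\A]_2$ where $W$ is its right wandering subspace, and Theorem \ref{newpr} decomposes $W$ as $\oplus^{col}_i\, u_i L^2(\D)$ with partial isometries $u_i$ whose initial projections $u_i^*u_i$ lie in $\D$ and whose ranges are mutually orthogonal (since $u_j^*u_i = 0$ for $i \neq j$ forces the ranges to be orthogonal in $L^2(\M)$). Combining this with $K = [W\A]_2$ and the identity $[u_i L^2(\D) \A]_2 = u_i \mathcal{H}^2(\A) = u_i H^2$ yields $K = \oplus_i^{col}\, u_i H^2$. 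Conversely, a direct computation shows that the right wandering subspace of $\oplus_i^{col}\, u_i H^2$ equals $\oplus_i^{col}\, u_i L^2(\D)$, which generates the whole space as an $\A$-module, so the subspace is type 1.

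For (3), the wandering subspace $W$ of $K$ coincides, by Proposition \ref{typestuff}(4), with the wandering subspace of $K_1$, and the explicit column-sum description $W = \oplus^{col}_i\, u_i L^2(\D)$ from Theorem \ref{newpr} exhibits $W$ as a normal Hilbert $\D$-module in the sense of Junge--Sherman. The relation $u_j^* u_i \in \delta_{ij}\D$ together with $L^2(\D)^* L^2(\D) \subset L^1(\D)$ then shows that $W^*W \subset L^1(\D)$. The only genuine obstacle in the whole argument is ensuring that the partial-isometry decomposition of the wandering subspace in Theorem \ref{newpr} really does survive in the non-$\sigma$-finite setting; but that work has already been discharged in the proof of Theorem \ref{newpr}, where the approximate identity $(f_\lambda) \subset \mathfrak{n}(\D)_\nu \cap \mathfrak{n}(\D)_\nu^*$ from Proposition \ref{7:P Terp2} was used to bootstrap the cyclic Hilbert $\D$-module representation theorem into this more delicate context. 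No further non-trivial approximation is required here.
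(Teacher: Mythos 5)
Your proposal is correct and follows exactly the route the paper intends: the paper offers no separate proof of Theorem \ref{main}, stating only that it is obtained ``on collating the information contained in the preceding four results,'' and your argument is precisely that collation made explicit --- Theorem \ref{inv1}(5) plus Proposition \ref{typestuff}(1),(2) for the existence and uniqueness in (1), Corollary \ref{adcor} with Theorem \ref{newpr} for (2), and Proposition \ref{typestuff}(4) with Theorem \ref{newpr} for (3). Your closing remark correctly identifies that the only genuinely delicate point in the non-$\sigma$-finite setting was already absorbed into the proof of Theorem \ref{newpr}, so nothing further is needed here.
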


\subsection{Invariant subspaces of $L^p(\M)$}

In this subsection we shall consistently assume that $\A$ is in fact maximal subdiagonal. In extending the theory of $\A$-invariant subspaces to the case $p\neq 2$, we shall follow a novel path based on an extension of Saito's insightful density theorem \cite{Sai} to the general case. This result was subsequently extended to the semifinite setting by Sager [Proposition 4.1(ii), \cite{sager}]. We take time to briefly reinterpret Sager's result before proving the result we need.

\begin{lemma}[Proposition 4.1(ii), \cite{sager}] Let $1\leq p<\infty$ be given, and let $\M$ be a semifinite von Neumann algebra equipped with a faithful normal semifinite trace $\tau$ and $\A$ be a maximal subdiagonal subalgebra of $\M$ for which the restriction of $\tau$ to $\D=\A^*\cap\A$ is still semifinite. For any right $\A$-invariant subspace $K$ of $L^p(\M,\tau)$, the subspace $\mathfrak{m}_\tau\cap K$ is a norm dense right $\A$-invariant subspace of $K$.
\end{lemma}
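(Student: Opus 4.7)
The right $\A$-invariance of $\mathfrak{m}_\tau\cap K$ is immediate. Since $\tau$ is a faithful normal semifinite trace on $\M$, the definition ideal $\mathfrak{m}_\tau$ is a two-sided ideal of $\M$; consequently, for $y\in\mathfrak{m}_\tau\cap K$ and $a\in\A\subseteq\M$, both $ya\in K$ (by right $\A$-invariance of $K$) and $ya\in\mathfrak{m}_\tau$ (by the ideal property), whence $ya\in\mathfrak{m}_\tau\cap K$.

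For the density claim the plan is to reduce to Saito's finite-algebra density theorem via the compressions of Proposition \ref{Bek-red}. Given $x\in K$ and $\epsilon>0$, the semifiniteness of $\tau{\upharpoonright}\D$ provides an increasing net $(e_\alpha)\subset\D$ of finite-trace projections with $e_\alpha\nearrow\I$. Since each $e_\alpha\in\D\subseteq\A$, right $\A$-invariance yields $xe_\alpha\in K$, and the standard $L^p$-module convergence (using $e_\alpha\to\I$ $\sigma$-strongly with $\|e_\alpha\|_\infty\le 1$) gives $xe_\alpha\to x$ in $L^p$-norm. Fix $\alpha$ so that $\|x-xe_\alpha\|_p<\epsilon/2$ and set $z:=xe_\alpha\in K$; the task reduces to approximating $z$ in $K\cap\mathfrak{m}_\tau$ to within $\epsilon/2$.

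Consider the polar decomposition $z=u|z|$ with $u\in\M$ and $|z|\in L^p(\M,\tau)_+$. Because $ze_\alpha=z$, the right support $s(|z|)$ is majorised by $e_\alpha$, and hence $|z|\in e_\alpha L^p(\M)e_\alpha=L^p(\M_{e_\alpha},\tau_{e_\alpha})$ where $\M_{e_\alpha}=e_\alpha\M e_\alpha$ is a finite von Neumann algebra and $\A_{e_\alpha}=e_\alpha\A e_\alpha$ its finite maximal subdiagonal subalgebra (Proposition \ref{Bek-red}). The subspace $Y:=[|z|\A_{e_\alpha}]_p$ of $L^p(\M_{e_\alpha})$ is closed and right $\A_{e_\alpha}$-invariant, and contains $|z|=|z|\cdot e_\alpha$ since $e_\alpha=e_\alpha\I e_\alpha\in\A_{e_\alpha}$. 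Saito's density theorem for \emph{finite} maximal subdiagonal subalgebras (which in that tracial setting reduces to the $L^p$-density of bounded elements, since $\mathfrak{m}_{\tau_{e_\alpha}}=\M_{e_\alpha}$) then produces $v_n\in Y\cap\M_{e_\alpha}$ with $v_n\to|z|$ in the $L^p(\M_{e_\alpha})$-norm, and hence also in $L^p(\M,\tau)$. Setting $y_n:=uv_n\in\M$, continuity of left multiplication by $u$ on $L^p(\M)$ yields $y_n\to u|z|=z$ in $L^p$, while $y_n=uv_n\in u\cdot Y\subseteq[u|z|\A_{e_\alpha}]_p=[z\A_{e_\alpha}]_p\subseteq K$ (using $z\A_{e_\alpha}\subseteq K$ and closedness of $K$). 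Finally, $y_n=y_ne_\alpha$ forces $|y_n|^2=e_\alpha y_n^*y_ne_\alpha\le\|y_n\|^2e_\alpha$, whence operator monotonicity of the square root gives $|y_n|\le\|y_n\|e_\alpha$, so $\tau(|y_n|)\le\|y_n\|\tau(e_\alpha)<\infty$, confirming $y_n\in K\cap\mathfrak{m}_\tau$.

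The principal obstacle is the need to work in the one-sided bimodule $L^p(\M)e_\alpha$ rather than directly inside $L^p(\M_{e_\alpha})$: the element $z$ itself need not lie in $L^p(\M_{e_\alpha})$ (only its right polar factor $|z|$ does), so Saito's finite-case argument must be executed on $|z|$ and the approximation transported back to $z$ through the bounded left polar partial isometry $u$, relying on the fact that left multiplication by $u\in\M$ preserves both $L^p$-convergence and membership in the right-module $[z\A_{e_\alpha}]_p\subseteq K$.
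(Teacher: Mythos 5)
Your proof is correct, but it takes a genuinely different (and more self-contained) route than the paper's. The paper does not reprove the density claim at all: it defers entirely to Sager's published argument for the $\M\cap K$ version and merely adds the observation that the approximants $h_1ex$ constructed at the end of her proof already lie in $\mathfrak{m}_\tau$, since $h_1ex=eh_1ex$ with $e\in\mathfrak{m}_\tau$ a finite-trace projection and $\mathfrak{m}_\tau$ a two-sided ideal. You instead rebuild the semifinite statement from Saito's finite-case density theorem: cut down by finite-trace projections $e_\alpha\in\D$ (so that $xe_\alpha\in K$ and $xe_\alpha\to x$ in $L^p$), observe that $z(\I-e_\alpha)=0$ forces $|z|=e_\alpha|z|e_\alpha\in L^p(e_\alpha\M e_\alpha)$, apply Saito in that finite corner to the invariant subspace $[\,|z|\A_{e_\alpha}]_p$, and transport the bounded approximants back through the polar partial isometry $u$; the inclusion $e_\alpha\A e_\alpha\subseteq\A$ keeps everything inside $K$, and the estimate $|uv_n|\le\|uv_n\|e_\alpha$ places the approximants in $\mathfrak{m}_\tau$. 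All of these steps check out, and the two facts you import --- the norm convergence $xe_\alpha\to x$ for $1\leq p<\infty$ and Saito's theorem itself --- are standard and already part of the paper's toolkit (Proposition \ref{Bek-red} and the discussion preceding the lemma). Your version buys independence from the internals of Sager's proof at the cost of length; the paper's buys brevity at the cost of sending the reader to \cite{sager}. The two are nevertheless cognate in spirit, since Sager's own argument also pivots on compression by a finite-trace projection.
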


\begin{proof} Sager proves a version of this result for $\M\cap K$ rather than $\mathfrak{m}_\tau\cap K$. However her proof suffices to also prove the stated hypothesis. To see this one need only note that the operator $h_1ex$ constructed in the final stages of her proof, does not just belong to $\M$, but even to $\mathfrak{m}_\tau$. To see this notice that in her proof $e\in\mathfrak{m}_\tau$. Since in the tracial case $\mathfrak{m}_\tau$ is a two-sided ideal, the claim now follows from the fact that $h_1ex = eh_1ex$. (Here we used the fact that in her proof $h_1\in eH^\infty e$. This proves the first claim. The right $\A$-invariance of $\mathfrak{m}_\tau\cap K$ follows from the right $\A$-invariance of $K$ and the fact noted earlier that here $\mathfrak{m}_\tau$ is a two-sided ideal. 
\end{proof}

In the ensuing analysis we shall repeatedly use the more general embeddings $\mathfrak{i}_c^{(p)}$ of $\mathfrak{m}_\nu$ into 
$L^p$ introduced at the end of section \ref{S3}. It will however be very important to keep track of the ``powers of $h$'' 
involved in a particular embedding. So for that reason we shall in this section not use the notation $\mathfrak{i}_c^{(p)}$ 
but in for example the case $2\leq p<\infty$, rather respectively write $\clo{h^{c/p}xh^{(1-c)/p}}$ and 
$\clo{h^{c/p}Sh^{(1-c)/p}}$ for $\mathfrak{i}_c^{(p)}(x)$ and $\mathfrak{i}_c^{(p)}(S)$ where $x\in \mathfrak{m}_\nu$ and 
$S\subset \mathfrak{m}_\nu$.

\begin{theorem}\label{techinv-Lp} Let $K$ be a right $\A$-invariant closed subspace of $L^p(\M)$ where $1\leq p\leq \infty$ ($\sigma$-weakly closed in the case of $p=\infty$). 
\begin{enumerate}
\item For any $2\leq p \leq \infty$ and any $0\leq c\leq 1$, there exists a right $\mathfrak{n}(\A)_\nu$-invariant subspace $\mathcal{S}(K)_p^{(c)}$ of $\mathfrak{m}_\nu$ for which $\langle h^{c/p}\mathcal{S}(K)_p^{(c)}h^{(1-c)/p}\rangle$ is norm-dense in $K$ if $p<\infty$ and $\sigma$-weakly dense if $p=\infty$.
\item Given $1\leq p <2$ select $q,r>0$ so that $\frac{1}{r}+\frac{1}{2}=\frac{1}{p}$ and $\frac{1}{p}+\frac{1}{q}=1$.
There exists a right $\mathfrak{n}(\A)_\nu$-invariant subspace $\mathcal{S}(K)_p^{(c)}$ of $\mathfrak{m}_\nu$ for which $\langle h^{c/q}h^{1/r}{S}(K)_p^{(c)}h^{1/r}h^{(1-c)/q}\rangle$ is norm dense in $K$.
\end{enumerate}
\end{theorem}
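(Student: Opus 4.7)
The strategy is to combine the analytic reduction of Proposition \ref{anred} with Sager's semifinite density lemma (the one immediately preceding this theorem). The case $p=\infty$ admits a direct treatment, while $1\le p<\infty$ passes through the reduction machinery.

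For $p=\infty$ the embedding $\clo{h^{c/p}(\cdot)h^{(1-c)/p}}$ collapses to the identity, so one sets $\mathcal{S}(K)_\infty^{(c)}:=\mathfrak{m}_\nu\cap K$. Right $\mathfrak{n}(\A)_\nu$-invariance is immediate since $\mathfrak{m}_\nu$ is a two-sided ideal of $\M$ and $K$ is right $\A$-invariant. For the $\sigma$-weak density, letting $(f_\lambda)\subset\D$ be the analytic approximate identity supplied by Proposition \ref{7:P Terp2}, the element $kf_\lambda^2=(kf_\lambda)f_\lambda$ lies in $K\A\cap\mathfrak{m}_\nu\subseteq K\cap\mathfrak{m}_\nu$ for every $k\in K$, and $kf_\lambda^2\to k$ $\sigma$-strongly$^*$ as $\lambda$ increases.

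For $1\le p<\infty$, form $\widehat{K}:=[K\widehat{\A}]_p\subseteq L^p(\R)$, a closed right $\widehat{\A}$-invariant subspace. Using that $\mathscr{W}_{\mathbb{Q}_D}(\widehat{\A})=\A$ (Proposition \ref{anred}(1)) together with the bimodule property of Proposition \ref{exp-props}, one readily checks that $\mathscr{W}_{\mathbb{Q}_D}^{(p)}(\widehat{K})=K$. For each $n$, the bimodule property also shows that $\widehat{K}_n:=\mathscr{W}_n^{(p)}(\widehat{K})$ is a closed right $\A_n$-invariant subspace of $J_n(L^p(\R_n,\tau_n))$; the weak convergence $\mathscr{W}_n^{(p)}(f)\to f$ recorded in Theorem \ref{10:T reduction Lp}(3), combined with the convexity of $\cup_n\widehat{K}_n$, yields norm density of this union in $\widehat{K}$. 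Sager's lemma applied to each semifinite quadruple $(\R_n,\tau_n,\A_n,\D_n)$ (whose maximal subdiagonality is given by Proposition \ref{anred}(2)) then provides $L^p$-norm density of $\mathfrak{m}_{\tau_n}\cap\widehat{K}_n$ in $\widehat{K}_n$, and pushing back through $\mathscr{W}_{\mathbb{Q}_D}^{(p)}$ furnishes a norm-dense subset of $K$ built from semifinite algebra-level elements. The candidate $\mathcal{S}(K)_p^{(c)}$ is taken to be the right $\mathfrak{n}(\A)_\nu$-submodule of $\mathfrak{m}_\nu$ generated by $\{x\in\mathfrak{m}_\nu:\mathfrak{i}_c^{(p)}(x)\in K\}$; right $\mathfrak{n}(\A)_\nu$-invariance is then built in, while the required density reduces to identifying the Sager-produced dense set in $K$ with images under $\mathfrak{i}_c^{(p)}$. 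Case (2), $1\le p<2$, runs along identical lines with the generalised embedding $\clo{h^{c/q}h^{1/r}(\cdot)h^{1/r}h^{(1-c)/q}}$ introduced immediately before Proposition \ref{7:P Terp2} in place of $\mathfrak{i}_c^{(p)}$.

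The main obstacle will be the last step: harmonising Sager's tracial algebra-level elements of $\mathfrak{m}_{\tau_n}\subseteq\R_n$ with the Haagerup-style embeddings $\mathfrak{i}_c^{(p)}$ of $\mathfrak{m}_\nu\subseteq\M$. Concretely, one has to show that every element of $\mathscr{W}_{\mathbb{Q}_D}^{(p)}(\mathfrak{m}_{\tau_n}\cap\widehat{K}_n)$ can be $L^p$-approximated by elements $\mathfrak{i}_c^{(p)}(y)$ with $y\in\mathfrak{m}_\nu$ and $\mathfrak{i}_c^{(p)}(y)\in K$. This will require a careful combination of the isometries $J_n$ of Theorem \ref{10:T reduction Lp}, the compatibility of the various traces and densities recorded in Remark \ref{10:R cond-exp}, the action of $\mathscr{W}_{\mathbb{Q}_D}^{(p)}$ on the embeddings given by Proposition \ref{genformula-exp}, and the exchange relation of Lemma \ref{GL2-2.4+5} for shuffling powers of $h$ past analytic elements of $\M$. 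This is the technical heart of the argument.
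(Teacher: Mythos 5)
Your overall strategy for $1\le p<\infty$ (pass to $\widehat{K}=[K\widehat{\A}]_p$, project to the semifinite layers $\R_n$, invoke Sager's density lemma there, and push back down) is the same as the paper's, but you have explicitly deferred the one step that actually carries the proof: converting Sager's algebra-level density in the \emph{tracial} spaces $L^p(\R_n,\tau_n)$ into density of a set of the precise form $\clo{h^{c/p}\,\mathcal{S}\,h^{(1-c)/p}}$. Listing the tools that ``will be required'' is not a proof of this step, and it is not routine. The paper's resolution rests on a specific device you have not identified: under the isomorphism $\R_n\rtimes_{\tnu}\bR\cong\R_n\rtimes_{\nu_n}\bR$ one has $h=e^{a_n}k_n$ with $k_n=\frac{d\widetilde{\nu_n}}{d\tau}$ central over $\R_n$ and $e^{a_n}=\frac{d\tnu{\upharpoonright}\R_n}{d\nu_n}\in\D_n$ bounded and invertible; Sager's lemma is then applied not to the pullback $L_n$ of $K_n$ but to the \emph{twisted} invariant subspace $e^{-ca_n/p}L_n$, so that after reinstating the factors $e^{ca_n/p}$ and $e^{(1-c)a_n/p}$ the dense set recombines with $k_n^{c/p}$ and $k_n^{(1-c)/p}$ to give exactly $h^{c/p}(\cdot)h^{(1-c)/p}$ landing in $K_n$. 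Without some such manoeuvre the powers of $h$ do not distribute correctly on the two sides, and your candidate module generated by $\{x\in\mathfrak{m}_\nu:\mathfrak{i}_c^{(p)}(x)\in K\}$ is not known to be large enough to be dense.

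The $p=\infty$ argument is incorrect as stated. For a non-tracial weight $\nu$, $\mathfrak{m}_\nu$ is \emph{not} a two-sided ideal of $\M$ (only $\mathfrak{n}_\nu$ is a left ideal; the two-sided ideal property of $\mathfrak{m}_\tau$ is special to traces and is precisely what the paper flags in its reinterpretation of Sager's lemma). Consequently $kf_\lambda^2=k(f_\lambda^*f_\lambda)$ need not belong to $\mathfrak{m}_\nu$ for $k\in K$: one would need $f_\lambda k^*\in\mathfrak{n}_\nu$, i.e.\ $\nu(kf_\lambda^2k^*)<\infty$, which fails in general. The standard repair, sandwiching as $f_\gamma kf_\lambda$, does land in $\mathfrak{m}_\nu$ but leaves $K$, since $K$ is only \emph{right} $\A$-invariant. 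So you cannot exhibit a $\sigma$-weakly dense subset of $\mathfrak{m}_\nu\cap K$ this way, and indeed this is why the paper treats $p=\infty$ entirely differently: it passes to the polar $K^\circ\subset L^1(\M)$, applies the (left-invariant) $p=1$ and $p=2$ cases already established, and recovers $K=\overline{\mathcal{S}}^{w^*}$ by a bipolar argument. You should either adopt that duality route or find a genuinely new way to populate $\mathfrak{m}_\nu\cap K$.
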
 

The symmetry of the theory of left and right $\A$-invariant subspaces ensures that a version of the above also holds for left $\A$-invariant subspaces, with left $\mathfrak{n}(\A^*)^*_\nu$-invariance taking the place of right $\mathfrak{n}(\A)_\nu$-invariance.

\begin{proof} We shall use the reduction theorem to prove the theorem. All notation will therefore be as in Proposition 
\ref{anred}. For the moment assume that $1\leq p<\infty$ and let $K$ be a right $\A$-invariant subspace of $L^p(\M)$. We start our proof by noting several facts:
\begin{itemize}
\item One may use Proposition \ref{anred} to check that $\widehat{K}=[K\widehat{\A}]_p$ is a right $\widehat{\A}$-invariant subspace of $L^p(\R)$. Since ${\mathscr{W}}^{(p)}_{\mathbb{Q}_D}(K\widehat{\A})=K\A=K$, continuity ensures that ${\mathscr{W}}^{(p)}_{\mathbb{Q}_D}$ maps $\widehat{K}$ onto $K$. 
\item The technology in Proposition \ref{anred} now enables us to conclude from the above that each  $K_n=\mathscr{W}^{(p)}_n(\widehat{K})$ is a right $\A_n$-invariant subspace of $L^p(\R_n)$. 
\item It is clear from for example \cite[Theorem VIII.2.11]{Tak2} and part (2) of Lemma \ref{construction of Rn}, that 
$\frac{d\nu_n}{d\tnu{\upharpoonright}\R_n}=e^{-a_n}$ where $0\leq a_n=2^{n}b_n \leq 2^{n+1}\pi$ by Lemma \ref{bn} and the discussion 
following it.
\item Since by definition $a_n=2^nb_n=-2^ni\log(\lambda_{2^{-n}})$, it is clear that in fact $a_n\in \D_n$ 
\end{itemize} 

By the discussion in section \ref{S3}, the crossed product $\mathcal{C}_n=\R_n\rtimes_{\tnu}\mathbb{R}$ may be regarded as a 
subspace of $\mathcal{C}=\R\rtimes_{\tnu}\mathbb{R}$, in which case we may then identify $h$ with 
$\frac{d\tnu{\upharpoonright}\R_n}{d\tau_{\mathcal{C}_n}}$. (Here $\tau_{\mathcal{C}_n}$ is the canonical \emph{fns} trace on the crossed product 
$\R_n\rtimes_{\tnu}\mathbb{R}$.) We know from \cite[Theorem 6.65]{GLnotes} that $\mathcal{C}_n=\R_n\rtimes_{\tnu}\mathbb{R}$ 
may be identified with $\mathcal{B}_n=\R_n\rtimes_{\nu_n}\mathbb{R}$ by means of an implemented $*$-isomorphism 
$\mathscr{I}$. It moreover follows from \cite[Proposition 7.14]{GLnotes} that an extension of this isomorphism 
homeomorphically identifies $L^p(\R_n)$ constructed with respect to $\tnu{\upharpoonright}\R_n$, with $L^p(\R_n, \nu_n)$ constructed using 
$\nu_n$. It further follows from \cite[Theorems 6.74 \& 7.5]{GLnotes}, that up to Fourier transform, $L^p(\R_n,\nu_n)$ constructed 
using $\nu_n$, is nothing but the space of simple tensors $\{a\otimes \frac{1}{\exp}^{1/p}\colon a\in L^p(\R_n,\tau)\}$ which 
all lie in the space of $\tau$-measurable operators affiliated with the von Neumann algebra tensor product 
$\R_n\overline{\otimes}L^\infty(\mathbb{R})$. In this identification, the operator 
$k_n=\frac{d\widetilde{\nu_n}}{d\tau_{\mathcal{B}_n}}$ corresponds to $\I\otimes \frac{1}{\exp}$. Amongst other facts, this 
ensures that inside $\mathcal{B}_n$, $k_n$ commutes with $\R_n$. The isometry from $L^p(\R_n,\nu_n)$ onto $L^p(\R_n)$ 
guaranteed by Theorem \ref{10:T reduction Lp}, is then up to Fourier transform nothing but the composition of the extension 
of $\mathscr{I}$ to the $\tau$-measurable operators, composed with the map $a\mapsto a\otimes \frac{1}{\exp}^{1/p}$. 

Careful checking of the proof of \cite[Theorem 6.65]{GLnotes} shows that $\mathscr{I}$ maps $\widetilde{\lambda}_t$ 
associated with $\mathcal{C}_n$, onto $(D\tnu{\upharpoonright}\R_n{:}\,D\nu_n)_t\widetilde{\lambda}_t$ where here $\widetilde{\lambda}_t$ is 
associated with $\mathcal{B}_n$. (Here we write $\widetilde{\lambda}_t$ for the shift operators used to construct the crossed 
products $\mathcal{C}_n$ and $\mathcal{B}_n$, to distinguish them from the operators used to construct $\R$.) By for example 
\cite[Theorem VIII.2.11]{Tak2}, we here have that $(D\tnu{\upharpoonright}\R_n{:}\,D\nu_n)_t = (\frac{d\tnu{\upharpoonright}\R_n}{d\nu_n})^{it}$. On now 
invoking \cite[Proposition 6.61]{GLnotes}, it follows that the $*$-isomorphism between the two crossed products maps each 
$h^{it}$ onto $\frac{d\tnu{\upharpoonright}\R_n}{d\nu_n}^{it}k_n^{it}=(e^{a_n})^{it}k_n^{it}$. Since the operators $a_n$ and $k_n$ are 
known to commute, we may now apply the Borel functional calculus to conclude that this isomorphism associates the operator $h$
with $e^{a_n}k_n$.

With the preparation done, we proceed with the proof of parts (1) and (2) under the assumption that $p<\infty$. Fix 
$0\leq c\leq 1$. It is clear that $\mathscr{I}$ and the map $a\mapsto a\otimes\frac{1}{\exp}^{1/p}$ may be used to pull $K_n$ 
back to an $\A_n$ invariant closed subspace $L_n$ of $L^p(\R_n,\nu_n)$. The space $e^{-ca_n/p}L_n$ is then clearly again a 
right invariant subspace of $L^p(\R_n,\nu_n)$. By the lemma, $\mathfrak{m}_{\nu_n}\cap e^{-ca_n/p}L_n$ is a norm dense right 
$\A$-invariant subspace of $e^{-ca_n/p}L_n$. Using the fact that $e^{(1-c)a_n/p}$ is an invertible element of $\D_n$ and 
$\mathfrak{m}_{\nu_n}$ a two sided ideal, it follows that 
 $$e^{ca_n/p}(\mathfrak{m}_{\nu_n}\cap e^{-ca_n/p}L_n)e^{(1-c)a_n/p} = \mathfrak{m}_{\nu_n}\cap L_n$$ 
is a norm dense subspace of $L_ne^{(1-c)a_n/p}=L_n$. We may now first apply the map 
$x\to x\otimes\exp^{-1/p}$ to $\mathfrak{m}_{\nu_n}\cap L_n$, followed by the Fourier transform to transform 
$\mathfrak{m}_{\nu_n}\cap L_n$ to a dense subspace of $\mathscr{I}^{-1}(K_n)$. This subspace is of the form 
$(e^{a_n}k_n)^{c/p}(\mathfrak{m}_{\nu_n}\cap e^{-ca_n/p}L_n)(e^{a_n}k_n)^{(1-c)/p}$. The fact that inside 
$\mathcal{B}_n=\R_n\rtimes_{\nu_n}\mathbb{R}$, $k_n$ commutes with $\R_n$, ensures that no questions regarding the 
$\tau$-measurability of the elements of this subspace arise at this point. Recalling that $\mathscr{I}$ associates 
$e^{a_n}k_n$ with $h$, inside $\mathcal{C}_n=\R_n\rtimes_{\tnu}\mathbb{R}$ this dense subspace of $K_n$ is then of the form 
$h^{c/p}(\mathfrak{m}_{\nu_n}\cap e^{-ca_n/p}L_n)h^{(1-c)/p}$. To allay any concerns about the relation of members of 
$\mathfrak{m}_{\nu_n}$ to $\mathfrak{m}_{\tnu}$, we point out that the very definition of $\nu_n$ ensures that we will for 
any $x\in \mathfrak{m}_{\nu_n}^+$ have that $$\tnu(x)=\nu_n(e^{a_n/2}xe^{a_n/2})\leq \|e^{a_n}\|_\infty\nu_n(x)<\infty.$$

Recall that by the proof of part (5) of the proof of Lemma \ref{construction of Rn}, $h_n=e^{a_{n+1}-a_n}$ belongs to the 
centre of $\R_n$. This and the right $\A$-invariance of $L_n$, ensures that $$e^{-ca_n/p}L_n = e^{-ca_{n+1}/p}L_n(e^{(a_{n+1}-a_n)})^{c/p}= e^{-ca_{n+1}/p}L_n.$$Given 
$x\in \mathfrak{m}_{\nu_n}^+$ we again have by part 
(5) of the proof of Lemma \ref{construction of Rn}, that $$\nu_{n+1}(x)=\nu_n(h_nx)\leq \|h_n\|_\infty\nu_n(x)<\infty.$$We 
therefore also have that $\mathfrak{m}_{\nu_n}\subset \mathfrak{m}_{\nu_{n+1}}$. But then 
$\mathfrak{m}_{\nu_n}\cap e^{-ca_n/p}L_n \subset \mathfrak{m}_{\tau_{n+1}}\cap e^{-ca_{n+1}/p}L_{n+1}$, which ensures that 
$\mathfrak{S}(K)_{p}^{(c)}=\cup_{n\geq 1}(\mathfrak{m}_{\nu_n}\cap e^{-ca_n/p}L_n)$ is a linear space. Hence 
$\clo{h^{c/p}\mathfrak{S}(K)_{p}^{(c)}h^{(1-c)/p}}$ is a subspace of $\cup_{n\geq 1}K_n \subset L^p(\R)$ whose closure 
includes $\cup_{n\geq 1}K_n$. But by part (3) of Theorem \ref{10:T reduction Lp}, $\cup_{n\geq 1}K_n$ is $L^p$-weakly dense 
(and hence norm dense by convexity) in $K$. It is now clear that the closure of 
$\clo{h^{c/p}\mathfrak{S}(K)_{p}^{(c)}h^{(1-c)/p}}$ is $\overline{\cup_{n\geq 1}K_n}=\widehat{K}$. 

On passing from $\R$ to $\M$, we need to differentiate between the cases $2\leq p<\infty$ and $1\leq p<2$. The proofs are 
similar and hence we only prove the first case. The $L^p$-continuity of $\mathscr{W}^{(p)}_{\mathbb{Q}_D}$ and the very specific 
action of this expectation, ensures that it maps $\clo{h^{c/p}\mathfrak{S}(K)_{p}^{(c)}h^{(1-c)/p}}$ onto the dense subspace 
$\clo{h^{c/p}\mathscr{W}_{\mathbb{Q}_D}(\mathfrak{S}(K)_{p}^{(c)})h^{(1-c)/p}}$ of $\mathscr{W}_{\mathbb{Q}_D}(\widehat{K})= K$. Notice that since for any $x\in \R^+$ we have that $\tnu(x)= \nu(\mathscr{W}_{\mathbb{Q}_D}(x))$, we clearly have that 
$\mathscr{W}_{\mathbb{Q}_D}(\mathfrak{S}(K)_{p}{(c)})\subset \mathfrak{m}_\nu$. The right $\A$-invariance of $K$, now ensures 
that $$\clo{h^{c/p}\mathscr{W}_{\mathbb{Q}_D}(\mathfrak{S}(K)_{p}^{(c)})h^{(1-c)/p}}\mathfrak{n}(\A^*)^*_\nu\subset K,$$with 
the fact that 
 $$[\clo{h^{(1-c)/p}\mathfrak{n}(\A^*)^*_\nu}]_{(1-c)/p}= [\clo{\mathfrak{n}(\A)_\nu h^{(1-c)/p}}]_{(1-c)/p}$$ 
ensuring that $$[\clo{h^{c/p}\mathscr{W}_{\mathbb{Q}_D}(\mathfrak{S}(K)_{p}^{(c)})\mathfrak{n}(\A)_\nu h^{(1-c)/p}}]_p= [\clo{h^{c/p}\mathscr{W}_{\mathbb{Q}_D}(\mathfrak{S}(K)_{p}^{(c)})h^{(1-c)/p}}\mathfrak{n}(\A^*)^*_\nu]_{p}\subset K.$$It is 
an easy exercise to check that $\mathscr{W}_{\mathbb{Q}_D}(\mathfrak{S}(K)_{p}^{(c)})\mathfrak{n}(\A)_\nu\subset \mathfrak{m}_\nu$. It is the subspace $\mathscr{W}_{\mathbb{Q}_D}(\mathfrak{S}(K)_{p}^{(c)})\mathfrak{n}(\A)_\nu$ that we 
define to be $\mathcal{S}(K)_p^{(c)}$. This subspace is clearly right $\mathfrak{n}(\A)_\nu$-invariant. To conclude the first 
part of the proof we therefore need to show that $[\clo{h^{c/p}\mathcal{S}(K)_p^{(c)}h^{(1-c)/p}}]_p$ is all of $K$. This will 
follow if we can show that it includes $\clo{h^{c/p}\mathscr{W}_{\mathbb{Q}_D}(\mathfrak{S}(K)_p^{(c)})h^{(1-c)/p}}$. For this we shall use the net $(f_\lambda)\subset \D$ described in Proposition \ref{7:P Terp2}. For any $\lambda$ and any 
$x\in \mathscr{W}_{\mathbb{Q}_D}(\mathfrak{S}(K)_{p}^{(c)})$, $\clo{h^{c/p}x f_\lambda h^{(1-c)/p}}$ belongs to 
$\clo{h^{c/p}\mathcal{S}(K)_p^{(c)}h^{(1-c)/p}}$. Now notice that 
\begin{eqnarray*}
\clo{h^{c/p}x f_\lambda h^{(1-c)/p}}&=&(h^{c/p}x)[f_\lambda h^{(1-c)/p}]\\
&=&(h^{c/p}x) (h^{(1-c)/p}\sigma_{i(1-c)/p}^\nu(f_\lambda))\\
&=& \clo{h^{c/p}xh^{(1-c)/p}} \sigma_{i(1-c)/p}^\nu(f_\lambda).
\end{eqnarray*}
Since $(\sigma_{i(1-c)/p}^\nu(f_\lambda))$ is $\sigma$-weakly convergent to $\I$, 
$(\clo{h^{c/p}xh^{(1-c)/p}}\sigma_{i(1-c)/p}^\nu(f_\lambda))$ is $L^p$-weakly convergent to $\clo{h^{c/p}xh^{(1-c)/p}}$, 
which then ensures that $$\clo{h^{c/p}xh^{(1-c)/p}}\in [\clo{h^{c/p}\mathcal{S}(K)_p^{(c)}h^{(1-c)/p}}]_p$$as required.

In conclusion we pass to proving the validity of claim (1) in the case $p=\infty$. Let $K$ be a $\sigma$-weakly closed right 
$\A$-invariant subspace of $\M$. Then the polar set $K^\circ$ is a left $\A$-invariant subspace of $L^1(\M)$. From the left invariant 
version of what we have already proven, there exists a left $\mathfrak{n}(\A^*)_\nu^*$-invariant subspace 
$\mathcal{S}(K^\circ)_1$ of $\mathfrak{m}_\nu$ such that $\clo{h^{1/2}\mathcal{S}(K^\circ)_1h^{1/2}}$ is norm dense in 
$K^\circ$. We now use $\mathcal{S}(K^\circ)_1$ to construct the closed subspace $K_2=[\clo{\mathcal{S}(K^\circ)_1h^{1/2}}]_2$ 
of $L^2(\M)$. The left $\mathfrak{n}(\A^*)_\nu^*$-invariance of $\mathcal{S}(K^\circ)_1$ ensures that $K_2$ itself is left 
$\mathfrak{n}(\A^*)_\nu^*$-invariant, and hence left $\A$-invariant (using the fact that $\mathfrak{n}(\A^*)_\nu^*$ is 
$\sigma$-weakly dense in $\A$). Then the polar $K_2^\circ$ is right $\A$-invariant. By what we have already shown, 
$K_2^\circ$ is of the form $K_2^\circ=[\clo{\mathcal{S}h^{1/2}}]_2$ for some right $\mathfrak{n}(\A)_\nu$-invariant subspace 
$\mathcal{S}$ of $\mathfrak{m}_\nu$. 

We will show that $K=\overline{\mathcal{S}}^{w^*}$ which will prove the theorem. Firstly notice that if $x\in \mathcal{S}$, 
we will for any $y\in \mathcal{S}(K^\circ)_1$ have that $0=tr([xh^{1/2}].[yh^{1/2}])=tr(x\clo{h^{1/2}yh^{1/2}})$. This follows because $[yh^{1/2}]\in K_2$ 
and $[xh^{1/2}]\in K_2^\circ$. Hence $x\in \clo{h^{1/2}\mathcal{S}(K^\circ)_1h^{1/2}}^\circ= (K^\circ)^\circ=K$, which shows that 
$\overline{\mathcal{S}}^{w^*}\subseteq K$. 

To prove the reverse containment, we firstly note that the right $\mathfrak{n}(\A)_\nu$-invariance of $\mathcal{S}$, ensures 
that $\overline{\mathcal{S}}^{w^*}$ is right invariant with respect to multiplication by elements of 
$\overline{\mathfrak{n}(\A)_\nu}^{w^*}=\A$. Thus $(\overline{\mathcal{S}}^{w^*})^\circ$ is a left $\A$-invariant subspace of 
$L^1(\M)$. Hence there exists a left $\mathfrak{n}(\A^*)_\nu^*$-invariant subspace $\mathcal{S}_0$ of $\mathfrak{m}_\nu$ such 
that $(\overline{\mathcal{S}}^{w^*})^\circ=[\clo{h^{1/2}\mathcal{S}_0h^{1/2}}]_1$. Given $w\in\mathcal{S}_0$, we will for any 
$z\in \mathcal{S}$ have that $$0=tr(z.\clo{h^{1/2}wh^{1/2}})=tr([zh^{1/2}].[wh^{1/2}]),$$which ensures that then 
$[wh^{1/2}]\in [\clo{\mathcal{S}h^{1/2}}]^\circ=K_2$; in other words that $[\clo{\mathcal{S}_0h^{1/2}}]_2\subset K_2^\circ$. 
For any given $w\in\mathcal{S}_0$, we may then use the fact that $K_2^\circ=[\clo{\mathcal{S}(K^\circ)_1h^{1/2}}]_2$, to 
select a sequence $(s_n)\subset \mathcal{S}(K^\circ)_1$ such that $[s_nh^{1/2}]\to [wh^{1/2}]$ in $L^2$-norm. Now let 
$(f_\lambda)\subset \D$ be as before. For any fixed $\lambda$, H\"older's inequality then ensures that 
$$f_\lambda\clo{h^{1/2}s_nh^{1/2}}=[f_\lambda h^{1/2}][s_nh^{1/2}]\to[f_\lambda h^{1/2}][wh^{1/2}]= f_\lambda\clo{h^{1/2}wh^{1/2}}$$in 
$L^1$-norm. The left $\A$-invariance of $K^\circ$ will, when combined with the fact that 
$K^\circ = [\clo{h^{1/2}\mathcal{S}(K^\circ)_1h^{1/2}}]$, ensure that each $f_\lambda\clo{h^{1/2}s_nh^{1/2}}$ belongs to 
$K^\circ$ and hence that the net $(f_\lambda\clo{h^{1/2}wh^{1/2}})$ is contained in $K^\circ$. The fact $(f_\lambda)$ is 
$\sigma$-weakly convergent to $\I$, now ensures that $(f_\lambda\clo{h^{1/2}wh^{1/2}})$ is $L^1$-weakly convergent to 
$\clo{h^{1/2}wh^{1/2}}$. But by convexity $K^\circ$ is $L^1$-weakly closed, which then ensures that 
$\clo{h^{1/2}wh^{1/2}}\in K^\circ$; i.e. that $\clo{h^{1/2}\mathcal{S}_0h^{1/2}}\subseteq K^\circ$. Taking the  
closure then shows that $(\overline{\mathcal{S}}^{w^*})^\circ\subseteq K^\circ$, or equivalently that 
$K\subseteq \overline{\mathcal{S}}^{w^*}$ as was required.
\end{proof}

The following result extends \cite[Corollary 4.3]{BL-Beurling}. Whereas this was an easy corollary in the setting of finite von Neumann algebras, the passage to the general case demands some deep analysis. 

\begin{theorem}\label{latt-isom} For any $1\leq p, q\leq \infty$ there is a lattice isomorphism from the $\sigma$-weakly closed right 
$\A$-invariant subspaces of $L^p$ to those of $L^q$. We in particular have the following:
\begin{enumerate}
\item Given $2\leq p < \infty$ and a right $\A$-invariant closed subspace $K$ of $L^p(\M)$, the prescription 
$K\to \overline{\mathcal{S}(K)_p}^{w^*}$ where $\mathcal{S}(K)_p$ is a right $\mathfrak{n}(\A)_\nu$-invariant subspace of 
$\mathfrak{n}_\nu$ for which $K=[\clo{\mathcal{S}(K)_ph^{1/p}}]_p$, realises a lattice isomorphism from the right 
$\A$-invariant subspaces of $L^p(\M)$ to those of $L^\infty(\M)$.
\item Given $1\leq p < 2$ and a right $\A$-invariant closed subspace $K$ of $L^p(\M)$, the prescription 
$K\to [\clo{h^{1/2}\mathcal{S}(K)_p}]_2$ (where $r>0$ is chosen so that $\frac{1}{p}=\frac{1}{2}+\frac{1}{r}$ and where 
$\mathcal{S}(K)_p$ is a right $\mathfrak{n}(\A)_\nu$-invariant subspace of $\mathfrak{m}_\nu$ for which 
$K= [\clo{h^{1/2}\mathcal{S}(K)_ph^{1/r}}]_p$) realises a lattice isomorphism from the right $\A$-invariant subspaces of 
$L^p(\M)$ to those of $L^2(\M)$.
\end{enumerate}
\end{theorem}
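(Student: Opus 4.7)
The plan is to use Theorem \ref{techinv-Lp} as a common bridge between the $\sigma$-weakly closed right $\A$-invariant subspaces of the various $L^p(\M)$ and a shared collection of right $\mathfrak{n}(\A)_\nu$-invariant subspaces of $\mathfrak{n}_\nu$. I would first construct an explicit inclusion-preserving bijection between the invariant subspaces of $L^p(\M)$ and those of $L^\infty(\M)$ when $p \geq 2$, and between those of $L^p(\M)$ and $L^2(\M)$ when $1 \leq p < 2$; the general isomorphism $L^p \leftrightarrow L^q$ would then follow by composition (through $L^\infty$ when $p, q \geq 2$, through $L^2$ when $p, q < 2$, and through both in the mixed case).

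For part (1), I would fix $2 \leq p < \infty$ and, for each right $\A$-invariant closed subspace $K \subseteq L^p(\M)$, introduce the canonical generator $\mathcal{T}_p(K) := \{x \in \mathfrak{n}_\nu : \mathfrak{j}^{(p)}(x) \in K\}$. This is a right $\mathfrak{n}(\A)_\nu$-invariant subspace of $\mathfrak{n}_\nu$ that contains every valid choice of $\mathcal{S}(K)_p$ (which, since $\mathfrak{m}_\nu \subseteq \mathfrak{n}_\nu$ for bounded operators, can be taken from Theorem \ref{techinv-Lp}). Set $\Phi_p(K) := \overline{\mathcal{T}_p(K)}^{w*}$ and, in the reverse direction, $\Psi_p(L) := [\mathfrak{j}^{(p)}(L \cap \mathfrak{n}_\nu)]_p$ for $L$ a $\sigma$-weakly closed right $\A$-invariant subspace of $\M$. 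The critical point is to verify that $\overline{\mathcal{T}_p(K)}^{w*} = \overline{\mathcal{S}(K)_p}^{w*}$ for every valid $\mathcal{S}(K)_p$, which simultaneously establishes well-definedness of the map of the statement and identifies it with $\Phi_p$. Given $x \in \mathcal{T}_p(K)$ I would use the approximate unit $(f_\lambda) \subset \D \cap \mathfrak{n}_\nu$ of Proposition \ref{7:P Terp2} to replace $x$ by the bounded ``localised'' elements $xf_\lambda \in \mathfrak{m}_\nu \cap \mathcal{T}_p(K)$, and then invoke the density of $\mathfrak{j}^{(p)}(\mathcal{S}(K)_p)$ in $K$ together with the trace pairing $tr(\mathfrak{j}^{(q)}(y)^{*}\, \mathfrak{j}^{(p)}(xf_\lambda)) = \nu(y^{*} xf_\lambda)$ for $y \in \mathfrak{n}_\nu$ (where $1/p + 1/q = 1$) to convert the $L^p$-approximation of $\mathfrak{j}^{(p)}(xf_\lambda)$ by $\mathfrak{j}^{(p)}(s_n)$ (with $s_n \in \mathcal{S}(K)_p$) into $\sigma$-weak approximation $s_n \to xf_\lambda$ in $\M$. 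Letting $\lambda$ increase then places $x$ in $\overline{\mathcal{S}(K)_p}^{w*}$; the reverse containment is immediate.

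Once well-definedness is in hand, bijectivity will be essentially formal. For $\Phi_p \circ \Psi_p = \mathrm{id}$, note that the subspace $\mathcal{S}(L)_\infty$ furnished by Theorem \ref{techinv-Lp} with $p = \infty$ is itself a valid generator for $\Psi_p(L)$, so $\Phi_p(\Psi_p(L)) = \overline{\mathcal{S}(L)_\infty}^{w*} = L$; conversely $\mathcal{T}_p(K) = \mathcal{S}(\Phi_p(K))_\infty$ is a valid generator in $\Phi_p(K)$, giving $\Psi_p(\Phi_p(K)) = K$. Inclusion preservation in both directions is transparent from the definitions. Part (2), for $1 \leq p < 2$, would be handled by a strictly parallel construction, replacing the $c=0$ parametrisation by the $c=1$ parametrisation $K = [\clo{h^{1/2}\mathcal{S}(K)_p h^{1/r}}]_p$ of Theorem \ref{techinv-Lp}(2), replacing $\mathfrak{j}^{(p)}$ by the map $y \mapsto \clo{h^{1/2}y}$, and replacing $\M$ (with its $\sigma$-weak topology) by $L^2(\M)$ (with its norm topology). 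Here the well-definedness step should be appreciably easier, since $L^2(\M)$ has a genuine Hilbert-space structure that dovetails with the Haagerup--Terp standard form.

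The principal obstacle I anticipate lies in the well-definedness step of part (1): extracting genuine $\sigma$-weak convergence in $\M$ out of $L^p$-norm convergence in $L^p(\M)$ is delicate, since on bounded sets of $\M$ the $\sigma$-weak topology is in general strictly finer than the topology inherited from any $L^p$-norm. The key technical assertion will be that the family of functionals $x \mapsto \nu(y^{*}x)$ with $y \in \mathfrak{n}_\nu$ separates points of (bounded subsets of) $\overline{\mathcal{T}_p(K)}^{w*}$ and that these functionals can be ``seen'' through the trace pairing between $L^p$ and $L^q$; making this precise, and simultaneously handling the closure symbols in the algebra of $\tau$-measurable operators, is where the bulk of the work will lie.
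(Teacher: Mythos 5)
Your overall architecture matches the paper's: both proofs use Theorem \ref{techinv-Lp} as the bridge, both reduce everything to showing that the assignment $K\mapsto\overline{\mathcal{S}(K)_p}^{w^*}$ is independent of the choice of generator $\mathcal{S}(K)_p$, and both then get bijectivity and order-preservation essentially formally. (Your observation that an order isomorphism of complete lattices is automatically a lattice isomorphism is legitimate and would let you skip the paper's explicit verifications that closures of sums and intersections are preserved, though the paper proves these anyway because the explicit realisations are needed later.)

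However, the well-definedness step — which you correctly single out as the principal obstacle — has a genuine gap as you propose to carry it out. You want to take $s_n\in\mathcal{S}(K)_p$ with $\mathfrak{j}^{(p)}(s_n)\to\mathfrak{j}^{(p)}(xf_\lambda)$ in $L^p$-norm and upgrade this to $\sigma$-weak convergence $s_n\to xf_\lambda$ in $\M$ by testing against functionals of the form $\nu(y^*\,\cdot\,)$. This cannot work: the sequence $(s_n)$ carries no operator-norm bound whatsoever (only its image under $\mathfrak{j}^{(p)}$ is controlled), and convergence against a dense family of normal functionals does not imply $\sigma$-weak convergence of an unbounded sequence. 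Moreover the family $\nu(y^*\,\cdot\,)$, $y\in\mathfrak{n}_\nu$, does not consist of well-defined normal functionals on all of $\M$. The paper's resolution avoids any convergence statement about the $s_n$ in $\M$: one passes to the polar $(\overline{\mathcal{S}}^{w^*})^\circ\subset L^1(\M)$, observes that this polar is itself a left $\A$-invariant subspace of $L^1$, and applies Theorem \ref{techinv-Lp} \emph{to it} to produce a dense subspace of the form $\clo{h^{1/2}\mathcal{S}_0h^{1/2}}$. The pairing $tr(t\,\clo{h^{1/2}\mathcal{S}_0h^{1/2}})$ is then rewritten, by shuffling powers of $h$, as $tr([th^{1/p}]\,\clo{h^{1/r}\mathcal{S}_0h^{1/2}})$ with $\frac{1}{2}=\frac{1}{p}+\frac{1}{r}$; this is H\"older-continuous in the $L^p$-norm of $[th^{1/p}]$, so the vanishing of $tr(s_n\,\cdot\,)$ on the polar passes to the limit, and the bipolar theorem places $t$ in $\overline{\mathcal{S}}^{w^*}$. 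This duality mechanism — in particular the recognition that the polar of a $\sigma$-weakly closed right-invariant subspace of $\M$ is a left-invariant subspace of $L^1$ to which the density theorem applies — is the idea missing from your sketch, and without it the well-definedness argument does not close.
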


\begin{proof} \textbf{Case 1 $(2\leq p<\infty)$:} Fix $p$ where $2\leq p<\infty$, and write $\mathfrak{I}_r(L^p)$ for the 
closed right $\A$-invariant subspaces of $L^p$ (with $\mathfrak{I}_r(L^\infty)$ denoting $\sigma$-weakly closed right 
$\A$-invariant subspaces). Given $K\in\mathfrak{I}_r(L^p)$, we note that there is a unique largest right 
$\mathfrak{n}(\A)_\nu$-invariant subspace $\mathbf{S}(K)_p$ of $\mathfrak{m}_\nu$ for which 
$K=[\clo{\mathbf{S}(K)_ph^{1/p}}]_p$. 
To see this simply define $\mathbf{S}(K)_p$ to be the span of the union of all the $\mathfrak{n}(\A)_\nu$-invariant subspaces 
$\mathcal{S}$ of $\mathfrak{m}_\nu$ for which $K=[\clo{\mathcal{S}h^{1/p}}]_p$. It is an exercise to see that for any such 
$\mathcal{S}$ we have that $\clo{\mathcal{S}h^{1/p}}\subseteq \clo{\mathbf{S}(K)_ph^{1/p}}\subseteq K$, which then 
ensures that $K=[\clo{\mathbf{S}(K)_ph^{1/p}}]_p$. When combined with the fact that $\mathfrak{n}(\A)_\nu$ is $\sigma$-weakly 
dense in $\A$, the right $\mathfrak{n}(\A)_\nu$-invariance of $\mathbf{S}(K)_p$ ensures that 
$\mathbf{S}(K)_pa\subset \overline{\mathbf{S}(K)_p}^{w^*}$ for any $a\in \A$, and hence that 
$\overline{\mathbf{S}(K)_p}^{w^*}a\subset \overline{\mathbf{S}(K)_p}^{w^*}$ for any $a\in \A$. Thus the prescription 
$$\mathcal{L}_{p,\infty}:[\clo{\mathbf{S}(K)_ph^{1/p}}]_p \to \overline{\mathbf{S}(K)_p}^{w^*}$$yields a well-defined map from 
$\mathfrak{I}_r(L^p)$ to $\mathfrak{I}_r(L^\infty)$.

We proceed with describing the properties of this map.
(i) \emph{Injectivity:} Suppose we are given $K,L \in\mathfrak{I}_r(L^p)$ with 
$\overline{\mathbf{S}(K)_p}^{w^*}=\overline{\mathbf{S}(L)_p}^{w^*}$. Given any $s\in \mathbf{S}(K)_p$, we may 
therefore find a net $(t_\alpha)\subset \overline{\mathbf{S}(L)_p}^{w^*}$ which is $\sigma$-weakly convergent to $s$. Now let 
$(f_\lambda)\subset \D$ be the net described in Proposition \ref{7:P Terp2}. Since $(f_\lambda)\subset \mathfrak{n}(\A)_\nu$, 
we will for any fixed $\lambda$ have that $([t_\alpha f_\lambda h^{1/p}])\subset [\clo{\mathbf{S}(L)_ph^{1/p}}]_p$. But 
$[t_\alpha f_\lambda h^{1/p}]= t_\alpha[f_\lambda h^{1/p}]$ is then $L^p$ weakly convergent to 
$s[f_\lambda h^{1/p}]=[sf_\lambda h^{1/p}]$, ensuring that $([sf_\lambda h^{1/p}])\subset L$ for any $\lambda$. However the 
fact that for each $\lambda$ we have $[s f_\lambda h^{1/p}]=[s h^{1/p}]\sigma^\nu_{i/p}(f_\lambda)$ with 
$(\sigma^\nu_{i/p}(f_\lambda))$ $\sigma$-weakly convergent to $\I$ by Proposition \ref{7:P Terp2}, ensures that 
$([sf_\lambda h^{1/p}])$ is $L^p$-weakly convergent to $[sh^{1/p}]$. We therefore have that 
$\clo{\mathbf{S}(K)_ph^{1/p}}\subset L$ and hence that $K\subseteq L$. Reversing the roles of $K$ and $L$ in the above 
argument then shows that we also have that $L\subseteq K$, and hence that $K=L$.

(ii) \emph{Uniqueness of the realisation of $\mathcal{L}_{p,\infty}$:} Let $K \in\mathfrak{I}_r(L^p)$ and let $\mathcal{S}$ be a right 
$\mathfrak{n}(\A)_\nu$ invariant subspace of $\mathfrak{n}_\nu$ for which $K=[\clo{\mathcal{S}h^{1/p}}]_p$. We claim that then 
$\mathcal{L}_{p,\infty}(K)= \overline{\mathcal{S}}^{w^*}$. We proceed with proving this claim.

It is an exercise to on the one hand see that for the subspace $\mathcal{T}=\mathcal{S}+\mathbf{S}(K)_p$ we then still have 
that $K=[\clo{\mathcal{T}h^{1/p}}]_p$, and on the other that each of $\overline{\mathcal{S}}^{w^*}$ and 
$\overline{\mathcal{T}}^{w^*}$ are then right $\A$-invariant subspaces of $L^\infty$. It is clear that 
$\overline{\mathcal{S}}^{w^*}\subseteq \overline{\mathcal{T}}^{w^*}$. 

We show that in fact $\overline{\mathcal{S}}^{w^*}=\overline{\mathcal{T}}^{w^*}$. To achieve this we firstly note that 
$(\overline{\mathcal{S}}^{w^*})^\circ$ is a left $\A$-invariant subspace of $L^1$, and then use Theorem \ref{techinv-Lp} to 
select a left $\mathfrak{n}(\A^*)_\nu^*$-invariant subspace $\mathcal{S}_0$ of $\mathfrak{m}_\nu$ such that 
$(\overline{\mathcal{S}}^{w^*})^\circ = [\clo{h^{1/2}\mathcal{S}_0h^{1/2}}]$. Given any $t\in \mathcal{T}$, we use the fact 
that $[\clo{\mathcal{T}h^{1/p}}]_p=K=[\clo{\mathcal{S}h^{1/p}}]_p$ to select a sequence $(s_n)\subset \mathcal{S}$ for which 
$([s_mh^{1/p}])$ converges to $[th^{1/p}]$ in $L^p$-norm. Let $r>0$ be given such that $\frac{1}{2}=\frac{1}{p}+\frac{1}{r}$. 
For each $n$ we then have that $$0=tr(s_n\clo{h^{1/2}\mathcal{S}_0h^{1/2}})= 
tr([s_nh^{1/p}]\clo{h^{1/r}\mathcal{S}_0h^{1/2}}),$$ which in turn ensures that $$0=\lim_{n\to\infty}tr([s_nh^{1/p}]\clo{h^{1/r}\mathcal{S}_0h^{1/2}})= tr([th^{1/p}]\clo{h^{1/r}\mathcal{S}_0h^{1/2}})= tr(t\clo{h^{1/2}\mathcal{S}_0h^{1/2}}).$$That is $$\mathcal{T}\subset \clo{h^{1/2}\mathcal{S}_0h^{1/2}}^\circ= (\overline{\mathcal{S}}^{w^*})^{\circ\circ}=\overline{\mathcal{S}}^{w^*}.$$We then clearly have that 
$\overline{\mathcal{T}}^{w^*}\subseteq\overline{\mathcal{S}}^{w^*}$, which suffices to ensure that 
$\overline{\mathcal{S}}^{w^*}=\overline{\mathcal{T}}^{w^*}$. 

A similar argument shows that we also have that $\overline{\mathbf{S}(K)_p}^{w^*}=\overline{\mathcal{T}}^{w^*}$, and hence 
that $\overline{\mathbf{S}(K)_p}^{w^*}=\overline{\mathcal{S}}^{w^*}$ as claimed.

(iii) \emph{Surjectivity of $\mathcal{L}_{p,\infty}$:} Let $K\in\mathfrak{I}_r(L^\infty)$ be given. We know from Theorem 
\ref{techinv-Lp} that any $K$ admits a $\sigma$-weakly dense right $\mathfrak{n}(\A)_\nu$-invariant subspace $\mathcal{S}(K)$ of 
$\mathfrak{m}_\nu\cap K$. Consider $K_p=[\clo{\mathcal{S}(K)h^{1/p}}]_p\subset L^p$. The fact that 
 $$[\clo{\mathfrak{n}(\A)_\nu h^{1/p}}]_p=H^p=[\clo{h^{1/p}\mathfrak{n}(\A^*)^*_\nu}]_p$$ 
now ensures that 
  \begin{eqnarray*}
  \clo{\mathcal{S}(K)h^{1/p}}\mathfrak{n}(\A^*)^*_\nu 
  &=&\mathcal{S}(K)\clo{h^{1/p}\mathfrak{n}(\A^*)^*_\nu} \subset\mathcal{S}(K)[\clo{\mathfrak{n}(\A)_\nu h^{1/p}}]_p\\
  &\subset& [\clo{\mathcal{S}(K)\mathfrak{n}(\A)_\nu h^{1/p}}]_p \subset [\clo{\mathcal{S}(K)h^{1/p}}]_p = K_p.
  \end{eqnarray*} 
 This ensures that $K_p$ is right $\mathfrak{n}(\A^*)^*_\nu$-invariant. But 
$\mathfrak{n}(\A^*)^*_\nu$ is $\sigma$-weakly dense in $\A$. So given $a\in \A$, we may select a net 
$(a_\gamma)\subset \mathfrak{n}(\A^*)^*_\nu$ converging $\sigma$-weakly to $a$. For any $k\in K_p$, what we showed above 
ensures that $(ka_\gamma)\subset K_p$. Since by convexity $K_p$ is $L^p$-weakly closed with $(ka_\gamma)$ $L^p$-weakly 
convergent to $ka$, we have that $ka\in K_p$ for any $k\in K_p$ and any $a\in \A$. Thus $K_p\in \mathfrak{I}_r(L^p)$. The 
fact verified in the previous step now ensures that $\mathcal{L}_{p,\infty}(K_p)=K$.

(iv) \emph{$\mathcal{L}_{p,\infty}$ preserves closures of sums:} Let $K_\alpha\in \mathfrak{I}_r(L^p)$ ($\alpha\in A$) be a 
family of spaces in $\mathfrak{I}_r(L^p)$. Each $K_\alpha$ is of the form $K_\alpha=[\clo{\mathbf{S}(K_\alpha)_ph^{1/p}}]_p$, where 
$\mathbf{S}(K_\alpha)_p$ is as at the start of the proof. We then clearly have that 
$$K_\beta = [\clo{\mathbf{S}(K_\beta)_ph^{1/p}}]_p\subset [\clo{\mathrm{span}(\cup_\alpha\mathbf{S}(K_\alpha)_p)h^{1/p}}]_p\subset 
\overline{\mathrm{span}(\cup_\alpha K_\alpha)},$$which ensures that 
$$[\clo{\mathrm{span}(\cup_\alpha\mathbf{S}(K_\alpha)_p)h^{1/p}}]_p = \overline{\mathrm{span}(\cup_\alpha K_\alpha)}.$$(Here 
$\mathrm{span}(\cup_\alpha\mathbf{S}(K_\alpha)_p)$ and $\mathrm{span}(\cup_\alpha K_\alpha)$ are of course the sets of finite 
sums of terms respectively taken from the $\mathbf{S}(K_\alpha)_p$s and $K_\alpha$s. It is not difficult to see that then 
$\overline{\mathrm{span}(\cup_\alpha K_\alpha)}$ is again a right $\A$-invariant closed subspace of $L^p$, and that 
$\mathrm{span}(\cup_\alpha\mathbf{S}(K_\alpha)_p)$ is a right $\mathfrak{n}(\A)$-invariant subspace of $\mathfrak{m}_\nu$. By 
what we showed earlier $\mathcal{L}_{p,\infty}$ will map $\overline{\mathrm{span}(\cup_\alpha K_\alpha)}$ onto 
$\overline{\mathrm{span}(\cup_\alpha \mathbf{S}(K_\alpha)_p)}^{w^*}$. Since for any $\beta$ we have that 
$$\mathbf{S}(K_\beta)_p\subset \mathrm{span}(\cup_\alpha \mathbf{S}(K_\alpha)_p)\subset \mathrm{span}(\cup_\alpha \overline{\mathbf{S}(K_\alpha)_p}^{w^*})= 
\mathrm{span}(\cup_\alpha \mathcal{L}_{p,\infty}(K_\alpha)),$$it follows that 
$$\mathcal{L}_{p,\infty}(K_\beta) = \overline{\mathbf{S}(K_\beta)_p)}^{w^*} \subset \overline{\mathrm{span}(\cup_\alpha \mathbf{S}(K_\alpha)_p)}^{w^*} =\mathcal{L}_{p,\infty}(\overline{\mathrm{span}(\cup_\alpha K_\alpha)})$$for each $\beta$, 
and that $$\overline{\mathrm{span}(\cup_\alpha \mathbf{S}(K_\alpha)_p)}^{w^*} \subset \overline{\mathrm{span}(\cup_\alpha \mathcal{L}_{p,\infty}(K_\alpha))}^{w^*}.$$Together 
these inclusions suffice to ensure that 
in fact $$\mathcal{L}_{p,\infty}(\overline{\mathrm{span}(\cup_\alpha K_\alpha)}) = \overline{\mathrm{span}(\cup_\alpha \mathcal{L}_{p,\infty}(K_\alpha))}^{w^*}.$$

(v) \emph{$\mathcal{L}_{p,\infty}$ preserves intersections:} Let $q\in (1,2]$ be given such that $1=\frac{1}{p}+\frac{1}{q}$. 
The observation made at the start of this section regarding the relation between closed right $\A$-invariant subspaces of 
$L^p$ and left $\A$-invariant subspaces of $L^q$, ensures that the map $\mathcal{L}_{p,\infty}$ induces a bijection 
$\mathcal{L}_{p,\infty}^\circ$ from the closed left $\A$-invariant subspaces of $L^q$ (denoted by $\mathfrak{I}_\ell(L^q)$) 
to those of $L^1$ (namely $\mathfrak{I}_\ell(L^1)$) given by the formula $\mathcal{L}_{p,\infty}^\circ(K)= (\mathcal{L}_{p,\infty}(K^\circ))^\circ$. When combined with known properties of the polar operation, the fact that 
$\mathcal{L}_{p,\infty}$ preserves closures of sums, ensures that $\mathcal{L}_{p,\infty}^\circ$ preserves intersections. If 
we can show that $\mathcal{L}_{p,\infty}^\circ$ also preserves closures of sums, then by symmetry, $\mathcal{L}_{p,\infty}$ 
will preserve intersections.

Firstly select $r\geq 1$ such that $\frac{1}{q}=\frac{1}{r} + \frac{1}{2}$. For any $\widetilde{K}\in \mathfrak{I}_\ell(L^q)$ 
there exists a unique largest left $\mathfrak{n}(\A^*)_\nu^*$-invariant subspace $\widetilde{\mathcal{S}}(\widetilde{K})_q$ 
of $\mathfrak{m}_\nu$ such that $\widetilde{K} =[\clo{h^{1/r}\widetilde{\mathcal{S}}(K)_qh^{1/2}}]_q$. (This can be shown by 
the same sort of argument we used at the start of the proof.) 

We claim that the space $[\clo{h^{1/2}\widetilde{\mathcal{S}}(\widetilde{K})_qh^{1/2}}]_1$ is left 
$\mathfrak{n}(\A)_\nu$-invariant. If that is the case, then since $\overline{(\mathfrak{n}(\A^*)_\nu^*)}^{w^*}=\A$, it will 
even be $\A$-invariant. To see the left $\mathfrak{n}(\A)_\nu$-invariance, we may the use the fact that 
$[\clo{\mathfrak{n}(\A)_\nu h^{1/2}}]_2= [\clo{h^{1/2}\mathfrak{n}(\A^*)_\nu^*}]_2$, to see that  
\begin{eqnarray*}
\mathfrak{n}(\A)_\nu\clo{h^{1/2}\widetilde{\mathcal{S}}(K)_qh^{1/2}}&\subset& [\clo{\mathfrak{n}(\A)_\nu h^{1/2}}]_2 \clo{\widetilde{\mathcal{S}}(K)_qh^{1/2}}\\
&\subset& [\clo{h^{1/2}\mathfrak{n}(\A^*)_\nu^*}]_2 \clo{\widetilde{\mathcal{S}}(K)_qh^{1/2}}\\
&\subset& [\clo{h^{1/2}\mathfrak{n}(\A^*)_\nu^*\widetilde{\mathcal{S}}(K)_qh^{1/2}}]_1\\ 
&\subset& [\clo{h^{1/2}\widetilde{\mathcal{S}}(K)_qh^{1/2}}]_1.
\end{eqnarray*}
The prescription 
\begin{equation}\label{eq:tech-invss} 
[\clo{h^{1/r}\widetilde{\mathcal{S}}(\widetilde{K})_qh^{1/2}}]_q \to [\clo{h^{1/2}\widetilde{\mathcal{S}}(\widetilde{K})_qh^{1/2}}]_1
\end{equation}
therefore yields a well defined map from $\mathfrak{I}_\ell(L^q)$ into $\mathfrak{I}_\ell(L^1)$. The same sort 
of arguments as those used in parts (ii) and (iv) above, may now be used to show that 
\begin{itemize}
\item this map may firstly alternatively be realised by simply sending $[\clo{h^{1/r}\widetilde{\mathcal{S}}h^{1/2}}]_q$ to 
$[\clo{h^{1/2}\widetilde{\mathcal{S}}h^{1/2}}]_1$, where $\widetilde{\mathcal{S}}$ is an arbitrary left $\mathfrak{n}(\A^*)^*$-invariant subspace of $\mathfrak{m}_\nu$ for which $K=[\clo{h^{1/r}\widetilde{\mathcal{S}}(K)_qh^{1/2}}]_q$
\item and secondly that it preserves closures of sums.
\end{itemize}

If therefore we can show that the map described by equation (\ref{eq:tech-invss}) above is precisely 
$\mathcal{L}_{p,\infty}^\circ$, the proof will be complete. With this task in mind let $K\in \mathfrak{I}_r(L^p)$ be given 
and let $\widetilde{K}=K^\circ$ as above. Let $\mathcal{S}(K^\circ)_q$ and 
$\mathcal{S}(\mathcal{L}_{p,\infty}(K^\circ)^\circ)_1$ respectively be the unique largest left 
$\mathfrak{n}(\A^*)^*$-invariant subspaces for which $K^\circ=[\clo{h^{1/r}\mathcal{S}(K^\circ)_qh^{1/2}}]_q$ and 
$\mathcal{L}_{p,\infty}(K)^\circ=[\clo{h^{1/2}\mathcal{S}(\mathcal{L}_{p,\infty}(K)^\circ)_1h^{1/2}}]_1$.
With $\mathbf{S}(K)_p$ as at the start of the proof, the action of $\mathcal{L}_{p,\infty}$ is of course to send 
$K^\circ=[\clo{\mathbf{S}(K^\circ)_ph^{1/p}}]_p$ to $\overline{\mathbf{S}(K^\circ)_p}^{w^*}$. Let $\mathbf{S}(K)_p$ be as at 
the start of the proof. The fact that $[\clo{h^{1/r}\mathcal{S}(K^\circ)_qh^{1/2}}]_q = [\clo{\mathbf{S}(K)_ph^{1/p}}]_p^\circ$ then ensures that we will for any $s\in \mathbf{S}(K)_p$ have that 
$$0=tr([sh^{1/p}]\clo{h^{1/r}\mathcal{S}(K^\circ)_qh^{1/2}})= tr(s\clo{h^{1/2}\mathcal{S}(K^\circ)_qh^{1/2}}),$$and hence that 
$$\clo{h^{1/2}\mathcal{S}(K^\circ)_qh^{1/2}}\subset \mathbf{S}(K)_p^\circ =(\overline{\mathbf{S}(K)_p}^{w^*})^\circ =$$ $$\mathcal{L}_{p,\infty}(K)^\circ = [\clo{h^{1/2}\mathcal{S}(\mathcal{L}_{p,\infty}(K)^\circ)_1h^{1/2}}]_1.$$This fact then in 
turn ensures that $\mathcal{S}_0=\mathcal{S}(K^\circ)_q+\mathcal{S}(\mathcal{L}_{p,\infty}(K)^\circ)_1$ is another left 
$\mathfrak{n}(\A^*)^*_\nu$-invariant subspace of $\mathfrak{m}_\nu$ for which 
$$[\clo{h^{1/2}\mathcal{S}_0h^{1/2}}]_1=[\clo{h^{1/2}\mathcal{S}(\mathcal{L}_{p,\infty}(K)^\circ)_1h^{1/2}}]_1=\mathcal{L}_{p,\infty}(K)^\circ.$$The fact that $\mathcal{S}(\mathcal{L}_{p,\infty}(K)^\circ)_1$ must be the largest such space ensures that 
$\mathcal{S}_0=\mathcal{S}(\mathcal{L}_{p,\infty}(K)^\circ)_1$.

Similarly the fact that $$[\clo{h^{1/2}\mathcal{S}(\mathcal{L}_{p,\infty}(K)^\circ)_1h^{1/2}}]_1 = \mathcal{L}_{p,\infty}(K)^\circ=(\overline{\mathbf{S}(K)_p})^\circ = \mathbf{S}(K)_p^\circ,$$then ensures that we 
will for any $s\in \mathbf{S}(K)_p$ have that $$0=tr(s\clo{h^{1/2}\mathcal{S}(\mathcal{L}_{p,\infty}(K)^\circ)_1h^{1/2}})= tr([sh^{1/p}]\clo{h^{1/r}\mathcal{S}(\mathcal{L}_{p,\infty}(K)^\circ)_1h^{1/2}}),$$and hence that 
$$\clo{h^{1/r}\mathcal{S}(\mathcal{L}_{p,\infty}(K)^\circ)_1h^{1/2}}\subset [\clo{\mathbf{S}(K)_ph^{1/p}}]_p^\circ =K^\circ = [\clo{h^{1/r}\mathcal{S}(K^\circ)_1h^{1/2}}]_q.$$Thus here 
$\mathcal{S}_0=\mathcal{S}(K^\circ)_q+\mathcal{S}(\mathcal{L}_{p,\infty}(K)^\circ)_1$ is another left 
$\mathfrak{n}(\A^*)^*_\nu$-invariant subspace of $\mathfrak{m}_\nu$ for which 
$$[\clo{h^{1/r}\mathcal{S}_0h^{1/2}}]_q=[\clo{h^{1/r}\mathcal{S}(K^\circ)_qh^{1/2}}]_q=K^\circ.$$As before the fact that 
$\mathcal{S}(K^\circ)_q$ must be the largest such space ensures that $\mathcal{S}_0=\mathcal{S}(K^\circ)_q$. 

It follows that $\mathcal{S}(K^\circ)_q=\mathcal{S}(\mathcal{L}_{p,\infty}(K)^\circ)_1$. Now observe that by equation 
(\ref{eq:tech-invss}), $\mathcal{L}_{p,\infty}^\circ$ will map $K^\circ = [\clo{h^{1/r}\mathcal{S}(K^\circ)_qh^{1/2}}]_q$ onto 
$[\clo{h^{1/2}\mathcal{S}(K^\circ)_qh^{1/2}}]_1$. But by what we have just shown, this latter subspace is precisely 
$[\clo{h^{1/2}\mathcal{S}(\mathcal{L}_{p,\infty}(K)^\circ)_1h^{1/2}}]_1=\mathcal{L}_{p,\infty}(K)^\circ$. This is what was 
required and hence concludes this part of the proof.

\medskip

\emph{Observation:} The pair of maps $\mathcal{L}_{p,\infty}$ and $\mathcal{L}_{p,\infty}^\circ$ are both lattice 
isomorphisms and hence enough to establish the first assertion of the theorem. We therefore do not need to prove the validity 
of claim (2) if all we are interested in is the verification of the first assertion. Our interest in claim (2) is rather in its usefulness as a technical tool for our subsequent analysis.

\textbf{Case 2 $(1\leq p< 2)$:} Let $p\in [1,2)$ be given and select $q\in (2,\infty]$ so that $1=\frac{1}{p}+\frac{1}{q}$. 
The existence of the lattice isomorphism $\mathcal{L}_{p,2}$ from $\mathfrak{I}_\ell(L^p)$ to $\mathfrak{I}_\ell(L^2)$ described in the 
hypothesis of (2), can be verified by suitably modifying the preceding argument. There is however a shortcut to demonstrating 
its existence. To see this, we firstly need to note that for any $1\leq s_0, s_1 \leq \infty$ with $1=\frac{1}{s_0}+\frac{1}{s_1}$, $K$ is a closed right $\A$-invariant 
subspace of $L^{s_0}$ if and only if $K^\circ$ is a left $\A$-invariant subspace of $L^{s_1}$. To construct 
$\mathcal{L}_{p,2}$, we may therefore firstly apply the construction in the first part of the proof to the closed left 
$\A$-invariant subspaces of $L^q$ to obtain the lattice homomorphism $\widetilde{\mathcal{L}}_{q,\infty}$ from  
the closed left $\A$-invariant subspaces of $L^q$ to those of $L^\infty$ with the action as described in equation 
(\ref{eq:tech-invss}), where for such a subspace $\widetilde{K}$ the set $\widetilde{\mathcal{S}}(\widetilde{K})_q$ is here 
the unique largest left $\mathfrak{n}(\A^*)_\nu^*$-invariant subspace of $\mathfrak{m}_\nu$ for which 
$\widetilde{K}=[\clo{h^{1/r}\widetilde{\mathcal{S}}(\widetilde{K})_qh^{1/2}}]_q$. The prescription
$\mathcal{L}_{p,1}=\widetilde{\mathcal{L}}_{q,\infty}^\circ$ where $\widetilde{\mathcal{L}}_{q,\infty}^\circ(K)=\widetilde{\mathcal{L}}_{q,\infty}(K^\circ)^\circ$ then yields a lattice homomorphism from $\mathfrak{I}_r(L^p)$ to 
$\mathfrak{I}_r(L^1)$ with the action of sending any $K$ of the form $[\clo{h^{1/2}\mathcal{S}h^{1/r}}]_p$ (where 
$\mathcal{S}$ is a right $\mathfrak{n}(\A)_\nu$-invariant subspace of $\mathfrak{m}_\nu$), to 
$[\clo{h^{1/2}\mathcal{S}h^{1/r}}]_1$. The map $\mathcal{L}_{p,1}$ we seek is then nothing but 
$\mathcal{L}_{2,1}^{-1}\mathcal{L}_{p,1}$.
\end{proof}

As in the case of $L^2$, we say that a closed right $\A$-invariant subspace $K$ of $L^p$ is a type 2 invariant subspace if $K=[K\A_0]_p$. With this concept in place, we are now able to present the following analogue of Theorem \ref{main}. This result extends each of \cite[Theorem 4.6]{sager} and \cite[Theorems 3.6 \& 3.8]{BeRa}. The proof of the second part closely follows that of \cite[Theorem 4.5]{BL-Beurling}.

\begin{theorem} \label{lp-inv}  Let $\A$ be a maximal subdiagonal subalgebra of $\M$, and suppose that $K$ is a closed 
$\A$-invariant subspace of $L^p(\M)$, for $1 \leq p \leq \infty$.  (For $p = \infty$ we assume that $K$ is $\sigma$-weakly closed.) 
\begin{enumerate}
\item The space $K$ may then be written as an $L^p$-column sum of the form $Z \oplus^{col} (\oplus_i^{col} \, u_i H^p)$ where $Z$ is a type 2 closed right $\A$-invariant subspace of $L^p$, and where the $u_i$'s are partial isometries in $\M \cap K$ with $u^*_ju_i = 0$ if $i \neq j$ and $u_i^* u_i \in {\mathcal D}$. Moreover, for each $i$, $u_i^* Z = (0)$, left multiplication by the $u_i u_i^*$ are contractive projections from $K$ onto the summands $u_i H^p(\A)$, and left multiplication
by $1 - \sum_i \, u_i u_i^*$ is a contractive projection from $K$ onto $Z$.
\item Let $K$ be in the form $Z \oplus^{col} (\oplus_i^{col} \, u_i H^p)$ described above. Then there exists a contractive projection from $K$ onto $\oplus_i^{col} \, u_i L^p({\mathcal D})$ and along $[K\A_0]_p$. The quotient $K/[K A_0]_p$ is therefore isometrically ${\mathcal D}$-isomorphic to the subspace $\oplus_i^{col} \, u_i L^p({\mathcal D})$.
(Here $[\cdot]_\infty$ is as usual the $\sigma$-weak closure.)
\end{enumerate}
\end{theorem}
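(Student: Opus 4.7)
The plan is to transfer the problem to $L^2(\M)$ via the lattice isomorphism of Theorem \ref{latt-isom}, apply the $L^2$ structure theorem (Theorem \ref{main}), and push the resulting decomposition back to $L^p$. Let $\mathcal{L}_{p,2}\colon \mathfrak{I}_r(L^p) \to \mathfrak{I}_r(L^2)$ be the lattice isomorphism obtained by composing the isomorphisms of Theorem \ref{latt-isom} (through $L^\infty$ when $p \geq 2$), and set $K_2 := \mathcal{L}_{p,2}(K)$. Theorem \ref{main} together with Corollary \ref{adcor} decomposes $K_2 = Z_2 \oplus^{col} (\oplus_i^{col}\, u_i H^2)$ with $Z_2$ type 2 and $u_i$ partial isometries satisfying $u_i^* u_i \in \D$ and $u_j^* u_i = 0$ for $i \neq j$; the corollary following Theorem \ref{newpr} places each $u_i$ in $\A$, and these will be the same partial isometries we use for the $L^p$-decomposition.

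Since each $u_i \in \A$, the subspace $u_i H^p$ is closed and right $\A$-invariant, and the orthogonality $u_j^* u_i = 0$ ($i \neq j$) gives $(u_j y_j)^*(u_i y_i) = 0$ for $y_i, y_j \in H^p$, so $\oplus_i^{col} u_i H^p$ is a bona fide $L^p$-column sum via the identity $\|\sum u_i y_i\|_p^p = \|\sum y_i^* u_i^* u_i y_i\|_{p/2}^{p/2}$. I would verify that $\mathcal{L}_{p,2}(u_i H^p) = u_i H^2$ by exhibiting a common right $\mathfrak{n}(\A)$-invariant subspace of $\mathfrak{m}_\nu$ realising both sides via the recipes of Theorem \ref{latt-isom}, and then invoke lattice monotonicity and the inclusion $u_i H^2 \subset K_2$ to conclude $u_i H^p \subset K$. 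Next set $Z := \mathcal{L}_{p,2}^{-1}(Z_2)$. Type-2-ness of $Z_2$ transfers to $Z$ via the intertwining of the operation $Y \mapsto [Y\A_0]$ by $\mathcal{L}_{p,2}$ (using the sum-preservation of the lattice iso). The crucial relation $u_i^* Z = 0$ is extracted from its $L^2$ counterpart: picking $\mathcal{S} \subset \mathfrak{m}_\nu$ right $\mathfrak{n}(\A)$-invariant with $Z_2 = [\clo{\mathcal{S} h^{1/2}}]_2$, the equality $u_i^*[\clo{s h^{1/2}}] = [\clo{u_i^* s h^{1/2}}] = 0$ forces $u_i^* s = 0$ by invertibility of $h^{1/2}$, and using the same $\mathcal{S}$ to realise $Z$ on the $L^p$ side then yields $u_i^* Z = 0$. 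The column-sum decomposition $K = Z \oplus^{col} (\oplus_i^{col} u_i H^p)$ follows, with left multiplication by $1 - \sum_i u_i u_i^* \in \M$ providing the contractive projection onto $Z$ and multiplication by $u_i u_i^*$ the contractive projections onto $u_i H^p$, completing part (1).

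For part (2), define $Q(x) := \sum_i u_i \mathbb{E}_p(u_i^* x)$ on $K$. For $x = z + \sum_j u_j y_j \in K$ one computes $u_i^* x = u_i^* u_i y_i = e_i y_i$ where $e_i := u_i^* u_i \in \D$, so $Q(x) = \sum_i u_i \mathbb{E}_p(y_i) \in \oplus_i^{col} u_i L^p(\D)$, while $x - Q(x) = z + \sum_i u_i (y_i - \mathbb{E}_p(y_i))$ lies in $Z + \sum_i u_i H^p_0 \subset [K\A_0]_p$ by Theorem \ref{complementHp}. Contractivity of $Q$ follows from the column-sum structure of $K$ together with the contractivity of $\mathbb{E}_p$; since $Q$ is the identity on its range, $Q$ is the desired projection, and $K/[K\A_0]_p$ is $\D$-isometrically isomorphic to $\oplus_i^{col} u_i L^p(\D)$ as claimed.

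The main obstacle will be the careful bookkeeping needed to show that $\mathcal{L}_{p,2}$ faithfully transports the column-sum structure and the type-2 property. Concretely, one must verify that a single right $\mathfrak{n}(\A)$-invariant subspace $\mathcal{S} \subset \mathfrak{m}_\nu$ can be used simultaneously to realise $Z_2$ via $[\clo{\mathcal{S} h^{1/2}}]_2$ and $Z$ via the appropriate recipe of Theorem \ref{latt-isom} (which takes two distinct forms depending on whether $p \geq 2$ or $p < 2$), so that the relation $u_i^* \mathcal{S} = 0$ established on the $L^2$ side can be invoked to conclude $u_i^* Z = 0$ on the $L^p$ side. Handling the two realisations of $\mathcal{L}_{p,2}$ in tandem, and keeping track of $\sigma$-weak versus norm closures in connecting them, will be the technical heart of the argument.
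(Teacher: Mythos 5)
Your proposal follows essentially the same route as the paper: transfer $K$ to $L^2$ via the lattice isomorphisms of Theorem \ref{latt-isom}, check that the isomorphism respects type-2-ness, column sums and subspaces of the form $uH^p$, pull the decomposition of Theorem \ref{main} back to $L^p$, and for part (2) use exactly the map $\theta(x)=\sum_i u_i\E_p(u_i^*x)$. The outline is sound, but two points need repair. First, your assertion that ``the corollary following Theorem \ref{newpr} places each $u_i$ in $\A$'' is not available here: that corollary requires $K\subseteq \mathcal{H}^2(\A)$, which is not assumed, and for a general invariant subspace the $u_i$ are only partial isometries in $\M$ with $u_i^*u_i\in\D$. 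Fortunately nothing downstream actually needs $u_i\in\A$ (right $\A$-invariance of $u_iH^p$ follows from $H^p\A\subseteq H^p$, and closedness from $u_iH^p=u_i(e_iH^p)$ with $u_i$ isometric on $e_iL^p$), so you should simply drop that claim.

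Second, and more seriously, the contractivity of $Q$ does \emph{not} ``follow from the column-sum structure together with the contractivity of $\E_p$.'' Each summand $u_i\E_p(u_i^*x)$ is individually contractive, but a sum of contractions is not one, and the column-sum identity only reduces the problem to bounding $tr\bigl(\bigl(\sum_i|\E_p(u_i^*x)|^2\bigr)^{p/2}\bigr)$ by $\|x\|_p^p$. For $2\le p\le\infty$ this requires the operator Schwarz inequality $|\E_p(f)|^2\le \E_{p/2}(|f|^2)$ (extended from $\M$ to $L^p$ by density), followed by $\sum_i \E_{p/2}(x^*u_iu_i^*x)\le\E_{p/2}(x^*x)$ and contractivity of $\E_{p/2}$ on $L^{p/2}$ --- the last step already needs $p/2\ge 1$. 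For $1\le p<2$ this argument is unavailable ($L^{p/2}$ is only a quasi-Banach space and $\E_{p/2}$ is not contractive there in the required sense), and one must instead obtain contractivity by duality against the already-established case $q=p/(p-1)\ge 2$, using $tr(g^*\theta_F(f))=tr(\theta_F(g)^*f)$. This is precisely how the paper handles it; as written, your one-line justification conceals a genuine case split. You should also record both inclusions needed for $\ker(Q{\upharpoonright}K)=[K\A_0]_p$ (you only show $\mathrm{ran}(I-Q){\upharpoonright}K\subseteq[K\A_0]_p$; the reverse containment $[K\A_0]_p\subseteq Z\oplus^{col}(\oplus_i^{col}u_iH^p_0)\subseteq\ker Q$ is easy but must be said).
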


\begin{proof} \textbf{Claim 1:} We shall prove the first claim in two stages.

\textbf{Case 1 $(2< p\leq \infty)$:} By part (1) of Theorem \ref{latt-isom} the prescription 
$K\to \overline{\mathcal{S}(K)_p}^{w^*}$ where $\mathcal{S}(K)_p$ is a right $\mathfrak{n}(\A)_\nu$-invariant subspace of 
$\mathfrak{n}_\nu$ for which $K=[\clo{\mathcal{S}(K)_ph^{1/p}}]_p$, yields a lattice isomorphism $\mathcal{L}_{p,\infty}$ from the right $\A$-invariant subspaces of $L^p(\M)$ to those of $L^\infty(\M)$. 

\emph{Observation 1.1: $K$ is type 2 if and only if $\mathcal{L}_{p,\infty}(K)$ is.}$\quad$ Let $K$ be a closed right $\A$-invariant 
subspace of $L^p$ of the form $K=[\clo{\mathcal{S}h^{1/p}}]_p$ for some right $\mathfrak{n}(\A)$-invariant subspace of 
$\mathfrak{m}_\nu$. First suppose that $K$ is type 2. Since both $\clo{\mathcal{S}h^{1/p}}$ and $K\A_0$ are norm dense in 
$K$, $\clo{\mathcal{S}h^{1/p}}\A_0$ is norm-dense in $K$. The fact that $\mathfrak{n}(\A_0^*)^*_\nu$ is $\sigma$-weakly dense 
in $\A_0$, then ensures that $\clo{\mathcal{S}h^{1/p}}\mathfrak{n}(\A_0^*)^*_\nu$ is $L^p$-weakly dense (and hence norm 
dense) in $K$. We may now use the fact that $$[\clo{\mathcal{S}h^{1/p}}]_p = K = [\clo{h^{1/p}\mathfrak{n}(\A_0^*)^*_\nu}]_p=[\clo{\mathfrak{n}(\A_0)_\nu h^{1/p}}]_p$$to conclude that 
$[\clo{\mathcal{S}h^{1/p}}\mathfrak{n}(\A_0^*)^*_\nu]$. This clearly shows that in fact 
$[\clo{\mathcal{S}\mathfrak{n}(\A_0)_\nu h^{1/p}}]_p = [\clo{\mathcal{S}h^{1/p}}]_p$. It now follows from Theorem 
\ref{latt-isom} that $\overline{\mathcal{S}\mathfrak{n}(\A_0)_\nu}^{w^*}= \mathcal{L}_{p,\infty}(K)$. In view of the fact that 
$\mathcal{S}\mathfrak{n}(\A_0)_\nu\subset \overline{\mathcal{S}}^{w^*}\,\A_0\subset \overline{\mathcal{S}}^{w^*},$ this 
suffices to show that $\mathcal{L}_{p,\infty}(K)$ is type 2. 

Conversely suppose that $\mathcal{L}_{p,\infty}(K)$ is type 2. Select a right $\mathfrak{n}(\A)_\nu$-invariant subspace 
$\mathcal{S}$ of $\mathfrak{m}_\nu$ such that $K=[\clo{\mathcal{S}h^{1/p}}]_p$. Then  
$\overline{\mathcal{S}}^{w^*}=\mathcal{L}_{p,\infty}(K)$. We know that $\overline{\mathcal{S}}^{w^*}\,\A_0$ is $\sigma$-weakly 
dense in $\mathcal{L}_{p,\infty}(K)$, $\overline{\mathcal{S}}^{w^*}\,\mathfrak{n}(\A_0)_\nu$ $\sigma$-weakly dense in 
$\overline{\mathcal{S}}^{w^*}\,\A_0$, and $\mathcal{S}\mathfrak{n}(\A_0)_\nu$ $\sigma$-weakly dense in 
$\overline{\mathcal{S}}^{w^*}\,\mathfrak{n}(\A_0)_\nu$. Thus $\mathcal{S}\mathfrak{n}(\A_0)_\nu$ is $\sigma$-weakly dense in 
$\overline{\mathcal{S}}^{w^*}=\mathcal{L}_{p,\infty}(K)$. By Theorem \ref{latt-isom}, this ensures 
$\mathcal{L}_{p,\infty}(K)$ will map both of the spaces $K=[\clo{\mathcal{S}h^{1/p}}]_p$ and 
$[\clo{\mathcal{S}\mathfrak{n}(\A_0)_\nu h^{1/p}}]_p$ onto $\overline{\mathcal{S}}^{w^*}$. The injectivity of this map then 
ensures that $[\clo{\mathcal{S}h^{1/p}}]_p = [\clo{\mathcal{S}\mathfrak{n}(\A_0)_\nu h^{1/p}}]_p$. A similar argument to that 
used in the preceding paragraph now shows that $\clo{\mathcal{S}h^{1/p}}\mathfrak{n}(\A_0^*)^*_\nu$ is norm-dense in 
$[\clo{\mathcal{S}\mathfrak{n}(\A_0)_\nu h^{1/p}}]_p=K$. This clearly ensures that the larger space 
$\clo{\mathcal{S}h^{1/p}}\A_0$ (and therefore also $K\A_0$) is norm dense in $K$.

\emph{Observation 1.2: $\mathcal{L}_{p,\infty}$ and its inverse preserves column sums.}$\quad$ We demonstrate the validity of the 
claim in the simplest case of the column sum of two subspaces. The proof easily extends to the case of arbitrary columns 
sums. Let $K$, $K_1$ and $K_2$ be closed right invariant subspaces of $L^p$ for which $K=\overline{K_1+K_2}$. The fact that 
$\mathcal{L}_{p,\infty}$ preserves closures of sums ensures that 
$\mathcal{L}_{p,\infty}(K)=\overline{\mathcal{L}_{p,\infty}(K_1)+\mathcal{L}_{p,\infty}(K_2)}$. For each of $K_1$ and $K_2$ 
we may select a corresponding right $\mathfrak{n}(\A)_\nu$-invariant subspace $\mathcal{S}_i$ of $\mathfrak{m}_\nu$ such that 
$K_1=[\clo{\mathcal{S}_1h^{1/p}}]_p$ and $K_2=[\clo{\mathcal{S}_2h^{1/p}}]_p$. It is then clear that $(K_1)^*K_2=0$ if and 
only if $\clo{\mathcal{S}_1h^{1/p}}^*\clo{\mathcal{S}_2h^{1/p}}=\clo{h^{1/p}\mathcal{S}_1^*\mathcal{S}_2h^{1/p}}=0$ if and 
only if $\mathcal{S}_1^*\mathcal{S}_2 =0$ if and only if $\mathcal{L}_{p,\infty}(K_1)^*\mathcal{L}_{p,\infty}(K_2)= (\overline{\mathcal{S}_1}^{w^*})^*\overline{\mathcal{S}_2}^{w^*} =0$. 

\emph{Observation 1.3: $K=uH^p(\A)$ for some partial isometry $u\in \M$ with $u^*u\in \D$ if and only $\mathcal{L}_{p,\infty}(K)= 
u\A$.} $\quad$ To see this notice that since $uH^p(\A)=[\clo{u\mathfrak{n}(\A)_\nu h^{1/p}}]_p$, $\mathcal{L}_{p,\infty}$ 
will by Theorem \ref{latt-isom} map this subspace onto $\overline{u\mathfrak{n}(\A)_\nu}^{w^*}= u\A$. By the injectivity of 
$\mathcal{L}_{p,\infty}$, it is only the space $uH^p(\A)=[\clo{u\mathfrak{n}(\A)_\nu h^{1/p}}]_p$ that can map onto 
$\overline{u\mathfrak{n}(\A)_\nu}^{w^*}= u\A$.

\emph{Conclusion:} $\quad$ Any closed right $\A$-invariant subspace of $L^p$ is the image of a closed right $\A$-invariant 
subspace of $L^2$ under the map $\mathcal{L}_{p,\infty}^{-1}\mathcal{L}_{2,\infty}$. In view of the above three observations, 
the validity of the theorem for $L^p$ where $2<p\leq \infty$ willtherefore follow from applying this map to the invariant subspaces of 
$L^2$.

\textbf{Case 2 $(1\leq p< 2)$:} In this case we directly use the map $\mathcal{L}_{p,2}$ described in part (2) of Theorem 
\ref{latt-isom}, to transfer the known structure of closed right invariant subspaces of $L^2$ to those of $L^p$. In the rest of the proof we shall consistently assume that $r\geq 1$ has been chosen so that $\frac{1}{p}=\frac{1}{2}+\frac{1}{r}$. The action of this map is then to send subspaces of the form $K=[\clo{h^{1/2}\mathcal{S}(K)_ph^{1/r}}]_p$ (where $\mathcal{S}(K)_p$ is a right $\mathfrak{n}(\A)_\nu$-invariant subspace of 
$\mathfrak{n}_\nu$) to $[\clo{h^{1/2}\mathcal{S}(K)_p}]$. 

\emph{Observation 2.1: If $\mathcal{L}_{p,2}(K)$ is type 2, then so is $K$.}$\quad$ Let $K$ be a closed right $\A$-invariant subspace of $L^p$ of the form $K=[\clo{h^{1/2}\mathcal{S}h^{1/r}}]_p$ for some right $\mathfrak{n}(\A)$-invariant subspace of $\mathfrak{m}_\nu$.  Suppose that $\mathcal{L}_{p,2}(K) = [\clo{h^{1/2}\mathcal{S}}]_2$. We know that $\clo{h^{1/2}\mathcal{S}}\A_0$ is norm dense in $\mathcal{L}_{p,2}(K) = [\clo{h^{1/2}\mathcal{S}}]_2$, and $\clo{h^{1/2}\mathcal{S}}\mathfrak{n}(\A_0)_\nu$ $L^2$-weakly dense (and therefore norm dense) in 
$\clo{h^{1/2}\mathcal{S}}\A_0$, So $\clo{h^{1/2}\mathcal{S}}\mathfrak{n}(\A_0)_\nu$ is norm dense in $\mathcal{L}_{p,2}(K)$. 
Since by Theorem \ref{latt-isom}, $\mathcal{L}_{p,2}$ will map $[\clo{h^{1/2}\mathcal{S}\mathfrak{n}(\A_0)_\nu h^{1/r}}]_p$
onto $[\clo{h^{1/2}\mathcal{S}\mathfrak{n}(\A_0)_\nu}]_2=\mathcal{L}_{p,2}(K)$, the injectivity of this map ensures that $K=[\clo{h^{1/2}\mathcal{S}h^{1/r}}]_p$ and $[\clo{h^{1/2}\mathcal{S}\mathfrak{n}(\A_0)_\nu h^{1/r}}]_p$ agree. A similar argument to those used before now shows that $\clo{h^{1/2}\mathcal{S}h^{1/r}}\mathfrak{n}(\A_0^*)^*_\nu$ is norm-dense in 
$[\clo{\mathcal{S}\mathfrak{n}(\A_0)_\nu h^{1/p}}]_p=K$. This clearly ensures that the larger space 
$\clo{h^{1/2}\mathcal{S}h^{1/r}}\A_0$ (and therefore also $K\A_0$) is norm dense in $K$.

\emph{Observation 2.2: $\mathcal{L}_{p,2}^{-1}$ preserves column sums.}$\quad$ As before we demonstrate the validity 
of the claim in the simplest case of the column sum of two subspaces. Let $K$, $K_1$ and $K_2$ be closed right invariant 
subspaces of $L^p$ for which $K=\overline{K_1+K_2}$. We know that $\mathcal{L}_{p,2}^{-1}$ is a lattice isomorphism and hence
preserves closures of sums. We therefore only need to check the preservation of the ``orthogonality'' property inherent in 
column sums. For each of $K_1$ and $K_2$ we may select a corresponding right $\mathfrak{n}(\A)_\nu$-invariant subspace 
$\mathcal{S}_i$ of $\mathfrak{m}_\nu$ such that $K_1=[\clo{h^{1/2}\mathcal{S}_1h^{1/r}}]_p$ and 
$K_2=[\clo{h^{1/2}\mathcal{S}_2h^{1/r}}]_p$. By Theorem \ref{latt-isom} we then have that $\mathcal{L}_{p,2}(K_i)=[\clo{h^{1/2}\mathcal{S}_i}]_2$ for $i=1,2$. It is then clear that \begin{eqnarray*}
&&\mathcal{L}_{p,2}(K_1)^*\mathcal{L}_{p,2}(K_2)=0\\
&\Rightarrow& \clo{h^{1/2}\mathcal{S}_1}^*\clo{h^{1/2}\mathcal{S}_2} = \clo{\mathcal{S}_1^*h^{1/2}}\clo{h^{1/2}\mathcal{S}_2}=0\\
&\Rightarrow& \clo{h^{1/2}\mathcal{S}_1h^{1/r}}^*\clo{h^{1/2}\mathcal{S}_2h^{1/r}}= \clo{h^{1/r}\mathcal{S}_1^*h^{1/2}}\clo{h^{1/2}\mathcal{S}_2h^{1/r}}=0\\
&\Rightarrow& K_1^*K_2=0.
\end{eqnarray*}
\emph{Observation 2.3: If $K$ is a closed right $\A$-invariant subspace for which $\mathcal{L}_{p,2}(K)= 
uH^2(\A)$ for some partial isometry $u\in \M$ with $u^*u\in \D$, then $K=uH^p(\A)$.} $\quad$ Let $K$ be given such that 
$\mathcal{L}_{p,2}(K)= uH^2(\A)$. Select a right $\mathfrak{n}(\A)_\nu$ invariant subspace $\mathcal{S}$ of 
$\mathfrak{m}_\nu$ such that $K=[\clo{h^{1/2}\mathcal{S}h^{1/r}}]_p$. By Theorem \ref{latt-isom} we will then have that 
$uH^2(\A) = \mathcal{L}_{p,2}(K) =[\clo{h^{1/2}\mathcal{S}}]_2$. Following what should by now be fairly familiar paths, one may then 
use the fact that $[\clo{h^{1/r}\mathfrak{n}(\A^*)^*_\nu}]_r= [\clo{\mathfrak{n}(\A)_\nu h^{1/r}}]_r$ to see that 
$K\mathfrak{n}(\A^*)^*_\nu =[\clo{h^{1/2}\mathcal{S}h^{1/r}}]_p\mathfrak{n}(\A^*)^*_\nu$ is norm-dense in 
$[\clo{h^{1/2}\mathcal{S}\mathfrak{n}(\A)_\nu h^{1/r}}]_p$. We leave it as an exercise to verify that 
\begin{eqnarray*}
&&[\clo{h^{1/2}\mathcal{S}\mathfrak{n}(\A)_\nu h^{1/r}}]_p=[[\clo{h^{1/2}\mathcal{S}}]_2\clo{\mathfrak{n}(\A)_\nu h^{1/r}}]_p\\
&=& [uH^2(\A)\clo{\mathfrak{n}(\A)_\nu h^{1/r}}]_p = u[H^2(\A)\clo{\mathfrak{n}(\A)_\nu h^{1/r}}]_p\\
&=& u[\clo{h^{1/2}\mathfrak{n}(\A^*)^*_\nu}\clo{\mathfrak{n}(\A)_\nu h^{1/r}}]_p =u[\clo{h^{1/2}\mathfrak{n}(\A^*)^*_\nu\mathfrak{n}(\A)_\nu h^{1/r}}]_p\\
&=& uH^p(\A).
\end{eqnarray*}
But since $K\mathfrak{n}(\A^*)^*_\nu$ is weakly $L^p$-dense (and therefore norm dense) in $K\A=K$, we must have that $K=uH^p(\A)$ as claimed.

Having verified the above three observations, the conclusion now follows from an application of $\mathcal{L}_{p,2}$ to Theorem \ref{main}. 

To see the final claim, notice that since left multiplication by $u_i u_i^*$ annihilates $Z$ and $u_jH^p(\A)$ if $j \neq i$, 
left multiplication by the $u_i u_i^*$'s are contractive projections from $K$ onto the summands $u_iH^p(\A)$. Easy checking also shows that  $1-\sum_i \, u_i u_i^*$ is a projection which will by left multiplication map $K$ onto $Z$.

\textbf{Claim 2:} Let $\E$ be the canonical faithful normal conditional expectation onto $\D$ for which $\nu\circ \E=\nu$. Our task here, is to show that the map $\theta:L^p\to L^p$ defined by $\theta(f)=\sum_iu_i\E_p(u_i^*f)$ is a contractive idempotent map. To see this we first let $F$ be a finite subfamily of the index set $\{i\colon i\in I\}$. It then easily follows from the properties of the $u_i$'s, that for $\theta_F$ defined by $\theta_F(f)=\sum_{i\in F}u_i\E_p(u_i^*f)$, we have that $$\theta(\theta(f))=\theta(\sum_{i\in F}u_i\E_p(u_i^*f)) =$$ $$\sum_{i,j\in F}u_j\E_p(u_j^*u_i\E_p(u_i^*f))=\sum_{i\in F}u_i\E_p(u_i^*f).$$We pass to verifying the contractivity. 

\emph{Case 1:} First let $p\geq 2$. If say $p<\infty$, the complete positivity of $\E$ combined with the very particular action of the 
extension of $\E$ to $L^p$ and $L^{p/2}$, ensures that for any $x\in \mathfrak{n}_\nu$ we have that 
$$|\E_p([xh^{1/p}])|^2= \E_p([xh^{1/p}])^*\E_p([xh^{1/p}])= \clo{h^{1/p}\E(x^*)\E(x)h^{1/p}}\leq$$ $$\clo{h^{1/p}\E(x^*x)h^{1/p}}=\E_{p/2}(\clo{h^{1/p}x^*xh^{1/p}}).$$The norm density of $\clo{\mathfrak{n}_\nu h^{1/p}}$ in $L^p$ 
combined with the $L^p$ and $L^{p/2}$-continuity of the relevant extensions of $\E$, ensures that we in fact have that 
$|\E_p(f)|^2\leq \E_{p/2}(|f|^2)$ for any $f\in L^p(\M)$. As far as $\theta$ is concerned, the properties of the $u_i$'s ensure that 
we will for any $f\in L^p$ have that 
$$\theta_F(f)^*\theta_F(f) = (\sum_{i\in F}\E_p(u_i^*f)^*u_i^*)(\sum_{j\in F}u_j\E_p(u_j^*f))=$$ $$\sum_{i\in F}\E_p(u_i^*f)^*u_i^*u_i\E_p(u_i^*f) = \sum_{i\in F}\E_p(u_i^*f)^*\E_p(u_i^*f) \leq \sum_{i\in F}\E_{p/2}(f^*u_iu_i^*f).$$(In the second to last (in)equality we used the fact that $u^*_iu_i$ is a projection in $\D$. Easy 
checking reveals that $\sum_{i\in F}u_iu_i^*$ is itself in fact a projection, and hence the above inequality may be further 
refined to yield the fact that $$\theta_F(f)^*\theta_F(f)\leq \sum_{i\in F}\E_{p/2}(f^*u_iu_i^*f)\leq \E_{p/2}(f^*f).$$The contractivity of 
the extension of $\E$ to $L^{p/2}$, then ensures that $$\|\theta_F(f)\|_p^2=\|\,|\theta_F(f)|^2\|_{p/2}\leq \|\E_{p/2}(f^*f)\|_{p/2}\leq \|f^*f\|_{p/2}=\|f\|_p^2$$for all $f\in L^p(\M)$. We leave the verification of a similar claim for the case $p=\infty$ to the reader. It follows fairly directly from the fact that $|\E(f)|^2\leq \E(|f|^2)$ for all 
$f\in L^\infty$. 

\emph{Case 2:} Now let $1\leq p<2$, and let $q>2$ be the conjugate index of $p$. For any $f\in L^p$ and $g\in L^q$, we may then use the fact that the extension of $\E$ to $L^p$ is the dual of its extension to $L^q$, to see that 
$$tr(g^*\theta_F(f))=\sum_{j\in F}tr(g^*u_i\E_p(u_i^*f)) = \sum_{j\in F}tr((u_i\E_q(u_i^*g))^*f)= tr(\theta_F(g)^*f).$$Since $\theta_F$ acts contractively on $L^q$, it therefore follows that $$|tr(g^*\theta_F(f))|\leq \|\theta_F(g)\|_q\|f\|_p\leq \|g\|_q\|f\|_p,$$which by $L^p$ duality ensures that $\|\theta_F(f)\|_p\leq \|f\|_p$.

The above facts now clearly imply that the map $\theta:f \to \lim_F\sum_{i\in F}u_i\E(u_i^*f)=\sum_iu_i\E(u_i^*f)$ is a 
contractive idempotent map on $L^p$. Our last order of business is to investigate the action of $\theta$ on $K$. Since 
$u_i^*Z=\{0\}$ for any $i$, it is clear that $\theta$ annihilates $Z$. For a typical element $f=\sum_iu_i x_i$ (where $x_i\in H^p$ 
for each $i$) of $\oplus^{col}_iu_iH^p(\A)$, the fact that $u_j^*u_i=0$ if $j\neq i$, ensures that then 
$$\theta(f)= \sum_iu_i\E_p(u_i^*f) = \sum_iu_i\E_p(u_i^*u_ix_i).$$But since $u_i^*u_i\in \D$ for each $i$, it then further follows 
that $$\theta(f)=  \sum_iu_i\E_p(u_i^*u_ix_i) = \sum_iu_iu_i^*u_i\E_p(x_i) = \sum_iu_i\E_p(x_i).$$It now clearly follows that 
$\theta$ maps $K=Z\oplus^{col}(\oplus^{col}_iu_iH^p(\A))$ onto $\oplus^{col}_iu_i\E_p(H^p(\A))= \oplus^{col}_iu_iL^p(\D)$. As 
far as the kernel of $\theta$s action on $K$ is concerned, it therefore clearly corresponds to 
$Z\oplus^{col}(\oplus^{col}_iu_iH^p_0(\A))$. Since $K\A_0\subseteq [Z\A_0]_p\oplus^{col}(\oplus^{col}_iu_i[H^p(\A)\A_0]_p) = Z\oplus^{col}(\oplus^{col}_iu_iH^p_0(\A))$, the kernel must include $[K\A_0]_p$. However conversely the fact that 
$Z\subset K$ and $u_iH^p(\A)$ for each $i$, similarly ensures that $Z=[Z\A_0]_p\subset [K\A_0]_p$ and that $H^p_0(\A)= [H^p(\A)\A_0]_p\subset [K\A_0]_p$ for each $i$. Therefore $[K\A_0]_p= Z\oplus^{col}(\oplus^{col}_iu_iH^p_0(\A))= \mathrm{ker}(\theta)$, as is required. Thus the restriction of $\theta$ to $K$ is a contractive projection onto $\oplus^{col}_iu_iL^p(\D)$ and along $[K\A_0]$, and hence induces an isometric ${\mathcal D}$-module map from
$K/{\rm ker}(\theta)$ onto $\oplus^{col}_i \, u_i L^p(\D)$.
\end{proof}

Armed with the wisdom of part (2) of the above theorem the following definition of the right wandering subspace for the case $p\neq 2$ now makes perfect sense. It is clear from part (2) of Theorem \ref{inv1} that in the case $p=2$ this definition gives us exactly the right wandering subspace defined earlier.

\begin{definition} Let $K$ be a right $\A$-invariant subspace of $L^p$ where $1\leq p \leq\infty$. With $Z$ as in the preceding theorem, we may now on the basis of the above theorem define the right wandering subspace $W(K)$ of $K$ to be the space $\oplus_i^{col} \, u_i L^p({\mathcal D})$. We say that $K$ is a type 1 invariant subspace if $K=[W(K)\mathcal{A}]$ and a type 2 invariant subspace if $K=[K\mathcal{A}_0]$.
\end{definition}

Against the backdrop of the above definition, the first assertion of the preceding theorem may be interpreted as the statement that any closed right $\A$-invariant subspace of $L^p$ may be written as a column sum of a type 1 and type 2 invariant subspace. The following analogue of Beurling's characterization of $\sigma$-weakly closed ideals of $H^\infty(\mathbb{D})$ now also readily follows from Theorem \ref{lp-inv}. This extends \cite[Corollary 4.8]{BL-Beurling} where this fact was noted for the case of finite von Neumann algebras.

\begin{corollary} \label{rid}  If  $\A$ is maximal subdiagonal, then the type 1 $\sigma$-weakly closed right ideals of $\A$
are precisely those right ideals of the form $\oplus_i^{col} \, u_i \A$, for partial isometries $u_i \in \A$ with mutually 
orthogonal ranges and $|u_i| \in {\mathcal D}$.
\end{corollary}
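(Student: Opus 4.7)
The plan is to apply Theorem \ref{lp-inv} with $p=\infty$ to the $\sigma$-weakly closed right ideal viewed as a $\sigma$-weakly closed right $\A$-invariant subspace of $L^\infty(\M)=\M$, and then to use the fact that type 1 forces the type 2 summand to vanish.

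First I would fix a $\sigma$-weakly closed right ideal $J$ of $\A$ which is type 1. Since $J\A\subseteq J$, in particular $J$ is a $\sigma$-weakly closed right $\A$-invariant subspace of $L^\infty(\M)$. By part (1) of Theorem \ref{lp-inv} (applied at $p=\infty$) we obtain a decomposition
\[
J = Z \oplus^{col}\Bigl(\oplus_i^{col}\, u_i H^\infty(\A)\Bigr) = Z \oplus^{col}\Bigl(\oplus_i^{col}\, u_i \A\Bigr),
\]
where $Z$ is type 2, and the $u_i$ are partial isometries in $\M\cap J$ with $u_j^*u_i=0$ for $i\neq j$ and $u_i^*u_i\in\mathcal{D}$. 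Because $J\subseteq\A$ we have $u_i\in\M\cap J\subseteq\A$, which gives the required membership condition on the $u_i$. Moreover the ranges of the $u_i$ are mutually orthogonal, since $u_iu_i^*\cdot u_ju_j^* = u_i(u_i^*u_j)u_j^* = 0$ for $i\neq j$. The hypothesis that $J$ is type 1 means $J=[W(J)\A]_\infty$, which in terms of the decomposition above forces $Z=\{0\}$: indeed by part (2) of Theorem \ref{lp-inv} the contractive projection $\theta$ onto $\oplus_i^{col} u_iL^\infty(\mathcal{D})$ has kernel $[J\A_0]_\infty$, whereas the type 1 condition combined with the fact that $W(J)\subseteq\operatorname{range}(\theta)$ yields $J=\overline{\operatorname{range}(\theta)\cdot\A}^{w^*}\subseteq \oplus_i^{col}u_i\A$, so $Z\subseteq Z\cap\bigl(\oplus_i^{col}u_i\A\bigr)=\{0\}$ by the column-sum orthogonality $Z^*(u_iH^\infty)=\{0\}$.

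Conversely, suppose $J=\oplus_i^{col}u_i\A$ for partial isometries $u_i\in\A$ with mutually orthogonal ranges and $|u_i|\in\mathcal{D}$. Then $J$ is obviously a $\sigma$-weakly closed right $\A$-invariant subspace of $\A$, and hence a $\sigma$-weakly closed right ideal of $\A$. It remains to check that $J$ is type 1. Applying Theorem \ref{lp-inv} to $J$, one obtains a decomposition $J=Z'\oplus^{col}(\oplus_j^{col} v_j\A)$; by the uniqueness embedded in the statement of that theorem (concretely, because the left-multiplication projections $1-\sum_i u_iu_i^*$ and $1-\sum_j v_jv_j^*$ must both recover the type 2 part, and because our presentation already realises $J$ with no type 2 summand), we must have $Z'=\{0\}$. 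Equivalently, the wandering subspace at $p=\infty$ is $W(J)=\oplus_i^{col}u_iL^\infty(\mathcal{D})=\oplus_i^{col}u_i\mathcal{D}$ and $[W(J)\A]_\infty = \oplus_i^{col}u_i[\mathcal{D}\A]_\infty = \oplus_i^{col}u_i\A = J$, so $J$ is type 1.

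The main (and essentially only) subtlety will be ensuring that the two identifications above are consistent with the $\sigma$-weak closure conventions in the $p=\infty$ case of Theorem \ref{lp-inv}, in particular that the internal column sum $\oplus_i^{col}u_i\A$ is already $\sigma$-weakly closed inside $\M$; this is precisely what the $p=\infty$ statement of Theorem \ref{lp-inv} delivers. Everything else is bookkeeping within the decomposition supplied by that theorem.
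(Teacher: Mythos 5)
Your proof is correct and follows exactly the route the paper intends: the corollary is stated there as "readily following" from Theorem \ref{lp-inv}, and your argument is precisely that specialisation to $p=\infty$, with the type 1 hypothesis killing the type 2 summand $Z$ via the contractive projection $1-\sum_i u_iu_i^*$, and the converse read off from the wandering subspace $W(J)=\oplus_i^{col}u_i\mathcal{D}$. The details you supply (in particular $u_i\in\M\cap J\subseteq\A$ and the equivalence of $u_j^*u_i=0$ with orthogonality of ranges) are all sound.
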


\section{Characterizations of maximal subdiagonal subalgebras}\label{S12}

With the technology of both section \ref{Hpsection} and a stronger analytic reduction theorem at our disposal (Proposition \ref{anred}), we are now able to extend the equivalences in 
\cite[Theorem 3.4]{L-HpIII}, to the general case. The primary focus is to describe conditions which characterise subdiagonality of analytically conditioned subalgebras of a von Neumann 
algebra. In the case of finite von Neumann algebras equipped with a faithful normal tracial state, all such conditions turn out to be equivalent to the validity of Szeg\"o's formula 
\cite{BLsurvey}. Hence even in settings where there are no Fuglede-Kadison determinants and hence no non-commutative Szeg\"o formula, one may nevertheless regard such conditions as echoes 
of Szeg\"o's formula. 

The same line of attack as was used to prove \cite[Theorem 3.4]{L-HpIII} will go through in the present context. However several of the steps require 
a significantly more refined and delicate argument, and so 
for the sake of the reader we give full details in the main theorem. We firstly note that \cite[Lemma 3.1]{L-HpIII} carries over almost verbatim to the present context. We formulate this result 
for the convenience of the reader.

\begin{lemma}
 Let $\A$ be an analytically conditioned algebra and let $\widehat{\A}$ be the $\sigma$-weak closure of $\{\pi_\nu(a)\lambda_t:a\in \A,\, t\in \mathbb{Q}_D\}$ in $\R=\M\rtimes\mathbb{Q}_D$. Then with notation as in Proposition \ref{anred}, $\widehat{\A}$ is an analytically conditioned subalgebra of $\R$, and each $\widehat{\A}_n=\widehat{\A}\cap \R_n$ an analytically conditioned subalgebra of $\R_n$.
\end{lemma}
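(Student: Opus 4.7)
The plan is to verify the two defining axioms of an analytically conditioned subalgebra — modular invariance and the existence of a faithful normal weight-preserving conditional expectation onto the diagonal that is multiplicative on the algebra — first for $\widehat{\A}$ and then for each $\widehat{\A}_n = \widehat{\A}\cap\R_n$. The key observation that makes this efficient is that the construction of $\widehat{\E}$ and its basic properties in Proposition \ref{anred}(1) — namely the extension to a faithful normal conditional expectation onto $\widehat{\D}$, the identity $\tnu\circ\widehat{\E}=\tnu$, and the intertwining with $\mathscr{W}_{\mathbb{Q}_D}$ — only used the $\sigma_t^\nu$-invariance of $\A$ (hence of $\D$) together with the multiplicativity of $\E$ on $\A$, never requiring that $\A+\A^*$ be $\sigma$-weakly dense in $\M$. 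Hence almost all of the apparatus there can be imported verbatim.

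For $\widehat{\A}$ I would first verify that it is a unital $\sigma$-weakly closed subalgebra of $\R$: unitality comes from $\I = \pi_\nu(\I)\lambda_0$, while closure under multiplication of the generating family $\{\pi_\nu(a)\lambda_t: a\in\A,\, t\in\mathbb{Q}_D\}$ follows from $\lambda_s\pi_\nu(b) = \pi_\nu(\sigma_s^\nu(b))\lambda_s$ combined with $\sigma_s^\nu(\A)=\A$. Modular invariance $\sigma_t^{\tnu}(\widehat{\A})=\widehat{\A}$ then follows from Lemma \ref{10:L modgpred} on generators (together with $\sigma_t^{\tnu}(\pi_\nu(a))=\pi_\nu(\sigma_t^\nu(a))$ and $\sigma_t^{\tnu}(\lambda_s)=\lambda_s$) and is extended to all of $\widehat{\A}$ by normality. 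Next I would import $\widehat{\E}:\R\to\widehat{\D}$ from Proposition \ref{anred}(1) with its prescription $\widehat{\E}(\pi_\nu(a)\lambda_t) = \pi_\nu(\E(a))\lambda_t$, and identify $\widehat{\D} = \widehat{\A}\cap\widehat{\A}^*$: the inclusion $\widehat{\D}\subseteq\widehat{\A}\cap\widehat{\A}^*$ is obvious from the generating families, and the reverse follows from applying the idempotent $\widehat{\E}$ once multiplicativity has been verified. Multiplicativity of $\widehat{\E}$ on $\widehat{\A}$ I would establish by a generator computation
\[\widehat{\E}\bigl(\pi_\nu(a)\lambda_s \cdot \pi_\nu(b)\lambda_t\bigr) = \pi_\nu(\E(a\sigma_s^\nu(b)))\lambda_{s+t} = \pi_\nu(\E(a)\E(\sigma_s^\nu(b)))\lambda_{s+t},\]
using that $\sigma_s^\nu(b)\in\A$, that $\E$ is multiplicative on $\A$, and that $\E$ commutes with $\sigma_s^\nu$; a parallel computation of $\widehat{\E}(\pi_\nu(a)\lambda_s)\,\widehat{\E}(\pi_\nu(b)\lambda_t)$ matches, and normality then propagates the identity to all of $\widehat{\A}$.

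For the $\widehat{\A}_n$ the transfer is quick: as an intersection of two $\sigma$-weakly closed unital subalgebras, $\widehat{\A}_n$ is itself $\sigma$-weakly closed and unital, and its invariance under $\sigma_t^{\tnu}$ follows from the same property for $\widehat{\A}$ (just shown) and for $\R_n$ (Theorem \ref{red s-finite}(3)). The faithful normal conditional expectation $\E_n = \mathscr{W}_n\circ\widehat{\E}:\R\to\D_n$ produced in Proposition \ref{anred}(2), when restricted to $\R_n$, gives a faithful normal conditional expectation $\R_n\to \D_n$ that preserves both $\tau_n$ and $\tnu{\upharpoonright}\R_n$, and multiplicativity on $\widehat{\A}_n$ follows by combining the already-established multiplicativity of $\widehat{\E}$ on $\widehat{\A}$ with the fact that $\mathscr{W}_n$ acts as the identity on $\R_n \supseteq \widehat{\A}_n$. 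The main — though rather mild — obstacle in this plan is the verification that $\widehat{\D}=\widehat{\A}\cap\widehat{\A}^*$ (and correspondingly $\D_n=\widehat{\A}_n\cap\widehat{\A}_n^*$): one direction is built into the definitions via the generating families, but the reverse needs the multiplicativity of $\widehat{\E}$ together with $\sigma_t^\nu$-invariance to force a general element of $\widehat{\A}\cap\widehat{\A}^*$ to be a fixed point of $\widehat{\E}$; the same argument then transfers to $\widehat{\A}_n$ via $\mathscr{W}_n$.
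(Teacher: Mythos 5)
Your proposal is correct, and it follows the route the paper intends: the paper itself gives no proof here, merely asserting that \cite[Lemma 3.1]{L-HpIII} ``carries over almost verbatim,'' and your generator computations (the covariance relation $\lambda_s\pi_\nu(b)=\pi_\nu(\sigma_s^\nu(b))\lambda_s$, the action of $\sigma_t^{\tnu}$ and $\widehat{\E}$ on $\pi_\nu(a)\lambda_s$, and the faithfulness-plus-Schwarz argument identifying $\widehat{\D}$ with $\widehat{\A}\cap\widehat{\A}^*$) are exactly the standard details being deferred to. Your observation that the construction of $\widehat{\E}$ and $\E_n$ in Proposition \ref{anred} never uses $\sigma$-weak density of $\A+\A^*$ correctly disposes of the only potential circularity.
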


The result \cite[Lemma 3.1]{L-HpIII} similarly carries over to the general setting, although we have a bit more work to do here.

\begin{lemma}\label{L2}
 Let $\A$ be an analytically conditioned algebra.  If $L^2(\M)=\mathcal{H}^2(\A)\oplus(\mathcal{H}_0^2(\A))^*$, then also 
$L^2(\R)=\mathcal{H}^2(\widehat{\A})\oplus(\mathcal{H}_0^2(\widehat{\A}))^*$, and $L^2(\R_n)=\mathcal{H}^2(\widehat{\A}_n)\oplus(\mathcal{H}_0^2(\widehat{\A}_n))^*$ for each $n$.
\end{lemma}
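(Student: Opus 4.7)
My plan is to first establish the decomposition for $L^2(\R)$ via a Fourier-type decomposition relative to the discrete group $\mathbb{Q}_D$, and then deduce the $L^2(\R_n)$ case by applying the orthogonal projection induced by $\mathscr{W}_n$. The orthogonality of $\mathcal{H}^2(\widehat{\A})$ and $(\mathcal{H}_0^2(\widehat{\A}))^*$ in $L^2(\R)$, and of $\mathcal{H}^2(\widehat{\A}_n)$ and $(\mathcal{H}_0^2(\widehat{\A}_n))^*$ in $L^2(\R_n)$, is immediate from Corollary \ref{gen-perp}, since the preceding Lemma ensures that $\widehat{\A}$ and each $\widehat{\A}_n$ are analytically conditioned. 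The task therefore reduces to showing that the respective sums fill up the entire $L^2$ space.

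For $L^2(\R)$, the first step is to establish the orthogonal decomposition
\begin{equation*}
L^2(\R) = \overline{\bigoplus_{t \in \mathbb{Q}_D} \lambda_t L^2(\M)}
\end{equation*}
(with $L^2(\M)$ embedded canonically as in Remark \ref{10:R cond-exp}). For orthogonality of $\lambda_s L^2(\M)$ and $\lambda_t L^2(\M)$ when $s \neq t$, I would compute $\langle \lambda_s f, \lambda_t g\rangle_{L^2(\R)} = tr_{\R}(\lambda_{s-t} f g^*)$ for $f, g \in L^2(\M)$; since $fg^* \in L^1(\pi(\M))$, the module property of conditional expectations (Proposition \ref{exp-props}) combined with $\mathscr{W}_{\mathbb{Q}_D}(\lambda_{s-t})=0$ from Corollary \ref{5:C OVexistcor} yields $\mathscr{W}_{\mathbb{Q}_D}^{(1)}(\lambda_{s-t} f g^*) = \mathscr{W}_{\mathbb{Q}_D}(\lambda_{s-t})\cdot fg^* = 0$, whence $tr_{\R}(\lambda_{s-t}fg^*) = tr_{\R}(\mathscr{W}_{\mathbb{Q}_D}^{(1)}(\lambda_{s-t} f g^*)) = 0$. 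For density, Lemma \ref{L2 estimate}(1) shows $\mathrm{span}\{\lambda_t \pi(a) : t \in \mathbb{Q}_D,\ a \in \mathfrak{n}_\nu^\infty(\M)\} \subset \mathfrak{n}_{\tnu}^\infty$, and Proposition \ref{P density1} gives norm-density of its $\mathfrak{j}^{(2)}$-image in $L^2(\R)$; since $\lambda_t$ is bounded, $\mathfrak{j}^{(2)}(\lambda_t a) = \lambda_t[a h^{1/2}] \in \lambda_t L^2(\M)$.

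Using the hypothesis, each summand decomposes as $\lambda_t L^2(\M) = \lambda_t \mathcal{H}^2(\A) + \lambda_t (\mathcal{H}_0^2(\A))^*$. The embedding $L^2(\M) \hookrightarrow L^2(\R)$ sends $\mathcal{H}^2(\A)$ into $\mathcal{H}^2(\widehat{\A})$ and $\mathcal{H}_0^2(\A)$ into $\mathcal{H}_0^2(\widehat{\A})$, via $\mathfrak{n}_\nu(\A) \subset \mathfrak{n}_{\tnu}(\widehat{\A})$ and the realisation of $\mathcal{H}^2$-spaces through $\mathfrak{j}^{(2)}$ in Theorem \ref{tech-thm}. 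Since $\lambda_t \in \widehat{\D} \subset \widehat{\A}$ is a unitary in the centralizer $\R_{\tnu}$ (Lemma \ref{bn}), it commutes with $h$; hence for any $a \in \mathfrak{n}_{\tnu}(\widehat{\A})$ the element $\lambda_t a$ again lies in $\mathfrak{n}_{\tnu}(\widehat{\A})$, giving $\lambda_t \mathcal{H}^2(\widehat{\A}) \subset \mathcal{H}^2(\widehat{\A})$. Analogously, for $a \in \mathfrak{n}_{\tnu}(\widehat{\A}_0)$ the element $a\lambda_t^*$ lies in $\mathfrak{n}_{\tnu}(\widehat{\A}_0)$ (using the ideal property of $\widehat{\A}_0$ and $\sigma_t^{\tnu}$-invariance of $\tnu$), so $\lambda_t [h^{1/2} a^*] = [h^{1/2} \lambda_t a^*] = [(a\lambda_t^*)h^{1/2}]^* \in (\mathcal{H}_0^2(\widehat{\A}))^*$. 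Combining all inclusions yields $L^2(\R) \subseteq \mathcal{H}^2(\widehat{\A}) + (\mathcal{H}_0^2(\widehat{\A}))^*$, completing the $\R$ case.

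For the $\R_n$ case, I would use that $\mathscr{W}_n^{(2)}$ is an orthogonal projection of $L^2(\R)$ onto $L^2(\R_n)$. Self-adjointness follows from the computation
\begin{equation*}
\langle \mathscr{W}_n^{(2)}(f), g\rangle = tr(\mathscr{W}_n^{(1)}(\mathscr{W}_n^{(2)}(f) g^*)) = tr(\mathscr{W}_n^{(2)}(f)\mathscr{W}_n^{(2)}(g)^*) = \langle \mathscr{W}_n^{(2)}(f), \mathscr{W}_n^{(2)}(g)\rangle,
\end{equation*}
using Proposition \ref{exp-props} with $\mathscr{W}_n^{(2)}(f) \in L^2(\R_n)$, together with the symmetric computation for the other side. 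By Proposition \ref{anred}(3), $\mathscr{W}_n^{(2)}$ maps $\mathcal{H}^2(\widehat{\A})$ onto $\mathcal{H}^2(\widehat{\A}_n)$, and since $\mathscr{W}_n$ preserves adjoints, it maps $(\mathcal{H}_0^2(\widehat{\A}))^*$ onto $(\mathcal{H}_0^2(\widehat{\A}_n))^*$. Applying $\mathscr{W}_n^{(2)}$ to the established decomposition of $L^2(\R)$ then yields $L^2(\R_n) = \mathcal{H}^2(\widehat{\A}_n) + (\mathcal{H}_0^2(\widehat{\A}_n))^*$, which combined with the orthogonality from Corollary \ref{gen-perp} gives the direct sum. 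The main obstacle is verifying the Fourier-type decomposition of $L^2(\R)$, which requires the careful interplay between the trace functional on $L^1(\R)$, the operator-valued weight $\mathscr{W}_{\mathbb{Q}_D}$, and the density machinery from Section \ref{S3}.
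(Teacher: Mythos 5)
Your overall architecture is sound and close to the paper's: orthogonality from Corollary \ref{gen-perp}, reduction to the density of $\mathrm{span}\{\lambda_t f\colon t\in\mathbb{Q}_D,\ f\in L^2(\M)\}$ in $L^2(\R)$, per-summand application of the hypothesis, and the inclusions $\lambda_t\mathcal{H}^2(\widehat{\A})\subset\mathcal{H}^2(\widehat{\A})$ and $\lambda_t(\mathcal{H}_0^2(\widehat{\A}))^*\subset(\mathcal{H}_0^2(\widehat{\A}))^*$ are correctly argued using $\lambda_t\in\R_{\tnu}\cap\widehat{\D}$ and the ideal property of $\widehat{\A}_0$. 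Your explicit treatment of the $\R_n$ case via the orthogonal projection $\mathscr{W}_n^{(2)}$ is a reasonable addition (with the small caveat that Proposition \ref{anred} is stated for maximal subdiagonal algebras, so the surjectivity of $\mathscr{W}_n^{(2)}$ onto $\mathcal{H}^2(\widehat{\A}_n)$ needs to be rechecked in the merely analytically conditioned setting).

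The genuine gap is at the crux. Your justification of the density of $\mathrm{span}\{\lambda_t[ah^{1/2}]\colon t\in\mathbb{Q}_D,\ a\in\mathfrak{n}_\nu^\infty(\M)\}$ in $L^2(\R)$ does not follow from what you cite: Proposition \ref{P density1}, applied to $(\R,\tnu)$, gives density of $\mathfrak{j}^{(2)}(\mathfrak{n}_{\tnu}^\infty)$ --- the image of the \emph{whole} space of analytic square-integrable elements of $\R$ --- and knowing that your span is merely \emph{contained} in $\mathfrak{n}_{\tnu}^\infty$ says nothing about whether its image is dense. Indeed $\sigma$-weak density of a subspace of $\mathfrak{n}_{\tnu}$ in $\R$ does not in general imply $L^2$-density of its $\mathfrak{j}^{(2)}$-image: the assertion that $\mathfrak{n}(\A)+\mathfrak{n}(\A^*)$ has dense image in $L^2(\M)$ is precisely the nontrivial half of condition (3) of Theorem \ref{Co}, which is equivalent to maximality and so certainly not automatic. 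Establishing this density is exactly what the paper's proof spends its length on: one assumes $tr(z\lambda_t[ch^{1/2}])=0$ for all $t\in\mathbb{Q}_D$ and $c\in\mathfrak{n}_\nu$, uses the identity $x\lambda_t[ch^{1/2}]=\lambda_t[(\sigma_t^\nu(x)c)h^{1/2}]$ to see that the span is stable under left multiplication by the $\sigma$-weakly dense set $\{x\lambda_t\}$ and hence that $fz=0$ for all $f$ in the span, deduces $z^*L^2(\M)=0$, and then kills $z$ by a spectral/functional-calculus argument exploiting the nonsingularity of $h$. You correctly flag the Fourier-type decomposition as the main obstacle, but as written you have not overcome it; some version of this annihilator argument must be supplied.
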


\begin{proof} Firstly observe that since $\M$ is an expected subalgebra of $\R$, we may assume that $h_{\M}=h_{\R}$. By assumption $L^2(\M)=\mathcal{H}^2(\A)\oplus(\mathcal{H}_0^2(\A))^*$. By Theorem \ref{tech-thm}, this means that $\mathfrak{j}^{(2)}(\mathfrak{n}(\A)+\mathfrak{n}(\A_0^*))$ is dense in $L^2(\M)$.  The claim will therefore follow if we are able to conclude from this that $\mathrm{span}\{\lambda_t[(a+b^*)h^{1/2}]\colon \in \mathbb{Q}_D,\, a\in \mathfrak{n}(\A),\, b\in \mathfrak{n}(\A_0^*)^*\}$ is dense in $L^2(\R)$. By assumption the $L^2$-closure of $\mathrm{span}\{\lambda_t[(a+b^*)h^{1/2}]\colon \in \mathbb{Q}_D,\, a\in \mathfrak{n}(\A),\, b\in \mathfrak{n}(\A_0^*)^*\}$ certainly includes $\mathrm{span}\{\lambda_tf\colon \in \mathbb{Q}_D,\, f\in L^2(\M)\}$, which in turn includes $\mathrm{span}\{\lambda_t[ch^{1/2}]\colon t\in \mathbb{Q}_D,\, c\in\mathfrak{n}_\nu\}$. We show that this subspace is dense in $L^2(\R)$. We achieve this through a modification of the second part of the proof of Theorem \ref{10:T reduction Lp} 

Suppose that $z\in L^2(\R)$. The claim will follow if we can show that we must have that $z=0$ whenever $tr(z\lambda_t[ch^{1/2}])=0$ for each $t\in\mathbb{Q}_D$ and each $c\in \mathfrak{n}_\nu$. So let this be the case. Given $x\in \M$, the fact that each $\mathfrak{n}_{\nu}$ is a left-ideal ensures that for each fixed $t\in\mathbb{Q}_D$ and $c\in \mathfrak{n}_\nu$, we have that 
$$x\lambda_t[ch^{1/2}]=\lambda_t(\lambda_t^*x\lambda_t)[ch^{1/2}]=\lambda_t\sigma_t^\nu(x)[ch^{1/2}]= \lambda_t[(\sigma_t^\nu(x)c)h^{1/2}]$$where $\sigma_t^\nu(x)c\in \mathfrak{n}_\nu$. So for any $t\in\mathbb{Q}_D$, $x\in\M$ and $f\in\mathrm{span}\{\lambda_t[ch^{1/2}]\colon t\in \mathbb{Q}_D,\, c\in\mathfrak{n}_\nu\}$, we have that $x\lambda_t f\in \mathrm{span}\{\lambda_t[ch^{1/2}]\colon \in \mathbb{Q}_D,\, c\in\mathfrak{n}_\nu\}$. This then ensures that $0=tr(z(x\lambda_t)f)$ for each $t\in\mathbb{Q}_D$, $x\in \M$ and $f\in \mathrm{span}\{\lambda_t[ch^{1/2}]\colon t\in \mathbb{Q}_D,\, c\in\mathfrak{n}_\nu\}$. Given that $\{x\lambda_t:x\in\M, \, t\in\mathbb{Q}_D\}$ is $\sigma$-weakly dense in $\R$, this can only be the case if in fact $fz=0$ for all $f\in \mathrm{span}\{\lambda_t[ch^{1/2}]\colon t\in \mathbb{Q}_D,\, c\in\mathfrak{n}_\nu\}$.

If we restrict to $\mathrm{span}\{[ch^{1/2}]\colon c\in \n_\nu\}$ and apply Proposition \ref{P density1} to this fact, it 
follows that $L^2(\M)z=$ (equivalently $z^*L^{2}(\M)=0$), and hence that $z^*[ah^{1/q}]=0$ for each $a\in \mathfrak{n}_{\nu}$. Fixing 
$a\in \mathfrak{n}_{\nu}$, it is easy to see that $z^*[ah^{1/2}]=0$ if and only if $|z^*|[ah^{1/2}]=0$. Since trivially $z^*=0$ if 
and only if $|z^*|=0$, it follows that we may assume that $z^*\geq 0$. Having made this assumption one may then further note that 
$z^*=0$ if and only if $(z^*\chi_{[0,\gamma]}(z^*))=0$ for every $\gamma>0$. Since in this setting the equality $z^*[ah^{1/2}]=0$ 
ensures that $0= \chi_{[0,\gamma]}(z^*)z^*[ah^{1/2}]$ and hence that $0=(z^*\chi_{[0,\gamma]}(z^*))[ah^{1/2}]$, it follows that we may 
further assume $z^*$ to be bounded. Hence given $\xi \in \mathrm{dom}(h^{1/2})\subset \mathrm{dom}([ah^{1/2}])$, we then have that 
$z^*a=0$ on the range of $h^{1/2}$, which must be dense by the fact that $h$ is non-singular and positive. Therefore $z^*a=0$. But 
$a\in \mathfrak{n}_{\nu}$ was arbitrary. So for $(f_\lambda)$ as in Proposition \ref{7:P Terp2}, we have that $z^*=\lim_\lambda z^*f_\lambda=0$ as required
\end{proof}

A version of \cite[Lemma 3.3]{L-HpIII} valid for general algebras now follows by exactly the same proof as the one used in \cite{L-HpIII}.

\begin{lemma}\label{unsep}
 Let $\A$ be an analytically conditioned algebra.  If any $f\in L^1(\M)^+$ which is in the annihilator of $\A_0$ belongs to $L^1(\mathcal{D})$, then also 
\begin{itemize}
\item any $f\in L^1(\R)^+$ which is in the annihilator of $\widehat{\A}_0$ belongs to $L^1(\widehat{\mathcal{D}})$,
\item and for any $n$, any $f\in L^1(\R_n)^+$ which is in the annihilator of $(\widehat{\A}_n)_0$, belongs to $L^1(\mathcal{D}_n)$.
\end{itemize}
\end{lemma}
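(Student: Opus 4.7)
The plan is to prove the first bullet (the $\R$ case) via a polarization-plus-Fourier-uniqueness argument that pushes the hypothesis down to $\M$ via $\mathscr{W}^{(1)}_{\mathbb{Q}_D}$, and then to deduce the second bullet (the $\R_n$ case) as a short corollary using the reduction structure of Proposition \ref{anred}. The driving structural facts are that $\widehat{\A}_0$ is a two-sided ideal in $\widehat{\A}$ (because $\widehat{\E}$ is multiplicative on $\widehat{\A}$), is $\sigma^{\tnu}$-invariant, and in particular is stable under both left and right multiplication by each $\lambda_t$ ($t\in\mathbb{Q}_D$) since $\lambda_t\in\widehat{\D}\subset\widehat{\A}$.

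For the first bullet, let $f\in L^1(\R)^+$ annihilate $\widehat{\A}_0$ and fix $t\in\mathbb{Q}_D$. Setting $u:=f^{1/2}\lambda_{-t}$ and $v:=f^{1/2}$ (both in $L^2(\R)$), the polarization identity
\[
4\,v^{*}u \;=\; \sum_{k=0}^{3} i^{k}\,|u+i^{k}v|^{2}
\]
realises $4f\lambda_{-t}$ as a complex combination of the four positive $L^1$-elements $|u+i^{k}v|^{2}=\sigma^{\tnu}_{t}(f)+i^{k}\lambda_{t}f+i^{-k}f\lambda_{-t}+f$. I would verify that each of these four summands traces to zero against any $a_0\in\widehat{\A}_0$, which reduces to checking that $\sigma^{\tnu}_{-t}(a_0)$, $a_0\lambda_{t}$, $\lambda_{-t}a_0$ and $a_0$ all lie in $\widehat{\A}_0$; this is exactly the content of the three structural facts above. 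Consequently each $|u+i^{k}v|^{2}$ annihilates $\widehat{\A}_0$, so $\mathscr{W}^{(1)}_{\mathbb{Q}_D}(|u+i^{k}v|^{2})\in L^1(\M)^+$ annihilates $\A_0$ (via $\pi(\A_0)\subset\widehat{\A}_0$). The hypothesis on $\M$ then places each of these in $L^1(\D)$, and polarization places $\mathscr{W}^{(1)}_{\mathbb{Q}_D}(f\lambda_{-t})$ in $L^1(\D)$ for every $t\in\mathbb{Q}_D$.

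Set $h:=\widehat{\E}_{1}(f)-f$. The previous step says $\E_{1}\bigl(\mathscr{W}^{(1)}_{\mathbb{Q}_D}(f\lambda_{-t})\bigr)=\mathscr{W}^{(1)}_{\mathbb{Q}_D}(f\lambda_{-t})$, and combining the commutation $\E_{1}\circ\mathscr{W}^{(1)}_{\mathbb{Q}_D}=\mathscr{W}^{(1)}_{\mathbb{Q}_D}\circ\widehat{\E}_{1}$ with the module property $\widehat{\E}_{1}(f\lambda_{-t})=\widehat{\E}_{1}(f)\lambda_{-t}$ (valid since $\lambda_{-t}\in\widehat{\D}$, by Proposition \ref{exp-props}) forces $\mathscr{W}^{(1)}_{\mathbb{Q}_D}(h\lambda_{-t})=0$ for every $t$. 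Pairing against $\pi(a)$ and using trace-preservation,
\[
tr(h\,\lambda_{-t}\pi(a)) = tr\bigl(\mathscr{W}^{(1)}_{\mathbb{Q}_D}(h\lambda_{-t})\,\pi(a)\bigr) = 0
\]
for all $a\in\M$ and $t\in\mathbb{Q}_D$. Since $\mathrm{span}\{\lambda_{-t}\pi(a):t\in\mathbb{Q}_D,a\in\M\}$ is $\sigma$-weakly dense in $\R$ and $tr(h\,\cdot\,)$ is $\sigma$-weakly continuous, $h=0$ and $f=\widehat{\E}_{1}(f)\in L^{1}(\widehat{\D})$. For the second bullet, take $f\in L^{1}(\R_{n})^{+}$ annihilating $(\widehat{\A}_{n})_{0}$ and view it in $L^{1}(\R)$. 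Since $\mathscr{W}_{n}$ sends $\widehat{\A}_{0}$ into $(\widehat{\A}_{n})_{0}$ by Proposition \ref{anred}, the module property gives $tr(fa_{0})=tr(f\mathscr{W}_{n}(a_{0}))=0$ for every $a_{0}\in\widehat{\A}_{0}$. The first bullet then places $f$ in $L^{1}(\widehat{\D})$, whence $\E_{n,1}(f)=\mathscr{W}^{(1)}_{n}(\widehat{\E}_{1}(f))=\mathscr{W}^{(1)}_{n}(f)=f$ using $\E_{n}=\mathscr{W}_{n}\circ\widehat{\E}$ and the fact that $\mathscr{W}^{(1)}_{n}$ restricts to the identity on $L^{1}(\R_{n})$, so $f\in L^{1}(\D_{n})$.

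The main obstacle will be the polarization step, where all four cross-terms in $|u+i^{k}v|^{2}$ must simultaneously annihilate $\widehat{\A}_0$ under the trace; this is where the combination of the two-sided ideal property, $\sigma^{\tnu}$-invariance, and absorption by the $\lambda_{t}$ all get used at once. Once that is in hand the Fourier uniqueness argument using the density of $\mathrm{span}\{\lambda_{-t}\pi(a)\}$ and the deduction of the $\R_n$ case are essentially mechanical.
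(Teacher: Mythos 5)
Your proof is correct and follows what is essentially the intended route: the paper gives no details for this lemma, deferring to the $\sigma$-finite argument of \cite[Lemma 3.3]{L-HpIII}, which proceeds by exactly this polarization of $f\lambda_{-t}$ into four positive $L^1$-elements annihilating $\widehat{\A}_0$ (using the two-sided ideal property of $\widehat{\A}_0$ and the fact that each $\lambda_t$ lies in $\widehat{\D}$), followed by pushing down via $\mathscr{W}^{(1)}_{\mathbb{Q}_D}$ and a Fourier-coefficient/density argument to conclude $\widehat{\E}_1(f)=f$. The only step you leave implicit is that the commutation $\E\circ\mathscr{W}_{\mathbb{Q}_D}=\mathscr{W}_{\mathbb{Q}_D}\circ\widehat{\E}$ of Proposition \ref{anred}(1) passes to the $L^1$-extensions, which is a routine verification on the dense subspace $\mathfrak{i}^{(1)}(\mathfrak{m}_{\tnu})$.
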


All the lemmata used to prove \cite[Theorem 3.4]{L-HpIII}, therefore carry over to the general setting. Armed with these lemmata, we now have the technology to prove the result below. Some terminology is required to comprehend the formulation of the theorem. We say that an extension in the Banach dual $\M^\star$ of $\M$ of a functional  in $\A^\star$ (the Banach dual of $\A$) is a {\em Hahn-Banach extension} if it has the same norm as the original functional.
If $\A$ is a $\sigma$-weakly closed subalgebra of $\M$ then we say that $\A$ has property (GW1) if every
Hahn-Banach extension to $\M$ of any normal functional on $\A$, is normal on $\M$.
We say that $\A$ has property (GW2) if there is at most one normal Hahn-Banach extension to $\M$ of
any normal functional on $\A$. We say that $\A$ has the {\em Gleason-Whitney property} (GW)
if it possesses both (GW1) and (GW2).

\begin{theorem}  \label{Co}
 Let $\A$ be an analytically conditioned algebra.  Then the following are equivalent:
\begin{itemize} \item [(1)]  $\A$ is maximal subdiagonal,
\item [(2)]  For every right $\A$-invariant subspace $X$ of $L^2(M)$, the right wandering subspace $W$ of $X$ satisfies
$W^* W \subset L^1({\mathcal D})$, and $W^* (X \ominus [W A]_2) = \{0\}$
\item[(3)] $L^2(\M)=\mathcal{H}^2(\A)\oplus\mathcal{H}_0^2(\A)^*$, and any $f\in L^1(\M)^+$ which is in the annihilator of $\A_0$ belongs to $L^1(\mathcal{D})$.
\item[(4)] $\A$ satisfies (GW2).
\end{itemize}
\end{theorem}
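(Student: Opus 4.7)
The plan is to prove the ring of implications $(1) \Rightarrow (2) \Rightarrow (3) \Rightarrow (4) \Rightarrow (1)$, following the architecture of \cite[Theorem 3.4]{L-HpIII}, but employing the new invariant-subspace machinery of Section \ref{S11} and the extended analytic reduction of Proposition \ref{anred} together with the three preparatory lemmata just established to handle the possibly non-$\sigma$-finite ambient algebra.

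The implication $(1) \Rightarrow (2)$ is a direct consequence of Theorem \ref{main} combined with the refinement provided by Theorem \ref{newpr} and Corollary \ref{adcor}: for maximal subdiagonal $\A$ any right $\A$-invariant $X \subset L^2(\M)$ splits as $Z \oplus^{col} [WA]_2$ with $W = \oplus_i^{col} u_i L^2(\D)$, $u_i^*u_i \in \D$ and $u_j^*u_i = 0$ for $i \neq j$, from which $W^*W \subset L^1(\D)$ is immediate, and $W^*Z = W^*(X \ominus [WA]_2) = 0$ follows because the left supports of the $u_i$ are mutually orthogonal and annihilate the type 2 part. For $(2) \Rightarrow (3)$ I would specialize the hypothesis to $X = L^2(\M)$ to extract the orthogonal decomposition $L^2(\M) = \mathcal{H}^2(\A) \oplus \mathcal{H}_0^2(\A)^*$, pairing the identification $W \cong \mathfrak{j}^{(2)}(\mathfrak{n}(\D))$ with Corollary \ref{gen-perp} and the density assertion of Theorem \ref{tech-thm}; then given $0 \leq f \in L^1(\M)$ annihilating $\A_0$, I would consider the right $\A$-invariant subspace generated by $\mathfrak{j}^{(2)}(f^{1/2})$ and use $W^*(X \ominus [WA]_2) = 0$ to force $f \in L^1(\D)$.

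The core of the argument lies in $(3) \Rightarrow (4)$ and $(4) \Rightarrow (1)$, where the reduction machinery is activated. By the first preparatory lemma $\widehat{\A}$ and each $\widehat{\A}_n$ are analytically conditioned. Lemma \ref{L2} and Lemma \ref{unsep} together certify that (3) for $\A$ lifts to (3) for every $\widehat{\A}_n$. Since each $\R_n$ is semifinite, the semifinite counterpart of the equivalence $(3) \Leftrightarrow (1)$, obtained by coupling Bekjan's compression technique (Proposition \ref{Bek-red}) with the finite tracial case of \cite[Theorem 3.4]{L-HpIII}, yields that every $\widehat{\A}_n$ is maximal subdiagonal. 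Theorem \ref{necessary} then furnishes $\sigma_t^{\nu_n}(\widehat{\A}_n) = \widehat{\A}_n$; pushing this modular invariance through the $\sigma$-weak density $\overline{\cup_n \widehat{\A}_n}^{w^*} = \widehat{\A}$ and then back down via $\mathscr{W}_{\mathbb{Q}_D}$ (which by Proposition \ref{anred}(1) maps $\widehat{\A}$ onto $\A$ and intertwines the respective modular groups), we arrive at $\sigma_t^\nu(\A) = \A$, at which point Theorem \ref{sufficient} supplies maximal subdiagonality. Property (4) will be woven in by showing that a normal functional on $\A$ pulls back via $\mathscr{W}_{\mathbb{Q}_D}$ to a normal functional on $\widehat{\A}$ with matching norm, so that a putative non-unique normal Hahn--Banach extension on $\M$ would produce a non-unique such extension on $\R$ (equivalently on some $\R_n$), where the semifinite equivalence forbids it.

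The main obstacle will be the faithful transfer of the Gleason-Whitney condition (GW2) between $\A \subset \M$ and $\widehat{\A}_n \subset \R_n$. Unlike conditions (1)--(3), which are intrinsic and are engineered to transfer through the lemmata, (GW2) is a predual statement about uniqueness of norm-preserving normal extensions, and the correspondence of normal functionals on $\widehat{\A}$ with those on $\A$ via $\mathscr{W}_{\mathbb{Q}_D}$ is not obviously surjective onto Hahn--Banach extensions of arbitrary normal functionals of $\A$. Rigorously executing this transfer will demand a careful pre-duality argument combining the normality of the expectations $\mathscr{W}_{\mathbb{Q}_D}$ and $\mathscr{W}_n$, the fact that each of them is $\tnu$-preserving, and a quotient analysis of the normal Hahn--Banach extensions, possibly by invoking Remark \ref{10:R cond-exp} to realise $L^1(\N) \hookrightarrow L^1(\M)$ isometrically and hence to identify dual lifts cleanly.
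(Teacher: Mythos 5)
There are two genuine gaps. First, your route to the density half of $(2)\Rightarrow(3)$ does not work: specialising the hypothesis to $X=L^2(\M)$ yields nothing, because $[L^2(\M)\A_0]_2$ is in general all of $L^2(\M)$ (already classically, $L^2(\mathbb{T})\cdot zH^\infty$ is dense in $L^2(\mathbb{T})$), so the right wandering subspace of $X=L^2(\M)$ is $\{0\}$ and condition (2) is vacuous for that choice. The paper instead applies (2) to $X=L^2(\M)\ominus\mathcal{H}^2_0(\A)^*$: one checks this $X$ is right $\A$-invariant, that the elements $(h^{1/2}f_\lambda)$ of Proposition \ref{7:P Terp2} lie in its wandering subspace $W$, then uses $W^*(X\ominus[W\A]_2)=\{0\}$ together with a weak-limit argument to force $X=[W\A]_2$, and finally uses $W^*W\subset L^1(\D)$ (via $W^*(h^{1/2}f_\lambda)\subset L^1(\D)$ and $\E_1$) to conclude $W\subset L^2(\D)$ and hence $X=\mathcal{H}^2(\A)$. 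Your annihilator argument also invokes the wrong half of (2): for $g\in L^1(\M)^+$ with $tr(g\A_0)=0$ and $f=g^{1/2}$, one shows $f$ lies in the wandering subspace of $X=[f\A]_2$, whereupon $[W\A]_2=X$ and the condition $W^*(X\ominus[W\A]_2)=\{0\}$ is vacuous; it is $W^*W\subset L^1(\D)$ that delivers $g=f^*f\in L^1(\D)$.

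Second, your handling of (4) leaves the ring of implications open. The argument you sketch — lifting a normal functional of $\A$ to $\widehat{\A}$ via $\mathscr{W}_{\mathbb{Q}_D}$ so that failure of (GW2) for $\A$ would contradict (GW2) for some $\R_n$ — is at best an argument for $(1)\Rightarrow(4)$ (or $(3)\Rightarrow(4)$ after $(3)\Rightarrow(1)$), and you yourself flag the key transfer step as unresolved; more importantly, you give no argument whatsoever for $(4)\Rightarrow(1)$, which is indispensable for (4) to be equivalent to the other conditions. The paper avoids this entirely: it proves $(3)\Rightarrow(1)$ directly by the reduction to the semifinite Bekjan--Oshanova equivalence (exactly as in your third paragraph, which is sound and matches the paper), and then establishes $(1)\Leftrightarrow(4)$ by the self-contained Gleason--Whitney argument of \cite[Theorem 3.4]{L-HpIII}, which carries over verbatim and never requires transporting (GW2) through the crossed product. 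You should replace your (GW2) reduction scheme with that direct equivalence, or else supply both the missing transfer and an independent proof of $(4)\Rightarrow(1)$.
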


\begin{proof} The fact that (1) implies (2i) is proven in Theorem \ref{inv1}.  We proceed to prove that (2) implies (3). 
To this end, let $g \in L^1(\M)^+$ be given with $tr(g \A_0) = 0$. Let $f = |g|^{\frac{1}{2}}$. Clearly $f\in L^2(\M)$, 
and $f^2=g$. Now set $X = [f \A]_2$. Note that $f \perp [f \A_0]_2$ since if $a_n \in \A_0$ with $f a_n \to k$ in $L^2$-norm, 
then $\langle k,f\rangle =tr(f k) =\lim_n tr(f^2 a_n) = \lim_n tr(g a_n)=0$. In particular, the fact that $f \perp
[f \A_0]_2=[X\A_0]_2$, ensures that $f \in X \ominus [X\A_0]_2 = W$. So by hypothesis, $f^2 = g \in L^1({\mathcal D})$.

Next set $X = L^2(\M)\ominus \mathcal{H}^2_0(\A)^*$. We will deduce that $\A$ satisfies $L^2$-density. That is that 
$X = \mathcal{H}^2(\A)$. Our first task is to show that $X$ is right $\A$-invariant. To see this recall that 
since $\A$ is analytically conditioned, $\{h^{1/2}a_0^*: a_0\in \mathfrak{n}(\A_0)\}$ is norm dense in 
$\mathcal{H}^2_0(\A)^*$. So $f \in L^2(\M)$ is orthogonal to $(\mathcal{H}^2_0(\A))^*$ if and only if $tr([a_0h^{1/2}]f)=tr((h^{1/2}a_0^*)^*f)=
\langle f,(h^{1/2}a^*)\rangle =0$ for every $a_0\in \mathfrak{n}(\A_0)$. 
Given such an $f\in X$ and $a\in \A$ and $a_0\in \mathfrak{n}(\A_0)$, the fact that then $aa_0\in \mathfrak{n}(\A_0)$, ensures that we will then also 
have that $tr([a_0h^{1/2}](fa))=tr([aa_0h^{1/2}]f)=0$ for every $a_0\in \mathfrak{n}(\A_0)$. Hence 
$fa\in L^2(\M)\ominus \mathcal{H}^2_0(\A)^*=X$ as required. 

Now let $(f_\lambda)$ be the net in $\mathfrak{n}(\D)_\nu^*\cap \mathfrak{n}(\D)_\nu$ converging strongly to $\I$ as 
guaranteed by Proposition \ref{7:P Terp2}. We first show that $(h^{1/2}f_\lambda) \in X$ for each $\lambda$, where 
$h=\frac{d\widetilde{\nu}}{d\tau}$. This is a consequence of the fact that $\{[a_0h^{1/2}]: a_0\in \mathfrak{n}(\A_0)\}$ 
is dense in $\mathcal{H}^2_0(\A)$, and that $$tr((h^{1/2}a^*)(h^{1/2}f))=tr((h^{1/2}f_\lambda)[ah^{1/2}])=\nu(f_\lambda a)=\nu(\mathbb{E}(f_\lambda a)) =0$$for all $a\in \mathfrak{n}(\A_0)$. In fact we even have $h^{1/2}f_\lambda \in W=X\ominus[X\A_0]_2$ since for any 
$a_0\in \mathfrak{n}(\A_0)$ and $x\in X$ the fact that $(h^{1/2}F_\lambda a_0^*)\in \mathcal{H}^2_0(\A)^*$ ensures that 
 $$0=\langle (h^{1/2}f_\lambda a_0^*), x\rangle = tr(x^*(h^{1/2}f_\lambda a_0^*))=tr((xa_0)^*(h^{1/2}f_\lambda))= \langle (h^{1/2}f_\lambda), xa_0\rangle.$$ 
Here we again used the density of $\{[a_0h^{1/2}]: a_0\in \mathfrak{n}(\A_0)\}$ 
in $\mathcal{H}^2_0(\A)$.

This then ensures that 
$$(X \ominus [W \A]_2)^*(h^{1/2}f_\lambda)\subset (X \ominus [W \A]_2)^*W=\{0\}.$$For any $b\in \mathfrak{n}_\nu$ we then have that $$\{0\}=(X \ominus [W \A]_2)^*(h^{1/2}f_\lambda)b=(X \ominus [W \A]_2)^*\sigma^\nu_{-i/2}(f_\lambda)(h^{1/2}b)$$(see Lemma \ref{GL2-2.4+5}). The 
$\sigma$-weak convergence of $\sigma^\nu_{-i/2}(f_\lambda)$ to $\I$ guarantees that $\sigma^\nu_{-i/2}(f_\lambda)(h^{1/2}b)$ is weak-$L^2$ convergent to $(h^{1/2}b)$. Hence we in fact have that $\{0\}=(X \ominus [W \A]_2)^*(h^{1/2}b)$ for each $b\in \mathfrak{n}_\nu^*$. But because of the density of $\{h^{1/2}b \colon b\in \mathfrak{n}_\nu^*\}$ in $L^2(\M)$, this can only be if $\{0\}=(X \ominus [W \A]_2)$, that is $X=[W\A]_2$.

However the fact that $h^{1/2}f_\lambda \in W$ for each $\lambda$, also ensures that $W^*(h^{1/2}f_\lambda) \subset W^* W  \subset L^1({\mathcal D})$. For any $w\in W$ we will then have that 
$$[w^*\sigma^\nu_{-i/2}(f_\lambda)h^{1/2}]=w^*(h^{1/2}f_\lambda)=\mathbb{E}_1(w^*(h^{1/2}f_\lambda))=$$ $$\mathbb{E}_2(w^*)(h^{1/2}f_\lambda)=[\mathbb{E}_2(w^*)\sigma^\nu_{-i/2}(f_\lambda)h^{1/2}].$$For any $b\in \mathfrak{n}_\nu^*$, we then have that 
 \begin{eqnarray*}
 (w^*\sigma^\nu_{-i/2}(f_\lambda))(h^{1/2}b)
 &=&[w^*\sigma^\nu_{-i/2}(f_\lambda)h^{1/2}]b\\
 &=&[\mathbb{E}_2(w^*)\sigma^\nu_{-i/2}(f_\lambda)h^{1/2}]= (\mathbb{E}_2(w^*)\sigma^\nu_{-i/2}(f_\lambda))(h^{1/2}b).
  \end{eqnarray*}
As noted earlier $\sigma^\nu_{-i/2}(f_\lambda))(h^{1/2}b)$ converges $L^2$-weakly to $(h^{1/2}b)$, which then leads to the conclusion that $w^*(h^{1/2}b)= \mathbb{E}_2(w^*)(h^{1/2}b)$ for each $b\in \mathfrak{n}_\nu^*$. The density of $\{h^{1/2}b \colon b\in \mathfrak{n}_\nu^*\}$ in $L^2(\M)$, then ensures that $w^*=\mathbb{E}_2(w^*)$, and hence that $w=\mathbb{E}_2(w)\in L^2(\mathcal{D})$. So $X = [W\A]_2 \subset [L^2(\mathcal{D})\A]_2\subset \mathcal{H}^2(\A)$. The converse inclusion 
$\mathcal{H}^2(\A) \subset X$ follows from the fact that $\mathcal{H}^2(\A)$ is orthogonal to $\mathcal{H}^2_0(\A)^*$.

We next note that Bekjan and Oshanova \cite{BO} proved the equivalence of (3) and (1) for the case of semifinite algebras equipped with a faithful normal semifinite trace. If therefore we take the proof of (3)$\Rightarrow$(1) in \cite[Theorem 3.4]{L-HpIII}, and replace the version of the reduction theorem used there with the one proved in this paper, then that will enable us to lift the implication (3)$\Rightarrow$(1) in the Bekjan-Oshanova result to the setting of general von Neumann algebras. The equivalence (1)$\Leftrightarrow$(4) may be proven using exactly the same argument as was used to prove the equivalence (1)$\Leftrightarrow$(4) in \cite[Theorem 3.4]{L-HpIII}. 
\end{proof}

\section{Toeplitz operators for general von Neumann algebras}\label{S13}

Let $\A$ be an approximately subdiagonal subalgebra of a von Neumann algebra $M$. For any $a\in \M$ we define the \emph{left} $T_a$ and \emph{right} $\prescript{}{a}{T}$ Toeplitz operators on $H^2(\A)$ by $T_a:f\mapsto P_+(af)$ and $\prescript{}{a}{T}:f\mapsto P_+(fa)$ respectively, where $P_+$ is the projection from $L^2(\M)$ onto $H^2(\A)$. We shall for the most part focus on left Toeplitz operators. Unless otherwise specified, all references to ``Toeplitz operators'' will therefore have left Toeplitz opertors in mind. The following basic operational properties hold for these operators. These properties are proved for left Toeplitz operators at the start of \S 2 of \cite{MW-Toeplitz} in the context of  finite maximal subdiagonal subalgebras, but on replacing the trace in their proof with the tracial functional $tr$, essentially the same proofs will with minor alterations go through in the general case.

\begin{proposition}\label{P:Toeplitz-props} Let $\A$ be an approximately subdiagonal subalgebra of a von Neumann algebra $M$. 
\begin{enumerate} 
\item For any $a\in \M$ we have that $T_a^* = T_{a^*}$ and $\prescript{}{a}{T^*} = \prescript{}{a^*}{T}$.
\item Let $a,b \in \M$ be given. If either $a\in \A$ or $b\in \A^*$, then $T_{ab}=T_aT_b$ and $\prescript{}{a}{T}\prescript{}{b}{T} = \prescript{}{ab}{T}$.
\end{enumerate}
\end{proposition}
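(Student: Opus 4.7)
The plan is to adapt the Marsalli-West argument, with only modular-theoretic adjustments, leaning heavily on Theorem \ref{tech-thm}.

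For (1), since $P_+$ is an orthogonal projection on $L^2(\M)$ it is self-adjoint, and for $f,g \in H^2(\A)$ a direct calculation with the inner product $\langle x,y\rangle = \tr(y^*x)$ gives
$$\langle T_af, g\rangle = \langle P_+(af), g\rangle = \langle af, P_+g\rangle = \langle af, g\rangle = \langle f, a^*g\rangle = \langle f, P_+(a^*g)\rangle = \langle f, T_{a^*}g\rangle,$$
whence $T_a^* = T_{a^*}$. The right-Toeplitz identity $\prescript{}{a}{T^*} = \prescript{}{a^*}{T}$ follows from the same scheme, now invoking $\tr(g^*fa) = \tr((ga^*)^*f)$ to transfer the action of $a$ to the other slot.

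For (2), the essential preliminary is that $H^2(\A)$ is both a left and a right $\A$-module. Using the identification $H^2(\A) = [\mathfrak{j}^{(2)}(\n(\A))]_2$ supplied by Theorem \ref{tech-thm}, for $a \in \A$ and $x \in \n(\A)$ I have $a[xh^{1/2}] = [axh^{1/2}]$, and $ax \in \n(\A)$ since $\n_\nu$ is a left ideal of $\M$ and $\A$ is a subalgebra; passage to $L^2$-closures yields $\A \cdot H^2(\A) \subset H^2(\A)$. For the right action, when $a$ is analytic the identity $[xh^{1/2}]\cdot a = [x\sigma^\nu_{-i/2}(a) h^{1/2}]$ (with $x\sigma^\nu_{-i/2}(a) \in \n(\A)$) places $[xh^{1/2}]\cdot a$ back in $H^2(\A)$, and the case of a general $a \in \A$ is handled by the analyticity approximation furnished by Proposition \ref{7:P Terp2}.

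Granted these module properties, the product identities are almost immediate. When the ``inner'' symbol is analytic, say $b \in \A$, $T_bf = P_+(bf) = bf \in H^2(\A)$, so $T_aT_bf = T_a(bf) = P_+(abf) = T_{ab}f$; the right-Toeplitz analogue runs the same way using right-module structure. The remaining case---outer symbol in $\A^*$---follows by adjoining via (1): $T_{ab}^* = T_{(ab)^*} = T_{b^*a^*}$ with $a^* \in \A$, so by the case just proved $T_{b^*a^*} = T_{b^*}T_{a^*} = (T_aT_b)^*$, giving $T_{ab} = T_aT_b$.

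The hard part is really only confirming the right-module property $H^2(\A)\cdot\A \subset H^2(\A)$, since one must commute $h^{1/2}$ past a possibly non-analytic element of $\A$; this is where the analytic approximation bites. Everything else is a direct transcription of the finite-trace argument, with $\tr$ on $L^1(\M)$ playing the role of the finite tracial state in the original Marsalli-West proof.
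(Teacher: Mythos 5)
Your overall strategy is the paper's own: the paper gives no proof beyond the remark that the Marsalli--West arguments go through once their trace is replaced by the functional $tr$, and your adjoint computation for (1) together with the verification that $H^2(\A)$ is an $\A$-bimodule (left multiplication directly, right multiplication via $[xh^{1/2}]a=[x\sigma^\nu_{-i/2}(a)h^{1/2}]$ for analytic $a$ and a weak-limit argument for general $a$) supplies exactly the ``minor alterations'' the paper has in mind. One small correction: the approximation you need for the right-module property is of a general $a\in\A$ by analytic elements \emph{of} $\A$, which is the smearing technique of Proposition \ref{analytic-n(A)}, not the approximate identity of Proposition \ref{7:P Terp2}.

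The substantive issue is that what you prove in (2) is not the statement as printed. Your two cases are ``inner symbol $b\in\A$'' and ``outer symbol $a\in\A^*$'', whereas the proposition reads ``$a\in\A$ or $b\in\A^*$''. These are genuinely different hypotheses, and it is the printed one that fails: on $H^2(\mathbb{T})$ take $a=z\in\A$ and $b=\bar z\in\A^*$, so that both printed hypotheses hold; then $T_aT_b=T_zT_{\bar z}$ is $\I$ minus the rank-one projection onto the constants, while $T_{ab}=T_{\I}=\I$. Your version is the correct one, and is the one the paper actually invokes later (e.g.\ $T_{a^*u}=T_{a^*}T_u$ with outer symbol $a^*\in\A^*$ in the proof of Proposition \ref{prop:linv}), so you have silently repaired a typo rather than proved the literal claim --- this should be said explicitly. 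The same care is needed for the right Toeplitz operators, which you wave through with ``runs the same way'': right multiplications compose anti-homomorphically, so $\prescript{}{a}{T}\,\prescript{}{b}{T}f=P_+(P_+(fb)a)$ compares with $P_+(fba)$, and the identity your argument actually delivers under ``$b\in\A$ or $a\in\A^*$'' is $\prescript{}{a}{T}\,\prescript{}{b}{T}=\prescript{}{ba}{T}$, not $\prescript{}{ab}{T}$. As written, your last sentence conceals both the hypothesis swap and this order reversal.
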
  

\subsection{Left vs Right Toeplitz operators}

Let $\mathscr{L}=\{T_a\colon a\in \A\}$ and $\mathscr{R}=\{\prescript{}{a}{T}\colon a\in \A\}$. In the case of finite von Neumann algebras equipped with a faithful 
normal tracial state Marsalli proved that $\mathscr{L}'=\mathscr{R}$ and $\mathscr{L}=\mathscr{R}'$ \cite[Theorem 1]{mar}. This was then lifted to the setting of $\sigma$-finite algebras by 
Ji \cite[Theorem 2.3]{jig3} using the version of the reduction theorem presented in \cite{HJX}. To the best of the authors' knowledge, this result is not yet known in the semifinite setting.
This is therefore the perfect opportunity to demonstrate the efficacy of the double reduction technique described in Remark \ref{doublered}, by first lifting this result from 
the setting of finite von Neumann algebras, to maximal subdiagonal subalgebras of semifinite von Neumann algebras and from there to maximal subdiagonal subalgebras of general von Neumann algebras. At its very root (the case considered by Marsalli) the proof depends on factorization, and hence at this stage it is not clear to the authors how this result could be extended to approximately subdiagonal subalgebras. The first step in our double reduction, is to use Bekjan's technique to lift the result from the setting of finite von Neumann algebras equipped with a faithful normal tracial state, to the semifinite setting.

\begin{lemma}\label{semifinLR}
Let $\M$ be a semifinite von Neumann algebra equipped with a faithful normal semifinite trace, and $\A$ a maximal subdiagonal subalgebra. Then $\mathscr{L}'=\mathscr{R}$ and $\mathscr{L}=\mathscr{R}'$
\end{lemma}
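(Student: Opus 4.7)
My plan is to treat the easy inclusion directly and then reduce the other inclusion to the finite tracial case (where the result is due to Marsalli) by using Bekjan's compression technique of Proposition~\ref{Bek-red}. For the inclusion $\mathscr{R} \subseteq \mathscr{L}'$, since Proposition~\ref{prop2.1} shows that both $\A \cdot H^2(\A)$ and $H^2(\A) \cdot \A$ sit inside $H^2(\A)$, the compositions simplify for $a,b \in \A$ and $f \in H^2(\A)$ to $T_a \prescript{}{b}{T}(f) = afb = \prescript{}{b}{T}\, T_a(f)$. The symmetric inclusion $\mathscr{L} \subseteq \mathscr{R}'$ is identical.

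For $\mathscr{L}' \subseteq \mathscr{R}$, I would select a net $(e_\alpha) \subset \D$ of projections of finite trace increasing to $\I$. Given $T \in \mathscr{L}'$, the proposal is to compress $T$ to each finite algebra $\M_{e_\alpha} = e_\alpha \M e_\alpha$ by setting
\begin{equation*}
T^{(e_\alpha)}(f) := T(f)\, e_\alpha \qquad \bigl(f \in H^2(\A_{e_\alpha}) = e_\alpha H^2(\A) e_\alpha\bigr).
\end{equation*}
Since $T$ commutes with left multiplication by $e_\alpha \in \A$, the vector $T^{(e_\alpha)}(f)$ lies in $e_\alpha H^2(\A) e_\alpha$; and since $T$ commutes with $T_a$ for every $a \in \A_{e_\alpha} \subseteq \A$, the compression lies in $\mathscr{L}(\A_{e_\alpha})'$ with $\|T^{(e_\alpha)}\| \leq \|T\|$. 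Marsalli's theorem in the finite tracial setting then produces $b_\alpha \in \A_{e_\alpha}$ with $\|b_\alpha\| \leq \|T\|$ such that $T(f)\, e_\alpha = f b_\alpha$ for every $f \in H^2(\A_{e_\alpha})$.

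The crux of the argument is to assemble $(b_\alpha)$ into a single element of $\A$. For $\beta \geq \alpha$ and $f \in H^2(\A_{e_\alpha}) \subseteq H^2(\A_{e_\beta})$ I would multiply $T(f)e_\beta = fb_\beta$ on the right by $e_\alpha$ (using $e_\beta e_\alpha = e_\alpha$) to obtain $fb_\alpha = fb_\beta e_\alpha$, and then specialise to $f = e_\alpha \in H^2(\A_{e_\alpha})$ to deduce the coherence relation
\begin{equation*}
b_\alpha = e_\alpha b_\beta e_\alpha, \qquad \beta \geq \alpha.
\end{equation*}
By weak-$*$ compactness of the $\|T\|$-ball of $\M$, a subnet of $(b_\alpha)$ converges weak-$*$ to some $b$, which lies in $\A$ by $\sigma$-weak closedness. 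The coherence relation then passes to the limit (fixing $\alpha$ and letting $\beta$ vary along the subnet), yielding $b_\alpha = e_\alpha b e_\alpha$ for every $\alpha$.

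The concluding step is to verify $T = \prescript{}{b}{T}$. Fixing any $f \in e_{\alpha_0} H^2(\A) e_{\alpha_0}$ and $\alpha \geq \alpha_0$, the established identities combine to give $T(f)\, e_\alpha = f b_\alpha = f b\, e_\alpha$. Since $|T(f)|^2 \in L^1(\M,\tau)$ and $e_\alpha \uparrow \I$, the standard trace estimate $\tau(|T(f)|^2(\I - e_\alpha)) \to 0$ ensures $T(f)\, e_\alpha \to T(f)$ in $L^2$-norm, and similarly $fb\, e_\alpha \to fb$. Thus $T(f) = fb$ on the dense subspace $\cup_\alpha e_\alpha H^2(\A) e_\alpha$ of $H^2(\A)$ (by Proposition~\ref{Bek-red}), and continuity of $T$ extends this to all of $H^2(\A)$. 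The companion equality $\mathscr{L} = \mathscr{R}'$ follows by a symmetric compression argument. The main technical obstacle will be establishing the coherence relation and its passage to the weak-$*$ limit, which is what ties the separate finite-case outputs back into a single element of $\A$.
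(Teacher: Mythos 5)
Your proposal is correct and follows essentially the same route as the paper's proof: compress the commutant operator to the finite corners $e_\alpha\M e_\alpha$ via Bekjan's net of finite-trace projections in $\D$, invoke Marsalli's finite-tracial result to get symbols $b_\alpha$, establish the coherence relation $b_\alpha = e_\alpha b_\beta e_\alpha$, and pass to a $\sigma$-weak cluster point $b\in\A$. The only cosmetic differences are that you obtain coherence by specialising to $f=e_\alpha$ (the paper appeals to uniqueness of the symbol) and you finish with an $L^2$-norm convergence argument where the paper uses an inner-product computation against a dense family; both are sound.
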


\begin{proof} It clearly suffices to only prove the first equality. It is easy to see that $\mathscr{R}\subset\mathscr{L}'$. We pass to 
proving the converse inclusion. So let $X\in \mathscr{L}'$ be given and let $(e_\gamma)$ be a net of projections in $\D$ increasing to 
$\I$. Write $V_\gamma$ for the operator $V_\gamma(x)= e_\gamma xe_\gamma$ on $H^2(\A)$. We claim that the restriction of  
$V_\gamma XV_\gamma$ to $H^2(e_\gamma\A e_\gamma)$ belongs to $\mathscr{L}_\gamma'$, where $\mathscr{L}_\gamma$ is the set of operators 
$\{T_a\colon a\in e_\gamma\A e_\gamma\}$ considered as operators on $H^2(e_\gamma\A e_\gamma)$. Given $a\in e_\gamma\A e_\gamma$ we will 
for the sake of clarity hereafter write $T_a^{(\gamma)}$ for the operator induced on $H^2(e_\gamma\A e_\gamma)$, and $T_a$ for the 
operator induced on $H^2(\A)$. Recall that $H^2(e_\gamma\A e_\gamma)=e_\gamma H^2(\A)e_\gamma\subset H^2(\A)$. Then notice that in this 
case we will clearly have that 
$V_\gamma T_a^{(\gamma)}V_\gamma(g)=T_a^{(\gamma)}(g)=T_a(g)$ for $f\in H^2(\A)$ and $g\in H^2(e_\gamma\A e_\gamma)$. Given any 
$x\in H^2(e_\gamma\A e_\gamma)$, it is therefore clear that 
$$T_a^{(\gamma)}(V_\gamma XV_\gamma(x))=T_a^{(\gamma)}(V_\gamma (X(x)))= ae_\gamma(X(x))e_\gamma 
= e_\gamma a(X(x))e_\gamma$$ $$= e_\gamma T_a(X(x))e_\gamma=e_\gamma X(T_a(x))e_\gamma = e_\gamma (X(e_\gamma axe_\gamma))e_\gamma = V_\gamma XV_\gamma(T_a^{(\gamma)}(x)).$$Thus the claim follows.

By Marsalli's result there will for any $\gamma$ exist a unique $a_\gamma\in e_\gamma\A e_\gamma$ such that $V_\gamma XV_\gamma= \prescript{}{a_\gamma}{T^{(\gamma)}}$. 
Now let $\gamma\leq\delta$ be given. Then of course $e_{\gamma}\leq e_{\delta}$, whence 
$H^2(e_{\gamma}\A e_{\gamma})\subset H^2(e_{\delta}\A e_{\delta})$. So for any $x\in H^2(e_{\gamma}\A e_{\gamma})$ we will have that 
$$\prescript{}{e_\gamma a_\delta e_\gamma}{T^{(\gamma)}}(x)= x e_\gamma a_\delta e_\gamma = e_\gamma x a_\delta e_\gamma = 
V_\gamma[(\prescript{}{a_\delta}{T^{(\delta)}}(x))]=$$ $$V_\gamma[V_\delta X V_\delta(x))] = V_\gamma[V_\delta X V_\delta(V_\gamma(x)))]= 
V_\gamma X V_\gamma(x)=\prescript{}{a_\gamma}{T^{(\gamma)}}(x).$$In other words 
$x(e_\gamma a_\delta e_\gamma)= xa_\gamma$ for any $x\in H^2(e_\gamma\A e_\gamma)$. This can only be if in fact 
\begin{equation}\label{eq-toeplitz} e_\gamma a_\delta e_\gamma= a_\gamma. \end{equation} For each $\gamma$ we moreover have that 
$\|a_\gamma\|=\|\prescript{}{a_\gamma}{T^{(\gamma)}}\|=\|V_\gamma X V_\gamma\|\leq\|X\|$. By passing to a subnet if necessary, we may therefore 
assume that $(a_\gamma)$ is $\sigma$-weakly convergent to some $a\in \A$. For any fixed $\delta$, $(e_\delta a_\gamma e_\delta)_\gamma$ 
is then $\sigma$-weakly convergent to $e_\delta a e_\delta$. But by equation (\ref{eq-toeplitz}), that limit must be $a_\delta$. Now let 
$\gamma$, $\lambda$ be given and select $\delta$ such that $\delta\geq \gamma$ and $\delta\ge \lambda$. It is an easy exercise to see 
that the Hilbert adjoint of the operator $V_\gamma$ on $H^2(\A)$ is just $V_\gamma$. For any $x\in H^2(e_\gamma\A e_\gamma)$ and 
$y\in H^2(e_\lambda\A e_\lambda)$ we may then use this observation and the fact that $e_\delta\geq e_\gamma$ and 
$e_\delta\geq e_\lambda$, to see that 
$$\langle X(x),y\rangle = \langle XV_\delta(x),V_\delta y\rangle = \langle V_\delta XV_\delta (x), y\rangle 
= \langle \prescript{}{a_\delta}{T^{(\delta)}}(x), y\rangle=$$ $$\langle \prescript{}{e_\delta a e_\delta}{T^{(\delta)}}(x), y\rangle=\langle x a e_\delta, y\rangle= \tau(y^*x a e_\delta)=\tau(e_\delta y^*x a)=\tau(y^*xa)= \langle \prescript{}{a}{T}(x), y\rangle.$$Since $\cup_\gamma H^2(e_\gamma\A e_\gamma)$ is norm dense in $H^2(\A)$, 
we have that $X=\prescript{}{a}{T}$ on a norm-dense subspace of $H^2(\A)$, and hence on all of $H^2(\A)$. This clearly proves the lemma.
\end{proof}

We now pass to the second part of the double reduction, which is the application of the version of the Haagerup reduction theorem proved 
in this paper, to lift the result from semifinite von Neumann algebras, to the general case. We shall achieve this by modifying the 
technique Ji used when he proved this result for $\sigma$-finite algebras using the version of the reduction theorem in \cite{HJX}. In 
his proof Ji often made intimate use of the fact that $\sigma$-finite algebras may be assumed to admit a cyclic and separating vector. 
Not all aspects of his proof strategy therefore go through in the general case. Some ingenuity is required to fill the gaps.

\begin{theorem}
Let $\A$ be a maximal subdiagonal subalgebra of some von Neumann algebra. Then $\mathscr{L}'=\mathscr{R}$ and $\mathscr{L}=\mathscr{R}'$
\end{theorem}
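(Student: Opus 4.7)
The inclusion $\mathscr{R}\subseteq\mathscr{L}'$ is immediate from Proposition~\ref{P:Toeplitz-props}(2), so the work is to establish $\mathscr{L}'\subseteq\mathscr{R}$. My strategy is to lift any given $X\in\mathscr{L}'$ to the enlargement $\R=\M\rtimes_\nu\mathbb{Q}_D$ of Proposition~\ref{anred}, extract right-Toeplitz symbols on each semifinite $\R_n$ via Lemma~\ref{semifinLR}, and then reassemble by the martingale structure inherent in the reduction theorem.

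First I would record the spectral decomposition. Since the dual action $\widehat{\alpha}$ of the compact group $\widehat{\mathbb{Q}_D}$ preserves both $\tnu$ and $\widehat{\A}$, the isotypic decomposition of $L^2(\R)$ restricts to
$$L^2(\R)=\bigoplus_{t\in\mathbb{Q}_D}\lambda_tL^2(\M),\qquad H^2(\widehat{\A})=\bigoplus_{t\in\mathbb{Q}_D}\lambda_tH^2(\A),$$
where the identification of the $t=0$ summands with $L^2(\M)$ and $H^2(\A)$ uses that $\mathscr{W}_{\mathbb{Q}_D}^{(2)}$ is the orthogonal projection of $L^2(\R)$ onto $L^2(\M)$ and maps $H^2(\widehat{\A})$ onto $H^2(\A)$ (Proposition~\ref{anred}(3)). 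Given $X\in\mathscr{L}'$, I would define $\widehat{X}$ on $H^2(\widehat{\A})$ by $\widehat{X}(\lambda_tf)=\lambda_tX(f)$ for $f\in H^2(\A)$; then $\|\widehat{X}\|\leq\|X\|$ and $\widehat{X}|_{H^2(\A)}=X$. The essential verification is that $\widehat{X}\in\mathscr{L}'_{\widehat{\A}}$: by linearity and $\sigma$-weak density of $\{\lambda_sb:s\in\mathbb{Q}_D,\,b\in\A\}$ in $\widehat{\A}$ it suffices to check commutation with each $\widehat{T}_{\lambda_sb}$, and the identity $\lambda_sb\lambda_t=\lambda_{s+t}\sigma_{-t}^\nu(b)$ (Lemma~\ref{10:L modgpred}) together with $\sigma_{-t}^\nu(\A)=\A$ (maximal subdiagonality) reduces the required commutation to $XT_{\sigma_{-t}^\nu(b)}=T_{\sigma_{-t}^\nu(b)}X$, which holds by hypothesis on $X$.

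Next I would compress to the semifinite levels by $X_n=\mathscr{W}_n^{(2)}\widehat{X}J_n$ on $H^2(\A_n)$, where $J_n$ is the inclusion $H^2(\A_n)\hookrightarrow H^2(\widehat{\A})$. The orthogonal decompositions $L^2(\R)=H^2(\widehat{\A})\oplus H^2_0(\widehat{\A})^*$ and $L^2(\R_n)=H^2(\A_n)\oplus H^2_0(\A_n)^*$ furnished by Theorem~\ref{Co}, combined with the inclusion $H^2_0(\A_n)^*\subseteq H^2_0(\widehat{\A})^*$ (which follows from $\A_{n,0}\subseteq\widehat{\A}_0$), force the commutativity $\mathscr{W}_n^{(2)}\widehat{P}_+=\widehat{P}_+\mathscr{W}_n^{(2)}=P_+^{(n)}$ of the relevant orthogonal projections. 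Together with the module property $\mathscr{W}_n^{(2)}(fa_n)=f\,\mathscr{W}_n(a_n)$ for $f\in L^2(\R_n)$, $a_n\in\R_n$ (Proposition~\ref{exp-props}), this shows $X_n\in\mathscr{L}'_{\A_n}$. Lemma~\ref{semifinLR} then produces a unique $a_n\in\A_n$ with $X_n=\prescript{}{a_n}{T^{(n)}}$ and $\|a_n\|\leq\|X\|$. For $n\leq m$, the tower property $\mathscr{W}_n\circ\mathscr{W}_m=\mathscr{W}_n$ and the same module identity force the compatibility $a_n=\mathscr{W}_n(a_m)$ (using $\mathscr{W}_n(\widehat{\A})=\A_n$ from Proposition~\ref{anred}(2)).

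Finally I would reassemble. Since $(a_n)$ is uniformly bounded and $\mathscr{W}_{\cdot}$-compatible, any $\sigma$-weak cluster point $\widehat{a}\in\widehat{\A}$ satisfies $\mathscr{W}_n(\widehat{a})=a_n$ for every $n$ by normality of $\mathscr{W}_n$; the $\sigma$-strong$^*$ density of $\bigcup_n\R_n$ in $\R$ (Theorem~\ref{red s-finite}) then pins down $\widehat{a}$ uniquely. Passing to the limit $m\to\infty$ in the identity $\mathscr{W}_m^{(2)}\widehat{X}(f)=X_m(f)=P_+^{(m)}(fa_m)=\mathscr{W}_m^{(2)}\widehat{P}_+(f\widehat{a})$ for $f\in\bigcup_nH^2(\A_n)$, using the norm density of that union in $H^2(\widehat{\A})$ (Proposition~\ref{anred}(3)) and the $L^2$-convergence $\mathscr{W}_m^{(2)}\to\mathrm{id}$, yields $\widehat{X}=\prescript{}{\widehat{a}}{\widehat{T}}$ on all of $H^2(\widehat{\A})$. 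Setting $a=\mathscr{W}_{\mathbb{Q}_D}(\widehat{a})\in\A$ and applying $\mathscr{W}_{\mathbb{Q}_D}^{(2)}$ to the resulting identity $X(f)=\widehat{P}_+(f\widehat{a})$ on $H^2(\A)$, together with the analogous commutativity $\mathscr{W}_{\mathbb{Q}_D}^{(2)}\widehat{P}_+=P_+\mathscr{W}_{\mathbb{Q}_D}^{(2)}$ and the module property, delivers $X(f)=P_+(fa)=\prescript{}{a}{T}(f)$. The hard part will be this martingale-style reassembly: one must carefully combine the uniqueness in Lemma~\ref{semifinLR}, the tower property, the uniform bound $\|a_n\|\leq\|X\|$, and the density provided by the reduction theorem to ensure that a \emph{single} element $\widehat{a}\in\widehat{\A}$ realises $\widehat{X}$ as a right Toeplitz operator on the whole of $H^2(\widehat{\A})$, not merely on the dense subspace $\bigcup_nH^2(\A_n)$.
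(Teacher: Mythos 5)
Your proposal is correct and follows essentially the same route as the paper: lift $X$ to $\widehat{X}$ on $H^2(\widehat{\A})$ via $\lambda_t f\mapsto\lambda_t X(f)$, compress through $\mathscr{W}_n^{(2)}$ to get commutants on the semifinite levels, invoke Lemma~\ref{semifinLR} to extract symbols $a_n\in\A_n$ with $\|a_n\|\leq\|X\|$, take a $\sigma$-weak cluster point to realise $\widehat{X}$ as a right Toeplitz operator on the dense union $\bigcup_n H^2(\A_n)$, and push down by $\mathscr{W}_{\mathbb{Q}_D}$. The only detail you elide is that Lemma~\ref{semifinLR} lives in the tracial picture, so one must pass through the unitary equivalence $\R_n\rtimes_{\tnu\upharpoonright\R_n}\mathbb{R}\cong\R_n\rtimes_{\tau_n}\mathbb{R}$ (equivalently the isometry $J_n$ of Proposition~\ref{anred}(3) identifying $H^2(\A_n,\tau_n)$ with $H^2(\A_n)$) before applying it, exactly as the paper does.
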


\begin{proof} The first half of Ji's proof of the $\sigma$-finite case will in this setting go through largely unaltered. However in the second half Ji regularly used tricks only valid in the $\sigma$-finite setting. The idea of that part of the proof still works, but much more care is needed to make it work in this setting. For the sake of the reader we briefly sketch the first part of the proof, before providing details for the second part. 

Given the similarity of the proofs, it clearly suffices to only prove the first equality. As in the semifinite case, it is easy to see that $\mathscr{R}\subseteq\mathscr{L}'$. Given $X\in \mathscr{L}'$, our task is therefore to show that $X\in \mathscr{R}$. The first step in doing this is to lift $X$ to an operator on $H^2(\widehat{\A})$. To do this one firstly shows that $\mathrm{span}\{\lambda_t x\colon x\in L^2(\M), \, t\in \mathbb{Q}_D\}$ and $\mathrm{span}\{\lambda_t a\colon a\in H^2(\A), \, t\in \mathbb{Q}_D\}$ are respectively dense in $L^2(\R)$ and $H^2(\widehat{A})$. The prescription $\widehat{X}(\lambda_t a)=\lambda_tX(a)$ where $a\in H^2(\A)$ and $t\in \mathbb{Q}_D$ therefore defines an operator on a dense subspace of $H^2(\widehat{A})$. However one is able to show that this operator is bounded and therefore continuously extends to all of $H^2(\widehat{A})$, and that in fact $\|\widehat{X}\|=\|X\|$. 

By careful checking one is then able to show that for any $x\in \mathrm{span}\{\lambda_t a\colon a\in H^2(\A), \, t\in \mathbb{Q}_D\}$, any $a\in \A$ and any 
$s\in \mathbb{Q}_D$, we have that $\widehat{X}(\lambda_sax)=\lambda_sa\widehat{X}(x)$. So by continuity this equality then holds for all $x\in H^2(\widehat{A})$. Now recall that $\mathrm{span}\{\lambda_s a: a\in \A, s\in \mathbb{Q}_D\}$ is $\sigma$-weakly dense in $\widehat{A}$. Since $\mathrm{span}\{\lambda_s a: a\in \A, s\in \mathbb{Q}_D\}$ is convex, this subspace is even $\sigma$-strongly dense. This fact then enables us to conclude that in fact $\widehat{X}(fx)=f\widehat{X}(x)$ for all $f\in\widehat{\A}$ and all $x\in H^2(\widehat{\A})$. This is however just another way of saying that $\widehat{X}$ is in the commutant of $\{T_f\colon f\in \widehat{A}\}$ - the class of left-Toeplitz operators on $H^2(\widehat{\A})$ with symbols in $\widehat{\A}$. 

At this point we pass to using reduction to show that then $\widehat{X}\in \{\prescript{}{f}{T}\colon f\in \widehat{A}\}$ and to then conclude from that fact that $X\in \{\prescript{}{a}{T}\colon a\in \A\}$. Here we need to give details. We will denote the extension of the conditional expectation $\mathscr{W}_n:\R\to \R_n$ to $L^2(\R)\to L^2(\R_n)$ by $\mathscr{W}_n^{(2)}$. Recall that $\mathscr{W}_n^{(2)}$ maps $H^2(\widehat{\A})$ onto $H^2(\A_n)$. For each $n\in \mathbb{N}$, we may therefore define a map $X_n$ on $H^2(\A_n)$ by means of the prescription $$X_n(\mathscr{W}_n^{(2)}(x))=\mathscr{W}_n^{(2)}(\widehat{X}(x))\mbox{ for all }x\in H^2(\widehat{\A}).$$For each $n$, we then clearly have that $X_n\in B(H^2(\A_n))$ with $\|X_n\|\leq \|X\|$. Moreover for any $n$, $a\in \A_n$ and $f\in H^2(\A_n)$, we then further have that 
 $$X_naf= X_n\mathscr{W}_n^{(2)}(af)=\mathscr{W}_n^{(2)}(\widehat{X}af) =  \mathscr{W}_n^{(2)}(a\widehat{X}f) = a\mathscr{W}_n^{(2)}(\widehat{X}f)=aX_n(f).$$ 

We know from \cite[Theorem II.37]{terp} and its proof that $\R_n\rtimes_{\tnu{\upharpoonright}\R_n}\mathbb{R}$ is unitarily equivalent to 
$\R_n\rtimes_{\tau_n}\mathbb{R}$. For the sake of clarity we shall momentarily suspend our practice of identifying $\R_n$ with the 
copies $\pi_{\tnu{\upharpoonright}\R_n}(\R_n)$ and $\pi_{\tau_n}(\R_n)$ inside $\R_n\rtimes_{\tnu{\upharpoonright}\R_n}\bR$ and $\R_n\rtimes_{\tau_n}\bR$ respectively. Let $u$ be the unitary realising the unitary equivalence. Then the map $V_u(x)=u^*xu$ maps 
$\R_n\rtimes_{\tnu{\upharpoonright}\R_n}\mathbb{R}$ onto $\R_n\rtimes_{\tau_n}\mathbb{R}$ with terms of the form $\pi_{\tnu/\R_n}(f)$ where $f\in \R_n$, mapping onto $\pi_{\tau_n}(f)$. We also know from the remark on page 62 of \cite{terp} that 
the space $L^2(\R_n)$ realised inside $\R_n\rtimes_{\tau_n}\mathbb{R}$, is a copy of $L^2(\R_n,\tau_n)\otimes \exp^{1/2}$. For the sake 
of simplicity of exposition we shall identify $L^2(\R_n,\tau_n)\otimes \exp^{1/2}$ with $L^2(\R_n,\tau_n)$ when convenient. We know from 
the above computation that in the context of $\R_n\rtimes_{\tnu/\R_n}\mathbb{R}$ we have that $X_naf = aX_n(f)$ for any 
$a\in \pi_{\tnu/\R_n}(\A_n)$ and $f\in H^2(\A_n)$. This then transfers to the claim that in the context of 
$\R_n\rtimes_{\tau_n}\mathbb{R}$, we have that $$V_uX_nV_{u^*}(af) = V_uX_n(V_{u^*}(a)V_{u^*}(f))=  V_uV_{u^*}(a)X_nV_{u^*}(f)
= aV_uX_nV_{u^*}(f)$$for $a\in \pi_{\tau_n}(\A_n)$ and 
$f\in H^2(\A_n,\tau_n)$. So by Lemma \ref{semifinLR} there must exist some $b_n\in \pi_{\tau_n}(\A_n)$ such that 
$V_uX_nV_{u^*}=\prescript{}{b_n}{T}$ on $H^2(\A_n,\tau_n)$; that is $V_uX_nV_{u^*}(f)=fb_n$ for each $f\in H^2(\A_n,\tau_n)$. Then also 
 $$\|b_n\|=\|\prescript{}{b_n}{T}\|=\|V_uX_nV_{u^*}\|=\|X_n\|.$$ On transferring back to the $\R_n\rtimes_{\tnu/\R_n}\mathbb{R}$ context, it can now 
easily be checked that there exists $c_n\in\pi_{\tnu/\R_n}(\A_n)\subset \widehat{\A}$ such that $X(f)=fc_n$ for each $f\in H^2(\A_n)$, 
with in addition $\|c_n\|=\|X_n\|\leq\|X\|$. (In fact $c_n=V_{u^*}(b_n)$.)

Now suppose we have $k,n\in \mathbb{N}$ with $k\geq n$. Since then $H^2(\A_n)\subset H^2(\A_k)$, we have that $X_kf=fc_k$ for any 
$f\in H^2(\A_n)$. By passing to a subsequence if necessary we may assume that $(c_k)$ is $\sigma$-weakly convergent to some $c\in\widehat{\A}$. For any 
$f\in H^2(\A_n)$, we then have that 
 $$\widehat{X}(f)= \lim_{k\to\infty}\mathscr{W}_k(\widehat{X}(f))= \lim_{k\to\infty}X_k(f)= \lim_{k\to\infty}fc_k=fc= 
\prescript{}{c}{T}(f).$$ 
Thus $\widehat{X}$ and $\prescript{}{c}{T}$ (as an operator on $H^2(\widehat{A})$) agree on the dense subspace 
$\cup_{n\geq 1}H^2(\A_n)$ of $H^2(\widehat{A})$, and hence on all of $H^2(\widehat{A})$. For any $f\in H^2(\A)\subset H^2(\widehat{A})$, 
we see from the definition of $\widehat{X}$ that $fc=\widehat{X}(f)=X(f)\in H^2(\A)$. But this means that we will for any $f\in H^2(\A)$ 
have that $X(f)=fc=\mathscr{W}^{(2)}_{\mathbb{Q}_D}(fc) = f\mathscr{W}_{\mathbb{Q}_D}(c)$ for all $f\in H^2(\A)$. Thus $X$ agrees with 
the operator $\prescript{}{g}{T}$ on $H^2(\A)$ where $g=\mathscr{W}_{\mathbb{Q}_D}(c)\in\A$. The result therefore follows
\end{proof}

\section{Fredholm Toeplitz operators}\label{S14} 

We remind the reader that closed densely defined operator $T$ between Banach space $X$ and $Y$ with closed range, is said to upper semi-Fredholm (denote by $T\in \Phi_+$) if the kernel of $T$ is finite dimensional. If on the other hand the quotient space $Y/(T(X))$ is finite dimensional, we say that $T$ is lower semi-Fredholm (denoted by $T\in \Phi_-$). If $T$ is both upper and lower semi-Fredholm we will just call it Fredholm, and will denote this class by $\Phi$. It is worth noting that if $T$ is a densely defined closed operator between Banach spaces with the quotient space $Y/(T(X))$ finite dimensional, then its range is actually automatically closed.

\subsection{Indices of $T_f$ for $f\in (L^\infty(\M))^{-1}$}

We shall have need of the following fact, which is an easy consequence of the Haagerup-Terp standard form (Theorem \ref{7:T stdform}). In the following we will at times use this fact without comment.

\begin{lemma}\label{Mfvsf} Let $f\in \M$ be given and suppose that $\M$ is in standard form in the sense of Definition \ref{7:D defstdfrm}. Then the multiplication map $M_f:L^2(\M)\to L^2(\M):a\mapsto fa$ is bounded below if and only if the same is true of $f$ (that is if and only if $|f|$ is invertible).
\end{lemma}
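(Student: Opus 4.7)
The plan is to identify $M_f$ with the left action $\lambda(f)$ from the Haagerup-Terp standard form (Theorem \ref{7:T stdform}(1)), and then reduce the boundedness-below question to an invertibility statement about $\lambda(|f|^2)$ which transfers back to $|f|^2 \in \M$ via spectral permanence. The key inputs are that $\lambda:\M\to B(L^2(\M))$ is a faithful normal $*$-representation, and hence an isometric $*$-isomorphism onto its image.

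First I would observe that $M_f$ is literally the operator $\lambda(f)$, so in particular $M_f^*M_f=\lambda(f)^*\lambda(f)=\lambda(f^*f)=\lambda(|f|^2)$, and $\lambda(|f|^2)\geq 0$. Now recall that a bounded operator $T$ on a Hilbert space is bounded below precisely when $T^*T$ is an invertible positive operator, equivalently when $0\notin\sigma(T^*T)$. Applying this to $T=M_f$, we see that $M_f$ is bounded below if and only if $\lambda(|f|^2)$ has spectrum bounded away from $0$ in $B(L^2(\M))$.

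Next I would invoke spectral permanence. Since $\lambda$ is a faithful $*$-homomorphism between C*-algebras, it is isometric, and therefore preserves the spectrum of every self-adjoint (indeed every) element: for $x=x^*\in\M$ one has $\sigma_{B(L^2(\M))}(\lambda(x))=\sigma_{\lambda(\M)}(\lambda(x))=\sigma_\M(x)$ (the first equality because $\lambda(\M)$ is a C*-subalgebra closed under the continuous functional calculus for self-adjoint elements, the second because $\lambda$ is an injective $*$-homomorphism). Applied to $x=|f|^2$, this shows that $0\notin\sigma(\lambda(|f|^2))$ if and only if $0\notin\sigma_\M(|f|^2)$, which is exactly the statement that $|f|^2$ is invertible in $\M$. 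Finally $|f|^2$ is invertible in $\M$ if and only if $|f|$ is, since for positive elements of $\M$ one has $\sigma(|f|^2)=\{\lambda^2:\lambda\in\sigma(|f|)\}$ and $|f|$ admits a square root via Borel functional calculus. Stringing these equivalences together gives the claim.

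I do not expect any serious obstacle here; the only point that deserves a sentence of care is the spectral permanence step, which relies on the fact that the faithful normal $*$-representation $\lambda$ of Theorem \ref{7:T stdform} is automatically an isometric C*-algebra embedding, so that the spectrum of $\lambda(|f|^2)$ computed in the large algebra $B(L^2(\M))$ coincides with the spectrum of $|f|^2$ computed in $\M$.
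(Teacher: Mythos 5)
Your proof is correct, and it is essentially the argument the paper has in mind: the lemma is stated without proof as an "easy consequence of the Haagerup-Terp standard form," and your route — identifying $M_f$ with $\lambda(f)$, reducing boundedness below to invertibility of $M_f^*M_f=\lambda(|f|^2)$, and transferring that back to $\M$ via the isometric, spectrum-preserving nature of the faithful $*$-representation $\lambda$ — is precisely the intended use of Theorem \ref{7:T stdform}(1).
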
 

In this subsection $\A$ is taken to be an approximately subdiagonal subalgebra of the von Neumann algebra $\M$. 
We will define indices for $T_f$ for invertible elements of $\M$. The basic idea behind this definition, is to try and quantify the extent to which left multiplication by an element of $\M^{-1}$ disturbs the a priori structure of $H^2(\A)$. In other words, given say $f\in \M^{-1}$, how much of $fH^2(\A)$ lies in $(H^2_0)^*$, and how different is $fH^2+(H^2_0)^*$ from $L^2=H^2\oplus (H^2_0)^*$? 

\begin{definition} Given any $f\in \M$ we define the quantity $\alpha(f)$ to be $\alpha(f)=\mathrm{dim}(L^2/(fH^2+(H^2_0)^*))$. If $f$ is even bounded below, we define $\beta(f)$ to be $\beta(f)=\mathrm{dim}(H^2\cap (M_f)^{-1}(H^2_0)^*)$, where $(M_f)^{-1}$ is the partially defined inverse of $M_f$. Whenever either $\alpha(f)$ or $\beta(f)$ is finite, the index of an $f\in \M$ which is bounded below, is then defined to be $$\mathrm{ind}(f)=\beta(f)-\alpha(f).$$
\end{definition}

\begin{remark} In the case of a unitary $u$, we have that $$\beta(u)=\mathrm{dim}(H^2\cap u^{-1}(H^2_0)^*)=\mathrm{dim}(H^2\cap u^*(H^2_0)^*)=\mathrm{dim}(u(H^2)\cap (H^2_0)^*).$$We pause to justify these definitions. Essentially $\alpha(u)$ attempts to quantify how much of $H^2$ is ``lost'' when $H^2$ is replaced by $uH^2$, whereas $\beta(u)$ quantifies how much of $(H^2_0)^*$ is ``lost'' through being absorbed into $uH^2$. As such the philosophy of this definition seems to fit that of Definition 3 of \cite{mir}. We will denote the class of all elements of $\M$ with finite index by $\Phi(\M)$. 
\end{remark}

\begin{proposition}\label{ind-u} Let $f\in \M$ be given and let $T_f$ be the associated Toeplitz operator on $H^2(\A)$. 
\begin{enumerate}
\item[(1)] If $f\in \M$ is bounded below, then 
 $$\mathrm{dim(ker}(T_f))=\beta(f)\;\text{ and }\;\mathrm{dim}(H^2/(T_fH^2))=\alpha(f).$$ 
It follows that such an $f$ has an index if and only if $T_f$ does, in which case $\mathrm{ind}(T_f)=-\mathrm{ind}(f)$. Moreover $T_f\in \Phi_-$ if and only if $\alpha(f)<\infty$.
\item[(2)] In the case where $f$ is invertible we further have that $\alpha(f)\geq \beta(f^*)$, with equality holding whenever $\alpha(f)<\infty$. In such a case $fH^2+(H^2_0)^*$ is closed. 
\end{enumerate} 
\end{proposition}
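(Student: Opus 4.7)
The plan is to translate both claims into statements about the subspace geometry of $L^2(\M) = H^2 \oplus (H^2_0)^*$, the orthogonal decomposition furnished by Corollary~\ref{gen-perp} and the remark following Theorem~\ref{mw-5.2}. First I would record the two elementary identities that will do most of the work. For any $g \in H^2$, $T_f g = 0$ iff $fg \in (H^2)^\perp = (H^2_0)^*$, so
\[
\ker T_f \;=\; H^2 \cap M_f^{-1}((H^2_0)^*),
\]
which by definition has dimension $\beta(f)$. For any $h \in H^2$, decomposing $fh = P_+(fh) + P_-(fh) = T_f h + P_-(fh)$ with $P_-(fh) \in (H^2_0)^*$ yields the internal direct sum
\[
fH^2 + (H^2_0)^* \;=\; T_f H^2 \;\oplus\; (H^2_0)^*,
\]
since $T_f H^2 \subseteq H^2$ is orthogonal to $(H^2_0)^*$.

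Part (1) then follows by quotienting this last display by $(H^2_0)^*$: using $L^2 = H^2 \oplus (H^2_0)^*$ one obtains the algebraic identification
\[
L^2/(fH^2 + (H^2_0)^*) \;\cong\; H^2/T_f H^2,
\]
so $\dim H^2/T_f H^2 = \alpha(f)$. The index identity and the equivalence $T_f \in \Phi_- \Longleftrightarrow \alpha(f) < \infty$ are then immediate from the definitions, modulo the fact that $T_f H^2$ is closed in $H^2$ whenever $\alpha(f) < \infty$. For this last point I would invoke the standard consequence of the open mapping theorem that any bounded linear map between Banach spaces with algebraically finite-codimensional range automatically has closed range, applied to $T_f : H^2 \to H^2$.

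For part (2), the pairing $\langle fh, y \rangle = \langle h, f^*y \rangle$ combined with $y \perp (H^2_0)^* \Longleftrightarrow y \in H^2$ gives
\[
(fH^2 + (H^2_0)^*)^\perp \;=\; H^2 \cap M_{f^*}^{-1}((H^2_0)^*),
\]
a space of dimension $\beta(f^*)$ (here invertibility of $f$, and hence of $f^*$, is what makes the right-hand side fit the definition of $\beta(f^*)$). Since $V^\perp = (\overline{V})^\perp$ for any subspace $V$, this equals $\dim L^2/\overline{fH^2 + (H^2_0)^*}$, which is always dominated by the algebraic codimension $\dim L^2/(fH^2 + (H^2_0)^*) = \alpha(f)$; hence $\beta(f^*) \leq \alpha(f)$. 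When $\alpha(f) < \infty$, the closedness argument from part (1) makes $T_f H^2$ closed, so $fH^2 + (H^2_0)^* = T_f H^2 \oplus (H^2_0)^*$ is the orthogonal sum of two closed subspaces, and is therefore itself closed. This forces $\beta(f^*) = \alpha(f)$.

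The one genuinely delicate step is the passage from algebraically finite codimension to topologically closed range: kernels of discontinuous linear functionals show that algebraic finite codimension alone does not force closedness, so the open mapping argument cannot be dispensed with. Every other step is routine bookkeeping with the orthogonal decomposition $L^2 = H^2 \oplus (H^2_0)^*$ and the adjoint pairing.
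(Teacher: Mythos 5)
Your proof is correct and follows essentially the same route as the paper's: the same identification $\ker T_f=H^2\cap M_f^{-1}((H^2_0)^*)$, the same identification of $H^2/T_fH^2$ with $L^2/(fH^2+(H^2_0)^*)$ via $fH^2+(H^2_0)^*=T_fH^2\oplus(H^2_0)^*$, and the same appeal to the fact that a bounded operator with finite-codimensional range automatically has closed range. The only cosmetic divergence is in part (2), where you compute $(fH^2+(H^2_0)^*)^\perp$ directly instead of invoking $T_f^*=T_{f^*}$ together with the inequality $\dim(H^2/T_fH^2)\geq\dim\ker(T_f^*)$, and you deduce closedness of $fH^2+(H^2_0)^*$ from closedness of $T_fH^2$ plus orthogonality rather than from the finite-codimension criterion applied to the map $a\mapsto fa+(H^2_0)^*$ of $H^2$ into $L^2/(H^2_0)^*$; these are equivalent formulations of the same duality argument.
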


\begin{proof} We first prove part (1) of the proposition. Let $f\in \M$ be bounded below. To see the first equality, observe that by the definition of Toeplitz operators, $b\in \mathrm{ker}(T_f)$ if and only if $fb\in (H^2_0)^*$. From this fact it is now easy to conclude that $\mathrm{ker}(T_f)=H^2\cap (M_f)^{-1}(H^2_0)^*$ , where $f^{-1}$ is the partially defined inverse of $f$. 

To see the second claim, observe that it is an easy exercise to see that as vector spaces, the quotient spaces $$\frac{H^2}{T_f(H^2)}\mbox{ and }\frac{H^2\oplus(H_0^2)^*}{T_f(H^2)\oplus(H^2_0)^*}=\frac{L^2}{T_f(H^2)\oplus(H^2_0)^*}$$may be canonically identified with each other. The claim now follows from the observation that $T_f(H^2)\oplus(H^2_0)^*=fH^2+(H^2_0)^*$. 

It follows from classical Fredholm theory that $T_f\in \Phi_-$ if and only if \newline $\mathrm{dim}(H^2/(T_fH^2))<\infty$. In view of what we've already proved, this establishes the final claim of part (a). 

In view of the fact that $T_f^*=T_{f^*}$, the claim in part (2) is now simply a consequence of part (1) combined with the known fact that $\mathrm{dim}(H^2/(T_fH^2))\geq \mathrm{dim(ker}(T_f^*))$, with equality holding whenever $\mathrm{dim}(H^2/(T_fH^2))<\infty$. It remains to prove that $fH^2+(H^2_0)^*$ is closed whenever $\alpha(f)<\infty$.

If indeed $\alpha(f)<\infty$, the continuous map $Q:H^2\to L^2/((H^2_0)^*):a\to fa+(H^2_0)^*$ will have a range with finite codimension. By known theory \cite[IV.1.13]{gb}, the range of $Q$ must then be closed. But saying that $\{fa+(H^2_0)^*:a\in H^2\}$ is a closed subspace of $L^2/((H^2_0)^*)$, is the same as saying that $fH^2+(H^2_0)^*$ is a closed subspace of $L^2$.
\end{proof}

The following corollary easily follows from the above. In the next section we will prove a result which shows that requiring $f$ to be bounded below is a natural restriction to make.

\begin{corollary} Let $f\in \M$ be bounded below. Then $T_f$ is Fredholm if and only if $f$ has a finite index.
\end{corollary}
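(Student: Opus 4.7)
The plan is essentially just to unwind the definitions; this corollary is genuinely a corollary in the strictest sense, since the substantive work has already been carried out in Proposition \ref{ind-u}(1). First I would observe that $T_f$ being Fredholm means, by definition, that $\ker T_f$ is finite dimensional, the range $T_f(H^2)$ is closed, and the cokernel $H^2/T_f(H^2)$ is finite dimensional. However, I would invoke the standard Banach space fact (cf.\ \cite[IV.1.13]{gb}, already used in the proof of Proposition \ref{ind-u}(2)) that a bounded operator with finite-dimensional cokernel automatically has closed range. Thus $T_f \in \Phi$ iff $\dim\ker T_f < \infty$ and $\dim(H^2/T_f(H^2)) < \infty$ jointly hold.

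Next I would directly apply Proposition \ref{ind-u}(1), which for $f \in \M$ bounded below gives the explicit identifications $\dim\ker T_f = \beta(f)$ and $\dim(H^2/T_f(H^2)) = \alpha(f)$. Substituting these into the reformulated Fredholm criterion, $T_f$ is Fredholm precisely when $\beta(f) < \infty$ and $\alpha(f) < \infty$. By the definition immediately preceding Proposition \ref{ind-u}, $f$ has a finite index exactly when both $\alpha(f)$ and $\beta(f)$ are finite, so that $\mathrm{ind}(f) = \beta(f) - \alpha(f)$ is a well-defined integer.

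There is no main obstacle. The only point requiring a moment's attention is the interpretation of the phrase ``finite index'': the definition in the paper is phrased so that $\mathrm{ind}(f)$ may be declared as soon as \emph{either} $\alpha(f)$ or $\beta(f)$ is finite (in which case the difference is in principle allowed to be $\pm\infty$), so ``$f$ has finite index'' must be read as the stronger assertion that both quantities are finite, forcing $\mathrm{ind}(f) \in \mathbb{Z}$. With this reading the chain of equivalences above closes up, and the proof consists of little more than citing Proposition \ref{ind-u}(1) together with the automatic closed-range observation.
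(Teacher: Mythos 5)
Your proof is correct and is exactly the argument the paper intends: the corollary is stated without proof as an immediate consequence of Proposition \ref{ind-u}(1), and your unwinding — identifying $\dim\ker T_f=\beta(f)$ and $\dim(H^2/T_fH^2)=\alpha(f)$, invoking the automatic closed-range fact for finite-dimensional cokernel, and reading ``finite index'' as both quantities being finite — is precisely how it follows. Your remark on the interpretation of the index definition is a fair and correct reading of the paper's convention.
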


For the case of a unitary, the following example gives some intuition about what each of the quantities $\alpha(u)$ and $\beta(u)$ represents in elementary complex analysis. 

\begin{example} Let $g$ be analytic on $\overline{\mathbb{D}}$ with $m$ zeros in the interior of $\overline{\mathbb{D}}$. Then for the classical Hardy spaces of the disc, we have that $m= \mathrm{dim}(L^2/([gH^2]+(H^2_0)^*))$. To see this one may argue as follows:

If $g$ is as above, then on some neighbourhood of $\overline{\mathbb{D}}$, it can be written in the form $$g(z)=z^{m_0}\Pi_{i=1}^k(z-a_i)^{m_i}g_0(z),$$where $\sum_{i=0}^k m_i=m$ with each $m_i\geq 0$, where $a_i\in \mathbb{D}\backslash\{0\}$ for each $i\geq 1$, and where $g_0$ is analytic on $\overline{\mathbb{D}}$ with no zeros in the interior of $\overline{\mathbb{D}}$. It 
is clear that for each $i\geq 1$ the function $h_i(z)=(1-z\overline{a_i})^{m_i}$ is invertible in both $L^\infty(\mathbb{T})$ and the disc 
algebra $\A(\mathbb{D})$. Now consider the function $f=\Pi_{i=1}^k h_i$. Using the facts just noted regarding the $h_i$'s, it 
is an exercise to firstly see that $\overline{f}^{(-1)}(H^2_0)^*= (H^2_0)^*$, and that $\overline{f}^{(-1)}L^2=L^2$. Hence \begin{eqnarray*}\mathrm{dim}(L^2/([gH^2]+(H^2_0)^*)) &=& \mathrm{dim}((\overline{f}^{(-1)}L^2)/([(\overline{f}^{(-1)}g)H^2]+\overline{f}^{(-1)}(H^2_0)^*))\\
&=&\mathrm{dim}(L^2/([(\overline{f}^{(-1)}g)H^2]+(H^2_0)^*)).
\end{eqnarray*}
But in its action on the circle group $\mathbb{T}$, we have that $$\overline{f}^{(-1)}(z)= \Pi_{i=1}^k(1-\overline{z}a_i)^{-m_i}=\Pi_{i=1}^k(\frac{z}{z-a_i})^{m_i}.$$So as a subspace of $L^2(\mathbb{T})$, $[(\overline{f}^{(-1)}g)H^2]+(H^2_0)^*)$ may be identified with $[z^mg_0H^2]+(H^2_0)^*$. Now recall that by construction both $g_0$ 
and $\frac{1}{g_0}$ are analytic on $\overline{\mathbb{D}}$. So $[z^mg_0H^2]=[z^mH^2]$. Since $L^2(\mathbb{T})= H^2\oplus (H^2_0)^*$ with $H^2= \mathrm{span}\{z^k:0\leq k\leq m-1\}\oplus [z^mH^2]$, the claim follows. 
\end{example}

\bigskip

If we combine the above observation with the classical argument principle of complex analysis (which describes winding numbers in terms of the difference between the zeros and poles), it provides some intuition for the famous classical result of Gohberg and Krein, which states that if for some $f\in C(\mathbb{T})$ the operator $T_f:H^2(\mathbb{T})\to H^2(\mathbb{T})$ is Fredholm, its index will be minus the winding number of the curve traced out by $f$ with respect to the origin. In situations where classical results of the above type on Fredholm properties of Toeplitz operators are extended to group von Neumann algebras, $C_r^*(G)$ may be used as a noncommutative substitute for $C(\mathbb{T})$. As an alternative to $C_r^*(G)$, we may on occasion also use $\mathscr{C}(G)$. To see this note that $\vng$ appears as the double commutant of both $C_r^*(G)$ and $\mathscr{C}(G)$.

In closing this section we present a proposition which offers some insight into the index of specific operators. Let $G$ be a countable and discrete ordered group. In this case $\vng$ will be a finite von Neumann algebra equipped with a faithful normal tracial state, the $\sigma$-weakly closed subalgebra $\A$ generated by $\{\lambda_g\colon g\geq e\}$ will be maximal subdiagonal, and the algebras $C^*_r(G)$ and $\mathscr{C}(G)$ will agree. 

\begin{proposition}\label{lambdag} Let $g\in G$ be given. 
\begin{itemize}
\item If $g\geq e$, $T_{\lambda_g}$ will be an isometry with range $\overline{\mathrm{span}\{\lambda_t: t\geq g\}}$. The map $T_{\lambda_g}$ is then a $\Phi_+$ map with index $\mathrm{card}[e,g)$. (Here we take the cardinality of the empty set to be 0.) 
\item If $g\leq e$, $T_{\lambda_g}$ is a surjection with kernel $\overline{\mathrm{span}\{\lambda_t: e\leq t< g^{-1}\}}$. The map $T_{\lambda_g}$ is then a $\Phi_-$ map with index $-\mathrm{card}(e, g^{-1}]$.
\end{itemize}   
\end{proposition}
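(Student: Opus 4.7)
The plan is to reduce everything to concrete Fourier-series calculations in the orthonormal basis $\{\lambda_t:t\in G\}$ of $L^2(\vng)$. Since $\psi_G$ is a tracial state in this discrete setting, a direct computation shows $\langle\lambda_s,\lambda_t\rangle=\psi_G(\lambda_t^*\lambda_s)=\delta_{s,t}$, so $\{\lambda_t:t\in G\}$ is indeed an orthonormal basis of $L^2(\vng)$. Correspondingly $H^2(\A)=\overline{\mathrm{span}\{\lambda_t:t\geq e\}}$ while $(H^2_0)^*=\overline{\mathrm{span}\{\lambda_t:t<e\}}$, and these are the two summands of the orthogonal decomposition $L^2=H^2\oplus(H^2_0)^*$; thus $P_+$ is nothing but the coordinate projection onto the subspace indexed by $\{t\geq e\}$. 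The operator of left multiplication by $\lambda_g$ is unitary on $L^2$ and permutes this basis via $\lambda_t\mapsto\lambda_{gt}$; this is what will be exploited throughout.

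For the first bullet, the key observation is that when $g\geq e$ we have $gt\geq e$ for every $t\geq e$, so $\lambda_g H^2\subset H^2$. Hence $T_{\lambda_g}$ acts on $H^2$ simply by left multiplication by $\lambda_g$, which inherits isometric character from the unitarity of $\lambda_g$ on $L^2$. Its range is thus $\lambda_g H^2=\overline{\mathrm{span}\{\lambda_s:s\geq g\}}$ as claimed, and the orthogonal complement of this range inside $H^2$ is visibly $\overline{\mathrm{span}\{\lambda_s:e\leq s<g\}}$, of dimension $\mathrm{card}[e,g)$. Since $\ker T_{\lambda_g}=\{0\}$, $T_{\lambda_g}\in\Phi_+$, and using the sign convention $\mathrm{ind}(T_f)=\alpha(f)-\beta(f)$ implicit in Proposition \ref{ind-u}, the index equals $\mathrm{card}[e,g)$.

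For the second bullet, assume $g\leq e$ (so $g^{-1}\geq e$) and compute the action basis vector by basis vector: $T_{\lambda_g}(\lambda_t)=P_+(\lambda_{gt})$ equals $\lambda_{gt}$ when $gt\geq e$ (i.e., $t\geq g^{-1}$), and equals $0$ when $gt<e$ (i.e., $e\leq t<g^{-1}$, so $\lambda_{gt}\in(H^2_0)^*$). This motivates the decomposition $H^2=H_1\oplus H_2$ with $H_1=\overline{\mathrm{span}\{\lambda_t:e\leq t<g^{-1}\}}$ and $H_2=\overline{\mathrm{span}\{\lambda_t:t\geq g^{-1}\}}$. Then $T_{\lambda_g}$ vanishes on $H_1$, while on $H_2$ it acts as honest left multiplication by $\lambda_g$ (since $t\geq g^{-1}$ forces $gt\geq e$), hence as an isometry onto $\lambda_g H_2=\overline{\mathrm{span}\{\lambda_s:s\geq e\}}=H^2$. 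So $T_{\lambda_g}$ is surjective with $\ker T_{\lambda_g}=H_1$, giving $T_{\lambda_g}\in\Phi_-$ with index $-\dim H_1=-\mathrm{card}[e,g^{-1})=-\mathrm{card}(e,g^{-1}]$, the final equality reflecting the fact that in a discrete ordered group the cardinalities of $[a,b)$ and $(a,b]$ always coincide.

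There is no substantive obstacle: the argument is essentially a bookkeeping exercise using the orthonormal basis $\{\lambda_t\}$ and the ordering of $G$. The only point requiring genuine care is honouring the paper's non-standard sign convention $\mathrm{ind}(T_f)=\alpha(f)-\beta(f)=\dim\mathrm{coker}(T_f)-\dim\ker(T_f)$ when reading off the claimed values of the index.
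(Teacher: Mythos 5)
Your argument is correct, and for the first bullet it coincides with the paper's proof: both exploit that $\lambda_g\in\A$ for $g\geq e$, so that $T_{\lambda_g}$ is genuinely left multiplication by a unitary on $H^2$, with range $\overline{\mathrm{span}\{\lambda_s:s\geq g\}}$ and cokernel spanned by the orthonormal set $\{\lambda_s:e\leq s<g\}$. Where you diverge is the second bullet: the paper disposes of the case $g\leq e$ in two lines by duality, observing that $T_{\lambda_g}^*=T_{\lambda_{g^{-1}}}$ (Proposition \ref{P:Toeplitz-props}) and invoking classical Fredholm duality to transfer the $\Phi_+$ statement for $\lambda_{g^{-1}}$ into the $\Phi_-$ statement for $\lambda_g$, whereas you compute $T_{\lambda_g}$ directly on the basis, splitting $H^2$ as $H_1\oplus H_2$ with $H_1=\overline{\mathrm{span}\{\lambda_t:e\leq t<g^{-1}\}}$ annihilated and $H_2$ mapped isometrically onto $H^2$. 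The duality route is shorter; your direct computation has the advantage of exhibiting the kernel and the surjectivity explicitly rather than inferring them, and of making visible the bijection behind $\mathrm{card}[e,g^{-1})=\mathrm{card}(e,g^{-1}]$ (via $s\mapsto g^{-1}s^{-1}$, using bi-invariance of the order), a point the paper passes over silently when it switches between the two interval notations. You are also right to flag the sign convention $\mathrm{ind}(T_f)=\alpha(f)-\beta(f)$ forced by Proposition \ref{ind-u}; the paper's own computation $\mathrm{ind}(T_{\lambda_g})=\dim(\mathrm{ran}(T_{\lambda_g})^\perp)$ confirms it.
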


\begin{proof} First suppose that $g\geq e$. Then $\lambda_g\in \A$, which means that for any $b\in H^2(\A)$, $T_{\lambda_g}b=\lambda_g b$. So for any $b\in H^2$ we will trivially have that $\|T_{\lambda_g}b\|_2=\|\lambda_g b\|_2=\|b\|_2$. Now recall that as elements of 
$H^2(\A)$, $\{\lambda_t:t\geq e\}$ is here an orthonormal basis for $H^2$, and hence $\mathrm{span}\{\lambda_t:t\geq e\}$ is norm dense in $H^2(\A)$. Since $\lambda_g$
 is a unitary element of $\vng$, we must by continuity of multiplication have that $$\lambda[\overline{\mathrm{span}\{\lambda_t: t\geq e\}}]=\overline{\mathrm{span}\{\lambda_{gt}: t\geq e\}}=\overline{\mathrm{span}\{\lambda_s: s\geq g\}}.$$In other words the range of $T_{\lambda_g}$ is 
closed and is precisely $\overline{\mathrm{span}\{\lambda_s: s\geq g\}}$. So as claimed $T_{\lambda_g}$ is a $\Phi_+$ map. It is an easily 
verifiable fact that $$\mathrm{ran}(T_{\lambda_g})^\perp=H^2\ominus (\overline{\mathrm{span}\{\lambda_s: s\geq g\}})=\overline{\mathrm{span}\{\lambda_s: e\leq s < g\}}.$$In the case $g=e$ this subspace is of course empty. If $g>e$ then $\{\lambda_s: e\leq s <g\}$ is an orthonormal basis. The 
cardinality of this orthonormal basis is precisely $\mathrm{card}[e,g)$, and hence as claimed, we have that
 $$\mathrm{ind}(T_{\lambda_g})= \mathrm{dim}(\mathrm{ran}(T_{\lambda_g})^\perp)=\mathrm{card}[e,g).$$
 
Now suppose that $g\leq e$. Then of course $g^{-1}\geq e$. Given that $\lambda_g^*=\lambda_{g^{-1}}$, we may then apply what we've already proved to the operator $T_{\lambda_g^*}=T_{\lambda_g}^*$, and use classical duality theory to conclude that $T_{\lambda_g}$ is a surjective $\Phi_-$ operator with $\mathrm{ind}(T_{\lambda_g})= -\mathrm{ind}(T_{\lambda_g^*})=-\mathrm{card}[e,g^{-1})$.
\end{proof}

\subsection{Semi-Fredholm Toeplitz operators}

Throughout this subsection $G$ will be a topologically ordered locally compact group. Whenever $\vng$ is in view, $\A$ will denote the approximately subdiagonal subalgebra generated by 
$\{\lambda_g\colon g\geq e\}$.

\begin{definition} When we write $s\nearrow \infty$ where $s\in G$, we mean that for any compact neighbourhood $K$ of the group unit 
$e$, there exists some $s_0\geq e$ such that $s\geq s_0\Rightarrow s\not\in K$. The concept $s\searrow -\infty$ is defined similarly.
\end{definition}

\begin{lemma}\label{wk8to0} The operators $\lambda_s$ converge $\sigma$-weakly to 0 in $\vng$ as either $s\nearrow \infty$ or $s\searrow -\infty$.
\end{lemma}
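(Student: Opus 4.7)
The plan is to exploit two standard reductions before engaging with the topology of $G$. First, since each $\lambda_s$ is a unitary on $L^2(G)$, the net $(\lambda_s)$ is norm-bounded; on norm-bounded subsets of $\vng$ the $\sigma$-weak topology coincides with the weak operator topology (cf.\ \cite[II.2.5]{Tak2} style statements already implicit in the present paper). So it suffices to prove that $\langle \lambda_s \xi,\zeta\rangle\to 0$ for every pair $\xi,\zeta$ in a norm-dense subset of $L^2(G)$. Second, $C_c(G)$ is norm-dense in $L^2(G)$, so I reduce further to showing that $\langle \lambda_s f,g\rangle\to 0$ for all $f,g\in C_c(G)$.

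The key computation is then entirely elementary: unwinding the definition of $\lambda_s$ (cf.\ the discussion preceding Theorem~\ref{T8.Plancherel}),
\[
\langle \lambda_s f,g\rangle=\int_G f(s^{-1}t)\overline{g(t)}\,dt,
\]
and the integrand is supported on $t\in (s\cdot\mathrm{supp}\,f)\cap\mathrm{supp}\,g$. In particular $\langle\lambda_s f,g\rangle=0$ as soon as $s\cdot\mathrm{supp}(f)\cap\mathrm{supp}(g)=\emptyset$, equivalently $s\notin \mathrm{supp}(g)\cdot\mathrm{supp}(f)^{-1}$. Let $C=\mathrm{supp}(g)\cdot\mathrm{supp}(f)^{-1}$; as the continuous image of a compact set under $(u,v)\mapsto uv^{-1}$, $C$ is compact.

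The one subtlety is that $C$ need not be a neighbourhood of $e$, whereas the hypothesis $s\nearrow\infty$ is phrased in terms of escape from compact \emph{neighbourhoods} of $e$. This is harmless: picking any fixed compact neighbourhood $U$ of $e$ (using local compactness of $G$), the set $K=C\cdot U\cup U$ is a compact neighbourhood of $e$ containing $C$. By the definition of $s\nearrow\infty$, there exists $s_0\geq e$ with $s\notin K$, hence $s\notin C$, for all $s\geq s_0$. Therefore $\langle\lambda_s f,g\rangle=0$ eventually. The case $s\searrow -\infty$ follows from the mirror-image definition by an identical argument (or by applying the previous case to the reversed order, noting that $\lambda_s^{-1}=\lambda_{s^{-1}}$ and using that $\langle\lambda_s f,g\rangle=\overline{\langle\lambda_{s^{-1}}g,f\rangle\cdot\delta_G(s)^{?}}$ need not even be invoked—one just repeats the support argument, as $s\searrow-\infty$ likewise forces $s$ out of every compact neighbourhood of $e$).

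There is essentially no main obstacle: the argument is a routine density-plus-support calculation. The only point requiring any thought is the mismatch between ``compact set'' and ``compact neighbourhood'' in the hypothesis, which is resolved by the one-line enlargement $C\rightsquigarrow C\cdot U\cup U$.
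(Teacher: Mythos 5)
Your proof is correct and follows essentially the same route as the paper: both arguments reduce, via norm-boundedness of the net, to showing $\langle\lambda_s f,g\rangle=0$ eventually for compactly supported $f,g$, using the fact that $s\cdot\mathrm{supp}(f)$ eventually misses $\mathrm{supp}(g)$. The only cosmetic difference is that the paper packages the soft reduction as a cluster-point argument in the $\sigma$-weakly compact unit ball rather than your density-plus-boundedness argument in the weak operator topology, and your explicit enlargement of $\mathrm{supp}(g)\cdot\mathrm{supp}(f)^{-1}$ to a compact neighbourhood of $e$ is a point the paper glosses over.
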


\begin{proof}
We will prove the claim for the case $s\nearrow \infty$. Since the net $\{\lambda_s: s\in G\}$ is norm-bounded, it is relatively $\sigma$-weakly compact. To prove our claim we need only show that 0 is the only $\sigma$-weak cluster point of this net. Let $v$ be any $\sigma$-weak  cluster point. In that case there must be a subnet $\{\lambda_{s_t}:s_t\in G\}$ which is $\sigma$-weakly convergent to $v$. But then this subnet is also weak operator convergent to $v$. So for any two elements $f,g \in L^2(G)$, we should have $\langle \lambda_{s_t} f, g\rangle\to \langle vf, g\rangle$. 

Now suppose that $f$ and $g$ are supported on compact sets $K_f$ and $K_g$. Then $t\mapsto f(s^{-1}t)$ will of course be supported on 
$sK_f$. It is clear that as $s\nearrow \infty$ there must exist some $s_0$ such that $sK_f\cap K_g=\emptyset$ for all $s\geq s_0$. In other words we will then have that $\langle \lambda_s f, g\rangle = \int_G \overline{g(t)}f(s^{-1}t)\,dt=0$ for all $s\geq s_0$. This in turn ensures that $\langle vf,g\rangle=0$ for all $f,g \in C_{00}(G)$. Since $C_{00}(G)$ is dense in $L^2(G)$, we must therefore have that $\langle vf,g\rangle=0$ for all $f,g \in L^2(G)$, and hence that $v=0$ as required.
\end{proof}

The following theorem provides strong support for a practice that we have already been implicitly applying, which is that when trying to describe the symbols $f$ for which $T_f$ is $\Phi_+$, it is not unreasonable to restrict to the case where $f$ is bounded below. 

\begin{theorem}\label{semi-fred} Let $f\in \vng$ be given. If $T_f\in \Phi_+$, then $f$ is bounded below. 
\end{theorem}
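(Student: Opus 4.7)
The plan is to argue the contrapositive: assume $f$ is not bounded below and construct a sequence of unit vectors $(b_n) \subset H^2(\A)$ which is weakly null and satisfies $\|T_f b_n\|_2 \to 0$, contradicting the fact that $T_f \in \Phi_+$ must be bounded below on the cofinite-dimensional subspace $H^2(\A)\ominus\ker(T_f)$ of its finite-dimensional kernel. Lemma \ref{Mfvsf} reformulates the hypothesis as $|f|$ being non-invertible in $\vng$, so that $0 \in \sigma(|f|)$; hence for each $n$ the spectral projection $E_n := \chi_{[0,1/n]}(|f|) \in \vng$ is non-zero and satisfies $\|f E_n\|_\infty \leq 1/n$, and picking a unit vector $\xi_n \in E_n L^2(\vng)$ produces $\|f\xi_n\|_2 \leq 1/n$. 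These $\xi_n$ will not in general lie in $H^2(\A)$, and the core of the argument is to transport them there via right translation.

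I would use the Plancherel weight and the Haagerup--Terp standard form (Theorem \ref{7:T stdform}) to identify $L^2(\vng)$ with $L^2(G)$ in such a way that $H^2(\A)$ corresponds to those $L^2(G)$-functions essentially supported in $[e,\infty)$; this follows from Proposition \ref{A+A_0} together with the density of convolutions of $L^1 \cap L^2$-functions supported in $[e,\infty)$ among all $L^2$-functions so supported. The right action $\rho$ of $\vng$ on $L^2(\vng)$ lies in the commutant of the left action and therefore commutes with multiplication by $f$; each $\rho(\lambda_s)$ is a unitary right-translation operator $\rho(\lambda_s)\xi(t) = \delta_G(s)^{1/2}\xi(ts)$ which carries the support of $\xi$ to $\mathrm{supp}(\xi)\cdot s^{-1}$. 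Approximating each $\xi_n$ by some $\xi_n' \in C_c(G)$ with $\|\xi_n - \xi_n'\|_2 < 1/n$ gives $\|f\xi_n'\|_2 \to 0$; one then picks $s_n$ with $s_n^{-1} \nearrow \infty$ rapidly enough that the translated supports $\mathrm{supp}(\xi_n')\cdot s_n^{-1}$ sit inside $[e,\infty)$ and are pairwise disjoint. Setting $a_n := \rho(\lambda_{s_n})\xi_n' \in H^2(\A)$, commutation of the two actions yields $\|f a_n\|_2 = \|f\xi_n'\|_2 \to 0$, while $\|a_n\|_2 = \|\xi_n'\|_2 \to 1$ and the disjoint supports make $(a_n)$ orthogonal, hence weakly null; the normalised vectors $b_n := a_n/\|a_n\|_2$ therefore witness $\|T_f b_n\|_2 \leq \|f b_n\|_2 \to 0$ and deliver the contradiction.

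The main technical hurdle is securing the identification of $H^2(\A)$ with the $L^2(G)$-functions essentially supported in $[e,\infty)$ uniformly across the classes of topologically ordered locally compact groups considered in Section \ref{S5} (totally disconnected as well as those homeomorphic to $\mathbb{R}\rtimes\Gamma$), together with the precise form of $\rho(\lambda_s)$ in the presence of a non-trivial modular function $\delta_G$. Once those structural facts are in hand the construction is essentially a noncommutative analogue of the classical $L^\infty(\mathbb{T})$ argument: in place of multiplying the indicator of $\{|\phi| < 1/n\}$ by a high power of $z$ to land inside $H^2(\mathbb{T})$, right translation by $\lambda_{s_n}$ carries the support of $\xi_n'$ into $[e,\infty)$ and exhibits $T_f$'s failure to be bounded below.
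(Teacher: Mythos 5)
Your proposal is correct and runs on the same engine as the paper's proof, merely in contrapositive form: both arguments hinge on right translation by $\lambda_{s}$ with $s^{-1}\nearrow\infty$, which commutes with left multiplication by $f$, carries compactly supported symbols into $H^2(\A)$, and leaves only a kernel contribution that is killed by the finite dimensionality of $\ker(T_f)$ together with weak nullity of the translates. The paper establishes the lower bound $\|fb\|_2\geq\delta\|b\|_2$ directly on the dense subspace $\mathrm{span}\{\lambda_h\colon h\in C_c(G)\}$ using Lemma \ref{wk8to0} and a compact projection onto $\ker(T_f)$, whereas you extract almost-null test vectors from the spectral projections $\chi_{[0,1/n]}(|f|)$ and obtain weak nullity from disjointness of the translated supports --- an interchangeable packaging of the same idea.
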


\begin{proof}
Assume that $T_f\in \Phi_+$, and let $P_+$ be the projection of $L^2(\vng)\equiv L^2(G)$ onto $H^2(\vng)$. The kernel of $T_f$ is finite dimensional, so there exists a compact projection $K$ from 
$L^2(\vng)$ onto $\mathrm{ker}(T_f)$. The fact that $T_f\in \Phi_+$, ensures that $T_f$ is bounded below on $(\mathrm{ker}(T_f))^\perp$. Using this fact we may conclude that there exists a constant 
$\delta>0$ so that $$\|T_f(a)\|^2+\|K(a)\|^2\geq\delta(\|(\I-K)a\|^2+\|Ka\|^2)=\delta\|a\|^2$$for all $a\in H^2(\vng)$. Now let $g\in L^2(G)$ and $h\in L^1(G)$ be given, and assume that $h$ has compact support. Using the left-translation invariance of Haar measure, we then have that
\begin{eqnarray*}
\lambda_h\lambda_{s_0}(g)(t) &=& \lambda_h(g_{s_0})(t)\\
&=& \int_G h(s)g_{s_0}(s^{-1}t)\,ds\\
&=& \int_G h(s)g(s_0^{-1}s^{-1}t)\,ds\\
&=& \int_G h(rs_0^{-1})g(r^{-1}t)\,dr\quad\mbox{set }r=ss_0\\
&=& \lambda_{\rho_{s_0^{-1}}(h)}(g)(t)
\end{eqnarray*}
As far as the action of $r\mapsto h(rs_0^{-1})=\rho_{s_0^{-1}}(h)(r)$ is concerned, notice that $r\in \mathrm{supp}(\rho_{s_0^{-1}}(h))$ iff $rs_0^{-1}\in \mathrm{supp}(h)$. 
So any $t\in \rho_{s_0^{-1}}(h)$ is of the form $t=rs_0^{-1}$ for some $r\in \mathrm{supp}(h)$, with $t=rs_0^-1 \geq e$  iff $s_0^{-1}\geq r^{-1}$. If therefore we select 
$s_0$ so that $s_0^{-1}\geq r^{-1}$ for every $r\in \mathrm{supp}(h)$, the support of $\rho_{s_0^{-1}}(h)$ will then be contained in $G^+$. The same will of course be true 
for any $s\geq s_0$. This means that $\lambda_{\rho_{s}(h)}\in H^2(\vng)$ for all $s^{-1}\geq s_0{-1}$. 
 
Given any $b\in \mathrm{span}\{\lambda_h: h\in C_c(G)\}$, the above discussion ensures that we can find some $s_b\in G$ so that $b\lambda_s\in H^2(\A)$ for all $s^{-1}\geq s_b^{-1}$. 
So for $s$ large enough, we will have that $P_+(b\lambda_s)=b\lambda_s$. By Lemma \ref{wk8to0}, we also have that $\lambda_{s^{-1}}=\lambda_s^*$ converges $\sigma$-weakly to 0  as $s^{-1}\nearrow\infty$, 
and hence that $b\lambda_s$ then converges weakly to 0 in $L^2(\vng)$. On replacing $a$ with such a $b\lambda_s$ in the computation in the first part of the proof, we will then have that 
\begin{eqnarray*}
\|fb\|^2+\|K(b\lambda_s)\|^2 &=& \|fb\lambda_s\|^2+\|K(b\lambda_s)\|^2\\
&\geq& \|P_+(fb\lambda_s)\|^2+\|K(b\lambda_s)\|^2\\
&\geq& \delta\|b\lambda_s\|^2\\
&=& \delta \|b\|^2
\end{eqnarray*}
The compactness of the projection $K$ ensures that it transforms weak convergence to strong convergence. So the terms $K(b\lambda_s)$ will converge strongly to 0 as $s\nearrow \infty$. This fact then yields the inequality $$\|\varphi b\|\geq \delta \|b\|\mbox{ for all }b\in \mathrm{span}\{\lambda_h: h\in C_{c}(G)\}.$$The next thing to note is that $\mathrm{span}\{\lambda_h: h\in C_{c}(G)\}$ is norm-dense in $L^2(\vng)$. Hence $$\|fa\|\geq \delta \|a\|\mbox{ for all }a\in L^2(\vng).$$The Lemma \ref{Mfvsf} now ensures that $f$ is bounded below. 
\end{proof}

\begin{corollary} Let $\varphi\in \vng$ be given. If $T_\varphi\in \Phi_+\cap\Phi_-$, then $\varphi\in \vng^{-1}$. 
\end{corollary}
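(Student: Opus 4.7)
The proof is a direct two-line consequence of Theorem \ref{semi-fred} together with the adjoint identity in Proposition \ref{P:Toeplitz-props}.

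The plan is as follows. First I would observe that since $T_\varphi \in \Phi_+$, Theorem \ref{semi-fred} applies directly and yields that $\varphi$ is bounded below. In operator terms this means there exists $\delta > 0$ with $\varphi^*\varphi \geq \delta^2 \I$, so that $|\varphi|^2 = \varphi^*\varphi$ is invertible in $\vng$.

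Next I would invoke classical Fredholm theory to pass to the adjoint: since $T_\varphi \in \Phi_-$, its Banach-space adjoint lies in $\Phi_+$, and hence the Hilbert-space adjoint $T_\varphi^*$ is in $\Phi_+$. By Proposition \ref{P:Toeplitz-props}(1) we have $T_\varphi^* = T_{\varphi^*}$, so $T_{\varphi^*} \in \Phi_+$. A second application of Theorem \ref{semi-fred}, now to $\varphi^*$, shows that $\varphi^*$ is likewise bounded below, which is to say that $\varphi \varphi^* = |\varphi^*|^2$ is invertible in $\vng$.

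Finally I would combine these two conclusions: the invertibility of both $\varphi^*\varphi$ and $\varphi\varphi^*$ in $\vng$ produces an explicit left inverse $(\varphi^*\varphi)^{-1}\varphi^*$ and an explicit right inverse $\varphi^*(\varphi\varphi^*)^{-1}$ for $\varphi$ in $\vng$, so $\varphi \in \vng^{-1}$ as required. There is essentially no obstacle here, since all the nontrivial work has already been discharged in Theorem \ref{semi-fred}; the only ingredient beyond that is the standard duality $\Phi_- \leftrightarrow \Phi_+$ for Hilbert-space operators, combined with the self-adjointness identity $T_\varphi^* = T_{\varphi^*}$ already recorded in Proposition \ref{P:Toeplitz-props}.
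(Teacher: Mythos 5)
Your proof is correct and follows exactly the same route as the paper: apply Theorem \ref{semi-fred} to $\varphi$, use $T_\varphi^*=T_{\varphi^*}$ and the $\Phi_-\leftrightarrow\Phi_+$ duality to apply it again to $\varphi^*$, and conclude invertibility from both being bounded below. The only difference is that you spell out the explicit left and right inverses, which the paper leaves implicit.
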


\begin{proof} Recall that $T_\varphi^*=T_{\varphi^*}$. Since $T_\varphi\in\Phi_-$ iff $T_\varphi^*\in\Phi_+$, the preceding result ensures that both $\varphi$ and $\varphi^*$ are bounded below. So $\varphi$ must be invertible.
\end{proof}

In closing we present the following very elegant result. This is a faithful reproduction of a well-known classical result. A version of this result appears in \cite{pru} (see Corollary 4.4(iii) of that paper). However we hasten to point out that Prunaru's Toeplitz operators map from $H^\infty$ to $H^2$, not $H^2$ to $H^2$. Hence we cannot directly apply his result. The final part of the current proof relies on an application of the noncommutative Riesz factorization theorem which at this stage is only known to hold without restriction for finite maximal subdiagonal subalgebras.

\begin{proposition}\label{prop:linv} Let $\M$ be a finite von Neumann algebra equipped with a faithful normal tracial state and let $\A$ be a finite maximal subdiagonal subalgebra of $\M$. For any unitary $u\in \M$, $T_u$ is bounded below if and only if $d(u,\A)=\inf\{\|u-a\|_\infty : a\in \A\}<1$.
\end{proposition}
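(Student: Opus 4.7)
The plan is to express $T_u$ being bounded below in terms of the associated Hankel-type operator $\Gamma_u\colon H^2(\A)\to H^2_0(\A)^*,\ \Gamma_u f:=P_-(uf)$, where $P_+$ and $P_-$ denote the orthogonal projections arising from the orthogonal decomposition $L^2(\M)=H^2(\A)\oplus H^2_0(\A)^*$ guaranteed by Theorem \ref{Co}(3) (since $\A$ is maximal subdiagonal). Because $u$ is unitary and $\tau$ is a tracial state, $\|uf\|_2=\|f\|_2$ for all $f\in L^2(\M)$. Applied to $f\in H^2(\A)$ together with the orthogonal splitting $uf=T_uf+\Gamma_uf$, this gives the Pythagorean identity
$$\|T_uf\|_2^2+\|\Gamma_uf\|_2^2=\|f\|_2^2\qquad(f\in H^2(\A)).$$
Consequently $T_u$ is bounded below precisely when $\|\Gamma_u\|<1$, and the theorem reduces to proving the noncommutative Nehari identity $\|\Gamma_u\|=d(u,\A)$.

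The inequality $\|\Gamma_u\|\le d(u,\A)$ is straightforward. For any $a\in\A$ and $f\in H^2(\A)$ one has $af\in H^2(\A)$, so $\Gamma_u f=P_-((u-a)f)$ and hence $\|\Gamma_u f\|_2\le\|u-a\|_\infty\|f\|_2$. Taking the infimum over $a$ gives $\|\Gamma_u\|\le d(u,\A)$. Combined with the Pythagorean identity above, this immediately yields the sufficiency $d(u,\A)<1\Rightarrow T_u$ bounded below (with explicit constant $\sqrt{1-d(u,\A)^2}$).

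For the reverse inequality $d(u,\A)\le\|\Gamma_u\|$, I plan a Hahn--Banach duality argument. Since $\A$ is $\sigma$-weakly (equivalently, weak-$*$) closed in $\M=L^1(\M)^*$, the quotient-norm formula yields
$$d(u,\A)=\|u+\A\|_{\M/\A}=\sup\bigl\{|tr(uh)|\colon h\in\A_\perp,\ \|h\|_1\le 1\bigr\},$$
where $\A_\perp$ denotes the preannihilator of $\A$ in $L^1(\M)$. I would then identify $\A_\perp=H^1_0(\A)$: the inclusion $H^1_0(\A)\subset\A_\perp$ is immediate because for $h\in H^1_0(\A)$ and $a\in\A$ one has $ah\in H^1_0(\A)$, whence $tr(\mathfrak{i}^{(1)}(a)\mathfrak{i}^{(1)}(h))=tr(\mathfrak{i}^{(1)}(\mathbb{E}(ah)))=0$ by the multiplicativity of $\mathbb{E}$ on $\A$; the reverse inclusion is the F.~\&~M.~Riesz theorem for finite maximal subdiagonal subalgebras (\cite{BL-FMRiesz}). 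Next, the noncommutative Riesz factorization of Bekjan--Xu \cite{BX} allows every $h\in H^1_0(\A)$ of unit norm to be written as $h=pq$ with $p\in H^2(\A)$, $q\in H^2_0(\A)$ and $\|p\|_2\|q\|_2=\|h\|_1$. Using cyclicity of $tr$ and the fact that $q^*\in H^2_0(\A)^*=\mathrm{ran}(P_-)$ is orthogonal to $H^2(\A)=\mathrm{ran}(P_+)$, one obtains
$$|tr(uh)|=|tr(upq)|=|tr(qup)|=|\langle up,q^*\rangle|=|\langle P_-(up),q^*\rangle|\le\|\Gamma_u p\|_2\,\|q\|_2\le\|\Gamma_u\|\,\|h\|_1.$$
Supremising over $h$ gives $d(u,\A)\le\|\Gamma_u\|$, completing the proof.

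The main obstacle is the duality identification $\A_\perp=H^1_0(\A)$, which is essentially the noncommutative F.~\&~M.~Riesz theorem and relies on both the finiteness of $\M$ and the maximality of $\A$; the remaining ingredients (orthogonal decomposition, Riesz factorization, cyclicity of the tracial state) are standard in the finite maximal subdiagonal setting and are either developed earlier in the paper or available from \cite{BX,BL-FMRiesz}.
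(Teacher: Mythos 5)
Your proof is correct and follows essentially the same route as the paper's: the hard direction in both cases rests on the duality formula $d(u,\A)=\sup\{|\tau(uh)|:h\in H^1_0(\A),\ \|h\|_1\le 1\}$ together with Riesz factorization of $H^1_0$ and the orthogonality of $H^2(\A)$ and $H^2_0(\A)^*$ (the paper outsources this step to the end of the proof of \cite[Theorem 3.9]{LX}). The only cosmetic difference is in the easy direction, where the paper inverts $T_{a^*u}=T_{a^*}T_u$ by a Neumann series with $\|\I-T_{a^*u}\|\le\|u-a\|<1$, whereas you use the Pythagorean identity and the bound $\|\Gamma_u\|\le d(u,\A)$; your packaging via the Nehari identity $\|\Gamma_u\|=d(u,\A)$ is a clean way to organize the same ingredients.
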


\begin{proof}
Let $u\in \M$ be given with $d(u,\A)<1$. Then there exists $a\in \A$ with $\|\I-a^*u\|=\|u-a\|<1$. For any $f\in H^2(\A)$ we have that $\|f-T_{a^*u}f\|_2=\|P(f-a^*uf)\|_2\leq \|\I-a^*u\|_\infty\|f\|_2$. Hence $\|I-T_{a^*u}\|\leq d(u,\A)<1$. This ensures that $T_{a^*u}$ is invertible. But by Proposition \ref{P:Toeplitz-props} $T_{a^*u} =T_{a^*}T_u$. Hence $T_u$ is bounded below.

Conversely suppose $T_{a^*u}$ to be left invertible. That means we can find some $1>\epsilon>0$ for which 
$$\|P(uf)\|_2\geq \epsilon\|f\|_2=\epsilon\|uf\|_2\mbox{ for every }f\in H^2.$$Therefore for any $f\in H^2$, we will have that
$$\|uf\|_2^2=\|P(uf)\|_2^2+\|(I-P)(uf)\|_2^2\geq \epsilon^2\|uf\|^2_2+\|(I-P)(uf)\|_2^2,$$and hence that 
 $$|(I-P)(uf)\|_2^2\leq (1-\epsilon^2)^{1/2}\|uf\|_2.$$ 
Given $f\in H^2$ and $g\in H^2_0$, we may then use the fact that $P(uf)\perp g^*$ to see that then 
$$|\tau(ufg)|=|\tau([(I-P)(uf)]g)|\leq \|(I-P)(uf)\|_2\|g\|_2\leq (1-\epsilon^2)^{1/2}\|f\|_2\|g\|_2.$$We may now argue as in the last part of the proof of \cite[Theorem 3.9]{LX} to conclude from this inequality that 
$$d(u,\A)=\sup\{|\tau(uh)|:h\in H^1_0, \|h\|_1\leq 1\}\leq (1-\epsilon^2)^{1/2}.$$
\end{proof}

\subsection{Hankel maps and the existence of Fredholm Toeplitz operators}

Let $\A$ be an approximately subdiagonal subalgebra of $\M$. Given $f\in \M$ we define the Hankel map $\mathcal{H}_f$ with symbol $f$ to be the map $\mathcal{H}_f:H^2(\A)\to(H^2_0(\A))^*: a\mapsto (\I-P_+)(f a)$.

\medskip

\textbf{Warning:} \emph{This definition of a Hankel map is different from the one given in \cite{LX}, but is the same as the one given in \cite{CMNX}.}

\medskip

It turns out that an easy test for the Fredholmness of $T_f$, is compactness of the Hankel map. This fact should however be tempered with the observation that some group algebras admit NO compact Hankel maps (see the introduction to section 5 of \cite{Exel_1}).

\begin{proposition} Let $f\in \M$ be bounded below. Then $T_f$ will be $\Phi_+$ whenever $\mathcal{H}_f$ is compact.
\end{proposition}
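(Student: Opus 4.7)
The plan is to exploit the orthogonal decomposition
\[
fa = T_f(a) + \mathcal{H}_f(a), \qquad a \in H^2(\A),
\]
built into the very definition of $\mathcal{H}_f$, together with the fact that $L^2(\M) = H^2(\A) \oplus (H^2_0(\A))^*$ as an orthogonal direct sum, which is precisely what makes $P_+$ an orthogonal projection (compare Corollary~\ref{gen-perp} and the remark preceding the statement that $\mathrm{Re}(H^2(\A)) = L^2(\M)_{sa}$). First, I would invoke Lemma~\ref{Mfvsf} to conclude from the hypothesis that the multiplication operator $M_f$ is bounded below on $L^2(\M)$, so there exists a constant $c > 0$ with $\|fa\|_2 \geq c\|a\|_2$ for every $a \in L^2(\M)$. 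Since the two summands in the decomposition above are orthogonal, Pythagoras then yields the key perturbation inequality
\[
\|T_f(a)\|_2^2 + \|\mathcal{H}_f(a)\|_2^2 = \|fa\|_2^2 \geq c^2\|a\|_2^2, \qquad a \in H^2(\A),
\]
and in particular $\|T_f(a)\|_2 + \|\mathcal{H}_f(a)\|_2 \geq c\|a\|_2$.

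With this inequality in hand, the conclusion $T_f \in \Phi_+$ is a straightforward application of the classical compact-perturbation criterion for upper semi-Fredholmness, which I would spell out in two short steps for the sake of self-containment. The restriction of $\mathcal{H}_f$ to $\ker(T_f)$ is simultaneously compact and bounded below by $c$, which forces $\ker(T_f)$ to be finite-dimensional. Writing $H^2(\A) = \ker(T_f) \oplus N$ as an orthogonal sum, it remains to verify that $T_f$ is bounded below on $N$ (which will imply closed range). If this failed, one could select $(a_n) \subset N$ with $\|a_n\|_2 = 1$ and $T_f(a_n) \to 0$; the compactness of $\mathcal{H}_f$ allows passage to a subsequence along which $(\mathcal{H}_f(a_n))$ converges, and the perturbation inequality applied to the differences $a_n - a_m$ then shows that $(a_n)$ is Cauchy. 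Its limit would be a unit vector in $N \cap \ker(T_f) = \{0\}$, a contradiction. Hence $T_f$ has closed range and finite-dimensional kernel, i.e.\ $T_f \in \Phi_+$.

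The only point that needs to be pinned down carefully is the initial orthogonal decomposition of $L^2(\M)$, since without it neither the Pythagorean identity nor the use of $P_+$ as an orthogonal projection would be legitimate; once that is granted the proof is essentially a one-paragraph invocation of the standard Gokhberg--Kato compact-perturbation lemma, and I foresee no further obstacles.
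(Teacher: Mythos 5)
Your argument is correct and follows essentially the same route as the paper: both invoke Lemma \ref{Mfvsf} to get $M_f$ bounded below, regard $T_f$ and $\mathcal{H}_f$ as the two components of $M_f$ restricted to $H^2(\A)$, and conclude by compact perturbation of a semi-Fredholm (bounded below) operator. The only difference is cosmetic: the paper cites the compact perturbation theorem outright, whereas you inline its proof via the Pythagorean inequality, which is a perfectly valid self-contained variant.
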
 

\begin{proof}
Let $f\in \M$ be bounded below. But then so is $M_f$ by Lemma \ref{Mfvsf}. For the sake of convenience we will in the remainder of the proof regard both $T_f$ and $\mathcal{H}_f$ as maps into $L^2(\M)$. (This can be done without loss of generality, since all we need to check is the closedness of the range and the dimension of the kernel of $T_f$.) Since $\mathcal{H}_f$ is compact and $M_f\in \Phi_+$, it then trivially follows from the compact perturbation theorem for semi-Fredholm operators that $T_f$ is $\Phi_+$.
\end{proof}

The above result provides criteria under which we are assured of the existence of Fredholm Toeplitz operators. However given that some group algebras admit no compact Hankel maps, it is incumbent on us to find criteria which do guarantee the existence of such maps.

A very satisfactory set of criteria may be found in the setting of antisymmetric finite maximal subdiagonal subalgebras. \emph{So for the remainder of this subsection we will assume that $\M$ is a finite von Neumann algebra equipped with a faithful normal tracial state $\tau$, and that $\A$ is an anti-symmetric maximal subdiagonal subalgebra.}

\begin{definition}\label{Gleason-part} Let $\rho$, $\omega$ be two states which are multiplicative on $\A$. We say that $\rho$ is in the Gleason part of $\omega$ if $\|(\omega-\rho)|_{\A}\|<2$. We denote such membership by $\rho\in G(\omega)$.
\end{definition}

In the context of anti-symmetric weak*-Dirichlet algebras, Curto, Muhly, Nakazi and Xia \cite{CMNX} showed that the Gleason part of $\tau$ is trivial, that is  $G(\tau)=\{\tau\}$, if and only if the only compact Hankel map is the zero operator. For the commutative case this result characterises those algebras which allow non-trivial compact Hankel maps. Using some of their ideas, we will show that if indeed the Gleason part is non-trivial, then in the noncommutative setting that will in certain cases also guarantee the existence of non-trivial compact Hankel maps.

In the case of weak*-Dirichlet algebras Curto, Muhly, Nakazi and Xia \cite[Theorem 2]{CMNX} show that if $G(\tau)\neq \{\tau\}$, there exists an outer function $z$ (the so-called Wermer embedding function) for which we have that $zH^2=H^2_0$. They then go on to show that given the existence of such a function, a Hankel map $\mathcal{H}_f$ (where $f\in L^\infty$) will be compact if and only if $f$ belongs to the algebra generated by $\A$ and $z^*$. 

The issue of Gleason parts for finite maximal subdiagonal subalgebras was considered in some detail in \cite{BL-Gleason}. The crucial fact for us is that the theory developed there may be used provide criteria under which Wermer embedding functions exist in even the noncommutative context. Given the existence of such a map, we may then follow \cite{CMNX} by showing that compactness of a Hankel map $\mathcal{H}_f$ (where $f\in \M$) is controlled by membership of $f$ to the closed subalgebra of $\M$ generated by $\A$ and $z^{-1}$. 

We start our analysis by stating a result result which forms the foundation of the classical proof of the existence of the Wermer Embedding function. We point out that in their analysis of Gleason parts for subdiagonal algebras, Blecher and Labuschagne did not restrict themselves to the anti-symmetric case. However the equivalence we need is only known to be true in the anti-symmetric case, and hence we will restrict to this case. 

\begin{theorem}[\cite{BL-Gleason}]\label{Gleason} Let $\A$ be an antisymmetric finite maximal subdiagonal subalgebra and let $\rho$, $\omega$ be two states which are multiplicative on $\A$. Then the following are equivalent:
\begin{itemize} 
\item $\rho\in G(\omega)$ (that is $\|(\omega-\rho)|_{\A}\|<2$);
\item $\|\omega{\upharpoonright}\A_\rho\|<1$ where $\A_\rho=\{a\in \A: \rho(a)=0\}$;
\item there are constants $c, d>0$ such that $c\rho\leq\omega$ and $d\omega\leq\rho$.
\end{itemize}
\end{theorem}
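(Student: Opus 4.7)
The plan is to prove the equivalences by the cycle (i) $\Leftrightarrow$ (ii), (iii) $\Rightarrow$ (ii), and finally (ii) $\Rightarrow$ (iii), with only the last of these requiring heavy machinery.

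First I would dispose of (i) $\Leftrightarrow$ (ii) by a purely algebraic decomposition argument. Any $a \in \A$ splits as $a = \rho(a)\cdot\I + a_0$ with $a_0 = a - \rho(a)\cdot\I \in \A_\rho$ and $\|a_0\| \leq 2\|a\|$. Since both $\omega$ and $\rho$ are states, they agree on $\mathbb{C}\cdot\I$, so $(\omega-\rho)(a) = \omega(a_0)$. Taking suprema over the unit ball of $\A$ yields the two-sided estimate
\[
\|\omega{\upharpoonright}\A_\rho\| \;\leq\; \|(\omega-\rho){\upharpoonright}\A\| \;\leq\; 2\,\|\omega{\upharpoonright}\A_\rho\|,
\]
from which the equivalence of the strict bounds $<2$ and $<1$ is immediate.

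Next I would handle (iii) $\Rightarrow$ (ii) by a short positivity argument. If $c\rho \leq \omega$ for some $c > 0$, then $\omega - c\rho$ is a positive functional on $\M$, so $\|\omega - c\rho\| = (\omega - c\rho)(\I) = 1 - c$. For any $a \in \A_\rho$ we have $\omega(a) = (\omega - c\rho)(a)$ (because $\rho(a) = 0$), whence $|\omega(a)| \leq (1-c)\|a\|$ and therefore $\|\omega{\upharpoonright}\A_\rho\| \leq 1 - c < 1$.

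The main obstacle is the direction (ii) $\Rightarrow$ (iii), which is a noncommutative Harnack-type inequality. The plan is to exploit the multiplicativity of both $\omega$ and $\rho$ on $\A$ via exponentials: for any self-adjoint $a \in \A_\rho$ (which exist in abundance thanks to the noncommutative Riesz transform theory of Section \ref{S9}), multiplicativity gives $\rho(e^{ta}) = e^{t\rho(a)} = 1$ and $\omega(e^{ta}) = e^{t\omega(a)}$ for $t \in \mathbb{R}$, and the assumption $\|\omega{\upharpoonright}\A_\rho\| = \alpha < 1$ controls $|\omega(a)|$. One then converts this into an $L^2$-type control of the density $k = d\omega/d\tau$ relative to $d\rho/d\tau$ by invoking the noncommutative Szegő/Jensen inequality for finite maximal subdiagonal subalgebras together with the outer–inner factorization. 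Because the ambient algebra is antisymmetric ($\D = \mathbb{C}\cdot\I$) and finite, the Fuglede–Kadison determinant is available, and the classical Hoffman/Bishop Cayley-transform strategy can be transplanted: from $\|\omega{\upharpoonright}\A_\rho\| < 1$ one produces, for any bounded positive $b$, an outer element whose polar data link $\omega$ and $\rho$, yielding the two-sided comparisons $c\rho \leq \omega$ and $d\omega \leq \rho$. The delicate point — and the heart of the proof — is that unlike in the commutative weak$^*$-Dirichlet setting, the exponential $e^{ta}$ need not remain bounded in a simple way, so one must pass through $H^2$-factorization and the tracial Jensen inequality rather than a direct sup-norm estimate.
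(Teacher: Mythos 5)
The paper does not prove this statement at all: it is imported verbatim from \cite{BL-Gleason} and used as a black box, so there is no in-paper argument to compare yours against. Judged on its own merits, your proposal has two genuine gaps. First, the equivalence (i) $\Leftrightarrow$ (ii) does \emph{not} follow from your two-sided estimate. The decomposition $a=\rho(a)\I+a_0$ correctly gives $\|\omega{\upharpoonright}\A_\rho\|\leq\|(\omega-\rho){\upharpoonright}\A\|\leq 2\|\omega{\upharpoonright}\A_\rho\|$, and the right-hand inequality does yield (ii) $\Rightarrow$ (i); but the left-hand inequality only converts $\|(\omega-\rho){\upharpoonright}\A\|<2$ into $\|\omega{\upharpoonright}\A_\rho\|<2$, not into $\|\omega{\upharpoonright}\A_\rho\|<1$, so (i) $\Rightarrow$ (ii) is unproved. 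The standard way to close this is the M\"obius-transform trick: if $\|\omega{\upharpoonright}\A_\rho\|=1$, pick contractions $f_n\in\A_\rho$ with $\omega(f_n)\to 1$, form $g_n=(f_n-r\I)(\I-rf_n)^{-1}\in\A$ for $0<r<1$ (the inverse lies in $\A$ by the Neumann series, $\|g_n\|\leq 1$ by von Neumann's inequality), and use multiplicativity of $\rho$ and $\omega$ to get $\rho(g_n)=-r$ while $\omega(g_n)\to 1$, whence $\|(\omega-\rho){\upharpoonright}\A\|\geq 1+r\to 2$. Without some such argument exploiting multiplicativity, the equivalence fails to be established.

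Second, your sketch of (ii) $\Rightarrow$ (iii) rests on a false premise: in the antisymmetric case $\D=\A\cap\A^*=\mathbb{C}\I$, so the only self-adjoint element of $\A_\rho$ is $0$ — self-adjoint elements of $\A_\rho$ emphatically do \emph{not} ``exist in abundance,'' and the identity $\rho(e^{ta})=1$ for such $a$ is vacuous. The classical Harnack-type argument works instead with $u=\mathrm{Re}(f)$ for $f\in\A$ and with exponentials $e^{f}$, $f\in\A$, where the noncommutative obstruction is precisely that $|e^{f}|\neq e^{\mathrm{Re}(f)}$; this is where the Szeg\H{o}/Jensen inequality and Fuglede--Kadison determinant genuinely enter. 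As written, your paragraph for this implication names the correct toolbox but supplies no argument that could be checked, so the hardest implication remains open. Your proof of (iii) $\Rightarrow$ (ii) is correct and complete.
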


The above equivalence is the foundation on which the proof of the following crucial theorem (proved in \cite{BL-Gleason}) is built. Given the importance of this result for the present endeavour, we state the proof in full. 
 
\begin{theorem}[Wermer Embedding function \cite{BL-Gleason}] Let $\A$ be as before with $\omega$ a normal state in the Gleason part of $\tau$, distinct from $\tau$. Then there exists an element $z_r\in \A_0$ which is invertible in $\M$ such that $H^2(\A)z_r=H^2_0(\A)$. This element is of the form $h^{1/2}v_rh^{-1/2}$ for some unitary $v_r\in \M$ where $h\in \M_+^{-1}$ is the density for which $\omega=\tau(h\cdot)$. If in fact $\omega$ is also tracial, we have that $v$ commutes with $h$ and hence that $v_r=z_r$. There similarly also exists an element $z_l\in \A_0$ which is invertible in $\M$ such that $z_lH^2(\A)=H^2_0(\A)$. This element is of the form $h^{-1/2}v_lh^{1/2}$ for some unitary $v_l\in \M$. As before if $\omega$ is tracial, we have that $v_l=z_l$.
\end{theorem}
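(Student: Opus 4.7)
The strategy is to convert the Gleason-part hypothesis into an invertible Radon--Nikodym density, apply the noncommutative Beurling theorem to $H^2_0(\A)$ in the antisymmetric setting, and then use the similarity by $h^{\pm 1/2}$ coming from the relative modular structure to deliver $z_r$ in the stipulated form. Concretely, first apply Theorem~\ref{Gleason} with $(\rho,\omega)=(\tau,\omega)$: since $\omega\in G(\tau)\setminus\{\tau\}$, there are constants $c,d>0$ with $c\tau\leq\omega$ and $d\omega\leq\tau$. Because $\M$ is finite and $\omega$ is normal, $\omega=\tau(h\,\cdot)$ for some $h\in L^1(\M,\tau)_+$, and the sandwich lifts to the operator inequality $c\mathbf{1}\leq h\leq d^{-1}\mathbf{1}$, placing $h^{\pm 1/2}$ in $\M$.

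Next, apply Theorem~\ref{main} to the right $\A$-invariant subspace $H^2_0(\A)\subset L^2(\M,\tau)$. In the antisymmetric setting $\D=\mathbb{C}\mathbf{1}$, the Beurling constraint $u_i^*u_i\in\D$ forces each $u_i$ to be a scalar multiple of a unitary of $\M$ (using finiteness of $\M$). The crucial step would be to show that, under the Gleason hypothesis, the decomposition in fact collapses to a single type-1 summand with unitary partial isometry, giving $H^2_0(\A)=u\,H^2(\A)$ for one unitary $u\in\M$. This should follow from the interplay between the extra multiplicative character $\omega|_\A$ (distinct from $\tau|_\A$) and the two-sided bound on $h$: the invertibility of $h$ prevents $H^2_0(\A)$ from decomposing into an essential column sum, while the additional character provides the "shift" structure that rules out a type-2 part.

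With $u$ in hand, the task reduces to producing a unitary $v_r\in\M$ such that $z_r:=h^{1/2}v_r h^{-1/2}$ lies in $\A_0$ and realises the right-shift identity $H^2(\A)\,z_r=u\,H^2(\A)=H^2_0(\A)$. The construction would exploit the outer factorization $h=a^*a$ with $a\in\A\cap\M^{-1}$ outer, available in the finite maximal subdiagonal setting through Fuglede--Kadison determinant theory; $v_r$ is manufactured from $u$ and $a$ in such a way that conjugation by $h^{1/2}$ moves it into $\A$. Evaluating $H^2(\A)\,z_r=H^2_0(\A)$ at $\mathbf{1}$ then gives $z_r\in H^2_0(\A)\cap\M=\A_0$, while invertibility of $z_r$ in $\M$ is immediate. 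The symmetric construction (interchanging the roles of left- and right-invariant subspaces) produces $z_l=h^{-1/2}v_l h^{1/2}$. When $\omega$ is itself tracial, the modular group $\sigma_t^\omega(x)=h^{it}x h^{-it}$ acts trivially, forcing $h\in Z(\M)$, so $h^{1/2}v_r h^{-1/2}=v_r$ and thus $z_r=v_r$ (similarly $z_l=v_l$). The principal difficulty will be the Beurling-collapse step—showing that the decomposition of $H^2_0(\A)$ reduces to one unitary summand—where both antisymmetry and the two-sided bound on $h$ are essential, together with the identification of $z_r$ in the specific similarity form, where the outer factorization of $h$ and the multiplicativity of $\omega|_\A$ play the pivotal role.
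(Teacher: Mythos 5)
Your opening step (converting the Gleason-part hypothesis into the two-sided operator bound $c\I\leq h\leq d^{-1}\I$ via Theorem~\ref{Gleason}) matches the paper, and your treatment of the tracial case at the end is essentially correct. But the core of your argument has a genuine gap. The step you yourself flag as crucial --- that the Beurling decomposition of $H^2_0(\A)$ collapses to a single summand $uH^2(\A)$ for one unitary $u$ --- is asserted, not proved. The heuristic you offer (``the invertibility of $h$ prevents an essential column sum, while the additional character provides the shift structure'') is a restatement of the conclusion rather than an argument: antisymmetry gives $\D=\mathbb{C}\I$ and hence $u_i^*u_i\in\{0,\I\}$, but nothing you say rules out a nonzero type-2 part of $H^2_0(\A)$ or a column sum over more than one unitary, and the Gleason hypothesis enters your argument nowhere except through the invertibility of $h$. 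The same is true of the second half: you invoke an outer factorization $h=a^*a$ and say $v_r$ ``is manufactured from $u$ and $a$,'' but no construction is given, and it is precisely there that one would have to verify both that $h^{1/2}v_rh^{-1/2}\in\A_0$ and that right multiplication by it carries $H^2$ onto $H^2_0$.

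The paper's proof avoids Beurling theory entirely and is worth contrasting with your plan. It works in the equivalent inner product $\langle\cdot,\cdot\rangle_\omega$ (legitimate because $\alpha\tau\leq\omega\leq\beta\tau$), takes $e$ to be the $\omega$-orthogonal projection of $\I$ onto $H^2_0$, and sets $z=\frac{1}{c}e$ with $c^2=\|e\|_\omega^2$. The hypothesis $\omega\neq\tau$ is used exactly to guarantee $c\neq 0$. Multiplicativity of $\omega$ on $\A$ gives $c^2\omega(f)=\omega(e^*fe)$ on $\A$, and the Gleason--Whitney uniqueness property (Theorem~\ref{Co}(4)) upgrades this to all of $\M$; translating into densities yields $h=\frac{1}{c^2}ehe^*$, whence $v=\frac{1}{c}h^{-1/2}eh^{1/2}$ is unitary by finiteness of $\M$ and $z=h^{1/2}vh^{-1/2}$ automatically. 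Finally $H^2e=H^2_0$ is proved by showing any $g\in H^2_0\ominus_\omega H^2e$ is $\omega$-orthogonal to $(\A_0+\A^*)e$, which is dense since $e$ is invertible. If you want to salvage your route, you would need to supply an independent proof that the wandering subspace of $H^2_0$ (as an invariant subspace) is singly generated by a unitary under the Gleason hypothesis --- but that statement is essentially equivalent to the theorem itself, so you would be going in a circle unless you find a genuinely new mechanism.
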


\begin{proof}  We prove the existence of the element $z_r$. This proof will clearly also suffice to establish the existence of an element $w_r\in \A_0^*$ of the form $h^{1/2}u_rh^{-1/2}$ for some unitary $u_r$, for which $H^2(\A^*)w_r=H^2_0(\A^*)$. The second claim then follows by simply setting $z_l=w_r^*$.  For ease of notation, we will therefore drop the subscripts in the proof, and simply write $z$ and $v$ for $z_r$ and $v_r$.

Suppose that $\omega\in G(\tau)$ with $\omega\neq\tau$. The completion of $\A$, $\A_0$ and $\M$ under the $L^2$-norm generated by $\omega$, will respectively be denoted by $H^2(\omega)$, $H^2_0(\omega)$ and $L^2(\omega)$. We know from the preceding theorem that there exist $\alpha, \beta >0$ such that for all $g\in \M^+$, $\alpha\tau(g)\leq \omega(g) \leq \beta\tau(g)$. This ensures that the spaces $H^2(\omega)$, $H^2_0(\omega)$ and $L^2(\omega)$ are effectively just equivalent renormings of $H^2(\A)$, $H^2_0(\A)$ and $L^2(\M)$. The space $L^1(\omega)$ is similarly an equivalent renorming of $L^1(\M)$. The action of the state $\omega$ admits a natural extension to the space $L^1(\omega)$, which we will still denote by $\omega$. It is an exercise to see that for this extension we have that $\omega(b^*a)=\langle a,b\rangle_\omega$ for all $a,b\in L^2(\M)=L^2(\omega)$.

Now let $e\in H^2$ be the projection of $\I\in \A$ onto $H^2_0(\omega)$ with respect to the inner product $\langle\cdot,\cdot\rangle_\omega$ coming from $\omega$. So $e\in H^2_0(\omega)$, with $\I-e$ orthogonal to $H^2_0$ in $L^2(\omega)$.
Let $c^2=\langle e,e\rangle_\omega=\|e\|^2_\omega\neq 0$, for otherwise $\I$ will be orthogonal to $\A_0$ with respect to $\langle\cdot,\cdot\rangle_\omega$, which would in turn ensure that $\omega$ annihilates $\A_0$. But that would force $\omega= \tau$, which would contradict our assumption. Hence we may let $z=\frac{1}{c}e$. 

Let $f\in \A$ be given. Since then $fe\in H^2_0$, we have that $fe\perp_\omega(\I-e)$, and hence that $\omega(fe)=\omega(e^*fe)$. In particular for $f=\I$, we get $\omega(e)=c^2$. It is an exercise to see that the multiplicativity of $\omega$ on $\A$ ensures that $\omega(ab)=\omega(a)\omega(b)$ for all $a\in \A$, $b\in H^2$. From this it now follows that $$c^2\omega(f)=\omega(fe)=\omega(e^*fe) \quad\mbox{for all }f\in \A.$$

We proceed to show that $c^2\omega(a)=\omega(|e|^2a)$ for all $a\in \M$. To see this, firstly note that by construction, the functional $\gamma: \M\to \mathbb{C}: a\mapsto \frac{1}{c^2}\omega(e^*ae)$ is well-defined and positive on $\M$, and assumes the value 1 at $\I$. Hence it is a state. It is however a state which agrees with $\omega$ on $\A$. Therefore the claim follows by the noncommutative Gleason-Whitney theorem, namely part (4) of Theorem \ref{Co}.

Let $h\in L^1(\M)_+$ be the density for which $\omega=\tau(h\cdot)$. The fact that there exist $\alpha, \beta >0$ such that for all $g\in \M^+$, $\alpha\tau(g)\leq \omega(g) \leq \beta\tau(g)$, may alternatively be formulated as the claim that $\alpha\I \leq h \leq \beta\I$, or equivalently that $h\in \M_+^{-1}$ as claimed.

The fact that for every $f\in \M$ we have that $$\tau(hf)= \omega(f) =\frac{1}{c^2} \omega(e^*fe)=\frac{1}{c^2}\tau(he^*fe)=\frac{1}{c^2}\tau(ehe^*f),$$ensures that as affiliated operators of $\M$, $h=\frac{1}{c^2}ehe^*$. This may be reformulated as the claim that $\I=\frac{1}{c^2}|h^{1/2}e^*h^{-1/2}|^2$. Since $\M$ is finite, this in turn ensures that 
$v = \frac{1}{c}h^{-1/2}eh^{1/2}$ is a unitary element of $\M$. It follows that $z=\frac{1}{c}e$ is of the form $z=h^{1/2}vh^{-1/2}$. In view of the fact that $H^2_0(\A)=H^2_0(\omega)$, this description of $z$ moreover proves that $z\in H^2_0(\A)\cap \M=\A_0$.   

Now observe that if $\omega$ is actually tracial, that would ensure that for any $a,b\in \M$ we will have that $$\tau((ha)b)=\omega(ab)=\omega(ba)=\tau(hba)=\tau((ah)b).$$It follows that then $ha=ah$ for any $a\in \M$, in other words $h\eta\mathcal{Z}(\M)$. In this case we will therefore have that $z = \frac{1}{c}h^{-1/2}eh^{1/2}=\frac{1}{c}e$.

It remains to prove that $H^2(\A)z=H^2_0(\A)$, or equivalently that $H^2(\A)e=H^2_0(\A)$. Since $e\in \A_0$, it is clear that $H^2e\subset H^2_0$. Given that $e$ is an invertible element of $\M$, $H^2e$ must be a closed subspace of $L^2(\M)$. Let $g\in H^2_0\ominus_\omega H^2e$ be given. If we are able to show that we then necessarily have that $g=0$, it will follow that $H^2_0=H^2e$ as required. 

Since for any $f\in \A$ we have that $fg\in H^2_0$, we will then also have that $(\I-e)\perp fg$ with respect to the inner product $\langle\cdot,\cdot\rangle_\omega$. In other words for any $f\in \A$ we have that
$$0=\langle fg, (\I-e)\rangle_\omega=\omega(fg)-\omega(e^*fg).$$Next observe that $\omega(fg) =\omega(f)\omega(g)$ for any $f\in \A$. To see this select any sequence $\{a_n\}\subset \A$ converging to $g$ in the $L^2$-norm, and notice that we then have that 
 $$\omega(fg) =\lim_{n\to\infty}\omega(fa_n)=\lim_{n\to\infty}\omega(f).\omega(a_n)=\omega(f)\omega(g).$$ 
Therefore 
 $\omega(fg)=0$ for all $f\in \A_\omega=\{a\in \A: \omega(a)=0\}.$
When combined with the previously centered equation, this ensures that
 $$0=\omega(e^*fg)=\omega((f^*e)^*g)= \langle g,f^*e\rangle_\omega\quad\mbox{for all }f\in \A_\omega.$$

We have therefore shown that $g\perp (\A+\A_\omega^*)e$ with respect to the inner product $\langle\cdot,\cdot\rangle_\omega$. But 
 $$\A+\A_\omega^*=(\A_0+\mathbb{C}\I)+\A_\omega^*=\A_0+(\mathbb{C}\I+\A_\omega)^*=\A_0+\A^*,$$ 
and $\A_0+\A^*$
is known to be norm dense in 
$L^2(\M)$. The equivalence of the norms generated by $\tau$ and $\omega$, therefore ensures that $\A_0+\A^*$ is $\|\cdot\|_\omega$-dense in $L^2$. Since $e\in \M^{-1}$, $(\A_0+\A^*)e$ is similarly $\|\cdot\|_\omega$-dense in $L^2$. Hence $g$ is orthogonal to $L^2$ with respect to $\langle\cdot,\cdot\rangle_\omega$, ensuring that $\|g\|_\omega=0$ as required.
\end{proof}

\begin{lemma} Suppose there exists an element $z\in \A_0$ which is unitary in $\M$, such that $zH^2(\A)=H^2_0(\A)$. Then the left multiplication operators $\{M_{z^n}\}\subset B(L^2(\M))$ (respectively $\{M_{(z^*)^n}\}$) converge to 0 in the weak operator topology.
\end{lemma}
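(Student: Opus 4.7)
My plan is to reduce the weak-operator convergence $M_{z^n}\to 0$ to Bessel's inequality via the Riesz-type decomposition $L^2(\M)=H^2_0(\A)\oplus L^2(\D)\oplus(H^2_0(\A))^*$ from Theorem~\ref{complementHp}, exploiting that in the anti-symmetric setting $\D=\mathbb{C}\I$. The starting observation is that $\{z^n\}_{n\in\mathbb{Z}}$ (with the convention $z^{-m}:=(z^*)^m$) forms an orthonormal family in $L^2(\M)$: unitarity of $z$ gives $(z^k)^*z^n=z^{n-k}$, while $z\in\A_0$ together with multiplicativity of $\E$ on $\A$ and the identity $\tau=\tau\circ\E$ force $\tau(z^m)=\E(z)^m=0$ for every $m\neq 0$, and $\tau(\I)=1$. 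Bessel's inequality then delivers $\langle z^n,x\rangle\to 0$ for every $x\in L^2(\M)$.

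Since $\|M_{z^n}\|=1$ for all $n$, a standard density plus uniform boundedness argument reduces the claim to proving $\langle M_{z^n}f,g\rangle\to 0$ for $f,g$ in the dense subspace $\M\subset L^2(\M)$. For such $f,g$ traciality rewrites the inner product as $\tau(g^*z^nf)=\tau(z^n\,fg^*)$, so with $a:=fg^*\in\M$ the task becomes showing $\tau(z^na)\to 0$ for every $a\in\M$. Writing $a=a_++c\I+a_-^*$ with $a_+,a_-\in H^2_0(\A)$ and $c\in\mathbb{C}$ via the Riesz decomposition, and using that $(x,y)\mapsto\tau(xy)$ is a continuous bilinear form on $L^2(\M)\times L^2(\M)$ by non-commutative H\"older, I handle the three summands separately. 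For $a_+$: approximate $a_+$ by $b_k\in\A_0$ (which is $L^2$-dense in $H^2_0(\A)$), note that $z^nb_k\in\A_0$ by the ideal property, so $\tau(z^nb_k)=\tau(\E(z^nb_k))=0$ by multiplicativity of $\E$, and pass to the limit to get $\tau(z^na_+)=0$. The centre summand gives $\tau(z^n\,c\I)=c\,\tau(z^n)=0$. For the last term, traciality and the definition of the $L^2$-inner product yield $\tau(z^na_-^*)=\langle z^n,a_-\rangle$, which tends to $0$ by Bessel and the orthonormality of $\{z^n\}$.

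Combining these estimates yields $M_{z^n}\to 0$ in the weak operator topology. The companion statement for $M_{(z^*)^n}$ follows immediately: a direct computation from $\langle zf,g\rangle=\tau(g^*zf)=\langle f,z^*g\rangle$ gives $(M_z)^*=M_{z^*}$, hence $M_{(z^*)^n}=(M_{z^n})^*$, and weak-operator convergence to $0$ is preserved under taking adjoints. I expect the subtlest point to be ensuring that the individual summands of the Riesz decomposition of a bounded element $a\in\M$ can meaningfully be paired against $z^n$ through $\tau$: the components $a_\pm$ lie only in $L^2(\M)$ and not in $\M$, so the vanishing of $\tau(z^na_+)$ relies on $L^2$-continuity of the $\tau$-pairing combined with a density argument from $\A_0$, rather than on an algebraic identity available purely in $\M$.
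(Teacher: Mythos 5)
Your proof is correct, and its first half coincides with the paper's: both arguments begin by showing that $\{z^n\}$ is an orthonormal system in $L^2(\M)$ (via $(z^k)^*z^n=z^{n-k}$ and $\tau(z^m)=\tau(\E(z^m))=0$ for $m\neq 0$), so that $z^n\to 0$ weakly in $L^2(\M)$. Where you diverge is in upgrading this to weak operator convergence of $M_{z^n}$. The paper keeps $f,g\in L^2(\M)$ arbitrary, observes that $z^nf\to 0$ weakly in $L^1(\M)$, and then uses relative weak compactness of the bounded set $\{z^nf\}$ in $L^2(\M)$ together with the contractive embedding $L^2\hookrightarrow L^1$ to show that $0$ is the only weak cluster point. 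You instead restrict to $f,g$ in the dense subalgebra $\M$, where $a=fg^*\in\M\subset L^2(\M)$, and extend by uniform boundedness of $\|M_{z^n}\|$; this is a cleaner and more standard route that avoids the cluster-point argument entirely. One remark: your Riesz decomposition $a=a_++c\I+a_-^*$ is correct but superfluous. Once $a\in\M\subset L^2(\M)$ you have directly $\tau(z^na)=\tau(az^n)=\langle z^n,a^*\rangle\to 0$ by Bessel's inequality applied to $a^*\in L^2(\M)$, so the three-term case analysis (and with it the appeal to Theorem \ref{complementHp} and anti-symmetry of $\A$) can be deleted without loss; this also makes the argument match the generality of the paper's proof, which does not use anti-symmetry at this point. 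Your observation that $M_{(z^*)^n}=(M_{z^n})^*$ and that WOT-convergence to $0$ passes to adjoints disposes of the second assertion just as the paper's ``entirely analogous'' does.
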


\begin{proof} We will prove the claim for the sequence $\{M_{z^n}\}$. The other claim is entirely analogous. 

First note that the hypothesis ensures that $(z^*)^mz^n=z^{n-m}$ belongs to either $H^1_0(\A)$, or $H^1_0(\A)^*$ whenever $n\neq m$. That means that $\langle z^n,z^m\rangle=\tau(z^{n-m})=0$ whenever $n\neq m$. Thus as a subset of $L^2(\M)$, $\{z^n\}$ is an ONS. Hence $z^n\to 0$ weakly in $L^2(\M)$. That in turn ensures that for any $f\in L^2(\M)$, $z^nf\to 0$ weakly in $L^1(\M)$. 

We claim that in fact for any $f\in L^2(\M)$, $\{z^nf\}$ will converge weakly to 0 in $L^2(\M)$. To see this note that since $\{z^n\}$ is a bounded subset of $\M$, $\{z^nf\}$ is a bounded subset of $L^2(\M)$, and hence relatively weakly compact. If we can show that 0 is the only cluster point of this set in $L^2(\M)$, that will suffice to prove the claim regarding weak convergence. Let $g_0$ be a cluster point of $\{z^nf\}$ in $L^2(\M)$. Then there exists a subnet $\{z^{n_\lambda}f\}$ converging weakly to $g_0$ in $L^2(\M)$. Since $\M$ admits a tracial state, $L^2(\M)$ contractively embeds into $L^1(\M)$. So as elements of $L^1(\M)$, $\{z^{n_\lambda}f\}$ converges weakly to $g_0$ in $L^1(\M)$. But since in $L^1(\M)$ the sequence  $\{z^{n_\lambda}f\}$ converges weakly to 0, the subnet $\{z^{n_\lambda}f\}$ must also converge weakly to 0 in $L^1(\M)$. In other words $g_0=0$. Thus the only cluster point of $\{z^nf\}$ in $L^2(\M)$, is 0. As noted earlier, this suffices to show that $\{z^nf\}$ converges weakly to 0 in $L^2(\M)$. In other words for any $f,g\in L^2(\M)$, $\langle M_{z^n}f,g\rangle=\langle z^nf,g\rangle\to 0$ as $n\to\infty$, which is what we needed to prove.
\end{proof}

We are now finally ready to establish existence criteria for compact Hankel maps. The result we present is a faithful non-commutative version of \cite[Theorem 2]{CMNX}. We closely follow the proof offered in \cite{CMNX}. 

\begin{proposition}
Let $\A$ be an antisymmetric finite maximal subdiagonal subalgebra and suppose that there exists an element $z\in \A_0$ (invertible in $\M$) such that $zH^2(\A)=H^2_0(\A)$. Given $f\in \M$, the Hankel map $\mathcal{H}_f$ will be compact if $f$ belongs to the norm closed subalgebra generated by $z^{-1}$ and $\A$. If indeed $z$ is unitary in $\M$, then whenever $\mathcal{H}_f$ is compact, $f$ will conversely necessarily belong to the norm closed subalgebra generated by $z^{-1}=z^*$ and $\A$ .
\end{proposition}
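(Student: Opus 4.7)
For the first (sufficiency) direction the plan is to show that $\mathcal{K} := \{f \in \M : \mathcal{H}_f \text{ is compact}\}$ is a norm-closed subalgebra of $\M$ containing $\A \cup \{z^{-1}\}$; this will force $\mathcal{K}$ to contain the entire norm-closed subalgebra they generate. Norm-closedness is immediate from $\|\mathcal{H}_f\| \leq \|f\|_\infty$ together with norm-closedness of the compact operators in $B(H^2, (H^2_0)^*)$. The inclusion $\A \subset \mathcal{K}$ is trivial, since $aH^2 \subset H^2$ forces $\mathcal{H}_a = 0$ for $a \in \A$. To see $z^{-1} \in \mathcal{K}$, I would invoke $zH^2 = H^2_0$ (so $z^{-1} H^2_0 = H^2$) and antisymmetry ($L^2(\D) = \mathbb{C}I$, so $H^2 = \mathbb{C}I \oplus H^2_0$) to decompose each $g \in H^2$ as $g = \tau(g)I + g_0$ with $g_0 \in H^2_0$; then $z^{-1} g_0 \in H^2$ and
\[ \mathcal{H}_{z^{-1}}(g) = (I - P_+)(z^{-1} g) = \tau(g)\,(I - P_+)(z^{-1}), \]
which is a rank-one operator. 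Closure of $\mathcal{K}$ under products comes from decomposing $gh = P_+(gh) + \mathcal{H}_g(h)$ and applying $M_f$, which yields
\[ \mathcal{H}_{fg}(h) = \mathcal{H}_f(P_+(gh)) + (I - P_+)\,M_f\,\mathcal{H}_g(h), \]
a sum of two compositions each involving a compact factor and bounded factors; hence $\mathcal{H}_{fg}$ is compact.

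For the second (necessity) direction, assuming $z$ unitary, the strategy is to exploit the structural hypotheses to reduce to the classical disc algebra setting, where the result becomes Hartman's theorem. The Toeplitz shift $T_z : H^2 \to H^2$, $T_z g = zg$, is then an isometry (as $z$ is a unitary element of $\A$) whose range $zH^2 = H^2_0$ has one-dimensional wandering subspace $L^2(\D) = \mathbb{C}I$ by antisymmetry. The key step is to prove $T_z$ is a \emph{pure} isometry, i.e.\ $\bigcap_n z^n H^2 = \{0\}$; once this is done, the Wold decomposition identifies $\{z^n : n \geq 0\}$ as an orthonormal basis of $H^2$, and the unitarity of $z$ combined with $L^2(\M) = H^2 \oplus (H^2_0)^*$ (Corollary \ref{gen-perp}, Theorem \ref{Co}) yields the full orthonormal basis $\{z^n : n \in \mathbb{Z}\}$ of $L^2(\M)$.

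From this orthonormal basis one deduces, via the cyclic-trace identity $\langle az, z^k\rangle = \tau(z^{1-k}a) = \langle za, z^k\rangle$ valid for every $a \in \M$ and every $k \in \mathbb{Z}$, that $z$ is central in $\M$. The abelian von Neumann subalgebra generated by $z$ then has $L^2$-space equal to $L^2(\M)$ and so must equal $\M$ itself (the trace-preserving normal conditional expectation onto it restricts to the identity on $L^2(\M)$). Identifying $\M$ with $L^\infty(\mathbb{T})$ via $z \mapsto e^{i\theta}$, with $\tau$ corresponding to normalized Lebesgue measure (since $\tau(z^n) = 0$ for $n \neq 0$), and identifying $\A$ with $H^\infty(\mathbb{T})$ via antisymmetry together with $zH^2 = H^2_0$, the statement reduces to the classical Hartman theorem: a Hankel operator on $H^2(\mathbb{T})$ is compact exactly when its symbol lies in the Douglas algebra $H^\infty + C(\mathbb{T})$, which coincides with the norm-closed subalgebra of $L^\infty(\mathbb{T})$ generated by $H^\infty$ and $\bar z = z^{-1}$.

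The main obstacle in this plan is verifying the purity of $T_z$: the hypotheses $zH^2 = H^2_0$ and $z$ unitary do not by themselves yield $\bigcap_n z^n H^2 = \{0\}$, and one likely needs to invoke the invariant subspace theory of Section \ref{S11} (in particular the type 1 / type 2 decomposition of Theorem \ref{main}) together with antisymmetry to rule out a nontrivial type 2 component in $\bigcap_n z^n H^2$. An alternative strategy, which sidesteps the reduction-to-classical route, would be to adapt the weak*-Dirichlet argument directly: combine the weak-operator convergence $M_{z^n} \to 0$ furnished by the preceding lemma with the fact that for a compact operator $T$ and a uniformly bounded net $S_n$ with $S_n^* \to 0$ strongly one has $\|TS_n\| \to 0$, to conclude that $\|\mathcal{H}_f - \mathcal{H}_f E_n\| \to 0$ where $E_n$ is the finite-rank projection onto $\overline{\mathrm{span}}\{1, z, \ldots, z^{n-1}\}$; the finite-rank approximants $\mathcal{H}_f E_n$ would then be realised as Hankel operators with symbols in the algebra generated by $\A$ and $z^*$, and a distance/duality argument would extract $f$ itself as a norm limit of such symbols.
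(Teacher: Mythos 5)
Your sufficiency argument is correct and is essentially a repackaging of the paper's: where the paper shows directly that every algebraic word in $z^{-1}$ and $\A$ has a \emph{finite rank} Hankel map (via the codimension count $\dim\bigl((bH^2+H^2)/H^2\bigr)<\infty$) and then passes to norm limits using $\|\mathcal{H}_f-\mathcal{H}_b\|\leq\|f-b\|_\infty$, you observe that $\{f:\mathcal{H}_f \mbox{ compact}\}$ is a norm-closed subalgebra containing $\A$ and $z^{-1}$. Your product identity $\mathcal{H}_{fg}(h)=\mathcal{H}_f(P_+(gh))+(\I-P_+)M_f\mathcal{H}_g(h)$ and the rank-one computation for $\mathcal{H}_{z^{-1}}$ (which, like the paper's count, uses antisymmetry through $H^2=\mathbb{C}\I\oplus H^2_0$) are both sound. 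This half is fine.

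The necessity direction, however, has a fatal gap, and it is not merely the unverified purity of $T_z$ that you flag as ``the main obstacle'': purity is actually \emph{false} under the stated hypotheses, so the entire reduction-to-the-classical-disc strategy is unsalvageable. Take $\M=\mathrm{VN}(\mathbb{Z}^2)\cong L^\infty(\mathbb{T}^2)$ with $\A$ the maximal subdiagonal algebra coming from the lexicographic order on $\mathbb{Z}^2$; this is antisymmetric and finite, and $z=\lambda_{(0,1)}$ is a unitary element of $\A_0$ with $zH^2=H^2_0$, yet $\bigcap_n z^nH^2=\overline{\mathrm{span}}\{\lambda_{(a,b)}:a\geq 1\}\neq\{0\}$ and $z$ does not generate $\M$. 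Consequently your chain of deductions --- $\{z^n\}$ is an orthonormal basis of $L^2(\M)$, hence $z$ is central, hence $\M=W^*(z)\cong L^\infty(\mathbb{T})$ --- collapses at the first step, and no appeal to the type 1/type 2 decomposition of Section \ref{S11} can rule out the nontrivial ``unitary part'' $\bigcap_n z^nH^2$, since it genuinely occurs. Your alternative strategy fails for the same reason: the finite-rank projections $E_n$ onto $\overline{\mathrm{span}}\{\I,z,\dots,z^{n-1}\}$ do not converge strongly to the identity of $H^2$ in this example, so $\|\mathcal{H}_f-\mathcal{H}_fE_n\|\to\|\mathcal{H}_f(\I-P)\|$ with $P$ a proper subprojection, and compactness of $\mathcal{H}_f$ alone does not force this limit to vanish.

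The paper's proof of the converse runs on entirely different fuel. Its engine is the Nehari-type distance formula $\|\mathcal{H}_f\|=\inf\{\|f+a\|_\infty: a\in\A\}$, obtained by combining the noncommutative Riesz factorization theorem ($H^1_0=[H^2\cdot H^2_0]_1$, used to rewrite $\|\mathcal{H}_f\|$ as $\sup\{|\tau(fF)|:F\in H^1_0,\ \tau(|F|)\leq 1\}$) with the $L^1$--$L^\infty$ duality argument of \cite[Theorem 3.9]{LX}; this formula, together with the weak-operator convergence $M_{z^n}\to 0$ from the preceding lemma, is what allows the argument of \cite[Lemma 2.3]{CMNX} to be transplanted, and that argument is designed to cope with the non-pure case. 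Your proposal contains neither the distance formula nor any substitute for it, and this is precisely the content of the converse direction that is missing.
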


\begin{proof} Suppose there exists an element $z\in \A_0\cap \M^{-1}$ such that $zH^2(\A)=H^2_0(\A)$, and let $f$ belongs to the norm closed 
subalgebra generated by $z^{-1}$ and $\A$. We prove that $\mathcal{H}_f$ is then compact. Now let $E$ be a subspace of $L^2(\M)$ which contains $H^2(\A)$ and for which $\mathrm{dim}\frac{E}{H^2}<\infty$. We show that then $\mathrm{dim}\frac{z^{-1}E}{H^2}<\infty$ and that $\mathrm{dim}\frac{aE}{H^2}<\infty$ for any $a\in \A$. To see this observe that any such subspace $E$ is of the form 
$E=F\oplus H^2=F\oplus\mathbb{C}\I\oplus H^2_0$ where $F$ is finite dimensional. From the assumptions on $z$, it is clear that we then have that 
$z^{-1}(F\oplus H^2)=z^{-1}(F\oplus\mathbb{C}\I)+ H^2$ with $z^{-1}(F\oplus\mathbb{C}\I)$ obviously finite dimensional. In other words $$\mathrm{dim}\frac{F\oplus H^2}{H^2}<\infty\Rightarrow \mathrm{dim}\frac{z^{-1}(F\oplus H^2)}{H^2}<\infty$$as claimed. Using the fact that $aH^2(\A)\subset H^2(\A)$ for any $a\in \A$, it is a trivial exercise to see that for any such $a$, we similarly have 
$$\mathrm{dim}\frac{F\oplus H^2}{H^2}<\infty\Rightarrow \mathrm{dim}\frac{a(F\oplus H^2)+H^2}{H^2}<\infty.$$ 

Now let $b$ be a finite algebraic combination of powers of $z^{-1}$ and elements of $\A$. It follows from what we proved above that then $\mathrm{dim}\frac{b(H^2)+H^2}{H^2}<\infty$, and hence that $\mathcal{H}_b$ is a finite rank operator, and therefore compact. However any $f$ belonging to the norm closed subalgebra generated by $z^{-1}$ and $\A$, is the norm limit of such $b$'s. Since we trivially have that $ \|\mathcal{H}_f-\mathcal{H}_b\|=\|\mathcal{H}_{(f-b)}\|\leq \|f-b\|_\infty$, the map $\mathcal{H}_f$ will then be a norm limit of compact maps, and hence itself compact. 

It remains to prove the converse. To this end let $f\in \M$ be given such that $\mathcal{H}_f$ is compact. Observe that
\begin{eqnarray*}
\|\mathcal{H}_f\| &=& \sup\{\|(\I-P_+)(fa)\|: a\in H^2(\A), \|a\|_2\leq 1\}\\
&=& \sup\{|(\I-P_+)(fa), \,b\rangle|: a\in H^2(\A), b\in H^2_0(\A)^*, \|a\|_2\leq 1, \|b\|_2\leq 1\}\\
&=& \sup\{|\langle fa,\, b\rangle|: a\in H^2(\A), b\in H^2_0(\A)^*, \|a\|_2\leq 1, \|b\|_2\leq 1\}\\
&=& \sup\{|\tau(fab^*)|: a\in H^2(\A), b\in H^2_0(\A)^*, \|a\|_2\leq 1, \|b\|_2\leq 1\}\\
&=& \sup\{|\tau(fF)|: F\in H^1_0(\A), \tau(|F|)\leq 1\}.
\end{eqnarray*}
Here the last equality follows from the Noncommutative Riesz Factorisation theorem from \cite{Sai} and \cite{MW}. Next notice that the argument in the last eleven lines of the proof of \cite[Theorem 3.9]{LX} suffices to show that
$$\sup\{|\tau(fF)|: F\in H^1_0(\A), \tau(|F|)\leq 1\} = \inf\{\|f+a\|_\infty : a \in \A a\}.$$Combining these observations now leads to the fact that $\|\mathcal{H}_f\|=\inf\{\|f+a\|_\infty : a \in \A a\}$. This fact together with the Lemma, now provides us with all the technology we need for the proof of \cite[Lemma 2.3]{CMNX} to go through almost verbatim in the present setting.
\end{proof}

\end{document}